\numberwithin{equation}{section}
\newcommand{\Bk}{\color{black}}
\newtheorem{lem}{Lemma}[section]
\newtheorem{pro}[lem]{Proposition}
\newtheorem{defi}[lem]{Definition}
\newtheorem{thm}[lem]{Theorem}
\newtheorem{cor}[lem]{Corollary}
\newtheorem{rem}[lem]{Remark}
\newenvironment{proof}{\medskip\noindent{\bf Proof.}}{\medskip}
\newcommand{\ms}{\medskip}
\newcommand{\pari}{\par\noindent}
\newcommand{\R}{\mathbb{R}}
\renewcommand{\H}{\mathcal H}
\renewcommand{\O}{{\cal O}}
\renewcommand{\d}{\partial}
\newcommand{\dist}{\,\mathrm{dist}\,}
\newcommand{\sm}{\setminus}
\newcommand\qed{\hfill\vrule height8pt width6pt depth0pt}
\renewcommand{\i}{\subset}
\newcommand{\wt}{\widetilde}
\renewcommand{\u}{{\bf u}}
\renewcommand{\v}{{\bf v}}
\newcommand{\V}{{\bf V}}
\newcommand{\W}{{\bf W}}
\newcommand{\F}{{\cal F}}
\newcommand{\1}{{\mathds 1}}
\begin{document}

\title{A free boundary problem for the localization of eigenfunctions}
\author{G. David,  
M. Filoche,  
D. Jerison, 
and S. Mayboroda.}
\newcommand{\Addresses}{{
  \bigskip
  \vskip 0.08in \noindent --------------------------------------
\vskip 0.10in

  \footnotesize

  G.~David, \textsc{Univ Paris-Sud, Laboratoire de Math\'{e}matiques, 
UMR 8658 Orsay, F-91405; CNRS, Orsay, F-91405; Institut Universitaire de France}\par\nopagebreak
  \textit{E-mail address}: \texttt{Guy.David@math.u-psud.fr}

  \medskip

 M.~Filoche , \textsc{Physique de la Mati\`ere Condens\'ee, Ecole Polytechnique, CNRS, Palaiseau, France; CMLA, ENS Cachan, UniverSud, Cachan, France}\par\nopagebreak
  \textit{E-mail address}: \texttt{marcel.filoche@polytechnique.edu}
  
  \medskip

D.~Jerison, \textsc{Massachusetts Institute of Technology,
77 Massachusetts Ave,
Cambridge, MA 02139-4307
USA}\par\nopagebreak
  \textit{E-mail address}: \texttt{jerison@math.mit.edu}

  \medskip

S.~Mayboroda, \textsc{School of Mathematics, University of Minnesota, 206 Church St SE, Minneapolis, MN 55455 USA}\par\nopagebreak
  \textit{E-mail address}: \texttt{svitlana@math.umn.edu}

}}
\date{}
\maketitle

\ms\noindent{\bf Abstract.}
We study a variant of the Alt, Caffarelli, and Friedman free boundary problem, with many phases and a slightly different volume term, which we originally designed to guess the localization of eigenfunctions of a Schr\"odinger operator in a domain. We prove Lipschitz bounds for the functions and some nondegeneracy and regularity properties for the domains.

\ms\noindent{\bf R\'esum\'e en Fran\c cais.}
On \'etudie une variante du probl\`eme de fronti\`ere libre de Alt, Caffarelli, et Friedman,
avec plusieurs phases et un terme de volume l\'eg\`erement diff\'erent, que l'on a choisie pour
deviner la localisation des fonctions propres d'un op\'erateur de  Schr\"odinger dans un domaine.
On d\'emontre des estimations Lipschitziennes pour les fonctions associ\'ees \`a un minimiseur,
et des propri\'et\'es de nond\'eg\'erescence et de r\'egularit\'e pour les fronti\`eres libres. 

\ms\noindent{\bf Key words/Mots cl\'es.}
Free boundary problem ; localization of eigenfunctions.

\ms\noindent
AMS classification: 49Q20 35B65.

\tableofcontents

\section{Introduction}\label{intro}

The initial motivation for this paper was to describe the localization of 
eigenfunctions for an operator $\cal L$ on a domain $\OmegaÊ\i \R^n$. Let us assume that $|\Omega|$, the measure of $\Omega$, is finite. 
The typical operator that we consider is the positive Laplacian 
${\cal L} = -\Delta$, or a Schr\"odinger operator ${\cal L} = -\Delta + {\cal V}$, 
with ${\cal V}$ bounded and nonnegative;  
in a forthcoming paper we may also consider the bilaplacian $\Delta^2$. 

In \cite{FM}, a pointwise estimate for eigenfunctions for $\cal L$ is found,
which bounds them in terms of a single function $w_{0}$,
namely the solution of ${\cal L}w_{0} = 1$ on $\Omega$, with the Dirichlet
condition $w_{0} = 0$ on $\R^n \sm \Omega$. Our first goal  
is to derive an automatic way, using $w_{0}$, to find subdomains 
$W_{j}$, $1\leq j \leq N$, of $\Omega$, where the eigenfunctions of ${\cal{L}}$
are more likely to be supported.
The work in \cite{FM} indicates that, roughly speaking, one seeks a collection of disjoint $W_{j}\subset \Omega$, $1\leq j \leq N$, such that 
$w_{0}$ is small
on the boundaries of the $W_{j}$, and it is natural to try to measure ``smallness" in terms of
the operator $\cal L$ itself. 
Even though many handmade or numerical decompositions of $\Omega$ based on $w_0$ seem to 
give very good predictions of the localization of eigenfunctions,
we would like to have a more systematic way to realize the decomposition. 

The functional described below is intended to give such a good partition
of $\Omega$ into subdomains, and it turns out to be an interesting 
variant of functionals introduced by Alt and Caffarelli \cite{AC}, and studied 
by many others. In the present paper, we shall mainly study the theoretical
properties of our functional (existence and regularity of the minimizers and
regularity of the corresponding free boundaries).

Let us now describe the main free boundary problem that we shall
study here; the relation with $w_{0}$ and our original localization problem
will be explained more in Section \ref{motivation}.

We are given a domain $\Omega \i \R^n$, and (for instance) an operator 
${\cal L} = -\Delta + {\cal V}$; assumptions on the potential ${\cal V}$, or other 
functions associated to a similar problem, will come later.
We are also given an integer $N \geq 1$,  and we want to cut $\Omega$ 
into subregions $W_{i}$, $1 \leq i \leq N$, 
according to the geometry associated to $\cal L$.
For this, we want to define and minimize a functional $J$. 
But let us first define the set of admissible pairs $(\u,W)$ for which $J(\u,W)$ is defined.
\begin{defi}\label{d1.1} 
Given the open set $\Omega \i \R^n$ and the integer $N \geq 1$, we
denote by $\F = \F(\Omega)$ the set of admissible pairs 
$(\u,\W)$, where $\W = (W_{i})_{1 \leq i \leq N}$ is a $N$-uple of pairwise
disjoint Borel-measurable sets $W_{i} \i \Omega$, and 
$\u = (u_{i})_{1 \leq i \leq N}$ is a $N$-uple of real-valued 
functions $u_{i}$, such that
\begin{equation}\label{e1.1}
u_{i} \in W^{1,2}(\R^n)
\end{equation}
and
\begin{equation}\label{e1.2}
u_{i}(x) = 0 \hbox{ for almost every } x\in \R^n \sm W_{i}.
\end{equation}
\end{defi}

\ms
Here $W^{1,p}(\R^n)$, $1 \leq p < +\infty$, denotes the set of functions 
$f\in L^p_{loc}(\R^n)$ whose derivative, computed in the distribution sense, 
lies in $L^p(\R^n)$.
We chose a definition for which we do not need to assume any regularity
for the sets $W_{i}$, nor to give a precise meaning to the Sobolev
space $W^{1,2}_{0}(W_{i})$, but under mild assumptions, the functions
$u_{i}$ associated to minimizers will be continuous, and we will be able to take
$W_{i} = \big\{ x\in \Omega \, ; \, u_{i}(x) > 0 \big\}$. 
For the moment we took real-valued functions, 
but what we will say will systematically apply when some of the functions
$u_{i}$ are required to be nonnegative. In addition, some of our results will
only work under this constraint (that $u_{i} \geq 0$).

Our functional $J$ will have three main terms. The first one is the energy
\begin{equation}\label{e1.3}
E(\u) =  \sum_{i=1}^N \int |\nabla u_{i}(x)|^2 dx,
\end{equation}
where we denoted by $\nabla u_{i}$ the distributional gradient of $u_{i}$, 
which is an $L^2$ function. It does not matter whether we integrate on $\R^n$, 
$\Omega$, or $W_{i}$, because one can check that if $u_{i} \in W^{1,2}(\R^n)$
vanishes almost everywhere on $\R^n \sm W_{i}$, then $\nabla u_{i} = 0$
almost everywhere on $\R^n \sm W_{i}$.
Indeed, by the Rademacher-Calder\'on theorem, $u_{i}$ is differentiable at almost every
point of $\R^n$, with a differential that coincides almost everywhere with the distribution
$Du_{i}$. It is then easy to check that $Du_{i}(x) = 0$ when $x$ is a point of Lebesgue
differentiability of $\R^n \sm W_{i}$, hence, for almost every $\R^n \sm W_{i}$.

The second term of our functional will be
\begin{equation}\label{e1.4}
M(\u) = \sum_{i=1}^N \int [u_{i}(x)^2 f_{i}(x) - u_{i}(x) g_{i}(x)] dx,
\end{equation}
where the $f_{i}$ and the $g_{i}$, $1 \leq i \leq N$,
are given functions on $\Omega$
that we may choose, depending on our problem. We are slightly abusing notation
here, because $M(\u)$ also depends on the $W_{i}$ through the choice of
functions we integrate against $u_{i}^2$ or $u_{i}$, at least if the $f_{i}$ and
$g_{i}$ depend on $i$. The convergence of the integrals in \eqref{e1.4}
will follow from our assumptions on the $f_{i}$ and the $g_i$.
For our initial localization problem, all the $f_{i}$ will be equal to the potential ${\cal V}$, 
and $g_{i}= 2$ for all $i$. 

We put a negative sign before $u_{i}(x) g_{i}(x)$
so that the $f_{i}$ and $g_{i}$ will be nonnegative
in the most interesting cases, but we won't always need 
both assumptions. When this happens, the functions $u_{i}$
will also naturally be nonnegative, which will allow some specific arguments.

The last term of the functional is a function $F(\W)$ that depends on the sets
$W_i$. We do not need to be too specific for the moment, but our main example 
is to take a continuous function of the Lebesgue measures $|W_i|$ of the $W_i$.
Thus our functional is 
\begin{eqnarray}\label{e1.5}
J(\u,\W) &=& E(\u) + M(\u) + F(\W)
\nonumber\\
&=& \sum_{i=1}^N \int |\nabla u_{i}(x)|^2 dx 
+ \sum_{i=1}^N \int [u_{i}(x)^2 f_{i}(x) - u_{i}(x) g_{i}(x)] dx 
+ F(W_{1}, \ldots W_{N}). 
\end{eqnarray}
We add the term $F(\W)$ to the functional to avoid some trivial solutions
(typically, where $W_1 =\Omega$ and all the other $W_i$
are empty). For some of our main results we shall put some
monotonicity assumptions on $F$, 
but for the moment let us merely say that a typical choice would be
\begin{equation}\label{e1.6}
F(\W) = \sum_{i=1}^N a|W_i| + b|W_i|^{1+\alpha}
\end{equation}
for some $\alpha > 0$ and suitable positive constants $a$ and $b$, where the second term 
tends to make us choose roughly equal volumes when we minimize the functional 
(see Section~\ref{number-p}) and the linear one ensures a certain non-degeneracy 
(see, e.g., a discussion after \eqref{e2.12}).  

The reader may also think about the case
when we choose nonnegative bounded functions $q_i$, $1 \leq i \leq N$,
and we set
\begin{equation}\label{e1.7}
F(\W) = \sum_{i=1}^N \int_{W_i} q_i
\end{equation}
which is the typical choice that people use for the 
functionals of \cite{AC} and \cite{ACF}.

\ms
We shall first prove that if the Lebesgue measure of $\Omega$ is finite,
the $f_{i}$ are bounded and nonnegative, and the $g_{i}$ lie in $L^2$,
there exist minimizers for $J$ in the class $\cal F$. See Section \ref{existence}.

But the main goal of the paper is to consider a minimizer $(\u,\W)$ 
for the functional and establish, under slightly stronger assumptions, 
some regularity results for $u$ (for instance, Lipschitz bounds), 
and even the sets $W_{i}$ (all the way up to $C^1$-regularity 
almost everywhere when we are very lucky). 
Maybe we should say right away that although many of our results also
hold when $n=1$ (but may not be interesting), we shall concentrate our attention
on $n \geq 2$ and can't even promise that all our statements will make sense
when $n=1$. 

Variants of our functional $J$ have been studied extensively, 
starting with the very important papers \cite{AC} and \cite{ACF}, 
where the authors wanted to study the regularity of the
boundary of the positive set $\{ u>0 \}$ of some PDE solution; 
see also  
\cite{CJK} for further results on these free boundary problems.
Similar problems were also raised, for instance to study optimal shapes;
see \cite{HP}, 
\cite{BV}.

There is an interesting difference between the present context and the situation of 
the aforementioned papers, which is that we want to allow a number $N \geq 3$ 
of domains. When $N = 2$ and the $u_i$ are nonnegative, 
as in most of the aforementioned papers, 
we can regroup $u_{1}$ and $u_{2}$ into one single real-valued 
function $u_{1}-u_{2}$, and this is very convenient to produce competitors for $u$, 
for instance by taking the harmonic extension of the restriction of $u_{1}-u_{2}$ 
to a sphere. This trick will not be available here, and will force us to be slightly 
more imaginative in the construction of our competitors. 
See our description of harmonic competitors in Section \ref{favorites}
for a more detailed discussion.
Nonetheless, we shall still be able to use a monotonicity formula, 
which was first formulated in \cite{ACF}, and which is a major tool
in the aforementioned papers. This will allow us to prove that $u$
is Lipschitz, for instance, and try to follow the same route as in these
papers to prove some regularity results for the $W_i$.

Let us describe the plan of this paper and the results that it contains. 
In Section \ref{motivation} we try to explain why we introduced the functional $J$ above, 
what choices of the parameters seem more interesting to us, and what the regularity
results may mean in the context of localization of eigenfunctions.

Recall that the existence of minimizers is proved in 
Section \ref{existence}, in a quite general setting; the main point of the proof is 
the fact that, by Poincar\'e's inequalities, the energy term $E(\u)$
controls the second term $M(\u)$. When $F(\W)$ is a continuous
function of the volumes $|W_i|$, as in Theorem \ref{t3.1},
we extract from a minimizing sequence a subsequence for which these 
volumes converge, and we get a minimizer rather easily 
(without having to make the sets $W_i$ themselves converge); the proof also
works when $F(\W)$ is a continuous and nondecreasing function of the $W_i$, 
(hence, for instance, when $F$ is as in \eqref{e1.7} with nonnegative 
integrable functions $q_i$). See Corollary \ref{t3.5}. 

After a short Section \ref{poincare} where we derive simple consequences
of the Poincar\'e inequalities, we check in Section \ref{bounded} that
if $|\Omega| < +\infty$, the $f_i$ are nonnegative, and the $f_i$ and
the $g_i$ lie in $L^p(\Omega)$ for some $p>n/2$, and 
$(\u,\W)$ is a minimizer for $J$ in the class $\F$, then the $u_i$
are bounded. The proof uses simple bounds on the fundamental 
solution of $-\Delta$. We put his intermediate result here because it makes
it easier to estimate various error terms later.

Many of our subsequent estimates will be obtained by comparing $(\u,\W)$ 
with two competitors that we introduce in Section \ref{favorites}. The first one
is simply obtained by multiplying some of the $u_i$ by a cut-off function that
vanishes in a ball; the interest is that we may save on the energy or volume
terms. The second one is our substitute for the harmonic extension. We want to
define a smooth competitor with the same values of $\u$ on a sphere, 
and since we cannot extend harmonically each component $u_i$, we cut them off 
as above, except one for which we can use a harmonic extension because we
just created some space. 

The competitors of Section \ref{favorites} are used in Section~\ref{holder}
to prove that the $u_i$ are locally H\"older-continuous on $\Omega$
if in addition to the assumptions of Section~\ref{bounded},
$F$ is a H\"older-continuous function of the $W_i$, with an exponent
$\beta > {n-2 \over n}$.
Here and below, the distance between $\W$ and $\W'$ is defined as the sum of 
the measures of the symmetric differences $W_i \Delta W'_i$; see \eqref{e7.1}.
The proof is arranged like Bonnet's monotonicity argument for the Mumford-Shah
functional, but the estimates are not sharp and we only get a very small H\"older exponent.

We show in Section \ref{boundary} that if $\Omega$ is smooth 
(but in fact much less is needed), 
$\u$ is also H\"older-continuous all the way to the boundary $\d \Omega$.

Then we turn to the monotonicity formula. From now on, let us assume that the
$f_i$ and the $g_i$ are bounded, and that the $f_i$ are nonnegative.
This formula concerns products of two functions $\Phi_\varphi$ that are
defined as follows. 
Choose an origin $x_0$, and denote by $I$ the set of pairs
$\varphi = (i,\varepsilon)$, where $i \in [1,N]$ and 
$\varepsilon \in \{ -1, 1 \}$. To each $\varphi \in I$ we associate the
function $v_\varphi =  (\varepsilon u_i)_+ = \max(0,\varepsilon u_i)$
(we shall often call $v_\varphi$ a phase of $\u$)
and the function $\Phi_\varphi$ defined by 
\begin{equation}
\label{e1.8}
\Phi_\varphi(r) = {1 \over r^2} 
\int_{B(x_0,r)} {|\nabla v_\varphi|^2 \over |x-x_0|^{n-2}} \, dx
\end{equation}
for $r > 0$. For $\varphi \neq \psi \in I$, set 
$\Phi_{\varphi,\psi}(r) = \Phi_\varphi(r)\Phi_\psi(r)$. 
This is just a minor variant of the functional introduced by Alt, Caffarelli, and 
Friedman in \cite{ACF}, and we show that $\Phi_{\varphi,\psi}$ is nearly 
nondecreasing near the origin when $(\u,\W)$ minimizes $J$ in $\F$, 
at least if we assume that  $|\Omega| <+\infty$, 
and $F$ is H\"older-continuous as in \eqref{e7.1}.
See Theorem \ref{t9.1} for a precise statement.
The proof consists in checking that the functions $v_\varphi$ and $v_\psi$ satisfy
the assumptions of a near monotonicity formula that was 
established in \cite{CJK}, 
and for this the H\"older estimates of Section \ref{holder} are useful.
This result is also valid when $x_0$ lies on $\d \Omega$ or close to $\d \Omega$, 
if $\Omega$ is smooth (as in \eqref{e8.1}), and then we use the results of  
Section~\ref{boundary} to check the assumptions of \cite{CJK}. 

Once we have a control on the functionals $\Phi$, we can use a bootstrap argument
to show that $\u$ is locally Lipschitz. For local Lipschitz bounds in $\Omega$,
we just need to assume that (in addition to the previous assumptions) 
$F$ is a Lipschitz function of the $W_i$ (as in \eqref{e10.2}); 
See Theorem \ref{t10.1}.

If in addition $\Omega$ is bounded and has a $C^{1+\alpha	}$ boundary
for some $\alpha > 0$, we show in Section~\ref{global} that $\u$ is also
Lipschitz near $\d \Omega$.

Notice that in general, we do not expect the sets $W_i$,
$1 \leq i \leq N$, to cover, or almost cover $\Omega$. In fact,
we will often make sure that $F(\W)$ is a sufficiently large, or increasing
function of each $W_i$, so that if some part of $W_i$ is not really
useful to make $E(\u)+M(\u)$ small, we may as well remove it and
save more on the $F(\W)$ term. 
In Section \ref{positive}, we take the opposite approach and 
find conditions on $F$ and the $g_i$ that imply that the
$W_i$ associated to a minimizer $(\u,\W)$
almost cover $\Omega$ and that $\u \neq 0$ almost everywhere.
Typically, this means some decay for $F$ (so that adding
a missing piece to the $W_i$ does not cost anything) and
the positivity of $g_i$.  
See Propositions \ref{t12.3} and \ref{t12.4} in particular.

In Section \ref{number-p} we show that if 
the $f_i$ are bounded and nonnegative, the $g_i$ lie in $L^2$
and at least one of them is nonzero, and
$F$ is given by \eqref{e1.6} with $\alpha > {2 \over n}$, $b$ is large enough,
and $a \geq 0$ small enough, then the mininizers for $J$ are such that
$\u \neq 0$ and $|W_i| < |\Omega|/10$ for $1 \leq i \leq N$.

In Section \ref{number} we show that under mild conditions on
$F$ (where we say that volume is not too cheap), the number
of indices $i$ for which $|W_i| > 0$ is bounded, even if we allowed much more
components by taking $N$ very large.

\ms
We then return to a general scheme in the study of free boundary problems.
In Section~\ref{good}, we consider for instance $u_{1,+}$, the positive part of $u_1$,
and we want to show that under suitable non degeneracy conditions, 
it behaves roughly like the distance to the 
free boundary $\d_1 = \d\big\{x \, ; \,  u_1(x) > 0 \big\}$.
The nondegeneracy condition that we will use is that, as far as the volume term $F(\W)$
is concerned, we can always sell small parts of $W_1$ and get a proportional profit. 
That is, if $A \i W_1$ has a small enough measure, we can remove $A$ from
$W_1$, and maybe distribute some part of it to the $W_i$, $i \neq 1$,
in such a way as to make $F(\W)$ smaller by at least $\lambda |A|$
for some fixed $\lambda > 0$.
See \eqref{e13.1} for the precise condition.
For instance, if $F$ is given by \eqref{e1.7}, we get this condition as soon
as $q_1(x) \geq \lambda+\min\big\{ 0, q_2(x), \cdots, q_N(x) \big\}$
almost-everywhere on $\Omega$ (see \eqref{e13.4}); when we have
\eqref{e1.6}, we just need to take $a \geq \lambda$ and $b \geq 0$.

If we add this nondegeneracy condition to the other assumptions above,
we get the desired rough behavior of $u_{1,+}$; see
\eqref{e13.6} and \eqref{e13.7} in Theorem \ref{t13.1},
\eqref{e13.10} in Theorem \ref{t13.2}, and 
\eqref{e13.40} in Theorem \ref{t13.3}.
We also get that the complementary region $\big\{ u_1 \leq 0 \big\}$
is not too thin near a point of $\d_1$; see Theorem \ref{t13.4}.

In Section \ref{recti} we show that if $W_1$ satisfies our nondegeneracy condition
(we'll also say that $\Omega_1 = \big\{ u_1 > 0 \big\}$ is a good region),
then $\d_1 = \d \Omega_1$ is locally Ahlfors-regular and (uniformly) rectifiable.
The argument goes nearly as in \cite{AC}, and is based on the fact that
when $(\u,\W)$ is a minimizer, $\Delta u_1 +C$ is a positive measure
(that we can also estimate). 
We use the non degeneracy results of Section \ref{good}
to show that the restriction of this measure to 
$\d_1$ is locally Ahlfors-regular (Proposition \ref{t17.1}),  
compare it to the total variation measure $D \1_{\Omega_1}$,
show that $\Omega_1$ is a set of finite perimeter, and get the rectifiability
of $\d_1$ almost for free (as the reduced boundary of $\Omega_1$).
We also deduce from this a representation formula for 
$\Delta u_{1,+}$ in terms of the density of that measure with respect 
to the restriction of $\H^d$ to $\d_1$; see Proposition \ref{t17.2}.
Finally we check that $\d \Omega_1$ is locally uniformly rectifiable,
with ``Condition B'' and big pieces of Lipschitz graphs, because this 
follows rather easily from the results of Section \ref{good}. 
See Proposition~\ref{t17.3}.

The Lipschitz bounds in Sections \ref{lip} and \ref{global} allow us to define
the blow-up limits of $(\u,\W)$ at a point, and the non degeneracy results
of Section \ref{good} will often be the best way to make sure that these limits are nontrivial.
Before we really get to that, we need a theorem about limits.
We take care of this in Section \ref{limits}. We introduce a notion of
local minimizer for a functional $J$ in an open set $\O$,
and prove that under reasonable assumptions, if we have
pairs $(\u_k,\W_k)$ of local minimizers in $\cal O$ of functionals $J_k$
(associated to domains $\Omega_k$, and defined as $J$ above), 
and if the $\u_k$ converge to a limit $\u$,
then we can find $\W$ such that $(\u,\W)$ is a 
local minimizer in $\O$ of the natural limit functional $J$.
See Theorem \ref{t14.1} and Corollary \ref{t14.5}.

We use this in Section \ref{blowup} to prove that if $(\u,\W)$
is a minimizer for $J$ and $\u_\infty$ is a blow-up limit of $\u$
at some point $x_0$ such that $\u(x_0)=0$, then (under some
reasonable smoothness assumptions, in particular on the volume term $F$
and on $\Omega$ if $x_0 \in \Omega$) 
we can find $\W_\infty$ such that $(\u_\infty,\W_\infty)$ 
is a local minimizer (in an infinite domain, which is $\R^n$
when $x_0$ is an interior point of $\Omega$, and otherwise
is a blow-up limit of $\Omega$) 
of a simpler functional $J_\infty$.
The functional $J_\infty$ is simpler because it does not have 
an $M$-term, and its $F$ term is like \eqref{e1.7}, with
constant functions $q_i$. See Theorem~\ref{t15.1} or Corollary \ref{t15.3}.

In Sections \ref{2phases} and \ref{1phase} we find various situations where
the blow-up limits of $\u$ at a point are given by a simple formula with affine 
functions. The main result of Section \ref{2phases} is Corollary~\ref{t16.4},
where we get such an expression as soon as we can find two phases
$\varphi \neq \psi \in I$ (as above)
such that the Alt-Caffarelli-Friedman functional $\Phi_{\varphi,\psi}(r)$
that we use in Section \ref{mono} has a nonzero limit at $r=0$. 
This is the case, for instance, if the nondegeneracy condition of Section \ref{good}
is satisfied for (the indices $i$ that come from) $\varphi$ and $\psi$,
and the origin lies in $\d \big\{ v_\varphi > 0 \big\} \cap \d \big\{ v_\psi > 0 \big\}$.
The proof is not surprising (all the ingredients were prepared in the previous section)
and is based on a careful study of the case of equality in the monotonicity theorem
of \cite{ACF}; see Theorem \ref{t16.3}.

The main result of  Section \ref{1phase} is Corollary \ref{t18.3}, which concerns the case 
when we cannot find $\varphi$ and $\psi$ as above, but the origin lies in 
$\d \big\{ v_\varphi > 0 \big\}$ for some $\varphi \in I$ that satisfies the 
nondegeneracy condition of Section \ref{good}.
Then we use a result of \cite{CJK2} to show that, in dimensions $n \leq 3$,
some blow-up limits of $\u$ are composed of just one phase $v$, which is the
positive part of an affine function.

We also show (in any dimension) that the conclusion of  Corollary \ref{t16.4}
or Corollary~\ref{t18.3} holds when the origin is a point of 
$\d_\varphi = \d \big\{ v_\varphi > 0 \big\}$
where $\d_\varphi$ has a tangent (and the nondegeneracy condition holds). 
By Proposition \ref{t17.3}, this happens almost everywhere on $\d_\varphi$.
See Proposition \ref{t18.5} and Remark \ref{t18.6}.

In Section \ref{good2} we summarize the situation: when all the regions satisfy
the nondegeneracy condition of Section \ref{good}, for instance, a given point
$x_0 \in \Omega$ can only lie in at most two sets $\d_\varphi$ (at most one if
$x_0 \in \d \Omega$, and so there is a small neighborhood of $x_0$ where
$(\u,\W)$ is a minimizer of a variant of the Alt-Caffarelli-Friedman functional
with only one or two phases. See Lemmas \ref{t19.1}, \ref{t19.4}, and \ref{t19.5}.
While we were preparing this manuscript, we learned about a recent result
of Bucur and Velichkov \cite{BV}, with a version of Lemma \ref{t19.1},
obtained by a completely different method (in particular a monotonicity formula 
with three phases). We discuss this a little more in Section \ref{good2}.

At the end of  Section \ref{good2}, we are left with a series of good sufficient
conditions for $\u$ to have blow-up limits composed of affine functions. These
conditions imply the asymptotic flatness of the free boundaries $\d_\varphi$,
which holds everywhere inside $\Omega$ under nondegeneracy conditions 
and if $n \leq 3$, and only almost everywhere when $n > 3$; 
see Proposition \ref{t16.5} and Lemma \ref{t18.4}.
We do not continue the regularity study of the $\d_\varphi$, that would
normally lead to local $C^{1,\alpha}$-regularity, probably under additional
nondegeneracy assumptions, but Lemma~\ref{t19.1} says that this now is a problem
about functionals with at most $2$ phases. 

Section \ref{vari} can be seen  as an appendix. We complete some of the proofs of 
Sections \ref{2phases} and \ref{1phase} with a standard computation of first variation, 
which we only do for the affine blow-up limits of our problem, 
and then use the representation formula of 
Proposition \ref{t17.2} to show that, at points where $\u$ has a nice blow-up limit, 
there are Euler-Lagrange relations between the values of the normal derivative 
of $\u$ on both sides of the free boundary, and the multipliers that comes 
from the derivatives of $F$ in the corresponding directions. These formulas are 
deduced from the corresponding formulas for the blow-up limits, which we derived
with the first variation argument. See Proposition \ref{t20.1}.

\ms\noindent
{\bf Acknowledgements.}
The authors wish to thank D. Bucur, A. Henrot, M. Pierre, T. Toro, B. Velichkov, 
for helpful discussions during the preparation of this paper.

The first author acknowledges the generous support of the Institut Universitaire de France, 
and of the ANR (programme blanc GEOMETRYA, ANR-12-BS01-0014). 
The second author was partially supported by the CNRS grant PEPS-PTI Thorgal.
The third author was partially supported by the NSF grant DMS 1069225,
the Bergman Trust, and wishes to thank Universit\'e de Paris Sud (Orsay) for 
a monthlong visit during  June-July 2012.
The fourth author was partially supported by the NSF grants DMS 1220089 (CAREER), DMS 1344235 (INSPIRE), DMR 0212302 (UMN MRSEC Seed grant), and the the Alfred P. Sloan Fellowship. The fourth author would also like to acknowledge the support of the CNRS and hospitality of Universit\'e Paris-Sud, Orsay, France, in Oct-Dec of 2012 and April of 2013,  when parts of this work were completed.

\ms\noindent
{\bf Frequently used notation.}
\pari
$B(x,r)$ is the open ball centered at $x$, with radius $r$
\pari 
$|A|$ denotes the Lebesgue measure of a set $A$
\pari
$C$ and $c$ are positive constants that change from line to line; usually $C$
is large and $c$ is small.
\pari
$W^{1,2}(\R^n)$ is the Sobolev space (one derivative in $L^2$)
\pari
$J(\u,\W) = E(\u) + M(\u) + F(\W)$ is our main functional; see \eqref{e1.3}, \eqref{e1.4}, \eqref{e1.5}
\pari
$\F = \F(\Omega)$ is our class of acceptable pairs $(\u,\W)$;
see Definition \ref{d1.1}
\pari
$\F(\O,\Omega)$ is its variant for local minimizers in a domain $\O$;
see the start of Section \ref{limits}
\pari
$S_r = \d B(0,r)$
\pari
${\cal W}(\Omega)$ is the set of acceptable $N$-uples $\W$;
see the beginning of Section \ref{positive}
\pari
$A \Delta B$ is the symmetric difference $(A \sm B) \cup (B \sm A)$
\pari
$\Omega_i = \big\{ x\in \R^n \, ; \, u_i(x) > 0 \big\}$ is usually strictly smaller than $\Omega$  and may be smaller than $W_i$
\pari
$I = [1,N] \times \{ -1, 1 \}$ denotes our set of phases, and we set
\pari
$v_\varphi = [\varepsilon u_i]_+ = \max(0, \varepsilon u_i)$ and 
$\Omega_\varphi = \big\{ x\in \R^n \, ; \, v_\varphi(x) > 0 \big\}$
for $\varphi = (i,\varepsilon) \in I$
\pari
$\Phi_j$ and $\Phi$ define our Alt-Caffarelli-Friedman functional; see \eqref{e8.4}
\pari
$\Phi_\varphi^0$ and $\Phi_{\varphi_1,\varphi_2}^0$ are their new names
in Sections \ref{2phases} and later; see \eqref{e16.11} and \eqref{e16.17}
\pari 
$\Phi_{\varphi,k}$ and $\Phi_{\varphi_1,\varphi_2,k}$ are the same ones,
along a blow-up sequence; see \eqref{e16.13} and \eqref{e16.18}.

\section{Motivation for our main functional}  \label{motivation} 

In this section we try to explain the connection between the functional defined
in (\ref{e1.5}) and our initial localization problem.
Let us start with a domain $\Omega \i \R^n$, which we may assume to be bounded, 
and the operator ${\cal L} = -\Delta+{\cal V}$, 
with ${\cal V}$ bounded and nonnegative. For simplicity of exposition, let us also assume that $\Omega$ is smooth (only for the duration of this section: 
the relevant regularity conditions on $\Omega$, when necessary, are carefully tracked in the remainder of the paper). 
Let $w_{0}$ be the solution of ${\cal L}w_{0} = 1$ on $\Omega$, 
with the Dirichlet condition $w_{0} = 0$ on $\R^n \sm \Omega$. 
It is shown in \cite{FM} that the eigenfunctions of $\cal L$
can be estimated pointwise by the single function $w_{0}$, so we can expect
$w_0$ to give useful information on the localization of the eigenfunctions.

Given an integer $N \geq 2$, we want to  to split $\Omega$ into $N$ 
disjoint subregions $W_{j}$,
preferably with a nice boundary $\Gamma = \cup_{j} \d W_{j}$, so that the 
values of $w_{0}$ on $\Gamma$ give the best control (on $w_{0}$ and the eigenfunctions),
and for this we want the restriction of $w_{0}$ to $\Gamma$ to be as small as possible,
in a sense that will be discussed soon. Numerical experiments suggest that when we do this,
the eigenfunctions tend to be localized inside the regions $W_{j}$, even though the
precise mechanism why this happens is not clear to us. 
See \cite{FM}. 

One interesting way to  require this smallness is be to minimize something like
$\int_{\Gamma} w_{0}$, plus the same volume term $F(\W)$ as in the present paper.
We shall not do this here, and instead we shall encode smallness in a slightly more 
subtle and perhaps more natural way: via the energy corresponding to the governing operator ${\cal L}$. 
Let us associate, to any function $w\in W^{1,2}(\R^n)$ such that $w(x) = 0$ 
almost everywhere on $\R^n \sm \Omega$,  the energy
\begin{equation}\label{e2.1}
E_{0}(w) = \int_{\Omega} |\nabla w|^2+ {\cal V}\, w^2.
\end{equation}
Given the closed set $\Gamma \i \Omega$, we want to define
\begin{eqnarray}\label{e2.2}
J_{0}(\Gamma) = 
\inf\big\{E_{0}(w) \, ; \, w\in W^{1,2}(\R^n),
w(x) = 0  \text{ almost everywhere on } \R^n \sm \Omega, &\,&
\nonumber
\\
\text{ and }
w = w_{0} \text { on } \Gamma \big\}, &\,&
\end{eqnarray}
and for the moment let us assume that $\Gamma$ is composed of smooth 
hypersurfaces. Then it is possible to define the traces on $\Gamma$
of the functions $w$ and $w_{0}$ (just because they lie in $W^{1,2}(\R^n)$), 
and thus give a sense to the phrase ``$w=w_{0}$ on $\Gamma$''.

Of course $J_{0}(\emptyset) = 0$ (take $w=0$ in (\ref{e2.2})), and so $J_{0}(\Gamma)$
can be seen as the minimal amount of  energy that we need to pay, 
when we add the constraint that $w=w_{0}$ on $\Gamma$ to the basic condition that 
$w(x) = 0$ a.e. on  $\R^n \sm \Omega$\Bk. 

It does not make sense to minimize $J_{0}(\Gamma)$ without any constraint 
on $\Gamma$, because the infimum would be when $\Gamma = \emptyset$, 
so we add a term $F(\W)$ to our functional, that depends on $\Gamma$
(typically through the volumes of the components 
of $\Omega \sm \Gamma$), just to compensate and avoid this case.
Then we try to minimize $J_{0}(\Gamma) +F(\W)$. Using a reasonably smooth 
term $F$ that depends on the volumes seems to be the mildest way to avoid degeneracy; 
for instance requiring that the connected components 
of $\Omega \sm \Gamma$ have prescribed volumes looks a little too violent,
even though it gives an interesting (but harder) mathematical problem.
The definition of $F(\W)$ will be discussed later.  Assuming that this procedure can be justified and that a minimizer exists, we denote the latter by $w$.

Anyway, it is expected that picking sets $\Gamma$ on which $w_{0}$ is small helps
making $J_{0}(\Gamma)$ small, and our definition of smallness of $w_{0}$
on $\Gamma$ will be through the smallness of $J_{0}(\Gamma)$.
An apparently simpler choice of $E_0(w)$ would have been $\int_{\Omega} |w|^2$
(after all, we started the discussion with pointwise estimates on the functions themselves),
but the constraint that $w = w_{0}$ on $\Gamma$ does not really make sense
with the weaker $L^2$ norm, or in other words the infimum in (\ref{e2.2}) (say, on smooth
functions $w$) would be zero. Working in the class $W^{1,2}(\R^n)$ and with the energy
in (\ref{e2.1}) then seems to be the simplest reasonable choice, with an obvious connection
with our operator $-\Delta+ {\cal V}$.

\ms
Next we continue with ${\cal L} = -\Delta+ {\cal V}$, and show how to restate the minimization 
problem for $J_{0}$ more simply, in terms of the difference $v=w_{0}-w$.  Roughly speaking, the idea is that viewing $w_0$ as a solution to ${\cal L}w_{0} = 1$ on $\Omega$, 
with the Dirichlet condition $w_{0} = 0$ on $\R^n \sm \Omega$ and, trivially, $w_0=w_0$ on $\Gamma$, and viewing $w$ as a solution to ${\cal L}w = 0$ on $\Omega$, 
with $w = 0$ on $\R^n \sm \Omega$ and $w=w_0$ on $\Gamma$, we would have  ${\cal L}v= 1$ on $\Omega$, 
with the Dirichlet condition $v = 0$ on $\R^n \sm \Omega$ and $v=0$ on $\Gamma$. The latter problem is more natural to formalize and address in our context, but let us first convert this reasoning to the language of minimization.

By (\ref{e2.1}),
\begin{multline}\label{e2.3}
E_{0}(w) = E_{0}(w_{0}-v) = \int_{\Omega} |\nabla (w_{0}-v)|^2 +V(w_0-v)^2
\\[4pt]
= E_{0}(w_{0}) + E_{0}(v) - 2 \int_{\Omega} \langle\nabla v,\nabla w_{0}\rangle- 2 \int_{\Omega} {\cal V} \,vw_0,
\end{multline}
where for us $E_{0}(w_{0})$ is just a constant. 
Let us integrate by parts brutally; some justifications will come soon.
This yields 
\begin{equation}\label{e2.4}
-2\int_{\Omega} \langle\nabla v,\nabla w_{0}\rangle 
= 2 \int_{\Omega} v \Delta w_{0} - 2\int_{\d \Omega} v {\d w_{0} \over \d n}
=  2 \int_{\Omega}{\cal V}\, v  w_{0} - 2 \int_{\Omega} v 
\end{equation} 
because the boundary term $\int_{\d \Omega} v {\d w_{0} \over \d n}$
vanishes since $v = 0$ on $\R^n \sm \Omega$, and where we use the fact that
$ (-\Delta+{\cal V}) w_{0} = {\cal L}w_{0} = 1$ on $\Omega$.
Then (\ref{e2.3}) yields
\begin{equation}\label{e2.5}
E_{0}(w) = E_{0}(w_{0}) +  E_{0}(v) - 2\int_{\Omega} v.
\end{equation}
Concerning the integration by parts in \eqref{e2.4}, one way not to do
it is to use the variational definition of the function $w_{0}$, i.e., the fact
that it is the function $f\in W^{1,2}(\R^n)$ such that $f(x) = 0$
almost everywhere on $\R^n \sm \Omega$, and which minimizes
$\int_{\Omega} |\nabla f|^2  +{\cal V}f^2 - 2 f$ under these constraints.
Since for all $\lambda \in \R$, the function $w_{0}+\lambda v$ also
satisfies these constraints, we get that
\begin{equation}\label{e2.6}
\int_{\Omega} |\nabla w_{0}|^2  +{\cal V} w_0^2- 2  w_{0}
\leq \int_{\Omega} |\nabla (w_{0}+\lambda v)|^2 +{\cal V} (w_0+\lambda v)^2 - 2   (w_{0}+\lambda v),
\end{equation}
which implies the result of \eqref{e2.4} (just compute the derivative at $\lambda =0$).

So $J_{0}(\Gamma)$ is the same, modulo adding the constant $E_{0}(w_{0})$,
as
\begin{eqnarray}\label{e2.7}
J_{1}(\Gamma) &= &
\inf\Big\{ \int_{\Omega} |\nabla v|^2 +{\cal V} v^2 - 2 v  \, ; \, v\in W^{1,2}(\R^n),
\nonumber
\\
&\,&\hskip 1cm
v(x) = 0 \text{ almost everywhere on } \R^n \sm \Omega, \text{ and }
v = 0 \text { on } \Gamma \Big\}.
\end{eqnarray}
We can further simplify this and define $J_1(\Gamma)$ without our smoothness assumption on
$\Gamma$ (so far implicit in the understanding of what $v=0$ on $\Gamma$ means). Denote by $W_{i}$, $i\in I$, the connected components of 
$\Omega \sm \Gamma$, set $u_{i} = \1_{W_{i}} v$. 
If $\Omega$ and $\Gamma$ are smooth and $I$ is finite, then the functions $u_i$ have traces from both sides of $\Gamma$, and,
by a simple welding lemma (and the definition of the traces), our constraint that 
$v = 0$ on $\Gamma$ implies that $u_{i} \in W^{1,2}(\R^n)$,
i.e., that the distribution derivative of $u_{i}$ does not catch an extra piece
along $\Gamma$. See for instance Chapters 10-13 in \cite{D}, or rather
the discussion near \eqref{e4.13}-\eqref{e4.18} where we do similar manipulations
near spheres. 

Conversely, if the $u_{i} \in W^{1,2}(\R^n)$ are such that $u_{i} = 0$
almost everywhere on $\R^n \sm W_{i}$, it is easy to see that 
$v=\sum_{i} u_{i}$ satisfies the constraints in \eqref{e2.7}
(in order to prove that $v = 0$ on $\Gamma$, just compute the trace of $u_{i}$
from the side that does not lie in $W_{i}$). In other words,
\begin{equation}\label{e2.8}
J_{1}(\Gamma) =
\inf\Big\{ \sum_{i}\int_{W_{i}} |\nabla u_{i}|^2 +{\cal V} u_i^2 - 2 u_{i}  \, ; \, u_{i}\in W^{1,2}(\R^n),
u_{i}(x) = 0 \text{ a.e. on } \R^n \sm W_{i} \}.
\end{equation}
This is exactly the term $E(\u)+M(\u)$ of our functional (see \eqref{e1.3}
and \eqref{e1.4}), with $f_{i} = {\cal V}$ and $g_{i} = 2$ for $i\in I$. 
There is a small difference in the class of competitors. 
Here the $W_i$, $i\in I$, are the connected components
of $\Omega \sm \Gamma$, while in our description of the functional, we are
allowed to regroup some of them into a single set. In principle, we shall take
functions $F(\W)$ that are convex, in such a way that regrouping two regions
makes the functional larger, because $E(\u)+M(\u)$ does not change, but
$F(\W)$ increases. So the only case when we expect two components 
of $\Omega \sm \Gamma$ to be merged is when $I$ has more than $N$ elements, 
and we need to regroup some of them because we added the constraint that we 
do not use more than $N$ sets.
This difference should not disturb us much. If $\Omega \sm \Gamma$
has more than $N$ components, this may just be a sign that we chose 
$N$ a little too small, and anyway putting a constraint on the number of sets
seems less brutal than putting a constraint on the number of components
(which we could have done instead).

The advantage of this new definition is that we don't need to know that 
$\Gamma$ is smooth to define $J_{1}(\Gamma)$.
This is why we used this definition in the introduction, 
and intend to keep it in this paper. We still hope that, under mild
conditions on our data, the minimizers will provide smooth enough boundaries $\Gamma$
so that $J_{0}$ also makes sense, but we shall not attempt to check this and do 
the backward translation.

Notice that in the present situation we can restrict to nonnegative functions:
our special function $w_{0}$ is nonnegative, and all the interesting competitors 
for $w$ or the $u_{i}$ are nonnegative, because replacing $u_{i}$ with its
positive part always makes our functionals smaller.

\ms
Recall that we also want to add a volume term $F(\W)$, 
that for instance depends on the volumes $|W_{i}|$. 
Our main goal for this is to encourage the functional to choose nontrivial 
minimizers for which the $W_{i}$ have roughly comparable volumes, 
and in this respect the precise choice
of $F$ should mostly be a matter of experience. But we have no good reason
to make $F$ complicated, or to treat one of the component differently,
so choosing 
\begin{equation} \label{e2.12}
F(\W) = \sum_{i=1}^N f(|W_i|)
\end{equation}
looks like a good idea. We may even brutally decide to use \eqref{e1.6} for simplicity.

It is probably a good idea to choose $f$ convex, so that 
the functional will prefer to choose sets $W_i$ with roughly comparable volumes.
At this time, we also prefer to choose $f(t) \geq a t$ for some $a>0$;
this way, the non degeneracy assumptions of Sections \ref{number} and \ref{good} are
automatically satisfied for every $i$, so we get an upper bound on the number of
nonzero functions $u_i$ (even if we took $N = +\infty$) and a better description
of the sets $\Omega_i = \big\{ x\in \Omega \, ; \, u_i(x) > 0 \big\}$.
Recall that here the $u_i$ are nonnegative, so we do not care for
$\big\{ u_i < 0 \big\}$, and that $\Omega_i$ is open because $\u$
is continuous when $(\u,\W)$ is a minimizer.
For instance, we get that the $\d\Omega_i$ are uniformly rectifiable
(as in Section \ref{recti}), and that a given point $x_0$ cannot lie
on more than two sets $\d\Omega_i$ (one if $x_0 \in \d \Omega$).
In dimension $n \leq 3$, we even expect the boundary $\d\Omega_i$
to be smooth, but we only show that $\d\Omega_i$ has flat blow-up limits 
at every point.

More directly, choosing $f$ such that $f(t) \geq a t$ is a way of making sure
that if a piece of $W_i$ is not so useful to make $E(\u)+M(\u)$ smaller, 
we may as well remove it and save on the term $F(\W)$. 
In terms of localization of eigenfunctions,
this means that we introduce a black zone $\Omega \sm \bigcup_i W_i$,
where $\u = 0$, and we bet that the eigenfunctions will not live in that zone.

Another option would be to try to force the $W_i$ to cover $\Omega$,
for instance by choosing $f$ decreasing. Then, by the results of Section \ref{positive},
we expect no black zone. If $f$ is strictly convex, the regions $W_i$ that do not have
the smallest volume satisfy the degeneracy assumptions of Section \ref{good},
and we get a better description for them. But not for the regions with the smallest
volume, 
just as in the previous case we do not get a good description of the black zone.

We should insist on the fact that the regularity results that we get
for the free boundaries $\d \big\{ x\in \Omega \, ; \, \pm u_i(x) > 0 \big\}$ are
mostly a consequence of our choice of function $F$, not a corollary
of any localization property for the eigenfunctions. However the first evidence
that we have 
suggests that the eigenfunctions have a tendency to live in the regions computed
by the functional, and away from the black zone.

\ms
Let us also comment on the choice of $N$. If for instance 
$F(\W) = \sum_i f(|W_i|)$ of some $f$ such that $f(t) \geq a t$
(as above), the non degeneracy assumptions of Section \ref{good} are
satisfied, and we even get that there is a constant $\tau > 0$, that does
not depend on $N$, such that $|W_i| \geq \tau$ when $|W_i| >0$;
see  Proposition \ref{tn1}.
Thus there is a bounded number of nontrivial sets $W_i$, regardless of
our initial choice of $N$. In other words, even though we select $N$ in
advance in our theory, as soon as $N$ is large enough (depending
in particular on  $a$ above and $|\Omega|$), the minimizers
will no longer depend on $N$ and will not have too many pieces.

There are some sort of Euler-Lagrange conditions on the minimizers,
that we could obtain with a (heuristic) computation of first variation of
the domains. For instance, along a smooth boundary between $\Omega_1$
and $\Omega_2$, the normal derivatives ${\d u_1 \over \d n}$ and 
${\d u_2 \over \d n}$ would need to satisfy the relation
\begin{equation} \label{e2.13}
\Big({\d u_1 \over \d n}\Big)^2 -   \Big({\d u_2 \over \d n}\Big)^2 = 
f'(|W_1|) - f'(|W_2|).
\end{equation}
We do not do this first variation computation here, but in Section \ref{vari},
we do it for the nice blow-up limits of $u$, and get an almost-everywhere
variant of \eqref{e2.13} and similar formulas. 
See near \eqref{e20.14} (where the fact that $\lambda_i = f'(|W_i|)$ 
comes from \eqref{e15.11} and \eqref{e2.12}). 

\ms
Starting with the next section, we shall forget about $\cal L$ and
the localization of eigenfunctions and return to a more general expression
for $J$, but of course the regularity results that we shall obtain can be seen 
as an encouragement for the use of our functional in this context.

\section{Existence of minimizers }  \label{existence} 

In this section we prove the existence of minimizers for our
functional $J$ in the class $\cal F$, under the following mild assumptions.
First we shall assume that
\begin{equation}\label{e3.1}
\Omega \text{ is a borel set, with } |\Omega| < +\infty,
\end{equation}
where we use the notation $|E|$ for the Lebesgue measure of any Borel
set $E \i \R^n$. Thus we don't need $\Omega$ to be bounded, or even open.
We also assume that for $1 \leq i \leq N$,
\begin{equation}\label{e3.2}
f_{i} \in L^\infty(\Omega),  f_{i}(x) \geq 0 \text{ almost-everywhere, and }
 g_{i} \in L^2(\Omega).
\end{equation}
We could be a little more general and  only assume that $f_{i}$ and $g_{i}$ 
are in slightly larger $L^p$-spaces, and also allow $f_{i}$ to be slightly negative,
but we don't expect to use that generality, and also we shall soon have
more restrictive conditions on the $g_{i}$.
See Remark \ref{r3.4} at the end of the section. 
Concerning the volume term, let us first assume that
\begin{equation}\label{e3.3}
F(W_1, \ldots, W_N) = \wt F(|W_1|, \ldots, |W_N|),
\text{ where }
\wt F : [0,|\Omega|]^N \to \R
\text{ is continuous.}
\end{equation}
We shall see at the end of the section how to deal with
other types of functions $F$, including the more classical
definition of $F(\W)$ by \eqref{e1.7}; 
see Corollary \ref{t3.5}. 
Now define $\cal F$ and $J$ as in Definition \ref{d1.1} and (\ref {e1.3})-(\ref {e1.5}).

\begin{thm}\label{t3.1} 
Under the assumptions (\ref {e3.1})-(\ref {e3.3}),
we can find $(\u_{0},W_{0}) \in {\cal F}$ such that
\begin{equation}\label{e3.4}
J(\u_{0},\W_{0}) \leq J(\u,\W)
\text{ for all } (\u,\W) \in {\cal F}.
\end{equation}
\end{thm}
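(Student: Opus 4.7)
\medskip\noindent\textbf{Proof proposal.}
My plan is to apply the direct method of the calculus of variations: take a minimizing sequence $(\u_k,\W_k) \in \F$, extract a subsequence for which the $u_{i,k}$ converge (weakly in $W^{1,2}$, strongly in $L^2$) and for which the volumes $|W_{i,k}|$ converge, and then manufacture a limit set-system $\W_0$ by hand. The crucial feature of the present setting, which makes this work without any regularity on the $W_{i,k}$, is that $F$ only sees the volumes, so we do not need the $W_{i,k}$ themselves to converge.

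First I would establish coercivity and a uniform $W^{1,2}$ bound. If $u \in W^{1,2}(\R^n)$ vanishes a.e.\ outside $\Omega$, then Sobolev embedding on $\R^n$ together with H\"older (using $|\Omega| < +\infty$) gives $\|u\|_{L^2(\R^n)} \leq C|\Omega|^{1/n}\|\nabla u\|_{L^2}$ (for $n \geq 3$; for $n=2$ one uses $L^p$ with any $p>2$). Since $f_i \geq 0$ and $g_i \in L^2$, Cauchy--Schwarz and absorption yield
\[
M(\u) \geq -\tfrac{1}{2}E(\u) - C\sum_{i=1}^N \|g_i\|_{L^2}^2 |\Omega|^{2/n},
\]
and $F(\W) = \wt F(|W_1|,\ldots,|W_N|)$ is bounded on the compact cube $[0,|\Omega|]^N$. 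Hence $J \geq \tfrac{1}{2}E(\u) - C'$, so $\inf_\F J$ is finite and along a minimizing sequence the $u_{i,k}$ are bounded in $W^{1,2}(\R^n)$.

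Next I would extract a subsequence with $u_{i,k} \rightharpoonup u_{i,0}$ weakly in $W^{1,2}(\R^n)$ and $|W_{i,k}| \to t_i \in [0,|\Omega|]$. By Rellich on every ball plus a diagonal argument we get $u_{i,k} \to u_{i,0}$ a.e. The $L^{2^*}$-bound together with $\supp u_{i,k} \subset \Omega$ and $|\Omega| < +\infty$ provides uniform tightness and uniform absolute continuity of $u_{i,k}^2$, so Vitali upgrades the convergence to strong $L^2(\R^n)$. Then $E(\u_0) \leq \liminf_k E(\u_k)$ by weak lower semicontinuity, and $M(\u_k) \to M(\u_0)$ since $f_i \in L^\infty$ and $g_i \in L^2$.

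The main obstacle, and the only nontrivial part, is constructing $\W_0$ with $|W_{i,0}| = t_i$. Set $A_i = \{u_{i,0} \neq 0\}$. Since $u_{i,k} u_{j,k} = 0$ a.e.\ for $i \neq j$, the a.e.\ convergence gives $u_{i,0}u_{j,0} = 0$ a.e., so the $A_i$ are pairwise disjoint up to a null set. For each $\eta > 0$, on $\{|u_{i,0}| > \eta\}$ we eventually have $|u_{i,k}| > \eta/2$ off a set of small measure, whence $|\{|u_{i,0}|>\eta\}| \leq \liminf |W_{i,k}| = t_i$; letting $\eta \to 0$ gives $|A_i| \leq t_i$. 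Since also $\sum_i t_i \leq |\Omega|$ (as the $W_{i,k}$ are disjoint in $\Omega$), I can partition $\Omega \setminus \bigcup_i A_i$ into Borel pieces $B_i$ with $|B_i| = t_i - |A_i|$, using the nonatomicity of Lebesgue measure, and define $W_{i,0} = A_i \cup B_i$. Then the $W_{i,0}$ are disjoint Borel subsets of $\Omega$ with $|W_{i,0}| = t_i$, and $u_{i,0} = 0$ a.e.\ on $\R^n \sm W_{i,0}$, so $(\u_0,\W_0) \in \F$. Continuity of $\wt F$ gives $F(\W_0) = \wt F(t_1,\ldots,t_N) = \lim_k F(\W_k)$, and combining the three estimates yields $J(\u_0,\W_0) \leq \liminf_k J(\u_k,\W_k) = \inf_\F J$, as desired.
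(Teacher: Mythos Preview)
Your proof is correct and follows essentially the same approach as the paper: direct method, Poincar\'e-type inequality for coercivity, $L^2$-compactness to pass the $E$- and $M$-terms to the limit, extraction of a subsequence for which the volumes converge, and construction of $\W_0$ by enlarging the sets $\{u_{i,0}\neq 0\}$ to match the target volumes via nonatomicity of Lebesgue measure. The paper packages the Poincar\'e inequality and the $L^2$-compactness as Lemmas~\ref{l3.2} and~\ref{l3.3}, and phrases the inequality $|A_i|\le t_i$ via Fatou on indicator functions rather than through level sets, but these are only cosmetic differences.
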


\begin{proof}
The proof will rather easily follow from the compactness of some Sobolev injections.
Let $\Omega$, $F$, and the $f_{i}$ be as in the statement, and set
\begin{equation}\label{e3.5}
m = \inf_{(\u,\W) \in {\cal F}} J(\u,\W).
\end{equation}
We shall see soon that $m$ is bounded, but let us first observe that $m < + \infty$:
just pick $\u=0$ and any acceptable disjoint collection $\W$ of subsets, 
and observe that $J(\u,\W) < +\infty$.
For $(\u,\W) \in {\cal F}$, set
\begin{equation}\label{e3.6}
E(\u) = \sum_{i=1}^N \int_{\Omega} |\nabla u_{i}|^2
= \sum_{i=1}^N \int_{W_{i}} |\nabla u_{i}|^2
\end{equation}
as in (\ref{e1.3}),
where the last identity comes from the fact that $u_{i} = 0$ a.e. outside of
$W_{i}$, hence $\nabla u_{i} = 0$ a.e. on $\Omega \sm W_{i}$ too.
The next lemma will show that $E(\u)$ controls the potentially negative terms 
$\int_{\Omega} u_{i}(x) g_{i}(x)$.

\begin{lem}\label{l3.2} 
If $E$ is a measurable set such that $|E| < +\infty$ and 
$u\in W^{1,2}(\R^n)$ is such that $u(x) = 0$ a.e. on $\R^n \sm E$, then
\begin{equation}\label{e3.7}
\int_{E} u^2 \leq C |E|^{2/n} \int_{E} |\nabla u|^2,
\end{equation}
where $C$ depends only on $n$.
\end{lem}

\begin{proof}
The lemma is a fairly easy consequence of the standard Poincar\'e's inequality
(on balls and where we subtract mean values); 
for the sake of completeness, we shall add a short Section \ref{poincare}
where we recall this inequality and prove Lemma \ref{l3.2} and its analogue on
a sphere. In the mean time let us refer to \cite{HP}, Lemma 4.5.3.
\qed
\end{proof}

We are ready to check that $m > -\infty$. For $(\u,\W) \in {\cal F}$
and $1 \leq i \leq N$, observe that
\begin{equation}\label{e3.8}
\Big|\int_{\Omega} u_{i}(x) g_{i}(x) dx \Big|
\leq ||u_{i}||_{2} ||g_{i}||_{2} 
\leq C |\Omega|^{1/n} \Big\{\int_{\Omega} |\nabla u|^2\Big\}^{1/2} ||g_{i}||_{2}
\end{equation}
by Lemma \ref{l3.2}, and then
\begin{equation}\label{e3.9}
\Big|\sum_{i} \int_{\Omega} u_{i}(x) g_{i}(x) dx \Big| \leq C E(\u)^{1/2},
\end{equation}
where $C$ now depends on the data $\Omega$ and the $||g_{i}||_{2}$. 
Since $F(\W)$ is bounded by (\ref {e3.3}), we get that
\begin{eqnarray}\label{e3.10}
J(\u,\W) &=& E(\u) + \sum_{i} \int [u_{i}(x)^2 f_{i}(x) - u_{i}(x) g_{i}(x)] dx + F(\W)
\nonumber \\
&\geq& E(\u) - \Big|\sum_{i} \int u_{i}(x) g_{i}(x) dx \Big| - C 
\geq E(\u) - C E(\u)^{1/2} - C,
\end{eqnarray}
because $f_{i} \geq 0$.
This is bounded from below; so $m > -\infty$.

\ms
Return to the proof of Theorem \ref{t3.1}. Let $\{(\u_{k},\W_{k})\}$, $k \geq 0$,
be a minimizing sequence in $\cal F$, which means that 
\begin{equation}\label{e3.11}
\lim_{k \to +\infty} J(\u_{k},\W_{k}) = m.
\end{equation}
We want to extract a converging subsequence and show that the limit is a minimizer
in $\cal F$. The following compactness lemma will help.

\begin{lem}\label{l3.3} 
Let $\Omega$ be a measurable set, with finite measure $|\Omega|$,
and denote by $U_{\Omega}$ the set of functions 
$u\in W^{1,2}(\R^n)$ such that $u(x) = 0$ a.e. on $\R^n \sm \Omega$
and $\int_{\Omega} |\nabla u|^2 \leq 1$. Then $U_{\Omega}$ is contained 
in a compact subset of $L^2(\R^n)$.
\end{lem}

\begin{proof}
We start with the case when $\Omega$ is bounded, because then we can
use the compactness of the Sobolev injection, as stated for instance
in \cite{Z}, Theorem 2.5.1. Let $B$ be a ball that contains $\Omega$;
it is well known, and easy to check, that 
$U_{\Omega} \i W^{1,2}_{0}(2B)$. By Theorem 2.5.1 in \cite{Z},
it is contained in a compact subset of $L^2(\R^n)$, and the result follows.

Let us now treat the general case when $|\Omega| < +\infty$, which
is probably almost equally well known. Since $L^2(\R^n)$ is complete, 
it is enough to show that $U_{\Omega}$ is completely bounded in $L^2(\R^n)$. 
That is, for each given $\varepsilon > 0$ we want to show
that $U_{\Omega}$ is contained in the union of a finite number of $L^2$-balls
of radius $\varepsilon$.

Let $R \geq 1$ be large, to be chosen soon (depending on $\varepsilon$),
and let $\varphi$ be a smooth cut-off function such that 
$0 \leq \varphi(x) \leq 1$ for $x\in \R^n$, 
$\varphi(x) = 1$ for $0 \leq |x| \leq R$, $\varphi(x) = 0$ for $|x| \geq 3R$,
and $|\nabla\varphi(x)| \leq 2/R$ everywhere. Then write any function 
$u \in U_{\Omega}$ as $u = \varphi u + (1-\varphi) u$. Set $v = (1-\varphi) u$,
and notice that $v \in W^{1,2}(\R^n)$, with 
$\nabla v = (1-\varphi) \nabla u - u \nabla \varphi$
(as a distribution), so that $\int_{\Omega} |\nabla v|^2 \leq 2\int_{\Omega} |\nabla u|^2
+ 2 \int_{\Omega} |u|^2|\nabla \varphi|^2 \leq 2 \int_{\Omega} |\nabla u|^2
+ 8 R^{-2} \int_{\Omega} |u|^2$. By Lemma \ref{l3.2} and because $u \in U_{\Omega}$,
we get that $\int_{\Omega} |u|^2 \leq C \int_{\Omega} |\nabla u|^2$ 
(where $C$ depends on $\Omega$), and hence
$\int_{\Omega} |\nabla v|^2 \leq 3 \int_{\Omega} |\nabla u|^2$ if $R$ is large enough.

But now $v = 0$ almost everywhere on $B(0,R)$, 
so Lemma \ref{l3.2} applies with $E = \Omega \sm B(0,R)$ and yields
$\int_{\Omega} |v|^2 \leq 3C |\Omega \sm B(0,R)|^{2/n} \leq \varepsilon^2/4$
if $R$ is large enough. That is, $||(1-\varphi) u||_{2} = ||v||_{2} \leq \varepsilon/2$.

Because of this, it is now enough to cover the set 
$U' = \big\{ u\varphi \, ; \, u\in U_{\Omega}\big\}$ by a finite number of
$L^2$-balls of radius $\varepsilon/2$. But $u\varphi \in W^{1,2}(\R^n)$,
with $\int_{\Omega} |\nabla (u\varphi) |^2 \leq 3$ by the same proof as above,
and in addition $u\in W^{1,2}_{0}(B(0,4R))$, so Theorem 2.5.1 in \cite{Z}
says that $U'$ is relatively compact in $L^2(\R^n)$, hence totally
bounded, and the relative compactness of $U_{\Omega}$ follows.
\qed
\end{proof}

Observe that by (\ref {e3.10}), $E(\u_{k})$ stays bounded along our 
minimizing sequence,  so by Lemma \ref{l3.3} the components 
$u_{i,k}$, $1 \leq i \leq N$, of $\u_k$ stay in a compact subset of $L^2(\R^n)$. 
This allows us to replace $\{(\u_{k},\W_{k})\}$ with a subsequence, 
which for convenience we still denote by $\{(\u_{k},\W_{k})\}$, 
for which $\u_{k}$ converges to a limit $\u$ in $L^2(\R^n)$. 
This just means that for $1 \leq i \leq N$, 
the component $u_{i,k}$ converges to $u_{i}$ in $L^2$. 
By extracting a new subsequence if needed, we can also 
assume that $u_{i,k}$ converges to $u_{i}$ pointwise almost everywhere.
This is more pleasant, because it will help in the definition of the
$W_{i}$. In the mean time, observe that $u_{i}$ automatically satisfies
our constraint that $u_{i}(x) =0$ a.e. on $\R^n \sm \Omega$. Similarly,
if in our definition of $F$ we added the constraint that some $u_{i}$
be nonnegative, this stays true as well for our limit. 
Notice that
\begin{eqnarray}\label{e3.12}
E(\u) &=& \sum_{i} \int_{\Omega} |\nabla u_{i}|^2
\leq \sum_{i} \liminf_{k \to +\infty} \int_{\R^n} |\nabla u_{i,k}|^2
\nonumber \\
&\leq& \liminf_{k \to +\infty} \sum_{i} \int_{\R^n} |\nabla u_{i,k}|^2
= \liminf_{k \to +\infty} E(\u_{k}) 
\end{eqnarray}
by the lower semicontinuity of  $\int_{\R^n} |\nabla u|^2$ when $u$ converges
(even weakly in $L^2$) to some limit. More easily,
\begin{equation}\label{e3.13}
\sum_{i} \int_{\Omega} u_{i}^2 f_{i} 
= \sum_{i}  \lim_{k \to +\infty} \int_{\Omega} u_{i,k}^2 \, f_{i}
\end{equation}
because $f_{i} \in L^\infty$ and the $u_{i,k}$ converge in $L^2$, and
\begin{equation}\label{e3.14}
\sum_{i} \int_{\Omega} u_{i} g_{i} 
= \sum_{i}  \lim_{k \to +\infty} \int_{\Omega} u_{i,k} \, g_{i}
\end{equation}
because $g_{i} \in L^2$, so, with the notation of (\ref{e1.4}),
\begin{equation}\label{e3.15}
M(\u) = \lim_{k \to +\infty} M(\u_{k}).
\end{equation}
We also need to take care of the volumes. We don't know how to make the 
characteristic functions $\1_{W_{i,k}}$ converge, but at least we can replace 
$\{(\u_{k},\W_{k})\}$ with a subsequence for which each $V_{i,k} = |W_{i,k}|$
converges to a limit $l_{i}$. We want to associate to $\u$ a collection
of sets $W_{i}$ such that
\begin{equation}\label{e3.16}
|W_{i}| = \lim_{k \to +\infty} V_{i,k} = l_i
\end{equation}
because then we will get that
\begin{equation}\label{e3.17}
F(\W) = \wt F(|W_{1}|,\ldots,|W_{N}|) = 
\lim_{k \to +\infty} \wt F(V_{1,k},\ldots,V_{N,k})
= \lim_{k \to +\infty} F(\W_{k})
\end{equation}
because $\wt F$ is continuous.

Let $Z$ denote the set of $x\in \Omega$ for which 
$\u(x)$ is not the limit of the $\u_{k}(x)$, or
for some choice of $k$ and $i$, $x\in \Omega \sm W_{i,k}$ but
$u_{i,k}(x) \neq 0$; then $|Z| = 0$ by definitions; it will be more convenient
to work in $\Omega' = \Omega \sm Z$.

Also set $W'_{i} = \big\{ x\in \Omega' \, ; \, u_{i}(x) \neq 0 \big\}$ and similarly,
for each $k$, $W'_{i,k} = \big\{ x\in \Omega' \, ; \, u_{i,k}(x) \neq 0 \big\}$.
By definition of $\Omega'$, $W'_{i,k} \i W_{i,k}$, and hence the $W'_{i,k}$,
$1 \leq i \leq N$, are disjoint (recall that $(\u_{k},\W_{k}) \in \F)$).

If $x\in W'_{i}$, then $u_{i}(x) \neq 0$ and hence $u_{i,k}(x) \neq 0$ 
and $x\in W'_{i,k}$ for $k$ large.
In particular, 
\begin{equation}\label{e3.18}
\text{the $W'_{i}$, $1 \leq i \leq N$, are disjoint},
\end{equation}
because the $W'_{i,k}$ are disjoint for each $k$. Also,
$\1_{W'_{i}} \leq \liminf_{k \to +\infty} \1_{W'_{i,k}}$ everywhere, and
by Fatou
\begin{equation}\label{e3.19}
|W'_{i}| = \int \1_{W'_{i}} \leq \liminf_{k \to +\infty} \int\1_{W'_{i,k}}
= \liminf_{k \to +\infty} |W'_{i,k}| \leq \lim_{k \to +\infty} |W_{i,k}| = l_{i}.
\end{equation}

Notice that $\sum_{i} l_{i} \leq |\Omega|$ because for each $k$,
$\sum_{i} V_{i,k} \leq |\Omega|$ (recall that the $W_{i,k}$ are disjoint 
when $(\u_{k},\W_{k}) \in \F)$).
Hence we can add disjoint measurable sets to the $W'_{i}$ if needed, 
and get new sets $W_{i}$, $1 \leq i \leq N$, such that $W'_{i} \i W_{i} \i \Omega$,
the $W_{i}$ are still disjoint (see \eqref{e3.18}), and $|W_{i}| = l_{i}$, as 
required for (\ref{e3.16}).

We now check that with this choice of $\W = (W_{1}, \ldots, W_{N})$,
the pair $(\u,\W)$ lies in $\F$. The fact that $u_{i} \in W^{1,2}(\R^n)$
comes from the convergence of $u_{i,k}$ in $L^2$, and our uniform bound
for $E(\u_{k})$. We have that $u_{i}(x) = 0$ for a.e. $x\in \R^n \sm \Omega$
by pointwise convergence, and then $u_{i}(x) = 0$ for a.e. $x\in \R^n \sm W_{i}$
by definition of $W_i \supset W'_i$. So $(\u,\W)$ lies in $\F$. Since in addition
\begin{equation}\label{e3.20}
J(\u,\W) \leq \liminf_{k \to +\infty} J(\u_{k},\W_{k}) \leq m
\end{equation}
by (\ref{e3.12}), (\ref{e3.15}), (\ref{e3.17}), and (\ref{e3.11}),
$(\u,\W)$ is the desired minimizer.
\qed
\end{proof}

\begin{rem}\label{r3.4} 
In Theorem \ref{t3.1}, we can replace the assumption (\ref{e3.2}) with the
following weaker set of assumptions: for $1 \leq i \leq N$,
\begin{equation}\label{e3.21}
f_i \in L^r(\Omega) \ \hbox{ and } g_i \in L^p(\Omega),
\end{equation}
as soon as we pick $r > n/2$ and $p > {2n \over n+2}$ if $n \geq 2$,
and $r \geq 1$ and $p \geq 1$ if $n=1$, and
(instead of our assumption that $f_i \geq 0$)
\begin{equation}\label{e3.22}
||f_{i,-}||_{L^r(\Omega)} \leq  \varepsilon(r,n,\Omega, F)
\end{equation}
with the same $r$, where $f_{i,-}$ denotes the negative part of $f_i$,
and where  $\varepsilon(r,n,\Omega, F)$
is a small positive constant that depends only on $n$, $r$, $F$, and $|\Omega|$.
\end{rem}

\begin{proof}
We just sketch the proof because this remark is not so important
(and probably \eqref{e3.22} is hard to use). For the different exponents,
we just want to use better Sobolev embeddings. We claim that 
Lemmas \ref{l3.2} and \ref{l3.3} still hold when we replace
$||u||_2$ with $||u||_q$, as long as we take $q < {2n \over n-2}$
when $n \geq 2$ and $q \leq +\infty$ when $n=1$; we just need to
know that the Sobolev embedding is still compact with such exponents, and
follow the proof above.

Thus, in the argument above, we can get the $u_k$ to converge in 
$L^q$. We can choose $q$ to be the conjugate exponent of $p$, 
and this gives enough control on the integrals $\int u_{i,k} g_i$
(as in \eqref{e3.14}). Or we choose $q/2$ to be the conjugate exponent
of $r$, and we get a good control on $\int u_{i,k}^2 f_i$, as in \eqref{e3.13}.
The other estimates are as as above, except that we also need to replace 
\eqref{e3.10} because \eqref{e3.22} is weaker than our earlier assumption
that $f_i \geq 0$. So we keep $q/2 = r'$, the conjugate exponent of $r$, and 
say that
\begin{eqnarray}\label{e3.23}
J(\u,\W) &=& E(\u) + \sum_{i} \int [u_{i}(x)^2 f_{i}(x) - u_{i}(x) g_{i}(x)] dx + F(\W)
\nonumber \\
&\geq& E(\u) - \sum_{i} \int u_{i}^2(x) f_{i,-}(x) dx - \Big|\sum_{i} \int u_{i}(x) g_{i}(x) dx \Big| - C 
\nonumber \\
&\geq& E(\u) - C \sum_{i} ||u_i^2||_{r'}  ||f_{i,-}||_{r} - C E(\u)^{1/2} - C
\nonumber \\
&=& E(\u) - C \sum_{i} ||u_i||_{q}^2  ||f_{i,-}||_{r} - C E(\u)^{1/2} - C
\\
&\geq& E(\u) - C \sum_{i}  ||\nabla u_{i}||_{2}^2 \, ||f_{i,-}||_{r} - C E(\u)^{1/2} - C
\nonumber\\
&\geq& E(\u) - C E(\u) \sup_i ||f_{i,-}||_{r} - C E(\u)^{1/2} - C
\geq {1 \over 2} E(\u)  - C E(\u)^{1/2} - C
\nonumber
\end{eqnarray}
if $||f_{i,-}||_{r}$ is small enough; then we can continue the argument as above,
and the remark follows. \qed
\end{proof}

\ms
We mentioned earlier that Theorem \ref{t3.1} is enough for the application
that we have in mind, but since we feel bad about the case when our volume
term $F(\W)$ is given by the more standard formula \eqref{e1.7}, let us mention
the following variant. We want to replace \eqref{e3.3} with the 
monotonicity condition
\begin{equation}\label{e3.24}
F(\W) \leq F(\W') \ \text{ whenever  $W_i \i W'_i$ for $1 \leq i \leq N$,}
\end{equation}
whose effect is that we should try to take the $W_j$ as small as we can 
(all things being equal otherwise). Notice that \eqref{e3.24} is satisfied
when $F(\W) = \sum_i \int_{W_i} q_i$, as in \eqref{e3.24}, as soon as
the $q_i$ are nonnegative and integrable on $\Omega$. 
The integrability of the $q_j$ also gives the continuity of $F$, which we define
as follows.

Denote by $A \Delta B = (A \sm B) \cup (B \sm A)$ the symmetric difference  
between two sets $A$ and $B$; if $\W = (W_1, \ldots, W_N)$ and 
$\W' = (W'_1, \ldots, W'_N)$ are $N$-uples of disjoint subsets of $\Omega$,
set
 \begin{equation}\label{e3.25}
\W \Delta \W' = \bigcup_i  W_i \Delta W'_i
= \bigcup_i [W_i \sm W'_i] \cup [W_i' \sm W_i],
\end{equation}
and then
\begin{equation}\label{e3.26}
\delta(\W,\W') = |\W \Delta \W'|
\simeq \sum_i |W_i \Delta W'_i|.
\end{equation}
We say that the sequence $\{ \W_k \}$ tends to $\W$ when 
$\lim_{k\to +\infty} \delta(\W,\W_k) = 0$, and then we say that $F$ 
is continuous at $\W$ when $F(\W) = \lim_{k \to +\infty} F(\W_k)$ when 
$\{ \W_k \}$ tends to $\W$. In other words, $F$ is continuous when
\begin{equation}\label{e3.27}
\lim_{k \to +\infty} F(\W_k) = F(\W)
\text{ whenever } \lim_{k \to +\infty} |W_i \Delta W_{i,k}| = 0
\text{ for } 1 \leq i \leq N.
\end{equation}

Here is the variant of Theorem \ref{t3.1} that we want to prove. 

\begin{cor}\label{t3.5} 
Assume \eqref{e3.1}, \eqref{e3.2}, \eqref{e3.24}, and that $F$
is continuous (as in \eqref{e3.27}). Then we can find 
$(\u_{0},W_{0}) \in {\cal F}$ such that
\begin{equation}\label{e3.28}
J(\u_{0},\W_{0}) \leq J(\u,\W)
\text{ for all } (\u,\W) \in {\cal F}.
\end{equation}
\end{cor}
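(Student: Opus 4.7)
The plan is to adapt the proof of Theorem~\ref{t3.1}, using the monotonicity \eqref{e3.24} to compensate for the fact that $F(\W)$ now depends on the sets $W_i$ rather than only on their volumes. Start from a minimizing sequence $(\u_k,\W_k) \in \F$ and repeat verbatim the estimates of Theorem~\ref{t3.1} through the use of Lemma~\ref{l3.3}: $E(\u_k)$ is bounded, so after extraction we may assume $\u_k \to \u$ in $L^2(\R^n)$ and pointwise almost everywhere, with $\u$ automatically satisfying $u_i(x)=0$ for a.e.\ $x \in \R^n \sm \Omega$, and with $E(\u) \le \liminf E(\u_k)$ and $M(\u) = \lim M(\u_k)$.

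Now define $W'_i = \{x : u_i(x) \neq 0\}$. These sets are pairwise disjoint: if $x$ lay in $W'_i \cap W'_j$ with $i \neq j$, then pointwise convergence would force $u_{i,k}(x) \neq 0$ and $u_{j,k}(x) \neq 0$ for $k$ large, violating the disjointness of $\{u_{i,k}\neq 0\} \subset W_{i,k}$ and $\{u_{j,k}\neq 0\} \subset W_{j,k}$. Hence $(\u,\W') \in \F$. The key step is to show $F(\W') \le \liminf_k F(\W_k)$. Set $W^-_{i,k} = W_{i,k} \cap W'_i$; these stay pairwise disjoint and lie in $\Omega$, and by monotonicity \eqref{e3.24}, $F(\W^-_k) \le F(\W_k)$. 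On the other hand, $W^-_{i,k} \Delta W'_i = W'_i \sm W_{i,k}$, and for a.e.\ $x \in W'_i$ the convergence $u_{i,k}(x) \to u_i(x) \neq 0$ together with $\{u_{i,k}\neq 0\} \subset W_{i,k}$ (modulo null sets) forces $x \in W_{i,k}$ for $k$ large. Thus $\1_{W'_i \sm W_{i,k}} \to 0$ a.e., and since $|W'_i| \le |\Omega| < +\infty$, dominated convergence yields $|W^-_{i,k} \Delta W'_i| \to 0$. Continuity \eqref{e3.27} then gives $F(\W^-_k) \to F(\W')$, so
$$F(\W') \;=\; \lim_k F(\W^-_k) \;\le\; \liminf_k F(\W_k).$$
Combining with $E(\u)\le \liminf E(\u_k)$ and $M(\u)=\lim M(\u_k)$ produces $J(\u,\W') \le \liminf J(\u_k,\W_k) = m$, so $(\u,\W')$ is a minimizer.

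The main obstacle, relative to Theorem~\ref{t3.1}, is that the volume term is no longer a function of the $|W_i|$, so the trick of padding $W'_i$ to match a volume limit $l_i$ is unavailable. The substitute is the intersection $W^-_{i,k} = W_{i,k} \cap W'_i$: monotonicity guarantees $F(\W^-_k) \le F(\W_k)$ for free, while the pointwise convergence of $u_{i,k}$ provides precisely the one-sided symmetric-difference convergence $|W'_i \sm W_{i,k}| \to 0$ needed to apply \eqref{e3.27}. Once these two ingredients are paired, the rest of the argument is identical to that of Theorem~\ref{t3.1}.
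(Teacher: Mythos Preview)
Your proof is correct and follows essentially the same approach as the paper: define $W'_i=\{u_i\neq 0\}$, intersect with $W_{i,k}$ to get $\W^-_k$, use monotonicity \eqref{e3.24} for $F(\W^-_k)\le F(\W_k)$, and use the one-sided symmetric-difference convergence $|W'_i\sm W_{i,k}|\to 0$ together with continuity \eqref{e3.27} to pass to the limit. The paper's own proof is shorter but identical in substance.
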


\begin{proof}
We proceed as in the proof of Theorem \ref{t3.1}, except that instead 
of completing the $W_i'$ to get sets $W_i$ such that \eqref{e3.18} holds, 
we just keep $W_i = W'_i$.
The sets $W_{i,k} \cap W'_i$ converge to $W'_i$
(that is, each $|(W_{i,k} \cap W'_i) \Delta W'_i| = |W'_i \sm W_{i,k}|$ tends
to $0$), because every $x\in W'_i$ lies in $W_{i,k}$ for $k$ large,
and then
\begin{equation}\label{e3.29}
F(W'_1, \ldots, W'_N) = \lim_{k \to +\infty} F(W_{1,k} \cap W'_i, \ldots, W_{N,k} \cap W'_i) 
\leq \liminf_{k \to +\infty} F(W_{1,k}, \ldots, W_{N,k}) 
\end{equation}
by \eqref{e3.27} and \eqref{e3.24}. The rest of the proof is the same.
\qed
\end{proof}

\section{Poincar\'e inequalities and restriction to spheres}  \label{poincare} 

We record here some easy properties of functions in the Sobolev spaces
$W^{1,p}$, $1\leq p < +\infty$. In particular, some consequences of
the Poincar\'e inequalities (like Lemma \ref{l3.2} above),
the fact that the restriction of $u \in W^{1,p}$ to almost every sphere
$S_{r} = \d B(0,r)$ lies in $W^{1,p}(S_{r})$, and a way to glue
two Sobolev functions along a sphere.

We first recall the standard Poincar\'e inequalities. We shall find it convenient 
to use the notation
\begin{equation}\label{e4.1}
m_{B} u = \fint_{B} u = {1 \over |B|} \int_{B} u
\end{equation}
for the average of a function $u$ on a set $B$, which will often be a ball.
With this notation, the standard Poincar\'e inequality for a ball says that 
if $B = B(x,r) \i \R^n$ and $u\in W^{1,p}(B)$ for some $p \in [1,+\infty)$, 
then $u\in L^p(B)$ and
\begin{equation}\label{e4.2}
\fint_{B} |u-m_{B}u|^p \leq C_{p} r^p\fint_{B} |\nabla u|^p.
\end{equation}

We shall also use Poincar\'e inequalities on spheres, for which we want to use similar
notation. On the spheres $S_{r} = \d B(0,r)$, we shall systematically use the
surface measure, which we denote by $\sigma$; we could also have used the equivalent
Hausdorff measure $\H^{n-1}$, but $\sigma$ will be simpler and in particular we
won't need to worry about normalization in the Fubini-like formula
\begin{equation}\label{e4.3}
\int_{B(0,r)} f(x) dx = \int_{0}^r \int_{S_{t}} f(x) d\sigma(x) dt,
\end{equation}
which we shall use from time to time.
We will use the notation  $\sigma(E) = |E|_{\sigma}$ for the surface measure 
of a measurable set $E \i S_{r}$, and
\begin{equation}\label{e4.4}
m_{E}^\sigma u = \fint_{E} u d\sigma= {1 \over |E|_{\sigma}} \int_{E} u(x) d\sigma(x)
\end{equation}
when $E \i S_{r}$ is such that $\sigma(E) > 0$
and $u$ is a measurable function at least defined on $E$.

It is easy to define the Sobolev spaces $W^{1,p}(S_{r})$, $1 \leq p < +\infty$,
for instance by making smooth local changes of variable that sent surface disks
in $S_{r}$ to disks in $\R^{n-1}$. We can then define the distribution gradient
$\nabla_{t} u$ for $u\in W^{1,p}(S_{r})$, in such a way as to coincide with the
usual notion when $u$ is smooth. We shall use the Poincar\'e inequalities on
surface disks, which says that if $1 \leq p < +\infty$,
$u \in W^{1,p}(S_r)$, and $D = B(x,s) \cap S_r$ for some
choice of $x\in S_r$ and $0 < s \leq 2r$, then
\begin{equation}\label{e4.5}
\fint_{D} |u-m_{B}u|^p d\sigma \leq C_{p} s^p\fint_{D} |\nabla_{t} u|^p d\sigma.
\end{equation}
This can be proved just like \eqref{e4.2} above; here $C_{p}$
does not depend on $r$, $x$, $s$, or $u$.

We shall now prove an analogue of Lemma \ref{l3.2} on the sphere $\d B(0,r)$.
The proof will also apply on $\R^n$, and give a proof of Lemma \ref{l3.2}.
We state the result for all $p$, but we are mostly interested in $p=1$ or $2$.

\begin{lem}\label{l4.1} 
Set $S_{r} = \d B(0,r)$, let $E \i S_{r}$ be a measurable set such that 
$\sigma(S_{r}\sm E) > 0$, and suppose $u\in W^{1,p}(S_{r})$ 
for some $p\in [1,+\infty)$ is such that $u(x) = 0$ for $\sigma$-a.e.  
$x\in S_{r} \sm E$. Then
\begin{equation}\label{e4.6}
\int_{E} |u|^p \leq C_{p} r^p {\sigma(S_{r}) \over \sigma(S_{r} \sm E)} 
\int_{E} |\nabla_{t} u|^p
\end{equation}
and, in the special case when $\sigma(E) < {1 \over 2} \sigma(S_{r})$,
\begin{equation}\label{e4.7}
\int_{E} |u|^p \leq C_{p} \sigma(E)^{p \over n-1} \int_{E} |\nabla_{t} u|^p.
\end{equation}
Here $C_{p}$ depends only on $n$ and $p$.
\end{lem}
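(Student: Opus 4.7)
The plan is to obtain \eqref{e4.6} from a global Poincar\'e inequality on $S_r$ together with the fact that $u$ vanishes on a non-negligible subset, and then to obtain the sharper \eqref{e4.7} by a localization argument on small surface disks. The proof follows the well-known pattern of the Euclidean analogue (Lemma \ref{l3.2}), adapted to the sphere.

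For \eqref{e4.6}, I would first note that applying \eqref{e4.5} with any $x\in S_r$ and $s=2r$ yields a global Poincar\'e inequality on the sphere: since $\diam S_r = 2r$, the surface disk $B(x,2r)\cap S_r$ equals $S_r$ modulo a null set, and (assuming the mean in \eqref{e4.5} is that of the disk $D$) one gets
\begin{equation}
\int_{S_r} |u - m|^p \, d\sigma \leq C_p r^p \int_{S_r} |\nabla_t u|^p \, d\sigma,
\end{equation}
where $m = m_{S_r}^\sigma u$. Since $u=0$ $\sigma$-a.e.\ on $S_r\sm E$,
\begin{equation}
|m|^p\,\sigma(S_r\sm E) = \int_{S_r\sm E} |u-m|^p\, d\sigma \leq \int_{S_r} |u-m|^p\, d\sigma,
\end{equation}
so that $|m|^p \sigma(S_r) \leq \frac{\sigma(S_r)}{\sigma(S_r\sm E)}\int_{S_r} |u-m|^p\, d\sigma$. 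The triangle inequality $\int_{S_r} |u|^p \leq 2^{p-1}\bigl(\int_{S_r} |u-m|^p + |m|^p\sigma(S_r)\bigr)$ then combines with the global Poincar\'e bound to yield
\begin{equation}
\int_{S_r} |u|^p\, d\sigma \leq 2^p\,C_p\, r^p\, \frac{\sigma(S_r)}{\sigma(S_r\sm E)} \int_{S_r} |\nabla_t u|^p\, d\sigma.
\end{equation}
Since $u = 0$ a.e.\ on $S_r\sm E$ forces $\nabla_t u = 0$ a.e.\ there (same Rademacher-Calder\'on argument used just after \eqref{e1.3}), both integrands are supported on $E$, and \eqref{e4.6} follows.

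For \eqref{e4.7}, when $\sigma(E) \geq \sigma(S_r)/4$ we have $\sigma(E)^{p/(n-1)}\geq c_n r^p$, and because $\sigma(S_r\sm E) \geq \sigma(S_r)/2$ the bound \eqref{e4.6} directly gives \eqref{e4.7}. So the interesting case is $\sigma(E) < \sigma(S_r)/4$. Choose $s > 0$ so that every surface disk $D(x,s) = B(x,s)\cap S_r$ with $x\in S_r$ satisfies $\sigma(D(x,s)) \geq 4\sigma(E)$; since $\sigma(D(x,s)) \sim s^{n-1}$ for $0 < s \leq r$, this is possible with $s \sim \sigma(E)^{1/(n-1)}$. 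For any such disk, $\sigma(D(x,s)\sm E) \geq \sigma(D(x,s)) - \sigma(E) \geq \tfrac{3}{4}\sigma(D(x,s))$. Repeating the argument of the preceding paragraph on $D(x,s)$ in place of $S_r$ (this time starting from \eqref{e4.5} with radius $s$) gives
\begin{equation}
\int_{D(x,s)\cap E} |u|^p\, d\sigma \leq C_p\, s^p \int_{D(x,s)\cap E} |\nabla_t u|^p\, d\sigma.
\end{equation}
Then I would cover $E$ by a family of disks $D_i = D(x_i,s)$, $x_i \in E$, selected by a Vitali procedure so that the $D(x_i, s/5)$ are disjoint; this yields bounded overlap of the $D_i$ with a constant depending only on $n$. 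Summing the local inequality over $i$ and using $s^p \sim \sigma(E)^{p/(n-1)}$ produces \eqref{e4.7}.

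The main obstacle is the covering step in \eqref{e4.7}: one has to check that surface disks of radius $s \lesssim r$ in $S_r$ behave like Euclidean balls in $\R^{n-1}$ of comparable radius (so that their measures are indeed comparable to $s^{n-1}$ and the Vitali covering lemma applies with bounded overlap depending only on $n$). This is standard but is where all the quantitative estimates are actually used. Everything else is a routine combination of Poincar\'e inequalities and the vanishing of $u$ on a set of controlled measure.
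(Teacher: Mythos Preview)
Your proof of \eqref{e4.6} is correct and essentially identical to the paper's: global Poincar\'e on $S_r$, control the mean via the vanishing on $S_r\sm E$, and combine.

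For \eqref{e4.7} your argument is also correct, but it differs from the paper's. The paper assigns to each Lebesgue density point $x\in E$ a \emph{variable} radius $t(x)\in(0,2r)$ chosen by continuity so that the density $\sigma(E\cap D(x,t(x)))/\sigma(D(x,t(x)))$ equals exactly $1/2$, then applies the Besicovitch covering lemma to the family $\{D(x,t(x))\}$; the relation $d(x,t(x))=1/2$ both guarantees the local Poincar\'e constant is bounded and forces $t(x)^{n-1}\le C\sigma(E)$. You instead fix a single radius $s\sim\sigma(E)^{1/(n-1)}$ and use a Vitali selection with disjoint $D(x_i,s/5)$. Your approach is slightly more elementary (no Besicovitch needed, and no continuity argument for the density), at the small cost that the threshold $\sigma(S_r)/4$ may have to be replaced by a smaller constant $c_n\sigma(S_r)$ (depending only on $n$) to guarantee $s\le 2r$; this is harmless since for $\sigma(E)\ge c_n\sigma(S_r)$ the estimate \eqref{e4.6} already gives \eqref{e4.7}.
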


\begin{proof}
We start with the easier \eqref{e4.6}. 
Notice that $|u(x)-m_{B}^\sigma u|^p = |m_{B}^\sigma u|^p$ almost everywhere 
on $S_{r} \sm E$ (just because $u(x) = 0$). We average and get that
\begin{eqnarray}\label{e4.8}
|m_{B}^\sigma u|^p 
&=& \sigma(S_{r} \sm E)^{-1} 
\int_{S_{r} \sm E} |u(x)-m_{B}^\sigma u|^p d\sigma(x)
\nonumber
\\
&\leq& {\sigma(S_{r}) \over \sigma(S_{r} \sm E)}
 \fint_{S_{r}} |u-m_{B}^{\sigma} u|^p d\sigma
\leq C_{p} r^p {\sigma(S_{r}) \over \sigma(S_{r} \sm E)}
 \fint_{S_{r}} |\nabla_{t} u|^p d\sigma
\end{eqnarray}
by \eqref{e4.5}; then 
\begin{eqnarray}\label{e4.9}
\int_{E} |u|^p &\leq& 
C_{p } \int_{S_{r}} \big[|u-m_{B}^{\sigma} u|^p +  |m_{B}^{\sigma} u|^p \big]
\leq C'_{p} r^p \int_{S_{r}} |\nabla_{t} u|^p + C_{p } |m_{B}^{\sigma} u|^p \sigma(S_{r})
\nonumber
\\
&\leq&  C''_{p} r^p {\sigma(S_{r}) \over \sigma(S_{r} \sm E)}
 \int_{S_{r}} |\nabla_{t} u|^p d\sigma
\end{eqnarray}
by \eqref{e4.5} and \eqref{e4.8}; \eqref{e4.6} follows.

Now we prove \eqref{e4.7}. The proof will also work for Lemma \ref{l3.2}.
We shall use a covering. Let $x\in E$ be given, and consider the density ratio 
$d(x,t) = \sigma(E \cap B(x,t))/\sigma(S_{r}\cap B(x,t))$.
Notice that $d(x,2r) = \sigma(E) /\sigma(S_{r}) < 1/2$
if $\sigma(E) < \sigma(S_{r})/2$. If $x$ is a Lebesgue density
point of $E$ (on the sphere), $\lim_{t \to 0} d(x,t) = 1$, 
and by continuity we can find $t = t(x) \in (0,2r)$ such that $d(x,t) = 1/2$. 
Then use the Besicovitch covering lemma (see for instance \cite{F}) to find a set $X$ 
such that the $B(x,r(x))$, $x\in X$, cover the set of Lebesgue points of $E$
(and hence, $\sigma$-almost all $E$), but $\sum_{x\in X} \1_{B(x,r(x))} \leq C$. 

Fix $x\in X$ and set $D = B(x,t(x)) \cap S_r$
The point of choosing $t(x)$ as we did is that, as in \eqref{e4.8},
\begin{eqnarray}\label{e4.10}
|m_{D}^{\sigma} u|^p 
&=& \sigma(D \sm E)^{-1} \int_{D \sm E} |u-m_{D}^{\sigma} u|^p 
\nonumber
\\
&=& 2 \sigma(D)^{-1} \int_{D \sm E} |u-m_{D}^{\sigma} u|^p 
\leq 2 \fint_{D} |u-m_{D}^{\sigma} u|^p 
\leq 2C_{p} t(x)^p  \fint_{D} |\nabla_{t} u|^p 
\end{eqnarray}
because $u(x) = 0$ almost everywhere on $S_{r} \sm E$,
and by \eqref{e4.5}. Then, as in \eqref{e4.9},
\begin{equation}\label{e4.11}
\int_{D} |u|^p \leq 
C_{p } \int_{D} |u-m_{D}^{\sigma} u|^p +  C_{p} |m_{D}^{\sigma} u|^p \sigma(D)
\leq C'_{p} t(x)^p \int_{D} |\nabla_{t} u|^p
\end{equation}
by \eqref{e4.5} and \eqref{e4.10}. Observe that 
$t(x)^{n-1} \leq C \sigma(D) = C\sigma(S_{r} \cap B(x,t))
= 2C \sigma(E \cap B(x,t)) \leq 2C\sigma(E)$
by various definitions. We now sum over $x\in X$ and get that
\begin{eqnarray}\label{e4.12}
\int_{E} |u|^p &\leq& \sum_{x\in X} \int_{B(x,t(x))\cap S_r} |u|^p
\leq C_{p} \sum_{x\in X} t(x)^p\int_{B(x,t(x))\cap S_r} |\nabla_{t}u|^p
\nonumber
\\
&\leq& C_{p} \sigma(E)^{p \over n-1} \sum_{x\in X} 
\int_{S_{r}} \1_{B(x,t(x))} |\nabla_{t}u|^p
\leq C_{p} \sigma(E)^{p \over n-1} \int_{S_{r}} |\nabla_{t}u|^p
\end{eqnarray}
because the $B(x,t(x))$, $x\in X$, have bounded covering.
This completes our proof of \eqref{e4.7}; Lemma \ref{l4.1} follows.
\qed
\end{proof}

\ms
Next we talk about the restriction of $u\in W^{1,p}$ to almost
every sphere $S_{r}$. Let $u \in W^{1,p}_{loc}(\R^n)$ be given, 
with $1 \leq p < +\infty$. It is possible to define the trace of $u$ 
on every sphere $S_{r}$; this looks fine, but the trace is a less regular object 
that will be difficult to control.
What we shall do instead is use arguments based on Fubini, and control
the restriction of $u$ on almost every sphere, with often a better control.

The following description is not hard to get, for instance by changing
variables (so that spheres become hyperplanes) and using Fubini;
see for instance Chapter 10-13 in \cite{D}. 
First, for almost every half line $L$ through the origin, $u$ is 
equal almost everywhere on $L$ to an absolutely continuous,
i.e., the integral of some locally integrable function on $L$.

Let us modify $u$ on a set of measure zero, so that it becomes
absolutely continuous (and hence, continuous) on almost every half line 
through the origin. This way, for each $r > 0$, we have a radial limit description
of the restriction, as
\begin{equation}\label{e4.13}
u(x) = \lim_{t \to 1} u(tx)
\ \text{ for $\sigma$-almost every } x\in S_{r}.
\end{equation}
Next, for almost every $r > 0$,
\begin{equation}\label{e4.14}
\text{the restriction of $u$ to $S_{r}$ lies in $W^{1,p}(S_{r})$,} 
\end{equation}
and with tangential partial derivatives that coincide almost everywhere on $S_{r}$
with (or we should rather say, can be naturally computed in terms of) 
the restriction of the derivative $Du$ to $S_{r}$.
This will be pleasant, for instance because we immediately get, 
by \eqref{e4.3} and Fubini, that
\begin{equation}\label{e4.15}
 \int_{\rho=0}^r \int_{S_{r}} |\nabla_{t} u|^p d\sigma d\rho
 = \int_{B(0,r)} |\pi_{t}(\nabla u)|^p \leq \int_{B(0,r)} |\nabla u|^p,
\end{equation}
where for the sake of the argument we introduced the tangential part
$\pi_{t}(\nabla u)$ of $\nabla u$.

\ms
It will also be useful to glue two Sobolev functions along a sphere.
Suppose we have two functions $u_{1} \in W^{1,p}(B(0,r))$ and
$u_{2} \in W^{1,p}(B(0,2r)\sm \overline B(0,r))$. Again modulo changing
$u_{1}$ and $u_{2}$ on sets of measure zero, and by the same proof
as for \eqref{e4.13}, we have the existence of boundary values
\begin{equation}\label{e4.16}
\overline u_{1}(x) = \lim_{t \to 1^-} u_{1}(tx)
\ \text{ and } \overline u_{2}(x) = \lim_{t \to 1^+} u_{2}(tx).
\end{equation}
for $\sigma$-almost every $x\in S_{r}$. Now suppose that
\begin{equation}\label{e4.17}
\overline u_{1}(x) = \overline u_{2}(x)
\ \text{ for  $\sigma$-almost every } x\in S_{r}. 
\end{equation}
Then set $u(x) = u_{1}(x)$ for $x\in B(0,r)$ and 
$u(x) = u_{2}(x)$ for $x\in B(0,2r)\sm B(0,r)$. It is not hard to check that
\begin{equation}\label{e4.18}
u\in W^{1,p}(B(0,2r)),
\end{equation}
and of course its derivative can be computed locally, so it coincides
with the derivative of $u_{i}$ on the corresponding domain.
That is, because of the absence of jump, the distribution derivative of $u$
does not have an extra piece on $S_{r}$.
The proof is not hard: for the existence of the radial derivative, for instance,
we integrate against a test function, use Fubini to integrate on rays, 
write $u$ as the integral of its radial derivative, and on each good ray 
do a soft integration by parts (using Fubini). 
Again, see \cite{D} for details. 

\ms
Let us record a last estimate where we mix the values in $B(0,r)$
and on $S_{r}$.

\begin{lem}\label{l4.2} 
Suppose $u\in W^{1,1}(B(0,r))$, and let $\overline u$ denote the
boundary values of $u$ on $S_{r}$, defined $\sigma$-almost everywhere on 
$S_{r}$ as in \eqref{e4.16}. Then $\overline u \in L^1(S_{r})$, and
\begin{equation}\label{e4.19}
|m^{\sigma}_{S_{r}} \overline u - m_{B(0,r)} u|
\leq C r \fint_{B(0,r)} |\nabla u|.
\end{equation}
\end{lem}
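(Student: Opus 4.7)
The plan is to pass to polar coordinates and exploit the absolute continuity of $u$ along almost every ray, which was recorded earlier in the excerpt (see \eqref{e4.13}--\eqref{e4.14}). Write $\phi(\rho) = \fint_{S^{n-1}} u(\rho\omega)\,d\omega$, so that $\fint_{S_\rho} u\,d\sigma = \phi(\rho)$ for almost every $\rho \in (0,r]$, and by \eqref{e4.13} applied at $\rho = r$ we have $m^\sigma_{S_r} \overline u = \phi(r)$. Rewriting the ball average with \eqref{e4.3} gives
\begin{equation*}
m_{B(0,r)} u = \frac{n}{r^n} \int_0^r \rho^{n-1} \phi(\rho)\, d\rho,
\end{equation*}
and since $\int_0^r \rho^{n-1}\,d\rho = r^n/n$, the difference becomes
\begin{equation*}
m^\sigma_{S_r}\overline u - m_{B(0,r)} u = \frac{n}{r^n}\int_0^r \rho^{n-1}\bigl(\phi(r) - \phi(\rho)\bigr)\,d\rho.
\end{equation*}

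Next I would use the radial absolute continuity: for $\sigma$-almost every $\omega \in S^{n-1}$ and every $\rho \in (0,r]$,
\begin{equation*}
\overline u(r\omega) - u(\rho\omega) = \int_\rho^r \omega\cdot \nabla u(s\omega)\,ds,
\end{equation*}
and therefore $\phi(r) - \phi(\rho) = \fint_{S^{n-1}} \int_\rho^r \omega\cdot\nabla u(s\omega)\,ds\,d\omega$. Plugging this in, taking absolute values and switching the order of the $(\rho,s)$-integration (with the region $0 \le \rho \le s \le r$) yields
\begin{equation*}
\bigl|m^\sigma_{S_r}\overline u - m_{B(0,r)} u\bigr|
\le \frac{n}{r^n} \fint_{S^{n-1}} \int_0^r |\nabla u(s\omega)| \frac{s^n}{n}\,ds\,d\omega
= \frac{1}{r^n} \fint_{S^{n-1}} \int_0^r s\cdot s^{n-1} |\nabla u(s\omega)|\,ds\,d\omega.
\end{equation*}
Recognizing the last expression, by \eqref{e4.3}, as $\frac{1}{r^n |S^{n-1}|}\int_0^r s \int_{S_s}|\nabla u|\,d\sigma\,ds$ and bounding $s \le r$, one obtains $C r \fint_{B(0,r)} |\nabla u|$, which is exactly \eqref{e4.19}.

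For the integrability claim $\overline u \in L^1(S_r)$, I would go back to the radial identity and take $\rho$ so that $u|_{S_\rho} \in L^1(S_\rho)$ (true for a.e.\ $\rho$ by Fubini, in particular for a.e.\ $\rho \in [r/2,r]$). The bound $|\overline u(r\omega)| \le |u(\rho\omega)| + \int_\rho^r |\nabla u(s\omega)|\,ds$, multiplied by $r^{n-1}$, integrated over $\omega \in S^{n-1}$, and then averaged over $\rho \in [r/2,r]$, controls $\|\overline u\|_{L^1(S_r)}$ by $C r^{-1}\|u\|_{L^1(B(0,r))} + C\|\nabla u\|_{L^1(B(0,r))}$. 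No step here is hard; the only point worth watching is to make sure Fubini is applied legitimately on the cone of rays, but this has already been justified in the discussion preceding \eqref{e4.13}--\eqref{e4.18}.
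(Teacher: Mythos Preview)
Your proof is correct, and for the main inequality \eqref{e4.19} it is actually more direct than the paper's. The paper restricts the inner radius to $t\in(1/2,1)$ (so that the spherical Jacobian $(t/r)^{n-1}$ stays bounded below), writes $|\overline u(y)-m|$ pointwise, averages over $y\in S_r$, and then invokes the Poincar\'e inequality \eqref{e4.2} on $B(0,r)$ to dispose of the term $\fint_{B(0,r)}|u-m|$. You instead keep all $\rho\in(0,r)$, swap the $(\rho,s)$ integrals via Tonelli, and let the inner integral $\int_0^s\rho^{n-1}\,d\rho=s^n/n$ produce exactly the polar weight needed to rebuild $\int_{B(0,r)}|\nabla u|$; this avoids Poincar\'e entirely and in fact yields the explicit constant $C=1/n$. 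For the $L^1(S_r)$ integrability of $\overline u$, your argument (average over $\rho\in[r/2,r]$ and bound by $Cr^{-1}\|u\|_{L^1}+C\|\nabla u\|_{L^1}$) is essentially the same as the paper's. One small matter of presentation: since the identity $m^\sigma_{S_r}\overline u - m_{B(0,r)}u = \frac{n}{r^n}\int_0^r\rho^{n-1}(\phi(r)-\phi(\rho))\,d\rho$ presupposes $\phi(r)$ finite, it would be cleaner to place the $\overline u\in L^1(S_r)$ paragraph first.
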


\begin{proof}
Change $u$ on a set of measure $0$ so that $u$ is absolutely continuous 
along almost all rays. Then, for almost every $y\in S_{r}$, we get that 
for every $\rho\in (1/2,1)$
\begin{equation}\label{e4.20}
|\overline u(y)-u(\rho y)| \leq r\int_{t=\rho}^1 |\nabla u|(ty) dt
\leq r\int_{1/2 < t < 1} |\nabla u(ty)| dt
\end{equation}
and hence, setting $m = m_{B(0,r)} u$,
\begin{equation}\label{e4.21}
|\overline u(y)-m| 
\leq |u(\rho y) - m| + r\int_{1/2 < t < 1} |\nabla u(ty)| dt.
\end{equation}
We average this over $\rho$ and get that
\begin{equation}\label{e4.22}
|\overline u(y)-m| 
\leq 2\int_{1/2 < t < 1}|u(t y) - m| dt + r\int_{1/2 < t < 1} |\nabla u(ty)| dt.
\end{equation}
Then we average on $y\in S_{r}$ and obtain
\begin{eqnarray}\label{e4.23}
\fint_{S_{r}} |\overline u(y)-m| d\sigma(y)
&=& 
\fint_{y\in S_{r}} \int_{1/2 < t < 1} 2|u(t y) - m|+r|\nabla u(ty)| 
\nonumber
\\
&\leq& C \fint_{z\in B(0,r)} |u(z)-m| + r |\nabla u(z)|,
\end{eqnarray}
where the constant $C$ comes from a Jacobian, but which we control
because we restricted to $t > 1/2$. We apply Poincar\'e's inequality \eqref{e4.2},
get that
\begin{equation}\label{e4.24}
\fint_{S_{r}} |\overline u(y)-m| d\sigma(y)
\leq C r \fint_{B(0,r)} |\nabla u|,
\end{equation}
and use the triangle inequality to conclude from there.
\qed
\end{proof}

\section{Minimizers are bounded}  \label{bounded} 

In this section we use a variant of the maximum principle to show that 
$\u$ is bounded when $(\u,\W) \in \F$ is a minimizer 
for the functional $J$ and the following assumptions hold:
\begin{equation}\label{e5.1}
|\Omega| < +\infty,
\end{equation}
and, for $1 \leq i \leq N$,
\begin{equation}\label{e5.2}
f_{i} \in L^p(\Omega) \ \text{ for some } p > n/2,
\end{equation}
\begin{equation}\label{e5.3}
f_{i}(x) \geq 0 \text{ almost everywhere,}
\end{equation}
and
\begin{equation}\label{e5.4}
g_{i}(x) \in L^p(\Omega) \text{ for some } p>n/2.
\end{equation}
When $n=1$, let us not allow exponents smaller than $1$.
That is, let us understand that \eqref{e5.2} and \eqref{e5.4}
mean that the $f_i$ and the $g_i$ lie in $L^1(\Omega)$ when
$n=1$.

In this section we shall not be able to escape from using Sobolev
exponents entirely, so we decided to choose our exponents
a little in the spirit of Remark \ref{r3.4}. The reader may assume
that the $f_i$ are bounded and (in dimensions $n \leq 3$)
the $g_i$ lie in $L^2$; this will simplify some estimates 
(as in Section \ref{existence}) but unfortunately not all.

We keep the same conditions on the $f_i$ as in 
Remark \ref{r3.4}, and this way we can compute the terms
$\int u_i^2 f_i$ because $u_i \in W^{1,2}(\R^n)$ and
$u_i = 0$ a.e. on $\R^n \sm \Omega$; see Remark \ref{r3.4}.
Our condition on the $g_i$ is stronger now because 
${n \over 2} \geq {2n \over n+2}$ when $n \geq 2$.
Thus we can also compute the $\int u_i g_i$. 

The new constraint that now $g_i \in L^p$ for some $p> n/2$
is about right: we expect Theorem~\ref{t5.1} to fail 
when $g_{i} \notin L^{n/2}$; 
see Remark \ref{r5.2} at the end of this section. 

Here we shall only assume that $F$ is bounded; then we may not have
an existence theorem, but this does not matter. We only need to assume
that $F$ is bounded because we want a bound on $E(\u)$ (see below);
otherwise we don't need information on $F$ because we shall not modify 
the $W_{i}$ in the proof.

\begin{thm}\label{t5.1} 
Assume that \eqref {e5.1}-\eqref {e5.4} hold, 
that the volume functional $F$ is bounded,
and that $(\u,\W)$ is a minimizer for $J$ in $\F$
(see Definition~\ref{d1.1} and \eqref {e1.3}-\eqref {e1.5}).
Then $\u$ is bounded, and we have bounds on the $||u_{i}||_{\infty}$
that depend only on $n$, $N$, $p$, a bound for $F$,
and the constants that arise in \eqref{e5.1} and\eqref{e5.4}.
\end{thm}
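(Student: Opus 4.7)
The plan is to follow a truncation/Stampacchia iteration scheme. Fix an index $i\in\{1,\dots,N\}$ and a level $M>0$, and define the competitor $\v$ by leaving the components $u_j$ ($j\ne i$) unchanged and replacing $u_i$ by $v_i=\min(u_i,M)$. Since $M\ge0$, $v_i$ vanishes wherever $u_i$ does, so $(\v,\W)\in\F$. Writing $w=(u_i-M)_+$, so that $u_i=v_i+w$ with disjoint gradient supports, the volume term $F(\W)$ is unchanged and a direct computation yields
\begin{equation*}
J(\u,\W)-J(\v,\W)=\int|\nabla w|^2+\int_{\{u_i>M\}}(u_i^2-M^2)f_i-\int w\,g_i.
\end{equation*}
By minimality this quantity is $\le0$, and since $f_i\ge0$ and $u_i>M\ge0$ on the relevant set, the $f_i$-term is nonnegative and can be dropped; hence
\begin{equation*}
\int|\nabla w|^2\le\int w\,g_i\le\int w\,|g_i|.
\end{equation*}

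Next I would combine the Sobolev embedding with H\"older's inequality to run Stampacchia's trick. Set $A(M)=\{u_i>M\}\subset W_i\subset\Omega$. Since $w\in W^{1,2}(\R^n)$ vanishes off $A(M)$, one has $\|w\|_{L^{2^*}}\le C\|\nabla w\|_{L^2}$ when $n\ge3$ (with obvious modifications for $n=2$ using any large Sobolev exponent, and $n=1$ where $W^{1,2}\hookrightarrow L^\infty$ gives the conclusion directly). Because $p>n/2$ implies $p>(2^*)'=2n/(n+2)$, H\"older on $A(M)$ gives $\|g_i\1_{A(M)}\|_{(2^*)'}\le|A(M)|^{\alpha}\|g_i\|_p$ with $\alpha=\tfrac{n+2}{2n}-\tfrac1p>0$. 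Plugging this back:
\begin{equation*}
\|\nabla w\|_{L^2}^2\le C\,\|g_i\|_p\,|A(M)|^{\alpha}\,\|\nabla w\|_{L^2},
\qquad\text{so}\qquad
\|w\|_{L^{2^*}}\le C\|g_i\|_p\,|A(M)|^\alpha.
\end{equation*}
For $M'>M$ we have $w\ge M'-M$ on $A(M')$, giving $(M'-M)^{2^*}|A(M')|\le C\|g_i\|_p^{2^*}|A(M)|^{\alpha 2^*}$. A short check shows $\alpha\cdot 2^*>1$ precisely because $p>n/2$ (equality would occur at $p=n/2$), so Stampacchia's iteration lemma applies and produces $M_0<\infty$ such that $|A(M_0)|=0$, i.e.\ $u_i\le M_0$ a.e.

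To start the iteration I need a baseline estimate on some $|A(M_1)|$. For that I would use the minimality against the trivial competitor $(\0,\W)$: combined with the energy-dominates-$M$ estimate already derived in \eqref{e3.10} (which only used \eqref{e5.1}, \eqref{e5.3}, $\|g_i\|_p$ together with $|\Omega|<\infty$, and a bound on $F$), this gives $E(\u)\le C$ with $C$ depending only on the listed data, whence by Lemma~\ref{l3.2} also $\|u_i\|_2\le C$, and then $|A(M_1)|\le\|u_i\|_2^2/M_1^2$ is as small as we please for $M_1$ large. The symmetric argument with the competitor $v_i=\max(u_i,-M)$ and $w'=(u_i+M)_-$ controls the negative part (here the sign of $u_i+M$ on $\{u_i<-M\}$ conspires to give again $\int|\nabla w'|^2\le\int w'|g_i|$, so the same iteration applies).

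The only real obstacle is to verify that the exponent arithmetic closes, i.e.\ that the strict inequality $p>n/2$ (rather than $\ge$) is what makes $\alpha\cdot 2^*>1$ and hence makes Stampacchia's lemma yield a \emph{finite} $M_0$; this is exactly where the hypothesis \eqref{e5.4} is used sharply, and this is also why Remark~\ref{r5.2} should announce failure at the borderline. The cases $n\in\{1,2\}$ require only bookkeeping: for $n=1$ one replaces $L^{2^*}$ by $L^\infty$ and concludes in one step, and for $n=2$ one takes $L^q$ with $q$ chosen so that $q'<p$, after which the same iteration goes through. All constants in the final bound track only $n$, $N$, $p$, $|\Omega|$, $\|g_i\|_p$ and the bound on $F$, as required.
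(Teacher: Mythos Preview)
Your proof is correct, and it takes a genuinely different route from the paper's. The paper constructs an explicit barrier: it sets $v=G\ast\rho$ where $G$ is the fundamental solution of $-\Delta$ and $\rho\in L^p$ dominates $|g_i|/2$, checks directly that $v\in L^\infty$ with the right bound (this is where $p>n/2$ enters, via integrability of $|x|^{(2-n)q}$ for $q=p'$), and then proves $u_i\le v$ a.e.\ by testing the competitor $u_{t,i}=u_i-t(u_i-\min(u_i,v))_+$ and showing that the first variation forces $(u_i-v)_+=0$. This is a one-step maximum-principle argument, but it has to be redone separately in dimension $n=2$ because the logarithm changes sign, which costs the paper an extra page of localization.

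Your De~Giorgi/Stampacchia truncation avoids the fundamental solution entirely and is more uniform across dimensions; the only place the argument splits is the choice of Sobolev exponent, and your treatment of $n=1,2$ is adequate (for $n=2$ one can, for instance, apply the $W^{1,p_0}\hookrightarrow L^{2p_0/(2-p_0)}$ embedding for some $p_0<2$ together with H\"older on $\nabla w$ to get $\|w\|_q\le C|A(M)|^{1/p_0-1/2}\|\nabla w\|_2$, which is enough to close the iteration). Both approaches use only $f_i\ge0$ (not $\|f_i\|_p$), consistent with the statement, and both rely on the a~priori energy bound coming from \eqref{e3.10} (or its extension in Remark~\ref{r3.4} when $p<2$) to get started. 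The paper's method gives a slightly more explicit pointwise comparison $|u_i|\le v$; yours is more self-contained and arguably cleaner.
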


\begin{proof}
First we should observe that we have bounds on the energy
$E(\u)$ of \eqref{e3.6}, which depend only on $n$, $N$, $p$, 
a bound for $F$, and the constants that arise in 
\eqref{e5.1} and \eqref{e5.4}. This follows from 
\eqref{e3.8} (with $f_{i,-}=0$ and where we never use 
estimates on the size of $f_i$), and a trivial bound
for $J(\u,\W)$ that we obtain by testing the function $\u=0$.
We included the assumption that $F$ is bounded precisely for this;
otherwise, our estimates would also depend on $E(\u)$.

We shall first do the proof in dimensions larger than $2$, because 
of complications with the sign of the fundamental solution of $-\Delta$ 
in dimension 2; the proof will have to be modified when $n=2$; we 
will take care of that near \eqref{e5.21}. 

So let us assume that $n \geq 3$.
We intend to show that each $u_{i}$ is bounded by comparing 
it with a function $v$ that we shall construct by hand. 
Fix $i$, and choose $\rho \in L^p(\R^n)$, with $||\rho||_{p} \leq ||g_{i}||_{p}$
and such that $\rho(x) = 0$ for $x\in \R^n \sm \Omega$ but
$\rho(x) > |g_{i}(y)|/2$ almost everywhere on $\Omega$
(we lose a factor $2$ but we win a strict inequality). Then set
\begin{equation}\label{e5.5}
v(x) = (\rho \ast G)(x) = \int_{\R^n} G(x-y) \rho(y) dy,
\end{equation}
where $G$ is the fundamental solution of $-\Delta$. Here $n \geq 3$ so
$G(z) = c_{n} |z|^{2-n}$ for some positive constant $c_{n}$.
In dimension $2$, we would get a logarithm, and we would not like 
that as much because it takes negative values.

Let us first check that the integral converges, and even that for all $x\in \R^n$,
\begin{equation}\label{e5.6}
0 \leq v(x) = C |\Omega|^{2p-n\over np} ||g_{i}||_{p}.
\end{equation}
Let $q$ denote the conjugate exponent of $p$ and $r > 0$ be such that
$|\Omega| = |B(0,r)|$. Observe that since $G$ is radial and decaying,
\begin{equation}\label{e5.7}
\int_{\Omega} G(x-y)^q dy \leq \int_{B(0,r)} G(z)^q dz 
= C \int_{B(0,r)} |z|^{-q(n-2)} dz.
\end{equation}
Recall that $p > n/2$, so $p^{-1} < 2/n$, $q^{-1} > (n-2)/n$,
$q < n/(n-2)$, $q(n-2) < n$, and the integral converges. We also get that
$\int_{\Omega} G(x-y)^q dy \leq C r^{n-q(n-2)}$. Then by H\"older
\begin{equation}\label{e5.8}
||v||_{\infty} \leq ||G||_{L^q(B(0,r))} ||\rho||_{p}
\leq C r^{{n \over q}-(n-2)} ||\rho||_{p} 
= C |\Omega|^{{1\over q}-{n-2 \over n}} ||\rho||_{p},
\end{equation}
which implies \eqref{e5.6} because $||\rho||_{p} \leq ||g_{i}||_{p}$.
We may as well assume that $p < n$, because the result is easier
otherwise. We shall need to know that 
\begin{equation}\label{e5.9}
\Delta v = - \rho  \ \text{ and } \ \nabla^2 v \in L^p(\R^n)
\end{equation}
and 
\begin{equation}\label{e5.10}
\nabla v \in L^{r}(\R^n), \text{ with } r = {n \over n-p}.
\end{equation}
We start with \eqref{e5.9}; when $\varphi$ is smooth and compactly supported, 
$\Delta(G \ast \varphi) = -\varphi$ (by our choice of $G$), 
and $\nabla^2 (G \ast \varphi)$ is obtained from $-\varphi$ by applying 
second order Riesz transforms. These are known to be bounded on $L^p$ 
because $p > 1$ and we assumed that $p< n < +\infty$
(for this fact and the proof of \eqref{e5.10}, see \cite{S}), 
so we get that $||\nabla^2 (G \ast \varphi)||_{p} \leq C ||\varphi||_{p}$.
Then \eqref{e5.9} follows by writing $\rho$ as a limit in $L^p$ of
test functions $\varphi$. 

So $v$ lies in the Sobolev space
$W^{2,p}(\R^n)$, hence $\nabla v \in W^{1,p}(\R^n) \i L^r$,
where $1/r = 1/p - 1/n$, and \eqref{e5.10} holds.
See for instance Theorem 1 on page 119 of \cite{S}.

\ms
Return to the proof of Theorem \ref{t5.1}, still when $n \geq 3$. 
We shall show that $u_{i} \leq v$ almost everywhere on $\Omega$
(hence on $\R^n$, since $u_i = 0$ on $\R^n \sm \Omega$)
and the proof will also show that $u_{i} \geq -v$
(either change the sign of $u_{i}$ and $g_{i}$ in the functional, 
or modify slightly the proof).
Theorem \ref{t5.1} will follow because $v$ is bounded by \eqref{e5.6}.
Set 
\begin{equation}\label{e5.11}
w = \min(u_{i},v), h = u_{i}-w \geq 0, 
\end{equation}
and, for $0 < t < 1$,
\begin{equation}\label{e5.12}
u_{t,i} = u_{i} - t h = (1-t) u_{i} + t w.
\end{equation}
We want to see whether replacing $u_{i}$ with $u_{t,i}$
yields a better competitor. First observe that for almost every 
$x\in \R^n\sm W_{i}$, $u_{i}(x)=0$ and hence $w(x)=h(x)=u_{t,i}(x)=0$.
Next, $v \in W^{1,2}_{loc}(\R^n)$, by \eqref{e5.10} and because
${n \over n-p} \geq 2$ when $n/2 \leq p < n$.
Since both $u_{i}$ and $v$ lie in $W^{1,2}_{loc}(\R^n)$, so do 
$w$, $h$, and $u_{t,i}$. Since their derivative vanishes a.e. on 
$\R^n \sm W_{i}$, we get that
\begin{equation}\label{e5.13}
\text{$w$, $h$, and $u_{t,i}$ lie in $W^{1,2}(\R^n)$.}
\end{equation}
Notice also that if $u_{i} \geq 0$, then $w \geq 0$ too, and
(by \eqref{e5.12} and because $0 < t <1$), $u_{t,i} \geq 0$. 
Because of all this, if we replace $u_{i}$ with $u_{t,i}$,
leave the other $u_{j}$ as they are, and also keep the same sets $W_{j}$,
we get a new pair $(\u_{t},\W)$ that still lies in $\F$.
By minimality, $J(\u_{t},\W) \geq J(\u,\W)$.
We compute
\begin{eqnarray}\label{e5.14}
J(\u_{t},\W)-J(\u,\W) &=& E(\u_{t})-E(\u) + M(\u_{t})-M(\u)
\nonumber \\
&= &\int_{\Omega} |\nabla u_{t,i}|^2 - |\nabla u_{i}|^2 
+ [u_{t,i}^2 - u_{i}^2] f_{i} - [u_{t,i}-u_{i}] g_{i} \, ;
\end{eqnarray}
then we use \eqref{e5.12} and compute the derivative 
\begin{eqnarray}\label{e5.15}
{d \over dt}J(\u_{t},\W)_{|t=0} &=& 
-2 \int_{\Omega} \langle \nabla u_{i},\nabla h \rangle
-2 \int_{\Omega} h u_{i} f_{i}
+ \int_{\Omega} hg_{i}
\nonumber \\
&\leq & -2 \int_{\Omega} \langle \nabla u_{i},\nabla h \rangle
+ \int_{\Omega} hg_{i}
\end{eqnarray}
because $h$ and $f_{i}$ are nonnegative, and $u_{i}(x) > v(x) \geq 0$
when $h(x) \neq 0$. Notice that our assumption \eqref{e5.2} that $f_i$
lie in $L^p$ for some $p > n/2$ was useful to define
$\int [u_{t,i}^2 - u_{i}^2] f_{i}$ (see Remark \ref{r3.4});
now we used the main assumption that $f_{i} \geq 0$
and we can forget about \eqref{e5.2} (and in fact $f_i$ altogether).

The derivative is nonnegative (because 
$J(\u_{t},\W) \geq J(\u,\W)$), hence
\begin{equation}\label{e5.16}
2 \int_{\Omega} \langle \nabla u_{i},\nabla h \rangle
\leq \int_{\Omega} hg_{i}.
\end{equation}
We may now use the fact that $\Delta v = - \rho$ (by \eqref{e5.9}),
which means that
\begin{equation}\label{e5.17}
\int_{\R^n} \langle \nabla v,\nabla \varphi \rangle = 
- \int_{\R^n} \Delta v \varphi  = \int_{\R^n} \rho \varphi
\end{equation}
for every test function $\varphi$. Let us check that
this remains true with $\varphi = h$. Denote by $r'$
the conjugate exponent of $r = {n \over n-p}$ in \eqref{e5.10};
thus $r' = {n \over p} < 2$.
Recall from \eqref{e5.13} that $h \in W^{1,2}(\R^n)$;
thus $h \in W^{1,r'}(\R^n)$ (recall that $h$ and $\nabla h$ are supported
on $W_i \i \Omega$), and we can write $h$ as a limit in $W^{1,2} \cap W^{1,r'}$ 
of test functions $\varphi_k$.
Then the left-hand side of \eqref{e5.17} converges to 
$\int_{\R^n} \langle \nabla v,\nabla h \rangle$, because
$\nabla v \in L^r(\Omega)$ by \eqref{e5.10}.

For the left-hand side, we know that $\rho \in L^p$, so it is enough
to show that the $\varphi_k$ converge to $h$ in $L^{p'}$.
But $p' < {n \over n-2}$ because $p > {n \over 2}$, so the desired
convergence follows from the convergence of the $\varphi_k$ in 
$W^{1,2}$, because the Sobolev exponent for $W^{1,2}$
is ${2n \over n-2} > {n \over n-2}$, and by the proof of 
Lemma \ref{l3.2}. So \eqref{e5.17} holds with $\varphi = h$, i.e.,
\begin{equation}\label{e5.18}
\int_{\R^n} \langle \nabla v,\nabla h \rangle =  \int_{\R^n} \rho h.
\end{equation}

Set $Z = \big\{ x\in \R^n \, ; \, h(x) > 0 \big\}$. By \eqref{e5.11},
$w(x) \neq u_{i}(x)$ and hence $w(x) = v(x)$ on $Z$.
Since $\nabla h = 0$ almost everywhere on $\R^n \sm Z$, we 
get that 
\begin{eqnarray}\label{e5.19}
\int_{\R^n} \rho h &=& \int_{\R^n} \langle \nabla v,\nabla h\rangle
= \int_{Z} \langle \nabla v,\nabla h\rangle = \int_{Z} \langle \nabla w,\nabla h\rangle
\nonumber \\
&=& \int_{\R^n} \langle \nabla w,\nabla h\rangle
= \int_{\R^n} \langle \nabla (u_{i}-h),\nabla h\rangle
\leq \int_{\R^n} \langle \nabla u_{i},\nabla h\rangle
\end{eqnarray}
(by \eqref{e5.11} again, and because $\int |\nabla h|^2 \geq 0$).
By \eqref{e5.19} and \eqref{e5.16}
\begin{equation}\label{e5.20}
\int_{\R^n} 2\rho h \leq 2\int_{\R^n} \langle \nabla u_{i},\nabla h\rangle
= 2\int_{\Omega} \langle \nabla u_{i},\nabla h\rangle
\leq \int_{\Omega} hg_{i}.
\end{equation}
Recall that we chose $\rho$ such that $\rho = O$ on $\R^n\sm \Omega$
and $\rho > |g_{i}|/2$ everywhere on $\Omega$; 
since $h \geq 0$ by \eqref{e5.11}, we deduce from \eqref{e5.20} that $h(x) = 0$ 
almost everywhere on $\Omega$, which precisely means that 
$u_{i} \leq v$ almost everywhere on $\Omega$. 
A minor modification of the argument would show that
$-u_{i} \leq v$ too, and we have seen that this proves 
Theorem \ref{t5.1} when $n\geq 3$.

\ms
We now turn to $n=2$. A fundamental solution of $-\Delta$ is now
$\log(1/|x|)$, but since it becomes negative for $|x|$ large,
we shall need to localize the argument above. Let $i$ be given, and also
choose $r$ so that $|B(0,r)| = 2 |\Omega|$. We will find a constant $C_{0}$
(that depends only on $p > 1$, $|\Omega|$ and $||g_{i}||_{p}$) such that
$u_{i} \leq C_{0}$ almost everywhere on $B(0,r/2)$. The same proof would
also yield $-u_{i} \leq C_{0}$, and since we can choose the origin arbitrarily, 
this will give the desired $L^\infty$ bound. We still choose $\rho \in L^p$
such that $||\rho||_{p} \leq ||g_{i}||_{p}$, $\rho = 0$ on $\R^n \sm \Omega$,
and $2\rho > |g_{i}|$ on $\Omega$, and set 
\begin{equation}\label{e5.21}
v(x) = \int_{B(0,2r)} G(x-y) \rho(y) dy,
\text{ with } G(x) = c \log(10r/|x|),
\end{equation}
where $c$ is still chosen so that $\Delta G = -\delta_{0}$ (a Dirac mass);
we added a constant to the logarithm to make sure that 
$v \geq 0$ in $B(0,2r)$. Denote by $q$ the conjugate exponent of $p$
and observe that for $x\in B(0,2r)$,
\begin{equation}\label{e5.22}
\int_{B(0,2r)} |G(x-y)|^q dy 
\leq \int_{z\in B(0,4r)} |G(z)|^q dz
\leq \int_{B(0,4r)} \log^q(10r/|x|) \leq C,
\end{equation}
where $C$ depends on $p$ and $r$, so by H\"older
\begin{equation}\label{e5.23}
|v(x)| \leq C ||\rho||_{p} \leq C ||g_i||_{p}
\ \text{ for } x\in B(0,2r).
\end{equation}

Now we want to choose a radius $s \in (r,2r)$ where we will do some 
surgery. We modify $u_{i}$ on a set of measure zero so that
\eqref{e4.13} (the continuity of $u_{i}$ along almost all rays) holds.
We want the restriction of $u_i$ to $S_{s}$ to be in $W^{1,2}$;
this is true for almost every $s$, because $u_i\in W^{1,2}(\R^n)$
(compare with \eqref{e4.14}). We also require that
\begin{equation}\label{e5.24}
\int_{S_{s}} |\nabla_{t} u_i|^2 d\sigma \leq 10 r^{-1} \int_{\R^2} |\nabla u_i|^2,
\end{equation}
which by \eqref{e4.15} and Chebyshev is true except for a set of measure
at most $r/10$ of radii $s$. In addition, we chose $s$ such that
\begin{equation}\label{e5.25}
u_i(y) = 0 \text{ for $\sigma$-almost every } y\in S_{s}\sm \Omega
\end{equation}
(also true for almost every $s$, because $u_i = 0$ almost everywhere
on $\R^n \sm W_i$) and
\begin{equation}\label{e5.26}
\sigma(S_{s}\sm \Omega) \geq 10^{-1} \sigma(S_{s})
\end{equation}
which by Fubini holds for most $s \in (r,2r)$ because
\begin{eqnarray}\label{e5.27}
\int_{s=r}^{2r} \sigma(S_{s}\sm \Omega) ds
&=& \Big|[B(0,2r)\sm B(0,r)]\sm \Omega \Big|
\nonumber
\\
&\geq& |B(0,2r)\sm B(0,r)| - |\Omega|
\geq {1 \over 2} |B(0,2r)\sm B(0,r)|
\end{eqnarray}
by \eqref{e4.3} and because $|B(0,r)| = 2 |\Omega|$.
So we choose $s$ with all these properties.

Here things will be simpler because $n=2$; we could make the argument work
in higher dimensions (instead of adding a constant below, add the harmonic extension
of the restriction of $|u_i|$ to $S_{s}$), but let us not do that.
By \eqref{e5.24}, we have that for almost all choices of $x, y \in S_{s}$,
\begin{equation}\label{e5.28}
|u_i(x)-u_i(y)| \leq \int_{S_{s}} |\nabla_{t} u_i| d\sigma
\leq C s^{1/2} \Big\{\int_{S_{s}} |\nabla_{t} u_i|^2 d\sigma\Big\}^{1/2}
\leq C ||\nabla u_i||_{2}.
\end{equation}
We apply this with some $y\in S_{s}\sm \Omega$ and get that
\begin{equation}\label{e5.29}
|u_i(x)| < C_{1}
\text{ for $\sigma$-almost every } x\in S_{s},
\end{equation}
where $C_{1}$ depends on $||\nabla u_i||_{2}$. 

We may now define a competitor for $(\u,\W)$, as we did near
\eqref{e5.11}. First set 
\begin{equation}\label{e5.30}
\begin{array}{lll}
w(x) =& u_{i}(x) &\text{ for } x\in \R^2 \sm B(0,s)
\\
w(x) =& \min(u_{i}(x),v(x)+ C_{1}) &\text{ for } x\in B(0,s),
\end{array}
\end{equation}
where $v$ is as in \eqref{e5.21} and $C_{1}$ as in \eqref{e5.29}.
We want to show that 
\begin{equation}\label{e5.31}
w \in W^{1,2}(\R^2),
\end{equation}
and we shall first check that
\begin{equation}\label{e5.32}
v \in W^{1,2}(B(0,2r)) \ \text{ with  } \ 
\nabla v = \nabla G \ast (\1_{B(0,2r)}\rho).
\end{equation}
Recall from \eqref{e5.21} that $v = G \ast \psi$,
with $\psi = \1_{B(0,2r)}\rho \in L^p$.

Let $r$ be such that $1/r = 1/p - 1/2 < 1/2$;
since $|\nabla G|$ is a Riesz potential of order $1$,
the Hardy-Littlewood-Sobolev theorem of fractional integration 
on page 119 of \cite{S} says that for any $\psi \in L^p(\R^n)$,
$|\nabla G| \ast |\psi| \in L^r$, with
\begin{equation}\label{e5.33}
\big|\big| |\nabla G| \ast |\psi| \big|\big|_{r} \leq C ||\psi||_{p}.
\end{equation}
Returning to $\psi = \1_{B(0,2r)}\rho$, we see that
$\nabla G \ast \psi \in L^r$ and, since $r > 2$, 
$\nabla G \ast \psi \in L^2(B(0,2r))$ as well.

We still need to check that $\nabla G \ast \psi$ is the distribution
derivative of $v$ in $B(0,2r)$.
If $\psi$ were a test function, or just bounded with compact support,
we could brutally differentiate $v = G \ast \psi$ under the integral,
using the fact that $|\nabla G|$ is integrable near the origin,
and get that $\nabla v =\nabla G \ast \psi$ (a continuous derivative).
Unfortunately, $\psi$ is not bounded, but we can write it as the limit
in $L^p$ of a sequence of test functions $\psi_k$. Then set
$v_k = G \ast \psi_k$; we just observed that 
$\nabla v_k = \nabla G \ast \psi_k$,
and \eqref{e5.33} shows that $\nabla v_k$ tends to 
$\nabla G \ast \psi$ in $L^r$, hence in $L^2(B(0,2r))$.
But $v_k$ tends to $v$ in $L^2(B(0,2r))$, more brutally
because $|G|$ is integrable near the origin, and this implies that
$v \in W^{1,2}(B(0,2r))$, with $\nabla v = \nabla G \ast \psi$
(pair $v$ against a the gradient of test function).

So \eqref{e5.32} holds, and by \eqref{e5.30} this implies that
$w\in W^{1,2}(B(0,r))$, because the minimum of two functions of 
$W^{1,2}$ lies in $W^{1,2}$. 

We also have that  $w \in W^{1,2}(\R^2 \sm \overline B(0,r))$,
again by \eqref{e5.30} and  because $u_{i} \in W^{1,2}(\R^2)$.

We want to glue the two pieces, so we consider the radial 
limits of $w$ on $S_{s}$. From $\R^2 \sm B(0,s)$, 
this limit is equal to the restriction of $u_{i}$ to $S_{s}$
(because $u_{i}$ is continuous along almost all rays). 
From $B(0,s)$, let us modify $v$ so that it is also continuous along almost all
rays; then $v + C_{1}$ has limits on $S_{r}$ that are larger than
the values of $u_{i}$, by \eqref{e5.29} and because $v \geq 0$
on $B(0,2r)$. Then the radial limit of $w$ from $B(0,s)$ is the same
as for $u_{i}$, the gluing condition \eqref{e4.17} holds, and \eqref{e4.18}
says that $w\in W^{1,2}(\R^2)$, as needed for \eqref{e5.31}.

\ms
Obviously $w \leq u_{i}$ (just by \eqref{e5.30}), so $h = u_{i} - w$
is nonnegative, and null outside of $B_{s}$. 
Now we follow the same argument as when $n \geq 3$,
starting below \eqref{e5.12}. We still have \eqref{e5.16}
for the same reasons, but we need to compute 
its left-hand side $2\int_{\R^n} \langle \nabla u_{i},\nabla h \rangle$
differently. We want to replace \eqref{e5.17} with the fact that
\begin{equation}\label{e5.34}
\int_{\R^n} \langle \nabla v,\nabla \varphi \rangle 
= \int_{\R^n} \rho \1_{B(0,2r)} \varphi
\end{equation}
for every test function $\varphi$,
which we expect to hold because $v = G \ast \rho \1_{B(0,2r)}$
should yield $\Delta v = - \rho \1_{B(0,2r)}$ in the sense of distributions.
And indeed \eqref{e5.32} yields
\begin{eqnarray}\label{e5.35}
\int_{\R^n} \langle \nabla v,\nabla \varphi \rangle 
&=& \int_{\R^n} \langle \nabla G \ast \rho \1_{B(0,2r)},\nabla \varphi \rangle 
\nonumber\\
&=&\int_{x\in \R^n} \int_{y\in B(0,2r)}
\rho (y) \langle \nabla G(x-y) ,\nabla \varphi(x) \rangle dxdy,
\end{eqnarray}
where the double integral converges absolutely by \eqref{e5.33}.
We apply Fubini, use the fact that
\begin{equation}\label{e5.36}
\int \langle \nabla G(x-y) ,\nabla \varphi(x) \rangle dx
= \int \langle \nabla G(x) ,\nabla \varphi(x+y) \rangle
= - \langle \Delta G, \varphi(\cdot+y)\rangle
= \varphi(y)
\end{equation}
by definition of $G$ (and because $\varphi$ is a test function),
and get \eqref{e5.34}. Then we wish to replace $\varphi$ with 
$h$ in \eqref{e5.34}, i.e., get that
\begin{equation}\label{e5.37}
\int_{\R^n} \langle \nabla v,\nabla h \rangle =  \int_{B(0,2r)} \rho h.
\end{equation}
as in \eqref{e5.18}. Recall that $h = u_i - w$, so $h\in W^{1,2}(\R^n)$.
In addition, $h= 0$ on $\R^n \sm B^s$, so the Sobolev embedding
theorem says that in lie in $L^q$ for every $q < +\infty$. We choose 
for $q$ the dual exponent of $p$ (recall that $p > n/2 = 1$).
Then we write $h$ as a limit of test functions $\varphi_k$, so that
the $\nabla \varphi_k$ converge to $\nabla h$ in $L^2$
and the $\varphi_k$ converge to $h$ in $L^q$. The identity 
\eqref{e5.34} for $\varphi_k$ goes to the limit, because $\nabla v \in L^2$
by \eqref{e5.32} and $\rho \in L^p$ by definition. So \eqref{e5.37}
holds.

Set $Z = \big\{ x\in \R^n \, ; \, h(x) > 0 \big\}$ as before, and notice that
$Z \i B_s$ (modulo a negligible set). This time the definition of $h$ yields 
$w(x) = v(x) + C_{1}$ on $Z$, and
\begin{eqnarray}\label{e5.38}
\int_{\R^n} \rho h &=& \int_{\R^n} \langle \nabla v,\nabla h\rangle
= \int_{Z} \langle \nabla v,\nabla h\rangle 
= \int_{Z} \langle \nabla (w-C_{1}),\nabla h\rangle
\nonumber \\
&=& \int_{\R^n} \langle \nabla w,\nabla h\rangle
= \int_{\R^n} \langle \nabla (u_{i}-h),\nabla h\rangle
\leq \int_{\R^n} \langle \nabla u_{i},\nabla h\rangle
\end{eqnarray}
by \eqref{e5.37}, as in \eqref{e5.19}, and because
$\nabla C_{1} = 0$. So the conclusion
of \eqref{e5.19} still holds, and we may end the argument as before.
This completes our proof of Theorem \ref{t5.1} when $n=2$.

We are left with the easier case when $n=1$.
Recall that we have a bound on $E(\u)$ (see the very beginning
of the proof, and recall that we could take $p=r=1$ in Remark \ref{r3.4});
then $u_i$ is H\"older continuous, with bounds that depend only $E(\u)$,
and since $u_i = 0$ almost everywhere on $\R \sm \Omega$, we get
the desired $L^\infty$ bounds on the $u_i$ because $|\Omega| < +\infty$.
\qed
\end{proof}

\begin{rem}\label{r5.2} 
The constraint that $p > n/2$ in \eqref{e5.4} is not so far from optimal,
in the sense that the exponent $n/2$ cannot be made smaller.
\end{rem}

Let us just consider the case when $N=1$, $f_{1}=0$, $F = 0$, and $\Omega = B(0,1)$.
It is not too hard to check that the minimizer for $J$ is the pair $(u,\Omega)$,
where $u$ is the solution of $\Delta u = -g_{1}/2$, with the usual Dirichlet constraint
$u = 0$ on $\R^n \sm \Omega$ (see \eqref{e9.6} below for the equation). 
Locally $u$ behaves like $G \ast g_{1}$, which in general is not bounded (even locally) 
when $g_{1} \notin L^{n/2}$; for instance, take $g_{1}(x) = |x|^{-2}$ near the origin
and observe that $G \ast g_{1}(x)$ tends to $G \ast g_{1}(0)= +\infty$ 
when $x$ tends to $0$.

\section{Two favorite competitors}  \label{favorites} 

We shall soon start for good our study of the local regularity of minimizers
for the functional $J$, and in this section we present constructions of competitors
that we shall often use to obtain information on such minimizers. In particular,
the second one (harmonic competitors) will sometimes be a good replacement
for the main competitor that people use when $N \leq 2$.

We are given a pair $(\u,\W) \in \F$ (see Definition \ref{d1.1}), and a ball
$B$, and we want to define other pairs $(\u^\ast,\W^\ast) \in \F$ by modifying
$\u$ and $\W$ in $B$ (with $u=u^*$ and $W=W^*$ in $\Omega\sm B$), and then compare them with $(\u,\W) \in \F$ to get 
valuable information if $(\u,\W)$ is a minimizer.  

\ms
The first pair will be called the \underbar{cut-off competitor}. We may as well suppose
that $B = B(0,r)$ (by translation invariance of our problems), we give ourselves
a number $a \in (0,1)$ (often close to $1$), and we choose a smooth cut-off 
function $\varphi$ such that 
\begin{equation} \label{e6.1}
\begin{array}{cl}
\varphi(t) = 0 & \text{ for }  0 \leq t \leq a r
\\
0 \leq \varphi(t) \leq  1  & \text{ for }  a r \leq t \leq r
\\
\varphi(t) = 1 & \text{ for }  t \geq r 
\\ 
0 \leq \varphi'(t) \leq 2 (1-a)^{-1} r^{-1}  &\text{ everywhere.}
\end{array}
\end{equation}
Then we pick any collection $I$ of indices $1 \leq i \leq N$, and set
\begin{equation} \label{e6.2}
\begin{array}{rll}
u_{i}^\ast(x) &= \varphi(|x|) u_{i}(x) & \text{ when  }  i\in I
\\
u_{i}^\ast(x) &=  u(x) & \text{ when  }  i\notin I.
\end{array}
\end{equation}
Here the simplest is to keep $\W^\ast = \W$, because $u_{i}^\ast(x) = 0$ as soon as
$u_{i}(x) = 0$, but the fact that $u_{i}^\ast = 0$ in $B(0,ar)$ allows us to modify
some of the $W_{i} \cap B(0,ar)$ in some arguments. Anyway, it is easy to see that
$(\u^\ast,\W) \in \F$; the only thing left to check is that $u_{i} \in W^{1,2}(\R^n)$
for $i\in I$. But the definition of a distribution derivative yields
\begin{equation}\label{e6.3}
\nabla u_{i}^\ast(x) = \varphi(|x|) \nabla u_{i}(x) + u_{i}(x) \varphi'(|x|) {x \over |x|}
\end{equation}
which lies in $L^2$ because $\varphi'$ is bounded and $u\in L^2(B)$ 
(recall that $u\in W^{1,2}(B)$ and use Poincar\'e's inequality \eqref{e4.2}). 
We shall often choose a small number $\tau > 0$, and apply the fact that
$(A+B)^2 = A^2 + B^2 + 2AB \leq (1+\tau) A^2 + (1+\tau^{-1}) B^2$
for $A, B \geq 0$ to get that for $x\in B$,
\begin{equation}\label{e6.4}
|\nabla u_{i}^\ast(x)|^2 \leq  (1+\tau) |\nabla u_{i}(x)|^2 
 + 4(1-a)^{-2} (1+\tau^{-1}) r^{-2} |u_{i}(x)|^2 .
\end{equation}
This will typically be useful when we know that, by some application of Poincar\'e's
inequality, $\int_{B} |u_{i}|^2$ is small. Of course $\nabla u_{i}^\ast(x) = 0$
when $x\in B(0,ar)$ (by (\ref{e6.1}),  (\ref{e6.2}), and because $i\in I$).
We now integrate and get that for $i\in I$,
\begin{equation}\label{e6.5}
\int_{B(0,r)} |\nabla u_{i}^\ast|^2 
\leq  (1+\tau) \int_{B(0,r) \sm B(0,ar)} |\nabla u_{i}|^2 
 + 4(1-a)^{-2} (1+\tau^{-1}) r^{-2} \int_{B(0,r) \sm B(0,ar)} |u_{i}|^2.
\end{equation}
Let us also record trivial estimates for the difference in the terms of $M(\u)$; 
for $i\in I$,
\begin{eqnarray}\label{e6.6}
\Big|\int_{\Omega} (u_{i}^\ast)^2 f_{i} - \int_{\Omega} u_{i}^2 f_{i}\Big|
&= &
\Big|\int_{B(0,r)} (1-\varphi(|x|)^2) u_{i}(x)^2 f_{i}(x) \Big|
\nonumber
\\
&\leq& \int_{B(0,r)} |u_{i}^2 f_{i}|
\leq C r^{n-{n \over p}} ||u_{i}||_{\infty}^2 ||f_{i}||_{p}
\end{eqnarray}
and 
\begin{eqnarray}\label{e6.7}
\Big|\int_{\Omega} u_{i}^\ast g_{i} - \int_{\Omega} u_{i}g_{i}\Big|
&= &\Big|\int_{B(0,r)} (1-\varphi(|x|)) u_{i}(x) g_{i}(x) \Big|
\nonumber
\\
&\leq&\int_{B(0,r)} |u_{i} g_{i}|
\leq C r^{n-{n \over p}} ||u_{i}||_{\infty} ||g_{i}||_{p}.
\end{eqnarray}

\ms
The next competitors that we want to introduce are obtained
by extending the values of $\u$ on $S_{r} = \d B(0,r)$. 
Recall from the discussion near \eqref{e4.13} that modulo modifying
$\u$ on a set of measure zero, we can assume that $\u$
is continuous along almost every ray, and (as in \eqref{e4.14})
that the restriction of $u$ to $S_{r}$ is itself in $W^{1,2}(S_{r})$
for almost every $r$, with partial derivatives that correspond
to the restriction of the derivative $Du$ to $S_{r}$.

It will be easier to define our competitors for these radii $r$,
because estimates for the tangential gradient $\nabla_{t} u$ 
on the sphere will often be useful to control the extension. 

The simplest description is when $N=1$ (and $u=u_{1}$ is
allowed to be real-valued). We assume that
\begin{equation}\label{e6.8}
\text{the restriction of $u$ to $S_{r}$ lies in $W^{1,2}(S_{r})$}
\end{equation}
(this holds for almost all $r$, by the discussion above), and also that
\begin{equation} \label{e6.9}
B(0,r) \i \Omega \ \text{ (modulo a set of vanishing Lebesgue measure).}
\end{equation}
Denote by $\overline u$ the restriction of $u$ to $S_{r}$, and by
$u^\ast$ the harmonic extension of $\overline u$ to $B(0,r)$, 
obtained by convolution of $\overline u$ with the Poisson kernel. 
Also set $u^\ast = u(x)$ for $x\in \R^n \sm B(0,r)$.
In this simple case the harmonic competitor (for the pair
$(u_{1},W_{1}) \in \F$, in the ball $B(0,r)$) is just the pair 
$(u^\ast,W^\ast)$, where $W^\ast = W_{1} \cup (\Omega \cap B(0,r))$.
Let us check that $(u^\ast,W^\ast) \in \F$.
It is well known that when $\overline u \in W^{1,2}(S_{r})$,
$u^\ast \in W^{1,2}(B(0,r))$, and even 
\begin{equation}
\label{e6.10}
\int_{B(0,r)} |\nabla u^\ast|^2  = \inf\Big\{ |\nabla v|^2 \, ; \, 
v \in W^{1,2}(B(0,r)) \hbox{ and } v = \overline u \text{ on } S_{r} \big\},
\end{equation}
where by $v = \overline u$ on $S_{r}$ we mean that the radial limits
of $v$ on $S_{r}$ (which exist as in \eqref{e4.13} and \eqref{e4.16})
coincide with $\overline u$ almost everywhere on $S_{r}$.
See for instance \cite{D}, Chapter 15 (and use the maximum principle
for the uniqueness of the harmonic extension $u^\ast$).
Also, $u^\ast$ itself satisfies $u^\ast = \overline u$ on $S_{r}$
and it is the unique minimizer in \eqref{e6.10}.

The gluing condition \eqref{e4.17} on $S_{r}$ holds, because we kept
$u^\ast = u$ on $\R^n \sm B(0,r)$, so \eqref{e4.18} implies that
$u^\ast \in W^{1,2}(\R^n)$. Since we added the constraint 
\eqref{e6.9}, almost every point of $B(0,r)$ lies in $W^\ast$,
and hence $u^\ast = 0$ almost everywhere on $\R^n \sm W^\ast$
(because $u = 0$ almost everywhere on $\R^n \sm W_{1}$).
If in addition we required that $u_{1} \geq 0$, we still get $u^\ast \geq 0$.
So $(u^\ast,W^\ast) \in \F$, and it is often a very good competitor to use,
because of \eqref{e6.10} and the fact that $u^\ast$ is as smooth as possible
in $B(0,r)$.

When $N=2$, but we only consider nonnegative functions $u_{i}$, there
is a nice trick that allows us to define the harmonic competitor.
We start from $\u = (u_{1},u_{2})$, and set $u = u_{1}-u_{2}$, which is
now a real-valued function in $W^{1,2}(\R^n)$. For $r$ as above
(i.e. satisfying \eqref{e6.8} and \eqref{e6.9}), we define $\overline u$
and $u^\ast$ as above. Then we set $\u^\ast = (u^\ast_{1},u^\ast_{2})$, 
where $u^\ast_{1}$ is the positive part of $u^\ast$ and $u^\ast_{2}$ 
is its negative part. The sets $Z_{i} = \big\{ x\in B(0,r) \, ; \, u^\ast_{i}>0\big\}$
are disjoint, so we may set $W_{i}^\ast = [W_{i}\sm B(0,r)]
\cup [Z_{i} \cap \Omega]$, and get disjoint subsets of $\Omega$.
This gives a pair $(\u^\ast,\W^\ast)$, which lies in $\F$ as before
(notice in particular that $u_{i}^\ast = 0$ almost everywhere on $\R^n \sm W_{i}^\ast$,
again by \eqref{e6.9}). This trick has been used extensively in the literature,
often implicitly by setting the problem directly in terms of $u = u_{1}-u_{2}$.

Unfortunately, this trick is not available when $N \geq 3$
(or when $N=2$ and we use real-valued functions).
If we just extend the restrictions to $S_{r}$ of the $u_{i}$, 
we get functions with overlapping supports,
and we will not be able to find sets $W_{i}^\ast$ for which
$(\u^\ast,\W^\ast) \in \F$. We shall be able to circumvent 
this problem at a price; we shall decide that the main contribution to our
functional comes from one of the $u_{i}$ (say, $u_{1}$), 
and we'll make the other ones vanish brutally on a slightly smaller ball.
Of course this will only be useful in special situations, where for some reason
there is a dominant component $u_{1}$.
Let us do this; the competitor that we will define now will be still referred to as the
\underbar{harmonic competitor} of $(\u,\W)$ in $B(0,r)$.
Suppose as before that $B(0,r) \i \Omega$ (as in (\ref{e6.9})),
and that (\ref{e6.8}) holds. Pick $a \in (0,1)$ (rather close to $1$), and define
$\varphi$ as in (\ref{e6.1}). For $i \geq 2$, set
\begin{equation} \label{e6.11}
\begin{array}{rll}
u_{i}^\ast(x) &= \varphi(|x|) {\overline u}_{i}(rx/|x|) & \text{ when  }  x\in B(0,r)
\\
u_{i}^\ast(x) &=  u_{i}(x) & \text{ when  }  x\in \R^n \sm B(0,r).
\end{array}
\end{equation}
This is not exactly the same formula as in the first part of (\ref{e6.2}),
because here we only use the values of $u_{i}$ on $S_{r}$
to do the extension, and this is why we prefer to have \eqref{e6.8}
(and often some bounds on the norm in $W^{1,2}(S_{r})$).
Notice that 
\begin{equation} \label{e6.12}
u_{i}^\ast(x)=0  \text{ when $i \geq 2$ and }  x\in B(0,ar),
\end{equation}
which will allow $u_{1}$ to be nonzero on the whole $B(0,ar)$.

For $u_{1}$ we shall use a harmonic extension. Denote by
$\overline u_{1}$ the restriction of $u_{1}$ to $S_{r}$, 
and then by $v_{1}$ the harmonic extension of $\overline u_{1}$ to $B(0,r)$, 
obtained by convolution of $\overline u_{1}$ with the Poisson kernel. 
By (\ref{e6.8}), even though we do not know whether $\overline u_{1}$
is continuous, we still have that $v_{1} \in W^{1,2}(B(0,r))$, and even that
\begin{equation}\label{e6.13}
\int_{B(0,r)} |\nabla v_{1}|^2  = \inf\Big\{ \int_{B(0,r)} |\nabla v|^2 \, ; \, 
v \in W^{1,2}(B(0,r)) \hbox{ and } v = \overline u_{1} \text{ on } S_{r} \big\},
\end{equation}
with the same definition of the boundary condition $v = \overline u_{1}$ 
as in \eqref{e6.10}. We set
\begin{equation} \label{e6.14}
\begin{array}{rll}
u_{1}^\ast(x) &=  u_{1}(x) & \text{ when  }  x\in \R^n \sm B(0,r),
\\
u_{1}^\ast(x) &= {\overline{u}}_{1}(rx/|x|) & \text{ when  }  x\in B(0,r)\sm B(a,r)
\\
u_{1}^\ast(x) &=  v_{1}(a^{-1} x) & \text{ when  }  x\in  B(0,ar).
\end{array}
\end{equation}
Let us now define $\W^\ast$ so that, with this definition of the $u_{i}^\ast$,
\begin{equation} \label{e6.15}
(\u^\ast,\W^\ast) \in \F.
\end{equation}
First we need to know that $u_{i}^\ast \in W^{1,2}(\R^n)$. By construction,
it is easy to see that $u_{i}^\ast \in W^{1,2}(\O)$ for $\O = \R^n \sm \overline B(0,r)$,
$\O = B(0,r)\sm \overline B(0,ar)$, and $\O = B(0,ar)$, but we also need to
check the gluing condition \eqref{e4.17} on the spheres $S_{ar}$ and 
$S_{r}$, to make sure that $Du_{i}^\ast$ does not have any extra piece on them.
But this is the case, because  we chose $u_{i}^\ast$ with equal radial limits 
from both sides of these spheres. 

We shall also assume that for $1 \leq i \leq N$,
\begin{equation} \label{e6.16}
u_{i}(x) = 0 \text{ for $\sigma$-almost every } x \in S_{r} \sm W_{i}.
\end{equation}
This is true for almost every $r > 0$, because $u_{i}(x)=0$ a.e. on 
$\R^n \sm W_i$, so this extra assumption will not cost us anything. 
Now set
\begin{equation} \label{e6.17}
W_{1}^\ast = [W_{1} \sm B(0,r)] \cup \big\{x\in \Omega \cap B(0,r) \sm B(0,ar) ; 
x/|x| \in W_{1} \big\} \cup [\Omega \cap B(0,ar)]
\end{equation}
and, for $i \geq 2$,
\begin{equation} \label{e6.18}
W_{i}^\ast = [W_{i} \sm B(0,r)] \cup \big\{x\in \Omega \cap B(0,r) \sm B(0,ar) ; 
x/|x| \in W_{i} \big\}.
\end{equation}
It is easy to see that these sets are disjoint and contained in $\Omega$.
Let us also check that $u_{i}^\ast(x) = 0$ for almost every $x\in \R^n \sm W_{i}^\ast$.
If $x\in \R^n \sm B(0,r)$, this comes from the corresponding property for $u_{i}$
(we did not change anything there). When $x\in B(0,r) \sm B(0,ar)$, 
$u_{i}^\ast(x) = 0$ because $u_{i}(rx/|x|) = 0$, which itself holds almost everywhere
by \eqref{e6.16}. When $x\in B(0,ar)$, either $i \geq 2$ and 
$u_{i}^\ast(x) = 0$ by \eqref{e6.11} and because $\varphi(|x|) = 0$,
or else $i=1$, but then $x$ lies in the set $B(0,ar)\sm \Omega$, which has
zero measure by (\ref{e6.9}). As usual, if some of the $u_{i}$ are nonnegative,
so are the corresponding $u_{i}^\ast$. This proves \eqref{e6.15}.

Let us also give a first estimate for $\int_{B(0,r)} |\nabla \u^\ast|^2$. 
For $i \geq 2$ and $x\in B(0,r) \sm B(0,ar)$, the definition \eqref{e6.11} says that
$\nabla u_{i}^\ast$ has a tangential gradient $\nabla_{t} u_{i}^\ast$ such that
$|\nabla_{t} u_{i}^\ast(x)|^2 = \varphi(|x|)^2 \big(r/|x|\big)^2 |\nabla_{t} u_{i}(rx/|x|)|^2
\leq \big(r/|x|\big)^2 |\nabla_{t} u_{i}(rx/|x|)|^2$, and a radial gradient
$\nabla_{r}u_{i}^\ast$ such that $|\nabla_{r} u_{i}^\ast(x)|^2 = \varphi'(|x|)^2 u_{i}(rx/|x|)|^2
\leq 4(1-a)^{-2}r^{-2} u_{i}(rx/|x|)|^2$. Thus 
\begin{eqnarray} \label{e6.19}
\int_{B(0,r)} |\nabla u_{i}^\ast|^2 &=& \int_{ar}^r \int_{\d B(0,t)} |\nabla u_{i}^\ast|^2
\nonumber\\
&\leq& \int_{ar}^r \int_{\d B(0,t)} \Big[(r/t)^2 |\nabla_{t} u_{i}(rx/|x|)|^2
+ 4(1-a)^{-2}r^{-2} u_{i}(rx/|x|)|^2 \big]d\sigma(x) dt
\nonumber
\\
&=& \int_{ar}^r \int_{S_{r}} \Big[(r/t)^{2-n} |\nabla_{t} u_{i}(\xi)|^2
+ 4(1-a)^{-2}r^{-2} (r/t)^{-n} u_{i}(\xi)|^2 \big]d\sigma(\xi) dt
\\
&\leq& (1-a) r  a^{2-n} \int_{S_{r}} |\nabla_{t} u_{i}|^2
+ 4 (1-a)^{-1} r  a^{-n} \int_{S_{r}} |r^{-1} u_{i}|^2.
\nonumber
\end{eqnarray}
This will be acceptably small (with $a= 1/2$) when $\int_{S_{r}} |\nabla_{t} u_{i}|^2$
and $\int_{S_{r}} |r^{-1} u_{i}|^2$ are small (the second often following
from the first one and Poincar\'e), or even (with $a$ close to $1$) if 
$\int_{S_{r}} |\nabla_{t} u_{i}|^2$ is not too large but 
$\int_{S_{r}} |r^{-1} u_{i}|^2$ is very small (which typically follows from
Poincar\'e if $W_{i} \cap S_{r}$ is very small).

For $i=1$, the estimate for $\nabla u_{i}^\ast$ on $B(0,r) \sm B(0,ar)$
is simpler, because it only has a tangential gradient, and
$|\nabla u_{1}^\ast(x)|^2 = |\nabla_{t} u_{1}^\ast(x)|^2 = 
\big(r/|x|\big)^2 |\nabla_{t} u_{1}(rx/|x|)|^2$ (as above), so
\begin{equation} 
\label{e6.20}
\int_{B(0,r)\sm B(0,ar)} |\nabla u_{1}^\ast|^2
\leq (1-a) r  a^{2-n} \int_{S_{r}} |\nabla_{t} u_{1}|^2.
\end{equation}
The remaining part is
\begin{eqnarray} \label{e6.21}
\int_{B(0,ar)} |\nabla u_{1}^\ast|^2 
&=& a^{-2}\int_{B(0,ar)} |\nabla v_{1}(a^{-1}x)|^2
= a^{n-2}\int_{B(0,r)} |\nabla v_{1}|^2
\nonumber
\\
&=& a^{n-2} \inf\Big\{ \int_{B(0,r)} |\nabla v|^2 \, ; \,
v\in W^{1,2}(B(0,r)) \text{ and } v=u_{1} \text{ on } S_{r} \Big\},
\nonumber
\\
&=& a^{n-2} \inf\Big\{ \int_{B(0,r)} |\nabla v|^2 \, ; \,
v\in W^{1,2}(\R^n) \text{ and } v=u_{1} \text{ a.e. on } \R^n\sm B(0,r) \Big\},
\end{eqnarray}
by \eqref{e6.13}, and where we mentioned the last infimum because 
its definition no longer involves radial limits; but this is the same by the
gluing property \eqref{e4.18}.

Notice that $||u_{i}^\ast||_{\infty} \leq ||u_{i}||_{\infty}$ because the Poisson kernel
is nonnegative and sends the constant $1$ to $1$; 
then the proof of \eqref{e6.6} and \eqref{e6.7} also yields 
\begin{equation}
\label{e6.22}
\Big|\int_{\Omega} (u_{i}^\ast)^2 f_{i} - \int_{\Omega} u_{i}^2 f_{i}\Big|
\leq C r^{n-{n \over p}} ||u_{i}||_{\infty}^2 ||f_{i}||_{p}
\end{equation}
and 
\begin{equation}\label{e6.23}
\Big|\int_{\Omega} u_{i}^\ast g_{i} - \int_{\Omega} u_{i}g_{i}\Big|
\leq C r^{n-{n \over p}} ||u_{i}||_{\infty} ||g_{i}||_{p}.
\end{equation}

\begin{rem} \label{r6.1}
When the $u_{i}$ are nonnegative, we could use the same trick as 
when $N=2$ to define a variant of the harmonic competitor that we
just defined, but where we select two main functions, say $u_{1}$ and $u_{2}$, 
and get rid of the other ones by the same cut-off argument.
That is, we would use the definition \eqref{e6.11} for $i \geq 3$,
and we would define $u_{1}^\ast$ and $u_{2}^\ast$ as follows.
We would group $u_{1}$ and $u_{2}$ as the real-valued $u = u_{1}-u_{2}$,
denote by $\overline u$ the restriction of $u$ to $S_{r}$,
call $v$ the harmonic extension of $\overline u$ to $B(0,r)$,
define $u^\ast$ by a formula like \eqref{e6.14}, and then cut it into
its positive part $u_{1}^\ast$ and its negative part $u_{2}^\ast$;
we could still define $W_{1}^\ast$ and $W_{2}^\ast$ so that
$(\u^\ast,\W^\ast) \in \F$ (with the other $W_{i}^\ast$ defined
as in \eqref{e6.18}). The estimates \eqref{e6.20}-\eqref{e6.23}
would have analogues too.
But we do not seem to need this trick in the present paper.
\end{rem}

\section{H\"older-continuity of $u$ inside $\Omega$}  \label{holder} 

In this section we keep the same assumptions \eqref {e5.1}-\eqref {e5.4}
as in Section \ref{bounded}, also assume that the function $F$ in the
volume term is H\"older-continuous with an exponent 
$\beta > {n-2 \over n}$, and prove  that if $(\u,\W)$ is a minimizer
for our functional $J$, then $\u$ is H\"older-continuous on 
the interior of $\Omega$. 
We only see this as a first step towards interior regularity, which will allow us 
to be more relaxed about the definition of $\{ u_{i} > 0 \}$
(see Remark \ref{r7.2} below), 
and more importantly to use results of \cite{CJK} in later sections. 
But we intend to get more regularity later on (under stronger assumptions).
Also, we shall discuss the H\"older-continuity of $u$ near $\d \Omega$ in the
next section. 
Precisely, our H\"older condition on $F$ is that for some $\beta > {n-2 \over n}$,
\begin{equation}\label{e7.1}
\Big| F(W_{1},W_{2},\ldots,W_{N}) - F(W'_{1},W'_{2},\ldots,W'_{N}) \Big|
\leq C \sum_{i=1}^N |W_{i} \Delta W'_{i}|^\beta
\end{equation}
for some $C \geq 0$ and all choices of $N$-uples 
$(W_{i})$ and $(W'_{i}) \i \Omega^N$, of disjoint sets, and where $A \Delta B$ 
still denotes the symmetric difference $(A \sm B) \cup (B \sm A)$.

\begin{thm}\label{t7.1} 
Assume that \eqref {e5.1}-\eqref {e5.4} and \eqref {e7.1} hold. 
There is an exponent $\alpha > 0$, that depends only on $n$, $N$,
$\beta$ and $p$ (from \eqref {e5.2} and \eqref {e5.4}), 
and a constant $C_0 \geq 0$, that also depends on $|\Omega|$ 
and the bounds in \eqref{e5.2}, \eqref{e5.4}, and \eqref {e7.1},
such that if $(\u,\W)$ is a minimizer for $J$ in $\F$
(see Definition~\ref{d1.1} and \eqref {e1.3}-\eqref {e1.5}),
$x_0\in \Omega$ and $0 < r_0 \leq 1$ are such that
$B(x_{0},r_{0}) \i \Omega$, then 
(possibly after modifying $\u$ on a set of measure zero)
\begin{equation}\label{e7.2}
|\u(x)-\u(y)| \leq C |x-y|^\alpha
\ \text{ for } x,y \in B(x_{0},r_{0}/2),
\end{equation}
with $C = C_0+ C_0 r_0^{1-\alpha} \big\{\fint_{B(0,r_0)} |\nabla \u|^2\big\}^{1/2}$.
\end{thm}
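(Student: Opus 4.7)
The plan is to establish the Morrey-type decay
\[
\omega(x,r) := \int_{B(x,r)} |\nabla \u|^2 \leq C\, r^{\,n-2+2\alpha}
\]
uniformly for $x \in B(x_0, r_0/2)$ and $r$ small, from which the H\"older estimate follows immediately by Poincar\'e's inequality and the Campanato characterization of $C^{0,\alpha}$ (with the stated scaling of the constant coming out automatically).

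To produce this decay, fix a ``good'' radius $r$ (satisfying the Fubini conditions of Section \ref{poincare}, which is true for a.e. $r$) and select a \emph{dominant index} $i_0$: since the $W_i$ are pairwise disjoint, at most one index $i$ can satisfy $\sigma(W_i \cap S_r) > \sigma(S_r)/2$, so we take $i_0$ to be this index if it exists, and $i_0 = 1$ otherwise. Then $\sigma(S_r \sm W_i) \geq \sigma(S_r)/2$ for every $i \neq i_0$, so Lemma \ref{l4.1} yields the spherical Poincar\'e bound $\int_{S_r} u_i^2 \leq C r^2 \int_{S_r} |\nabla_t u_i|^2$ for all such $i$. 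Assembling the harmonic competitor of Section \ref{favorites} with $u_{i_0}$ as main function and, say, $a = 1/2$, combining \eqref{e6.19} for $i \neq i_0$ (the spherical Poincar\'e bound absorbs the $r^{-1} \int_{S_r} u_i^2$ term) with \eqref{e6.20}--\eqref{e6.21} for $i_0$ (plus the standard Dirichlet-to-Neumann estimate $\int_{B(0,r)}|\nabla v_{i_0}|^2 \leq \frac{r}{n-1}\int_{S_r}|\nabla_t u_{i_0}|^2$ for the harmonic extension), we obtain $\sum_i \int_{B(x_0,r)} |\nabla u_i^*|^2 \leq C\, r \int_{S_r} |\nabla_t \u|^2$.

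Plugging this into the minimality $J(\u,\W) \leq J(\u^*,\W^*)$, combined with the $L^\infty$ bound of Section \ref{bounded} fed into \eqref{e6.22}--\eqref{e6.23} to give $|M(\u) - M(\u^*)| \leq C r^{n - n/p}$, and with \eqref{e7.1} applied to $|W_i \Delta W_i^*| \lesssim r^n$ to give $|F(\W) - F(\W^*)| \leq C r^{n\beta}$, produces the key hole-filling inequality
\[
\omega(x_0, r) \leq C\, r\, \omega'(x_0, r) + C\, r^{\gamma}, \qquad \gamma := \min(n\beta,\, n - n/p),
\]
where $\gamma > n-2$ by the hypotheses $\beta > (n-2)/n$ and $p > n/2$. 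This is a standard Gronwall-type ODE: splitting into the regimes $\omega(x_0,r) \leq 2Cr^\gamma$ (trivial bound) and $\omega(x_0,r) > 2Cr^\gamma$ (which gives $(\log\omega)'(r) \geq 1/(2Cr)$), and iterating dyadically from $r_0$ down to $r$, yields $\omega(x_0,r) \leq C \omega(x_0, r_0) (r/r_0)^{\mu} + C r^\gamma$ with $\mu = 1/(2C) > 0$. Running the same argument with arbitrary $x \in B(x_0, r_0/2)$ in place of $x_0$ delivers the uniform Morrey decay with $\alpha = \tfrac{1}{2}\bigl(\min(\mu,\gamma) - (n-2)\bigr)$, which combined with Poincar\'e and Campanato closes the loop.

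The main obstacle is to keep the dimensional constants under enough control that the resulting exponent $\min(\mu,\gamma)$ is strictly larger than $n-2$: while $\gamma > n-2$ is automatic from the hypotheses, the condition $\mu = 1/(2C) > n-2$ requires careful optimization of the cutoff parameter $a$ (which trades off the annular cutoff error in \eqref{e6.20} against the harmonic-energy savings in \eqref{e6.21}) and of the spherical Poincar\'e constant in Lemma \ref{l4.1}. This tightness is precisely what the introduction refers to by the ``very small H\"older exponent'' obtained here; once any positive $\alpha$ is produced, the final steps (uniformity in $x$, the dyadic iteration, and the Morrey-Campanato embedding) are standard.
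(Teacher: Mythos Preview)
Your overall strategy---derive a differential inequality $\omega(r) \leq C\, r\, \omega'(r) + C r^\gamma$ and integrate---matches the paper's, and you correctly isolate the crux: one must force $1/C > n-2$. The gap is that the harmonic competitor of Section~\ref{favorites} \emph{alone} cannot achieve this. At radii $r$ where two phases each occupy roughly half of $S_r$, your dominant index $i_0$ is chosen with $\sigma(W_{i_0}\cap S_r)$ barely above $\sigma(S_r)/2$, so for each $i\neq i_0$ the Poincar\'e constant in \eqref{e4.6} is a fixed dimensional number, not small. The cutoff term $4(1-a)^{-1} r\, a^{-n}\int_{S_r}|r^{-1}u_i|^2$ in \eqref{e6.19} then contributes a coefficient of order $(1-a)^{-1}$ in front of $r\int_{S_r}|\nabla_t u_i|^2$; balancing it against the annular term $(1-a)$ by optimizing over $a$ leaves a coefficient of order $1$, never below $1/(n-2)$ once $n\geq 3$. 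The Poincar\'e constant is dictated by the geometry of $W_i\cap S_r$ and is not a free parameter, so the optimization you invoke in your last paragraph cannot close the gap.

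The paper resolves this with a genuine two-competitor dichotomy. When some $W_{i_0}$ occupies \emph{nearly all} of $S_r$ (namely $\sigma(S_r\setminus W_{i_0})\leq\varepsilon\sigma(S_r)$ with $\varepsilon$ tiny), the non-main phases live on a set of measure $\leq\varepsilon\sigma(S_r)$, so the sharper Poincar\'e \eqref{e4.7} carries the small factor $(\varepsilon\sigma(S_r))^{2/(n-1)}$ and makes the cutoff error in \eqref{e6.19} genuinely negligible; the harmonic extension of $u_{i_0}$ then dominates and, via the sharp spherical Poincar\'e \eqref{e7.19}, yields the coefficient $\frac{1}{n-1}<\frac{1}{n-2}$. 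When no phase dominates to this extent, the paper drops the harmonic competitor entirely and uses the \emph{homogeneous extension} $u_i^\ast(ty)=t^\gamma u_i(y)$ for \emph{every} $i$ (see \eqref{e7.22}--\eqref{e7.25}): now the radial-derivative contribution carries a factor $\gamma^2$, which for small $\gamma$ kills the $\int_{S_r}u_i^2$ term even with the merely-bounded Poincar\'e constant, while the leading coefficient $\frac{1}{n+2\gamma-2}$ sits just below $\frac{1}{n-2}$ (see \eqref{e7.32}). This second competitor is exactly the idea your outline is missing.
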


\ms
Observe that we do not require $\Omega$ to be nice (or even open), but 
we compensate by requiring that $B(x_{0},r_{0}) \i \Omega$
(and in fact, we only need this modulo a set of vanishing Lebesgue measure,
since modifying $\Omega$ on a set of measure zero does not change the problem).
We required $r_0 \leq 1$ in order to obtain a constant $C$ in \eqref{e7.2} that
depends on $r_{0}$ as stated. Of course the fact that $C_0$ does not depend on 
the specific choice of data, or of the minimizer, but only on the various constants 
in our assumptions, is more important.
Finally, the fact that $\alpha$ depends on $\beta$ and $p$ hides an
important defect of Theorem~\ref{t7.1}: even with $p=+\infty$
and $\beta = 1$, our proof will only give a very small exponent $\alpha > 0$,
while much more regularity is expected. 

A different proof of Theorem \ref{t7.1} may be given 
in a forthcoming paper, which is more complicated but perhaps more natural, 
relies on a different monotonicity argument, 
and most importantly works in the context of ${\cal L} = (-\Delta)^2$.

\ms 
Our proof of Theorem \ref{t7.1} will follow the same rough outline as a proof
of monotonicity for the normalized energy that A. Bonnet gave in the context of
the Mumford-Shah functional \cite{Bo}. 
But let us first observe that when $n=1$, Theorem \ref{t7.1} holds with $\alpha = 1/2$,
just because $|\u(x)-\u(y)| = \big|\int_{[x,y]} \nabla \u \big| \leq
|x-y|^{1/2} \big\{\int_{[x,y]} |\nabla \u|^2 \big\}^{1/2}$ by H\"older's inequality.
So we may assume that $n \geq 2$.

We fix $x_{0}$ (without loss of generality
we shall immediately assume that $x_{0} = 0$), and we want to prove a 
differential inequality on the function $E$, where 
\begin{equation}\label{e7.3}
E(r) = \int_{B(x_{0},r)} |\nabla \u|^2
= \int_{B(0,r)} |\nabla \u|^2
= \int_{B(0,r)} \sum_{i=1}^N  |\nabla u_{i}|^2.
\end{equation}
By Fubini (and as in \eqref{e4.3}),
\begin{equation}\label{e7.4}
E(r) =  \int_{t=0}^r  \int_{y\in S_{r}} |\nabla \u|^2 d\sigma(y)dt,
\end{equation}
where we still use the notation $S_{r} = \d B(0,r)$.
Hence the derivative $E'(r)$ exists for almost every $r > 0$, 
\begin{equation}\label{e7.5}
E'(r) =  \int_{S_{r}} |\nabla \u|^2 d\sigma
\ \text{ for almost every } r > 0,
\end{equation}
and $E$ is the indefinite integral of $E'$.
Our main goal is to estimate $E(r)$ in terms of $E'(r)$, and
then integrate in $r$ to get a good upper bound for $E(r)$ for $r$
small; the H\"older estimate \eqref{e7.2} will then follow easily.

Notice that $E(r)$ is a nice quantity to work with, because the minimality
of $(\u,\W)$ gives an estimate on $E(r)$ each time we build a competitor
for $(\u,\W)$ in $B(0,r)$. So let us do this. We restrict to $r \leq r_{0}$
(and this way we get that $B(0,r) \i \Omega$, as in \eqref{e6.9}), 
and also assume that \eqref{e6.8}, \eqref{e6.16}, and the conclusion
of \eqref{e7.5} hold (they all hold for almost all $r$, and we'll only need
almost all $r$ to integrate the differential inequality). 

We shall distinguish between two cases. Let $\varepsilon > 0$ be small, to be chosen later; 
we start with the case when $W_{1} \cap \d B(0,r)$ is very large, more precisely
\begin{equation}\label{e7.6}
\sigma(S_{r} \sm W_{1}) \leq \varepsilon \sigma(S_{r}),
\end{equation}
and we use the harmonic competitor $(\u^\ast,\W^\ast)$ defined near \eqref{e6.11}.
Since $J(\u,\W) \leq J(\u^\ast,\W^\ast)$, we get that
\begin{equation}\label{e7.7}
E(r) = \int_{B(0,r)} |\nabla \u|^2 \leq \int_{B(0,r)} |\nabla \u^\ast|^2
+ |M(\u^\ast) - M(\u)| +  |F(\W^\ast) - F(\W)|
\end{equation}
where $M$ and $F$ are as in \eqref{e1.3}-\eqref{e1.5}. 
Obviously $W_{i}^\ast$ coincides with
$W_{i}$ on $\R^n \sm B(0,r)$, so $|W_{i}^\ast \Delta W_{i}| \leq C r^n$, 
and \eqref{e7.1} yields
\begin{equation}\label{e7.8}
|F(\W^\ast) - F(\W)| \leq C \sum_{i} |W_{i}^\ast \Delta W_{i}|^\beta \leq C r^{\beta n}.
\end{equation}
For the $M$-terms, we use \eqref{e6.22} and \eqref{e6.23} and get that 
\begin{eqnarray}\label{e7.9}
|M(\u^\ast) - M(\u)| &\leq& \sum_{i}
\Big|\int_{\Omega} (u_{i}^\ast)^2 f_{i} - \int_{\Omega} u_{i}^2 f_{i}\Big|
+ \Big|\int_{\Omega} u_{i}^\ast g_{i} - \int_{\Omega} u_{i}g_{i}\Big|
\nonumber \\
&\leq& C r^{n-{n \over p}} ||u_{i}||_{\infty}^2 ||f_{i}||_{p} 
+ C r^{n-{n \over p}} ||u_{i}||_{\infty} ||g_{i}||_{p}
\leq C r^{n-{n \over p}},
\end{eqnarray}
where we just used Theorem \ref{t5.1}. Recall that Theorem \ref{t5.1} also
says that the $||u_{i}||_{\infty}^2$ are bounded in terms of the various constants
in the assumptions of Theorem \ref{t7.1}; hence $C$ only depends on these 
constants. This remark will also apply to the other constants $C$ 
in the computations that follow.

We now use the energy estimates \eqref{e6.19}-\eqref{e6.20},
plus the first part of \eqref{e6.21}, and get that
\begin{eqnarray}\label{e7.10}
\int_{B(0,r)} |\nabla \u^\ast|^2 
\leq \sum_{i =1}^N (1-a) r  a^{2-n} \int_{S_{r}} |\nabla_{t} u_{i}|^2
&+& \sum_{i \geq 2} 4 (1-a)^{-1} r  a^{-n} \int_{S_{r}} |r^{-1} u_{i}|^2
\nonumber
\\
&+& a^{n-2} \int_{B(0,r)} |\nabla v_{1}|^2
\end{eqnarray}
where $v_{1}$ (defined below \eqref{e6.11}), is the harmonic extension of 
the restriction $\overline u_{1}$ of $u_{1}$ to $S_{r}$. Thus,
by \eqref{e7.7}-\eqref{e7.10} and because $a^{n-2}\leq 1$,
\begin{equation}\label{e7.11}
E(r) \leq \int_{B(0,r)} |\nabla v_{1}|^2 + A,
\end{equation}
with 
\begin{eqnarray}\label{e7.12}
A &\leq& C r^{\beta n} + C r^{n-{n \over p}} + 
\sum_{i =1}^N (1-a) r  a^{2-n} \int_{S_{r}} |\nabla_{t} u_{i}|^2
+ \sum_{i \geq 2} 4 (1-a)^{-1} r  a^{-n} \int_{S_{r}} |r^{-1} u_{i}|^2
\nonumber
\\
&\leq& C r^{\beta n} + C r^{n-{n \over p}} + C (1-a) r \int_{S_{r}} |\nabla_{t} \u|^2
+ C (1-a)^{-1} r^{-1}  \sum_{i \geq 2}\int_{S_{r}} |u_{i}|^2
\end{eqnarray}
because we shall take $a \geq 1/2$. In addition, for $i \geq 2$
\eqref{e6.8} says that (the restriction of) $\overline u_{i}$ lies
in $W^{1,2}(S_{r})$,  and \eqref{e6.16} says that $u_{i}(x) = 0$ 
almost everywhere on $S_{r} \sm W_{i}$. Set $E = S_{r} \sm W_{1}$;
if $x\in S_{r} \sm E$, then $x\in W_{1}$, hence (by disjointness)
$x \in S_{r} \sm W_{i}$, and almost always $u_{i}(x) = 0$.
This allows us to apply Lemma \ref{l4.1} (with $p=2$) and get that
\begin{equation}\label{e7.13}
\int_{S_{r}} |u_{i}|^2 = \int_{E} |u_{i}|^2 \leq C_{2} \sigma(E)^{2 \over n-1} 
\int_{E} |\nabla_{t} u_{i}|^2
\leq C_{2} (\varepsilon \sigma(S_{r}))^{2 \over n-1}
\int_{E} |\nabla_{t} u_{i}|^2
\end{equation}
by \eqref{e7.6} and \eqref{e4.7}. Recall also that we restrict to
$r$ such that the conclusion of \eqref{e7.5} holds, so
\begin{equation}\label{e7.14}
\sum_{i} \int_{S_{r}} |\nabla_{t} u_{i}|^2
\leq  \sum_{i} \int_{S_{r}} |\nabla u_{i}|^2
= \int_{S_{r}} |\nabla \u|^2 = E'(r);
\end{equation}
then \eqref{e7.12} yields
\begin{equation}\label{e7.15}
A \leq C r^{\beta n} + C r^{n-{n \over p}} + C (1-a) r E'(r) 
+ C (1-a)^{-1} \varepsilon^{2 \over n-1} r E'(r).
\end{equation}
Let $\tau > 0$ be small, to be chosen later.
We choose $a$ close to $1$  (depending on $\tau$), and then $\varepsilon$ very small
(depending also on $a$), so that \eqref{e7.15} yields 
\begin{equation}\label{e7.16}
A \leq C r^{\beta n} + C r^{n-{n \over p}} + \tau r E'(r),
\end{equation}
and now we concentrate on $\int_{B(0,r)} |\nabla v_{1}|^2$.
We shall just need an estimate on the norm of the Poisson extension,
from $W^{1,2}(S_{r})$ to $W^{1,2}(B_{r})$, but since 
$\int_{B(0,r)} |\nabla v_{1}|^2$ is minimal (by \eqref{e6.13}
or \eqref{e6.21}), it will be enough to control the energy of some
extension. Call $u$ the restriction of $u_{1}-m_{S_{r}}^\sigma u_{1}$ 
to $S_{r}$, and define $v$ on $B(0,r)$ by
\begin{equation}\label{e7.17}
v(ty) = tu(y) \text{ for $y\in S_{r}$ and } 0 \leq t \leq 1.
\end{equation}
[We will not really lose much, because if we were to find the optimal
extension, it would happen to have the largest extension norm on spherical
harmonics of degree $1$, which happen to have homogeneous extensions of degree $1$.]
Anyway, it is easy to see that $v\in W^{1,2}(B(0,r))$, with a gradient that we compute
now. In fact, since $v$ is homogeneous of degree $1$, its gradient is 
homogeneous of degree $0$. So we compute it at $y\in S_{r}$. Its radial part
is $r^{-1}u(y)$, and its tangential part is just $\nabla_{t}u(y)$. Then
$|\nabla v(y)|^2 =  r^{-2}u(y)^2 + |\nabla_{t}u(y)|^2$, and 
\begin{eqnarray}\label{e7.18}
\int_{B(0,r)} |\nabla v|^2 
&=& \int_{t\in (0,r)} \int_{S_{t}} |\nabla v|^2 d\sigma dt
= \int_{t\in (0,r)} (t/r)^{n-1} \int_{S_{r}} |\nabla v|^2 d\sigma dt
\nonumber
\\
&=& {r^n \over n r^{n-1}} \int_{S_{r}} |\nabla v|^2 d\sigma
= {r \over n} \int_{S_{r}} r^{-2}u^2 + |\nabla_{t} u|^2 
\end{eqnarray}
by homogeneity. We use Poincar\'e's inequality on the sphere 
(exceptionally, with the right constant!), which says that
\begin{equation}\label{e7.19}
\int_{S_{r}} r^{-2} |u|^2 \leq {1 \over n-1} \int_{S_{r}} |\nabla_{t} u|^2
\end{equation}
because $\int_{S_{r}} u d\sigma = 0$. See for instance 
Exercise 76.21 in \cite{D}. 
Then \eqref{e7.18} and \eqref{e7.19} yield
$\int_{B(0,r)} |\nabla v|^2 \leq {r \over n}{n \over n-1} 
\int_{S_{r}} |\nabla_{t}u|^2$ and, since 
$v + m_{S_{r}}^\sigma u_{1}$ has boundary values on $S_{r}$ 
equal to $u + m_{S_{r}}^\sigma u_{1} = u_{1}$, the minimizing
property of $v_{1}$ yields
\begin{eqnarray}\label{e7.20}
\int_{B(0,r)}  |\nabla v_{1}|^2 
&\leq& \int_{B(0,r)}  |\nabla (v + m_{S_{r}}^\sigma u_{1})|^2
= \int_{B(0,r)}  |\nabla v |^2 
\nonumber
\\
&\leq& {r \over n-1} \int_{S_{r}} |\nabla_{t}u(y)|^2
= {r \over n-1} \int_{S_{r}} |\nabla_{t}u_{1}(y)|^2
\leq {r \over n-1} \, E'(r)
\end{eqnarray}
by \eqref{e7.14}. We combine with \eqref{e7.11} and \eqref{e7.16}
and get that 
\begin{equation}\label{e7.21}
E(r) \leq \int_{B(0,r)} |\nabla v_{1}|^2 + A
\leq C r^{\beta n} + C r^{n-{n \over p}} + \Big(\tau + {1\over n-1}\Big) \, r E'(r).
\end{equation}
This is our first differential inequality, valid at almost every $r \leq r_{0}$
such that \eqref{e7.6} holds. If \eqref{e7.6}, with $i=1$ replaced by some
other index $i$, holds, we do the same argument with $u_{1}$ replaced by $u_{i}$,
and we still get the conclusion of \eqref{e7.21}.

\ms
When \eqref{e7.6} fails for all indices, i.e., if
$\sigma(S_{r} \sm W_{i}) \geq \varepsilon \sigma(S_{r})$
for all $i$, we use another competitor to get a similar
differential inequality. This time we pick a very small $\gamma > 0$,
to be chosen later, and we set
\begin{equation}\label{e7.22}
u^\ast_{i}(ty) = t^{\gamma} u_{i}(y)
\ \text{ for $y\in S_{r}$ and $0 \leq t < 1$.}
\end{equation}
On $\R^n \sm B(0,r)$, we keep $u_{i}^\ast = u_{i}$, as usual.
It is easy to see, using again the gluing condition \eqref{e4.17}
that $u_{i}^\ast \in W^{1,2}(\R^n \sm \{ 0 \})$. 
Its gradient is now homogeneous of degree $\gamma-1$,
and we compute it at $y\in S_{r}$. The tangential gradient
is still $\nabla_{t} u_i(y)$, and the radial derivative is
$\gamma r^{-1} u_{i}(y)$. Thus
$|\nabla u^\ast_{i}(y)|^2 = |\nabla_{t} u_i(y)| + \gamma^2 r^{-2} u_{i}(y)^2$.
The same computation as in \eqref{e7.18} yields
\begin{eqnarray}\label{e7.23}
\int_{B(0,r)} |\nabla u^\ast_{i}|^2 
&=& \int_{t\in (0,r)} \int_{S_{t}} |\nabla u^\ast_{i}|^2 d\sigma dt
= \int_{t\in (0,r)} (t/r)^{n-1} (t/r)^{2\gamma - 2}
\int_{S_{r}} |\nabla u^\ast_{i}|^2 d\sigma dt
\nonumber
\\
&=& {r^{n+2\gamma -2} \over (n+2\gamma-2) r^{n+2\gamma-3}} 
\int_{S_{r}} |\nabla u^\ast_{i}|^2 d\sigma
= {r \over (n+2\gamma-2)} \int_{S_{r}} \gamma^2 r^{-2}u_i^2 + |\nabla_{t} u_i|^2 
\end{eqnarray}
In particular the integral converges (recall that $n \geq 2$), 
and it is not hard to show that $u^\ast_{i} \in W^{1,2}$ near the origin. 
For instance, we can approximate $t^\gamma$ in \eqref{e7.22} 
with functions that vanish near $0$, and take a limit.

Since \eqref{e7.6} fails for $i$, we can apply Lemma \ref{l4.1} to $u_{i}$,
with $E = W_{i} \cap S_{r}$ (by \eqref{e6.16}). We get that
\begin{equation}\label{e7.24}
\int_{S_{r}} |u_{i}|^2 = \int_{E} |u_{i}|^2 
\leq C r^2 {\sigma(S_{r}) \over \sigma(S_{r} \sm W_{i})} \int_{E} |\nabla_{t} u_{i}|^2
\leq C r^2 \varepsilon^{-1} \int_{S_{r}} |\nabla_{t} u_{i}|^2
\end{equation}
by \eqref{e4.6}. Set
\begin{equation}\label{e7.25}
\lambda = {1 \over (n+2\gamma-2)} + {C\gamma^2 \over (n+2\gamma-2) \varepsilon},
\end{equation}
with the same constant $C$ as in \eqref{e7.24}. Then \eqref{e7.23} yields
\begin{equation}\label{e7.26}
\int_{B(0,r)} |\nabla u^\ast_{i}|^2 
\leq \lambda r \int_{S_{r}} |\nabla_{t} u_{i}|^2.
\end{equation}
We sum over $i$ and get that
\begin{equation}\label{e7.27}
\int_{B(0,r)} |\nabla \u^\ast|^2 
\leq \lambda r \int_{S_{r}} |\nabla_{t} \u|^2 \leq \lambda r E'(r)
\end{equation}
with $\u^\ast = (u_{1}^\ast, \ldots, u_{N}^\ast)$, and by \eqref{e7.14}.

We now complete the definition of our competitor by choosing
sets $W_{i}^\ast \i \Omega$, so that $(\u^\ast,\W^\ast) \in \F$.
We keep $W_{i}^\ast \sm B(0,r) = W_{i}\sm B(0,r)$, and we 
take for $W_{i}^\ast \cap B(0,r)$ the intersection of $\Omega$
with the cone over $W_{i} \cap S_{r}$. The $W_{i}^\ast$ are disjoint
because the $W_{i}$ are disjoint, and $u_{i}^\ast = 0$
almost everywhere on $\R^n \sm W_{i}$ because $\Omega$ is almost 
contained in $B(0,r)$, and $u_{i}(y) = 0$ almost everywhere on 
$S_{r} \sm W_{i}$ (by \eqref{e6.16}). As usual, $u_{i}^\ast \geq 0$
if $u_{i}\geq 0$, so $(\u^\ast,\W^\ast) \in \F$.

We complete the estimate with the $F$ and $M$-terms. As before,
$||u_{i}^\ast||_{\infty} \leq ||u_{i}||_{\infty}$, so
\begin{eqnarray}\label{e7.28}
|M(\u^\ast) - M(\u)| 
&\leq&  C r^{n-{n \over p}} ||u_{i}||_{\infty}^2 ||f_{i}||_{\infty} 
+ C r^{n-{n \over p}} ||u_{i}||_{\infty} ||g_{i}||_{p}
\leq C r^{n-{n \over p}},
\end{eqnarray}
as in \eqref{e7.9}; also, 
$|W_{i}^\ast \Delta W_{i}| \leq C r^n$, so \eqref{e7.1} yields
\begin{equation}\label{e7.29}
|F(\W^\ast) - F(\W)| \leq C \sum_{i} |W_{i}^\ast \Delta W_{i}|^\beta \leq C r^{\beta n}.
\end{equation}
as in \eqref{e7.8}. Since $J(\u,\W) \leq J(\u^\ast,\W^\ast)$, we get (as in
\eqref{e7.7}) that
\begin{eqnarray}\label{e7.30}
E(r) &=& \int_{B(0,r)} |\nabla \u|^2 \leq \int_{B(0,r)} |\nabla \u^\ast|^2
+ |M(\u^\ast) - M(\u)| +  |F(\W^\ast) - F(\W)|
\nonumber
\\
&\leq& \int_{B(0,r)} |\nabla \u^\ast|^2 + C r^{n-{n \over p}} + C r^{\beta n}
\leq \lambda r E'(r) + C r^{n-{n \over p}} + C r^{\beta n}
\end{eqnarray}
by \eqref{e7.27}. This is our alternative differential inequality, which holds
for almost every $r$ such that \eqref{e7.21} fails for each $i$. 

Let us now check that we can choose $\delta$ such that
\begin{equation}\label{e7.31}
n-2 < \delta < \min\Big(n\beta,n-{n \over p}, 
\Big(\tau + {1\over n-1}\Big)^{-1}, \lambda^{-1} \Big).
\end{equation}
We know that $n\beta > n-2$ (see the definition of $\beta$ near \eqref{e7.1}),
and $n-{n \over p} > n-2$ because $p> n/2$ (see \eqref{e5.4}).
Next choose $\tau$ so small that $\tau + {1\over n-1} < {1\over n-2}$ 
(no condition if $n=2$); then $n-2 < \Big(\tau + {1\over n-1}\Big)^{-1}$, 
and this too leaves some room for $\delta $. 
This choice of $\tau$ forces $a$ to be chosen close enough to $1$,
and $\varepsilon$ small enough, but this is all right. We still need to check that
$(n-2) \lambda < 1$; rewrite \eqref{e7.25} as 
\begin{equation}\label{e7.32}
(n-2) \lambda = {n-2 \over (n+2\gamma-2)}\Big(1 + {C\gamma^2 \over \varepsilon}\Big)
= \Big(1 + {2\gamma \over n-2} \Big)^{-1}\Big(1 + {C\gamma^2 \over \varepsilon}\Big);
\end{equation}
even if we chose $\varepsilon$  very small, this expression becomes smaller than $1$
when $\gamma$ is small enough. So we choose $\gamma$ small, depending 
on $\varepsilon$, and we can choose $\delta $ as in \eqref{e7.31}. 
Notice that $a$, then $\varepsilon$, $\gamma$, and finally $\delta$
depend on $N$, because (we think that) $C$ in \eqref{e7.15} depends on $N$.
With all this new notation, \eqref{e7.21} and \eqref{e7.30} yield
\begin{equation}\label{e7.33}
\delta  E(r) \leq   r E'(r) + C r^{n-{n \over p}} + C r^{\beta n}
\ \hbox{ for almost every } r \leq r_{0},
\end{equation}
where $C$ also depends on constants like $\delta$, but not
on $r$ or $r_0$. 

Now we want to integrate this differential inequality between
$r\in (0,r_{0})$ and $r_{0}$. Set $f(r) = r^{-\delta} E(r)$;
then $f$ is differentiable almost everywhere on $(0,r_{0})$,
and 
\begin{equation}\label{e7.34}
f'(r) = - \delta  r^{-\delta -1} E(r)+ r^{-\delta} E'(r) 
\geq - C r^{-\delta -1} [r^{n-{n \over p}} + r^{\beta n}]
\end{equation}
almost everywhere, by \eqref{e7.33}. 
We want to integrate this between $r \in (0,r_{0})$ and $r_{0}$
and get that
\begin{eqnarray}\label{e7.35}
f(r) &=&  f(r_{0}) - \int_{r}^{r_{0}} f'(t) dt
\leq f(r_{0}) + C \int_{r}^{r_{0}} t^{-\delta-1}[t^{n-{n \over p}} + t^{\beta n}]dt
\nonumber
\\
&\leq& f(r_{0}) + C r^{-\delta} [r^{n-{n \over p}} + r^{\beta n}]
\end{eqnarray}
(recall that by \eqref{e7.31}, the final exponents are negative).
So we have to justify the first equality. Write $g(r) = r^{-\delta}$
to simplify the algebra, and recall from \eqref{e7.4} and \eqref{e7.5} that
$E(t) = E(r) + \int_{r}^t E'(s) ds$ for $r < t < r_{0}$. Then 
set $I = [r,r_{0}]$ and compute
\begin{eqnarray}\label{e7.36}
\int_{r}^{r_{0}} f'(t) dt 
&=& \int_{I} E(t)g'(t)+E'(t)g(t) dt
\nonumber
\\
&=& \Big(\int_{I} E(r)g'(t) + \int_{I\times I} E'(s)g'(t) \1_{s < t}\Big)
+ \Big(\int_{I} E'(t)g(r) + \int_{I\times I} E'(t)g'(s) \1_{s < t} \Big)
\nonumber
\\
&=& E(r)[g(r_{0})-g(r)] + g(r)[E(r_{0})-E(r)] + \int_{I\times I} E'(t)g'(s) dsdt
\\
\nonumber
&=& E(r)[g(r_{0})-g(r)] + g(r)[E(r_{0})-E(r)] +[E(r_{0})-E(r)][g(r_{0})-g(r)]
\\
&=& E(r_{0}) g(r_{0}) - E(r)g(r) = f(r_{0}) - f(r)
\nonumber
\end{eqnarray}
where Fubini can be used because $E'$ and $g'$ are both nonnegative
(or integrable). So \eqref{e7.35} holds.  We multiply it by $r^\delta$ and get that
\begin{equation}\label{e7.37}
E(r) = r^\delta  f(r) 
\leq r^\delta f(r_{0}) + C [r^{n-{n \over p}} + r^{\beta n}]
= (r/r_0)^\delta E(r_{0}) + C [r^{n-{n \over p}} + r^{\beta n}].
\end{equation}
Since we shall use it a few times in the future, let us record what we just got:
under the general assumptions of Theorem \ref{t7.1}, we just proved that
\begin{equation}\label{e7.38}
\int_{B(x_0,r)} |\nabla \u|^2 
\leq (r/r_0)^\delta \int_{B(x_0,r_0)} |\nabla \u|^2 + C [r^{n-{n \over p}} + r^{\beta n}] 
\ \text{ for } 0 < r < r_0
\end{equation}
as soon as $B(x_0,r_0) \i \Omega$ (recall that we immediately assumed
that $x_0 = 0$, and see \eqref{e7.3} for the definition of $E(r)$).

Return to the proof; \eqref{e7.37} is the energy estimate that we wanted, 
but we also want its analogue for other centers.
And indeed we can do the proof of \eqref{e7.37}, but with any other origin
$x \in B(0,2r_{0}/3)$, and with radii $0 < r \leq r_{0}/3$, and we get that
\begin{equation}\label{e7.39}
\int_{B(x,r)} |\nabla \u|^2 
\leq (r/r_0)^\delta E(r_{0}) + C [r^{n-{n \over p}} + r^{\beta n}] 
\end{equation}
for $x \in B(0,2r_{0}/3)$ and $0 < r \leq r_{0}/3$.
Here $C$ depends on the usual constants, i.e., $n$, $N$, $p$, $|\Omega|$, 
the $||f_{i}||_{\infty}$ and $||g_{i}||_{p}$, and the two constants in \eqref{e7.1}.
Incidentally, these constants also control $E(r_{0})$, by (3.10)
(modified as in Remark \ref{r3.4} if $p < 2$) 
and because they easily control $J(\u,\W)$. But we may lose some
information if we use this remark.

Anyway, let us rewrite \eqref{e7.39} as
\begin{equation}\label{e7.40}
r \Big\{\fint_{B(x,r)} |\nabla \u|^2\Big\}^{1/2} \leq C \theta(r),
\ \text{ with }
\theta(r) = r^{\delta -n +2 \over 2} [r_0^{-\delta} E(r_0)]^{1/2}
+ r^{2p-n \over 2p} + r^{2+\beta n - n \over 2}
\end{equation}
and notice that by \eqref{e7.31}, the smallest exponent in \eqref{e7.40} is
\begin{equation}\label{e7.41}
\alpha = {\delta -n +2 \over 2} > 0.
\end{equation}
We shall now check that \eqref{e7.2} follows from \eqref{e7.40}, with
this exponent $\alpha$. This will be a standard consequence of the 
Poincar\'e inequalities. 

Fix $i \in [1,N]$, set $u = u_{i}$ (to save notation), and define
$u(x,r) = \fint_{B(x,r)} u$ for $x \in B(0,2r_{0}/3)$ and 
$0 < r \leq r_{0}/3$. Then 
\begin{eqnarray}\label{e7.42}
|u(x,r/2) - u(x,r)| &=& \Big|\fint_{B(x,r/2)} u-u(x,r)\Big|
\leq \fint_{B(x,r/2)} |u-u(x,r)| 
\nonumber
\\
&\leq& 2^n \fint_{B(x,r)} |u-u(x,r)|
\leq C r \fint_{B(x,r)} |\nabla u|
\nonumber
\\
&\leq& C r \Big\{\fint_{B(x,r)} |\nabla u|^2 \Big\}^{1/2}
\leq C \theta(r)
\end{eqnarray}
by Poincar\'e (see \eqref{e4.2}), H\"older, and \eqref{e7.40}.
It follows from iterations of \eqref{e7.42} that 
$\overline u(x) = \lim_{k \to +\infty} u(x,2^{-k})$
exists for all $x\in B(0,2r_{0}/3)$, and that
\begin{equation}\label{e7.43}
|\overline u(x)-u(x,2^{-k})| \leq C \theta(2^{-k})
\ \text{ when } 2^{-k} \leq r_{0}/3
\end{equation}
(also use the special form of $\theta$ to sum three geometric series).
Since $\overline  u(x) = u(x)$ for every point of Lebesgue
differentiability for $u$, we see that replacing $u$ with $\overline u$
on $B(0,2r_{0}/3)$ will only change its values on a set of measure $0$,
so it is now enough to prove that
\begin{equation}\label{e7.44}
|\overline u(x) - \overline u(y)| \leq C_1 |x-y|^\alpha
\end{equation}
for $x,y \in B(x_{0},r_{0}/2)$, and with the announced value of
\begin{equation}\label{e7.45}
C_1 = C_0+ C_0 r_0^{1-\alpha} \Big\{\fint_{B(0,r_0)} |\nabla \u|^2\Big\}^{1/2},
\end{equation}
where $C_0$ depends on the usual constants but not on $r_0 \leq 1$.
It is even enough to prove this when $|x-y| \leq r_{0}/10$
(just use a short chain of points, and maybe multiply $C$
by $5$). Choose $k$ such that
$2^{-k-2} \leq |x-y| \leq 2^{-k-1}$; then $2^{-k} \leq r_{0}/3$, and
\begin{eqnarray}\label{e7.46}
|\overline u(x) - \overline u(y)| 
&\leq& |\overline u(x) - u(x,2^{-k-1})| 
+ |u(x,2^{-k-1})-u(y,2^{-k})|
+ |u(y,2^{-k})- \overline u(y)|
\nonumber
\\
&\leq&  |u(x,2^{-k-1})-u(y,2^{-k})| + C \theta(2^{-k})
\nonumber
\\
&\leq& |u(x,2^{-k-1})-u(y,2^{-k})| + C \theta(|x-y|).
\end{eqnarray}
But, as in \eqref{e7.42}
\begin{eqnarray}\label{e7.47}
|u(x,2^{-k-1})-u(y,2^{-k})| &=& \Big|\fint_{B(x,2^{-k-1})} u-u(y,2^{-k})\Big|
\leq \fint_{B(y,2^{-k})} |u-u(y,2^{-k})|
\nonumber
\\
&\leq& 2^n \fint_{B(y,2^{-k})} |u-u(y,2^{-k})|
\leq C 2^{-k} \fint_{B(y,2^{-k})} |\nabla u|
\nonumber
\\
&\leq& C 2^{-k} \Big\{\fint_{B(y,2^{-k})} |\nabla u|^2 \Big\}^{1/2}
\leq  C \theta(2^{-k}) \leq C \theta(|x-y|)
\end{eqnarray}
because $B(x,2^{-k-1}) \i B(y,2^{-k})$. But
\begin{equation}\label{e7.48}
\theta(r) =  r^{\alpha} [r_0^{-\delta} E(r_0)]^{1/2}
+ r^{2p-n \over 2p} + r^{2+\beta n - n \over 2}
\leq r^{\alpha} [r_0^{-\delta} E(r_0)]^{1/2} + 2 r^\alpha
\end{equation}
by \eqref{e7.40}, \eqref{e7.41}, and because $r \leq r_0 \leq 1$ and
$\delta$ was the smallest exponent (by \eqref{e7.31}), and
\begin{equation}\label{e7.49}
[r_0^{-\delta} E(r_0)]^{1/2} = \Big[r_0^{-2\alpha - n +2} \int_{B(0,r_0)} |\nabla \u|^2\Big]^{1/2} 
= r_0^{1-\alpha} \Big[\fint_{B(0,r_0)} |\nabla \u|^2\Big]^{1/2},
\end{equation}
so \eqref{e7.46} and \eqref{e7.47} yield
\begin{equation}\label{e7.50}
|\overline u(x) - \overline u(y)| \leq C \theta(|x-y|)
\leq C |x-y|^\alpha 
+ C |x-y|^\alpha r_0^{1-\alpha} \Big[\fint_{B(0,r_0)} |\nabla \u|^2\Big]^{1/2},
\end{equation}
which proves \eqref{e7.44} with $C_1$ as in \eqref{e7.45}.
This completes our proof of Theorem~\ref{t7.1}.
\qed

\begin{rem}\label{r7.2} 
Once we have Theorem \ref{t7.1}, we can be a little more relaxed about the
definition of the $W_i$. Suppose $\Omega$ is open and that the assumptions
of Theorem \ref{t7.1}are satisfied. We know that there is a locally H\"older
continuous function $ \widetilde \u$ that coincides with $\u$ almost everywhere
on $\Omega$ (the local continuous functions provided by applications of the theorem
on small balls $B \i \Omega$ can easily be glued). Then we can work with the open sets
\begin{equation}\label{e7.51}
\Omega_i = \big\{ x\in \Omega \, ; \, \widetilde u_i(x) > 0 \big\}; 
\end{equation}
it is easy to see that the $\Omega_i$ are disjoint, and the constraints that
$W_i \i \Omega$ and  $u_i =0$ almost everywhere on $\R^n\sm W_i$  just mean that 
$\Omega_i \i W_i$, modulo a set of vanishing measure.
After the next section, and under additional regularity assumptions on 
$\Omega$, we will also know that (if we set $\u = 0$ on $\R^n \sm \Omega$)
$\widetilde \u$ is also continuous across $\d \Omega$, and we will feel free to replace
$\u$ with $\widetilde \u$ without saying.
\end{rem}

\section{H\"older-continuity of $u$ on the boundary}  \label{boundary} 

In this section we keep the assumptions of Section \ref{holder},
add a smoothness assumption on $\Omega$, and prove that
$\u$ is H\"older-continuous on the whole $\R^n$ when 
$(\u,\W)$ is a minimizer for $J$.
For the main statement, let us be brutal and just assume that
\begin{equation}\label{e8.1}
\Omega \text { is a bounded open set with $C^1$ boundary.}
\end{equation}
But we shall see that the result holds under somewhat weaker
assumptions; see Remark \ref{r8.3} at the end of the section.

\begin{thm}\label{t8.1} 
Assume that \eqref {e5.1}-\eqref {e5.4}, \eqref {e7.1}, and \eqref {e8.1} hold. 
There is an exponent $\alpha > 0$, that depends only on $n$, $N$,
$\beta$ and $p$ (from \eqref {e5.4}), 
such that if $(\u,\W)$ is a minimizer for $J$ in $\F$
(see Definition~\ref{d1.1} and \eqref {e1.3}-\eqref {e1.5})
there is a constant $C \geq 0$ such that 
(possibly after modifying $\u$ on a set of measure zero)
\begin{equation}\label{e8.2}
|\u(x)-\u(y)| \leq C |x-y|^\alpha
\ \text{ for $x,y \in \R^n$ such that } |x-y|\leq 1.
\end{equation}
\end{thm}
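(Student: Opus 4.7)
The plan is to extend the Campanato-type argument of Theorem 7.1 up to the boundary. Interior H\"older continuity is provided by Theorem 7.1, so together with the fact that $\u \equiv 0$ on $\R^n \sm \Omega$, it suffices to establish, for each boundary point $x_0 \in \d\Omega$, a local energy decay
\begin{equation*}
\int_{B(x_0, r)} |\nabla \u|^2 \leq C_0 r^\delta \qquad \text{for } 0 < r \leq r_1,
\end{equation*}
with some exponent $\delta > n-2$ and constants $C_0, r_1$ controlled by the usual data. Once this is in hand, the Morrey--Campanato chain \eqref{e7.42}--\eqref{e7.50} applied at points $x$ near $\d\Omega$ produces the H\"older estimate \eqref{e8.2} in a neighborhood of $\d\Omega$; combined with Theorem~\ref{t7.1} on interior balls and a short chaining argument, this gives \eqref{e8.2} on all of $\R^n$, modulo taking the minimum of the two exponents.

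The boundary decay is obtained by adapting the scaling competitor \eqref{e7.22} to a point on $\d\Omega$. The main difficulty is that the homogeneous rescaling produces a function that does not automatically vanish outside $\Omega$. To bypass this, I would first flatten the boundary by a local $C^1$ diffeomorphism $\Phi$ sending $x_0$ to the origin and $\Omega \cap B(x_0, 2 r_1)$ onto $V \cap \R^n_+$ for a neighborhood $V$ of $0$. Because $\Phi$ and $\Phi^{-1}$ are bi-Lipschitz, the pulled-back pair $(\wt \u, \wt \W)$ minimizes, on the image domain, a functional $\wt J$ whose energy and mass terms are comparable to those of $J$, whose potentials $\wt f_i, \wt g_i$ still lie in $L^p$ with controlled norms, and whose volume term $\wt F$ still satisfies \eqref{e7.1} with an adjusted constant (bi-Lipschitz maps distort symmetric differences and volumes by bounded factors). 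Moreover $\wt \u \equiv 0$ on $V \sm \R^n_+$.

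In these flattened coordinates, I would use the homogeneous extension competitor of \eqref{e7.22}, now based at $0$: for $\xi \in S^{n-1}$ and $0 \leq t \leq r$, set $\wt u_i^\ast(t\xi) = (t/r)^\gamma \wt u_i(r \xi)$, and define $\wt W_i^\ast \cap B(0,r)$ as the cone generated by $\wt W_i \cap S_r$, intersected with the ambient image domain. Since every ray from $0$ that meets $\R^n_+ \cap S_r$ is entirely contained in $\R^n_+$, no ray escapes the admissible region, and the constraint that $\wt u_i^\ast = 0$ outside $\wt W_i^\ast$ follows from \eqref{e6.16} applied to $\wt u_i$. Crucially, because $\sigma(S_r \sm \R^n_+) = \tfrac{1}{2}\sigma(S_r)$ for every $r$, Lemma \ref{l4.1} gives
\begin{equation*}
\int_{S_r} |\wt u_i|^2 \leq C r^2 \int_{S_r} |\nabla_t \wt u_i|^2
\end{equation*}
with a uniform constant, so the dichotomy \eqref{e7.6} is never needed. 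Repeating the computation \eqref{e7.23}--\eqref{e7.27} then yields
\begin{equation*}
\int_{B(0,r)} |\nabla \wt \u^\ast|^2 \leq \lambda \, r \, \wt E'(r), \qquad \lambda = \frac{1 + C \gamma^2}{n + 2\gamma - 2},
\end{equation*}
and, as in \eqref{e7.32}, $(n-2)\lambda < 1$ once $\gamma$ is sufficiently small.

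Combining this with the estimates \eqref{e7.28}--\eqref{e7.29} for the mass and volume terms gives a differential inequality $\delta \wt E(r) \leq r \wt E'(r) + C(r^{n-n/p} + r^{\beta n})$ for almost every $r \in (0, r_1)$, with $\delta > n-2$. Integrating as in \eqref{e7.34}--\eqref{e7.37} produces the desired decay for $\wt E$, and H\"older continuity of $\wt \u$ at $0$ follows by the end of Section~\ref{holder}; transferring back by $\Phi^{-1}$ yields H\"older continuity at $x_0$ with the same exponent. The main obstacle will be the bookkeeping around the flattening: one has to check that the minimization property transfers to the pulled-back functional up to admissible errors of order $r^n$, that composing with $\Phi$ introduces only additional error terms absorbable by the final estimate, and that the H\"older constant in \eqref{e7.1} survives the change of variables. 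All of this is routine given the $C^1$ regularity of $\Phi$, but it is the only substantive technical step beyond the proof of Theorem~\ref{t7.1}.
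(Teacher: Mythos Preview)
Your proposal is correct and follows essentially the same route as the paper: flatten the boundary by a bilipschitz map, use only the homogeneous scaling competitor \eqref{e7.22} (the harmonic competitor is never needed at the boundary because $\sigma(S_r\sm\R^n_+)=\tfrac12\sigma(S_r)$ supplies the Poincar\'e inequality uniformly), derive the differential inequality $\delta E(r)\le rE'(r)+Cr^{n-n/p}+Cr^{\beta n}$, integrate, and combine with the interior estimate via a case analysis on $\dist(x,\d\Omega)$.

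One refinement worth noting: rather than asserting that the pulled-back pair minimizes a new functional $\wt J$, the paper builds the competitor $\v^\ast$ in flattened coordinates and then pulls it back to a competitor $\u^\ast$ for the \emph{original} pair $(\u,\W)$, applying the original minimality directly. This sidesteps the issue that $F$ is global and not obviously compatible with a local change of variables. The bilipschitz distortion then enters as a multiplicative factor $(1+\eta)^{2n+4}$ in front of $\lambda$ in the differential inequality (see \eqref{e8.31}--\eqref{e8.32}), and the $C^1$ regularity of $\d\Omega$ is used precisely to make $\eta$ small enough that $(n-2)\lambda(1+\eta)^{2n+4}<1$ still holds. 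Your remark that the errors are ``absorbable'' is correct, but this is the mechanism: the distortion must be small, not merely bounded, which is why ``comparable'' should be read as ``close to equal on small balls.''
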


\ms
As for Theorem \ref{t7.1}, we even get that
$C = C_0+ C_0 r_0^{1-\alpha} \big\{\fint_{B(x,r_0)} |\nabla \u|^2\big\}^{1/2}$,
where $C_0$ depends only on $n$, $N$, $\beta$, $p$, and $|\Omega|$.

Again the main difficulty for the proof will be to find $\delta  > n-2$ such that
\begin{equation}\label{e8.3}
\int_{B(x,r)} |\nabla \u|^2 \leq C_{0} r^\delta 
\end{equation}
for $x \in \R^n$ and $r>0$; the conclusion, with
$\alpha = {1\over 2}(\delta -n +2)$ will follow by the
same argument as in Section \ref{holder}, near \eqref{e7.41}.

We would still be happy to prove that for $x\in \R^n$,
$E(r) = \int_{B(x,r)} |\nabla u|^2$ often satisfies a differential inequality,
and there will be one main new case, when $x \in \partial \Omega$.
We cannot repeat the argument of Section \ref{holder} as it is,
because we want to make sure that the function $\u^\ast$
that we build still vanishes on $\Omega$. There is a special case
where we can still use our second competitor, the one where
we used a homogeneous extension of $\u$,
and this is when $\Omega \cap B(x,r)$ is a cone centered at $x$.
In the next proposition, which is the main ingredient for
Theorem~\ref{t8.1}, we will assume that $\Omega$ looks like
a cone near the origin, and then we shall get some decay for $E(r)$.

Again let us center our balls at the origin. Let $r_0 \in (0,1]$ be given.
We shall assume that 
\begin{equation}\label{e8.4}
0 \in \d \Omega
\end{equation}
and that there is a (measurable) cone $\Gamma$, centered at the origin,
such that
\begin{equation}\label{e8.5}
\sigma(S_1 \sm \Gamma) \geq \varepsilon \sigma(S_1),
\end{equation}
as well as a mapping $\Phi : B(0,2r_0) \to \R^n$, 
which is $(1+\eta)$-bilipschitz in the sense that
\begin{equation}\label{e8.6}
(1+\eta)^{-1} |x-y| \leq |\Phi(x)-\Phi(y)| \leq (1+\eta) |x-y|
\ \text{ for } x, y \in B(0,2r_0),
\end{equation}
and for which
\begin{equation}\label{e8.7}
\Phi(0) = 0,
\end{equation}
\begin{equation}\label{e8.8}
\Phi(B(0,2r_0)\cap \Omega) \i \Gamma,
\end{equation}
\begin{equation}\label{e8.9}
\Phi(B(0,2r_0)\sm \Omega) \i \R^n \sm\Gamma,
\end{equation}
and
\begin{equation}\label{e8.10}
\Phi(B(0,2r_0)) \supset B(0,3r_0/2).
\end{equation}
Here $\eta > 0$ is a small constant, that will be chosen in terms of
$\varepsilon$, and then \eqref{e8.10} is quite probably a consequence
of the other, but we are too lazy to prove this.

This will be our main additional assumption of approximation by a good cone.
When $\Omega$ is a $C^1$ domain, as in the statement of Theorem \ref{t8.1},
and $0 \in \d \Omega$, this property holds for $r_0$ small enough, 
and we can even take for $\Gamma$ an open half space. 
In addition, by compactness of $\d \Omega$,
the same property holds with the origin replaced by any point
$x\in \d \Omega$, and for $0 < r_0 \leq R$, where $R$ does not depend
on $x$.  We now state the main decay estimate.

\begin{pro}\label{p8.2} 
For each $\varepsilon > 0$, we can find $\eta \in (0,1/2)$ and $\delta > n-2$,
that depend only on $n$, $N$, $\beta$, $p$,
and $\varepsilon$, such that if $(\u,\W)$ satisfies the assumptions
of Theorem~\ref{t8.1}, and in addition \eqref{e8.4}-\eqref{e8.10} hold
for some choice of $r_0 \leq 1$, $\Gamma$ and $\Phi$, then
\begin{equation}\label{e8.11}
\int_{B(0,r)} |\nabla \u|^2 \leq  2 (r/r_0)^\delta \int_{B(0,2r_0)} |\nabla \u|^2
+ C_1 r^{n-{n \over p}} + C_1 r^{\beta n}
\end{equation}
for $0 < r < r_0$. Here $C_1$ depends on the various constants in the
assumptions, but not on $r_0$. 
\end{pro}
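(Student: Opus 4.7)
I would mimic the argument of Theorem \ref{t7.1} but exploit the boundary cone structure so that we are automatically in the ``no dominant phase'' regime of that proof. Indeed, assumptions \eqref{e8.5}, \eqref{e8.6}, \eqref{e8.9} together imply that for $0 < \rho < r_0$ and every $i$,
\[
\sigma(S_\rho \sm W_i) \;\geq\; \sigma(S_\rho \sm \Omega) \;\geq\; c(\eta)\,\varepsilon\,\sigma(S_\rho),
\]
because $\Phi$ transports $S_\rho \sm \Gamma$ to (approximately) a sphere of comparable surface measure disjoint from $\Omega$, with distortion bounded by $(1+\eta)^{n-1}$. Thus the analogue of \eqref{e7.6} fails for every $i$ at every scale, and I can build a competitor by homogeneous extension, as in \eqref{e7.22}, after straightening the boundary with $\Phi$.

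\textbf{Construction of the competitor.} Fix $r \in (0,r_0)$ and choose a good radius $\rho \in (r/(1+\eta)^2,\, r/(1+\eta))$ with the property that the restrictions of $u_i$ to $\Phi^{-1}(S_\rho)$ and of $u_i \circ \Phi^{-1}$ to $S_\rho$ lie in $W^{1,2}$ and satisfy \eqref{e6.16} (true for a.e.\ $\rho$ by Fubini, see \eqref{e4.14}); the upper bound on $\rho$ ensures $\Phi^{-1}(B(0,\rho)) \i B(0,r)$. For $x \in \Phi^{-1}(B(0,\rho))$ set
\[
u_i^\ast(x) \;=\; \Big(\tfrac{|\Phi(x)|}{\rho}\Big)^{\!\gamma}\, u_i\!\Big(\Phi^{-1}\!\Big(\tfrac{\rho\,\Phi(x)}{|\Phi(x)|}\Big)\Big),
\]
and $u_i^\ast(x) = u_i(x)$ otherwise. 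The matching condition \eqref{e4.17} along $\Phi^{-1}(S_\rho)$ is trivially satisfied, so $u_i^\ast \in W^{1,2}(\R^n)$. Crucially, \eqref{e8.9} forces $\Phi^{-1}(\Gamma) \cap B(0,2r_0) \i \Omega$, so the support of $u_i^\ast$ on $\Phi^{-1}(B(0,\rho))$ lies inside $\Omega$; I take $W_i^\ast$ to be the pullback through $\Phi$ of the cone over $\Phi(W_i)\cap S_\rho$, intersected with $\Omega$, and kept equal to $W_i$ outside $B(0,r)$, yielding a valid pair $(\u^\ast,\W^\ast) \in \F$.

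\textbf{Energy estimate and differential inequality.} Setting $\tilde v_i = u_i^\ast \circ \Phi^{-1}$ on $B(0,\rho)$, the bilipschitz distortion gives $\int_{\Phi^{-1}(B(0,\rho))} |\nabla u_i^\ast|^2 \leq (1+\eta)^{n+2} \int_{B(0,\rho)} |\nabla \tilde v_i|^2$. The explicit identity \eqref{e7.23} (applied to $\tilde v_i$ on the genuine cone $\Gamma$) combined with Lemma \ref{l4.1}, applied to $v_i = u_i \circ \Phi^{-1}$ with $E = S_\rho \cap \Gamma$ and using the cone miss \eqref{e8.5}, reproduces \eqref{e7.24}--\eqref{e7.26} and yields
\[
\int_{\Phi^{-1}(B(0,\rho))} |\nabla u_i^\ast|^2 \;\leq\; \lambda\,\rho \int_{\Phi^{-1}(S_\rho)} |\nabla_{t} u_i|^2,
\]
with $\lambda = \lambda(\eta,\gamma,\varepsilon,n)$ satisfying $(n-2)\lambda < 1$ once $\gamma$ and then $\eta$ are chosen small enough, exactly as in \eqref{e7.32}. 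Summing over $i$, using $J(\u,\W) \leq J(\u^\ast,\W^\ast)$ together with the elementary $M$- and $F$-error bounds \eqref{e7.28}--\eqref{e7.29}, and finally integrating in $\rho$ over the annulus $[r/(1+\eta)^2,\,r/(1+\eta)]$ to convert $\int_{\Phi^{-1}(S_\rho)} |\nabla_{t}\u|^2$ into a controlled piece of $E'(r) = \int_{S_r}|\nabla \u|^2$ via Fubini, produces a differential inequality
\[
\delta\, E(r) \;\leq\; r\,E'(r) + C r^{n-n/p} + C r^{\beta n} \qquad \text{for a.e.\ } r \in (0,r_0),
\]
for some $\delta > n-2$ depending only on $n,N,\beta,p,\varepsilon$. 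Integrating it as in \eqref{e7.34}--\eqref{e7.37} yields \eqref{e8.11}.

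\textbf{Main obstacle.} The delicate point is that $\Phi$ contributes distortions in two places: the energy comparison (a factor $(1+\eta)^{n+2}$) and the surface-measure comparison used to transfer the cone miss \eqref{e8.5} to $\Phi^{-1}(S_\rho)$ (a factor $(1+\eta)^{n-1}$). To preserve the essential strict inequality $(n-2)\lambda < 1$, one must choose $\gamma$ small depending on $\varepsilon$ and then $\eta$ small depending on both, which is exactly the quantitative dependence $\eta = \eta(\varepsilon,n,N,\beta,p)$ claimed in the proposition. The only other minor subtlety is the simultaneous selection of a ``good'' radius $\rho$ satisfying the required $W^{1,2}$-trace conditions on both $S_\rho$ and $\Phi^{-1}(S_\rho)$; a two-parameter Fubini argument (picking $\rho$ from a positive-measure subset of the annulus) handles this cleanly.
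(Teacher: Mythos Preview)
Your overall strategy and the competitor construction are correct and essentially match the paper's approach: straighten via $\Phi$, use the cone miss \eqref{e8.5} so that you are always in the ``no dominant phase'' case, and take the $\gamma$-homogeneous extension in the straightened variables. The formula you write for $u_i^\ast$ is exactly the paper's $\u^\ast(x)=\v^\ast(\Phi(x))$ with $\v^\ast(ty)=t^\gamma \v(y)$ and $\v=\u\circ\Phi^{-1}$.

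There is, however, a real gap in the ``differential inequality'' step. You want the inequality $\delta E(r)\le rE'(r)+\cdots$ for $E(r)=\int_{B(0,r)}|\nabla\u|^2$, so that $E'(r)=\int_{S_r}|\nabla\u|^2$. But your competitor estimate naturally controls the energy on $\Phi^{-1}(B(0,\rho))$ in terms of a surface integral on $\Phi^{-1}(S_\rho)$, and neither of these coincides with a Euclidean ball $B(0,r)$ or sphere $S_r$. The ``Fubini over the annulus'' step you invoke would produce an \emph{averaged} bound $\fint_\rho \int_{\Phi^{-1}(S_\rho)}\cdots$, not a pointwise quantity $E'(r)$ at a single radius; you cannot extract a differential inequality for $E$ in the original coordinates from that.

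The paper's clean fix is to run the entire differential-inequality argument in the straightened variables. Set $\v=\u\circ\psi$ with $\psi=\Phi^{-1}$ and define $E(\rho):=\int_{B(0,\rho)}|\nabla\v|^2$. Then $E'(\rho)=\int_{S_\rho}|\nabla\v|^2$ is precisely the surface term arising from the homogeneous competitor $\v^\ast$. The minimality of $(\u,\W)$, after two bilipschitz changes of variable (one to pass from $\int|\nabla\u|^2$ to $\int|\nabla\v|^2$, one to compare $\int|\nabla\u^\ast|^2$ with $\int|\nabla\v^\ast|^2$), yields
\[
E(\rho)\;\le\;(1+\eta)^{2n+4}\,\lambda\,\rho\,E'(\rho)+C\rho^{n-n/p}+C\rho^{\beta n}
\quad\text{for a.e.\ }\rho\in(0,r_0),
\]
with $\lambda$ as in \eqref{e7.25}. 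Choose $\gamma$ small (depending on $\varepsilon$) so that $(n-2)\lambda<1$, then $\eta$ small so that $(n-2)(1+\eta)^{2n+4}\lambda<1$; this allows $\delta$ as in \eqref{e8.32}. Integrate exactly as in \eqref{e7.34}--\eqref{e7.37}, and only at the very end perform one last change of variables $\int_{B(0,r)}|\nabla\u|^2\le(1+\eta)^{n+2}\int_{B(0,(1+\eta)r)}|\nabla\v|^2$ to obtain \eqref{e8.11}. The double factor $(1+\eta)^{2n+4}$ you anticipated in your ``Main obstacle'' paragraph is exactly right; the point you missed is that it must be absorbed inside the \emph{single} differential inequality for the $\v$-energy, not split between two incompatible radial foliations.
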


\ms
This will be our analogue of \eqref{e7.38}. For the proof we intend to 
conjugate by $\Phi$ to simplify the geometry, and then copy the
the proof of \eqref{e7.30}-\eqref{e7.38} in Section \ref{holder}.

Observe that because of \eqref{e8.10} and \eqref{e8.6}, we can define an 
inverse mapping $\psi = \Phi^{-1} : B(0,3r_0/2) \to B(0,2r_0)$. 
This allows us to define $\v$ on $B(0,3r_0/2)$ by 
\begin{equation}\label{e8.12}
\v(y) = \u(\psi(y)) \ \text{ for } y\in B(0,3r_0/2).
\end{equation}
Notice that 
\begin{equation}\label{e8.13}
v \in W^{1,2}(B(0,3r_0/2)
\end{equation}
because $u\in W^{1,2}(\R^n)$ and $\Phi$ is bilipschitz.
See for instance \cite{Z}.
Because of the geometry, it will be preferable to work with the 
function $\v$, and prove appropriate differential inequalities on the energy
\begin{equation}\label{e8.14}
E(r) =  \int_{B(0,r)} |\nabla \v|^2
= \sum_i \int_{B(0,r)} |\nabla v_i|^2.
\end{equation}

So let $(\u,\W)$ and $r_0$ satisfy the assumptions of the 
proposition, and let $r \leq r_0$ be such that the restriction of $\v$
to $S_r$ lies in $W^{1,2}(S_r)$ (as in \eqref{e6.8}), with derivatives 
that can be computed from the restriction of $D\v$, and that
\begin{equation}\label{e8.15}
v_i(x) = 0 \ \text{ for $\sigma$-almost every }
x \in S_r \sm \Phi(W_i),
\end{equation}
as in \eqref{e6.16}. These properties hold for the same reason as before 
(and by \eqref{e8.13}). Let us also assume that
\begin{equation}\label{e8.16}
E'(r) = \int_{S_r} |\nabla \v|^2
\end{equation}
(which again holds a.e. as in \eqref{e7.5}). We may now define
$\v^\ast$ by
\begin{equation}\label{e8.17}
\v^{\ast}(z) = \v(z)
\ \text{ for  } z\in B(0,3r_0/2)\sm B(0,r),
\end{equation}
and 
\begin{equation}\label{e8.18}
\v^{\ast}(tz) = t^\gamma \v(z)
\ \text{ for  $z\in S_r$ and } 0 \leq t  < 1,
\end{equation}
where the small $\gamma$ will be chosen later, depending on $\varepsilon$.
And then we set
\begin{equation}\label{e8.19}
\u^{\ast}(x) =  \u(x)
\ \text{ for } x\in \R^n \sm B(0,5r/4),
\end{equation}
and 
\begin{equation}\label{e8.20}
\u^{\ast}(x) =  \v^\ast(\Phi(x))
\ \text{ for } x\in B(0,4r/3),
\end{equation}
which is defined because $\Phi(x) \in B(0,3r/2) \i B(0,3r_0/2)$
(by \eqref{e8.6} and \eqref{e8.7});
the two definitions coincide when $x\in B(0,4r/3) \sm B(0,5r/4)$,
because $\Phi(x) \in B(0,3r/2) \sm B(0,r)$, and by \eqref{e8.17}).

We deduce from \eqref{e8.13} that $\v^\ast \in W^{1,2}(B(0,3r_0/2))$
(as we did near \eqref{e7.22}-\eqref{e7.23}), and then 
$\u^\ast \in W^{1,2}(\R^n)$ (because we have a whole gluing region
$B(0,4r/3) \sm B(0,5r/4)$). As always, $u_i^\ast \geq 0$ everywhere when
$u_i \geq 0$ everywhere.  We now need to define sets $W_i^\ast$ such that 
\begin{equation}\label{e8.21}
(\u^\ast,\W^\ast) \in \F.
\end{equation}
We keep 
\begin{equation}\label{e8.22}
W_i^\ast \sm \psi(B(0,r)) = W_i \sm \psi(B(0,r))
\end{equation}
(where $\psi = \Phi^{-1}$ as before) and set
\begin{equation}
\label{e8.23}
W_i^\ast \cap \psi(B(0,r)) = \psi(H_i),
\ \text { with }
H_i = \big\{ ty \, ; \, y \in S_r \cap \Phi(W_i) \text{ and } 0 < t < 1 \big\}. 
\end{equation}
The $W_i^\ast$ are disjoint, because the $W_i$ are disjoint
and $\psi : B(0,r) \to \psi(B(0,r))$ is injective. Next let us check
that $W_i^\ast \i \Omega$. Pick $x\in W_i^\ast$. If $x\in 
W_i^\ast \sm \psi(B(0,r))$, then $x\in W_i \i \Omega$ by
\eqref{e8.22} and the definition of $\F$. Otherwise, 
$x \in \psi(H_i)$, so there exist $y \in S_r \cap \Phi(W_i)$
and $0 < t < 1$ such that $x= \psi(ty)$. But $y = \Phi(z)$ for some 
$z\in W_i$, $z\in B(0,3r/2)$ by \eqref{e8.6} and \eqref{e8.7},
$y\in \Gamma$ because $z\in W_i \i \Omega$ and by \eqref{e8.8},
$ty \in \Gamma \cap B(0,r)$ because $\Gamma$ is a cone, and finally
$x=\psi(ty) \in \Omega$ by \eqref{e8.9}. So $W_i^\ast \i \Omega$.

Finally, we claim that $u_i^\ast(x) = 0$ for almost every $x\in \R^n \sm W_i^\ast$.
Start when $x\in \psi(B(0,r))$. Write $x = \psi(z)$, with $z = \Phi(x) \in B(0,r)$.
Further write $z=ty$, with $y\in S_r$ and $t < 1$; then $y\in S_r \sm \Phi(W_i)$,
because otherwise $z\in H_i$ and $x\in W_i^\ast$. 
If $v_i(y) \neq 0$, then $y$ lies in the $\sigma$-negligible set 
from \eqref{e8.15} (we just saw that $y\in S_r \sm \Phi(W_i)$);
then $z=ty$ lies in a negligible set too, and so does $x = \psi(z)$.
Hence $v_i(y) = 0$ for almost every $x$, and so 
$v_i^\ast(z) = 0$ by \eqref{e8.18}
and $u_i^\ast(x) = v_i^\ast(\Phi(x))=v_i^\ast(z) =0$ by \eqref{e8.20}.

We are left with the case when $x\in \R^n \sm\psi(B(0,r))$, and then
$x\in \R^n \sm W_i$ by \eqref{e8.22}. But we claim that
\begin{equation}\label{e8.24}
\u^\ast(x) = \u(x) \ \text{ for } x\in \R^n \sm\psi(B(0,r)).
\end{equation}
Indeed, if $x\in B(0,5r/4)$,
$u_i^\ast(x) = v_i^\ast(\Phi(x)) = v_i(\Phi(x)) = u_i(x)$
by \eqref{e8.20}, \eqref{e8.17}, and \eqref{e8.12}. Otherwise,
$x\in \R^n \sm B(0,5r/4)$ and $u_i^\ast(x) = u_i(x)$ directly by 
\eqref{e8.19}. This completes our proof of \eqref{e8.24},
and we deduce from \eqref{e8.24} that $u_i^\ast(x) = u_i(x)= 0$
for almost every $x\in [\R^n \sm W_i^\ast] \cap [\R^n \sm\psi(B(0,r))]$.
In turn \eqref{e8.21} follows.

\ms
We now need to estimate various terms. Since all our functions are bounded
and $\u^\ast = \u$ on $\R^n \sm B(0,5r/4)$, we have the same estimates on
the $F$ and $M$ terms as in \eqref{e7.28} and \eqref{e7.29}. We still
have that $J(\u,\W) \leq J(\u^\ast,\W^\ast)$ and hence, as in
\eqref{e7.30}) 
\begin{eqnarray}\label{e8.25}
\int_{B(0,5r/4)} |\nabla \u|^2 &\leq& \int_{B(0,5r/4)} |\nabla \u^\ast|^2
+ |M(\u^\ast) - M(\u)| +  |F(\W^\ast) - F(\W)|
\nonumber
\\
&\leq& \int_{B(0,5r/4)} |\nabla \u^\ast|^2 + C r^{n-{n \over p}} + C r^{\beta n}.
\end{eqnarray}
By \eqref{e8.24}, the energy contributions of  $B(0,5r/4) \sm \psi(B(0,r))$
cancel and we get that
\begin{equation}\label{e8.26}
\int_{\psi(B(0,r))} |\nabla \u|^2 
\leq \int_{\psi(B(0,r))} |\nabla \u^\ast|^2 + C r^{n-{n \over p}} + C r^{\beta n}.
\end{equation}
We now change use our bilipschitz mapping to change variables. By \eqref{e8.20}
and \eqref{e8.6},
\begin{equation}\label{e8.27}
\int_{\psi(B(0,r))} |\nabla \u^\ast|^2 
= \int_{\psi(B(0,r))} |\nabla (\v^\ast \circ \Phi)|^2 
\leq (1+\eta)^{n+2} \int_{B(0,r)} |\nabla \v^\ast|^2.
\end{equation}
Then the proof of \eqref{e7.23} (or \eqref{e7.18}) yields
\begin{eqnarray}\label{e8.28}
\int_{B(0,r)} |\nabla v^\ast_{i}|^2 
&=&  {r \over (n+2\gamma-2)} \int_{S_{r}} \gamma^2 r^{-2}v_i^2 + |\nabla_{t} v_i|^2.
\end{eqnarray}
We claim that $v_i = 0$ almost everywhere on $S_r \sm \Gamma$.
Indeed, if $y\in S_r \sm \Gamma$ and $x= \psi(y)$, then
$x\in B(0,2r) \sm \Omega$ by \eqref{e8.8}, hence $x$ lies out of $W_i$
and $y = \Phi(x)$ lies out of $\Phi(W_i)$. Almost always, $v_i(y)=0$,
by \eqref{e8.15}. This,  \eqref{e8.5}, and our assumption that $\v \in W^{1,2}(S_r)$)
allows us to apply (4.6) and get 
\begin{equation}\label{e8.29}
\int_{S_{r}} |v_{i}|^2 = \int_{S_r \cap \Phi(W_i)} |v_{i}|^2 
\leq C r^2 \varepsilon^{-1} \int_{S_{r}} |\nabla_{t} v_{i}|^2
\end{equation}
as in \eqref{e7.24}. We return to \eqref{e8.28},
sum over $i$ (the pieces are still orthogonal because
of disjoint supports), use \eqref{e8.29},
and obtain as in \eqref{e7.27}, and with the same $\lambda$ 
as in \eqref{e7.25}, that
\begin{equation}\label{e8.30}
\int_{B(0,r)} |\nabla \v^\ast|^2
= \sum_i \int_{B(0,r)} |\nabla v^\ast_i|^2
\leq \lambda r \sum_i  \int_{S_{r}} |\nabla_{t} v_i|^2
= \lambda r  \int_{S_{r}} |\nabla_{t} \v|^2.
\end{equation}
We complete the estimate with a change of variable in the other direction:
\begin{eqnarray}\label{e8.31}
E(r) &=& \int_{B(0,r)} |\nabla \v|^2 = \int_{B(0,r)} |\nabla (\u \circ \psi)|^2
= (1+\eta)^{n+2} \int_{\psi(B(0,r))} |\nabla \u|^2
\nonumber
\\
&\leq& (1+\eta)^{n+2} \int_{\psi(B(0,r))} |\nabla \u^\ast|^2 + C r^{n-{n \over p}} + C r^{\beta n}
\nonumber
\\
&\leq& (1+\eta)^{2n+4} \int_{B(0,r)} |\nabla \v^\ast|^2 + C r^{n-{n \over p}} + C r^{\beta n}
\\
&\leq& (1+\eta)^{2n+4} \lambda r  \int_{S_{r}} |\nabla_{t} \v|^2
+ C r^{n-{n \over p}} + C r^{\beta n}
\nonumber
\\
&\leq& (1+\eta)^{2n+4} \lambda r  E'(r)
+ C r^{n-{n \over p}} + C r^{\beta n}
\nonumber
\end{eqnarray}
by \eqref{e8.12}, \eqref{e8.6}, \eqref{e8.26}, by
\eqref{e8.27}, \eqref{e8.30}, and \eqref{e8.16}.
This is our analogue of \eqref{e7.30}, with the only difference that
we have the extra term $(1+\eta)^{2n+4}$. Also, we do not need to care
about \eqref{e7.21} (there is no first case). Anyway, $(n-2)\lambda < 1$
if $\gamma$ is small enough, depending on $\varepsilon$
(see \eqref{e7.32}), so we can choose $\eta$ so small that
$(n-2)\lambda (1+\eta)^{2n+4}< 1$, and then choose $\delta$
such that 
\begin{equation}\label{e8.32}
n-2 < \delta < \min\Big(n\beta, n-{n\over p}, (1+\eta)^{-2n-4}\lambda^{-1}\Big)
\end{equation}
(see \eqref{e7.31}), and \eqref{e8.31}) becomes
\begin{equation}\label{e8.33}
\delta E(r) \leq r E'(r) +  C r^{n-{n \over p}} + C r^{\beta n}.
\end{equation}
This is the same as \eqref{e7.33}. We integrate this as we did before and get 
the analogue of \eqref{e7.38}:
\begin{equation}\label{e8.34}
\int_{B(x,r)} |\nabla \v|^2 
\leq (r/r_0)^\delta E(r_{0}) + C [r^{n-{n \over p}} + r^{\beta n}].
\end{equation}
We complete this by a last change of variable: we are interested in
\begin{eqnarray}\label{e8.35}
\int_{x\in B(x,r)} |\nabla \u(x)|^2 
&\leq& (1+\eta)^{n+2}\int_{y\in \Phi(B(x,r))} |\nabla \v(y)|^2 
\nonumber
\\
&\leq& (1+\eta)^{n+2}\int_{y\in B(0,(1+\eta r))} |\nabla \v(y)|^2 
\\
&\leq& (1+\eta)^{n+3} (r/r_0)^\delta E(r_{0}) + 2C [r^{n-{n \over p}} + r^{\beta n}]
\nonumber
\end{eqnarray}
by \eqref{e8.12}, \eqref{e8.6}, \eqref{e8.7}, if $(1+\eta)r \leq r_0$,
and by \eqref{e8.34}. Since 
$E(r_{0}) \leq (1+\eta)^{n+2} \int_{B(0,2r_0)} |\nabla \u|^2$
by the usual change of variable, \eqref{e8.35} implies \eqref{e8.11}
when $(1+\eta)r \leq r_0$. The other case is trivial (recall that $\delta$
is very small). This completes our proof of Proposition \ref{p8.2}.
\qed

\ms
We are now ready to prove Theorem \ref{t8.1}. Let $\Omega$ satisfy 
\eqref{e8.1}. That is, $\d \Omega$ is a compact $C^1$ embedded submanifold
if codimension $1$, and we can assume that $\Omega$ is locally on one side of 
$\d \Omega$; otherwise, remove the set of points of $\d \Omega$ which
have $\Omega$ on both sides (this set is open and closed in $\d \Omega$), 
without changing the problem.
By compactness, we can find $r_0 \in (0,1]$ such that, for each $x \in \d \Omega$,
the set $\Omega_x = \Omega-x$ satisfies the geometrical assumptions 
\eqref{e8.4}-\eqref{e8.10} of Proposition \ref{p8.2}. We can even take for
$\Gamma$ a half space (and so $\varepsilon = 1/2$).

Now let $(\u,\W)$ be a minimizer, as in the statement of Theorem \ref{t8.1},
and let $x\in \d \Omega$ be given. We can apply Proposition \ref{p8.2}, with
the value of $r_0$ that we just found, to a translation by $-x$ of
$\Omega$, $(\u,\W)$, and the data $f_i$ and $g_i$. We get that
\eqref{e8.11} holds, so
\begin{equation}\label{e8.36}
\int_{B(x,r)} |\nabla \u|^2 
\leq 2 (r/r_0)^\delta \int_{B(x,2r_0)} |\nabla \u|^2
+ C_1 r^{n-{n \over p}} + C_1 r^{\beta n}
\end{equation}
for $0 < r \leq r_0$. The constant $C_1$ depends only on the various constants that show
up in the assumptions. 

This shall take care of balls $B(x,r)$ centered on $\d \Omega$. Now consider
$x\in \R^n \sm \d\Omega$, set $d(x) = \dist(x,\d \Omega)$, and 
choose $y \in \d \Omega$ such that $|x-y| = d(x)$.
 
Let $r > 0$ be given, with $r \leq r_0/3$. We shall need to discuss cases.
If $r \geq d(x)/2$, just observe that
\begin{eqnarray}\label{e8.37}
\int_{B(x,r)} |\nabla \u|^2 &\leq& \int_{B(y,r+d(x))} |\nabla \u|^2
\leq \int_{B(y,3r)} |\nabla \u|^2 
\nonumber
\\
&\leq& 
2 (3r/r_0)^\delta \int_{B(x,2r_0)} |\nabla \u|^2
+ 3^n C_1 r^{n-{n \over p}} + 3^n C_1 r^{\beta n}
\nonumber
\\
&\leq& 
C (r/r_0)^\delta \int_{B(x,2r_0)} |\nabla \u|^2
+ C [r^{n-{n \over p}} +  r^{\beta n}]
\end{eqnarray}
by \eqref{e8.36}, and where we do not need to keep track of the 
dependence in $C_1$.
So we may assume that $r\leq d(x)/2$, and 
the only interesting case is when $x\in \Omega$,
because otherwise $B(x,r) \i \R^n \sm \Omega$ and 
$\int_{B(x,r)} |\nabla \u|^2 = 0$.

Let us first assume that $d(x) \leq r_0/3$. The proof of \eqref{e8.37}, 
with $r=d(x)$, yields
\begin{equation}
\label{e8.38}
\int_{B(x,d(x))} |\nabla \u|^2 \leq   C (d(x)/r_0)^\delta \int_{B(x,2r_0)} |\nabla \u|^2
+ C d(x)^{n-{n \over p}} + C d(x)^{\beta n}.
\end{equation}
Let us use the proof of Theorem \ref{t7.1}, applied as usual after translating 
everything by $-x$, and with $r_0 = d(x)$, so that $B(x,r_0) = B(x,d(x))  \i \Omega$.
In fact, we are only interested by \eqref{e7.38}, which implies that
\begin{eqnarray}
\label{e8.39}
\int_{B(x,r)} |\nabla \u|^2 
&\leq&   C (r/d(x))^\delta \int_{B(x,d(x))} |\nabla \u|^2 + C [r^{n-{n \over p}} +  r^{\beta n}]
\nonumber
\\
&\leq& C (r/r_0)^\delta \int_{B(x,2r_0)} |\nabla \u|^2
+ C (r/d(x))^\delta [d(x)^{n-{n \over p}} + d(x)^{\beta n}]
+ C [r^{n-{n \over p}} +  r^{\beta n}]
\nonumber
\\
&\leq& C (r/r_0)^\delta \int_{B(x,2r_0)} |\nabla \u|^2
+ C r^\delta
\end{eqnarray}
by \eqref{e8.38} and because $\delta \leq \min(n-{n \over p},\beta n)$
and $d(x) \leq r_0/3 \leq 1/3$.

In the last case when $d(x) > r_0/3$, we also use the proof of Theorem \ref{t7.1},
but with the radius $r_0/3$ (which is all right because $B(x,r_0/3) \i \Omega$), 
and deduce directly from \eqref{e7.38} that
\begin{eqnarray}\label{e8.40}
\int_{B(x,r)} |\nabla \u|^2 
&\leq&   C (3r/r_0)^\delta \int_{B(x,2r_0/3)} |\nabla \u|^2 
+ C [r^{n-{n \over p}} +  r^{\beta n}]
\nonumber
\\
&\leq& C (r/r_0)^\delta \int_{B(x,2r_0)} |\nabla \u|^2 
+ C [r^{n-{n \over p}} +  r^{\beta n}]
\end{eqnarray}
because $r \leq r_0/3$ by assumption.

Thus in all the cases we get the same conclusion as in \eqref{e8.39}, 
or better, which now holds for all balls of radius $r \leq r_0/3$.
This is not exactly as good as in \eqref{e7.38}, because the error term
$C r^\delta$ is a little larger. In fact if we want to really get \eqref{e8.2}
later, let us observe that we can get that
\begin{equation}\label{e8.41}
\int_{B(x,r)} |\nabla \u|^2 
\leq  C (r/r_0)^\delta \int_{B(x,2r_0)} |\nabla \u|^2
+ C r^{\delta'}
\end{equation}
for some $\delta' > \delta$ (we just use a constant a little
larger than $\delta$ in the estimates above that lead to \eqref{e8.39}).
The conclusion of Theorem \ref{t8.1}, namely \eqref{e8.2}, 
now follows by the same proof as for \eqref{e7.2}; see \eqref{e7.41}-\eqref{e7.50}.
\qed

\begin{rem}\label{r8.3} 
The regularity assumption \eqref{e8.1} that we put in
Theorem \ref{t8.1} is really far from optimal.
First, we just use the boundedness of $\Omega$ to have some uniformity in 
the approximation by cones. But more importantly, 
we do not need to have a good approximation by cones at all scales $r \leq r_0$. 
It would be more than enough, for instance,
if for some choice of $R > 0$ and $C \geq 0$ and all $x \in \d \Omega$, the set
$\Omega-x$ had the approximation condition \eqref{e8.4}-\eqref{e8.10} for all $r_0 \leq R$, 
except perhaps for $r_0$ in an exceptional set $Z(x)$ such that $\int_{Z(x)} dr/r \leq C$.
But many other conditions would probably work as well.
\end{rem}

\section{The monotonicity formula}  \label{mono} 

The H\"older-continuity of $\u$ (Theorems \ref {t7.1} and \ref{t8.1})
will allow us to apply a near monotonicity result of \cite{CJK} that will be very useful,
in particular for proving that $\u$ is Lipschitz inside $\Omega$ and controlling 
blow-up limits.

We shall need slightly stronger assumptions on the data. We still assume that
$|\Omega| < +\infty$ (as in \eqref{e3.1}), that $F$ is H\"older-continuous 
with exponent $\beta > {n-2\over n}$, as in \eqref{e7.1}, and that
\begin{equation}\label{e9.1}
f_i \in L^p(\Omega), \, \text{ for some $p > {n \over 2} \ $
and $f_i(x) \geq 0$ almost everywhere on } \Omega,
\end{equation}
(see \eqref{e5.2}-\eqref{e5.3}), but this time we shall also require 
\begin{equation}\label{e9.2}
g_i \in L^\infty(\Omega)
\end{equation}
(and not just $L^p$ for some $p> n/2$); 
this is probably not optimal, 
but we should probably at least require $p > n$. 
See Remark \ref{r9.2} 
When we consider balls that meet $\d \Omega$, we shall also assume 
$\Omega$ to be a bounded open set with a $C^1$ boundary, as in \eqref{e8.1}. 

A consequence of these assumptions is that we can change $\u$ on a set
of zero measure to make it H\"older-continuous. We shall always assume that 
this modification has been done, which will allow us to talk about the open sets
$\Omega_i = \big\{ x\in \Omega \, ; \, u_i(x) > 0 \big\}$. See Remark \ref{r7.2}.

Fix $x_0 \in \R^n$,  two indices $i_1, i_2 \in [1,N]$
and two signs $\varepsilon_1, \varepsilon_2 \in \{ -1, +1 \}$. Then
define functions $v_1$ and $v_2$ by
\begin{equation}\label{e9.3}
v_j(x) = [\varepsilon_j u_{i_j}(x)]_+ = \max(0,\varepsilon_j u_{i_j}(x)) \in [0,+\infty)
\end{equation}
for $j=1,2$ and $x\in \R^n$. We always take different pairs $(i_j,\varepsilon_j)$,
so typical choices of the two $v_j$ would be $v_1 = (u_1)_+$ and $v_2 = (u_1)_-$,
or $v_1 = (u_1)_+$ and $v_2 = (u_2)_+$. Our complicated notation is designed to accommodate
both cases. Finally set
\begin{equation}\label{e9.4}
\Phi_j(r) = {1 \over r^2} \int_{B(x_0,r)} {|\nabla v_j|^2 \over |x-x_0|^{n-2}} \, dx
\ \text { for $j=1,2\ $ and } \,  \Phi(r) = \Phi_1(r)\Phi_2(r)
\end{equation}
for $r > 0$. This is the function which will be nearly monotone. As we shall see later,
the integrals often converge because of \eqref{e7.38} or \eqref{e8.11}.

\begin{thm}\label{t9.1} 
Assume that \eqref{e3.1}, \eqref{e7.1}, \eqref{e9.1}, and \eqref{e9.2} hold.
Let $(\u,\W)$ be a minimizer of the functional $J$, and let $x_0$ and $r_0 > 0$
be given. If $B(x_0,r_0)$ is not contained in $\Omega$, also assume $\eqref{e8.1}$.
Then for all choices of $(i_1,\varepsilon_1) \neq (i_2,\varepsilon_2)$ as above,
and $0 < r  \leq r_0$,
\begin{equation}\label{e9.5}
\Phi(r) \leq C \Big(r_0^2 ||g_{i_1}||^2_\infty
+r_0^2 ||g_{i_2}||^2_\infty+ \Phi_1(r_0) + \Phi_2(r_0) \Big)^2,
\end{equation}
with a constant $C$ that depends only on $n$.
\end{thm}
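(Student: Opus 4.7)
The plan is to reduce the statement to the near-monotonicity theorem of Caffarelli--Jerison--Kenig \cite{CJK}. That theorem applies to pairs of continuous, nonnegative functions $v_1, v_2$ with disjoint supports, each satisfying $\Delta v_j \geq -C_0$ in the sense of distributions, and having a modulus of continuity at the base point; under these hypotheses, $\Phi_1\Phi_2$ is almost monotone up to multiplicative and additive constants involving $C_0$ and the boundary data at $r_0$. So the entire task is to verify these hypotheses for our phases $v_1, v_2$.

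First I would check the disjoint support condition $v_1 v_2 \equiv 0$. Since $(i_1,\varepsilon_1)\neq(i_2,\varepsilon_2)$, either $i_1\neq i_2$, in which case $v_j$ is supported in $W_{i_j}$ (modulo null sets) and $W_{i_1}\cap W_{i_2}=\emptyset$ since $(\u,\W)\in\F$; or $i_1=i_2$ but $\varepsilon_1=-\varepsilon_2$, in which case $v_1=(u_{i_1})_+$ and $v_2=(u_{i_1})_-$ have disjoint supports by construction. Continuity of $v_j$ comes from Theorem \ref{t7.1} (interior) and Theorem \ref{t8.1} (up to $\d\Omega$, using \eqref{e8.1} when $B(x_0,r_0)\not\subset\Omega$); in particular $v_j$ vanishes continuously along $\d\Omega$, which lets us extend it by $0$ to $\R^n$ without destroying regularity.

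Next I would derive the distributional inequality $\Delta v_j \geq -\tfrac{1}{2}\|g_{i_j}\|_\infty$ on $\R^n$. On the open set $\Omega_\varphi := \{v_j>0\}$ (which is contained in $\Omega$ by continuity), I perturb $u_{i_j}$ by $\varepsilon_j t\varphi$ for a nonnegative test function $\varphi\in C_c^\infty(\Omega_\varphi)$. Since $\varphi$ is supported strictly inside $\{v_j>0\}$, for $|t|$ small the perturbation preserves the sign of $u_{i_j}$ on a neighborhood of $\supp\varphi$ and hence lies in $\F$ without any change of the sets $W_i$ or of $F(\W)$; computing $\frac{d}{dt}J|_{t=0}=0$ yields the Euler--Lagrange equation $-\Delta u_{i_j}+f_{i_j}u_{i_j}=\tfrac{1}{2}g_{i_j}$ in $\Omega_\varphi$. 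Since $f_{i_j}\geq 0$ and $v_j=\varepsilon_j u_{i_j}\geq 0$ on $\Omega_\varphi$, this gives $\Delta v_j \geq -\tfrac{1}{2}\|g_{i_j}\|_\infty$ on $\Omega_\varphi$. Extending $v_j$ by $0$ across $\d\Omega_\varphi$ and across $\d\Omega$ (using the continuity on $\R^n$ provided by Theorem \ref{t8.1}), a routine computation with a smooth cut-off shows the inequality extends distributionally to all of $\R^n$, with constant $C_0=\tfrac{1}{2}\|g_{i_j}\|_\infty$.

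Finally I would apply the CJK theorem with $C_0=\tfrac{1}{2}\max_j\|g_{i_j}\|_\infty$ at the point $x_0$ on the scale $r_0$. Its conclusion is exactly a bound of the form $\Phi(r)\leq C(r_0^2C_0^2+\Phi_1(r_0)+\Phi_2(r_0))^2$ for $0<r\leq r_0$, which is \eqref{e9.5}. The main obstacle will be the boundary case, when $B(x_0,r_0)$ meets $\d\Omega$: one has to check that the CJK hypotheses are really satisfied in a full ball and not just inside $\Omega$. This is where $\eqref{e8.1}$ and the global H\"older continuity from Theorem \ref{t8.1} are essential, since they give both the continuous extension of $v_j$ by zero and the modulus-of-continuity input that CJK requires; the distributional inequality across $\d\Omega$ then holds because $v_j\geq 0=v_j|_{\d\Omega}$ (a subharmonic function can only gain, not lose, mass when extended by its boundary values from above).
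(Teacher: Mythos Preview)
Your approach is essentially identical to the paper's: verify continuity of the $v_j$ via the H\"older theorems, check disjoint supports, derive $\Delta v_j \geq -\tfrac{1}{2}\|g_{i_j}\|_\infty$ on $\{v_j>0\}$ from the Euler--Lagrange equation, extend this to all of $\R^n$ (the paper cites Remark~1.4 in \cite{CJK} for this extension rather than doing a cut-off argument), and then invoke Theorem~1.3 of \cite{CJK}. The only point where the paper is more explicit is the rescaling step: it sets $w_j(x)=\lambda_j v_j(x_0+r_0 x)$ with $\lambda_j \sim r_0^{-2}\|g_{i_j}\|_\infty^{-1}$ so that $\Delta w_j \geq -1$ on the unit ball, applies \cite{CJK} in its normalized form, and then unscales---this is precisely how the factors $r_0^2\|g_{i_j}\|_\infty^2$ emerge in \eqref{e9.5}, which you assert without computation.
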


\begin{proof}
Our proof will mostly consist in checking that $v_1$ and $v_2$ satisfy the assumptions
of Theorem 1.3 in \cite{CJK}, which is perfectly fit for our situation. This result is in the
same spirit as in the initial monotonicity formula in \cite{ACF}. 
It looks a little less nice because \eqref{e9.5} is less precise than saying that
$\Phi$ is nondecreasing, but this allows more general situations (as here), and will
give almost as good consequences.

The first assumption of \cite{CJK}, that the $v_j$ be continuous, 
is a consequence of Theorems \ref {t7.1} or \ref{t8.1}, and this is why 
we include \eqref{e3.1}, $L^p$ bounds on the $f_i$,  
the H\"older assumption \eqref{e7.1}, and sometimes \eqref{e8.1},
which will not show up in the estimates. They also satisfy the exclusion relation
$v_1v_2=0$, just because $(\u,\W) \in \F$ (and $\u$ is continuous).

Next we want to show that for each $i$, $u_i$ satisfies the equation
\begin{equation}\label{e9.6}
\Delta u_i = f_i u_{i} -  {1 \over 2} g_{i} 
\end{equation}
in the open set $\Omega_i = \big\{ x\in \R^n \, ; u_i(x) \neq 0 \big\} \i \Omega$. 
Here we restrict to $\Omega_i$ because in other places we may not
modify $u_i$
freely as we do in the proof below.
Otherwise we proceed in the most usual way. For each test
function $\varphi$ with compact support in $\Omega_i$, 
we observe that if we replace $u_i$ with $u_i + t \varphi$, $t\in \R$ small,
and otherwise change nothing, we get a new competitor $(\u_t,\W)$.
Thus $J(\u_t,\W) \geq J(\u,\W)$ for $t$ small. But $J(\u_t,\W)$ has a derivative
at $t=0$, which is
\begin{equation}\label{e9.7}
{\d J(\u_t,\W) \over \d t}(0) = 2 \int \langle \nabla u_i, \nabla\varphi\rangle
+ 2 \int f_i u_i \varphi - \int  g_i \varphi
\end{equation}
(see \eqref{e1.5} and recall that only $u_i$ changes). 
This derivative vanishes, so by definition of the distribution $\Delta u_i$, 
\begin{equation}\label{e9.8}
0 = 2 \int \langle \nabla u_i, \nabla\varphi\rangle + 2 \int f_i u_i \varphi - \int  g_i \varphi
= \langle - 2\Delta u_i + 2 f_i u_i - g_i,\varphi \rangle.
\end{equation}
This holds for every test function $\varphi$, and this gives \eqref{e9.6}.
As an immediate consequence of \eqref{e9.6} and the definitions,
\begin{equation}\label{e9.9}
\Delta v_j = \varepsilon_j \Delta u_{i_j} = 
\varepsilon_j f_i u_{i_j} - {1 \over 2}\varepsilon_j g_{i_j} 
\geq - {1 \over 2} ||g_{i_j}||_\infty
\end{equation}
in the sense of distributions, in the open set
$\Omega(j) = \big\{ x\in \R^n \, ; \, v_j(x) > 0 \big\} \i \Omega$.

We want to take advantage of the normalization in \cite{CJK}, 
so we don't apply the result directly to the $v_j$, but to 
$w_j(x) = \lambda_j v_j(x_0 + r_0 x)$, which are defined on the unit ball
and such that $\Delta w_j(x) = \lambda_j r_0^2 \Delta v_j(x_0 + r_0 x)
\geq - 1$ if $\lambda_j r_0^2 ||g_{i_j}||_\infty \leq 2$. 
A brutal, but acceptable choice will be to take
\begin{equation}\label{e9.10}
\lambda_1 = \lambda_2 = r_0^{-2} (\tau+||g_{i_1}||_\infty+||g_{i_2}||_\infty)^{-1},
\end{equation} 
with a very small $\tau > 0$ that will tend to $0$ soon.

Notice that the $w_j$ satisfy all the assumptions of Theorem 1.3 in
\cite{CJK}, in particular because Remark 1.4 in \cite{CJK} says that
since $w_j$ is nonnegative and continuous and $\Delta w_j \geq -1$ on $\{ w_j > 0 \}$,
we get that $\Delta w_j \geq -1$ (as a distribution and on the whole $\R^n$).
Set
\begin{equation}\label{e9.11}
\wt\Phi_j(\rho) = {1 \over \rho^2} \int_{B(x_0,\rho)} {|\nabla w_j|^2 \over |x|^{n-2}} \, dx
\ \text { and } \,  \wt \Phi(\rho) = \Phi_1(\rho)\Phi_2(\rho)
\end{equation}
for $0 < \rho \leq 1$; then by \cite{CJK}
\begin{equation}\label{e9.12}
\wt\Phi(\rho) \leq C \big(1+ \wt\Phi_1(1) + \wt\Phi_2(1)\big)^2.
\end{equation}
Also, a change of variable yields
$\wt\Phi_j(\rho) = \lambda_j^2 r_0^2 \Phi_j(r_0 \rho)$ for
$0 < r \leq 1$, and now
\begin{eqnarray}\label{e9.13}
\Phi(r) &=& r_0^{-4} \lambda_1^{-2}\lambda_2^{-2}\wt\Phi(r/r_0) 
\leq C r_0^{-4} \lambda_1^{-2}\lambda_2^{-2}
\big(1+\wt\Phi_1(1) +  \wt\Phi_2(1)\big)^2
\nonumber \\
&\leq&  C r_0^{-4} \lambda_1^{-2}\lambda_2^{-2}
\big(1+\lambda_1^2 r_0^2\Phi_1(r_0) +  \lambda_2^2 r_0^2\Phi_2(r_0)\big)^2
\\
&=& \big(\lambda_1^{-2} r_0^{-2} + \Phi_1(r_0) +  \Phi_2(r_0)\big)^2
\nonumber 
\end{eqnarray}
because $\lambda_1 = \lambda_2$ by \eqref{e9.10}.
Since $\lambda_1^{-2} r_0^{-2}
= r_0^2 (\tau+||g_{i_1}||_\infty+||g_{i_2}||_\infty)^2$
by \eqref{e9.10}, we get that
\begin{equation}\label{e9.14}
\Phi(r) \leq  C \big(r_0^2(\tau+||g_{i_1}||_\infty+||g_{i_2}||_\infty)^2
+ \Phi_1(r_0) + \Phi_2(r_0) \big)^2.
\end{equation}
We now let $\tau$ tend to $0$ and get \eqref{e9.5}; Theorem \ref{t9.1} follows.
\qed
\end{proof}

\begin{rem}\label{r9.2} 
Our previous assumption than $g_i \in L^p$ for some $p > n/2$
is no longer enough. In the simple case when $N=1$ and $f_1=0$,
we get a solution $u$ that satisfies $\Delta u = - {1 \over 2} g_1$
locally (see \eqref{e9.6}), and that looks like $G \ast g_{1}$,
where $G$ is the fundamental solution of $-\Delta$ (as in Section \ref{bounded}).
Then $\nabla u$ looks like $\nabla G \ast g_{1}$ (a Riesz transform
of order $1$). If we want to make sure that $u$ behaves like a Lipschitz function 
(this is what is suggested by the normalization in \eqref{e9.5}), we should 
probably require that $\Delta u = - {1 \over 2} g_1 \in L^p$, where
$p > n$ is larger than the Sobolev exponent. 
\end{rem}

\section{Interior Lipschitz bounds for $u$}  \label{lip} 

In this section we make our assumptions just a bit stronger than before
(we do not want error terms much larger than $r^n$), 
and show that $u$ is locally Lipschitz 
inside $\Omega$ when $(\u,\W)$ is a minimizer for $J$. 
We shall take care of the Lipschitz
regularity near $\d \Omega$ in the next section.

We now assume that for $1 \leq i \leq n$,
\begin{equation}\label{e10.1}
f_i \geq 0 \text{ a.e. on } \Omega , \ f_i \in L^\infty(\Omega), 
\text{ and } g_i \in L^\infty(\Omega),
\end{equation}
and we also require $F$ to be a Lipschitz function of $\W$,
i.e., that
\begin{equation}\label{e10.2}
\Big| F(W_{1},W_{2},\ldots,W_{N}) - F(W'_{1},W'_{2},\ldots,W'_{N}) \Big|
\leq C \sum_{i=1}^N |W_{i} \Delta W'_{i}|
\end{equation}
for some $C \geq 0$ and all choices of  $W_{i}, W'_{i} \i \Omega$, 
$1 \leq i \leq N$. As usual $\Delta$ denotes a symmetric difference.

\begin{thm}\label{t10.1} 
Assume that $|\Omega| < +\infty$ (as in \eqref{e3.1}), 
and that \eqref{e10.1} and \eqref{e10.2} hold.
Let $(\u,\W)$ be a minimizer of the functional $J$, and let $x_0$ and $r_0 \in (0,1]$
be such that $B(x_0,2r_0) \i \Omega$. Then 
\begin{equation}\label{e10.3}
|\u(x)-\u(y)| \leq C_2 \Big(1+ \fint_{B(x_0,2r_0)} |\nabla \u|^2 \Big)^{1/2}
 |x-y|
\ \text{ for } x, y \in B(x_0,r_0),  
\end{equation}
with a constant $C_2$ that depends only on $n$, $N$, $|\Omega|$, and 
the  constants in \eqref{e10.1} and \eqref{e10.1}.
\end{thm}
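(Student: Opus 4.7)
The plan is to combine the H\"older regularity from Section \ref{holder} with the near-monotonicity of the ACF functional from Section \ref{mono}, the key simplification being that under the stronger assumptions \eqref{e10.1}--\eqref{e10.2} the error terms in the proof of Theorem \ref{t7.1} improve to $O(r^n)$. Rerunning that proof (the $M$-term contributes $C r^n$ since $g_i \in L^\infty$, and the $F$-term contributes $C r^n$ since $F$ is Lipschitz in $\W$) I would obtain a sharper version of \eqref{e7.38}:
\[
\int_{B(y,r)} |\nabla \u|^2 \leq C(r/r_0)^\delta \int_{B(x_0, 2r_0)} |\nabla \u|^2 + C r^n
\]
for some fixed $\delta > n-2$, uniformly for $y \in B(x_0, 3r_0/2)$ and $r \leq r_0/2$. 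Since $\delta > n-2$, decomposing $B(y, r_0/4)$ into dyadic annuli and summing the resulting geometric series (the key exponent $2-n+\delta$ is strictly positive) then yields a uniform bound $\Phi_\varphi^y(r_0/4) \leq C(1 + \overline E)$ for every phase $\varphi$ and every $y \in B(x_0, r_0)$, where $\overline E := \fint_{B(x_0, 2r_0)} |\nabla \u|^2$.

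Next I would apply Theorem \ref{t9.1} at each such $y$ with initial scale $r_0/4$ to obtain, for all distinct phases $\varphi \neq \psi$ and all $0 < r \leq r_0/4$,
\[
\Phi_\varphi^y(r)\Phi_\psi^y(r) \leq M := C(1 + \overline E)^2.
\]
To convert this pairwise product bound into a pointwise Lipschitz bound on $\u$, I would split into cases at $y \in B(x_0, r_0)$. If $u_i(y) \neq 0$ for some $i$, then by continuity of $\u$ (Remark \ref{r7.2}) $y$ lies in the open component of $\{u_i > 0\}$ or $\{u_i < 0\}$ containing it, and on that component $u_i$ solves the elliptic equation \eqref{e9.6} with right-hand side bounded in $L^\infty$; interior Calder\'on--Zygmund/Schauder estimates then give a local Lipschitz bound on $u_i$ with the desired dependence on $\overline E$. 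If instead $\u(y) = 0$, I would invoke the classical ACF-type growth estimate $\sup_{B(y,r)} v_\varphi \leq Cr\sqrt{\Phi_\varphi^y(r)}$, together with the product bound above, to deduce $|\nabla v_\varphi(y)| \leq C\sqrt{M}$ at each Lebesgue point. Integrating these a.e.\ gradient bounds along segments then yields \eqref{e10.3}.

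The main obstacle is extracting phase-by-phase control from the pairwise ACF product bound. For $N \leq 2$ nonnegative phases, the classical $u_1 - u_2$ trick reduces the problem to a single scalar minimizer where one phase governs everything, but here (with $N \geq 3$, or real-valued phases) several phases can simultaneously be active at a single point $y$. The resolution I expect is the elementary observation that at each $y$ and scale $r$, at most one phase $\varphi_0$ can have $\Phi_{\varphi_0}^y(r) > \sqrt{M}$, since otherwise the product bound would be violated; all other phases are then uniformly controlled by $\sqrt{M}$, while the one ``large'' phase $\varphi_0$ is necessarily locally dominant and gets its Lipschitz bound from the interior elliptic regularity of the first case of the dichotomy applied inside its positivity set.
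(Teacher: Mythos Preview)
Your overall architecture---sharpen the H\"older estimate to get $O(r^n)$ errors, bound $\Phi_\varphi^y(r_0/4)$ uniformly by dyadic summation, apply Theorem~\ref{t9.1} for the product bound, then split on whether $\u(y)=0$---is exactly the paper's scheme, and the elliptic-regularity case $u_i(y)\neq 0$ is handled just as in Lemma~\ref{t10.6}. The gap is in your treatment of the dominant phase at a zero point.

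When $\u(y)=0$, the product bound $\Phi_\varphi^y(r)\Phi_\psi^y(r)\le M$ indeed forces all but one phase $\varphi_0$ to satisfy $\Phi_\varphi^y(r)\le\sqrt{M}$, but it gives \emph{no} bound on $\Phi_{\varphi_0}^y(r)$ itself. Your proposed ``ACF growth estimate'' $\sup_{B(y,r)}v_\varphi\le Cr\sqrt{\Phi_\varphi^y(r)}$, even if true, is therefore useless for $\varphi_0$. And your fallback---``interior elliptic regularity applied inside its positivity set''---cannot work either: since $v_{\varphi_0}(y)=0$, the point $y$ lies on $\partial\{v_{\varphi_0}>0\}$, not in its interior, so there is no ball around $y$ on which the equation \eqref{e9.6} holds. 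You are left with no mechanism to bound $|\nabla v_{\varphi_0}|$ near $y$.

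The paper closes this gap with a genuine extra ingredient, Lemma~\ref{t10.4}: at a zero point, if the total energy $\fint_{B(0,\rho)}|\nabla\u|^2$ is large, one compares $(\u,\W)$ with the \emph{harmonic competitor} of Section~\ref{favorites}, using the dominant phase $u_1$ as the harmonic one. The ACF product bound makes the other phases cheap to cut off; the harmonic extension $v_1$ of $u_1$ then satisfies $\int_{B(0,\rho)}|\nabla(u_1-v_1)|^2\le C\rho^n$ plus small errors, and since $u_1(0)=0$ one deduces that $v_1(0)$ is small, hence (because $v_1\ge 0$ is harmonic) $|\nabla v_1|$ is small near $0$. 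This yields a strict decay $\fint_{B(0,\tau\rho)}|\nabla\u|^2\le\tfrac{1}{10}\fint_{B(0,\rho)}|\nabla\u|^2+C$, which iterates (Corollary~\ref{t10.5}) to a uniform bound on $\fint_{B(y,r)}|\nabla\u|^2$. That is the missing step: a competitor argument converting ``one phase dominates and vanishes at the center'' into energy decay, rather than a direct pointwise growth bound on the dominant phase.
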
  

\ms
The main ingredient for the proof of Theorem \ref{t10.1} is Theorem \ref{t9.1}, 
which we shall use to say that when $\int_{B(x,r)} |\nabla \u|^2$ is very large,
then one of the $\int_{B(x,r)} |\nabla u_i|^2$ is much larger than the other ones,
which will allow us to use the harmonic competitor described in Section \ref{favorites}.

Before we start with the proof itself, let us describe a small decoupling trick that 
will allow us to simplify our notation. 

\begin{lem}\label{t10.2}
It is enough to prove Theorem \ref{t10.1} when, in the definition of $\F$,
all the functions $u_i$ are required to be nonnegative.
\end{lem}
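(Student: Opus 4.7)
The plan is to reduce from a possibly sign-changing minimizer to a nonnegative $2N$-phase problem by splitting each $u_i$ into its positive and negative parts, and then invoke the nonnegative version of Theorem \ref{t10.1}. Assume Theorem \ref{t10.1} has been established in the nonnegative case, and let $(\u,\W)$ be a (general, sign-changing) minimizer of $J$ in $\F$ satisfying the hypotheses.

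First I would build the new objects. Set $v_{2i-1} = (u_i)_+$ and $v_{2i} = (u_i)_-$ for $1 \leq i \leq N$, so $\v = (v_1,\dots,v_{2N})$ consists of $2N$ nonnegative functions. For the sets, I pick any disjoint Borel partition $W_i = V_{2i-1} \sqcup V_{2i}$ with $V_{2i-1} \supset \{u_i>0\}$ and $V_{2i} \supset \{u_i<0\}$ (almost everywhere), so that $v_{2i-1}=0$ a.e.\ off $V_{2i-1}$ and similarly for $v_{2i}$; then $\V = (V_1,\dots,V_{2N})$ is a $2N$-uple of pairwise disjoint subsets of $\Omega$. Since $u_i \in W^{1,2}(\R^n)$ implies $(u_i)_\pm \in W^{1,2}(\R^n)$ with disjointly supported gradients, $(\v,\V)$ lies in the nonnegative class $\wt\F$. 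Then I define new data by $\wt f_{2i-1}=\wt f_{2i}=f_i$ and $\wt g_{2i-1}=g_i$, $\wt g_{2i}=-g_i$, and a new volume functional $\wt F(\V)=F(V_1\cup V_2,\dots,V_{2N-1}\cup V_{2N})$. I would check that \eqref{e10.1} still holds for the $\wt f_j,\wt g_j$ with the same bounds, and that $\wt F$ satisfies \eqref{e10.2} with the same constant on $\wt\F$, since $|V_{2i-1}\Delta V'_{2i-1}|+|V_{2i}\Delta V'_{2i}| \geq |(V_{2i-1}\cup V_{2i})\Delta(V'_{2i-1}\cup V'_{2i})|$.

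Next I would verify that the two functionals take the same value on the corresponding pairs. Using the orthogonality $\nabla (u_i)_+ \cdot \nabla (u_i)_- = 0$ a.e.\ and $(u_i)_+ (u_i)_- = 0$, a direct computation gives
\begin{equation*}
E(\v) = \sum_i \int |\nabla (u_i)_+|^2 + |\nabla (u_i)_-|^2 = E(\u),
\end{equation*}
\begin{equation*}
\wt M(\v) = \sum_i \int \big[(u_i)_+^2 - (u_i)_-^2\big] f_i\, dx\text{ no wait }- \text{ properly: } \sum_i \int [u_i^2 f_i - u_i g_i] \, dx= M(\u),
\end{equation*}
because $(u_i)_+^2 + (u_i)_-^2 = u_i^2$ and $(u_i)_+ g_i - (u_i)_- g_i = u_i g_i$; and $\wt F(\V) = F(\W)$ by construction. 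Conversely, given any competitor $(\v',\V') \in \wt\F$ with $v_j' \geq 0$, the pair $\u' = (v_{2i-1}' - v_{2i}')_i$ with $W_i' = V_{2i-1}' \cup V_{2i}'$ lies in $\F$ and satisfies $\wt J(\v',\V') = J(\u',\W')$ by the same computation. Consequently the minimality of $(\u,\W)$ in $\F$ transfers to the minimality of $(\v,\V)$ in $\wt\F$ for $\wt J$.

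Finally I would apply the nonnegative version of Theorem~\ref{t10.1} to $(\v,\V)$ on the ball $B(x_0,r_0)$, obtaining
\begin{equation*}
|\v(x)-\v(y)| \leq C_2\Big(1 + \fint_{B(x_0,2r_0)} |\nabla \v|^2\Big)^{1/2}|x-y|,
\end{equation*}
and observe that $|\nabla \v|^2 = |\nabla \u|^2$ pointwise a.e.\ and $u_i = v_{2i-1}-v_{2i}$, so $|\u(x)-\u(y)| \leq |\v(x)-\v(y)|$ (up to an absolute constant depending on $N$), which yields \eqref{e10.3} for $\u$. The only mildly delicate step is the bookkeeping that $\wt F$ inherits the Lipschitz constant of $F$ on the enlarged class of $2N$-uples, and that the minimality genuinely passes back and forth between $\F$ and $\wt\F$; neither is hard, but they require the identities above together with the fact that any rearrangement of a nonnegative competitor into a signed competitor (and vice versa) preserves admissibility and the value of the functional.
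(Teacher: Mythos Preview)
Your proof is correct and follows essentially the same route as the paper: double the index set by splitting each $u_i$ into $(u_i)_\pm$, transfer the data and the volume term via the unions $W_i=V_{2i-1}\cup V_{2i}$, and check that the two functionals agree so that minimality passes back and forth between $\F$ and $\wt\F$. Your sign choice $\wt g_{2i}=-g_i$ is the one actually needed for the $M$-term identity (the paper's ``$g_{i,+}=g_{i,-}=g_i$'' is a slip), and the final comparison $|\u(x)-\u(y)|\le\sqrt 2\,|\v(x)-\v(y)|$ involves only a universal factor, not one depending on $N$.
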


\begin{proof}
Let $J$ be our initial  functional (for which we want to prove Theorem \ref{t10.1});
we want to construct new functional $\wt J$, defined on a new set $\wt\F$ of
competitors, so that the minimization of $J$ on $\F$ is equivalent to the 
minimization of $\wt J$ on $\wt\F$. Let $I$ denote the set of indices $i$ for which 
$u_i$ is not required to be nonnegative (in $\F$). For each $i\in I$, decouple
$i$ as two indices $i_+$ and $i_-$; for $i\in [1,N] \sm I$, just keep the same 
index $i$. This gives a new set of indices, which we call $I'$.

Define $\wt\F$ as in Section \ref{intro}, but with the new set $I'$ of indices,
and the constraint that all $u_i$, $i\in I'$, are nonnegative. For the $M$-term
of the functional, keep the $f_i$ and $g_i$, $i\in  [1,N] \sm I$, as they were,
and for $i\in I$, set $f_{i,+} = f_{i,-} = f_i$ and $g_{i,+} = g_{i,-} = g_i$.
Also define $\wt F$ by setting $W_i = W_{i,+} \cup W_{i,-}$ for each $i\in I$,
and then substituting $W_i$ in the definition of $F$. 
That is, if $\wt\W$ is indexed by $I'$, we define $\W$ indexed by $[1,N]$ 
by the rule above, and set $\wt F(\wt\W) = F(\W)$.

All this gives a new functional $\wt J$ defined on $\wt\F$. If
$(\u,\W) \in \F$, we define a pair $(\wt\u,\wt\W) \in \wt\F$
in the natural way: we keep $u_i$ as it is when $i\in [1,N] \sm I$,
and when $i\in I$ we set $u_{i,\pm} = \max(0,\pm u_i)$
(the positive and negative part). We keep $W_i$ when $i\in [1,N] \sm I$,
and otherwise we set $W_{i,+} = \big\{ x\in W_i \, ; \, u_i(x) \geq 0 \big\}$
and $W_{i,-} = \big\{ x\in W_i \, ; \, u_i(x) < 0 \big\}$.
Of course we could have sent part of the set $\big\{ x\in W_i \, ; \, u_i(x) = 0 \big\}$
in $W_{i,-}$, but this will not matter. It is easy to see that this gives a pair
$(\wt\u,\wt\W) \in \wt\F$, and that $J(\wt\u,\wt\W) = J(\u,\W)$.

Conversely, given $(\wt\u,\wt\W) \in \wt\F$, we construct a pair
$(\u,\W)$ by setting $W_i = W_{i,+} \cup W_{i,-}$ and
$u_i = u_{i,+}-u_{i,-}$ when $i\in I$, and changing nothing otherwise.
It is easy to see that $(\u,\W) \in \F$ and $J(\wt\u,\wt\W) = J(\u,\W)$.

Now, if we prove Theorem \ref{t10.1} for $\wt J$ in $\wt\F$, it immediately follows for
$J$ on $\F$, as needed for the lemma.
\qed
\end{proof}

\ms
Of course Lemma \ref{t10.2} does not change the nature of our problem, it will just
allow us to simplify our notation. Notice however that things would not have been so
easy if $F$ was required to be a strictly convex function of the volume in each variable, 
since the new function $\wt F$ is not. 

Return to the proof of Theorem \ref{t10.1}. Now assume that all the $u_i$
are required to be nonnegative. We shall try to control quantities like 
\begin{equation}\label{e10.4}
E(r) = \int_{B(0,r)} |\nabla \u|^2 = \sum_i \int_{B(x_1,r)} |\nabla u_i|^2
\end{equation}
and for the interior regularity, we shall concentrate on the case when
$0 < r \leq r_0$ for some $r_0$ such that $B(0,r_0) \i \Omega$.
Let us also define, for $1 \leq i \leq N$ and $0 < r \leq r_0$,
\begin{equation}\label{e10.5}
E_i(r) = \int_{B(0,r)} |\nabla u_i|^2.
\end{equation}
Let us now record what we get when we apply Theorem \ref{t9.1}.

\begin{lem}\label{t10.3} 
Suppose $B(0,r_0) \i \Omega$ and $r_0 \leq 1$. Then
\begin{equation}\label{e10.6}
r^{-2n} E_i(r)  E_j(r) \leq C_3 \Big(1+ \fint_{B(0,r_0)}  |\nabla \u|^2 \Big)^2 
\end{equation}
for $0 < \rho \leq r_0$ and $1 \leq i \neq j \leq N$.
The constant $C_3$ depends only on the usual constants, i.e., 
$n$, $N$, $|\Omega|$, the $||f_{i}||_\infty$ (we would even get away
with bounds on $||f_{i}||_p$ for some $p>n/2$), the $||g_{i}||_\infty$, 
and the Lipschitz constant in \eqref{e10.2}.
\end{lem}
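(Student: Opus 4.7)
The plan is to deduce Lemma \ref{t10.3} directly from the near-monotonicity formula of Theorem \ref{t9.1}, applied to pairs of phases. By Lemma \ref{t10.2} we may assume every $u_i \geq 0$, so for any indices $i \neq j$ we set $v_1 = u_i$ and $v_2 = u_j$, which are valid phases for Theorem \ref{t9.1} at the base point $x_0 = 0$.

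First I would observe the elementary inequality
\begin{equation*}
\Phi_k(r) = r^{-2}\int_{B(0,r)} \frac{|\nabla u_{i_k}|^2}{|x|^{n-2}}\,dx \;\geq\; r^{-n} E_{i_k}(r),
\end{equation*}
which holds because $|x|^{n-2} \leq r^{n-2}$ on $B(0,r)$. Thus
\begin{equation*}
r^{-2n} E_i(r) E_j(r) \;\leq\; \Phi_i(r)\Phi_j(r) \;=\; \Phi(r),
\end{equation*}
and it suffices, by Theorem \ref{t9.1}, to show that $r_0^2\|g_i\|_\infty^2 + r_0^2\|g_j\|_\infty^2 + \Phi_i(r_0) + \Phi_j(r_0) \leq C\bigl(1 + \fint_{B(0,r_0)} |\nabla \u|^2\bigr)$. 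The first two terms pose no problem since $r_0 \leq 1$ and the $g_i$ are bounded by \eqref{e10.1}. The main task is therefore to control $\Phi_i(r_0)$.

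The key step, and the one most likely to require care, is bounding $\Phi_i(r_0)$ by the average energy $\fint_{B(0,r_0)} |\nabla \u|^2$ despite the singular weight $|x|^{-(n-2)}$. For this I would invoke the interior decay estimate \eqref{e7.38}, which in the present setting (with $f_i, g_i \in L^\infty$ and $F$ Lipschitz, so $p=\infty$ and $\beta = 1$ are permissible) gives some $\delta > n-2$ with
\begin{equation*}
E(s) \;\leq\; (s/r_0)^\delta E(r_0) + C s^n \qquad \text{for } 0 < s \leq r_0.
\end{equation*}
Writing $F(s) := E_i(s) \leq E(s)$ and integrating by parts in the radial variable,
\begin{equation*}
\int_{B(0,r_0)} \frac{|\nabla u_i|^2}{|x|^{n-2}}\,dx
= \frac{F(r_0)}{r_0^{n-2}} + (n-2)\int_0^{r_0} \frac{F(s)}{s^{n-1}}\,ds,
\end{equation*}
and using the decay of $F$ together with $\delta > n-2$ to make the $s$-integral converge, one obtains
\begin{equation*}
\int_{B(0,r_0)} \frac{|\nabla u_i|^2}{|x|^{n-2}}\,dx \;\leq\; C\,\frac{E(r_0)}{r_0^{n-2}} + C r_0^2.
\end{equation*}
Dividing by $r_0^2$ yields $\Phi_i(r_0) \leq C\fint_{B(0,r_0)} |\nabla \u|^2 + C$, and the same holds for $\Phi_j$.

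Plugging these bounds into Theorem \ref{t9.1} gives
\begin{equation*}
\Phi(r) \;\leq\; C\Bigl(1 + \fint_{B(0,r_0)} |\nabla \u|^2\Bigr)^2,
\end{equation*}
which combined with $r^{-2n}E_i(r)E_j(r) \leq \Phi(r)$ establishes \eqref{e10.6}. The main obstacle is really the weighted integral estimate for $\Phi_i(r_0)$: everything else is a direct quotation of Theorem \ref{t9.1}, but the weight $|x|^{-(n-2)}$ forces us to exploit the strict inequality $\delta > n-2$ from the H\"older theory of Section~\ref{holder}, which is precisely why we assumed $F$ to be Lipschitz (so $\beta = 1 > (n-2)/n$) and the $f_i, g_i$ to be bounded.
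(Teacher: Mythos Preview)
Your proof is correct and follows essentially the same route as the paper: reduce to $r^{-2n}E_i(r)E_j(r)\le\Phi_i(r)\Phi_j(r)$, apply Theorem~\ref{t9.1}, and then bound $\Phi_i(r_0)$ using the energy decay \eqref{e7.38} with $\delta>n-2$. The only cosmetic difference is that the paper controls $\Phi_i(r_0)$ by summing over dyadic annuli $B(0,2^{-k}r_0)\setminus B(0,2^{-k-1}r_0)$ (see \eqref{e10.10}) whereas you use the radial integration-by-parts identity; both rely on exactly the same input $\delta>n-2$ and yield the same bound \eqref{e10.11}.
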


\begin{proof}
Set
\begin{equation} \label{e10.7}
\Phi_i(r) = r^{-2} \int_{B(0,r)} {|\nabla u_i|^2 \over |x|^{n-2}}
\end{equation}
for $1 \leq i \leq N$ and $0 < \rho \leq r_0$, observe that 
\begin{equation} \label{e10.8}
r^{-n} E_i(r)  \leq \Phi_i(r)
\end{equation}
because $|x| \leq r$ in the integral, and that $\Phi_i(r)$ is the same number 
that we called $\Phi_j(r)$ in \eqref{e9.4}, if we take $x_0 = 0$ there and 
$(i_j,\varepsilon_j) = (i,+1)$. Thus Theorem \ref{t9.1} says that
\begin{equation}
\label{e10.9}
r^{-2n} E_i(r)  E_j(r) \leq \Phi_i(r)\Phi_j(r) 
\leq C \Big(r_0^2 ||g_{i_1}||^2_\infty+ r_0^2 ||g_{i_2}||^2_\infty
+ \Phi_i(r_0) + \Phi_j(r_0) \Big)^2
\end{equation}
for $i \neq j$ and $0 < r \leq r_0$. We also need bounds on the right-hand side
of \eqref{e10.9}, and indeed
\begin{eqnarray}
\label{e10.10}
\Phi_i(r_0) &=& {1 \over r_0^2} \int_{B(0,r_0)} {|\nabla u_i|^2 \over |x|^{n-2}} \, dx
\leq \sum_{k \geq 0} {1 \over r_0^2}\int_{B(0,2^{-k}r_0) \sm B(0,2^{-k-1}r_0)} 
(2^{-k-1}r_0)^{2-n} |\nabla u_i|^2 
\nonumber
\\
& \leq & C r_0^{-n} \sum_{k \geq 0} 2^{k(n-2)}\int_{B(0,2^{-k} r_0)} |\nabla \u|^2
\nonumber
\\
& \leq & C r_0^{-n} \sum_{k \geq 0} 2^{k(n-2)} 
\Big\{2^{-k \delta} \int_{B(0,r_0)}  |\nabla \u|^2
+ [(2^{-k}r_0)^{n-{n \over p}} + (2^{-k}r_0)^{\beta n}] \Big\}
\\
& = & C r_0^{-n} \sum_{k \geq 0} 2^{k(n-2)} 
\Big\{2^{-k \delta} \int_{B(0,r_0)}  |\nabla \u|^2 + (2^{-k}r_0)^{n}  \Big\}
\nonumber
\end{eqnarray}
by \eqref{e10.7} and \eqref{e7.38}, and because with our new assumptions
\eqref{e10.1} and \eqref{e10.2}, we now have $p=+\infty$ and $\beta = 1$.
We do not really need this additional information here, but it simplifies the formulas.
Recall from \eqref{e7.31} that $\delta > n-2$. Thus the sum over $k$ converges 
geometrically, and
\begin{equation}
\label{e10.11}
\Phi_i(r_0) \leq C r_0^{-n} \int_{B(0,r_0)}  |\nabla \u|^2 + C.
\end{equation}
Then \eqref{e10.6} follows from \eqref{e10.9} and \eqref{e10.11}.
\qed
\end{proof}

Notice that we can get upper bounds for $\int_{\R^n}  |\nabla \u|^2$
in terms of the usual constants, by \eqref{e3.10}, but when we cannot find
a large ball $B(0,r_0) \i \Omega$, we may need to content ourselves 
with a small $r_0$, and get a large lower bound in \eqref{e10.6}.

We now state the main decay estimate in the proof of Theorem \ref{t10.1},
which concerns the case when $\u(0) = 0$ (the notion makes sense because
$\u$ is H\"older continuous inside $\Omega$).

\begin{lem}\label{t10.4} 
We can find $\tau \in (0,10^{-1})$, that depends on $n$ and $N$, 
and $C_4$, that also depends on $|\Omega|$ and the constants in
\eqref{e10.1} and \eqref{e10.2},  such that
if $\u(0) = 0$, $0 < r_0 \leq 1$ and $B(0,r_0) \i \Omega$,
\begin{equation}\label{e10.12}
\fint_{B(0,\tau \rho)} |\nabla \u|^2 
\leq  C(\tau,r_0) + {1 \over 10}\, \fint_{B(0,\rho)} |\nabla \u|^2  
\end{equation} 
for $0 < \rho \leq r_0$, with 
$C(\tau,r_0) = C_4 \big(1+ \fint_{B(0,r_0)}  |\nabla \u|^2 \big)$.
\end{lem}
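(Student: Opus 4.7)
The plan is to split by the size of $E(\rho) := \int_{B(0,\rho)}|\nabla\u|^2$ relative to a threshold $M\rho^n$, where $M$ is chosen large in terms of $n$, $N$, and $K := 1+\fint_{B(0,r_0)}|\nabla\u|^2$. By Lemma \ref{t10.2} I may assume every $u_i\geq 0$, and then $\u(0)=0$ gives $u_i(0)=0$ for every $i$. In the trivial regime $E(\rho)\leq M\rho^n$ one simply has $\fint_{B(0,\tau\rho)}|\nabla\u|^2 \leq M/(c_n\tau^n)$, which is absorbed into the additive term $C_4 K$ of \eqref{e10.12}; the lemma holds immediately without using the second term on the right-hand side, and this fixes $C_4$ in terms of $M$, $\tau$, and $n$.

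The main work is the high-energy regime $E(\rho)>M\rho^n$. Lemma \ref{t10.3} then yields a single dominant phase: if $i_0$ maximizes $E_i(\rho)$, we have $E_{i_0}(\rho)\geq E(\rho)/N > M\rho^n/N$, whereas for $j\neq i_0$, $E_j(\rho)\leq N C_3 K^2\rho^n/M$, which is negligible once $M$ is large. A Fubini--Chebyshev selection over $(\rho/2,\rho)$ supplies a good radius $\rho'$ with $\u|_{S_{\rho'}}\in W^{1,2}(S_{\rho'})$ and $\int_{S_{\rho'}}|\nabla_t\u|^2\leq CE(\rho)/\rho$, together with similar control on $\sum_{j\neq i_0}\int_{S_{\rho'}}u_j^2$ (using the H\"older estimates of Section \ref{holder} to control the non-dominant boundary traces). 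Applying the harmonic competitor of Section \ref{favorites} at scale $\rho'$ with $i_0$ as privileged index and a parameter $a\in(0,1)$ close to $1$, the minimality $J(\u,\W)\leq J(\u^*,\W^*)$ combined with \eqref{e6.19}--\eqref{e6.23} yields, after rearrangement,
\begin{equation*}
\int_{B(0,\rho')}|\nabla(u_{i_0}-v_{i_0})|^2 + \sum_{j\neq i_0}E_j(\rho') \leq C(1-a)E(\rho) + C_{K,M}\rho^n,
\end{equation*}
where $v_{i_0}$ is the harmonic extension of $u_{i_0}|_{S_{\rho'}}$ inside $B(0,\rho')$.

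The final ingredient is the sharp harmonic decay. Writing $w=v_{i_0}-v_{i_0}(0)$ so that $w$ is harmonic with $w(0)=0$, the solid spherical-harmonic expansion $w=\sum_{k\geq 1}h_k$ and the Dirichlet-orthogonality of different-degree pieces give
\begin{equation*}
\int_{B(0,s)}|\nabla v_{i_0}|^2 = \int_{B(0,s)}|\nabla w|^2 \leq (s/\rho')^n\int_{B(0,\rho')}|\nabla w|^2 \leq (s/\rho')^n E_{i_0}(\rho')
\end{equation*}
for every $s\leq\rho'$. Combining this $\tau^n$-gain with the closeness estimate via the full-ball Dirichlet orthogonality $\int_{B(0,\rho')}|\nabla u_{i_0}|^2 = \int|\nabla v_{i_0}|^2+\int|\nabla(u_{i_0}-v_{i_0})|^2$ (valid because $u_{i_0}-v_{i_0}\in W^{1,2}_0(B(0,\rho'))$) and Young's inequality on $B(0,\tau\rho)$, I arrive at an inequality of the form $E(\tau\rho)\leq A\tau^n E(\rho)+C_{K,M}(\tau\rho)^n$, with a leading constant $A$ that can be made smaller than $\tfrac{1}{10}$ by a careful parameter choice: first $\tau$ small in terms of $n,N$, then $a$ close to $1$ in terms of $\tau$, and finally $M$ large in terms of $K,n,N,\tau$. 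Dividing by $|B(0,\tau\rho)|=\tau^n|B(0,\rho)|$ then delivers \eqref{e10.12}.

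The main obstacle is extracting a leading coefficient strictly less than $1$ in front of $\fint_{B(0,\rho)}|\nabla\u|^2$: a naive application of the harmonic decay to $v_{i_0}$ only produces coefficient close to $1$ after normalization, so achieving the factor $\tfrac{1}{10}$ (or any fixed $\kappa<1$) requires carefully coupling (i) the Dirichlet orthogonality on the full ball $B(0,\rho')$, (ii) the smallness of the Dirichlet distance $\int|\nabla(u_{i_0}-v_{i_0})|^2$ extracted from the competitor inequality as $a\to 1$, and (iii) the smallness of the non-dominant phase energies $\sum_{j\neq i_0}E_j(\rho)$ inherited from the high-energy case assumption. The precise factor $\tfrac{1}{10}$ is a convenient specific choice; any $\kappa<1$ suffices for the iteration argument used to prove Theorem \ref{t10.1}.
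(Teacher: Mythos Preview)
Your argument has a genuine gap at the harmonic decay step. The inequality
\[
\int_{B(0,s)}|\nabla v_{i_0}|^2 \;=\; \int_{B(0,s)}|\nabla w|^2 \;\le\; (s/\rho')^{\,n}\int_{B(0,\rho')}|\nabla w|^2
\]
is correct for harmonic $w$ with $w(0)=0$, but the exponent $n$ is \emph{exactly critical}: when $w$ is linear (the $k=1$ piece of the expansion), equality holds. After dividing by $|B(0,\tau\rho)| = \tau^n |B(0,\rho)|$, the factor $\tau^n$ cancels completely, and what survives in front of $\fint_{B(0,\rho)}|\nabla\u|^2$ is a fixed constant of order $(\rho/\rho')^n\cdot 2 \ge 2$ (or at best $1+\epsilon$ if you use Young's inequality with a tiny $\epsilon$), never something $\le 1/10$. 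Subtracting $v_{i_0}(0)$ costs nothing and gives nothing: the condition $w(0)=0$ is automatic for $w = v_{i_0}-v_{i_0}(0)$ and does not use $\u(0)=0$ at all. So the claimed form $E(\tau\rho)\le A\tau^n E(\rho)+\ldots$ with $A$ adjustable below $1/10$ cannot be obtained this way.

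The paper's route is genuinely different and uses the hypothesis $\u(0)=0$ in an essential way. After the reduction $u_{i_0}\ge 0$, it feeds the H\"older decay \eqref{e7.38} (exponent $\delta>n-2$) into a telescoping estimate for $m(s)=\fint_{B(0,s)}u_{i_0}$ to get $m(\tau\rho)^2 \le C\tau^{\,\delta+2-n}\rho^{2-n}E(\rho)+C\tau^2\rho^2$; see \eqref{e10.37}--\eqref{e10.39}. Combined with the $L^2$ closeness of $u_{i_0}$ to $v_{i_0}$, this shows $v_{i_0}(0)$ is small; then, crucially using $u_{i_0}\ge 0$ so that $\fint_{S_r}|u_{i_0}| = \fint_{S_r}u_{i_0} = v_{i_0}(0)$, the Poisson-kernel bound gives $\|\nabla v_{i_0}\|_{L^\infty(B(0,r/2))}\le Cr^{-1}v_{i_0}(0)$, producing the genuine small factor $\tau^{\,\delta+2-n}$ in \eqref{e10.45}--\eqref{e10.49}. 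This is the mechanism you are missing. There is also a second case (when some $W_i$, $i\ge 2$, occupies most of $S_r$, so the spherical Poincar\'e bound on the non-dominant traces fails) that the paper handles separately near \eqref{e10.56}; you gloss over this, but it is secondary to the main gap above.
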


\ms
We see this as decay because in the most unpleasant situation when 
$\rho^{-n} \int_{B(0,\rho)} |\nabla \u|^2$ is very large 
\eqref{e10.12} will say that $\fint_{B(0,\tau \rho)} |\nabla \u|^2 \leq
{1\over 2} \fint_{B(0,\rho)} |\nabla \u|^2$.
We keep for later the case of balls that are not centered on the set $\{\u = 0\}$.

\begin{proof}
Let $\rho \leq r_0$ be given. Let $M \geq 0$ be a very large number, 
to be chosen later. Let us first treat the easy case when 
\begin{equation}\label{e10.13}
\fint_{B(0,\rho)} |\nabla \u|^2 \leq M.
\end{equation}
In this case, we just need to say that
$\fint_{B(0,\tau\rho)} |\nabla \u|^2 \leq \tau^{-n}\fint_{B(0,\rho)} |\nabla \u|^2 
\leq \tau^{-n} M$, and so \eqref{e10.12} holds if we choose
\begin{equation}\label{e10.14}
C(\tau,r_0) \geq  \tau^{-n} M.
\end{equation}
So we may now assume that \eqref{e10.13} fails.
Select $i$ so that $E_{i}(\rho)$ is largest; without loss of generality, 
we may assume that $i= 1$. 
Since $\int_{B(0,\rho)} |\nabla \u|^2 = E(\rho) = \sum_{j=1}^{N} E_{j}(\rho)$, we deduce from
the failure of \eqref{e10.13} that for the largest term
\begin{equation}\label{e10.15}
\rho^{-n} E_{1}(\rho) 
\geq N^{-1} \rho^{-n} \int_{B(0,\rho)} |\nabla \u|^2 \geq C^{-1}M
\end{equation}
and hence, by \eqref{e10.6}, that
\begin{equation}\label{e10.16}
\rho^{-n} E_{i}(\rho) \leq \Lambda
\ \text{ for } i > 1,
\end{equation}
with
\begin{equation}\label{e10.17}
\Lambda = C M^{-1} C_3 \Big(1+ \fint_{B(0,r_0)}  |\nabla \u|^2 \Big)^2.
\end{equation}
Next we want to choose a radius $r \in (\rho/2,\rho)$,
with a few good properties that will help us 
define and use the harmonic competitor of Section \ref{favorites}.
First we want the restriction of each $u_i$ to $S_r$ to lie in $W^{1,2}(S_r)$,
with tangential derivatives that can be computed from the restriction of 
$\nabla \u$ to $S_r$. This is easy to arrange, because it is true for almost 
every $r \in (\rho/2,\rho)$ (see the discussion near \eqref{e4.14}). Next,
\begin{equation}\label{e10.18}
u_i(x) = 0 \ \text{ for $1 \leq i \leq N$ and $\sigma$-almost every } x\in S_r \sm W_i
\end{equation}
(as in \eqref{e6.16}), which is also true for almost all $r$. Finally,
we choose $r$ so that 
\begin{equation}\label{e10.19}
\int_{S_r} |\nabla_t u_i|^2 \leq 2N\rho^{-1} \int_{B(0,\rho)} |\nabla u_i|^2
\leq 2N\rho^{-1} E(\rho)
\end{equation}
for $1 \leq i \leq N$. The first inequality is easy to arrange by Chebyshev
(use \eqref{e4.15} with $p=2$), and the second one is trivial
(see \eqref{e10.4}). When $i>1$, we deduce from \eqref{e10.16}
and the first part of \eqref{e10.19} that
\begin{equation}\label{e10.20}
\int_{S_r} |\nabla_t u_i|^2 \leq 2N\rho^{-1} E_i(\rho) 
\leq 2 N \rho^{n-1} \Lambda,
\end{equation}
which is a much better estimate if we choose $M$ large enough.

\ms
Let $(\u^\ast,\W^\ast)$ denote the harmonic competitor that was defined
near \eqref{e6.11}. The construction has a parameter $a\in (0,1)$, which will be 
chosen soon, close to $1$.
All the prerequisites that were mentioned before \eqref{e6.11}
are satisfied, and in particular \eqref{e6.9} holds because $B(0,r) \i B(0,r_0) \i \Omega$.
Since $(\u^\ast,\W^\ast) \in \F$ (by construction, see \eqref{e6.15}) and 
$(\u,\W)$ is a minimizer, we get that 
\begin{equation}\label{e10.21}
J(\u,\W) \leq J(\u^\ast,\W^\ast).
\end{equation}
The $M$-term of the functional is estimated as usual: by
Theorem \ref{t5.1}, $||u_i||_\infty \leq C$, with a constant $C$ that depends only 
on the usual constants, and by construction (and the maximum principle, or rather 
the corresponding properties of the Poisson kernel, for the harmonic
extension), $||u_i^\ast||_\infty \leq ||u_i||_\infty \leq C$. Then
\begin{equation}\label{e10.22}
|M(\u) - M(\u^\ast)| 
\leq C (||u_i||^2_\infty ||f_i||_\infty + ||u_i||_\infty||g_i||_\infty)\, |B(0,r)|
\leq C r^n,
\end{equation}
because $u_i^\ast = u_i$ outside of $B(x,r)$, and 
where again $C$ depends on the usual constants 
(see the definition \eqref{e1.4}).
Similarly, the $W_i^\ast$ only differ from the $W_i$ in the ball $B(0,r)$, 
so \eqref{e10.2} yields
\begin{equation}\label{e10.23}
|F(\W) - F(\W^\ast)| \leq C r^n
\end{equation}
and \eqref{e10.21} implies that
\begin{equation}\label{e10.24}
\int_{B(0,r)} |\nabla \u|^2 \leq \int_{B(0,r)} |\nabla \u^\ast|^2 + C r^n
\end{equation}
(compare with the definitions \eqref{e1.5} and \eqref{e1.3}).
Recall from \eqref{e6.19}-\eqref{e6.21} that
\begin{eqnarray} \label{e10.25}
\int_{B(0,r)} |\nabla \u^\ast|^2 &=& \sum_{i \geq 2}
\int_{B(0,r)} |\nabla u_{i}^\ast|^2 
+ \int_{B(0,r)\sm B(0,ar)} |\nabla u_{1}^\ast|^2
+ \int_{B(0,ar)} |\nabla u_{1}^\ast|^2 
\nonumber\\
&\leq& (1-a) r  a^{2-n} \sum_{i=1}^N \int_{S_{r}} |\nabla_{t} u_{i}|^2
+ 4 (1-a)^{-1} r  a^{-n} \sum_{i=2}^N\int_{S_{r}} |r^{-1} u_{i}|^2
\nonumber
\\
&\, & \hskip6cm 
+ a^{n-2}\int_{B(0,r)} |\nabla v_{1}|^2
\end{eqnarray}
where $v_1$ still denotes the harmonic extension (to $B(0,r)$) of the restriction
of $u_1$ to $S_r$; see above \eqref{e6.13}. 
For the first term, we just use \eqref{e10.19} and get that
\begin{equation}\label{e10.26}
(1-a) r  a^{2-n} \sum_{i=1}^N \int_{S_{r}} |\nabla_{t} u_{i}|^2
\leq 2N (1-a) a^{2-n} r \rho^{-1} E(\rho) \leq C (1-a) E(\rho)
\end{equation}
because $r \leq \rho$ and we will choose $a > 1/2$.
In fact, we will take $a$ very close to $1$ to make this term small. 
We shall only be able to continue our estimates with the present competitor
when
\begin{equation}\label{e10.27}
\sigma(S_r \sm W_i) \geq \varepsilon \sigma(S_r)
\ \text{ for } 2 \leq i \leq N,
\end{equation}
where the small $\varepsilon > 0$ will be chosen later. 
In the remaining case when \eqref{e10.27} fails, we shall use a different 
harmonic competitor; this will be somewhat easier, but will be done
later, near \eqref{e10.56}.

Our assumption \eqref{e10.27} allows us to use the Poincar\'e estimate \eqref{e4.6},
with $p=2$ and $E = S_r \cap W_i$ (recall \eqref{e10.18}). We get that 
\begin{equation}\label{e10.28}
\int_{S_{r}} |r^{-1} u_{i}|^2 
= r^{-2} \int_{S_{r}\cap W_i} |u_{i}|^2  
\leq C \varepsilon^{-1} \int_{S_{r}} |\nabla_t u_{i}|^2.
\end{equation}
For our second term, we use \eqref{e10.28} and \eqref{e10.20}, and get that
\begin{eqnarray}\label{e10.29}
4 (1-a)^{-1} r  a^{-n} \sum_{i=2}^N\int_{S_{r}} |r^{-1} u_{i}|^2
&\leq& C (1-a)^{-1} r \varepsilon^{-1} \sum_{i=2}^N\int_{S_{r}} |\nabla_t u_{i}|^2
\nonumber
\\
&\leq& C (1-a)^{-1}\varepsilon^{-1} \rho^n \Lambda.
\end{eqnarray} 
For this term, the two large constants $(1-a)^{-1}$
and $\varepsilon^{-1}$ will be neutralized by taking $M$ large enough
and hence $\Lambda$ very small.
We drop $a^{n-2}$ in the last term of \eqref{e10.25}, and get that 
\begin{equation}\label{e10.30}
\int_{B(0,r)} |\nabla \u^\ast|^2 \leq \int_{B(0,r)} |\nabla v_{1}|^2 + C \alpha_1 
\end{equation}
with 
\begin{equation}\label{e10.31}
\alpha_1 = (1-a) E(\rho) + (1-a)^{-1}\varepsilon^{-1} \rho^n \Lambda.
\end{equation}

Now recall from \eqref{e6.13} that $v_1$ is the minimizer of 
$\int_{B(0,r)} |\nabla v|^2$, among functions $v\in W^{1,2}(B(0,r))$
that coincide with $u_1$ on $S_r$. Of course we can take $v=u_1$
in the definition, and even $v_t = v_1 + t (u_1-v_1)$. 
Notice that $\int_{B(0,r)} |\nabla v_t|^2$ is a quadratic function of $t$,
whose derivative at $t=0$ vanishes by minimality. We compute this derivative and get
that $\int_{B(0,t)} \langle \nabla v_1, \nabla(u_1-v_1) \rangle = 0$.
Hence, by Pythagorus,
\begin{equation}\label{e10.32}
\int_{B(0,r)} |\nabla u_1|^2 = \int_{B(0,r)} |\nabla v_{1}|^2
+ \int_{B(0,r)} |\nabla (u_1-v_{1})|^2.
\end{equation}
We may now deduce from \eqref{e10.24} and \eqref{e10.30} that
\begin{eqnarray}\label{e10.33}
\int_{B(0,r)} |\nabla (u_1-v_{1})|^2  + \int_{B(0,r)} |\nabla v_{1}|^2 
&=& \int_{B(0,r)} |\nabla u_1|^2
\leq \int_{B(0,r)} |\nabla \u|^2
\nonumber
\\
&\leq& \int_{B(0,r)} |\nabla \u^\ast|^2 + C r^n
\\
&\leq& \int_{B(0,r)} |\nabla v_{1}|^2
+ C \alpha_1 + C \rho^n.
\nonumber
\end{eqnarray}
We subtract $\int_{B(0,r)} |\nabla v_{1}|^2$ from both sides and get that
\begin{equation}\label{e10.34}
\int_{B(0,r)} |\nabla (u_1-v_{1})|^2 
\leq C \alpha_1 + C \rho^n.
\end{equation}
Next we need to find ways to say that $\nabla v_1$ is small near the origin.
We claim that 
\begin{equation}\label{e10.35}
r^{-2}\int_{B(0,r)} |u_1-v_1|^2  \leq C \int_{B(0,r)} |\nabla (u_1-v_1)|^2
\leq C \alpha_1 + C \rho^n.
\end{equation}
The second part follows from\eqref{e10.34}.
The most direct way to see the first part is to notice that if we extend  $u_1-v_1$ 
by setting $(u_1-v_1)(x) = 0$ for $x\in \R^n \sm B(0,r)$, we get 
a function of $W^{1,2}(\R^n)$ (see \eqref{e4.18}) . Then 
by Poincar\'e's inequality (for instance, apply \eqref{e4.2} to $B(x,2r)$ and
use it to control $m_{B(x,2r)}(u_1-v_1)$), we get the claim. Or we 
can use Lemma \ref{l4.2} to control $m_{B(0,r} (u_1-v_1)$
and the apply Poincar\'e's inequality \eqref{e4.2} on $B(0,r)$.

Next we want to use the (estimates that lead to the) H\"older-continuity of $\u$ 
to control $\u(x) - \u(0)$ near the origin.
Recall from \eqref{e7.38} that for $0 < s \leq r$,
\begin{eqnarray}\label{e10.36}
\int_{B(0,s)} |\nabla u_1|^2 
&\leq& \int_{B(0,s)} |\nabla \u|^2
\leq (s/r)^\delta \int_{B(0,r)} |\nabla \u|^2 + C [s^{n-{n \over p}} + s^{\beta n}]
\\
\nonumber
&\leq& (s/r)^\delta E(\rho) + C [s^{n-{n \over p}} + s^{\beta n}]
= (s/r)^\delta E(\rho) + C s^n,
\end{eqnarray}
where $C$ depend on the usual constants, and the last equality comes
from the fact that $p=+\infty$ and $\beta = 1$ with our new assumptions 
\eqref{e10.1} and \eqref{e10.2}. Set $m(s) = \fint_{B(0,s)} u_1$ for $s < r$; 
the same proof as in \eqref{e7.42} yields
\begin{eqnarray}\label{e10.37}
|m(s/2) - m(s)| &=& \Big|\fint_{B(0,s/2)} u_1-m(s) \Big|
\leq \fint_{B(0,s/2)} |u_1-m(s)| 
\nonumber
\\
&\leq& 2^n \fint_{B(0,s)} |u_1-m(s)|
\leq C s \fint_{B(0,s)} |\nabla u_1|
\nonumber
\\
&\leq& C s \Big\{\fint_{B(0,s)} |\nabla u_1|^2 \Big\}^{1/2}
\leq C \Big\{s^{2-n} \int_{B(0,s)} |\nabla u_1|^2 \Big\}^{1/2}
\\
&\leq& C  \Big\{ s^{2-n} (s/r)^{\delta} E(\rho)  + s^{2} \Big\}^{1/2}.
\nonumber
\end{eqnarray}
By \eqref{e7.31}, the exponent $\delta+2-n$ is positive, so 
$\sum_{k \geq 0} |m(2^{-k-1}s) - m(2^{-k}s)| < +\infty$
and
\begin{equation}\label{e10.38}
m(s) \leq \sum_{k \geq 0} |m(2^{-k-1}s) - m(2^{-k}s)|
\leq C \Big\{s^{2-n} (s/r)^{\delta} E(\rho) + s^2 \Big\}^{1/2}
\end{equation}
because $u_1$ is continuous and $\u(0)=0$, by \eqref{e10.37}, and after summing a
geometric series whose main term is for $k=0$.
Let $\tau\in (0,1)$ be small, to be chosen soon, and apply this with 
$s = \tau\rho$. This yields
\begin{equation}\label{e10.39}
m(\tau\rho)^2 \leq C \tau^{\delta+2-n} \rho^{2-n} E(\rho) + C \tau^2\rho^2.
\end{equation}
We are interested in 
\begin{equation}\label{e10.40}
v_1(0) = \fint_{B(0,\tau\rho)} v_1 = \fint_{S_r} u_1,
\end{equation}
where both identities hold because $v_1$ is the Poisson integral
of the restriction of $u_1$ to $S_r$. Notice that
\begin{equation}\label{e10.41}
v_1(0) = \fint_{B(0,\tau\rho)} v_1 \leq\fint_{B(0,\tau\rho)} u_1 + \fint_{B(0,\tau\rho)} |u_1-v_1|
= m(\tau\rho) + \fint_{B(0,\tau\rho)} |u_1-v_1|
\end{equation}
and that
\begin{eqnarray}\label{e10.42}
\fint_{B(0,\tau\rho)} |u_1-v_1| &\leq& \Big\{\fint_{B(0,\tau\rho)} (u_1-v_1)^2 \Big\}^{1/2}
\leq C \Big\{(\tau\rho)^{-n}\int_{B(0,\tau\rho)} (u_1-v_1)^2 \Big\}^{1/2}
\nonumber
\\
&\leq& C \Big\{(\tau\rho)^{-n}\int_{B(0,r)} (u_1-v_1)^2 \Big\}^{1/2}
\leq C\Big\{(\tau\rho)^{-n} r^2 [\alpha_1 + \rho^n] \Big\}^{1/2}
\end{eqnarray}
by \eqref{e10.35}. Hence
\begin{eqnarray}\label{e10.43}
v_1(0)^2 &\leq& 2 m(\tau\rho)^2 + 2 \Big\{\fint_{B(0,\tau\rho)} |u_1-v_1| \Big\}^2
\nonumber
\\
&\leq& C \tau^{\delta+2-n} \rho^{2-n} E(\rho) + C \tau^2\rho^2
+ C(\tau\rho)^{-n} r^2 [\alpha_1 + \rho^n] 
\nonumber
\\
&\leq& C \tau^{\delta+2-n} \rho^{2-n} E(\rho)
+ C\tau^{-n}\rho^{2-n}  \alpha_1 + C \tau^{-n} \rho^2
\end{eqnarray}
by \eqref{e10.41}, \eqref{e10.39}, and \eqref{e10.42}.

Recall that $v_1$ is the Poisson integral of the restriction 
of $u_1$ to $S_r$, and its derivative is simply obtained by
differentiating under the integral sign. In addition, the derivative
of the Poisson kernel, say, from the unit sphere to the unit ball,
is uniformly bounded in $B(0,1/2)$. This and the obvious invariance
under dilations yield
\begin{equation}\label{e10.44}
||\nabla v_1||_{L^\infty(B(0,r/2)} \leq C r^{-1} \fint_{S_r} |v_1|
= C r^{-1} \fint_{S_r} |u_1|.
\end{equation}
But $u_1 \geq 0$ (this was our initial reduction, and if we did not do it we would
just have been working with $u_{1,+}$), so 
$\fint_{S_r} |u_1| = \fint_{S_r} u_1 = v_1(0)$,
by \eqref{e10.40}. Therefore
\begin{eqnarray}\label{e10.45}
\fint_{B(0,\tau\rho)} |\nabla v_1|^2 
&\leq&  ||\nabla v_1||_{L^\infty(B(0,r/2)}^2
\leq C \rho^{-2} v_1(0)^2
\nonumber
\\
&\leq& 
C \tau^{\delta+2-n} \rho^{-n} E(\rho) + C\tau^{-n}\rho^{-n}  \alpha_1 + C \tau^{-n} 
\end{eqnarray}
by \eqref{e10.43}. We add this to \eqref{e10.34} and get that
\begin{eqnarray}\label{e10.46}
\int_{B(0,\tau\rho)} |\nabla u_1|^2 
&\leq& 2 \int_{B(0,\tau\rho)} |\nabla v_1|^2 + 2 \int_{B(0,\tau\rho)} |\nabla (u_1-v_1)|^2
\nonumber
\\
&\leq& C \tau^n \rho^n \fint_{B(0,\tau\rho)} |\nabla v_1|^2
+ 2 \int_{B(0,r)} |\nabla (u_1-v_1)|^2
\nonumber
\\
&\leq& 
[C \tau^{\delta+2} E(\rho) + C \alpha_1 + C \rho^n]
+ [C \alpha_1 + C \rho^n]
\\
&\leq& C \tau^{\delta+2} E(\rho) + C \alpha_1 + C  \rho^{n}.
\nonumber
\end{eqnarray}
We also need to estimate the contribution of the other indices $i > 1$,
but fortunately
\begin{eqnarray}\label{e10.47}
\sum_{i > 1}\int_{B(0,\tau\rho)} |\nabla u_i|^2 
&\leq & \sum_{i > 1}\int_{B(0,r)} |\nabla u_i|^2 
= \int_{B(0,r)} |\nabla \u|^2 - \int_{B(0,r)} |\nabla u_1|^2
\nonumber
\\
&\leq& 
\int_{B(0,r)} |\nabla \u^\ast|^2 + C r^n - \int_{B(0,r)} |\nabla u_1|^2
\nonumber
\\
&\leq& \int_{B(0,r)} |\nabla v_1|^2 + C \alpha_1 + C r^n - \int_{B(0,r)} |\nabla u_1|^2
\leq  C \alpha_1 + C r^n
\end{eqnarray}
by \eqref{e10.24}, \eqref{e10.30}, and the minimization property
of $v_1$ (see \eqref{e6.13} or \eqref{e10.32}). Altogether
\begin{equation}\label{e10.48}
\int_{B(0,\tau\rho)} |\nabla \u|^2 
\leq  C \tau^{\delta+2} E(\rho) + C \alpha_1 + C  \rho^{n}
\end{equation}
by \eqref{e10.46} and \eqref{e10.47}. We divide this by $(\tau \rho)^{n}$
and get that 
\begin{eqnarray}\label{e10.49}
\fint_{B(0,\tau\rho)} |\nabla \u|^2 
&\leq&  C \tau^{\delta+2-n} \rho^{-n} E(\rho) + C\tau^{-n}\alpha_1\rho^{-n} + C  \tau^{-n}
\nonumber
\\
&=& C \tau^{\delta+2-n} \fint_{B(0,\rho)} |\nabla \u|^2
+ C\tau^{-n}\alpha_1\rho^{-n} + C  \tau^{-n}.
\end{eqnarray}
We need to compare this with the desired conclusion \eqref{e10.12}.
To take care of the first term, we just choose $\tau$ so small that
\begin{equation}\label{e10.50}
 C \tau^{\delta+2-n} \leq 30^{-1}
\end{equation}
(recall from \eqref{e7.31} that $\delta > n-2$). The second term is 
\begin{eqnarray}\label{e10.51}
C\tau^{-n}\alpha_1\rho^{-n} &= &
C\tau^{-n}\rho^{-n} [(1-a) E(\rho) + (1-a)^{-1}\varepsilon^{-1} \rho^n \Lambda]
\nonumber
\\
&=& C(1-a)\tau^{-n}\rho^{-n} E(\rho) + C \tau^{-n} (1-a)^{-1}\varepsilon^{-1} \Lambda
\nonumber
\\
&\leq& C(1-a)\tau^{-n}\fint_{B(0,\rho)} |\nabla \u|^2 
+ C \tau^{-n} (1-a)^{-1}\varepsilon^{-1}
M^{-1} C_3 \Big(1+ \fint_{B(0,r_0)}  |\nabla \u|^2 \Big)^2
\end{eqnarray}
by \eqref{e10.31} and \eqref{e10.17}. We shall choose $a$ so close to $1$
(depending on $\tau$, but not on $M$) that
\begin{equation} \label{e10.52}
C(1-a)\tau^{-n}\leq 30^{-1}.
\end{equation}
We do not choose $a$ yet, because there will be a third case where
a similar condition on $a$ will arise (in \eqref{e10.67}).

For the second piece of \eqref{e10.51}, recall that
$M \leq \fint_{B(0,\rho)} |\nabla \u|^2$ because we are in the interesting
case when \eqref{e10.13} fails; hence the second piece is at most
\begin{equation} \label{e10.53}
C \tau^{-n} (1-a)^{-1}\varepsilon^{-1}
M^{-2} C_3 \Big(1+ \fint_{B(0,r_0)}  |\nabla \u|^2 \Big)^2 \fint_{B(0,\rho)} |\nabla \u|^2.
\end{equation}
We promise that we shall choose $\varepsilon$ soon, and that it will
not depend on $M$. Then we shall choose $M$ so large, depending on
$\tau$, $a$, and $\varepsilon$, that
\begin{equation} \label{e10.54}
C \tau^{-n} (1-a)^{-1}\varepsilon^{-1}
M^{-2} C_3 \Big(1+ \fint_{B(0,r_0)}  |\nabla \u|^2 \Big)^2 \leq 30^{-1}.
\end{equation}
With all these choices, the two first terms of \eqref{e10.49} are dominated
by $10^{-1} \fint_{B(0,\rho)} |\nabla \u|^2$. Then, in the present case,
\eqref{e10.12} holds as soon as we choose 
\begin{equation} \label{e10.55}
C(\tau,r_0) \geq C \tau^{-n}. 
\end{equation}

\ms
We are not finished yet, because we still have to deal with the case when
\eqref{e10.27} fails, i.e., when we can find $i \geq 2$ such that
\begin{equation}\label{e10.56}
\sigma(S_r \sm W_i) < \varepsilon \sigma(S_r).
\end{equation}
In this case, we shall also use the harmonic competitor defined in Section \ref{favorites},
but with $u_i$ (rather that $u_1$) as our preferred variable, and with the same 
parameter $a$ as above (just to simplify the discussion). 
Denote by $(\u^\ast,\W^\ast)$ this new competitor. 
We still have \eqref{e10.21}-\eqref{e10.24} for the same reasons 
(all our functions are bounded and we change nothing outside
of $B(0,r)$), but we estimate $\int_{B(0,r)} |\nabla \u^\ast|^2$ differently.
For $j \neq i$, including $j=1$, we use \eqref{e6.19}, which says that
\begin{equation}\label{e10.57}
\int_{B(0,r)} |\nabla u_{j}^\ast|^2
\leq (1-a) r  a^{2-n} \int_{S_{r}} |\nabla_{t} u_{j}|^2
+ 4 (1-a)^{-1} r  a^{-n} \int_{S_{r}} |r^{-1} u_{j}|^2.
\end{equation}
By \eqref{e10.18}, $u_j$ is essentially supported in the small
set $W_j \cap S_r \i S_r \sm W_i$, and \eqref{e4.7} yields
\begin{equation}\label{e10.58}
\int_{S_{r}} |r^{-1} u_{j}|^2 
\leq C r^{-2} \sigma(S_r \sm W_i)^{2\over n-1} \int_{S_{r}} |\nabla_t u_{j}|^2 
\leq C \varepsilon^{2\over n-1} \int_{S_{r}} |\nabla_t u_{j}|^2.
\end{equation}
Hence 
\begin{eqnarray}\label{e10.59}
\int_{B(0,r)} |\nabla u_{j}^\ast|^2
&\leq& [(1-a) a^{2-n} + C (1-a)^{-1} a^{-n}\varepsilon^{2\over n-1}] \, r
\int_{S_{r}} |\nabla_{t} u_{j}|^2
\nonumber\\
&\leq& C [(1-a)+ (1-a)^{-1} \varepsilon^{2\over n-1}] \, E(\rho)
\end{eqnarray}
because we shall choose $a$ close to $1$, and by \eqref{e10.19}.
Next we use \eqref{e6.20} to say that
\begin{equation}\label{e10.60}
\int_{B(0,r)\sm B(0,ar)} |\nabla u_{i}^\ast|^2
\leq (1-a) r  a^{2-n} \int_{S_{r}} |\nabla_{t} u_{i}|^2
\leq C (1-a) E(\rho)
\end{equation}
by \eqref{e10.19} again. Finally, by \eqref{e6.21},
\begin{equation}\label{e10.61}
\int_{B(0,ar)} |\nabla u_{i}^\ast|^2
= a^{n-2} \inf\Big\{ \int_{B(0,r)} |\nabla v|^2 \, ; \,
v\in W^{1,2}(B(0,r)) \text{ and } v=u_{i} \text{ on } S_{r} \Big\}.
\end{equation}
A simple choice of $v$ is given by the following extension.
Set $m = \fint_{S_r} u_i$, and then
\begin{equation}\label{e10.62}
v(ty) = m + t (u_i(y)-m)
\ \text{ for $y\in S_r$ and } 0 \leq t \leq 1.
\end{equation}
This is the same extension that we used near \eqref{e7.17}, and the 
same computations as above yield
\begin{eqnarray}\label{e10.63}
\int_{B(0,r)} |\nabla v|^2 &=& \int_{B(0,r)} |\nabla (v-m)|^2 
\leq C r \int_{S_r} \big[r^{-2} (u_i-m)^2 +  |\nabla u_i|^2 \big]
\nonumber
\\
&\leq & C r \int_{S_r} |\nabla u_i|^2 \leq C \rho^n \Lambda
\end{eqnarray}
(see \eqref{e7.18} and then use \eqref{e4.2} and \eqref{e10.20}).
We combine this with \eqref{e10.59}, \eqref{e10.60}, and \eqref{e10.61},
and get that
\begin{eqnarray}\label{e10.64}
\int_{B(0,r)} |\nabla \u^\ast|^2
&\leq& C \big[(1-a)+ (1-a)^{-1} \varepsilon^{2\over n-1}\big] E(\rho)
+ \int_{B(0,r)} |\nabla u_i^\ast|^2
\nonumber
\\
&\leq & C \big[(1-a)+ (1-a)^{-1} \varepsilon^{2\over n-1}\big] E(\rho)
+ C \rho^n \Lambda =: \alpha_2,
\end{eqnarray}
where the last equality is a definition of $\alpha_2$.
Then, by \eqref{e10.24},
\begin{equation}\label{e10.65}
\int_{B(0,r)} |\nabla \u|^2 \leq \int_{B(0,r)} |\nabla \u^\ast|^2 + C r^n
\leq \alpha_2 + C r^n
\end{equation}
We keep the same $\tau$ that we chose in \eqref{e10.50}, and notice that
\begin{eqnarray}\label{e10.66}
\fint_{B(0,\tau\rho)} |\nabla \u|^2 
&\leq& C \tau^{-n} \rho^{-n} \int_{B(0,r)} |\nabla \u|^2 
\leq C \tau^{-n} \rho^{-n} \alpha_2 + C \tau^{-n}
\nonumber
\\
&= & C \big[(1-a)+ (1-a)^{-1} \varepsilon^{2\over n-1}\big] \tau^{-n} \rho^{-n} E(\rho)
+ C \tau^{-n} \Lambda + C \tau^{-n}
\nonumber
\\
&\leq & C \big[(1-a)+ (1-a)^{-1} \varepsilon^{2\over n-1}\big] \tau^{-n} \fint_{B(0,\rho)} |\nabla \u|^2 
+ C \tau^{-n} \Lambda + C \tau^{-n}.
\end{eqnarray}
We may now chose $a$, satisfying the old constraint \eqref{e10.52} and in addition
\begin{equation}\label{e10.67}
C (1-a) \tau^{-n} \leq 30^{-1}
\end{equation}
(with $C$ as in \eqref{e10.66}). Then we choose $\varepsilon$, depending
on $\tau$ and $a$, so that
\begin{equation}\label{e10.68}
C (1-a)^{-1} \varepsilon^{2\over n-1} \tau^{-n} \leq 30^{-1}.
\end{equation}
Next
\begin{eqnarray}\label{e10.69}
C \tau^{-n} \Lambda &=& 
C \tau^{-n} M^{-1} C_3 \Big(1+ \fint_{B(0,r_0)}  |\nabla \u|^2 \Big)^2
\nonumber
\\
&\leq& C \tau^{-n} M^{-2} C_3 \Big(1+ \fint_{B(0,r_0)}  |\nabla \u|^2 \Big)^2 
\fint_{B(0,\rho)} |\nabla \u|^2 
\end{eqnarray}
by \eqref{e10.17} and because \eqref{e10.13} fails, and now we choose $M$
so large, depending on $\tau$, $a$, and $\varepsilon$, that
\begin{equation}\label{e10.70}
C \tau^{-n} M^{-2} C_3 \Big(1+ \fint_{B(0,r_0)}  |\nabla \u|^2 \Big)^2 < 30^{-1},
\end{equation}
in addition to the earlier similar condition \eqref{e10.54}.
With all these choices, \eqref{e10.66} implies that
$\fint_{B(0,\tau\rho)} |\nabla \u|^2 \leq 10^{-1}\fint_{B(0,\rho)} |\nabla \u|^2 
+ C \tau^{-n}$, and \eqref{e10.12} holds if we choose
$C(\tau,r_0) \geq C \tau^{-n}$. We had a similar constraint in \eqref{e10.55},
but our main constraint on $C(\tau,r_0)$ comes from \eqref{e10.14}, which
demands that $C(\tau,r_0) \geq C \tau^{-n} M$. In view of our constraints
\eqref{e10.54} and \eqref{e10.70} on $M$, we see that we can choose $C(\tau,r_0)$
such that
\begin{equation} \label{e10.71}
C(\tau,r_0) \leq C \tau^{-3n/2} (1-a)^{-1/2}\varepsilon^{-1/2} 
\Big(1+ \fint_{B(0,r_0)}  |\nabla \u|^2 \Big)
\leq C \big(1+ \fint_{B(0,r_0)}  |\nabla \u|^2 \big).
\end{equation}
This completes our proof of \eqref{e10.12}, with the announced bound 
on $C(\tau,r_0)$; Lemma \ref{t10.4} follows.
\qed
\end{proof}

\begin{cor}\label{t10.5} 
Let $B(0,r_0)$ satisfy the assumptions of Lemma \ref{t10.4}; then
\begin{equation} \label{e10.72}
\fint_{B(0,r)} |\nabla \u|^2 
\leq C\big(1+ \fint_{B(0,r_0)}  |\nabla \u|^2 \big)
\text{ for $0 < r \leq r_0$,}
\end{equation}
with a constant $C$ that depends on the usual constants.
\end{cor}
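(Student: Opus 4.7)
The plan is to iterate the one-step decay estimate of Lemma \ref{t10.4} along a geometric sequence of radii, and then fill in the intermediate scales by a crude volume comparison. Fix once and for all the value of $\tau \in (0, 10^{-1})$ given by Lemma~\ref{t10.4}, so that $\tau$ depends only on $n$, $N$ and the other structural constants. Set $r_k = \tau^k r_0$ for $k \geq 0$ and
\begin{equation*}
a_k = \fint_{B(0, r_k)} |\nabla \u|^2.
\end{equation*}
Since $r_k \leq r_0$ for every $k$, and $B(0,r_0) \subset \Omega$ with $\u(0) = 0$, the assumptions of Lemma~\ref{t10.4} are satisfied at each scale $\rho = r_k$, so
\begin{equation*}
a_{k+1} \leq C(\tau, r_0) + \tfrac{1}{10}\, a_k, \qquad k \geq 0,
\end{equation*}
where $C(\tau, r_0) = C_4\big(1 + \fint_{B(0,r_0)} |\nabla \u|^2\big)$.

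The next step is a trivial induction on this recursion. Summing the geometric series shows
\begin{equation*}
a_k \leq \Big(\sum_{j \geq 0} 10^{-j}\Big)\, C(\tau, r_0) + 10^{-k}\, a_0 \leq \tfrac{10}{9}\, C(\tau, r_0) + a_0
\leq C_5 \Big(1 + \fint_{B(0, r_0)} |\nabla \u|^2\Big)
\end{equation*}
for all $k \geq 0$, with $C_5$ depending only on the usual constants (through $C_4$).

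Finally, for arbitrary $r \in (0, r_0]$, choose the integer $k \geq 0$ with $r_{k+1} < r \leq r_k$. Since $B(0, r) \subset B(0, r_k)$ and $|B(0, r_k)|/|B(0, r)| \leq \tau^{-n}$, the monotonicity of the integral gives
\begin{equation*}
\fint_{B(0, r)} |\nabla \u|^2
\leq \frac{|B(0, r_k)|}{|B(0, r)|}\, a_k
\leq \tau^{-n}\, a_k
\leq \tau^{-n} C_5 \Big(1 + \fint_{B(0, r_0)} |\nabla \u|^2\Big),
\end{equation*}
which yields \eqref{e10.72} with $C = \tau^{-n} C_5$ depending only on the usual constants.

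There is no real obstacle here: the corollary is simply the standard ``iteration of a contractive decay inequality up to a constant,'' and the only point worth noting is that the additive constant $C(\tau, r_0)$ from Lemma~\ref{t10.4} already has the right affine form in $\fint_{B(0,r_0)} |\nabla \u|^2$, so the conclusion is stable under iteration without any loss.
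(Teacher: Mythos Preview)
Your proof is correct and follows essentially the same approach as the paper: iterate the decay inequality of Lemma~\ref{t10.4} along the geometric sequence $r_k=\tau^k r_0$ to bound the averages $a_k$ uniformly, then handle intermediate radii by a volume comparison. The only cosmetic difference is that the paper bounds the recursion by a direct induction ($\omega_k\le 2C(\tau,r_0)+\omega_0$) rather than summing the geometric series, but the content is identical.
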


\begin{proof}
Define a sequence $\{ \omega_k \}$ by
\begin{equation} \label{e10.73}
\omega_k = (\tau^k r_0)^{-n} \int_{B(0,\tau^k r_0)} |\nabla \u|^2
\ \text{ for } k \geq 0,
\end{equation}
and let $C(\tau,r_0)$ be as in Lemma \ref{t10.4}.
Let us show by induction that $\omega_k \leq 2 C(\tau,r_0) + \omega_0$.
This is obviously true for $k=0$. Now suppose that it is true for $k$; 
Lemma \ref{t10.4} says that
\begin{equation} \label{e10.74}
\omega_{k+1} \leq 10^{-1 }\omega_k + C(\tau,r_0)
\leq 10^{-1 }(2 C(\tau,r_0) + \omega_0) + C(\tau,r_0) < 2 C(\tau,r_0) + \omega_0,
\end{equation}
which proves our claim. That is, 
\begin{equation} \label{e10.75}
\omega_{k} \leq 2C(\tau,r_0) + \omega_0  
\leq C\big(1+ \fint_{B(0,r_0)}  |\nabla \u|^2 \big)
\end{equation}
for all $k$. The corollary follows easily, by comparing any $r\leq r_0$ with a slightly
larger $\tau^k r_0$.
\qed
\end{proof}

\ms
We shall now consider balls that do not meet the set $\{ \u = 0 \}$.
They will be easier to deal with, because we can use the equation \eqref{e9.6}.

\begin{lem}\label{t10.6} 
Suppose that $0 < r_0 \leq 1$, $B(0,r_0) \i \Omega$ 
and $\u(x) \neq 0$ for $x\in B(0,r_0)$. Then 
\begin{equation}\label{e10.76}
|\nabla \u(x)| \leq C r_0 + 2^n\fint_{B(0,r_0)} |\nabla \u| 
\ \text{ for } x\in B(0,r_0/2),
\end{equation}
with a constant $C$ that depends only on the usual constants.
\end{lem}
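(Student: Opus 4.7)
The plan is to reduce the problem, via the hypothesis $\u \neq 0$ on $B(0,r_0)$, to a single-component Poisson equation with bounded right-hand side, and then to apply classical estimates for harmonic functions and for Newtonian potentials.

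\medskip
\noindent\textbf{Step 1 (Only one component is active).} The open sets $\Omega_i = \{x\in \Omega \, ; \, u_i(x) \neq 0\}$ are pairwise disjoint because $u_i = 0$ almost everywhere on $\Omega \setminus W_i$ and the $W_i$ are disjoint (combined with the continuity of $\u$ on $\Omega$, see Remark \ref{r7.2}). The hypothesis $\u \neq 0$ on $B(0,r_0)$ says $B(0,r_0) \subset \bigcup_i \Omega_i$. Since $B(0,r_0)$ is connected, exactly one index -- call it $i=1$ after relabeling -- satisfies $B(0,r_0) \subset \Omega_1$, and $u_j \equiv 0$ on $B(0,r_0)$ for $j \neq 1$. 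Hence $|\nabla \u| = |\nabla u_1|$ on $B(0,r_0)$, and it is enough to bound $|\nabla u_1|$.

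\medskip
\noindent\textbf{Step 2 (PDE for $u_1$).} By \eqref{e9.6}, $u_1$ satisfies
\[
\Delta u_1 = h \ \text{ on } B(0,r_0), \quad \text{where } h := f_1 u_1 - \tfrac{1}{2} g_1.
\]
Under \eqref{e10.1} and Theorem \ref{t5.1}, $\|u_1\|_\infty$, $\|f_1\|_\infty$, $\|g_1\|_\infty$ are all controlled by the usual constants, so $\|h\|_{L^\infty(B(0,r_0))} \leq C$.

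\medskip
\noindent\textbf{Step 3 (Splitting $u_1 = v + w$).} Let $G$ be the fundamental solution of $-\Delta$ in $\R^n$, and set
\[
w(x) = -\int_{B(0,r_0)} G(x-y)\, h(y)\, dy, \qquad v = u_1 - w \ \text{ on } B(0,r_0).
\]
Then $\Delta w = h$ on $B(0,r_0)$, so $v$ is harmonic on $B(0,r_0)$. A direct estimate using $|\nabla G(z)| \leq c |z|^{1-n}$ yields, for every $x \in B(0,r_0)$,
\[
|\nabla w(x)| \leq C \|h\|_\infty \int_{B(x,2r_0)} |z|^{1-n}\, dz \leq C r_0,
\]
with $C$ depending only on $n$ and on the bound from Step 2.

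\medskip
\noindent\textbf{Step 4 (Harmonic gradient bound and conclusion).} Each partial derivative $\partial_j v$ is harmonic on $B(0,r_0)$, so by the mean value property, for any $x \in B(0,r_0/2)$ we have $B(x,r_0/2) \subset B(0,r_0)$ and
\[
|\partial_j v(x)| = \Bigl|\fint_{B(x,r_0/2)} \partial_j v\Bigr| \leq \fint_{B(x,r_0/2)} |\nabla v| \leq 2^n \fint_{B(0,r_0)} |\nabla v|,
\]
using $|B(0,r_0)| = 2^n|B(x,r_0/2)|$. Combining with Step 3 and the triangle inequality,
\[
|\nabla u_1(x)| \leq |\nabla v(x)| + |\nabla w(x)| \leq 2^n \fint_{B(0,r_0)} |\nabla v| + C r_0 \leq 2^n \fint_{B(0,r_0)} |\nabla u_1| + C r_0,
\]
where the last step uses $|\nabla v| \leq |\nabla u_1| + |\nabla w|$ and absorbs the extra $2^n\,\fint |\nabla w| \leq C r_0$ into the $Cr_0$ term. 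Since $|\nabla \u| = |\nabla u_1|$ on $B(0,r_0)$ by Step 1, this is \eqref{e10.76}.

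\medskip
There is no real obstacle: once the connectedness argument in Step 1 reduces the problem to a single component, the lemma becomes a standard interior gradient estimate for a Poisson equation with bounded right-hand side. The only place to be slightly careful is preserving the explicit constant $2^n$ in the statement, which is why the splitting puts all nuisance terms into the $Cr_0$ summand rather than multiplying the harmonic mean value.
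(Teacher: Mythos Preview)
Your proof is correct and follows essentially the same route as the paper: reduce to a single active component by connectedness, invoke \eqref{e9.6} to get a Poisson equation with bounded right-hand side, subtract off a Newtonian potential, and apply the mean value property to the remaining harmonic part. One small point: in Step~4 you bound each $\partial_j v(x)$ separately and then jump to a bound on $|\nabla v(x)|$, which as written would cost an extra factor $\sqrt{n}$ and spoil the explicit $2^n$; the clean fix (which is what the paper does) is to apply the mean value property directly to the harmonic vector $\nabla v$, giving $|\nabla v(x)| = \bigl|\fint_{B(x,r_0/2)}\nabla v\bigr| \le \fint_{B(x,r_0/2)}|\nabla v| \le 2^n\fint_{B(0,r_0)}|\nabla v|$.
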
 

\begin{proof}
Since $\u(0) \neq 0$, we can find $i$ such that $u_i(0) \neq 0$.
Then we claim that $u_i > 0$ on $B(0,r_0)$. Indeed, the
set $V = \big\{ x\in B(0,r_0) \, ; \, u_i(x) > 0 \big\}$
contains $0$ (recall that $u_i \geq 0$ since Lemma \ref{t10.2}),
and is open in $B(0,r_0)$. But $u_j(z) = 0$ for $j \neq i$ and $z\in V$,
because $u_i u_j = 0$ everywhere (by \eqref{e1.2} because $\u$
is continuous), so $u_j(z) = 0$ when $j \neq i$ and $z$ lies in the closure
of $V$ in $B(0,r_0)$, and since $\u(z) \neq 0$, this forces $u_i(z) \neq 0$.
So $V$ is closed too, $V = B(0,r_0)$, and this proves our claim.

So we can use \eqref{e9.6}, which says that 
\begin{equation}\label{e10.77}
\Delta u_i = f_i u_{i} -  {1 \over 2} \, g_{i} 
\ \text{ in } B(0,r_0)
\end{equation}
Set $h = 1_{B(0,r_0)} [f_i u_{i} -  {1 \over 2} g_{i}]$; by Theorem \ref{t5.1},
$h\in L^\infty(\R^n)$, with bounds that depend only on the usual constants.
Set $w = G \ast h$, where $G$ is the fundamental solution of $-\Delta$;
from \eqref{e10.77} we deduce that
$u_i - w$ is harmonic in $B(0,r_0)$. At the same time,
$\nabla w = (\nabla G) \ast h$, so $\nabla w \in L^\infty$, and even
\begin{equation}\label{e10.78}
|\nabla w(x)| \leq \int_{B(0,r_0)} |\nabla G(x-y)| |h(y)| dy
\leq C ||h||_\infty \int_{B(0,r_0)} |x-y|^{1-n} dy
\leq C ||h||_\infty r_0
\end{equation}
for $x\in \R^n$. If $x\in B(0,r_0/2)$,
\begin{eqnarray}
\label{e10.79}
|\nabla u_i(x)| &\leq& |\nabla (u_i-w)(x)| + |\nabla w(x)|
\leq |\nabla (u_i-w)(x)| + C ||h||_\infty r_0
\nonumber
\\
&\leq& \fint_{B(x,r_0/2)} |\nabla (u_i-w)| + C r_0
\leq 2^n \fint_{B(x,r_0)} |\nabla (u_i-w)| + C r_0
\nonumber
\\
&\leq& 2^n \fint_{B(x,r_0)} |\nabla u_i| + 2^n ||\nabla w||_\infty + C r_0
\leq2^n \fint_{B(x,r_0)} |\nabla u_i| + C r_0
\end{eqnarray}
because $u_i - w$ is harmonic in $B(x,r_0/2) \i B(0,r_0)$;
\eqref{e10.76} and the lemma follow because all the other $u_j$
vanish on $B(0,r_0)$.
\qed
\end{proof}

\ms
After all these lemmas, we are now ready to prove Theorem \ref{t10.1}.
Let $B(x_0,r_0)$ be as in the theorem, and let $x$ be any point of $B(x_0,r_0)$.
We distinguish between cases, depending on the size of 
$d(x) = \dist(x, Z)$, where $Z = \big\{ z\in \R^n \, ; \, \u(z) = 0 \big\}$.

If $d(x) \geq r_0/4$, we notice that $\u(x) \neq 0$ on
$B(x,r_0/4)$ and that $B(x,r_0/4) \i B(x_0,2r_0) \i \Omega$, 
apply Lemma \ref{t10.6} to a translation by $-x$
of the minimizer $(\u,\W)$, and to the radius $r_0/4$,
and get that for $y$ near $x$,
\begin{equation}
\label{e10.80}
|\nabla \u(y)| \leq C r_0 + 2^n\fint_{B(x,r_0/4)} |\nabla \u| 
\leq C + C \fint_{B(x_0,2r_0)} |\nabla \u|.
\end{equation}

If $0 < d(x) < r_0/4$, choose $z\in Z$ such that $|z-x| = d(x)$,
notice that $B(z,r_0/2) \i B(0,2r_0) \i \Omega$,
and apply Corollary \ref{t10.5} to the translation of $(\u,\W)$ by $-z$
and with the radius $r_0/2$; then 
\begin{equation}
\label{e10.81}
\fint_{B(z,2d(x))} |\nabla \u|^2 \leq C\big(1+ \fint_{B(z,r_0/2)}  |\nabla \u|^2 \big)
\leq C + C \fint_{B(x_0,2r_0)}  |\nabla \u|^2.
\end{equation}
We now use the fact that $\u \neq 0$ on $B(x,d(x)) \i B(z,2d(x))\i \Omega$, apply
Lemma \ref{t10.6} to the translation by $-x$ of $(\u,\W)$ and with the radius 
$d(x)$, and get that for $y$ near $x$, 
\begin{eqnarray}
\label{e10.82}
|\nabla \u(y)| &\leq& C d(x) + 2^n\fint_{B(x,d(x))} |\nabla \u| 
\leq C + C \fint_{B(z,2d(x))} |\nabla \u|
\nonumber
\\
&\leq& C + C \Big\{\fint_{B(z,2d(x))} |\nabla \u|^2 \Big\}^{1/2}
\leq C + C \Big\{\fint_{B(x_0,2r_0)} |\nabla \u|^2 \Big\}^{1/2}
\end{eqnarray}
by \eqref{e10.81}. In both cases, by the proof of Lemma \ref{t10.6},
$u$ is even $C^1$ near $x$, so we don't need to worry about the
definition of $\nabla \u(x)$.

We are left with the case when $d(x) = 0$. We may observe that 
$\u(x) = 0$ on the corresponding set, and $\nabla \u = 0$ almost everywhere
on that set. But we can also restrict our attention to the case when  $x$ is a Lebesgue 
density point for $\nabla \u$, and use Corollary \ref{t10.5}
(applied to the translation of $(\u,\W)$ by $-x$ and with the radius $r_0$)
to get that $|\nabla \u(x)|^2 = \lim_{r \to 0} \fint_{B(x,r)} |\nabla \u|^2
\leq C + C \fint_{B(x,r_0)} |\nabla \u|^2 \leq C + C \fint_{B(x_0,2r_0)} |\nabla \u|^2$. 

Anyway, we get that $|\nabla \u(x)| 
\leq C + C \big(\fint_{B(x_0,2r_0)} |\nabla \u|^2\big)^{1/2}$
almost everywhere on $B(x_0,r_0)$. The Lipschitz bound in \eqref{e10.2}, and then
Theorem \ref{t10.1}, follow.
\qed

\section{Global Lipschitz bounds for $u$ when 
$\Omega$ is smooth}  \label{global} 

In this section we prove that if $\Omega$ is a bounded open set with $C^{1+\alpha}$ 
boundary and the assumptions of Section \ref{lip} are satisfied, then $\u$ is Lipschitz.

\begin{thm}\label{t11.1} 
Assume that for some $\alpha > 0$, $\Omega$ is a bounded open set with a 
$C^{1+\alpha}$ boundary, 
and that \eqref{e10.1} and \eqref{e10.2} hold. Then for each minimizer
$(\u,\W)$ of the functional $J$, $\u$ is $C$-Lipschitz,
with a constant $C$ that depends only on $n$, $N$, $|\Omega|$, $\alpha$,
the  constants in \eqref{e10.1} and \eqref{e10.2}, and the 
$C^{1+\alpha}$ constants for $\d\Omega$.
\end{thm}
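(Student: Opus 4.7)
The plan is to adapt the proof of Theorem \ref{t10.1}, treating separately the interior points (where the previous theorem applies directly) and the points within distance $r_0$ of $\partial\Omega$, where $r_0$ is a small constant determined by the $C^{1+\alpha}$ geometry. Since $\u = 0$ on $\R^n \setminus \Omega$, the Lipschitz estimate on $\R^n$ reduces to a Lipschitz estimate on $\overline{\Omega}$, and since Theorem~\ref{t8.1} already gives H\"older-continuity across $\d\Omega$, the only genuinely new work is a uniform pointwise bound on $|\nabla \u|$ near the boundary.

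First I would establish a boundary analogue of Lemma~\ref{t10.4}: for each $x_0 \in \d\Omega$ and $0 < \rho \leq r_0$,
\begin{equation*}
\fint_{B(x_0,\tau\rho)} |\nabla \u|^2 \leq C + \frac{1}{10}\,\fint_{B(x_0,\rho)} |\nabla \u|^2,
\end{equation*}
where $\tau$, $r_0$, and $C$ depend only on the usual constants and on the $C^{1+\alpha}$ geometry of $\d\Omega$. The strategy mirrors Lemma~\ref{t10.4}: note that $\u(x_0) = 0$ automatically because $x_0 \in \d\Omega$; use Theorem~\ref{t9.1} (valid on boundary balls under \eqref{e8.1}) to conclude that if the average energy is very large then a single phase, say $u_1$, dominates while $\rho^{-n} E_i(\rho)$ is tiny for $i \geq 2$. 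The key new ingredient is the \emph{boundary harmonic competitor}. Using the $C^{1+\alpha}$ regularity, find a $C^{1+\alpha}$ diffeomorphism $\Phi : B(x_0,2r_0) \to \R^n$ with $\Phi(x_0) = 0$ sending $\Omega \cap B(x_0,2r_0)$ onto the upper half-space (locally), and whose bilipschitz constant is $1 + \eta(r_0)$ with $\eta(r_0) \to 0$ as $r_0 \to 0$. Work with $\v = \u \circ \Phi^{-1}$ on the image, replace $v_1$ on a half-ball $B(0,r)\cap\{x_n>0\}$ by the harmonic function with boundary data $v_1$ on the spherical part and $0$ on $\{x_n = 0\}$, and kill the other phases with the cut-off of Section \ref{favorites}; then pull back by $\Phi^{-1}$ to produce $(\u^\ast,\W^\ast) \in \F$. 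The crucial gradient bound analogous to \eqref{e10.44} comes from odd reflection across $\{x_n=0\}$: the reflected harmonic function extends harmonically to the full ball, so the usual Poisson kernel estimate gives $\|\nabla v_1^\ast\|_{L^\infty(B(0,r/2))} \leq C r^{-1}\fint_{\partial B^+} |v_1|$, and the vanishing of $v_1$ on the flat part combined with $v_1 \geq 0$ allows the identity $\fint |v_1| = \fint v_1$ used in \eqref{e10.44}. The remaining energy estimates \eqref{e10.25}--\eqref{e10.64} carry over, with an extra harmless factor $(1+\eta)^{2n+4}$ coming from the change of variables, exactly as in \eqref{e8.31}; choosing $r_0$ small makes this factor irrelevant.

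Next I would iterate the decay estimate as in Corollary~\ref{t10.5} to obtain $\fint_{B(x_0,r)} |\nabla \u|^2 \leq C$ for all $x_0 \in \d\Omega$ and $0 < r \leq r_0$. Combined with the obvious uniform bound on $\fint_{\Omega} |\nabla \u|^2$ from \eqref{e3.10}, this gives $\fint_{B(x,r)} |\nabla \u|^2 \leq C$ for every $x \in \overline{\Omega}$ and $0 < r \leq r_0/4$, by comparing $B(x,r)$ with a ball $B(z,2d(x) + r)$ centered at the nearest boundary (or interior) point when $d(x) \leq r_0/4$, and by Corollary~\ref{t10.5} when $d(x) \geq r_0/4$. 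Finally, at a point $x\in\Omega$, let $d(x) = \dist(x,\d\Omega)$. If $d(x) \geq r_0/8$ we conclude by applying Theorem~\ref{t10.1} to $B(x, r_0/8)$. Otherwise, Lemma~\ref{t10.6} on $B(x, d(x))$ (which is contained in $\Omega$ and where $\u \neq 0$ provided we are not on the zero set of $\u$) gives $|\nabla \u(y)| \leq C d(x) + C\fint_{B(x,d(x))} |\nabla \u|$, and the last average is bounded by the uniform energy bound obtained in the previous step. The trivial case when $x$ lies on $\{\u = 0\}$ is handled, as at the end of the proof of Theorem~\ref{t10.1}, by taking $x$ a Lebesgue density point of $\nabla\u$.

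The main obstacle in this program is the construction of the boundary competitor and verification that the energy comparison gives the same decay exponent $\tau^{\delta+2-n}$ as in the interior case. The delicate point is that we need the harmonic extension on the half-ball with mixed Dirichlet data to satisfy the same Poisson-type gradient bound \eqref{e10.44}, which works thanks to the odd reflection trick, and that the $C^{1+\alpha}$ (rather than merely $C^1$) regularity of $\d\Omega$ is used here to make the bilipschitz distortion $\eta(r_0)$ small enough that one can still choose $\delta > n-2$ satisfying a condition analogous to \eqref{e8.32}. Once this decay is in hand, the rest of the argument is structurally identical to Section~\ref{lip}.
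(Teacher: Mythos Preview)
Your boundary decay lemma has a genuine gap at exactly the step you flag as ``crucial.'' In the interior proof, the chain \eqref{e10.44}--\eqref{e10.45} works because the Poisson gradient bound gives $\|\nabla v_1\|_{L^\infty(B(0,r/2))} \leq Cr^{-1}\fint_{S_r}|u_1|$, and then---this is the essential point---the mean value property converts the right-hand side into $Cr^{-1}v_1(0)$, which is then shown to be small via \eqref{e10.41}--\eqref{e10.43} by comparing $v_1(0)=\fint_{B(0,\tau\rho)}v_1$ with $m(\tau\rho)$. For your odd-reflected $\tilde v_1^\ast$, the analogous Poisson bound reads $\|\nabla \tilde v_1^\ast\|_{L^\infty(B(0,r/2))}\leq Cr^{-1}\fint_{S_r}|\tilde v_1^\ast|=2Cr^{-1}\fint_{S_r^+}v_1$, but now $\tilde v_1^\ast(0)=0$ trivially by odd symmetry, and the mean value property gives you no information about $\fint_{S_r^+}v_1$. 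Your identity $\fint|v_1|=\fint v_1$ is correct but does not bridge this gap: you still need to bound the half-sphere average of the boundary data at scale $r\sim\rho$, and there is no mechanism in your argument to push this down to scale $\tau\rho$ and extract the decay factor $\tau^{\delta+2-n}$. A direct H\"older bound on $\fint_{S_r^+}v_1$ from Theorem~\ref{t8.1} gives only $C\rho^\alpha$ with a constant whose energy dependence produces, after squaring and dividing by $\rho^2$, a term $C\fint_{B(x_0,2\rho)}|\nabla\u|^2$ with \emph{no} small prefactor, plus a term $C\rho^{2\alpha-2}$ that blows up.

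This is not a cosmetic issue: your argument, as written, uses the $C^{1+\alpha}$ hypothesis only to make the bilipschitz distortion small, but $C^1$ already suffices for that, and Remark~\ref{r11.2} shows the theorem is \emph{false} for $C^1$ domains. So the approach must be using $C^{1+\alpha}$ elsewhere, and it is not.

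The paper's route is entirely different and much shorter. Instead of a boundary harmonic competitor, it first proves (Lemma~\ref{t11.3}) via a barrier and the maximum principle that $|\u(x)|\leq C\dist(x,\d\Omega)$; the $C^{1+\alpha}$ hypothesis enters precisely here, to guarantee that the harmonic extension of a smooth boundary function is Lipschitz up to $\d\Omega$. Once you know $|\u|\leq Cr$ on $B(x_0,r)$ for $x_0\in\d\Omega$, the \emph{cut-off} competitor of Section~\ref{favorites} (not the harmonic one) gives $\int_{B(x_0,r/2)}|\nabla\u|^2\leq \tau\int_{B(x_0,r)}|\nabla\u|^2+C\tau^{-1}r^n$ for any $\tau>0$, hence the decay $\fint_{B(x_0,r/2)}|\nabla\u|^2\leq\frac12\fint_{B(x_0,r)}|\nabla\u|^2+C_0$ by choosing $\tau=2^{-n-1}$ (Lemma~\ref{t11.4}). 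No monotonicity formula, no dichotomy, no flattening. Your final assembly step (combining boundary and interior balls) is essentially correct and matches the paper.
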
  

\ms
In fact, the theorem also holds if we replace our $C^{1+\alpha}$  assumption
with  the weaker assumption that $\d \Omega$ is a Lyapunov-Dini surface, 
i.e., that it is $C^1$ with a Dini condition on the modulus of continuity on the unit normal. 
As we shall see in the proof, this is just because we want to know to know that if $f$ is a smooth
function on $\d\Omega$, its harmonic extension to $\Omega$ is Lipschitz.

\begin{rem}\label{r11.2}
Theorem \ref{t11.1} may fail if $\Omega$ is merely $C^1$. 
\end{rem}

\begin{proof}
To see this, consider the simple case when $n=2$, 
$\Omega$ is a simply connected domain in the plane, 
$N=1$, and $f_1 = 0$. Also take $F(\W) = 0$. 
Let $(\u,\W)$ be a minimizer for $J$; since $N=1$, we 
may write $(\u,\W) = (u,W)$, with $u=u_1$ and $W = W_1$.
Since making $W$ larger does not make $F(W)$ larger,
we may assume that $W=\Omega$. Let us not put any constraint
on the sign of $u$; then by the proof of \eqref{e9.6}, we get that
$\Delta u = -g/2$ on $\Omega$. Now we do not expect solutions
of this equation, with Dirichlet boundary values on $\d \Omega$,
to be Lipschitz if $\Omega$ is merely $C^1$. 

In fact, suppose that $0 \in \d\Omega$; even if
$\Delta u = 0$ in $\Omega \cap B(0,r)$ for some $r> 0$,
we do not expect $u$ to be Lipschitz near $0$, so the regularity
of $g$ is not an issue. To turn these considerations into a counterexample,
we shall use conformal mappings. 

Let $\Omega$ be a $C^1$, simply connected domain in
$\R^2 \simeq \Bbb C$, with $0 \in \d\Omega$, 
and let $\Psi$ be a conformal mapping from $\Omega$ to the 
unit ball. A result of Caratheodory says that $\Psi$ has a continuous
extension to $\overline \Omega$; for this and 
other information on conformal mappings that we shall use, 
we refer to \cite{P}. 
It is also known that we can choose $\Omega$ so that
$\Psi$ is not Lipschitz in $\Omega \cap B(0,r)$, where
$B(0,r)$ is a small ball centered at $0$. 
We can further arrange that $\Psi$ is Lipschitz on
a neighborhood $\Omega \sm B(0,r)$, simply by taking $\d \Omega$
smooth enough on $\R^2 \sm B(0, r/2)$ (the regularity
of $\Psi$ is a local notion). Denote by $\Gamma_1$ a small
arc of $\d \Omega$ that contains $\d\Omega \cap B(x,2r)$
and by $\Gamma_2$ the rest of $\d\Omega$. We can make
sure that $\Gamma_2$ is not empty, and we have a neighborhood 
$V_2$ of $\Gamma_2$ such that $\Psi$ is Lipschitz on
$V_2 \cap \overline\Omega$.

Now compose $\Psi$ with a conformal mapping 
$\Phi$, from the unit disk to the upper half disk 
$\big\{ z\in B(0,1) \, ; \, Im(z) > 0 \big\}$, which we choose
so that  $\Phi \circ \Psi(0) = 0$ and 
$\Phi \circ \Psi(\Gamma_1) \i [-1/2,1/2]$.
This last condition is easy to obtain, by composing $\Psi$
first with a M\"obius transform that sends $\Psi(\Gamma_1)$
to a small enough arc of circle, before we apply a standard 
conformal mapping to the half disk.

Let $v$ denote the imaginary part of $\Phi \circ \Psi$; this is a 
harmonic function on $\Omega$, with vanishing boundary values
near $0$, and yet it is not Lipschitz near $\Gamma_1$
because its gradient is not bounded (if it were, by Cauchy-Riemann's
equation the complex derivative  of $\Phi \circ \Psi$ would be too, 
which is false by construction). To make $v$ into an acceptable
solution $u$, we multiply it by $h = \varphi \circ \Phi \circ \Psi$,
where $\varphi$ is a smooth radial function such that 
$\varphi(z) = 1$ for $|z| \leq 1/2$ and $\varphi(z) = 0$ for $|z| \leq 2/3$.
Now $u = v h$ is continuous on $\overline \Omega$ and vanishes
on $\d \Omega$. It also lies in $W^{1,2}(\Omega)$, because
$v\in W^{1,2}(\Omega)$ (for these counterexamples, $\psi$ is barely
not Lipschitz), and if we extend $u$ by setting $u=0$ on $\R^2 \sm \Omega$,
we get that $u \in W^{1,2}(\R^2)$ by the proof of \eqref{e4.18}.
Also, $g = -2 \Delta u$ is a bounded function: it vanishes near
$\Gamma_1$ because $h = 1$ there, and otherwise we compute
$\Delta(vh)$ by the chain rule, and use the fact that $\Delta v=0$,
$\nabla v$ is bounded, and $h$ is smooth.

Finally let $\wt u$ denote (the first component of) a minimizer for $J$,
with the data $g$; we know that $\Delta \wt u = -g/2 = \Delta u$
on $\Omega$ (the reader may check that $\Delta u = -g/2$ also as
a distribution), then $\wt u - u$ is harmonic in $\Omega$, continuous
on $\overline\Omega$ (by Theorem \ref{t8.1} to be lazy),
and null on the boundary. By the maximum principle, $\wt u =u$
and $\wt u$ is not Lipschitz.

We are even so lucky that $u \geq 0$, so things do not get better
if we restrict to nonnegative functions. We expect that this lack of regularity
is the general rule for $C^1$ domains, even though our suggested example
was fairly special.
\qed
\end{proof}

\ms
The main new  ingredient for the proof of Theorem \ref{t11.1} will be the following simple
estimate, obtained by the maximum principle.

\begin{lem}\label{t11.3} 
Let $(\u,\W)$ be as in the theorem;
then there is a constant $C \geq 0$ such that 
\begin{equation}\label{e11.1}
|\u(x)| \leq C \dist(x,\d \Omega) \ \text{ for } x\in \Omega.
\end{equation} 
The constant $C$ depends only on the $C^{1+\alpha}$ constants for $\Omega$,
its diameter, and the $||g_i||_\infty$.
\end{lem}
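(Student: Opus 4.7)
The plan is to prove \eqref{e11.1} by a barrier argument based on the weak maximum principle. By the decoupling reduction of Lemma~\ref{t10.2}, we may assume that each $u_i$ is nonnegative, and by Theorem~\ref{t8.1} each $u_i$ extends continuously to $\overline{\Omega}$ with $u_i = 0$ on $\d \Omega$, so $\Omega_i = \{ u_i > 0 \}$ is an open subset of $\Omega$. Combining the $L^\infty$-bound from Theorem~\ref{t5.1} with the hypotheses on $f_i$ and $g_i$, equation \eqref{e9.6} yields, in the distributional sense on $\Omega_i$,
\begin{equation*}
|\Delta u_i| \leq M_0,
\qquad M_0 := \max_{1 \leq i \leq N} \big( \|f_i\|_\infty \|u_i\|_\infty + \tfrac{1}{2}\|g_i\|_\infty \big),
\end{equation*}
where $M_0$ depends only on the constants listed in the lemma.

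Next I would introduce the torsion-type barrier $w$ solving $-\Delta w = M_0 + 1$ in $\Omega$ with $w = 0$ on $\d \Omega$. The weak maximum principle gives $w \geq 0$, and classical Schauder theory for $C^{1+\alpha}$ domains gives $w \in C^{1,\alpha}(\overline{\Omega})$, so that $\|\nabla w\|_\infty$ is finite and controlled by $M_0$, $\diam(\Omega)$, and the $C^{1+\alpha}$ constants of $\d \Omega$. Since $w$ is Lipschitz on $\overline{\Omega}$ and vanishes on $\d \Omega$, connecting any $x \in \Omega$ to a nearest point $y \in \d \Omega$ by a short path inside $\overline{\Omega}$ of length comparable to $\dist(x,\d\Omega)$ (available because $\Omega$ is at worst Lipschitz) yields the bound $0 \leq w(x) \leq C \dist(x, \d \Omega)$.

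It remains to show $u_i \leq w$ on $\Omega$. On $\Omega_i$ the function $v := u_i - w$ satisfies $\Delta v = \Delta u_i + (M_0 + 1) \geq 1 > 0$, so $v$ is subharmonic. On $\d \Omega_i$ one has $v \leq 0$: at points of $\d \Omega_i \cap \Omega$, $u_i = 0$ by continuity of $u_i$ while $w \geq 0$; at points of $\d \Omega_i \cap \d \Omega$, both $u_i$ and $w$ vanish. The weak maximum principle on the bounded open set $\Omega_i$ gives $v \leq 0$, i.e.\ $u_i \leq w$ on $\Omega_i$, and the inequality is trivial on $\Omega \sm \Omega_i$. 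Combining with the boundary-decay of $w$ and summing over $i$ proves \eqref{e11.1}.

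The only genuine ingredient beyond the standard distributional maximum principle is the Lipschitz regularity of the solution $w$ up to $\d \Omega$; this is precisely where the $C^{1+\alpha}$ (or, in the generality claimed after the theorem, the Lyapunov-Dini) regularity of $\d \Omega$ enters, via Schauder estimates. Once this is in hand, the remainder of the argument is routine.
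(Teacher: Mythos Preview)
Your argument follows the same barrier-plus-maximum-principle scheme as the paper, and the logic is sound, but there is one genuine gap relative to the precise statement: your constant $M_0 = \max_i\big(\|f_i\|_\infty\|u_i\|_\infty + \tfrac12\|g_i\|_\infty\big)$ is \emph{not} controlled by the constants listed in the lemma (the $C^{1+\alpha}$ data, $\diam(\Omega)$, and the $\|g_i\|_\infty$ alone). It also involves $\|f_i\|_\infty$ and the $L^\infty$ bound on $u_i$ from Theorem~\ref{t5.1}, and the latter in turn depends on $|\Omega|$, bounds on $F$, etc. So as written you prove \eqref{e11.1} with a weaker constant than claimed.

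The fix is the one the paper uses: you do not need a two-sided bound on $\Delta u_i$. On $\Omega_i$ you have $u_i>0$ and $f_i\geq 0$, hence $f_i u_i\geq 0$ and
\[
\Delta u_i = f_i u_i - \tfrac12 g_i \;\geq\; -\tfrac12\|g_i\|_\infty.
\]
Thus it suffices to take the barrier $w$ with $-\Delta w = \max_i \|g_i\|_\infty$ (the paper realizes this as $w=h_1-h$ with $h_1(x)=-C_1|x|^2$, $C_1=\tfrac{1}{2n}\max_i\|g_i\|_\infty$, and $h$ the harmonic extension of $h_1|_{\d\Omega}$), and then $\Delta(u_i-w)\geq 0$ on $\Omega_i$ exactly as you argue. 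This gives the stated dependence of $C$. Your appeal to Schauder theory for the boundary Lipschitz bound on $w$ is equivalent to the paper's use of Widman's theorem for the harmonic extension; either route needs the $C^{1+\alpha}$ (or Lyapunov--Dini) hypothesis, as you correctly note.
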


\begin{proof}
Assume, for the sake of normalization, that $0 \in \Omega$.
Set $h_1(x) = - C_1 |x|^2$, where 
$C_1 = {1 \over 2n} \max_{1 \leq i \leq N} ||g_i||_\infty$, 
and then let $h$ be the harmonic extension of the restriction of $h_1$ to $\Omega$.
That is, $h$ is continuous on $\overline \Omega$, $h=h_1$ on $\d \Omega$,
and $h$ is harmonic in $\Omega$. The existence of $h$ is classical, and
since $h_1$ is smooth and $\d \Omega$ is $C^{1+\alpha}$ for some
$\alpha > 0$, we can apply Theorem~2.4 on page 23 of \cite{Wi} to get that
$f_2$ is Lipschitz on $\overline \Omega$, with estimates that depend 
only on the $C^{1+\alpha}$ constants for $\Omega$, its diameter,
and $C_1$. The same theorem also applies, and gives the same result, when
$\d \Omega$ is a Lyapunov-Dini surface, as defined on page 18 of \cite{Wi}.
Now set $w = h_1 - h$; by construction, $w = 0$ on
$\d\Omega$ and  $\Delta w = - 2n C_1$ on $\Omega$. By the maximum
principle, $w \geq 0$ on $\Omega$.

Let us compare this with what happens for $u_{i,+} = \max(0,u_i)$, $1 \leq i \leq N$.
We know from \eqref{e9.6} that $\Delta u_i = f_i u_i - {1\over2} g_i$
on the open set $\Omega_{i,+} = \big\{ x\in \Omega \, ; \, u_i(x) > 0 \big\}$; 
Since $w - u_{i,+} = w \geq 0$ on $\d \Omega_{i,+}$ and
$\Delta(w-u_{i,+}) = \Delta w - \Delta u_{i,+} \leq 0$ on $\Omega_{i,+}$,
we get that $u_{i,+} \leq w$ on $\Omega_{i,+}$, and hence trivially on $\Omega$.
That is, $u_{i,+}(x) \leq w(x) \leq C \dist(x,\d\Omega)$ for $x\in \Omega$,
just because $w$ is Lipschitz and vanishes on $\Omega$. Of course the other
inequality $u_{i,-}(x) \leq w(x) \leq C \dist(x,\d\Omega)$ would be proved the same way,
and the lemma follows.
\qed
\end{proof}

Next we use Lemma \ref{t11.3} to prove a decay estimate for balls centered on $\d \Omega$.
Still let $(\u,\W)$ be a minimizer, as in Theorem \ref{t11.1}.

\begin{lem}\label{t11.4} 
We can find $C \geq 0$, that depend on the same constants as in the
statement of the theorem, such that if $0 \in \d \Omega$, then
\begin{equation}\label{e11.2}
\fint_{B(0,r)} |\nabla \u|^2 \leq C
\ \text{ for } 0 \leq r \leq 1.
\end{equation} 
\end{lem}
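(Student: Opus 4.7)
\medskip\noindent\textbf{Proof plan for Lemma \ref{t11.4}.}

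The plan is to combine Lemma~\ref{t11.3} with the cut-off competitor of Section~\ref{favorites} to run a hole-filling argument on the energy $E(r) := \int_{B(0,r)} |\nabla \u|^2$. Because $0\in\d\Omega$, for every $x\in B(0,r)\cap\Omega$ we have $\dist(x,\d\Omega)\leq|x|\leq r$, so Lemma~\ref{t11.3} yields $|u_i(x)|\leq Cr$ throughout $B(0,r)$. This pointwise bound converts the $r^{-2}\int|u_i|^2$ term in the cut-off energy estimate \eqref{e6.5} into an acceptable $O(r^n)$ error, which is precisely what is needed to close the iteration.

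More concretely, for $r\leq 1$ fix $a\in(0,1)$ and a small $\tau>0$, and let $(\u^\ast,\W^\ast)$ be the cut-off competitor of \eqref{e6.1}--\eqref{e6.2} (applied to all indices $i$) in $B(0,r)$, keeping $\W^\ast=\W$. Since $F(\W^\ast)=F(\W)$ and $|u_i|\leq Cr$, $|u_i^\ast|\leq|u_i|$, the bounds \eqref{e6.6}--\eqref{e6.7} combined with the Lemma~\ref{t11.3} estimate give $|M(\u^\ast)-M(\u)|\leq Cr^{n+1}$. Plugging $\int_{B(0,r)}|u_i|^2\leq Cr^{n+2}$ into \eqref{e6.5} yields
\begin{equation*}
\int_{B(0,r)}|\nabla\u^\ast|^2\leq(1+\tau)\bigl[E(r)-E(ar)\bigr]+C(a,\tau)\,r^{n}.
\end{equation*}
Minimality of $(\u,\W)$ then gives $E(r)\leq(1+\tau)[E(r)-E(ar)]+C(a,\tau)r^n$, which rearranges to the hole-filling inequality
\begin{equation*}
E(ar)\leq\frac{\tau}{1+\tau}\,E(r)+C(a,\tau)\,r^n.
\end{equation*}

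The third step is to iterate this. Choose $a=1/2$ and then $\tau$ so small that $\theta:=\tau/(1+\tau)$ satisfies $2^n\theta\leq 1/2$. Setting $\phi(r)=r^{-n}E(r)$, dividing by $(r/2)^n$ converts the inequality into
\begin{equation*}
\phi(r/2)\leq\tfrac12\,\phi(r)+C_1,\qquad 0<r\leq 1.
\end{equation*}
At the starting scale $r=1$, $\phi(1)\leq|B(0,1)|^{-1}\int_\Omega|\nabla\u|^2$, which is controlled by the energy bound coming from the minimality of $(\u,\W)$ (as in the proof of Theorem~\ref{t3.1}, see \eqref{e3.10}). Iterating the displayed contraction along the dyadic sequence $r_k=2^{-k}$ gives $\phi(2^{-k})\leq 2^{-k}\phi(1)+2C_1$, hence a uniform bound. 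For an arbitrary $r\in(2^{-k-1},2^{-k}]$ one has $\phi(r)\leq 2^n\phi(2^{-k})$, which gives the conclusion \eqref{e11.2}.

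The one place where care is needed is in making sure that the error terms from both the $M$-part of the functional and the distance-to-boundary bound really are of order at most $r^n$ (not $r^{n-\varepsilon}$): this is exactly where the $C^{1+\alpha}$ assumption on $\d\Omega$ enters, through Lemma~\ref{t11.3}, whose linear (rather than merely H\"older) vanishing of $\u$ at the boundary is what lets us absorb the $L^\infty$ bound on $u_i$ into an extra factor of $r$. Everything else is parallel to the cut-off arguments already carried out in Sections~\ref{holder} and \ref{lip}; once the hole-filling inequality is in hand, the iteration is completely standard.
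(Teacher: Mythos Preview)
Your proposal is correct and follows essentially the same approach as the paper: apply the cut-off competitor with $a=1/2$ to all indices, use Lemma~\ref{t11.3} to convert the $r^{-2}\int |u_i|^2$ term in \eqref{e6.5} into an $O(r^n)$ error, and run the resulting hole-filling inequality $\fint_{B(0,r/2)}|\nabla\u|^2 \leq \tfrac12\fint_{B(0,r)}|\nabla\u|^2 + C_0$ down the dyadic scales from $r=1$. The only cosmetic difference is that the paper rearranges the inequality slightly differently (obtaining $E(r/2)\leq \tau E(r)+C\tau^{-1}r^n$ directly and then picking $\tau=2^{-n-1}$, rather than your $E(ar)\leq\frac{\tau}{1+\tau}E(r)+Cr^n$ followed by the choice of $\tau$), but the two are algebraically equivalent.
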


\begin{proof}
It is time for us to start using our first favorite competitor, 
the cut-off competitor of Section \ref{favorites}. 
Let $r\in (0,1]$ be given, and denote by $(\u^\ast,\W^\ast)$ the
competitor given by \eqref{e6.1}-\eqref{e6.2}, 
with the simplest choice $a=1/2$ and $I = [1,N]$ 
(we cut of all the $u_i$). Let $\tau > 0$ be small, to be chosen soon, and recall from \eqref{e6.5}
that for $1 \leq i \leq N$,
\begin{eqnarray}\label{e11.3}
\int_{B(0,r)} |\nabla u_i^\ast|^2
&\leq& (1+\tau) \int_{B(0,r)\sm B(0,r/2)} |\nabla \u_i|^2 
+ 4(1-a)^{-2} (1+\tau^{-1}) r^{-2} \int_{B(0,r) \sm B(0,r/2)} |u_{i}|^2
\nonumber
\\
&\leq& (1+\tau) \int_{B(0,r)\sm B(0,r/2)} |\nabla \u_i|^2 
+ C \tau^{-1} r^{-2} \int_{B(0,r)} |u_{i}|^2
\nonumber
\\
&\leq& (1+\tau) \int_{B(0,r)\sm B(0,r/2)} |\nabla \u_i|^2 
+ C \tau^{-1} r^{n} 
\end{eqnarray} 
because $0 \in \d\Omega$ and 
Lemma \ref{t11.3} says that $|\u_i| \leq C r$ on $B(0,r)$. 
We also deduce from \eqref{e6.6} and \eqref{e6.7}
(with $p=+\infty$) that
\begin{equation} \label{e11.4}
\Big|\int_{\Omega} (u_{i}^\ast)^2 f_{i} - \int_{\Omega} u_{i}^2 f_{i}\Big|
\leq C r^n ||u_{i}||_{\infty}^2 ||f_{i}||_{\infty} \leq C r^n
\end{equation}
and 
\begin{equation} \label{e11.5}
\Big|\int_{\Omega} u_{i}^\ast g_{i} - \int_{\Omega} u_{i}g_{i}\Big|
\leq C r^{n} ||u_{i}||_{\infty} ||g_{i}||_{\infty} \leq C r^n.
\end{equation}
Recall also that we did not change the sets $W_i$, so the volume term will not
interfere here. We sum over $i$ and use the fact that $(\u,\W)$ minimizes $J$
to get that
\begin{eqnarray}\label{e11.6}
0 &\leq& J(\u^\ast,\W^\ast) - J(\u,\W) 
= \int_{B(0,r)} |\nabla \u^\ast|^2 - \int_{B(0,r)} |\nabla \u|^2  + M(\u^\ast) - M(\u) 
\nonumber
\\
&=& \int_{B(0,r)} |\nabla \u^\ast|^2 - \int_{B(0,r)} |\nabla \u|^2 
+ \sum_i \int_{\Omega} 
[(u_{i}^\ast)^2 f_{i} + u_{i}^\ast g_{i} - u_{i}^2 f_{i} - u_{i}^\ast g_{i}]
\nonumber
\\
&\leq& \int_{B(0,r)} |\nabla \u^\ast|^2 - \int_{B(0,r)} |\nabla \u|^2  + C r^n
\\
&\leq& \tau \int_{B(0,r)\sm B(0,r/2)} |\nabla \u|^2
- \int_{B(0,r/2)} |\nabla \u|^2 
+ C \tau^{-1} r^{n} 
\nonumber
\end{eqnarray} 
by \eqref{e1.5}, \eqref{e11.4} and \eqref{e11.5}, and then \eqref{e11.3}.
We move $\int_{B(0,r/2)} |\nabla \u|^2$ to the other side and get that
\begin{equation} \label{e11.7}
\int_{B(0,r/2)} |\nabla \u|^2 
\leq \tau \int_{B(0,r)\sm B(0,r/2)} |\nabla \u|^2 + C \tau^{-1} r^n
\leq \tau \int_{B(0,r)} |\nabla \u|^2 + C \tau^{-1} r^n.
\end{equation}
Then we choose $\tau = 2^{-n-1}$ and obtain
\begin{equation} \label{e11.8}
\fint_{B(0,r/2)} |\nabla \u|^2 \leq {1 \over 2} \fint_{B(0,r)} |\nabla \u|^2 + C_0
\end{equation}
for some $C_0$.

Now this holds for $0 < r \leq 1$. When $r=1$, we start with
$\fint_{B(0,1)} |\nabla \u|^2 \leq C \fint_{\Omega} |\nabla \u|^2 \leq C_1$.
Then we easily prove by induction that 
$\fint_{B(0,2^{-k})} |\nabla \u|^2 \leq C_1+2C_0$
for $k \geq 0$; \eqref{e11.2} and Lemma~\ref{t11.4} follow easily.
\qed
\end{proof}

\ms
We may now complete the proof of Theorem \ref{t11.1} a little 
as we did with the H\"older estimate near \eqref{e8.36}. 
We want to prove that
\begin{equation} \label{e11.9}
\fint_{B(x,r)} |\nabla \u|^2 \leq C
\end{equation}
for $x\in \R^n$ and $0 < r \leq 1$.
By invariance under translations, it follows from Lemma \ref{t11.4}
that \eqref{e11.9} holds for $x\in \d\Omega$ and $0 \leq r \leq 1$.
It also holds when $r \geq 10^{-3}$, just because 
\begin{equation} \label{e11.10}
\fint_{B(x,r)} |\nabla \u|^2 
\leq C \int_{B(x,r)} |\nabla \u|^2 \leq C \int_{\Omega} |\nabla \u|^2 \leq C,
\end{equation}
as before. So we can suppose that $r \leq 10^{-3}$ and $x\in \R^n \sm \d \Omega$.

Let $x\in \R^n \sm \d \Omega$ be given, set $d(x) = \dist(x,\d\Omega)$
and choose $z\in \d\Omega$ such that $|z-x| = d(x)$. 
Also set $d'(x) = \min(d(x),1)$. 

If $r \geq 10^{-2} d(x)$, notice that $B(x,r) \i B(z,d(x)+r)$
and $d(x)+r \leq 101 r \leq 1$, so 
\begin{equation} \label{e11.11}
\fint_{B(x,r)} |\nabla \u|^2 \leq C \fint_{B(z,d(x)+r)} |\nabla \u|^2 \leq C
\end{equation}
because the radii are comparable and we already know \eqref{e11.9} for $B(z,d(x)+r)$
So we can assume that $r \leq 10^{-2} d(x)$. Let us check that
\begin{equation} \label{e11.12}
\fint_{B(x,d'(x))} |\nabla \u|^2 \leq C.
\end{equation}
When $d'(x) \geq 10^{-2}$, this follows from \eqref{e11.10}.
Otherwise, the proof of \eqref{e11.11} yields
$\fint_{B(x,d'(x))} |\nabla \u|^2 \leq C \fint_{B(z,2d'(x))} |\nabla \u|^2 \leq C$,
as needed for \eqref{e11.12}.

Recall that we are left with the case when $r \leq 10^{-2} d(x)$
and $r \leq 10^{-3}$; thus $r \leq 10^{-2} d'(x)$
(because $d'(x) \neq d(x)$ implies that $d(x) \geq 1$).
We distinguish new cases, depending on the value of  $d_1(x) = \dist(x, Z)$, 
where $Z$ still denotes the zero set of $\u$. Notice that $0 \leq d_1(x) \leq d(x)$, 
choose $z_1 \in Z$ such that $|z_1-x| = d_1(x)$, and also set 
$d'_1(x) = \min(d_1(x),1) \leq d'(x)$.

If $r \geq 10^{-1} d_1(x)$, notice that $B(x,r) \i B(z_1,d_1(x)+r)$
and $d_1(x)+r \leq 11 r \leq d'(x)/4$ (because $r \leq 10^{-2} d'(x)$).
In addition, $B(z_1,d'(x)/4) \i B(x,d'(x)) \i \Omega$ because 
$|z_1-x| = d_1(x) \leq 10r \leq d'(x)/10$, so
\begin{eqnarray} \label{e11.13}
\fint_{B(x,r)} |\nabla \u|^2 
&\leq& 11^n \fint_{B(z_1,d_1(x)+r)} |\nabla \u|^2 
\leq C \big(1+ \fint_{B(z_1,d'(x)/4)} |\nabla \u|^2 \big) 
\nonumber\\
&\leq& C \big(1+ \fint_{B(x,d'(x))} |\nabla \u|^2 \big) \leq C
\end{eqnarray}
by Corollary \ref{t10.5} and \eqref{e11.12}. 
Next we check that
\begin{equation} \label{e11.14}
\fint_{B(x,d'_1(x))} |\nabla \u|^2 \leq C.
\end{equation}
If $d_1(x) \leq 10^{-2} d'(x)$, we can apply \eqref{e11.13} 
with $r=d_1(x)=d'_1(x)$ (precisely because then $r \leq 10^{-2} d'(x)$), 
and we get that 
$\fint_{B(x,d'_1(x))} |\nabla \u|^2 = \fint_{B(x,d_1(x))} |\nabla \u|^2 \leq C$. 
If $d_1(x) \geq 10^{-2} d'(x)$, then $d'_1(x) \geq 10^{-2} d'(x)$ 
(because $d'(x) \leq 1$ anyway) and we immediately 
get that $\fint_{B(x,d'_1(x))} |\nabla \u|^2 \leq C\fint_{B(x,d'(x))} |\nabla \u|^2 
\leq C$, because $d'_1(x) \leq d'(x)$ and by \eqref{e11.12}. So 
\eqref{e11.14} holds in both cases. 

In our remaining case when $r \leq 10^{-1} d_1(x)$, we notice that
$\u \neq 0$ in $B(x,d'_1(x))$, so we can apply Lemma \ref{t10.6} and get that
\begin{eqnarray} \label{e11.15}
\fint_{B(x,r)} |\nabla \u|^2 &\leq& ||\nabla \u||_{L^\infty(B(x,r))}^2
\leq C + \fint_{B(x,d'_1(x))} |\nabla \u|^2 \leq C
\end{eqnarray}
by \eqref{e11.14}. This completes our list of cases, and we get \eqref{e11.9}.

Once we have \eqref{e11.9}, we also get that $|\nabla \u| \leq C$ almost everywhere
(for instance at Lebesgue density points for $\nabla u$), and then $\u$ is Lipschitz, 
as desired. This completes our proof of Theorem~\ref{t11.1}.
\qed

\begin{rem} \label{r11.5}
Our $C^{1+\alpha}$, or Dini condition, is just here to get Lipschitz bounds on
harmonic functions on $\Omega$ that are smooth on $\d \Omega$;
we clearly want to forbid corners pointing inside (for instance, if 
$\Omega$ is the union of two half spaces through the origin), 
but a $C^{1+\alpha}$ domain with a few corners pointing outside would be all right.
\end{rem}

\section{A sufficient condition for $|\u|$ to be positive}  \label{positive} 

We shall now start adding assumptions to our regularity assumptions, that will
yield some form of non-degenerescence results for minimizers of the functional $J$.
Thus special properties (mostly partial monotonicity properties) of our volume functional $F$ 
will start playing a role, in this section and the next ones.

In this section we try to get minimizers $(\u,\W)$ for which the 
$W_i$ almost cover the set $\Omega$, and for which $u_i > 0$
almost everywhere on $W_i$. 
The first property will be easy to get, if $F$ is decreasing (or not increasing)
in some directions. The second one will be more interesting, and we will give
two conditions that imply that $|\u(x)|> 0$ almost everywhere on $\Omega$
when $(\u,\W)$ is a minimizer, both involving the positivity of some $g_i$
and the fact that $F$ is non increasing in some directions.
See Propositions \ref{t12.3} and \ref{t12.4}.

\ms
We shall start with a (trivial) sufficient condition for the existence of a 
minimizer $(\u,\W)$ such that the $W_i$ almost cover $\Omega$. 

Some notation will be useful.
Denote by ${\cal W}(\Omega)$ the class of $N$-uples 
$\W = (W_1, \ldots, W_N)$ such that the $W_i$ are disjoint Borel subsets
of $\Omega$; thus our functional $F$ is defined on ${\cal W}(\Omega)$.
Then let $\W = (W_1, \ldots, W_N)$ and $\W' = (W_1', \ldots, W_N')$ be given;
we say that $\W \preceq \W'$ when $W_i \i W'_i$ for $1 \leq i \leq N$.

Also, we say that $\W \in {\cal W}(\Omega)$ fills $\Omega$ when
$\big| \Omega \sm \bigcup_{i=1}^N W_i \big| = 0$. The following
lemma will not be a surprise.

\begin{lem}\label{t12.1} 
Assume that for each $\W \in {\cal W}(\Omega)$, we can find
$\W' \in {\cal W}(\Omega)$ such that $\W \preceq \W'$,
$\W'$ fills $\Omega$, and $F(\W') \leq F(\W)$.
Then for every minimizer $(\u,\W)$ for $J$ in $\F$ 
(see Section \ref{intro} for the definitions),
we can find $\W' \in {\cal W}(\Omega)$ such that $\W \preceq \W'$,
$\W'$ fills $\Omega$, and $(\u,\W')$ is a minimizer for $J$ in $\F$.
\end{lem}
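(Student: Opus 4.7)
The plan is to take a minimizer $(\u,\W)$, apply the hypothesis directly to $\W$ to obtain the enlarged $\W'$, and then verify two things: that the pair $(\u,\W')$ still lies in $\F$, and that $J(\u,\W') \leq J(\u,\W)$, from which minimality of $(\u,\W')$ follows immediately since $(\u,\W)$ already realizes the infimum.

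First I would check that $(\u,\W') \in \F$. This is the easiest step. Since $\W \preceq \W'$ means $W_i \subset W'_i$ for each $i$, the complements satisfy $\R^n \sm W'_i \subset \R^n \sm W_i$. The condition \eqref{e1.2}, namely $u_i(x) = 0$ for a.e. $x \in \R^n \sm W_i$, therefore automatically passes to $u_i(x) = 0$ for a.e. $x \in \R^n \sm W'_i$. Since the $W'_i$ are disjoint Borel subsets of $\Omega$ by definition of $\mathcal{W}(\Omega)$, and since \eqref{e1.1} depends only on $\u$, the admissibility conditions of Definition \ref{d1.1} are satisfied.

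Next I would compare the values of $J$. The key observation is that $E(\u)$ (see \eqref{e1.3}) and $M(\u)$ (see \eqref{e1.4}) depend only on $\u$ and the prescribed data $f_i, g_i$ on $\Omega$, not on the choice of the sets $W_i$ themselves. Hence
\begin{equation*}
J(\u,\W') - J(\u,\W) = F(\W') - F(\W) \leq 0
\end{equation*}
by the hypothesis supplied on $\W'$. Combined with minimality of $(\u,\W)$ in $\F$, this forces equality $J(\u,\W') = J(\u,\W)$, so $(\u,\W')$ is itself a minimizer.

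There is essentially no obstacle here; the statement is a bookkeeping lemma that isolates the monotonicity behavior of $F$ from the rest of the functional. The only minor subtlety worth flagging is the parenthetical remark following \eqref{e1.4} to the effect that $M(\u)$ can be viewed as depending on $\W$ through which $f_i, g_i$ one integrates $u_i^2$ or $u_i$ against; but because $u_i$ vanishes a.e.\ outside $W_i \subset W'_i$, the integrals $\int u_i^2 f_i$ and $\int u_i g_i$ are unaffected by enlarging $W_i$ to $W'_i$, so $M(\u)$ is truly unchanged in the replacement.
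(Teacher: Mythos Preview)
Your proof is correct and follows exactly the same approach as the paper: apply the hypothesis to the minimizer's $\W$, observe that enlarging the sets preserves admissibility and leaves $E(\u)+M(\u)$ unchanged, so $J(\u,\W')\le J(\u,\W)$ and minimality transfers. Your added remark about $M(\u)$ being insensitive to the enlargement (because $u_i$ vanishes outside $W_i$) is a useful clarification that the paper leaves implicit.
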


Notice that the sufficient condition is satisfied if $F$ is a non increasing
function of some variable $W_i$ (when the other variables are fixed,
and subject to the constraint $\W \in {\cal W}(\Omega)$).
When $F$ is given by \eqref{e1.7}, it is satisfied when for each $x\in \Omega$,
we can find $i$ such that $q_i(x) \leq 0$.
The lemma is obvious, because if $\W'$ is the $N$-uple associated
to $\W$ by the sufficient condition, it is clear that $(\u,\W') \in \F$
and $J(\u,\W') = J(\u,\W) + F(\W') - F(\W) \leq J(\u,\W)$.

If we want to show that $\W$ fills $\Omega$ for every minimizer
$(\u,\W)$, it is reasonable to require some strict monotonicity.

\begin{lem}\label{t12.2} 
Assume that for each $\W \in {\cal W}(\Omega)$ that does not
fill $\Omega$, we can find $\W' \in {\cal W}(\Omega)$ such that 
$\W \preceq \W'$, $\W'$ fills $\Omega$, and $F(\W') < F(\W)$.
Then $\W$ fills $\Omega$ whenever $(\u,\W)$ is a minimizer
for $J$ in $\F$.
\end{lem}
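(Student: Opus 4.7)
The plan is a direct contradiction argument, mimicking the proof of Lemma \ref{t12.1} but exploiting the strict inequality.

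Suppose for contradiction that $(\u,\W)$ is a minimizer of $J$ in $\F$ but that $\W$ does not fill $\Omega$. Then by the hypothesis of the lemma, we can pick $\W' = (W'_1,\ldots,W'_N) \in {\cal W}(\Omega)$ such that $\W \preceq \W'$, $\W'$ fills $\Omega$, and
\begin{equation}\label{e12plan.1}
F(\W') < F(\W).
\end{equation}
I would then verify that the pair $(\u,\W')$ is an admissible competitor, i.e.\ that $(\u,\W') \in \F$. The two requirements in Definition \ref{d1.1} are that $u_i \in W^{1,2}(\R^n)$, which is inherited unchanged from $(\u,\W) \in \F$, and that $u_i(x) = 0$ for almost every $x \in \R^n \setminus W'_i$; this last property follows from the corresponding property for $W_i$ together with the inclusion $\R^n \setminus W'_i \subseteq \R^n \setminus W_i$ given by $\W \preceq \W'$.

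The final step is the comparison of values. Since $E(\u)$ only depends on $\u$ (see \eqref{e1.3}) and since the integrals defining $M(\u)$ in \eqref{e1.4} are taken against the fixed data $f_i, g_i$ over all of $\Omega$ (and thus depend only on the functions $u_i$, not on the ambient sets $W_i$ or $W'_i$), we have $E(\u) + M(\u)$ unchanged when replacing $\W$ by $\W'$. Combining with \eqref{e12plan.1},
\begin{equation}
J(\u,\W') = E(\u) + M(\u) + F(\W') < E(\u) + M(\u) + F(\W) = J(\u,\W),
\end{equation}
which contradicts the minimality of $(\u,\W)$. Hence $\W$ must fill $\Omega$. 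No obstacle is anticipated here: the whole content is packaged into the strict monotonicity hypothesis, and the proof reduces to a one-line observation once the admissibility of $(\u,\W')$ is checked.
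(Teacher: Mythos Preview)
Your proof is correct and follows exactly the same argument as the paper, which calls the lemma ``obvious'' and disposes of it in one line: if $\W$ does not fill $\Omega$, the hypothesis furnishes $\W'$ with $(\u,\W')\in\F$ and $J(\u,\W') = J(\u,\W) + F(\W') - F(\W) < J(\u,\W)$, contradicting minimality. Your version simply spells out the admissibility check and the invariance of $E(\u)+M(\u)$ more explicitly.
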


Again the sufficient condition is satisfied as soon as $F$ is a 
decreasing function of some variable $W_i$, i.e., if
$F(\W') < F(\W)$ whenever $\W' \in {\cal W}(\Omega)$,
$W_i \i W'_i$, $|W'_i \sm W_i| > 0$, and $W'_j = W_j$
for $j \neq i$. This lemma also is obvious: if $(\u,\W)$ is a minimizer
and $\W'$ does not fill $\Omega$, the hypothesis gives $\W' \in {\cal W}(\Omega)$
such that $(\u,\W') \in \F$ (as before)
and $J(\u,\W') = J(\u,\W) + F(\W') - F(\W) < J(\u,\W)$.

\ms
Notice that both lemmas are atypical in the world of 
Alt, Caffarelli, and Friedman free boundaries, because when $F$
is given by  \eqref{e1.7}, they tend to require $q_i(x)\geq 0$, or even
$q_i(x) >0$, for some $i$ (that may depend on $x$, but even so). 

In the context of eigenfunctions, it can make sense to assume that $F$
is a non increasing function of the volumes, so as to get a 
partition of $\Omega$ by the $W_i$. We still can add a convexity
assumption on $F$ to try to get regularity properties on the $W_i$,
except those which have the minimal volume. See Section \ref{good}.

\ms
Next we want to state sufficient conditions for $\u$ to be
nonzero almost everywhere on $\Omega$. Of course, if we want this
to happen, we need $\W$ to fill $\Omega$, and also each
$u_i$ to be nonzero almost everywhere on the corresponding $W_i$.
If our usual assumptions are satisfied, $\u$ is continuous,
and since $u_i=0$ almost everywhere on $\R^n \sm W_i$,
we also get that $u_i = 0$ on $\R^n \sm W_i$, and even on its closure.
So the best that we can do is to make sure that
$u_i(x) \neq 0$ a.e. on the interior of $W_i$, and then we will
also need to show that the interiors of the $W_i$ fill $\Omega$.
A simple way to make sure that 
\begin{equation} 
\label{e12.1}
u_i(x) > 0 \ \text{ on the interior of } W_i
\end{equation}
(of course, if the definition of $\F$ allows positive functions $u_i$)
is to require that $g_i(x) > 0$ almost everywhere on $W_i$.
Let us not check this for the moment because we shall prove a more
general result later; the general idea is that we first check that
$u_i \geq 0$ on $W_i$, then use the fact that $J(\u,\W)$
does not decrease when we replace $u_i$ by $u_i+t\varphi$,
where $\varphi$ is a bump function supported on a small
ball contained in the interior of $W_i$. But the argument seems
to require a bootstrap, that we shall do later.

Of course we could also assume that $g_i < 0$ a.e. on $W_i$,
to get that $u_i < 0$, but if $g_i = 0$ on $W_i$,  
we shall get $u_i=0$. We may also get $u_i = 0$ on
large pieces of $W_i$ if we allow $g$ to change signs and vanish
on a small disk, or if our definition of $\F$ only allows nonnegative
functions $u_i$, and we take $g$ very negative somewhere.
[Recall that we can always pick $u_i$ in advance, and then choose
$g_i = -2 \Delta u_i$ and $f_i = 0$ to make a counterexample.]
So the assumption that $g_i > 0$ on $W_i$ is reasonable.
But remember that we shall also need to make sure that the interiors
of the $W_i$ almost cover $\Omega$.

We are ready for a first statement. For this one, we require $F$
to depend only on the volumes of the $W_i$. That is, we suppose
that there is a function $\wt F : [0,|\Omega|]^N \to \R$ such that
\begin{equation} 
\label{e12.2}
F(W_1, \dots , W_N) = \wt F(|W_1|, \dots , |W_N|)
\ \text{ for }(W_1, \dots , W_N) \in {\cal W}(\Omega).
\end{equation}

\begin{pro}\label{t12.3} 
Assume that $\Omega$ is open, that the $f_i$ and $g_i$ are bounded,
that $F$ is given by \eqref{e12.2},
that $(\u,\W)$ is a minimizer for $J$ in $\F$, 
and that $\W$ fills $\Omega$.
Further assume that 
\begin{equation} 
\label{e12.3}
g_i(x) > 0  \ \text{ for $1 \leq i \leq N$ and almost every $x\in \Omega$,}
\end{equation}
and that the definition of $\F$ allows all the functions $u_i$ to take
positive values. Then $\u(x) \neq 0$ almost everywhere on $\Omega$.
\end{pro}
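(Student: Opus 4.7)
The strategy is a proof by contradiction: assuming $A := \{x\in\Omega:\u(x)=0\}$ has positive Lebesgue measure, I build a competitor $(\u',\W')\in\F$ with $J(\u',\W') < J(\u,\W)$. First, I establish that $u_i\ge 0$ a.e.\ for every $i$: since $\F$ allows $u_i$ to take positive values, replacing $u_i$ with $|u_i|$ yields an admissible pair (keeping $W_i$), preserves $E(\u)$ and $\int u_i^2 f_i$, but weakly decreases $-\int u_i g_i$ to $-\int|u_i|g_i$, with equality only when $u_i^-=0$ a.e.\ on $\{g_i>0\}$. The hypothesis $g_i>0$ a.e.\ on $\Omega$ then forces $u_i^-=0$.

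Under the (implicit) H\"older and Lipschitz regularity of $\wt F$ needed to apply Theorems~\ref{t7.1} and~\ref{t10.1}, one may take $\u$ continuous and locally Lipschitz on $\Omega$, so $A$ is relatively closed. Since $\W$ fills $\Omega$, $|A\cap W_{i_0}|>0$ for some $i_0$; WLOG $i_0=1$. Because $\{g_1>0\}$ has full measure in $\Omega$, I can pick $x_0\in A\cap W_1$ that is simultaneously a density-one point of $A$ and a Lebesgue point of $g_1$ with $g_1(x_0)\ge c_0>0$. For small $r>0$, fix a smooth cutoff $\psi$ with $\psi\equiv 0$ on $B(x_0,r/4)$ and $\psi\equiv 1$ outside $B(x_0,r/2)$, and a nonnegative bump $\varphi$ supported in $B(x_0,r/4)$ with $\varphi\equiv 1$ on $B(x_0,r/8)$. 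The competitor is
\[
u'_1=u_1+t\varphi,\quad u'_k=u_k\psi\ (k\ne 1),\quad W'_1=W_1\cup B(x_0,r/4),\quad W'_k=W_k\setminus B(x_0,r/4)\ (k\ne 1),
\]
with $t=\kappa r^2$ for a small constant $\kappa>0$. A routine check confirms $(\u',\W')\in\F$.

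The analysis of $\Delta J:=J(\u',\W')-J(\u,\W)$ rests on three estimates. \emph{(i) Bump contribution:} the linear gain $-t\int\varphi g_1\le -c\kappa c_0\, r^{n+2}$ (using the Lebesgue-point property of $g_1$) against the quadratic energy cost $t^2\int|\nabla\varphi|^2\le C\kappa^2 r^{n+2}$ yields a net effect $\sim -r^{n+2}$ once $\kappa$ is small. \emph{(ii) Annular losses:} on $B(x_0,r/2)\setminus B(x_0,r/4)$ the cutoff $\psi$ produces losses bounded by $\int_B |\nabla\u|^2 + \int_B |\u|^2/r^2$; the Lipschitz bound gives $|\u|\le Lr$ on $B$, and $\nabla\u=0$ a.e.\ on $A$ combined with density-one of $A$ at $x_0$ forces these integrals to be $o(r^n)$. \emph{(iii) Volume term:} the volume changes are all controlled by $|B(x_0,r/4)\setminus A|=o(r^n)$, so continuity (respectively Lipschitz) of $\wt F$ yields $\Delta F = o(r^n)$.

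The crux of the argument, and its main obstacle, is to push the $o(r^n)$ estimates in (ii) and (iii) down to $o(r^{n+2})$, so as to be dominated by the bump gain. For (ii), this is achieved by combining the level-set identity $\nabla\u\equiv 0$ on $A$ with Caccioppoli estimates for the PDE $-\Delta u_k+f_k u_k=g_k/2$ that holds on $\{u_k>0\}$ by \eqref{e9.6}; these upgrade the crude Lipschitz bound on $\int |\u|^2$ and $\int|\nabla\u|^2$ on $B$ into quadratic-in-$r$ improvements. For (iii), one uses that $\W$ is simultaneously $F$-optimal among admissible partitions of $\Omega$ with $\u$ fixed: the Lagrange/KKT condition implies $\partial_k\wt F$ is the same across indices that are strictly slack (i.e.\ $|W_k|>|\Omega_k|$), which holds for $k=1$ (since $|A\cap W_1|>0$) and, under a mild genericity, for some $j\ne 1$; moving the measure $|E|\sim r^n$ between two such indices is a first-order-null perturbation of $\wt F$, so $\Delta F=O(r^{2n})=o(r^{n+2})$ for $n\ge 2$. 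Combining (i), (ii), (iii) gives $\Delta J<0$ for $r$ small, contradicting minimality and establishing $|A|=0$, hence $\u\ne 0$ a.e.\ on $\Omega$.
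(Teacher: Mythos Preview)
Your overall scheme (density point of $\{\u=0\}$, cut off the other phases, add a bump of size $\kappa r^2$) is the right one, and the identification of the two obstacles --- pushing the annular energy and the volume change from $o(r^n)$ down to $o(r^{n+2})$ --- is exactly the heart of the matter. But both of your proposed resolutions have genuine gaps.

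\textbf{The volume term.} Your KKT argument needs $\wt F$ to be differentiable and a ``mild genericity'' hypothesis (a second slack index), neither of which is assumed: Proposition~\ref{t12.3} imposes \emph{no regularity whatsoever} on $\wt F$. The paper exploits \eqref{e12.2} much more directly. Since $F$ depends only on the vector $(|W_1|,\dots,|W_N|)$, one designs the competitor so that every $|W_j|$ is preserved \emph{exactly}, hence $\Delta F=0$ with no estimate needed. Concretely: first apply the full cut-off competitor (all indices) so that $\u^\ast=0$ on $B(0,r/2)$; then pick, by pigeonhole, an index $i$ with $|W_i\cap B(0,r/2)|\ge |B(0,r/2)|/N$; choose the bump support small enough to fit in that share; and rearrange the $W_j^\sharp\cap B(0,r/2)$ so that $|W_j^\sharp|=|W_j|$ for every $j$. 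Your construction, which moves a set of measure $\sim r^n$ from the other $W_k$ into $W_1$, genuinely changes the volumes and cannot be repaired without extra hypotheses on $\wt F$.

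\textbf{The energy decay.} A single Caccioppoli step does not give $\int_{B_r}|\nabla\u|^2=o(r^{n+2})$. With $|\u|\le Lr$ and $\u=0$ on $A$, Caccioppoli yields only
\[
\int_{B_{r/2}}|\nabla\u|^2 \;\lesssim\; r^{-2}\!\int_{B_r}\!|\u|^2 \;+\; \int_{B_r}|g||\u|
\;\lesssim\; r^{-2}\cdot L^2r^2\,|B_r\setminus A| \;+\; Lr\,|B_r\setminus A|
\;=\; o(r^n),
\]
which is two orders short. The paper obtains the sharp rate via an \emph{iterative} use of minimality (Lemma~\ref{t12.5}): at each scale one compares with the cut-off competitor (which does not touch $\W$, so no hypothesis on $F$ is needed) and uses the Poincar\'e inequality with the small-set factor $\theta(r)^{2/n}$, where $\theta(r)=r^{-n}|\{x\in B_r:\u(x)\neq0\}|\to0$. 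This produces a recursion of the form $E(r/2)\le \big(\tau+C\tau^{-1}\theta(r)^{2/n}\big)E(r)+C\theta(r)^{1/2}r^{1+n/2}E(r)^{1/2}$, which integrates to $r^{-n-2}E(r)\to0$. This lemma is the missing engine in your step (ii).

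A smaller point: you invoke Theorems~\ref{t7.1} and~\ref{t10.1} to get $\u$ locally Lipschitz, but those theorems require H\"older/Lipschitz control on $F$ that is not part of the hypotheses here. The paper's argument never uses continuity of $\u$; it works entirely with the measure-theoretic density-point property.
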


Recall that we say that $\W$ fills $\Omega$ when
$\big| \Omega \sm \bigcup_{i=1}^N W_i \big| = 0$. 
Of course this is needed if we want to have $\u(x) \neq 0$ a.e. on 
$\Omega$ (because $u_i = 0$ a.e. on $\R^n \sm W_i$.
If the hypothesis of Lemma~\ref{t12.1} holds, we do not need
to assume that $\W$ fills $\Omega$, because Lemma \ref{t12.1}
provides $\W'$ such that $(\u,\W')$ is a minimizer and 
$\W'$ fills $\Omega$; we apply Proposition \ref{t12.3}
to $(\u,\W')$ and get the desired conclusion. Of course we then get
that $\W$ fills $\Omega$ anyway, as a consequence of the proposition.

We can see the proposition as a very weak regularity result on the free
boundary set $\Omega \setminus \big\{ \u(x) \neq 0 \big\}$, since it
says that this set is Lebesgue negligible.

We do not need to require any regularity on $\wt F$, because
our proof will only use competitors for which the volumes of the
$W_i$ do not change. 

The statement allows some $u_i$ to be valued in
$\R$, while other ones are valued in $\R_+$. We could
also work with the assumption that $\varepsilon_j g_j > 0$,
with a sign that depends on $j$, provided that we exclude again
the ridiculous case when we require that $\varepsilon_j u_i \leq 0$
(because, if $\varepsilon_j g_j \geq 0$, we can be sure that $u_i=0$
in that case).

We are happy with \eqref{e12.2} for our initial setup with eigenfunctions,
because we intended to use such an $F$ anyway.
When $F$ is given by \eqref{e1.7}, \eqref{e12.2} requires all the $q_i$
to be constant, and this may be a little too much to ask. So we state a
second result, which will be proved together with 
Proposition \ref{t12.2}, and which is a little more flexible in this respect.

This time we select one index $i$ (and for convenience we will pick $i=1$)
and require something like a negative half derivative of $F$ in the direction
of that variable. More precisely, we shall assume that there exist $\varepsilon > 0$,
that may even depend on the minimizer $(\u,\W)$, such that
$F(\W') \leq F(W)$ for every choice of $\W' = (W_1, \ldots, W_N)$ 
such that
\begin{equation} 
\label{e12.4}
W_i' \i W_i \text{ and } | W_i \sm W'_i| \leq \varepsilon
\text{ for } 2 \leq i \leq N, 
\end{equation}
and
\begin{equation}
 \label{e12.5}
W'_1 = W_1 \bigcup \Big(\bigcup_{i \geq 2} (W_i \sm W'_i) \Big).
\end{equation}
That is, we transfer small pieces of the $W_i$, $i\geq 2$, into $W_1$;
notice that this gives $\W' \in {\cal W}(\Omega)$.

When $F(\W) = \sum_{i} \int_{W_i} q_i$ as in \eqref{e1.7}, this
property holds for all $\varepsilon$ as soon as $q_1 \leq \min(q_2, \ldots q_N)$ 
everywhere. When $F$ is given by \eqref{e12.2}, it holds as soon as
\begin{equation} \label{e12.6}
\wt F(V_1 + t_2 + \ldots t_N, V_2-t_2, \ldots, V_N-t_N)
\leq \wt F(V_1, \ldots, V_N)
\end{equation}
for some $\varepsilon > 0$ and all choices of $t_2, \ldots t_N \in [0,\varepsilon]$ 
such that $0 \leq t_i \leq V_i$ for $2 \leq i \leq N$. 

We add another requirement, that the reader probably implicitly assumed already,
that $F$ is insensitive to zero sets, in the sense that
$F(\W) = F(\W')$ when the $W_i$ coincide with the $W'_i$ almost everywhere,
i.e. when $|W_i \Delta W'_i| = 0$. Notice that this is contained in our continuity
assumptions. 

\begin{pro}
\label{t12.4} 
Assume that $\Omega$ is open, that the $f_i$ and $g_i$ are bounded,
that $(\u,\W)$ is a minimizer for $J$ in $\F$, and that $\W$ fills $\Omega$.
Further assume that $F$ is insensitive to zero sets and
we can find $\varepsilon > 0$ such that
$F(\W') \leq F(W)$ whenever $\W'$ satisfies \eqref{e12.4} and \eqref{e12.5},
and that
\begin{equation}
 \label{e12.7}
g_1(x) > 0  \ \text{ for almost every $x\in \Omega$,}
\end{equation}
and that the definition of $\F$ allows all the functions $u_i$ to take
positive values. Then $\u(x) \neq 0$ almost everywhere on $\Omega$.
\end{pro}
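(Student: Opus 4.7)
The plan is to prove the proposition in three stages, parallel to the strategy sketched below \eqref{e12.1}. The first stage fixes the sign of $u_1$; the second establishes strict positivity of $u_1$ on the interior $W_1^o$ via an Euler--Lagrange equation and the strong minimum principle; the third uses the one-sided monotonicity of $F$ together with a local transfer-plus-perturbation competitor to preclude a.e.~vanishing of $\u$.

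For Stage 1, I would compare $(\u,\W)$ with $(\u',\W)$ where $u_1' = |u_1|$ and $u_j' = u_j$ for $j\geq 2$. This lies in $\F$; the energy $E$, the function $F$, and the $M_j$ for $j\geq 2$ are unchanged, while $M_1(\u') - M_1(\u) = -2\int u_{1,-}\,g_1 \leq 0$, with equality only if $u_{1,-} = 0$ a.e., since $g_1 > 0$ a.e.\ by \eqref{e12.7}. Minimality therefore forces $u_1 \geq 0$ a.e. For Stage 2, for any $\varphi \in C_c^\infty(W_1^o)$ and small $t \in \R$, the pair $(\u + t\varphi\, e_1, \W)$ lies in $\F$, so the vanishing of the first variation yields the distributional identity $-\Delta u_1 + f_1 u_1 = g_1/2$ on $W_1^o$. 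Standard elliptic regularity promotes $u_1$ to $C^{1,\alpha}_{\mathrm{loc}}(W_1^o)$; if $u_1(x_0) = 0$ at some $x_0 \in W_1^o$, then on a small ball around $x_0$ one has $-\Delta u_1 \geq g_1/4 > 0$, and the strong minimum principle applied to the nonnegative superharmonic $u_1$ forces $u_1 \equiv 0$ locally, contradicting $\Delta u_1 = -g_1/2 \neq 0$. Hence $u_1 > 0$ on $W_1^o$.

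For Stage 3, suppose for contradiction that $|Z| > 0$, where $Z = \{\u = 0\}$, and choose $x_0 \in Z$ which is simultaneously a Lebesgue density point of $Z$ and a Lebesgue point of $g_1$ with $g_1(x_0) > 0$ (possible a.e.\ on $Z$ by \eqref{e12.7} and Lebesgue's differentiation theorem). For small $r$ with $B(x_0,2r) \subset \Omega$, construct the competitor
\[
W_1^* = W_1 \cup B_r, \quad W_j^* = W_j \setminus B_r \ (j\geq 2), \quad u_1^* = u_1 + t\varphi_r, \quad u_j^* = \rho_r u_j \ (j\geq 2),
\]
where $\varphi_r(x) = \varphi_0((x-x_0)/r)$ with $\varphi_0 \in C_c^\infty(B_1)$, $\varphi_0 \geq 0$, and $\rho_r$ is a radial cutoff vanishing on $B_r$, equal to $1$ outside $B_{2r}$, with $|\nabla \rho_r| \leq C/r$. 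Admissibility is a direct check. For $r$ small enough that $|W_j \cap B_r| \leq \varepsilon$ for all $j\geq 2$, the monotonicity hypothesis on $F$, together with the insensitivity of $F$ to the null set $B_r \setminus \bigcup_i W_i$, yields $\Delta F \leq 0$; and the principal gain from $M_1$ is $-t\int \varphi_r g_1 = -t r^n\bigl(g_1(x_0)\int\varphi_0 + o(1)\bigr)$.

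The hard part is to make the remaining cost terms strictly smaller than this gain. The key leverage is density: every $u_j$ vanishes on $Z$, so the only region where the modifications carry any cost is $B_{2r} \setminus Z$, whose relative measure $\delta(r)$ tends to $0$. Combined with the Lipschitz (or H\"older) bound on $\u$ supplied by the regularity theory of the earlier sections, the cutoff contributions $\Delta E_j + \Delta M_j$ for $j \geq 2$ admit a density-weighted bound of the form $C\,\delta(r) r^n$, and the cross term in $\Delta E_1$ is controlled by integration by parts, $\int \nabla u_1 \cdot \nabla \varphi_r = -\int u_1 \Delta \varphi_r$, using the density of $Z$ to keep $\int_{B_r} |u_1|$ small. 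Optimizing $t \sim r^2$ in $\Delta J \leq -c\, t r^n + C\, t^2 r^{n-2} + (\text{density-weighted error})$ yields $\Delta J < 0$ along a suitable sequence of scales $r \to 0$, contradicting the minimality of $(\u,\W)$ and forcing $|Z| = 0$.
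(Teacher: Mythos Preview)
Your Stage 3 has a genuine gap: the error terms you produce are of order $\delta(r)\,r^n$ (from the cutoff of the $u_j$, $j\ge 2$) and of order $t\,\delta(r)\,r^{n-1}$ (from the cross term $2t\int\nabla u_1\cdot\nabla\varphi_r$ after integration by parts and the Lipschitz bound $|u_1|\le Cr$ on $B_r$). After your optimization $t\sim r^2$, the gain from the bump is only $\sim r^{n+2}$. Since Lebesgue density alone gives no rate on $\delta(r)\to 0$, the quantity $\delta(r)\,r^n$ can be vastly larger than $r^{n+2}$, and the inequality $\Delta J<0$ does not follow. The Lipschitz or H\"older regularity of $\u$ does not help here: it only yields $\int_{B_r}|\nabla\u|^2 \le C\delta(r)\,r^n$, which is exactly the scale you already wrote.

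The paper closes this gap through Lemma~\ref{t12.5}, which shows that at a density point of $Z=\{\u=0\}$ one actually has $\int_{B_r}|\nabla\u|^2 = o(r^{n+2})$. This is proved by an iterative use of the cut-off competitor: one first shows $E(r/2)\lesssim (\tau + \tau^{-1}\theta(r)^{2/n})E(r) + \eta\, r^{1+n/2}E(r)^{1/2}$ via a Poincar\'e inequality with the density parameter $\theta(r)$, and then bootstraps in $r$ to obtain the super-quadratic decay. With this in hand, the paper cuts off \emph{all} components (including $u_1$) to obtain $\u^\ast\equiv 0$ on $B(0,r/2)$ at cost $o(r^{n+2})$, and only then adds the bump $c\,r^2\psi(x/r)$ to $u_1^\ast$; since $u_1^\ast=0$ there, there is no cross term at all. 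Your Stages~1--2 are not needed for the proposition (and Stage~2 has its own circularity issue when $\F$ imposes $u_1\ge 0$, since $u_1+t\varphi$ with $t<0$ is then admissible only where $u_1>0$, which is what you are trying to prove).
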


The same sort of comments as for Proposition \ref{t12.3} apply
here. In particular, if the hypothesis of Lemma \ref{t12.1} holds,
we don't need to assume that $\W$ fills $\Omega$.
The advantage of picking $i=1$ first is that we just need to check
that $g_1 > 0$.

We shall prove Propositions \ref{t12.3} and \ref{t12.4} at the same time,
and the idea will be to add a small bump function to one of the $u_i$
near a density point of $\{ \u = 0 \}$. But some surgery will be needed, 
so we shall first prove a lemma that applies to any minimizer (regardless
of our assumptions on $F$), and says that the energy of $\u$ decays
rather fast near such a point.

\begin{lem}
\label{t12.5} 
Assume that the $f_i$ and $g_i$ are bounded,
and that $(\u,\W)$ is a minimizer for $J$ in $\F$.
Then let $x_0\in \Omega$ be a Lebesgue density point of the
set $Z = \big\{x\in \R^n \, ; \, \u(x) = 0 \big\}$. Then
\begin{equation}
 \label{e12.8}
\lim_{r \to 0} r^{-n-2} \int_{B(x_0,r)} |\nabla u|^2 = 0.
\end{equation}
\end{lem}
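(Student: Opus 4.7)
The plan is to combine uniform smallness of $\u$ near $x_0$ (coming from the density hypothesis together with Lipschitz continuity) with a cut-off competitor argument to obtain a Caccioppoli-type decay inequality that iterates to the desired conclusion.

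First, by the Lipschitz bound (Theorem~\ref{t10.1} or~\ref{t11.1}) and continuity, $\u(x_0)=0$ and $|\u(x)|\leq L\,\dist(x,Z)$ near $x_0$. The density~1 condition gives a sharp sup-bound on $\u$: if $y\in B(x_0,r)$ satisfied $d(y,Z)\geq\delta r$, then the ball $B(y,\delta r)\subset B(x_0,2r)\setminus Z$, forcing $\varepsilon(2r):=|B(x_0,2r)\setminus Z|/|B(x_0,2r)|\geq c\delta^n$, so $\delta\leq C\varepsilon(2r)^{1/n}$. Hence
$$\mu(r):=\sup_{B(x_0,r)}|\u|\leq CL\,\varepsilon(2r)^{1/n}\,r,$$
which, together with $\u=0$ on $Z$, gives the bounds $\int_{B(x_0,r)}|\u|^2\leq\mu^2|B_r\setminus Z|\leq C\varepsilon^{1+2/n}r^{n+2}$ and $\int_{B(x_0,r)}|\u|\leq\mu|B_r\setminus Z|\leq C\varepsilon^{1+1/n}r^{n+1}$.

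Next I would test $(\u,\W)$ against the cut-off competitor $(\u^\ast,\W)$ of Section~\ref{favorites}, with $u_i^\ast=\varphi u_i$ and $\varphi$ a radial cut-off that vanishes on $B(x_0,r/2)$ and equals $1$ outside $B(x_0,r)$. Since $0\leq\varphi\leq 1$ and $f_i\geq 0$, the $f$-piece of $M(\u^\ast)-M(\u)$ is non-positive, so minimality and \eqref{e6.5} give
$$
\int_{B(x_0,r/2)}|\nabla \u|^2\leq \tau\int_{B(x_0,r)}|\nabla \u|^2+C\tau^{-1}r^{-2}\int_{B(x_0,r)}|\u|^2+C\|g\|_\infty\int_{B(x_0,r)}|\u|.
$$
Plugging in the two integral bounds above, complementing them with a Poincaré-with-vanishing inequality (apply Lemma~\ref{l3.2} to $\varphi\u$ for a cut-off $\varphi$, exploiting $\u=0$ on $Z$) that produces the additional estimate $\int_{B(x_0,r)}|\u|^2\leq C\varepsilon^{2/n}r^2\int_{B(x_0,2r)}|\nabla\u|^2$ modulo a term absorbed via the sup bound, and using Cauchy--Schwarz plus AM--GM on the $\int|\u|$ contribution to partly absorb it into the gradient energy, one arrives at a difference inequality
$$E(r/2)\leq \bigl(\tau+C\tau^{-1}\varepsilon(2r)^{2/n}+\kappa\bigr)E(r)+C\varepsilon(2r)^{\alpha}r^{n+2},\qquad E(r):=\int_{B(x_0,r)}|\nabla\u|^2,$$
for some $\alpha>0$, valid for all small $r$. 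Taking $\tau$ and $\kappa$ strictly smaller than $\tfrac{1}{2}\cdot 2^{-(n+2)}$, for $r$ small enough the total coefficient is bounded by $\lambda 2^{-(n+2)}$ for some $\lambda<1$, so that $\phi(r):=r^{-n-2}E(r)$ satisfies $\phi(r/2)\leq\lambda\phi(r)+C\varepsilon(2r)^\alpha$, and a standard geometric iteration yields $\phi(r)\to 0$.

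The main technical obstacle is making the coefficient of $E(r)$ in the iteration strictly smaller than the scaling ratio $2^{-(n+2)}$, so as to obtain true decay of the scale-invariant quantity $\phi$ (rather than mere boundedness), and ensuring the residual error is $o(r^{n+2})$ despite the linear-in-$\u$ term $\int|\u|g$; both hinge on combining the \emph{sharp} sup-bound $\mu(r)\leq C\varepsilon^{1/n}r$ (not just $\mu(r)=o(r)$) with the Poincaré-with-vanishing improvement, and using the sign condition $f_i\geq 0$ to kill the quadratic-in-$\u$ part of the $M$-variation.
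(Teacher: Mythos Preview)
Your argument has one genuine gap: you open by invoking the Lipschitz bound of Theorem~\ref{t10.1} or~\ref{t11.1}, but those results require $F$ to be Lipschitz (condition~\eqref{e10.2}), and Lemma~\ref{t12.5} assumes nothing about $F$ beyond the $f_i,g_i$ being bounded. So the sup-bound $\mu(r)\leq CL\,\varepsilon(2r)^{1/n}r$ and the absolute integral bounds you derive from it are not available under the stated hypotheses. (Even continuity of $\u$ is not assumed here; the lemma is purely measure-theoretic.)

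Fortunately this detour is also unnecessary, and the other half of your sketch is essentially the paper's proof. The key step is the Poincar\'e-with-vanishing inequality: applying the ball analogue of~\eqref{e4.7} directly to each $u_i$ on $B(x_0,r)$, with the set $\{u_i\neq 0\}\cap B(x_0,r)$ playing the role of $E$, gives
\[
\int_{B(x_0,r)}|u_i|^2\;\leq\; C\,\theta(r)^{2/n}\,r^2\int_{B(x_0,r)}|\nabla u_i|^2,
\qquad \theta(r):=r^{-n}\bigl|\{x\in B(x_0,r):\u(x)\neq 0\}\bigr|\to 0,
\]
with no cut-off multiplication or appeal to Lemma~\ref{l3.2} needed. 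This is~\eqref{e12.12}. Plugging into the cut-off competitor estimate and handling $\int|\u|$ by Cauchy--Schwarz plus the same Poincar\'e bound yields (with $e(r)=r^{-n-2}E(r)$) the recursion $e(r/2)\leq C\tau\,e(r)+C\tau\,e(r)^{1/2}$ once $\theta(r)$ is small enough, which iterates to $\limsup e(r)\leq C\tau$ for every $\tau>0$; hence $e(r)\to 0$. Your AM--GM splitting of the $g$-term into $\kappa E(r)+C\kappa^{-1}\varepsilon^{\alpha}r^{n+2}$ is just a repackaging of this same step. So once you delete the Lipschitz paragraph and go straight to the Poincar\'e inequality on the ball, your argument coincides with the paper's.
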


\begin{proof}
Let $(\u,\W)$ and $x_0$ be as in the statement. Without loss
of generality, we assume that $x_0 = 0$.
The general idea is that by Poincar\'e, $\u$ should stay very small near 
the origin, and even smaller if $\fint_{B(x_0,r)} |\nabla u|^2$ is small;
then the $M$-term of the functional should only play a small role, and
in turn there is no reason for the energy to be large to compensate.

In practice we shall repeatedly test $(\u,\W)$ against the cut-off 
competitor of Section \ref{favorites}, and use this
to shows that $\fint_{B(x_0,r)} |\nabla u|^2$ decays rapidly.
We shall use the quantities
\begin{equation} 
\label{e12.9}
\theta_i(r) = r^{-n} \big| \big\{ x\in B(0,r) \, ; \, u_i(x) \neq 0 \big\} \big|
\end{equation}
and
\begin{equation} 
\label{e12.10}
\theta(r) = \sum_i \theta_i(r)
= r^{-n} \big| \big\{ x\in B(0,r) \, ; \, \u(x) \neq 0 \big\} \big|;
\end{equation}
notice that our assumption that $0$ is a Lebesgue density point
of $Z$ exactly means that $\lim_{r \to 0} \theta(r) = 0$.
We shall restrict our attention to radii $r$ so small that
\begin{equation} 
\label{e12.11}
\theta(r) \leq \eta^n,
\end{equation}
where the small number $\eta$ will be chosen soon.

Fix such an $r$, and apply the analogue of \eqref{e4.7} (with $p=2$) 
to the ball $B(0,r)$ (we observed before that the proof of \eqref{e4.7}
that we gave on spheres also work on balls). We get that
\begin{eqnarray} 
\label{e12.12}
\int_{B(0,r)} |u_i|^2 
&\leq& C \big| \big\{ x\in B(0,r) \, ; \, u_i(x) \neq 0 \big\} \big|^{2 \over n}
\int_{B(0,r)} |\nabla u_i|^2
\nonumber\\
&=& C r^2 \theta_i(r)^{2 \over n} \int_{B(0,r)} |\nabla u_i|^2
\leq C r^2 \eta^2 \int_{B(0,r)} |\nabla u_i|^2. 
\end{eqnarray}

Now consider the cut-off competitor 
$(\u^\ast,\W)$ described at the beginning of Section \ref{favorites}. 
We take $I = [1,N]$ (i.e, multiply all the $u_i$ by $\varphi(|x|)$, as in 
\eqref{e6.2}) and $a = 1/2$. 
Notice that we do not even need $B(0,r)$ to be contained in $\Omega$
for this one, because $u_i^\ast = 0$ whenever $u_i=0$.
We now estimate 
the terms that we get from \eqref{e6.5}-\eqref{e6.7}.
Let $\tau > 0$ be small, to be chosen soon; then 
\eqref{e6.5}, with $p=+\infty$, yields
\begin{eqnarray}
\label{e12.13}
\int_{B(0,r)} |\nabla u_{i}^\ast|^2 
&\leq&  (1+\tau) \int_{B(0,r) \sm B(0,ar)} |\nabla u_{i}|^2 
 + 4(1-a)^{-2} (1+\tau^{-1}) r^{-2} \int_{B(0,r) \sm B(0,ar)} |u_{i}|^2
 \nonumber
 \\
 &\leq& (1+\tau) \int_{B(0,r) \sm B(0,r/2)} |\nabla u_{i}|^2 
 +  C \tau^{-1} \eta^2 \int_{B(0,r)} |\nabla u_{i}|^2
  \nonumber
 \\
 &\leq& \int_{B(0,r) \sm B(0,r/2)} |\nabla u_{i}|^2
 + \Big(\tau + C \tau^{-1} \eta^2\Big) \int_{B(0,r)} |\nabla u_{i}|^2
\end{eqnarray}
or equivalently
\begin{equation} 
\label{e12.14}
\int_{B(0,r)} |\nabla u_{i}^\ast|^2 - \int_{B(0,r)} |\nabla u_{i}|^2 
\leq - \int_{B(0,r/2)} |\nabla u_{i}|^2 
+ \Big(\tau + C \tau^{-1} \eta^2 \Big) \int_{B(0,r)} |\nabla u_{i}|^2.
\end{equation}
For the two $M$-terms, \eqref{e6.6} yields
\begin{eqnarray} 
\label{e12.15}
\Big|\int_{\Omega} (u_{i}^\ast)^2 f_{i} - \int_{\Omega} u_{i}^2 f_{i}\Big|
&\leq& \int_{B} |u_{i}^2 f_{i}|
\leq ||f_{i}||_{\infty} \int_{B(0,r)} |u_{i}|^2
\nonumber\\
&\leq& C r^2 \eta^2 \int_{B(0,r)} |\nabla u_i|^2
\end{eqnarray}
by \eqref{e12.12}, and \eqref{e6.7} gives
\begin{eqnarray}
\label{e12.16}
\Big|\int_{\Omega} u_{i}^\ast g_{i} - \int_{\Omega} u_{i}g_{i}\Big|
&\leq& \int_{B(0,r)} |u_{i} g_{i}|
\leq ||g_{i}||_{\infty} \int_{B(0,r)} |u_{i}|
\leq C r^{n/2} \Big\{\int_{B(0,r)} |u_{i}|^2\Big\}^{1/2}
\nonumber
 \\
 &\leq& C  r \eta \, r^{n/2} \Big\{\int_{B(0,r)} |\nabla u_{i}|^2\Big\}^{1/2}.
\end{eqnarray}
Set $E(r) = \int_{B(0,r)} |\nabla \u|^2$ as usual; we sum \eqref{e12.15} and
\eqref{e12.16} over $i$ and get that
\begin{equation} 
\label{e12.17}
|M(\u^\ast) - M(\u)| 
\leq C r^2 \eta^2 E(r) + C  r  \eta \,  r^{n/2} E(r)^{1/2}
=: \pi(r),
\end{equation}
where the last part is the definition of $\pi(r)$.
There is no difference in the volume terms, because we did not
change $\W$, so the fact that $(\u,\W)$ minimizes $J$ yields
\begin{eqnarray}
\label{e12.18}
0 &\leq& J(\u^\ast,\W^\ast) - J(\u,\W) 
= \int_{B(0,r)} |\nabla \u^\ast|^2 - \int_{B(0,r)} |\nabla \u|^2  + M(\u^\ast) - M(\u) 
\nonumber
\\
&\leq& \int_{B(0,r)} |\nabla \u^\ast|^2 - \int_{B(0,r)} |\nabla \u|^2  
+ \pi(r)
\\
&\leq& - \int_{B(0,r/2)} |\nabla \u|^2
+ \Big(\tau + C \tau^{-1} \eta^2 \Big) \int_{B(0,r)} |\nabla \u|^2
+ \pi(r)
\nonumber
\end{eqnarray} 
by \eqref{e12.17} and \eqref{e12.14} (summed over $i$). That is,
\begin{eqnarray} 
\label{e12.19}
E(r/2) &\leq& \Big(\tau + C \tau^{-1} \eta^2 \Big) E(r) + \pi(r)
\nonumber
\\
&\leq& \Big(\tau + C \tau^{-1} \eta^2 \Big) E(r)
+ C  r  \eta \,  r^{n/2} E(r)^{1/2}
\end{eqnarray}
if $r \leq 1$, say, so that the first term of \eqref{e12.17} is smaller.

We now choose $\eta$ so small, depending on $\tau$ that will be chosen soon,
that \eqref{e12.19} implies that
\begin{equation} 
\label{e12.20}
E(r/2) \leq 2\tau E(r) + \tau r^{1+{n \over 2}} E(r)^{1/2}.
\end{equation}
Let us rewrite this in terms of $e(r) = r^{-n-2} E(r)$; we get that
\begin{eqnarray} 
\label{e12.21}
e(r/2) &=& 2^{n+2} r^{-n-2} E(r/2)
\nonumber
\\
&=& 2^{n+2} r^{-n-2} \big[2\tau E(r) + \tau r^{1+{n \over 2}} E(r)^{1/2} \big]
\nonumber
\\
&=& 2^{n+2} r^{-n-2} \big[2\tau r^{n+2}e(r) + \tau r^{1+{n \over 2}} 
r^{{n\over 2}+1}e(r)^{1/2} \big]
\\
&=& 2^{n+3} \tau e(r) + 2^{n+2} \tau e(r)^{1/2}
\nonumber
\end{eqnarray}
Thus, if we set $e_k = e(2^{-k}r)$, we get the induction relation 
$e_{k+1} \leq 2^{n+3} \tau e_k + 2^{n+2} \tau e_k^{1/2}$.
If $\tau$ is so small enough, then $2^{n+3} \tau < 1/4$, and
$e_{k+1} \leq e_k/4 + 2^{n+2} \tau e_k^{1/2}$.
If $e_k \geq 2^{2n+8} \tau^2$, then
$2^{n+2} \tau e_k^{1/2} \leq e_k/4$ and so
$e_{k+1} \leq e_k/2$. Otherwise,
$e_{k+1} \leq e_k/4 + 2^{n+2} \tau e_k^{1/2}
\leq 2^{2n+6} \tau^2 + 2^{2n+6}\tau \leq 2^{2n+7}\tau$.
It is then easy to see that $e_k \leq 2^{2n+7}\tau$ for $k$ large.
In the present situation, we can choose $\tau$ as small as we want, 
and then we get that
\begin{equation} 
\label{e12.22}
\lim_{r \to 0} r^{-n-2} E(r) = \lim_{r \to 0} e(r) = 0;
\end{equation}
this completes our proof of Lemma \ref{t12.5}.
\qed
\end{proof}

\ms
We are now ready to prove our two propositions.
We are given a minimizer $(\u,\W)$ of $J$, and we want
to prove that $\u \neq 0$  almost everywhere on $\Omega$,
so we proceed by contradiction, and assume that 
$\big\{ x\in \Omega \, ; \, \u(x) = 0 \big\}$ 
has positive measure. Then we can find a 
Lebesgue density point $x_0$ in that set, and we can even
choose it so that for $1 \leq i \leq N$,
\begin{equation} 
\label{e12.23}
\lim_{r \to 0} \fint_{B(x_0,r)} |g_i(x) - g_i(x_0)| dx = 0,
\end{equation}
because this Lebesgue density property for $g_i$ holds
for almost every $x_0$ (see \cite{M}),  
and
\begin{equation} 
\label{e12.24}
g_i(x_0) > 0 
\end{equation}
for all $i$ if we prove proposition \ref{t12.3}, or for $i=1$ only
if we prove proposition \ref{t12.4}. Without loss of generality,
we may assume that $x_0=0$ (just to save notation).

Now we want to try a slightly different competitor, with the promised
bump function. Let $ r> 0$ be given, and denote by $\u^\ast$ the cut-off 
competitor that we used in Lemma \ref{t12.5}. 
We want to use the fact that $\u^\ast = 0$ in $B(0,r/2)$
to modify $\u^\ast$ again in $B(0,r/2)$ (and in particular add a small
bump function to some $u_i$).

Let $\psi$ be a smooth, nonnegative bump function, with compact support
in $B(0,1/3)$ and $\int \psi = 1$. For the sake of Proposition \ref{t12.3}, 
we choose the support of $\psi$ a little smaller, so that 
\begin{equation} 
\label{e12.25}
\big|\big\{ x\in B(0,1/2) \, ; \, \psi(x) \neq 0 \big\}\big| \leq 
{1 \over N} |B(0,1/2)|.
\end{equation}
We first define a new function $\u^\sharp$. We first select an index $i$.
In the case of Proposition \ref{t12.3}, choose 
$i$ such that 
\begin{equation} 
\label{e12.26}
|W_i \cap B(0,r/2)| \geq {1 \over N} |B(0,r/2)|.
\end{equation}
For Proposition \ref{t12.4}, choose $i=1$.
Then define $\u^\sharp$ by
\begin{equation}
\label{e12.27}
\begin{array}{rll}
u^\sharp_i(x) =& u^\ast_i(x) + c r^2 \psi(x/r)  &\text{ for } x\in \R^n
\\
u^\sharp_j(x) =& u^\ast_j(x)  &\text{ for $x\in \R^n$ and } j \neq i,
\end{array}
\end{equation}
where the small constant $c$ will be chosen soon. 

We also need to define sets $W_j^\sharp$. For both propositions, we keep 
\begin{equation} 
\label{e12.28}
W_j^\sharp \sm B(0,r/2) = W_j^\ast \sm B(0,r/2) = W_j \sm B(0,r/2)
\end{equation}
for all $j$. 
For Proposition \ref{t12.3}, we want to keep the same volumes,
so we choose  the $W_j^\sharp \cap B(0,r/2)$ so that they are
disjoint, that $|W_j^\sharp \cap B(0,r/2)| = |W_j\cap B(0,r/2)|$ for
all $j$, and that $W_i^\sharp \cap B(0,r/2)$ contains 
$\big\{ x\in B(0,r/2) \, ; \, \psi(x/r) \neq 0 \big\}$. This is possible,
precisely by \eqref{e12.26} and because we chose the support of $\psi$ 
small enough in \eqref{e12.25}.

For Proposition \ref{t12.4}, the most efficient is to take 
$W_1^\sharp \cap B(0,r/2) = B(0,r/2)$, and 
$W_j^\sharp \cap B(0,r/2) = \emptyset$ for $j > 1$. 

It is easy to see that the $W_i^\sharp$ are disjoint,
that $u^\sharp \in W^{1,2}(\R^n)$, and that
$u^\sharp=0$ almost everywhere on $\R^n \sm W_i$.
If $r$ is small enough, $B(0,r) \i \Omega$, the $W_i^\sharp$
are contained in $\Omega$, and $(\u^\sharp,\W^\sharp) \in \F$;
this was the reason why we required $\Omega$ to be open.

Now we estimate the functional, starting with the volume term.
For Proposition \ref{t12.3}, we did not change the volumes $|W_i|$,
so by \eqref{e12.2} $F(\W^\sharp) = F(\W)$.
For Proposition \ref{t12.4}, we took some pieces of the $W_j$,
$j \geq 2$ and threw them inside $W_1$. 
In fact, we also threw the set $B(0,r/2) \setminus \cup_{i=1}^N W_i$
in $W_1$, but that since $\W$ fills $\Omega$, this set has vanishing measure.
Thus $\W^\sharp$ satisfies \eqref{e12.4} and \eqref{e12.5}, modulo
this set of measure zero that does not matter because we assumed that
$F$ is insensitive to zero sets, and if $r$ is so small
that $|B(0,r/2)| \leq \varepsilon$. Thus we get that
\begin{equation} 
\label{e12.29}
F(\W^\sharp) \leq F(\W)
\end{equation}
in this case too.

We also modified the $M$-term a little, because we added $c r^2 \psi(x/r)$
to $u_i$. But
\begin{eqnarray}
\label{e12.30}
M(\u^\sharp) - M(\u^\ast)  &=& \int_{B(0,r/2)} [ (u_i^\sharp)^2 f_i - u_i^\sharp g_i ]
= c^2 r^4 \int_{B(0,r/2)} \psi(x/r)^2 f_i - c r^2 \int_{B(0,r/2)} \psi(x/r) g_i
\nonumber
\\
&\leq& C c^2 r^{4+n} - c r^2 \int_{B(0,r/2)} \psi(x/r) g_i
\nonumber
\\
&\leq& C c^2 r^{4+n} - c r^2 g_i(0) \int_{B(0,r/2)} \psi(x/r)
+ c r^{2}  \int_{B(0,r/2)} ||\psi||_\infty |g_i(x)-g_i(0)|
\\
&=& Cc^2 r^{4+n} - c r^{2} g_i(0)
+ Cc r^{2+n} ||\psi||_\infty \fint_{B(0,r/2)} |g_i(x)-g_i(0)|
\nonumber
\end{eqnarray}
by \eqref{e1.4}, because $\u^\ast = 0$ in $B(0,r/2)$, and by \eqref{e12.27}.
For the energy term, 
\begin{equation}
\label{e12.31}
\int |\nabla \u^\sharp|^2 - \int |\nabla \u^\ast|^2
= c^2  r^4 \int_{B(0,r/2)} |\nabla \psi(\cdot/r)|^2
= c^2  r^{2+n} \int_{B(0,1/2)} |\nabla \psi|^2.
\end{equation}
Hence, by \eqref{e1.5} and because $F(\W^\sharp) \leq F(\W)$,
\begin{eqnarray} 
\label{e12.32}
J(\u^\sharp,\W^\sharp) - J(\u^\ast,\W^\ast) 
&\leq& M(\u^\sharp) - M(\u^\ast) + \int |\nabla \u^\sharp|^2 - \int |\nabla \u^\ast|^2
\nonumber
\\
&\leq&  - c r^{2+n} g_i(0) + \pi_1(r),
\end{eqnarray}
where 
\begin{equation} 
\label{e12.33}
\pi_1(r) = Cc^2 r^{4+n}
+ Cc r^{2+n} ||\psi||_\infty \fint_{B(0,r/2)} |g_i(x)-g_i(0)|
+ Cc^2  r^{2+n} \int_{B(0,1/2)} |\nabla \psi|^2.
\end{equation}

Recall from \eqref{e12.18} and \eqref{e12.17} that
if $r$ is so small that \eqref{e12.11} holds,
\begin{eqnarray} 
\label{e12.34}
J(\u^\ast,\W^\ast) - J(\u,\W)  
&\leq& - \int_{B(0,r/2)} |\nabla \u|^2
+ \Big(\tau + C \tau^{-1} \eta^2\Big) \int_{B(0,r)} |\nabla \u|^2
+ \pi(r)
\nonumber
\\
&\leq & \Big(\tau + C \tau^{-1} \eta^2 \Big) \int_{B(0,r)} |\nabla \u|^2
+ \pi(r)
\nonumber
\\
& = & \Big(\tau + C \tau^{-1} \eta^2\Big) E(r) + \pi(r)
\\
& \leq & \Big(\tau + C \tau^{-1} \eta^2\Big) E(r) + 
 C  r \eta \, r^{n/2} E(r)^{1/2} 
\nonumber
\end{eqnarray} 
(we also use the fact that $r\leq 1$ to control the first half of $\pi(r)$).
We no longer need to optimize too much, so let us choose $\tau = 1$ 
and even $\eta = 1$, and deduce from this that
\begin{equation} 
\label{e12.35}
J(\u^\ast,\W^\ast) - J(\u,\W) \leq C E(r) + C r r^{n/2} E(r)^{1/2}
= : \pi_2(r),
\end{equation}
and where the last part is a definition of $\pi_2(r)$. 
But $(\u,\W)$ is a minimizer for $J$, so 
$J(\u,\W) \leq J(\u^\sharp,\W^\sharp)$, which means that
\begin{equation} 
\label{e12.36}
c r^{2+n} g_i(0) \leq \pi_1(r) + \pi_2(r),
\end{equation}
by \eqref{e12.32} and \eqref{e12.35}.
Let us now check that we can choose $c > 0$ so small that \eqref{e12.36} fails
for $r$ small; this contradiction will prove that our initial assumption that 
$|\big\{ x\in \Omega \, ; \, \u(x) = 0 \big\}| > 0$
was false, and the proposition will follow.

The first term of $\pi_1(r)$ is $C c^2 r^{4+n} = o(r^{2+n})$. The second term is
$C c r^{2+n} \fint_{B(0,r/2)} |g_i(x)-g_i(0)| = o(r^{2+n})$, by \eqref{e12.23}.
The last term, $c^2  r^{2+n} \int_{B(0,1/2)} |\nabla \psi|^2$, is smaller than
$c r^{2+n} g_i(0) /2$ if $c$ is small enough. It does not matter that $c$ depends on
$g_i(0)$ or our choice of $\psi$. For $\pi_2(r)$, we observe that 
since $x_0 = 0$ lies in the open set $\Omega$ and is a density point for 
$\big\{ x\in \Omega \, ; \, \u(x) = 0 \big\}$,
we can apply Lemma \ref{t12.5}; we get that
$E(r) = o(r^{n+2})$; then $\pi_2(r) = o(r^{n+2})$ too,
which is much smaller than $c r^{2+n} g_i(0) /2$. 
So \eqref{e12.36} fails for $r$ small, and Propositions~\ref{t12.3} and \ref{t12.4}
follow.
\qed

\ms
Notice that we have no margin in our last estimates, i.e., the decay exponent in 
Lemma~\ref{t12.5} is just enough for our purposes. This probably means that since
the amount of $J$ that we can save by adding a small jump function is of higher
order, our boundedness assumption on the $f_i$ and $g_i$ are about right.

\section{Sufficient conditions for minimizers to be nontrivial}  
\label{number-p} 

In this section, we check that if the volume functional is defined by
\begin{equation} \label{ep1}
F(\W) = \sum_{i=1}^N a|W_i| + b|W_i|^{1+\alpha}
\end{equation}
for some $\alpha > 2/n$ and suitable constants
$a$ and $b$, 
depending on $\Omega$, the $f_i$, and the $g_i$, then the minimizers of 
our functional $J$ are not trivial, in the sense that at least one function
$u_i$ is nonzero, and $|W_i| < |\Omega|$ for all $i$.

For this we need some assumptions on $\Omega$, the $f_i$, and the $g_i$,
which we shall not try to optimize. Let us assume that \eqref{e3.1} and \eqref{e3.2}
hold. That is, $|\Omega| < +\infty$, the $f_i$ are bounded and nonnegative, and the
$g_i$ lie in $L^2$. With these assumptions and $F$ as in \eqref{ep1}, 
Theorem~\ref{t3.1} says that $J$ admits minimizers.

Our first result says that if $b$ is large enough and $(\u,\W)$ is a minimizer for $J$,
the $W_i$ cannot be too large.

\ms
\begin{lem} \label{tp1}
Suppose $\Omega$, the $f_i$, and the $g_i$ satisfy \eqref{e3.1} and \eqref{e3.2}.
For each choice of $\alpha \geq 0$ and $\eta > 0$, we can find $b_0 > 0$, that depends
only on $n$, $\alpha$, $|\Omega|$, the $||f_i||_\infty$, and the $||g_i||_2$, so that
if $F$ is given by \eqref{ep1} for some $a \geq 0$ and $b \geq b_0$,
 and $(\u,\W)$ is a minimizer for $J$, then
\begin{equation} \label{ep2}
|W_i| \leq \eta \ \text{ for } 1 \leq i \leq N.
\end{equation}
\end{lem}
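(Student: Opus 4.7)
\medskip\noindent\textbf{Proof plan for Lemma \ref{tp1}.} The strategy is just to test against the trivial competitor and exploit the Poincar\'e-type bound already used in the existence proof (Section \ref{existence}). First I would observe that the pair $(\u_0,\W_0)$ with $\u_0\equiv 0$ and $\W_0 = (\emptyset,\dots,\emptyset)$ lies in $\F$, and that on this pair $E=M=F=0$, so $J(\u_0,\W_0)=0$. Minimality then gives
\begin{equation*}
E(\u) + M(\u) + F(\W) = J(\u,\W) \leq 0.
\end{equation*}

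Next I would combine this with the lower bound obtained in \eqref{e3.8}--\eqref{e3.10}. Since $f_i\geq 0$ almost everywhere,
\begin{equation*}
M(\u) \geq -\Big|\sum_i \int u_i g_i\Big| \geq -C_\ast\, E(\u)^{1/2},
\end{equation*}
where, by Lemma \ref{l3.2}, $C_\ast$ depends only on $n$, $|\Omega|$, and the $\|g_i\|_2$. Substituting into the previous inequality gives
\begin{equation*}
E(\u) + F(\W) \leq C_\ast\, E(\u)^{1/2}.
\end{equation*}
Because $F(\W)\geq 0$ (both $a$ and $b$ are nonnegative), this forces $E(\u)\leq C_\ast^2$ and hence
\begin{equation*}
F(\W) \leq C_\ast\, E(\u)^{1/2} - E(\u) \leq \tfrac14 C_\ast^2.
\end{equation*}

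Now the structure of $F$ in \eqref{ep1} gives the pointwise bound $b|W_i|^{1+\alpha}\leq F(\W)\leq C_\ast^2/4$ for each $i$, hence
\begin{equation*}
|W_i| \leq \Big(\frac{C_\ast^2}{4b}\Big)^{1/(1+\alpha)}.
\end{equation*}
Choosing $b_0 := C_\ast^2 \, \eta^{-(1+\alpha)}/4$ (which depends only on $n$, $\alpha$, $|\Omega|$, the $\|f_i\|_\infty$, and the $\|g_i\|_2$ through $C_\ast$, and on $\eta$) then yields \eqref{ep2} whenever $b\geq b_0$. There is no real obstacle here: the only point to verify carefully is that the constant $C_\ast$ in \eqref{e3.9} genuinely depends on the announced quantities only, which is exactly what Section~\ref{existence} established; the note that $C_\ast$ is independent of $a$ and $b$ is crucial so that the threshold $b_0$ is well defined.
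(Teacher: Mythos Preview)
Your proof is correct and follows essentially the same approach as the paper: test against the trivial competitor $(\u_0,\W_0)=(0,\emptyset)$, use $f_i\geq 0$ and the Poincar\'e-type estimate \eqref{e3.8}--\eqref{e3.9} to bound $-M(\u)$ by $C_\ast E(\u)^{1/2}$, deduce an upper bound on $F(\W)$, and then read off $|W_i|$ from $b|W_i|^{1+\alpha}\leq F(\W)$. The paper simply bounds $F(\W)\leq -M(\u)\leq C''$ rather than optimizing to $C_\ast^2/4$, but this is cosmetic.
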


\ms\begin{proof}
Let us first estimate the first two terms of $J(\u,\W)$. 
Let $(\u_0,\W_0)$ denote the trivial competitor for which $u_i = 0$ and 
$W_i = \emptyset$ for $1 \leq i \leq N$.
Then
\begin{equation} \label{ep3}
E(\u) + M(\u) \leq E(\u) + M(\u) + F(\W) = J(\u,\W) \leq J(\u_0,\W_0) = 0
\end{equation}
by \eqref{e1.3}-\eqref{e1.5} and because $F(\W) \geq 0$. Since $f_i \geq 0$
by \eqref{e3.2}, \eqref{e1.4} yields
\begin{equation} \label{ep4}
E(\u) \leq - M(\u) \leq \sum_{i=1}^N \int u_i(x) g(x) dx
\leq \sum_{i=1}^N ||g_i||_2 ||u_i||_2.
\end{equation}
By Lemma \ref{l3.2}, 
$||u_i||^2_2 \leq C |\Omega|^{2/n} \int |\nabla u_i|^2 \leq C |\Omega|^{2/n} E(\u)$, 
hence
\begin{equation} \label{ep5}
E(\u) \leq C |\Omega|^{1/n} E(\u)^{1/2} \sum_{i=1}^N ||g_i||_2,
\end{equation}
and so $E(\u) \leq C'$, where $C'$ depends on the data as above. 
We return to \eqref{ep3} and notice that for $1 \leq i \leq N$,
\begin{equation} \label{ep6}
b|W_i|^{1+\alpha} \leq F(\W) \leq - M(\u) \leq \sum_{i=1}^N ||g_i||_2 ||u_i||_2
\leq C |\Omega|^{1/n} E(\u)^{1/2} \sum_{i=1}^N ||g_i||_2
\leq C'',
\end{equation}
where $C''$ depends only on $n$, $|\Omega|$, the $||f_i||_\infty$, and the $||g_i||_2$.
The conclusion \eqref{ep2} follows easily (for $b \geq b_0$ and if $b_0$ is large enough).
\qed
\end{proof}

\ms
The next result says that if at least one of the $g_i$ is nontrivial and $a$ in \eqref{ep1}
is small enough, the minimizers for $J$ are nontrivial.

\ms
\begin{lem} \label{tp2}
Suppose $\Omega$, the $f_i$, and the $g_i$ satisfy \eqref{e3.1} and \eqref{e3.2}.
Suppose in addition that $\Omega$ is open (and non empty) and that $g_i \neq 0$ 
for some $i$. Then for each choice of parameters $\alpha > 2/n$ and $b > 0$, 
we can find $a_0 > 0$ such that if $F$ is given by \eqref{ep1} for some 
$a \in [0,a_0]$ and $(\u,\W)$ is a minimizer for $J$, then $\u \neq 0$.
\end{lem}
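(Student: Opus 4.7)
The plan is to exhibit a competitor $(\u^*,\W^*) \in \F$ with $J(\u^*,\W^*) < 0$. Since $M(\mathbf{0}) = 0$ and $F(\W') \geq 0$ for every $\W'$ (because $a,b \geq 0$), the pair $(\mathbf{0},\W')$ always has $J(\mathbf{0},\W') = F(\W') \geq 0$, so $J(\u^*,\W^*) < 0$ forces any minimizer $(\u,\W)$ to satisfy $\u \neq 0$.

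First I pick an index $i$ with $g_i \not\equiv 0$ (choosing the direction of the sign consistent with any constraint on $u_i$), and by the Lebesgue differentiation theorem I choose a point $x_0 \in \Omega$ that is a Lebesgue point of $g_i$ with $g_i(x_0) > 0$. Fix once and for all a nonnegative $\psi \in C_c^\infty(B(0,1))$ with $\int \psi > 0$, and for small $r > 0$ set $\varphi_r(x) = \psi((x-x_0)/r)$, which is supported in $B(x_0,r) \subset \Omega$ once $r$ is small enough. The test pair is then $u_i^* = t\varphi_r$, $W_i^* = \supp \varphi_r$, and $u_j^* \equiv 0$, $W_j^* = \emptyset$ for $j \neq i$. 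Setting $V_r = |W_i^*| = c_\psi r^n$, a direct computation gives
\begin{equation*}
J(\u^*,\W^*) = t^2 A(r) - t B(r) + D(r),
\end{equation*}
with $A(r) = \int|\nabla\varphi_r|^2 + \int\varphi_r^2 f_i \leq C_\psi r^{n-2} + C\|f_i\|_\infty r^n$, the bound $B(r) = \int \varphi_r g_i \geq \tfrac{1}{2} g_i(x_0) (\int \psi)\, r^n$ valid for $r$ small (by the Lebesgue point property), and $D(r) = aV_r + bV_r^{1+\alpha}$. Optimizing in $t$ yields $\inf_t J(\u^*,\W^*) = D(r) - B(r)^2/(4A(r))$.

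The heart of the argument is an exponent count: $B(r)^2/A(r) \gtrsim r^{2n}/r^{n-2} = r^{n+2}$ (for $n \geq 2$; the modifications for $n=1,2$ are immediate since the $f_i$-term or constants take over in $A$), while $bV_r^{1+\alpha} \leq C b\, r^{n(1+\alpha)} = Cb\,r^{n+n\alpha}$. The hypothesis $\alpha > 2/n$ is precisely $n\alpha > 2$, so $r^{n+n\alpha} = o(r^{n+2})$ and hence there exists $r_0 = r_0(b,\alpha,\psi,\|f_i\|_\infty,g_i(x_0),n)$ small such that $b V_{r_0}^{1+\alpha} \leq \tfrac{1}{2}\cdot B(r_0)^2/(4A(r_0))$. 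Fix such $r_0$; the linear volume contribution $a V_{r_0} = a c_\psi r_0^n$ is then absorbed by requiring $a \leq a_0$ with $a_0 := \tfrac{1}{4c_\psi r_0^n}\cdot B(r_0)^2/(4A(r_0))$. With these choices $D(r_0) < B(r_0)^2/(4A(r_0))$, and choosing $t = B(r_0)/(2A(r_0))$ makes $J(\u^*,\W^*) < 0$. Since $(\u^*,\W^*) \in \F$, every minimizer satisfies $J(\u,\W) \leq J(\u^*,\W^*) < 0$ and therefore $\u \neq 0$.

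The main (and really only) obstacle is getting the exponent matching right, which is precisely why the threshold in the hypothesis is $\alpha > 2/n$: larger $\alpha$ makes the superlinear penalty on volume subdominant at small scales, so a tiny bump near a Lebesgue point of $g_i$ beats the trivial minimizer. Once this exponent count is arranged with $a=0$, the smallness of $a_0$ is forced by nothing more than matching constants, and no regularity of the minimizer $(\u,\W)$ itself enters the proof.
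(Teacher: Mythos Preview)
Your proof is correct and takes essentially the same approach as the paper: both construct a small bump competitor near a Lebesgue point of $g_i$ and show that the exponent count $n(1+\alpha) > n+2$ makes $J(\u^*,\W^*) < 0$ once $r$ is small and $a$ is small enough. The only cosmetic difference is that the paper builds the scaling into the bump by writing $\varphi_r(x) = \beta r^2 \varphi((x-x_0)/r)$ and then chooses $\beta$ small, whereas you write $u_i^* = t\varphi_r$ and optimize the quadratic in $t$ explicitly; your optimal $t \sim B(r)/A(r) \sim r^2$ recovers exactly the paper's scaling.
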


\ms
\begin{proof}
Here $a_0$ will also depend on $g_i$ in a more complicated way that its norm,
through the choice of a small ball where an average of $g_i$ is not too small.

For the proof we may assume that $i=1$. Let $x_0$ be a point
of $\Omega$ such that $g_1(x_0) \neq 0$ and $x_0$ is a Lebesgue point for $g_1$,
in the sense that
\begin{equation} \label{ep7}
\lim_{r \to 0} {1 \over r^n} \int_{B(x_0,r)} |g_1(x)-g_1(x_0)| dx = 0;
\end{equation}
such a point $x_0$ exists because \eqref{ep7} holds almost everywhere and
$g_1(x_0) \neq 0$ on a set of positive measure. As in Proposition \ref{t12.3},
we required $\Omega$ to be open so that small bump functions near $x_0$
yield competitors for $J$.

Let $\varphi$ be a smooth radial bump function, supported in $B(0,1)$ and
such that $\int \varphi = 1$, and for $r > 0$, set
\begin{equation} \label{ep8}
\varphi_r(x) = \beta r^2 \varphi((x-x_0)/r),
\end{equation}
where the small constant $\beta$
will be chosen near the end of the proof. If $r < \dist(x_0,\R^n \sm \Omega)$,
we can use $\varphi_r$ to define an admissible pair $(\u,\W) \in \F(\Omega)$.
That is, we take $u_1 = \varphi_r$, $W_1 = B(x_0,r)$, and for $i > 1$
we take $u_i = 0$ and $W_i = \emptyset$; it is then clear that $(\u,\W)$
satisfies the requirements of Definition \ref{d1.1}.

We just need to prove that $J(\u,\W) < 0$ (if $a_0$, $\beta$, and $r$
are chosen correctly), because then $\u=0$ cannot yield a minimizer, no matter
which choice of $\W$ we associate to it. So let us evaluate all the terms in $J(\u,\W)$.
We start with the energy
\begin{equation} \label{ep9}
E(\u) = \int |\nabla \varphi_t|^2 = \beta^2 r^2 ||\nabla \varphi||_2^2
\leq C \beta^2 r^{n+2},
\end{equation}
where we do not need to worry about the dependence of $C$ on $\varphi$.
Let us also record that
\begin{equation} \label{ep10}
F(\W) = a |B(x_0,r)| + b |B(x_0,r)|^{1+\alpha} \leq C a r^n + C b r^{n(1+\alpha)}
\end{equation}
The first part of $M(\u)$ is
\begin{equation} \label{ep11}
\sum_i \int u_i^2 f_i = \int \varphi_r f_1 \leq ||f_1||_\infty ||\varphi_r||_2^2
= ||f_1||_\infty \beta^2 r^4 r^n ||\varphi||_2^2
\leq C \beta^2 r^{n+4}.
\end{equation}
The remaining part of $M(\u)$ is
\begin{eqnarray} \label{ep12}
- \sum_i \int u_i g_i &=& - \int \varphi_r g_1 
\leq - g_1(x_0) \int \varphi_r +  \int \varphi_r(x) |g(x)-g(x_0)| dx 
\nonumber\\
&\leq& - \beta r^{n+2} g_1(x_0) 
+ \beta r^2 ||\varphi||_\infty  \int_{B(x_0,r)} |g(x)-g(x_0)| dx.
\end{eqnarray}
We shall take $\beta$ small, with the same sign as $g_1(x_0)$,
and we want the negative term $-\beta r^{n+2} g_1(x_0)$ in \eqref{ep12} to 
dominate all the other ones.
Let us now choose our parameters $r$, $\beta$, and $a_0$ so that this is the case.
For the second term $\beta r^2 ||\varphi||_\infty  \int_{B(x_0,r)} |g(x)-g(x_0)| dx$
of \eqref{ep12}, this happens as soon as $r$ is small enough, because of \eqref{ep7}.
For $C \beta^2 r^{n+4}$ in \eqref{ep11}, this is also true as soon as $r$ is small enough
(because we'll take $|\beta| \leq 1$). The term $C b r^{n(1+\alpha)}$ in \eqref{ep10}
can be treated the same way, because $\alpha > 2/n$. 

At this stage we choose $r$ so small that $B(x_0,r) \i \Omega$ and
the terms mentionned above are smaller than $\beta r^{n+2} g_1(x_0)/10$.
Then we choose $\beta$ small, so that $E(\u) < \beta r^{n+2} g_1(x_0)/10$,
and $a_0$ so small (hence depending on our choice of $r$) that
$C a_0 r^n < \beta r^{n+2} g_1(x_0)/10$. Now \eqref{ep9}-\eqref{ep12}
imply that $J(\u,\W) < 0$ as soon as $0 \leq a \leq a_0$, and the lemma follows.
\qed
\end{proof}

\section{A bound on the number of components}  
\label{number} 

This short section answers a natural question on the implementation
of our functional: even if we choose to allow a very large number $N$ of 
regions, will the functional naturally limit the number of indices $i$ such
that $u_i \neq 0$ somewhere?

We shall check that under reasonable assumptions on  $F$, 
there is a lower bound on the volume of $W_i$ when
$u_i(x) \neq 0$ somewhere. If $|\Omega|$ is assumed to be 
bounded, this will give the desired bound on the number of nontrivial
components $W_i$.

Let us state our main assumption for the index $i = 1$. 
We assume that there exist an exponent $1 \leq p < {n+2 \over n}$
and a constant $\lambda \geq 0$ such that
there exist disjoint subsets $A_1, A_2, \ldots, A_N$ of $W_1$ 
such that
\begin{equation} \label{en1}
 F(A_1, W_2 \cup A_2, \ldots, W_N \cup A_N)
\leq F(W_1, \ldots, W_N) - \lambda  |W_1|^{p}.
\end{equation}
Thus we have the right to take away a part of $W_1$, 
dispatch some of it among the other components, and this will make
the volume form somewhat smaller. A simple special case of this is when
\begin{equation} \label{en2}
 F(\emptyset, W_2, \ldots, W_N)
\leq F(W_1, \ldots, W_N) - \lambda |W_1|^{p},
\end{equation}
where $\lambda |W_1|^{p}$ is a minimum price that we had to pay for
the volume $|W_1|$. Even more specifically, \eqref{en1} holds if
\begin{equation} \label{en3}
F(W_1, \ldots, W_N) = \sum_{i=1}^N  a_i |W_i| + b_i |W_i|^2,
\end{equation}
with $a_i  \geq \lambda > 0$ and $b_i \geq 0$.

If we do not assume anything, i.e., if volume is too cheap, the functional may decide 
to have a tiny components $W_1$ (even if this is not very useful), win something
on the $M$-part of the functional, pay less in the energy term if $u_1$ is small enough
(the homogeneity of $\int |\nabla u_1|^2$ is higher than for $\int u_1 g_1$), 
and essentially not pay for it in the volume term.

We shall only need \eqref{en1} when $\W$ comes from the minimizer $(\u,\W)$
(and then we will get bounds on $|W_1|$ that depend on $p$, $\lambda$, and $n$),
but in general we do not expect to know $(\u,\W)$ in advance,
so we may need to require \eqref{en1} for all $\W \in {\cal W}(\Omega)$.

Our condition \eqref{en1} is a simple form of the main nondegeneracy assumption
that will be introduced in Section \ref{good}.

\begin{pro} \label{tn1}
Let $(\u,\W)$ be a minimizer for $J$, and suppose 
that $|W_1| > 0$ and that \eqref{en1} holds for some 
choice of $p \in [1, {n+2 \over n})$ and $\lambda > 0$. 
Also suppose that $f_1$ and $g_1$ are bounded. Then 
\begin{equation} \label{en4}
|W_1| \geq C^{-1}( || g_1||_\infty^{2n \over n+2-np}+ ||f_1||_\infty^{n/2})^{-1} ,
\end{equation}
with a constant $C$ that depends only on $n$, $p$, and $\lambda$.
\end{pro}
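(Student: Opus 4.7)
The strategy is to test $(\u,\W)$ against the competitor provided by \eqref{en1}: with $A_1, \ldots, A_N \i W_1$ as in \eqref{en1}, set $\wt u_1 = 0$, $\wt W_1 = A_1$, and $\wt u_i = u_i$, $\wt W_i = W_i \cup A_i$ for $i \geq 2$. The $\wt W_i$ are disjoint subsets of $\Omega$, and because $u_i$ vanishes a.e.\ outside $W_i \i \wt W_i$ we have $(\wt\u, \wt\W) \in \F$. All terms of $J$ involving $u_i$ with $i \geq 2$ are unchanged, the contributions of $u_1$ to $E$ and $M$ disappear, and $F(\wt\W) - F(\W) \leq -\lambda |W_1|^p$ by \eqref{en1}. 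Feeding this into $J(\u,\W) \leq J(\wt\u, \wt\W)$ yields the master inequality
\begin{equation*}
\lambda |W_1|^p + \int |\nabla u_1|^2 + \int u_1^2 f_1 \,\leq\, \int u_1 g_1.
\end{equation*}

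The key point is that the right-hand side can be controlled in two complementary ways, producing the two terms in \eqref{en4} through a dichotomy. First suppose $\int u_1 g_1 \geq 2 \int u_1^2 f_1$. Dropping the $f_1$-term on the left gives $\int |\nabla u_1|^2 \leq \int u_1 g_1$. Cauchy--Schwarz on $W_1$ yields $\int u_1 g_1 \leq ||g_1||_\infty |W_1|^{1/2} ||u_1||_2$, and Lemma \ref{l3.2} gives $||u_1||_2^2 \leq C|W_1|^{2/n} \int |\nabla u_1|^2 \leq C|W_1|^{2/n} \int u_1 g_1$. Combining and solving for $\int u_1 g_1$ (the case $||u_1||_2 = 0$ contradicts the master inequality) yields $\int u_1 g_1 \leq C ||g_1||_\infty^2 |W_1|^{(n+2)/n}$; reinserted in the master inequality, and using $p < (n+2)/n$, this rearranges to $|W_1| \geq c\, ||g_1||_\infty^{-2n/(n+2-np)}$.

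In the complementary case $\int u_1 g_1 < 2 \int u_1^2 f_1$, the master inequality gives $\int |\nabla u_1|^2 \leq \int u_1^2 f_1 \leq ||f_1||_\infty\, ||u_1||_2^2 \leq C ||f_1||_\infty |W_1|^{2/n} \int |\nabla u_1|^2$ by Lemma \ref{l3.2} again. Since $|W_1|>0$ and $\lambda>0$ force $u_1 \not\equiv 0$ (otherwise the master inequality fails) and hence $\int |\nabla u_1|^2 > 0$, dividing yields $|W_1| \geq c\, ||f_1||_\infty^{-n/2}$. One of the two cases must hold, and since $\min(a^{-1}, b^{-1}) \geq (a+b)^{-1}$ for $a, b \geq 0$, we conclude $|W_1| \geq c\,(||g_1||_\infty^{2n/(n+2-np)} + ||f_1||_\infty^{n/2})^{-1}$, which is \eqref{en4}. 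There is no real obstacle once the case split is identified; dropping the $\int u_1^2 f_1$ term from the start only recovers the $g_1$-part of the estimate and loses the $f_1$-correction that the statement requires.
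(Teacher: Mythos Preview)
Your proof is correct and follows essentially the same route as the paper: the competitor and the resulting master inequality are identical, and both arguments finish via Lemma~\ref{l3.2} and a dichotomy. The only cosmetic difference is where the split is made: the paper tests whether $C\|f_1\|_\infty |W_1|^{2/n} \geq 1/2$ and, in the complementary case, uses the algebraic identity $E - 2\alpha E^{1/2} \geq -\alpha^2$ with $\alpha = C\|g_1\|_\infty |W_1|^{(n+2)/2n}$ to extract the $g_1$ bound, whereas you split on $\int u_1 g_1 \gtrless 2\int u_1^2 f_1$ and bootstrap $\int u_1 g_1$ directly; both routes land on the same two inequalities. One small point: your Case~1 step ``dropping the $f_1$-term on the left'' uses $\int u_1^2 f_1 \geq 0$, i.e.\ $f_1 \geq 0$, which is a standing hypothesis elsewhere in the paper but not restated in this proposition; the paper's version sidesteps this by bounding $\bigl|\int (u_1^2 f_1 - u_1 g_1)\bigr|$ with $\|f_1\|_\infty$ and $\|g_1\|_\infty$ from the start.
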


\begin{proof}  
The reader should not worry about the case when $f_1 = g_1= 0$;
the proof below will just show that it does not happen. 
We construct a simple competitor that will be compared to $(\u,\W)$.
Set $\u^\ast = (0, u_2, \ldots, u_N)$ and 
$\W^\ast = (A_1, W_2 \cup A_2, \ldots, W_N \cup A_N)$, where the
$A_i$ are the same as in \eqref{en1}. It is easy to see that
$(\u^\ast,\W^\ast)$ is an acceptable pair, i.e., lies in the class $\F(\Omega)$
of Definition \ref{d1.1};
simply notice that the $A_i$ are contained in $\Omega$ because they are contained 
in $W_1$. Thus $J(\u,\W) \leq J(\u^\ast,\W^\ast)$, and when we remove the
identical parts of $E(\u) + M(\u)$ and $E(\u^\ast) + M(\u^\ast)$, we are left with
\begin{equation} \label{en5}
\int_{W_1} |\nabla u_1|^2 + u_1^2 f_1 - u_1 g_1 + F(\W)
\leq F(\W^\ast) \leq F(\W) - \lambda |W_1|^{p},
\end{equation}
by \eqref{e1.4} and \eqref{en1}. Set $E = \int_{W_1} |\nabla u_1|^2$
and $M = \big|\int_{W_1} u_1^2 f_1 - u_1 g_1\big|$, and then apply 
the Poincar\'e inequality from Lemma \ref{l3.2}; we get that 
\begin{eqnarray} \label{en6}
M \leq ||f_1||_\infty \int_{W_1} u_1^2 + ||g_1||_\infty \int_{W_1} |u_1|
&\leq& C ||f_1||_\infty |W_1|^{2/n} E + C ||g_1||_\infty |W_1|^{1/2} ||u_1||_2
\nonumber\\
&\leq& C ||f_1||_\infty |W_1|^{2/n} E + C ||g_1||_\infty |W_1|^{n+2 \over 2n} E^{1/2}
\end{eqnarray}
and \eqref{en5} yields
\begin{equation} \label{en7}
E + \lambda |W_1|^{p} \leq  M \leq  C ||f_1||_\infty |W_1|^{2/n} E 
+ C ||g_1||_\infty |W_1|^{n+2 \over 2n} E^{1/2}.
\end{equation}
If $C ||f_1||_\infty |W_1|^{2/n} \geq 1/2$, we are happy because \eqref{en4} holds.
Otherwise we simplify \eqref{en7} and get that 
\begin{equation} \label{en8}
E + 2\lambda |W_1|^{p} \leq 2C ||g_1||_\infty |W_1|^{n+2 \over 2n} E^{1/2}.
\end{equation}
Set $\alpha = C ||g_1||_\infty |W_1|^{n+2 \over 2n}$; then
$E - 2 \alpha E^{1/2} = (E^{1/2} - \alpha)^2 - \alpha^2 \geq -\alpha^2$,
so \eqref{en8} implies that $2\lambda |W_1|^p \leq 2 \alpha E^{1/2} - E \leq \alpha^2$, 
and hence $2 \lambda \leq |W_1|^{-p} \alpha^2 \leq C^2 ||g_1||_\infty^2 
|W_1|^{{n+2 \over n}-p}$. Then \eqref{en4} holds, and the proposition follows.
\qed
\end{proof}

\ms
In fact the proof of Proposition \ref{tn1} shows that the domain $W_1$
is not too thin, in the sense that its Poincar\'e constant is fairly large. That
is, denote by $V(W_1)$ the smallest constant such that
\begin{equation} \label{en9}
\int_{W_1} |u|^2 \leq V(W_1)^{2/n} \int_{W_1} |\nabla u|^2
\end{equation}
for every function $u\in W^{1,2}(\R^n)$ such that $u(x) = 0$ almost
every $x\in \R^n \sm W_1$. We make $V(W_1)$ scale like a volume to simplify
the computations below. 
Then we have the same bounds as above for $V(W_1)$.

\begin{cor} \label{tn2}
Let $(\u,\W)$ and $W_1$ be as in Proposition \ref{tn1}; then
\begin{equation} \label{en10}
V(W_1) \geq C^{-1}( || g_1||_\infty^{2n \over n+2-np}+ ||f_1||_\infty^{n/2})^{-1} ,
\end{equation}
with a constant $C$ that depends only on $n$, $p$, and $\lambda$.
\end{cor}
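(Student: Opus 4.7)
The plan is to repeat the proof of Proposition \ref{tn1} almost verbatim, but to replace every appeal to Lemma \ref{l3.2} by a direct appeal to the definition \eqref{en9} of $V = V(W_1)$, which is the \emph{sharp} $L^2$-Poincar\'e constant for $W_1$. Using the same competitor as in the proof of Proposition \ref{tn1}, I still arrive at the basic inequality
$$E + \lambda|W_1|^p \leq \|f_1\|_\infty \int_{W_1} u_1^2 + \|g_1\|_\infty \int_{W_1} |u_1|.$$
The definition of $V$ gives $\int_{W_1} u_1^2 \leq V^{2/n} E$ directly, and a single Cauchy--Schwarz yields $\int_{W_1} |u_1| \leq |W_1|^{1/2}\|u_1\|_2 \leq |W_1|^{1/2} V^{1/n} E^{1/2}$. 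Substituting, I obtain
$$E + \lambda|W_1|^p \leq \|f_1\|_\infty V^{2/n} E + \|g_1\|_\infty |W_1|^{1/2} V^{1/n} E^{1/2}.$$

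Next I split into cases as in the proof of Proposition \ref{tn1}. If $\|f_1\|_\infty V^{2/n} \geq 1/2$, then $V \geq (2\|f_1\|_\infty)^{-n/2}$, which supplies the $\|f_1\|_\infty^{n/2}$ piece of \eqref{en10}. Otherwise I absorb $\|f_1\|_\infty V^{2/n} E \leq E/2$ on the left, and then apply the elementary inequality $ab \leq \tfrac14 a^2 + b^2$ to the remaining mixed product with $a = E^{1/2}$ and $b = \|g_1\|_\infty|W_1|^{1/2}V^{1/n}$; this absorbs the rest of the $E$ and leaves
$$\lambda|W_1|^p \leq C\|g_1\|_\infty^2 |W_1| V^{2/n}, \qquad \text{i.e.,}\qquad V^{2/n} \geq c_\lambda\, \|g_1\|_\infty^{-2}\, |W_1|^{p-1}.$$

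The main step is then to convert this into the clean bound $V \geq C^{-1}\Lambda^{-1}$, where $\Lambda := \|g_1\|_\infty^{2n/(n+2-np)} + \|f_1\|_\infty^{n/2}$. For this I invoke Proposition \ref{tn1} (already proved), which gives $|W_1| \geq C^{-1}\Lambda^{-1}$, and since $p \geq 1$ this upgrades to $|W_1|^{p-1} \geq C^{-(p-1)}\Lambda^{-(p-1)}$. Combining with the trivial consequence $\Lambda^{(n+2-np)/n} \geq \|g_1\|_\infty^{2}$ of the definition of $\Lambda$, and using the arithmetic identity
$$-(p-1) - \frac{n+2-np}{n} = -\frac{2}{n},$$
I obtain $V^{2/n} \geq c' \Lambda^{-2/n}$, which is exactly \eqref{en10}. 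The hypothesis $p < (n+2)/n$ in Proposition \ref{tn1} is what makes the exponent $(n+2-np)/n$ positive, and thus what makes this last arithmetic step go through.

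The conceptual point driving the proof is that $V(W_1)$ is a genuinely sharper quantity than $|W_1|$ (we have $V(W_1) \leq C|W_1|$ via Lemma \ref{l3.2}, with possibly a huge gap for thin domains), and the $L^2$-Poincar\'e ingredient that dominates the proof of Proposition \ref{tn1} only ever uses $V$. The one place where $|W_1|$ enters essentially is the Cauchy--Schwarz step for $\int|u_1|$; that residual $|W_1|$-dependence is eliminated by plugging in the volume lower bound already known from Proposition \ref{tn1}, and the exponents conspire exactly as needed. The main obstacle, if one can call it that, is just bookkeeping the exponents in the last step and checking that the Young/AM--GM choice absorbs the $E$ terms with the correct constants; there is no new analytic input.
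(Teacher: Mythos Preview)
Your proof is correct and follows the paper's route almost exactly: same competitor, same basic inequality $E+\lambda|W_1|^p \le \|f_1\|_\infty V^{2/n}E + \|g_1\|_\infty|W_1|^{1/2}V^{1/n}E^{1/2}$, same case split on $\|f_1\|_\infty V^{2/n}$. The only difference is the closing step. From $\|g_1\|_\infty^2 V^{2/n} \ge c\lambda\,|W_1|^{p-1}$ you invoke Proposition~\ref{tn1} to bound $|W_1|^{p-1}\ge C^{-1}\Lambda^{-(p-1)}$ and then chase exponents through $\Lambda$; the paper instead uses the more direct inequality $|W_1|\ge C^{-1}V(W_1)$ (this is \eqref{en11}, i.e., Lemma~\ref{l3.2}) to replace $|W_1|^{p-1}$ by $C^{-1}V^{p-1}$, yielding $V^{(n+2-np)/n}\ge c'\|g_1\|_\infty^{-2}$ in one line. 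Both are valid; the paper's closure is shorter because it stays with the single variable $V$ and never needs to reintroduce $\Lambda$ or Proposition~\ref{tn1}.
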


\ms
\begin{proof}  
Observe that
\begin{equation} \label{en11}
V(W_1) \leq C |W_1|,
\end{equation}
for instance by Lemma \ref{l3.2}; thus \eqref{en10} is better than \eqref{en4}.
But let us follow the argument above. The proof of \eqref{en6} yields
\begin{equation} \label{en12}
M \leq  ||f_1||_\infty V(W_1)^{2/n} E 
+ C ||g_1||_\infty |W_1|^{1/2} V(W_1)^{1/n} E^{1/2}.
\end{equation}
Then \eqref{en7} becomes
\begin{equation} \label{en13}
E + \lambda |W_1|^{p} \leq  M \leq  C ||f_1||_\infty  V(W_1)^{2/n} E 
+ C ||g_1||_\infty |W_1|^{1/2} V(W_1)^{1/n} E^{1/2}
\end{equation}
If $C ||f_1||_\infty V(W_1)^{2/n} \geq 1/2$, we are happy 
as before, and otherwise we are left with
\begin{equation} \label{en14}
E + 2\lambda |W_1|^{p} \leq   2C ||g_1||_\infty |W_1|^{1/2} V(W_1)^{1/n} E^{1/2}.
\end{equation}
Then we set $\alpha = C ||g_1||_\infty |W_1|^{1/2} V(W_1)^{1/n}$ and the computation
above yields
\begin{equation} \label{en15}
2\lambda |W_1|^{p} \leq 2\alpha E^{1/2} - E \leq \alpha^2 
= C^2 ||g_1||_\infty^2 |W_1| \, V(W_1)^{2/n}.
\end{equation}
We assumed that $p \geq 1$, because it did not disturb and now we can say that
\begin{equation} \label{en16}
C^2 ||g_1||_\infty^2 V(W_1)^{2/n} \geq 2\lambda |W_1|^{p-1}
\geq C^{-1} \lambda  V(W_1)^{p-1}
\end{equation}
and conclude as before. 
\qed
\end{proof}

For the initial goal of the section, it is important to notice that
our constant $C$ does not depend on $N$. So, if $|\Omega| < +\infty$
the $f_i$ and the $g_i$ are bounded, and the analogue of $\eqref{en1}$
holds for all $i$ (all this with constants that do not depend on $N$), we 
get a bound on the number of indices $i$ such that $|W_i| > 0$.

\begin{rem} \label{tn3}
In Section \ref{good} we will get more precise lower bounds 
on the measure of $W_i \cap B$ when $B$ is s small ball centered
on the boundary of $\Omega_1 = \big\{ u_1(x) > 0 \big\}$,
but the more complicated proofs will make the constants depend on $N$.
Thus it seems that we cannot use them to get easily the result of this section. 
On the contrary, because of the result of this section, we can apply the
results of Section \ref{good} to an equivalent minimizer with $N$ bounded,
and get that the constants of Section \ref{good} do not depend on $N$.
\end{rem}

\section{The main non degeneracy condition; good domains}  
\label{good} 

In this section we return to one of the main schemes of the study
of free boundaries and give a a sufficient condition for the positive part
of the function $u_i$ associated to a minimizer $(\u,\W)$
to behave like the distance to the boundary of 
$\Omega_i = \big\{ x\in \Omega \, ; \, u_i(x) > 0 \big\}$.

This is a condition on $F$, which essentially says that we can remove any small part
$A$ of $W_i$, distribute some of it among the other $W_j$, and win in $F$ an amount 
which is proportional to $|A|$, and then all sorts of useful non degeneracy properties
for $u_i$ and $\Omega_i$ follow. 
See Theorems~\ref{t13.1}, \ref{t13.2},  \ref{t13.3}, and  \ref{t13.4} below.

But let us first describe this condition. Without loss of generality,
we shall restrict our attention to $i=1$.
We say that $i=1$ is a good index, or (with a small abuse of notation)
that $W_1$ is a \underbar{good domain}, when there exist 
$\lambda > 0$ and $\varepsilon > 0$ such that, 
for each measurable set $A \i W_1$, with $0 < |A| \leq \varepsilon$,
we can find disjoint subsets $A_j \i A$, $2 \leq j \leq N$, such that 
\begin{equation} \label{e13.1}
F(W_1\sm A, W_2 \cup A_2, \ldots, W_N \cup A_N) \leq F(\W) - \lambda |A|.
\end{equation}
Again, this means that if we have a small set $A \i W_1$, and we can somehow 
dispense with it (typically, because it is not very useful for making $E(\u)+M(\u)$
small, we can then give some of it to the other regions $W_j$, $j \geq 2$, 
throw out the rest, and we will win something substantial in the $F$-term of the functional. 
We will see that for good regions of a minimizer, due to the fact that the whole set
$W_1$ is really needed, we have some additional regularity
properties of $u$ near the free boundary $\d\big(\big\{ u_1 \neq 0 \big\}\big)$).

Here are some simple sufficient conditions for $W_1$ to be a good region.
First assume that $F(\W)$ is a function of $\V = (|W_1|, \dots , |W_N|)$, i.e., that
$F(W_1, \dots , W_N) = \wt F(|W_1|, \dots , |W_N|)$
for some function $\wt F : [0,|\Omega|]^N \to \R$, as in \eqref{e12.3}.
If $\wt F$ is differentiable at the point $\V$ (coming from the minimizer
$(\u,\W)$); then \eqref{e13.1}
holds (for some choice of $\lambda$ and $\varepsilon$) as soon as 
\begin{equation} \label{e13.2}
{\d \wt F\over \d V_1}(\V) > \inf\Big(0, {\d \wt F\over \d V_2}(\V), \ldots,
{\d \wt F\over \d V_N}(\V)\Big).
\end{equation}
Even more specifically, if $F((W_1, \dots , W_N) = \sum_{1 \leq i \leq N} F_i(|W_i|)$,
and each $F_i$ is differentiable at $|W_i|$, then \eqref{e13.1} and \eqref{e13.2} hold
as soon as 
\begin{equation} \label{e13.3}
F'_1(|W_1|) > 0 \ \text{ or } \ 
F'_1(|W_1|) > F'_j(|W_j|) \ \text{ for some } j > 1.
\end{equation}
In the more familiar context of Alt, Caffarelli, and Friedman where 
$F(\W) = \sum_i \int_{W_i} q_i$ as in \eqref{e1.7}, \eqref{e13.1} holds as soon as
\begin{equation} \label{e13.4}
q_1(x) \geq \lambda + \min(0, q_2(x), \ldots, q_N(x))
\ \text{ almost everywhere on } \Omega
\end{equation}
(and often the $q_i$ are nonnegative and the condition becomes $q_1 \geq \lambda$).
Notice that if $q_1 \geq \lambda$ everywhere, we do not need to compare $q_1$
with the other $q_i$.

We state two similar nondegeneracy results now, which we distinguish because they have slightly
different assumptions, then prove them, and then state and prove other ones.

\begin{thm}\label{t13.1} 
Let $(\u,\W)$ is a minimizer in $\F$ of the functional $J$, suppose that 
the data $f_1$ and $g_1$ are bounded, and that
\eqref{e13.1} holds for some choice of $\lambda>0$ and $\varepsilon>0$.
Also let $x\in \R^n$ and $r > 0$ be such that $\u$ is continuous on $B(x,r)$ and 
\begin{equation} \label{e13.5}
r \leq \min(1,\varepsilon^{1/n})  \text{ and $B(x,r/2)$ meets the boundary of  }
\Omega_1 = \big\{ x\in \Omega \, ; \, u_1(x) > 0 \big\}.
\end{equation}
Then
\begin{equation} \label{e13.6}
\fint_{B(x,r)} |u_{1,+}|^2 \geq c_1 r^2,
\end{equation}
where we set $u_{1,+} = \max(0,u_1)$, and
\begin{equation} \label{e13.7}
|\Omega_1 \cap B(x,r)| \geq c_2 r^n,   
\end{equation}
with constants $c_1 > 0$ and $c_2 > 0$ that depend only on
$n$, $\lambda$, $||f_1||_\infty$, $||g_1||_\infty$, and an upper bound for
$\fint_{B(x,r)} |\nabla u_{1,+}|^2$.
\end{thm}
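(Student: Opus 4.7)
The strategy is the Alt--Caffarelli non-degeneracy template, adapted to the multi-phase setting via the transferring condition \eqref{e13.1}: I would compare $(\u,\W)$ with a cut-off competitor and use \eqref{e13.1} to extract a gain $\lambda|A|$ in the $F$-term which, by minimality, must be balanced against losses in $E+M$. I would establish \eqref{e13.6} first and deduce \eqref{e13.7} from it via Lemma~\ref{l3.2}.

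\emph{Step 1: competitor.} Since the conclusion concerns only $u_{1,+}$ and $\Omega_1$, I modify only the positive part of $u_1$: take a smooth radial cutoff $\varphi$ equal to $0$ on $B(x,r/2)$, $1$ outside $B(x,r)$, with $|\nabla\varphi|\le C/r$, and set $u_1^\ast = u_1 - (1-\varphi)u_{1,+}$. This lies in $W^{1,2}(\R^n)$ because $u_{1,+}$ vanishes continuously across $\d\Omega_1$, equals $\varphi u_1$ on $\Omega_1$, and equals $u_1$ on the complement. Take $A=\Omega_1\cap B(x,r/2)$; by \eqref{e13.5}, $|A|\le Cr^n\le\varepsilon$, so \eqref{e13.1} furnishes disjoint $A_j\subset A$ with $\W^\ast=(W_1\setminus A,W_2\cup A_2,\ldots,W_N\cup A_N)$ satisfying $F(\W^\ast)\le F(\W)-\lambda|A|$. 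Setting $u_j^\ast=u_j$ for $j\ge 2$, one checks $(\u^\ast,\W^\ast)\in\F$.

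\emph{Step 2: main inequality.} Writing $\eta=r^{-2}\fint_{B(x,r)}u_{1,+}^2$ and letting $K$ denote the hypothesized upper bound on $\fint_{B(x,r)}|\nabla u_{1,+}|^2$, combining the energy bound \eqref{e6.5} (with $a=1/2$ and small $\tau>0$) with crude estimates on $M(\u^\ast)-M(\u)$ via $\|f_1\|_\infty$, $\|g_1\|_\infty$, and Cauchy--Schwarz, the minimality $J(\u,\W)\le J(\u^\ast,\W^\ast)$ yields
\[
\lambda|A|+\int_{B(x,r/2)}|\nabla u_{1,+}|^2 \;\le\; \tau K r^n + C_\tau(\eta+\sqrt\eta)\,r^n,
\]
with $C_\tau$ depending only on $n,\tau,\|f_1\|_\infty,\|g_1\|_\infty$.

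\emph{Step 3: proving \eqref{e13.6}.} Assume for contradiction $\eta<c_1$ with $c_1$ small. Choosing $\tau$ small in terms of $K,\lambda$ and then $c_1$ small in terms of $\tau,K,\lambda$, the displayed inequality forces both $|A|/r^n$ and $r^{-n}\int_{B(x,r/2)}|\nabla u_{1,+}|^2$ to be arbitrarily small. Since $u_{1,+}$ vanishes on $B(x,r/2)\setminus A$---now of nearly full measure---Poincaré--Friedrichs gives $\int_{B(x,r/2)}u_{1,+}^2\le Cr^2\int_{B(x,r/2)}|\nabla u_{1,+}|^2=o(r^{n+2})$. By \eqref{e9.6}, the function $u_{1,+}(z)+(\|g_1\|_\infty/4n)|z-y|^2$ is subharmonic, so at the free-boundary point $y\in B(x,r/2)\cap\d\Omega_1$ (where $u_{1,+}(y)=0$) the submean inequality converts this $L^2$-smallness into a pointwise bound $\sup_{B(y,r/4)}u_{1,+}\le\omega(c_1)r$ with $\omega(c_1)\to 0$ as $c_1\to 0$. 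Rerunning the cut-off construction centered at $y$ at radius $r/4$, one obtains a strictly positive gain $\lambda|\Omega_1\cap B(y,r/8)|>0$ (strictly positive because $y\in\d\Omega_1$), which now dominates the $E+M$ losses that have been rendered even smaller by the propagated pointwise control---contradicting minimality.

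\emph{Step 4: deducing \eqref{e13.7}.} Let $\psi\in C^\infty_c(B(x,r))$ with $\psi=1$ on $B(x,r/2)$ and $|\nabla\psi|\le C/r$. The function $u_{1,+}\psi$ vanishes outside $W_1\cap B(x,r)$, so Lemma~\ref{l3.2} gives
\[
\int (u_{1,+}\psi)^2 \;\le\; C|W_1\cap B(x,r)|^{2/n}\int|\nabla(u_{1,+}\psi)|^2 \;\le\; C|W_1\cap B(x,r)|^{2/n}(K+\eta)\,r^n.
\]
Applying \eqref{e13.6} at the ball $B(x,r/2)$ (valid because the argument of Step~3 adapts to this smaller scale after, if necessary, translating the center to the midpoint of $[xy]$, which keeps the free-boundary point inside the half-ball) yields $\int_{B(x,r/2)}u_{1,+}^2\ge c_1(r/2)^{n+2}$; hence $|W_1\cap B(x,r)|\ge c_2 r^n$. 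Finally, \eqref{e13.1} applied to $A=W_1\setminus\Omega_1$ (on which $u_1\equiv 0$, so $\u$ is unchanged while $F$ strictly decreases if $|A|>0$) forces $|W_1\setminus\Omega_1|=0$, so this is \eqref{e13.7}.

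The hard part will be Step~3: the cut-off competitor alone only shows that $|A|$ is small, which is not yet a contradiction. Bridging the gap requires quantitatively propagating $L^2$-smallness of $u_{1,+}$ to pointwise smallness via the perturbed mean-value inequality at the free-boundary point, and then a second application of the nondegeneracy-based cutoff at the new center where the $F$-gain is guaranteed to be strictly positive.
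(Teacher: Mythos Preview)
Your Steps 1 and 2 are correct and match the paper's setup exactly (see \eqref{e13.11}--\eqref{e13.18}): the cut-off competitor and the transferring condition \eqref{e13.1} yield precisely your main inequality
\[
\lambda|\Omega_1\cap B(x,r/2)|+\int_{B(x,r/2)}|\nabla u_{1,+}|^2
\;\le\;\tau K r^n+C_\tau(\eta+\sqrt\eta)\,r^n.
\]
Your Step 4 is also sound, and in fact gives a slightly cleaner route from \eqref{e13.6} to \eqref{e13.7} than the paper's parallel iteration.

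The genuine gap is in Step 3. After one pass you correctly deduce that both $|\Omega_1\cap B(x,r/2)|/r^n$ and $r^{-n}\int_{B(x,r/2)}|\nabla u_{1,+}|^2$ are small, and the sub-mean-value trick does give $\sup_{B(y,r/4)}u_{1,+}\le\omega(c_1)r$. But when you ``rerun'' the cut-off at $y$, the resulting inequality from minimality is again an \emph{upper} bound:
\[
\lambda|\Omega_1\cap B(y,r/8)| \;\le\; (\text{losses})\,r^n,
\]
not a lower bound. You claim the gain ``dominates the losses'', but you have no lower bound on $|\Omega_1\cap B(y,r/8)|$---it is merely strictly positive, possibly $10^{-10^{10}}r^n$. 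The loss side, though small, is controlled by constants $(\tau,c_1)$ that you must fix \emph{before} seeing the minimizer; you cannot make them smaller than an unknown positive number. So there is no contradiction.

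What the paper does instead is turn your main inequality into a \emph{decay step} (see \eqref{e13.26}): using the improved Poincar\'e inequality \eqref{e13.22} with the volume fraction $\theta(\rho)=\rho^{-n}|\Omega_1\cap B(y,\rho)|$, one shows that whenever $\theta(\rho)\le c_4$,
\[
e(\rho/2)+\theta(\rho/2)\;\le\;\tfrac12\bigl(e(\rho)+\theta(\rho)\bigr),
\qquad e(\rho)=\rho^{-n}\!\int_{B(y,\rho)}|\nabla u_{1,+}|^2.
\]
This can be \emph{iterated}: once $\theta(\rho)+e(\rho)\le c_4$, both quantities halve at each dyadic scale and hence tend to $0$, forcing $u_{1,+}(y)=0$. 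Doing this for every $y\in B(x,r/2)$ gives $u_1\le 0$ on the open ball $B(x,r/2)$, which is the contradiction with \eqref{e13.5}. The missing idea is precisely this iteration to zero; a finite number of reruns cannot close the argument because you would need \eqref{e13.7} (which you have not yet proved) to bound the gain from below.
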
  

\ms
The rest of the paper is full of sufficient conditions for $\u$
to be continuous (and anyway we just ask this so that we can talk about
the open set $\Omega_1$), and ways to estimate $\fint_{B(x,r)} |\nabla u_1|^2$;
we state the theorem like this to stress the small amount of information
that we will use.

\begin{thm}\label{t13.2} 
Let $(\u,\W)$ is a minimizer in $\F$ of the functional $J$, suppose that 
$f_1$ and $g_1$ are bounded and that \eqref{e13.1} holds for some choice 
of $\lambda>0$ and $\varepsilon>0$.
Let $x\in \R^n$ and $r > 0$ be such that $\u$ is continuous on $B(x,r)$,
that \eqref{e13.5} holds, and that for some $C_0 > 0$, either
\begin{equation} \label{e13.8}
\text{$u_1$ is $C_0$-Lipschitz on $B(0,r)$,}
\end{equation}
or 
\begin{equation}\label{e13.9}
|B(0,r) \sm \Omega_1| \geq C_0^{-1} r^n,
\end{equation}
or both. Then 
\begin{equation} \label{e13.10}
\fint_{B(x,r)} |\nabla u_{1,+}|^2 \geq c_3.
\end{equation}
The constant $c_3$ depends only on $n$, $\lambda$, $||f_1||_\infty$, $||g_1||_\infty$, 
and $C_0$.
\end{thm}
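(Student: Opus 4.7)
The plan is to deduce \eqref{e13.10} directly from the $L^2$ nondegeneracy \eqref{e13.6} of Theorem \ref{t13.1}, by showing that if $\fint_{B(x,r)} |\nabla u_{1,+}|^2 = \eta$ is very small then $\fint_{B(x,r)} u_{1,+}^2$ is even smaller, contradicting \eqref{e13.6}. Via the decoupling of Lemma \ref{t10.2}, I may assume $u_1 \ge 0$, so $u_{1,+} = u_1$ and $\Omega_1 = \{u_1 > 0\}$. I argue by contradiction, restricting to $\eta \le 1$ (otherwise \eqref{e13.10} holds trivially with $c_3 \le 1$). The hypotheses of Theorem \ref{t13.1} are subsumed by those of Theorem \ref{t13.2}, using the $\eta$-independent a priori bound $\max(1, C_0^2)$ for $\fint |\nabla u_1|^2$; this produces a constant $c_1 > 0$, depending only on the allowed parameters, such that $\fint_{B(x,r)} u_1^2 \ge c_1 r^2$. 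It thus remains to produce an upper bound $\fint u_1^2 \le \mu(\eta)\, r^2$ with $\mu(\eta) \to 0$ as $\eta \to 0$.

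In case \eqref{e13.9}, continuity of $\u$ and Remark \ref{r7.2} give $u_1 = 0$ on $B(x,r) \setminus \Omega_1$, a set of measure $\ge C_0^{-1} r^n$; a ball-version of \eqref{e4.6} (its proof on spheres transports verbatim) then yields $\int u_1^2 \le C(C_0)\, r^2 \int |\nabla u_1|^2 \le C(C_0)\, \eta\, r^{n+2}$, so $\mu(\eta) = C(C_0)\,\eta$. In case \eqref{e13.8}, let $\bar u_1 = \fint_{B(x,r)} u_1$. Poincar\'e--Wirtinger \eqref{e4.2} gives $\int (u_1 - \bar u_1)^2 \le C\eta\, r^{n+2}$. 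Since $B(x,r/2)$ meets $\d\Omega_1$ and $u_1$ is continuous, pick $z \in B(x,r/2)$ with $u_1(z) = 0$; the Lipschitz bound then forces $u_1 \le C_0 \rho$ on $B(z,\rho) \subset B(x,r)$ for any $0 < \rho \le r/2$. If $\bar u_1 \ge 2 C_0 \rho$, then $|u_1 - \bar u_1| \ge C_0 \rho$ on that ball of measure $\simeq \rho^n$, forcing $\rho \le C(C_0)\, \eta^{1/(n+2)}\, r$ by comparison with the Poincar\'e--Wirtinger bound. The contrapositive gives $\bar u_1 \le C(C_0)\, \eta^{1/(n+2)}\, r$, whence
\begin{equation*}
\int u_1^2 \le 2\, |B(x,r)|\, \bar u_1^2 + 2 \int (u_1 - \bar u_1)^2 \le C(C_0)\, \eta^{2/(n+2)}\, r^{n+2},
\end{equation*}
so $\mu(\eta) = C(C_0)\, \eta^{2/(n+2)} \to 0$. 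In both cases, choosing $\eta$ so small that $\mu(\eta) < c_1$ contradicts the lower bound from Theorem~\ref{t13.1}.

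The main obstacle is case \eqref{e13.8}: absent a set of definite measure where $u_1$ vanishes, a zero-boundary Poincar\'e inequality is unavailable, and the Lipschitz bound alone yields only the pointwise estimate $u_1 \le C_0 r$, which is too weak to conflict with $\fint u_1^2 \ge c_1 r^2$. The remedy is to combine Poincar\'e--Wirtinger (which controls the $L^2$ oscillation from the gradient) with a Chebyshev-type volume argument on a small Lipschitz ball centered at the known zero $z$ of $u_1$: Lipschitz continuity near $z$ prevents the mean $\bar u_1$ from being much larger than $\eta^{1/(n+2)} r$, which then controls $\fint u_1^2$ and contradicts the Theorem \ref{t13.1} lower bound once $\eta$ is small enough.
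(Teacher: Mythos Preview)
Your proof is correct and, for case \eqref{e13.9}, is genuinely simpler than the paper's. The paper handles \eqref{e13.9} by re-running the full cut-off competitor estimate from the proof of Theorem~\ref{t13.1}: it bounds $\theta(r_1/2)$ and $e(r_1/2)$ via \eqref{e13.17}--\eqref{e13.20}, then iterates \eqref{e13.26} to force $u_1(y)\le 0$ at every $y\in B(x,r/2)$, contradicting \eqref{e13.5}. You bypass all of this by observing that \eqref{e13.9} gives a definite set where $u_1$ vanishes, so the ball version of the Poincar\'e inequality \eqref{e4.6} directly yields $\fint u_1^2 \le C(C_0)\,\eta\, r^2$, which immediately conflicts with \eqref{e13.6}. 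Your route is shorter and reuses Theorem~\ref{t13.1} as a black box, which is natural since its hypotheses are available.

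For case \eqref{e13.8}, your argument is essentially the same as the paper's (both bound the mean of $u_1$ via the Lipschitz control near a zero $z\in B(x,r/2)$, combined with Poincar\'e--Wirtinger on $B(x,r)$), with only a cosmetic difference in implementation. The paper averages over a small ball $B(y,\eta r)$ and uses the $L^1$ Poincar\'e inequality to bound $m$ directly, then optimizes over $\eta$; you instead run a Chebyshev-volume argument on $B(z,\rho)$ and optimize over $\rho$, obtaining the slightly sharper decay $\mu(\eta)\lesssim \eta^{2/(n+2)}$ (the paper's implicit exponent is $\eta^{1/(n+1)}$). One small point to watch in your contrapositive step: the choice $\rho=\bar u_1/(2C_0)$ must satisfy $\rho\le r/2$, but if it does not then $\bar u_1>C_0 r$ and the same Chebyshev argument with $\rho=r/2$ already gives a contradiction for $\eta$ small. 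So the logic goes through.
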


\ms
These two results are standard in the context of Alt, Caffarelli, and Friedman; 
our proof is slightly different because we try to rely more on the cut-off competitors, 
but it is not surprising. Even the assumption \eqref{e13.8} will not cost us much in practice,
because we are ready to assume that $\Omega$ is bounded and $C^{1+\alpha}$ and apply
Theorem \ref{t11.1}, and also because the conclusions of Theorem \ref{t13.2} 
are easier to use when we know that $\u$ is Lipschitz. In addition, \eqref{e13.9}
would automatically hold if $x\in \d\Omega$ and $r$ is small, under very weak assumptions 
on $\Omega$. Other than that, we shall not use the geometry of $\Omega$ in this section.

We shall prove the two theorems at the same time.
The main ingredient will be a comparison with the cut-off competitor, which we shall
use repeatedly with sometimes slightly different estimates.

We shall start the proof with any ball $B(y,r)$, which may be different from the ball of 
the theorem, and we shall only assume that
\begin{equation} \label{e13.11}
r \leq 1 \ \text{ and } |B(y,r/2)| \leq \varepsilon,
\end{equation}
where $\varepsilon$ still comes from \eqref{e13.1}.

For simplicity, we shall assume that $y=0$. We choose the following
the cut-off competitor: we select the first index $i=1$, take $a=1/2$, 
define $\varphi$ as in \eqref{e6.1}, and denote by $\u^\ast$ the function 
defined by $u_1^\ast(x) = \varphi(|x|) u_1(x)$ for $x\in \Omega_1$,
$u_1^\ast(x) = u_1(x)$ for $x\in \R^n \sm \Omega_1$,
and $u_j^\ast = u_j$ for $j \geq 2$. Another equivalent definition of $u_1^\ast$ is
$u_1^\ast(x) = \min(u_1(x),\varphi(|x|) u_1(x))$ (because $0 \leq \varphi \leq 1$),
from which the fact that $u_1^\ast \in W^{1,2}(\R^n)$ is more obvious.

To  define $\W^\ast$, we use the fact that $u_1 = 0$ on $\Omega_1 \cap B(0,r/2)$ 
to change the $W_i$ to our advantage. Since \eqref{e13.11} holds, we can set
$A = \Omega_1 \cap W_1 \cap B(0,r/2)$ and choose $A_2, \ldots A_N$ 
so that \eqref{e13.1} holds (we intersect with $W_1$ because formally
$\Omega_1$ is only almost everywhere contained in $W_1$).
Then we set $\W^\ast = (W_1\sm A, W_2 \cup A_2, \ldots, W_N \cup A_N)$,
and \eqref{e13.1} says that
\begin{equation} 
\label{e13.12}
F(\W^\ast) \leq F(\W) - \lambda |A| = F(\W) - \lambda |\Omega_1 \cap B(0,r/2)|.
\end{equation}
As in Section \ref{favorites}, it is easy to see that $(\u^\ast,\W^\ast) \in \F$,
the integrals $\int |\nabla u_j|^2$, $j \geq 2$, stay the same, and for $j=1$,
$\int_{\R^n \sm \Omega_1} |\nabla u_1|^2$ stays the same, while
the analogue of \eqref{e6.5} for $u_{1,+}$, with $a=1/2$ says that
\begin{eqnarray}\label{e13.13}
\int_{\Omega_1 \cap B(0,r)} |\nabla u_{1}^\ast|^2 
&=& \int_{B(0,r)} |\nabla u_{1,+}^\ast|^2
\nonumber\\
&\leq& (1+\tau) \int_{B(0,r) \sm B(0,r/2)} |\nabla u_{1,+}|^2 
 + 16 (1+\tau^{-1})\, r^{-2} \int_{B(0,r) \sm B(0,r/2)} |u_{1,+}|^2,
\end{eqnarray}
where we can choose $\tau > 0$ as we like. That is,
\begin{eqnarray}\label{e13.14}
\int_{B(0,r)} |\nabla u_{1}^\ast|^2 - \int_{B(0,r)} |\nabla u_{1}|^2
&\leq&  - \int_{B(0,r/2)} |\nabla u_{1,+}|^2 
+\tau \int_{B(0,r)\sm B(0,r/2)} |\nabla u_{1,+}|^2 
\nonumber\\
&\,& \hskip 2.5cm
+ 16 (1+\tau^{-1}) \, r^{-2} \int_{B(0,r) \sm B(0,r/2)} |u_{1,+}|^2.
\end{eqnarray}
Then we estimate the $M$-terms. Notice that only $u_{1,+}$ changes, so
\begin{eqnarray}\label{e13.15}
|M(\u^\ast) - M(\u)| &=&  \Big|\int [(u^\ast_1)^2 f_1 - u_1^2 f_1 
- u^\ast_1 g_1 + u_1 g_1] \Big|
\nonumber\\
&\leq& ||f_1||_\infty \int_{B(0,r)} u_{1,+}^2 
+ ||g_1||_\infty \int_{B(0,r)} u_{1,+}
\nonumber\\
&\leq& C \int_{B(0,r)} u_{1,+}^2 
+ C r^{n/2} \Big\{ \int_{B(0,r)} u_{1,+}^2 \Big\}^{1/2}
\end{eqnarray}
because $0 \leq |u_1^\ast| \leq |u_1|$ everywhere and by Cauchy-Schwarz.
Since 
\begin{eqnarray}\label{e13.16}
0 &\leq& J(\u^\ast,\W^\ast) - J(\u,\W)
\nonumber\\
&=& \int_{B(0,r)} |\nabla u_{1}^\ast|^2 - \int_{B(0,r)} |\nabla u_{1}|^2
+ M(\u^\ast) - M(\u) - F(\W^\ast) - F(\W)
\end{eqnarray}
by minimality, we deduce from \eqref{e13.12}, \eqref{e13.14}, and \eqref{e13.15} that
\begin{equation}\label{e13.17}
\int_{B(0,r/2)} |\nabla u_{1,+}|^2 + \lambda |\Omega_1 \cap B(0,r/2)| \leq \alpha(r),
\end{equation}
where 
\begin{eqnarray}\label{e13.18}
\alpha(r)
&=&  \tau \int_{B(0,r) \sm B(0,r/2)} |\nabla u_{1,+}|^2 
+ 16 (1+\tau^{-1}) r^{-2} \int_{B(0,r) \sm B(0,r/2)} u_{1,+}^2
+ |M(\u^\ast) - M(\u)|
\nonumber\\
&\leq& \tau \int_{B(0,r)} |\nabla u_{1,+}|^2 
+ [16 (1+\tau^{-1})  + Cr^2 ] \int_{B(0,r)} r^{-2} u_{1,+}^2
+ C r^{n/2} \Big\{ \int_{B(0,r)} u_{1,+}^2 \Big\}^{1/2}
\nonumber\\
&\leq& \tau \int_{B(0,r)} |\nabla u_{1,+}|^2 
+ C \tau^{-1} \int_{B(0,r)} r^{-2} u_{1,+}^2
+ C r r^{n/2} \Big\{ \int_{B(0,r)} r^{-2} u_{1,+}^2 \Big\}^{1/2}
\end{eqnarray}
because we shall take $\tau \leq 1/2$, and since $r \leq 1$.
It will be easier to use this when \eqref{e13.9} holds, because then 
the analogue for the ball $B(0,r)$ of the Poincar\'e estimate \eqref{e4.6} yields
\begin{equation}\label{e13.19}
\int_{B(0,r)} r^{-2} u_{1,+}^2 \leq C C_0 \int_{B(0,r)} |\nabla u_{1,+}|^2
\end{equation}
and \eqref{e13.18} implies that
\begin{equation}\label{e13.20}
\alpha(r) \leq (\tau + C C_0 \tau^{-1}) \int_{B(0,r)} |\nabla u_{1,+}|^2
+ C C_0^{1/2} r^{{n \over 2}+1} \Big\{ \int_{B(0,r)} |\nabla u_{1,+}|^2 \Big\}^{1/2}
\end{equation}
when \eqref{e13.9} holds. Similarly, set 
\begin{equation}\label{e13.21}
\theta(r) = r^{-n} |\Omega_1 \cap B(0,r)| = 
r^{-n}\big|\big\{ x\in B(0,r) \, ; \, u_1(x) > 0 \big\}\big|
\end{equation}
almost as in \eqref{e12.9}; when $\theta(r)$ is small, we can rather use 
the analogue of \eqref{e4.7} for the ball $B(0,r)$, which yields
\begin{equation}\label{e13.22}
\int_{B(0,r)} r^{-2} u_{1,+}^2 \leq C \theta(r)^{2/n} \int_{B(0,r)} |\nabla u_{1,+}|^2.
\end{equation}
In this case, it is also our interest to revise our application of Cauchy-Schwarz
in \eqref{e13.15}, because we can say that
\begin{eqnarray}\label{e13.23}
||g_1||_\infty \int_{B(0,r)} u_{1,+}
&\leq& C |\Omega_1 \cap B(0,r)|^{1/2} \Big\{ \int_{B(0,r)} u_{1,+}^2 \Big\}^{1/2}
\nonumber\\
&\leq& C \theta(r)^{{1 \over 2} + {1 \over 2n}} \,  r^{{n \over 2}+1}
\Big\{ \int_{B(0,r)} |\nabla u_{1,+}|^2 \Big\}^{1/2}.
\end{eqnarray}
Thus we can win an extra $\theta(r)^{1/2}$ in our estimate, and deduce from
the proof of \eqref{e13.20} that
\begin{equation}\label{e13.24}
\alpha(r) \leq \big(\tau + C \tau^{-1}\theta(r)^{2/n}\big) \int_{B(0,r)} |\nabla u_{1,+}|^2
+ C \theta(r)^{{1 \over 2} + {1 \over 2n}} \, r^{{n \over 2}+1} 
\Big\{ \int_{B(0,r)} |\nabla u_{1,+}|^2 \Big\}^{1/2}.
\end{equation}
To be honest, we don't really need this extra power if we are willing
to take $r$ small, but we shall use it to show that we don't need the extra power 
of $r$ that we also get in the last term of \eqref{e13.24}, and which we could use 
to make things small. Set 
\begin{equation}\label{e13.25}
e(r) = r^{-n} \int_{B(0,r)} |\nabla u_{1,+}|^2
\end{equation}
to simplify our discussion, and let us show that there exists a constant
$c_4 > 0$ (that depends only on $n$, $||f_1||_\infty$, and $||g_1||_\infty$)
such that
\begin{equation}\label{e13.26}
e(r/2)+\theta(r/2) \leq {1\over 2} \big(e(r) + \theta(r) \big)
\ \text { if $\theta(r) \leq c_4$.}
\end{equation}
Indeed, we can use \eqref{e13.24}, and then
\begin{eqnarray}\label{e13.27}
e(r/2)+\theta(r/2) 
&=& 2^n r^{-n} \Big\{ \int_{B(0,r/2)} |\nabla u_{1,+}|^2 + |B(0,r) \cap \Omega_1| \Big\}
\leq 2^n r^{-n} \lambda^{-1} \alpha(r)
\nonumber\\
&\leq&  C \lambda^{-1} (\tau + C \tau^{-1}\theta(r)^{2/n}) e(r)
+ C \lambda^{-1} \theta(r)^{{1 \over 2} + {1 \over 2n}} r e(r)^{1/2}
\end{eqnarray}
by \eqref{e13.17}, because we can safely assume that $\lambda \leq 1$, and 
by \eqref{e13.24}. We drop the extra power of $r$, 
use the assumption that $\theta(r) \leq c_4$, and get that
\begin{equation}\label{e13.28}
e(r/2)+\theta(r/2) 
\leq C \lambda^{-1} \tau e(r)
+ C \lambda^{-1} \tau^{-1} c_4 ^{2/n}  e(r)
+ C \lambda^{-1} c_4 ^{1/2n} (\theta(r) e(r))^{1/2}.
\end{equation}
We choose $\tau$ so small (depending on $\lambda$) 
that $C \lambda^{-1} \tau \leq 1/8$, and choose $c_4$,
depending on $\tau$ and $\lambda$, so small that \eqref{e13.28} yields
\begin{equation} \label{e13.29}
e(r/2)+\theta(r/2) \leq {1 \over 4} e(r) + {1 \over 4}(\theta(r) e(r))^{1/2}
\leq {1 \over 4} e(r) + {1 \over 4}(e(r) + \theta(r)),
\end{equation}
(because $ab \leq a^2+b^2$), as needed for \eqref{e13.26}. Observe now that
\begin{equation} \label{e13.30}
u_{1,+}(0)= 0 \ \text { if $\theta(r) \leq c_4$ for $r$ small enough,}
\end{equation}
because iterations of \eqref{e13.26} imply that 
$\lim_{k \to +\infty}\theta(2^{-k} r) = 0$, which is impossible
if $u_1(0) > 0$ (recall that $u$ is continuous).

\ms
We may now return to the proof of our two theorems. Let $B(x,r)$ be as in 
any of the two statements, and first assume that 
$r \leq \min(1,\varepsilon^{1/n})$ but \eqref{e13.6} fails.
We want to show that $u_1(y) \leq 0$ for every $y\in B(x,r/2)$,
and as before we may assume for simplicity that $y=0$.
We want to apply the estimates above to the ball
$B(y,r_1)$, with $r_1 = (2 \sqrt n)^{-1} r$. 
We choose this strange formula for $r_1$ just to make sure
that $|B(y,r_1/2)| \leq \varepsilon$ since $r^n \leq \varepsilon$
(as in our assumptions). This way \eqref{e13.11} holds for $r_1$, we can
use the estimates above, and we get that
\begin{equation} \label{e13.31}
e(r_1/2)+\theta(r_1/2) \leq 2^n r_1^{-n} \lambda^{-1} \alpha(r_1)
\end{equation}
by the first part of \eqref{e13.27}. Then, by \eqref{e13.18},
\begin{eqnarray} 
\label{e13.32}
r_1^{-n} \alpha(r_1) &\leq& C \tau \fint_{B(0,r_1)} |\nabla u_{1,+}|^2 
+ C \tau^{-1} \fint_{B(0,r_1)} r^{-2} u_{1,+}^2
+ C r \Big\{ \fint_{B(0,r_1)} r^{-2} u_{1,+}^2 \Big\}^{1/2}
\nonumber\\
&\leq&
C \tau \fint_{B(x,r)} |\nabla u_{1,+}|^2 
+ C \tau^{-1} \fint_{B(x,r)} r^{-2} u_{1,+}^2
+ C r \Big\{ \fint_{B(x,r)} r^{-2} u_{1,+}^2 \Big\}^{1/2}
\end{eqnarray}
Set $\Lambda = \fint_{B(x,r)} |\nabla u_{1,+}|^2$ (recall that we are allowed
to choose $c_1$ depending on $\Lambda$), and use the fact that
\eqref{e13.6} fails. This gives
\begin{equation} 
\label{e13.33}
r_1^{-n} \alpha(r_1) \leq C \tau  \Lambda + C \tau^{-1} c_1 + C r c_1^{1/2}
\end{equation}
and, by \eqref{e13.31}, 
$e(r_1/2)+\theta(r_1/2) \leq C \lambda^{-1} [\tau \Lambda + \tau^{-1} c_1 + r c_1^{1/2}]$.
If we choose $\tau > 0$ small enough, then $c_1$ even smaller, this implies
that $e(r_1/2)+\theta(r_1/2) \leq c_4$.
Then we can apply \eqref{e13.26} to the radius $r_1/2$, and get
that $e(r_1/4)+\theta(r_1/4) \leq c_4/2$. We iterate the argument
and find that $e(2^{-k} r_1)+\theta(2^{-k}r_1) \leq 2^{-k+1}c_4$
for $k \geq 1$, hence $\lim_{\rho \to 0} \theta(\rho) = 0$.
And now \eqref{e13.30} says that $u_1(y) \leq 0$
(recall that we assumed that $y=0$ for simplicity).

Thus we proved that if $r \leq \min(1,\varepsilon^{1/n})$ but \eqref{e13.6} fails, 
$u_1 \leq 0$ on $B(x,r/2)$. This is impossible under our assumption \eqref{e13.5},
so \eqref{e13.6} holds.

\ms 
We now prove \eqref{e13.7} similarly. Suppose that $r \leq \min(1,\varepsilon^{1/n})$
but \eqref{e13.7} fails, pick any $y\in B(x,r/2)$, and assume by translation invariance
that $y = 0$. Let $k \geq 0$ be an integer, that will be chosen soon in terms
of $\Lambda = \fint_{B(x,r)} |\nabla u_{1,+}|^2$. 
Set $r_1 = (2 \sqrt n)^{-1} r$ as above, and observe that for $0 \leq j \leq k$,
\begin{equation} 
\label{e13.34}
\theta(2^{-j} r_1) = 2^{nj} r_1^{-n} |\Omega_1 \cap B(0,2^{-j}r_1)| 
\leq C 2^{nj} r^{-n} |\Omega_1 \cap B(x,r)| \leq C 2^{nk} c_2 < c_4
\end{equation}
by \eqref{e13.21}, because \eqref{e13.7} fails, and if $c_2$ is chosen small
enough (depending on $k$).

Thus we can apply \eqref{e13.26} to the radii $2^{-j}r_1$,
 $0 \leq j \leq k$, and we get that 
\begin{equation} 
\label{e13.35}
\theta(2^{-k-1} r_1) + e(2^{-k-1} r_1)
\leq 2^{-k-1} (\theta(r_1) + e(r_1))
\leq 2^{-k-1} (c_4 + C \Lambda) < c_4,
\end{equation}
if $k$ is chosen large enough. Starting from that point, we can 
use \eqref{e13.26} to prove by induction that
$\theta(2^{-j} r_1) + e(2^{-j} r_1) \leq 2^{-j} (c_4 + C \Lambda) 
< c_4$ for $j \geq k+1$, as we did below \eqref{e13.33}.
Again \eqref{e13.30} says that $u_1(y) \leq 0$, and we can conclude
as above. This concludes our proof of Theorem \ref{t13.1}.
\qed

\ms
Now we switch to Theorem \ref{t13.2}, and start under the assumption \eqref{e13.9}.
Suppose that $r \leq \min(1,\varepsilon^{1/n})$ and \eqref{e13.9} holds, but
that \eqref{e13.10} fails, and pick any $y\in B(x,r/2)$. As usual, assume that $y=0$
for simplicity. Choose $r_1$ as above; 
because of \eqref{e13.9}, we can use \eqref{e13.19}, and hence
\begin{eqnarray} \label{e13.36}
\lambda \theta(r_1/2)
&=& 2^n\lambda r_1^{-n} |\Omega_1 \cap B(0,r_1/2)| 
\leq 2^n r_1^{-n} \alpha(r_1)
\nonumber\\
&\leq& C\tau \fint_{B(0,r_1)} |\nabla u_{1,+}|^2 
+ C \tau^{-1} \fint_{B(0,r_1)} r^{-2} u_{1,+}^2
+ C r  \Big\{ \fint_{B(0,r_1)} r^{-2} u_{1,+}^2 \Big\}^{1/2}
\nonumber\\
&\leq& C\tau \fint_{B(x,r)} |\nabla u_{1,+}|^2 
+ C \tau^{-1} \fint_{B(x,r)} r^{-2} u_{1,+}^2
+ C r  \Big\{ \fint_{B(x,r)} r^{-2} u_{1,+}^2 \Big\}^{1/2}
\\
&\leq& C [\tau + \tau^{-1} C_0] \fint_{B(x,r)} |\nabla u_{1,+}|^2 
+ C r C_0^{1/2} \Big\{ \fint_{B(x,r)} |\nabla u_{1,+}|^2 \Big\}^{1/2}
\nonumber\\
&\leq& C [\tau + \tau^{-1} C_0] c_3  + C r C_0^{1/2} c_3^{1/2}
\nonumber
\end{eqnarray}
by \eqref{e13.17}, \eqref{e13.18}, \eqref{e13.19}, and because \eqref{e13.10} fails.
In addition, $e(r_1/2) \leq C \fint_{B(x,r)} |\nabla u_{1,+}|^2 \leq C c_3 < c_4/2$
by the definition \eqref{e13.25} and because \eqref{e13.10} fails.

We take $\tau = 1/2$, then choose $c_3$ very small and get that
$\theta(r_1/2) + e(r_1/2) \leq c_4$. Then we can apply \eqref{e13.26}
iteratively, as we did twice before, get that $u_1(y) \leq 0$,
and conclude as before. Again we managed not to use the extra $r$ in the last term.

\ms
We are left with the case when we only assume that $u_1$ is
$C_0$-Lipschitz, as in \eqref{e13.8}. 
Let $B(x,r)$ satisfy the assumptions of the theorem, 
and use \eqref{e13.5} to pick $y\in B(x,r/2)$, with $u_1(y) = 0$. 
Then let $\eta > 0$ be small,
to be chosen soon, and notice that $|u_1(z)| \leq C_0 \eta r$
for $z\in B(y,\eta r)$. Set $m = \fint_{B(x,r)} u_{1,+}$; then by Poincar\'e
(see \eqref{e4.2}),
\begin{eqnarray} \label{e13.37}
m &=& \fint_{B(y,\eta r)} \big[u_{1,+}(z) - (u_{1,+}(z)-m)\big] 
\leq \fint_{B(y,\eta r)} \big[ |u_{1,+}(z)|  + |u_{1,+}(z)-m| \big]
\nonumber\\
&\leq& C_0 \eta r + \fint_{B(y,\eta r)} |u_{1,+}(z)-m|
\leq C_0 \eta r + \eta^{-n} \fint_{B(x,r)} |u_{1,+}(z)-m|
\nonumber\\
&\leq&  C_0 \eta r + C\eta^{-n} r \fint_{B(x,r)} |\nabla u_{1,+}|
\leq C_0 \eta r + C \eta^{-n} r \Big\{\fint_{B(x,r)} |\nabla u_{1,+}|^2 \Big\}^{1/2}
\\
&\leq& C_0 \eta r + C \eta^{-n} c_3^{1/2} r
\nonumber
\end{eqnarray}
 if \eqref{e13.10} fails. Then, by Poincar\'e again, 
 \begin{eqnarray} \label{e13.38}
\fint _{B(x,r)} |u_{1,+}|^2 &\leq & 2 m^2 + 2 \fint _{B(x,r)} |u_{1,+}-m|^2 
\leq 2 m^2 + C r^2 \fint _{B(x,r)} |\nabla u_{1,+}|^2 
 \nonumber\\
&\leq& 2 m^2 + C r^2 c_3  \leq C C_0^2 \eta^2 r^2 + C \eta^{-2n} c_3 r^2.
\end{eqnarray}
We shall choose $c_3 \leq 1$ soon; then let $c_1$ be as in 
Theorem \ref{t13.1}, with the bound $\fint _{B(x,r)} |\nabla u_{1,+}|^2 \leq 1$.
If we choose $\eta$, and then $c_3$ small enough (depending on this $c_1$),
we deduce from \eqref{e13.38} that $\fint _{B(x,r)} |u_{1,+}|^2 \leq c_1 r^2$,
i.e., that \eqref{e13.6} fails. This is impossible, by Theorem \ref{t13.1}, 
and this contradiction completes our proof of Theorem \ref{t13.2}.
\qed

\ms
For the next nondegeneracy result, we assume that \eqref{e13.1} holds
and $u_1$ is continuous, and compare $u_1(x)$ with the distance
\begin{equation} \label{e13.39}
\delta(x) =  \dist(x, \R^n \sm \Omega_1),
\text{ where again } \Omega_1 = \big\{ x\in \Omega \, ; \, u_1(x) > 0 \big\}.
\end{equation}

\begin{thm}\label{t13.3} 
Let $(\u,\W)$ is a minimizer in $\F$ of the functional $J$, suppose that 
$u_1$ is continuous, that $f_1$ and $g_1$ are bounded, 
and that \eqref{e13.1} holds for some choice of 
$\lambda>0$ and $\varepsilon>0$. For each $C_0 \geq 1$, there 
is a constant $c_5 > 0$, that depends only on 
$n$, $\lambda$, $||f_1||_\infty$, $||g_1||_\infty$, and $C_0$,
such that 
\begin{equation} \label{e13.40}
u_1(x) \geq c_5 \min(\delta(x), \varepsilon^{1/n}, 1)
\end{equation}
as soon as $x\in \Omega_1$ and $u_1$ is 
$C_0$-Lipschitz on $B(x,\delta(x)/2)$.
\end{thm}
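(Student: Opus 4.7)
The plan is to argue by contradiction: assume $u_1(x) = t$ satisfies $t < c_5 \rho$ with $\rho := \min(\delta(x),\varepsilon^{1/n},1)$, for a small $c_5 > 0$ depending on $n$, $\lambda$, $\|f_1\|_\infty$, $\|g_1\|_\infty$, and $C_0$, to be fixed at the end. I will contradict the minimality of $(\u,\W)$ by running the cut-off competitor of Theorem~\ref{t13.1} on a small ball $B(x,R)$, but with an improved pointwise control on $\sup_{B(x,R)} u_1$ coming from a harmonic replacement. Set $R = \rho/K$ with $K \geq 8$ large (to be chosen in terms of the data). Then $B(x,2R) \subset B(x,\delta(x)/2) \subset \Omega_1$, so the Lipschitz hypothesis applies on $B(x,R)$; moreover $R \leq \varepsilon^{1/n}/8$, so \eqref{e13.1} can be invoked with $A = W_1 \cap B(x,R/2)$, and since $B(x,R/2) \subset \Omega_1$ we have $|A| \asymp R^n$.

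Running the cut-off competitor argument from the proof of Theorem~\ref{t13.1} (index $1$, $a = 1/2$) together with \eqref{e13.1} yields the analogue of \eqref{e13.17}:
\begin{equation*}
\int_{B(x,R/2)} |\nabla u_1|^2 + c_n\,\lambda\, R^n \leq \alpha(R),
\end{equation*}
with $\alpha(R)$ bounded as in \eqref{e13.18}. If one simply plugs the Lipschitz bound $\sup_{B(x,R)} u_1 \leq t + C_0 R$ into \eqref{e13.18}, the resulting energy cost is of order $C_0^2 R^n$, which only beats the $F$-savings $\lambda R^n$ when $\lambda \gtrsim C_0^2$. The key improvement is as follows: let $v$ be the harmonic function on $B(x,2R)$ with boundary values $u_1|_{\partial B(x,2R)}$. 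Since $u_1$ solves $-\Delta u_1 + f_1 u_1 = g_1/2$ on $\Omega_1 \supset B(x,2R)$ by \eqref{e9.6} and $\|u_1\|_\infty$ is bounded by Theorem~\ref{t5.1}, a standard Poisson estimate for $-\Delta(u_1-v) = g_1/2 - f_1 u_1$ with zero boundary data gives $\|u_1 - v\|_{L^\infty(B(x,2R))} \leq C_* R^2$ for $C_*$ depending only on the data. The maximum principle gives $v \geq 0$, and Harnack for nonnegative harmonic functions on $B(x,2R)$ gives $\sup_{B(x,R)} v \leq C_H\, v(x)$ with $C_H$ dimensional. Combining,
\begin{equation*}
\sup_{B(x,R)} u_1 \leq C_H\,t + (C_H + 1)\,C_* R^2.
\end{equation*}

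Substituting this bound into \eqref{e13.18}, using $\int_{B(x,R)} |\nabla u_1|^2 \leq C_0^2|B(x,R)|$ from the Lipschitz hypothesis, dividing by $R^n$, optimizing the auxiliary parameter $\tau$ in \eqref{e13.18}, and using $t/R \leq c_5 K$, the inequality $c_n \lambda R^n \leq \alpha(R)$ reduces to an estimate of the form
\begin{equation*}
\lambda \;\lesssim\; C_0\,C_H\,\sqrt{c_5^2 K^2 + C_*^2 R^2} \;+\; \text{(l.o.t.\ in $R$ and $c_5$)}.
\end{equation*}
Choosing $K$ of order $1/\lambda$ (with the implicit constants depending on $C_0$, $C_H$, $C_*$, and $\|g_1\|_\infty$) so that $C_*^2 R^2 = C_*^2 \rho^2/K^2$ is dominated by the first term under the square root and the lower-order contributions are absorbed, this forces $c_5 K \gtrsim \lambda$, and hence $c_5 \gtrsim \lambda^2/(C_0^2 \cdot \text{data})$. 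Taking $c_5$ strictly below this explicit positive threshold contradicts the key inequality, so \eqref{e13.40} holds.

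The main obstacle, and what makes the argument more delicate than the proofs of Theorems~\ref{t13.1} and \ref{t13.2}, is the scale mismatch between the $C_0^2 R^n$ energy cost of the cut-off competitor and the $\lambda R^n$ volume savings: these are comparable and a pure Lipschitz input offers no improvement. Overcoming this requires exploiting the \emph{equation} satisfied by $u_1$ on $\Omega_1$ rather than just its Lipschitz regularity, in order to upgrade $\sup_{B(x,R)} u_1 \lesssim t + C_0 R$ to $\sup_{B(x,R)} u_1 \lesssim t + R^2$; the extra power of $R$ together with the freedom to shrink $R = \rho/K$ is precisely what unlocks the contradiction for all $\lambda > 0$ and $C_0 \geq 1$.
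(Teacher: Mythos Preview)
Your argument is correct and rests on the same core principle as the paper's: if $u_1(x)$ is too small relative to $\delta(x)$, then (since $u_1$ is nearly harmonic on $\Omega_1$) $u_1$ is uniformly small on a ball $B(x,R)$ well inside $\Omega_1$, and the cut-off competitor together with \eqref{e13.1} produces a net decrease in $J$, contradicting minimality.

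The implementations differ. The paper runs two competitors in sequence: first the harmonic competitor (the Poisson extension $u_1^\ast$ of $u_1|_{S_r}$), and minimality gives $\int_{B(0,r)}|\nabla(u_1-u_1^\ast)|^2 \leq C r^{n+2}$; this, combined with the mean-value property of the positive harmonic $u_1^\ast$, controls $u_1^\ast(0)$ in terms of $u_1(0)$, and hence controls \emph{both} $\fint |\nabla u_1|^2$ and $\fint u_1^2$ on the half-ball. The cut-off competitor is then applied with these improved bounds (several auxiliary parameters $\eta$, $\eta_1$, $r_0$ are threaded through). You instead use the PDE \eqref{e9.6} directly: the Poisson estimate gives $\|u_1-v\|_{L^\infty(B(x,2R))} \leq C_\ast R^2$ with $v$ harmonic, and Harnack for $v>0$ upgrades $u_1(x)=t$ to $\sup_{B(x,R)} u_1 \lesssim t + R^2$; this is inserted into the $u_1^2$ terms of \eqref{e13.18}, while for the gradient term you fall back on the Lipschitz hypothesis. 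Your route is more streamlined (one competitor, Harnack in place of the paper's chain of Poincar\'e and mean-value manipulations), at the cost of the raw $\tau C_0^2$ energy term that has to be absorbed by choosing $\tau\sim\lambda/C_0^2$, whence the $c_5\sim\lambda^2/C_0^2$ dependence you obtain.

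One minor point: invoking Theorem~\ref{t5.1} to bound $\|u_1\|_\infty$ is not quite licit here, since Theorem~\ref{t13.3} does not assume $|\Omega|<\infty$ or a bound on $F$. But this is harmless: on $B(x,2R)$ the Lipschitz hypothesis already gives $u_1 \leq t + 2C_0 R$, and one may assume $t\leq 1$ (else \eqref{e13.40} holds trivially for $c_5\leq 1$), so the right-hand side of the Poisson estimate is bounded in terms of the stated data.
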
  

\ms
Our Lipschitz assumption looks complicated because it depends on $\delta(x)$;
of course the simplest is to assume that $\u$ is globally Lipschitz,
for instance because of Theorem \ref{t11.1}, but we should also be able to manage 
without smoothness assumption on $\Omega$. Indeed, $u_1 > 0$
on $B = B(x,\delta(x))$, so $B$ is almost everywhere contained
in $\Omega$ (because $u_1(x) = 0$ on $\R^n \sm \Omega$), which means that 
$\Omega$ is equivalent to a set that contains $B$. Then we may use 
Lemma \ref{t10.6} to get local Lipschitz bounds on $B(x,\delta(x)/2)$,
which we can then use to get \eqref{e13.40}.

Notice also that if $u_1$ is $C$-Lipschitz in $B(x,\delta(x))$, we immediately
get the opposite inequality $|u_1(x)| \leq C \delta(x)$.

\begin{proof}
The general idea will be that if a positive harmonic function is very small near
the center of the ball, it must also be small on average on the whole ball; 
if $u_1(x)$ is too small, we shall try to approximate $u_1$ with such a harmonic function, 
show that the average of $u_1^2$ on a ball is small, and use a cut-off competitor to conclude.

Let $x\in \Omega_1$ be as in the statement; as usual, we can assume that $x=0$
to simplify the notation. Let $r > 0$ be small, to be chosen later. For the moment,
let us just assume that
\begin{equation} \label{e13.41}
r < \delta(x)/2.
\end{equation}
This way, we know that $u_1 > 0$ on $B(x,r)$, hence $B(x,r) \i \Omega$
(modulo replacing $\Omega$ with an equivalent domain, as above)
and $u_1$ is $C_0$-Lipschitz near $\overline B(x,r)$. 
Notice that the restriction of $u_1$ to $S_r = \d B(0,r)$ is Lipschitz,
so we can define its harmonic extension $u^\ast_1$ to $B(0,r)$
more easily than in Section \ref{favorites}.
Also set $u^\ast_1(x) = u_1(x)$ for $x\in \R^n \sm B(0,r)$,
take $u_j^\ast = 0$ on $B(0,r)$ for $j \geq 2$, keep $\u$ as it was on
$\R^n \sm B(0,r)$, and set $\W^\ast = \W$. Thus $(\u^\ast,\W^\ast)$
is a simpler variant of the harmonic competitor of Section \ref{favorites},
we get that $(\u^\ast,\W^\ast) \in \F$ at once 
(because $u^\ast_1 \in W^{1,2}(\R^n)$),
and we shall now see what the comparison yields. For the energy part, the usual
computation yields
$\int_{B(0,r)} |\nabla u_1^\ast|^2 \leq \int_{B(0,r)} |\nabla u_1|^2$
(because $u_1^\ast$ minimizes the energy with the given boundary data; 
see \eqref{e6.13}), and even
\begin{equation} 
\label{e13.42}
\int_{B(0,r)} |\nabla u_1|^2 - \int_{B(0,r)} |\nabla u_1^\ast|^2
= \int_{B(0,r)} |\nabla (u_1-u_1^\ast)|^2
\end{equation}
because $u_1^\ast + t (u_1-u_1^\ast)$, $t\in \R$, is a competitor
for $u_1^\ast$, and by Pythagorus (see the proof of \eqref{e10.32}). 
There is no difference in the $F$-terms, and 
\begin{eqnarray} 
\label{e13.43}
|M(\u^\ast) - M(\u)| &=&  \Big|\int_{B(0,r)} [(u^\ast_1)^2 f_1 - u_1^2 f_1 
- u^\ast_1 g_1 + u_1 g_1] \Big|
\nonumber\\
&\leq& \Big(||(u_1^\ast + u_1) f_1||_{L^\infty(B(0,r))} + ||g_1||_\infty \Big)
\int_{B(0,r)} |u^\ast_1- u_1|.
\end{eqnarray}
If $u_1(0) \geq 1$, we are happy because \eqref{e13.40} holds with $c_5=1$,
so we may assume that $u_1(0) \leq 1$; then 
$|u_1(x)| \leq 1 + C_0 r$ on $B(0,r)$, the same thing holds
for $u_1^\ast$ by the maximum principle, and hence
\begin{equation}\label{e13.44}
||(u_1^\ast + u_1) f_1||_{L^\infty(B(0,r))}
\leq 2(1+ C_0 r)||f_1||_\infty \leq C
\end{equation}
if we forget to write the dependence on $C_0$ and demand that $r \leq 1$.
Thus 
\begin{equation}\label{e13.45}
|M(\u^\ast) - M(\u)| \leq C \int_{B(0,r)} |u^\ast_1- u_1|
\leq C r^{n/2} \Big\{\int_{B(0,r)} |u^\ast_1- u_1|^2\Big\}^{1/2}.
\end{equation}
Next we claim that since $u_1^\ast = u_1$ on $S_r$,
\begin{equation}\label{e13.46}
\int_{B(0,r)} |u^\ast_1- u_1|^2 \leq C r^2 \int_{B(0,r)} |\nabla (u^\ast_1- u_1)|^2
\end{equation}
because $u^\ast_1- u_1 \in W^{1,2}(\R^n)$ (see near \eqref{e4.18})
and by Lemma \ref{l3.2}, or, if the reader prefers, because Lemma \ref{l4.2} says that
$\big|\fint_{B(0,r)} (u^\ast_1- u_1) \big| \leq C r \fint_{B(0,r)} |\nabla (u^\ast_1- u_1)|$,
and then by the usual Poincar\'e inequality \eqref{e4.2}. In addition,
\begin{eqnarray} \label{e13.47}
\int_{B(0,r)} |\nabla (u_1-u_1^\ast)|^2
&=& \int_{B(0,r)} |\nabla u_1|^2 - \int_{B(0,r)} |\nabla u_1^\ast|^2
\nonumber\\
&=& J(\u,\W) - M(\u) - J(\u^\ast,\W^\ast) + M(\u^\ast)
\nonumber\\
&\leq&  |M(\u^\ast) - M(\u)| 
\leq C r^{n/2} \Big\{\int_{B(0,r)} |u^\ast_1- u_1|^2\Big\}^{1/2}
\\
&\leq& C r^{{n\over 2}+1} \Big\{\int_{B(0,r)} |\nabla (u^\ast_1- u_1)|^2\Big\}^{1/2}
\nonumber
\end{eqnarray}
by \eqref{e13.42}, because $F(\W^\ast) = F(\W)$ and $(\u,\W)$ is a minimizer,
and by \eqref{e13.45} and \eqref{e13.46}. We simplify and get that
\begin{equation}\label{e13.48}
\int_{B(0,r)} |\nabla (u_1-u_1^\ast)|^2 \leq C r^{n+2}.
\end{equation}
Let $\eta > 0$ be small, to be chosen soon. Then 
\begin{equation} \label{e13.49}
u(z) \leq u(0) + C_0 \eta r \ \text{ for } z\in B(0,\eta r),
\end{equation}
hence
\begin{eqnarray} 
\label{e13.50}
|u_1^\ast(0)| &=& \Big|\fint_{B(0,\eta r)} u_1^\ast \Big|
\leq u(0) + C_0 \eta r + \fint_{B(0,\eta r)} |u_1^\ast - u_1| 
\nonumber\\
&\leq& u(0) + C_0 \eta r + \eta^{-n} \fint_{B(0, r)} |u_1^\ast - u_1| 
\nonumber\\
&\leq& u(0) + C_0 \eta r + \eta^{-n} \Big\{\fint_{B(0,r)} |u^\ast_1- u_1|^2\Big\}^{1/2}
\\
&\leq&  u(0) + C_0 \eta r + C \eta^{-n} r \Big\{\fint_{B(0,r)} |\nabla(u^\ast_1- u_1)|^2\Big\}^{1/2}
\leq u(0) + C_0 \eta r + C \eta^{-n} r^2
\nonumber
\end{eqnarray}
because $u_1^\ast$ is harmonic, and by \eqref{e13.46} and \eqref{e13.48}.
In addition, $u_1^\ast = u_1$ on $S_r$ and by \eqref{e13.41} $u_1 > 0$ on 
$\overline B(x,r) \i \Omega_1$, so $u_1^\ast > 0$ on $S_r$ and on
$B(0,r)$ (by the maximum principle). So
\begin{equation} \label{e13.51}
\fint_{B(0,r)} |u_1^\ast| = \fint_{B(0,r)} u_1^\ast  = u_1^\ast(0) 
\end{equation}
and, by simple estimates on harmonic functions,
\begin{equation} \label{e13.52}
\fint_{B(0,r/2)} |\nabla u_1^\ast|^2 
\leq C r^{-2} \Big\{\fint_{B(0,r)} |u_1^\ast|\Big\}^2
= C  r^{-2} u_1^\ast(0)^2.
\end{equation}
Then 
\begin{equation} \label{e13.53}
\fint_{B(0,r/2)} |\nabla u_1|^2 \leq 2 \fint_{B(0,r/2)} |\nabla u_1^\ast|^2 +
2 \fint_{B(0,r/2)} |\nabla (u_1 - u_1^\ast)|^2 
\leq C r^{-2} u_1^\ast(0)^2 + C r^{2}
\end{equation}
by \eqref{e13.48}. Set $m = \fint_{B(0,r/2)} u_1$; By Poincar\'e,
\begin{equation} \label{e13.54}
\fint_{B(0,r/2)} |u_1-m|^2 \leq C r^2 \fint_{B(0,r/2)} |\nabla u_1|^2 
\leq C u_1^\ast(0)^2 + C r^{4}.
\end{equation}
Let $\eta_1 > 0$ be another small number, to be chosen later,
notice that $u_1(z) \leq u_1(0) + C_0 \eta_1 r$ for $z\in B(0,\eta_1 r)$,
and use this and Poincar\'e to estimate $m$: 
\begin{eqnarray} \label{e13.55}
m^2 &=& \fint_{B(0,\eta_1 r)} m^2 
\leq 2 \fint_{B(0,\eta_1 r)} u_1^2 + 2 \fint_{B(0,\eta_1 r)} |u_1-m|^2
\nonumber\\
&\leq& 2 (u_1(0) + C_0 \eta_1 r)^2 + 2^{1-n} \eta_1^{-n} \fint_{B(0,r/2)} |u_1-m|^2
\nonumber\\
&\leq& 2 (u_1(0) + C_0 \eta_1 r)^2 + C \eta_1^{-n} u_1^\ast(0)^2 + C \eta_1^{-n} r^{4}
\end{eqnarray}
by \eqref{e13.54}. Finally,
\begin{eqnarray} \label{e13.56}
\fint_{B(0,r/2)} u_1^2 &\leq& 2 m^2 + 2 \fint_{B(0,r/2)} |u_1-m|^2
\leq 2 m^2 + C u_1^\ast(0)^2 + C r^{4}
\nonumber\\
&\leq& 
4 (u_1(0) + C_0 \eta_1 r)^2 + C \eta_1^{-n} u_1^\ast(0)^2 + C \eta_1^{-n} r^{4}
\nonumber\\
&\leq& C u_1(0)^2 + C \eta_1^2 r^2 + C \eta_1^{-n} \big(u_1(0) + \eta r + \eta^{-n} r^2\big)^2
+ C \eta_1^{-n} r^{4} 
\\
&\leq& C \eta_1^{-n} u_1(0)^2 + C \eta_1^2 r^2 + C \eta_1^{-n} \eta^2 r^2
+\eta_1^{-n} \eta^{-2n} r^4 =: r^2\alpha(r),
\nonumber
\end{eqnarray}
by \eqref{e13.54}, \eqref{e13.55}, and \eqref{e13.50} and where the last
identity is a definition of $\alpha(r)$.

\ms
We now have enough information to compare $(\u,\W)$ with the cut-off
competitor associated to $B(0,r/2)$ and the constant $a=1/2$. That is, 
we want to replace $u_1$ with a cut-off function $u_1^\ast$, defined
by $u_1^\ast(x) = u_1(x) \varphi(|x|)$ as in Section \ref{favorites} or in the 
beginning of this section. We do not need to touch the $u_j$, $j \geq 2$,
because they all vanish on $B(0,r/4)$. But we want to take 
advantage of \eqref{e13.1} to modify $\W$. Let us assume, in addition 
to \eqref{e13.41} and the fact that $r \leq 1$, that $r$ is so small that 
\begin{equation} \label{e13.57}
|B(0,r/4)| \leq \varepsilon
\end{equation}
Observe that $B(0,r) \i \Omega_1 \i W_1$ (modulo a null set), 
because $u_1 > 0$ there; we apply \eqref{e13.1} to 
$A = B(0,r/4) \cap W_1$, and get disjoint sets $A_j \i A$, $j \geq 2$.
Then set $\W^\ast = (W_1 \sm A, W_2 \cup A_1, \ldots, W_N \cup A_N)$;
it is easy to see that $(\u^\ast,\W^\ast) \in \F$, and \eqref{e13.1} says
that
\begin{equation} 
\label{e13.58}
F(\W^\ast) \leq F(\W) - \lambda |A| = F(\W) - \lambda |B(0,r/4)|.
\end{equation}
From \eqref{e6.5} with $\tau = 1$ we deduce that
\begin{eqnarray} 
\label{e13.59}
\int_{B(0,r/2)} |\nabla u_1^\ast|^2 
&\leq&  2 \int_{B(0,r/2)} |\nabla u_1|^2  + C r^{-2} \int_{B(0,r/2)} |u_1|^2
 \nonumber\\
&\leq& 
C r^{n-2} |u_1^\ast(0)|^2 + C r^{n+2} + C r^{n}\alpha(r)
=: r^n \beta(r)
\end{eqnarray}
by \eqref{e13.53} and \eqref{e13.56}, and where the last identity is a definition of $\beta(r)$. 
Concerning  $M(\u^\ast)-M(\u)$, even though we are now looking at a different
competitor, we can still use the estimates \eqref{e13.43}-\eqref{e13.45}.
We get that
\begin{eqnarray}
\label{e13.60}
|M(\u^\ast) - M(\u)| &\leq& C \int_{B(0,r/2)} |u^\ast_1- u_1|
\leq C r^{n/2} \Big\{\int_{B(0,r/2)} |u^\ast_1- u_1|^2\Big\}^{1/2}
 \nonumber\\
&\leq&
C r^{{n\over 2}+1} \Big\{\int_{B(0,r/2)} |\nabla (u^\ast_1- u_1)|^2\Big\}^{1/2}
\nonumber\\
&\leq& C r^{{n\over 2}+1} \Big\{\int_{B(0,r/2)} |\nabla u^\ast_1|^2+ |\nabla u_1)|^2\Big\}^{1/2}
\leq C r^{n+1} \beta(r)^{1/2}
\end{eqnarray}
by \eqref{e13.45}, Cauchy-Schwarz, Poincar\'e and the fact that $u_1^\ast = u_1$
on $\d B(0,r/2)$ (as in \eqref{e13.46}-\eqref{e13.47}), and \eqref{e13.59}.
Now
\begin{eqnarray}\label{e13.61}
\lambda |B(0,r/4)| &\leq & F(\W)- F(\W^\ast)  
\nonumber\\
&=& J(\u,\W)-J(\u^\ast,\W^\ast) + [M(\u^\ast) - M(\u)] 
+ \int_{B(0,r/2)} |\nabla u_1^\ast|^2 - |\nabla u_1|^2
 \nonumber\\
&\leq& C r^{n+1} \beta(r)^{1/2} +r^n \beta(r) 
\end{eqnarray}
by \eqref{e13.58}, \eqref{e1.5}, because $(\u,\W)$ is a minimizer,
and by \eqref{e13.59} and \eqref{e13.60}.
The game now consists in checking out the right-hand side of \eqref{e13.61},
and showing that all the terms that we get, except those coming from $u_1(0)$,
are much smaller than $\lambda r^n$. 
For this we shall put the additional
constraint that $r \leq r_0$, where $r_0$ depends on $n$, $\lambda$,
$||f_1||_\infty$, $||g_1||_\infty$, and $C_0$. 

We first look at $\alpha(r)$ in \eqref{e13.56}. If we chose $\eta_1$ small
enough, then $\eta$ small, and then $r_0$ small enough,
we get that $\alpha(r) \leq C \eta_1^{-n} r^{-2} u(0)^2 + o(1)$,
where $o(1)$ is as small as we want. The rest of 
$\beta(r)$ is 
\begin{equation} \label{e13.62}
\beta_1(r) = r^{-2} u_1^\ast(0)^2 + C r^{2}
\leq C r^{-2} u(0)^2 + C \eta^2 + C \eta^{-2n} r^2 + C r^{2}
\end{equation}
(see \eqref{e13.59} and \eqref{e13.50}), which is of the same type.
Then \eqref{e13.61} says that

\begin{eqnarray} \label{e13.63}
\lambda &\leq& C r^{-n} \lambda |B(0,r/4)|
\leq C r \beta(r)^{1/2} + C \beta(r)
 \nonumber\\
&\leq& C r \eta_1^{-n/2} r^{-1} u(0) +  C \eta_1^{-n} r^{-2} u(0)^2 + o(1)
\end{eqnarray}
which, if we choose our constants so that $o(1) \leq \lambda/2$,
yields $r^{-1} u(0) \geq c$ for some $c > 0$.
Our constraints on $r$ (namely \eqref{e13.41}, \eqref{e13.57}, 
$r \leq 1$, and $r \leq r_0$)
allow us to chose $r \geq C^{-1} \min(\delta(0), \varepsilon^{1/n},1)$,
and we obtain \eqref{e13.40} (recall that we took $x=0$). Theorem \ref{t13.3}
follows.
\qed
\end{proof}

\ms
The last result of this section concerns the size of the complement of a good region
where $u_1 \geq 0$.

\begin{thm}\label{t13.4} 
Let $(\u,\W)$ be a minimizer in $\F$ of the functional $J$, suppose that 
the $f_i$ and the $g_i$ in the data are bounded, that $F$ is Lipschitz
(i.e., \eqref{e10.2} holds), and that 
\eqref{e13.1} holds for some choice of $\lambda>0$ and $\varepsilon>0$.
For each $C_0 \geq 1$, we can find $r_0 > 0$ and $c_6 > 0$,
such that if $x\in \R^n$ and $0 \leq r \leq r_0$ are such that $B(x,r) \i \Omega$,
\begin{equation} 
\label{e13.64}
\text{ $\u$ is $C_0$-Lipschitz on $B(x,r)$,} 
\end{equation}
and $B(x,r/2)$ meets the boundary of  
$\Omega_1 = \big\{ x\in \Omega \, ; \, u_1(x) > 0 \big\}$,
then 
\begin{equation} 
\label{e13.65}
\big|\big\{ x\in B(x,r) \, ; \, u_1(x) \leq 0 \big\}\big| \geq c_6 r^n.   
\end{equation}
The constant $c_6 > 0$ depends only on $n$, 
the $||f_i||_\infty$, the $||g_i||_\infty$, the Lipschitz constant for $F$ (in \eqref{e10.2})
$\lambda$, and $C_0$, and $r_0$ depends on these constants, plus $\varepsilon$.
\end{thm}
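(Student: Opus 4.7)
The plan is to build a cut-off competitor that removes the contribution of $u_1$ on $B(x, r/2)$, trading it (via the nondegeneracy condition \eqref{e13.1}) for a proportional drop in the volume term $F$, and then to read off an upper bound on $|\Omega_1 \cap B(x,r/2)|$ from the minimality of $(\u,\W)$. By the decoupling trick of Lemma~\ref{t10.2}, I may assume without loss of generality that $u_1 \geq 0$ is enforced in the definition of $\F$. Since $B(x, r/2)$ meets $\d \Omega_1$ and $\u$ is continuous, there is a point $y \in B(x, r/2)$ with $u_1(y) = 0$; the Lipschitz hypothesis \eqref{e13.64} then yields the pointwise bound
$$0 \leq u_1(z) \leq C_0 |z-y| \leq \frac{3}{2} C_0 r \qquad \text{for all } z \in B(x,r).$$

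Next I apply the cut-off competitor of Section~\ref{favorites}: with $\varphi$ as in \eqref{e6.1} (choosing $a=1/2$, scale $r$, centered at $x$), set $u_1^*(z) = \varphi(|z-x|)\, u_1(z)$ and $u_j^* = u_j$ for $j \geq 2$. For $r_0$ small enough to ensure $|B(x, r/2)| \leq \varepsilon$, set $A = W_1 \cap B(x, r/2)$, use \eqref{e13.1} to obtain disjoint sets $A_j \subset A$, $2 \leq j \leq N$, and take $\W^* = (W_1 \setminus A,\, W_2 \cup A_2,\, \ldots,\, W_N \cup A_N)$; the usual verification shows $(\u^*, \W^*) \in \F$.

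The three costs of switching to $(\u^*,\W^*)$ are estimated from the pointwise bound on $u_1$: \eqref{e6.5} with $\tau = 1$ and Lipschitz on $\nabla u_1$ give
$$E(\u^*) - E(\u) \leq -\int_{B(x,r/2)} |\nabla u_1|^2 + C C_0^2 r^n \leq C C_0^2 r^n;$$
\eqref{e6.6}--\eqref{e6.7} together with the boundedness of $f_1, g_1$ yield
$$|M(\u^*) - M(\u)| \leq C(1 + C_0^2)\, r^{n+1};$$
and \eqref{e13.1} gives $F(\W^*) - F(\W) \leq -\lambda |A|$. Minimality $J(\u,\W) \leq J(\u^*,\W^*)$, together with the choice of $r_0$ so small that the $r^{n+1}$ term is dominated by the $r^n$ terms, yields $\lambda |A| \leq 2 C C_0^2 r^n$. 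Since $\Omega_1 \subset W_1$ modulo a null set, this gives $|\Omega_1 \cap B(x, r/2)| \leq |A| \leq 2 C C_0^2 r^n / \lambda$, and therefore
$$|B(x, r) \setminus \Omega_1| \geq |B(x, r/2)| - \frac{2 C C_0^2}{\lambda}\, r^n = \Big(\frac{\omega_n}{2^n} - \frac{2 C C_0^2}{\lambda}\Big)\, r^n.$$

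The main obstacle is quantitative: the naive Lipschitz bound $|u_1| \leq C_0 r$ on the cut-off annulus gives an energy cost $C C_0^2 r^n$ that does not exploit the fact that $y$ lies in $B(x, r/2)$, and so the preceding estimate is only useful when $\lambda$ is large enough relative to $C_0^2$ that the parenthesized quantity is positive. To produce a genuinely positive $c_6$ for arbitrary $\lambda > 0$, the key step will be to refine the estimate of $\int u_1^2$ on the annulus. One route is to invoke Theorem~\ref{t13.3}, which gives $u_1(z) \leq C_0 \delta(z)$ on $\Omega_1$ with $\delta(z) = \dist(z,\R^n\setminus \Omega_1)$, together with an iteration at dyadic scales centered at $y$: if the complement were very small at the scale $r$, then at smaller scales $s \ll r$ the cut-off cost improves (because $|u_1|$ is much smaller than $C_0 s$ where $\delta$ is small), and this can be bootstrapped until one forces a contradiction with the fact that $u_1(y) = 0$ lies at the boundary. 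Choosing $r_0 = r_0(n, \lambda, C_0, \|f_1\|_\infty, \|g_1\|_\infty, \varepsilon)$ small enough throughout, this produces a positive $c_6$ depending on the listed constants.
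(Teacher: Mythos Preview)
Your cut-off argument is clean up to the point where you compute the energy cost, and you correctly diagnose the obstacle: the bound $\lambda|A| \leq C C_0^2 r^n$ is useless once $C_0^2/\lambda$ exceeds $\omega_n/2^n$, which is exactly the generic situation. The trouble is that your proposed repair does not close the gap. First, Theorem~\ref{t13.3} gives a \emph{lower} bound $u_1(z) \geq c_5\,\delta(z)$, not the upper bound you quote; the inequality $u_1(z) \leq C_0\,\delta(z)$ is just the Lipschitz bound again. More importantly, an iteration at dyadic scales centered at $y$ gains nothing: on the annulus at scale $s$ you only know $\delta(z) \leq |z-y| \leq s$, hence $u_1(z) \leq C_0 s$, which reproduces the same $C C_0^2 s^n$ energy cost and the same useless inequality $\lambda|A_s| \leq C C_0^2 s^n$ at every scale. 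No bootstrap is happening.

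The paper's proof takes a completely different route: it argues by contradiction, picks $y \in \d\Omega_1 \cap B(x,r/2)$, and uses the \emph{harmonic competitor} of Section~\ref{favorites} in a ball $B(y,\rho)$ with $\rho \sim r$. The failure of \eqref{e13.65} lets one choose $\rho$ so that $\sigma(\{u_1 \leq 0\} \cap S_\rho)$ is small (Chebyshev), and then the comparison with the harmonic extension $v_1$ of $u_1|_{S_\rho}$ gives a small bound on $\int_{B(y,\rho)} |\nabla(u_1-v_1)|^2$, hence on $\|u_1-v_1\|_{L^2}$ by Poincar\'e. The punchline is the mean-value property: $v_1(y)$ equals the average of $v_1$ over $B(y,\rho)$, which is close to the average of $u_1$, and \emph{that} average is bounded below by $c_1^{1/2} r$ thanks to Theorem~\ref{t13.1}. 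Since $u_1(y)=0$ and $u_1$ is $C_0$-Lipschitz, $u_1$ is small on a tiny ball around $y$, forcing $v_1(y)$ to be small as well (again via the $L^2$ closeness). These two estimates for $v_1(y)$ collide once the parameters are tuned, and the contradiction finishes the proof. The nondegeneracy condition \eqref{e13.1} enters only through Theorem~\ref{t13.1}, not through a direct volume trade as in your attempt.
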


The main case is probably when $u_1 \geq 0$. Then \eqref{e13.65} implies
that $|B(x,r) \sm W_1| \geq c_6 r^n$, because the remaining set
$\big\{ x\in W_1 \, ; \, u_1(x) \leq 0 \big\} =
\big\{ x\in W_1 \, ; \, u_1(x) = 0 \big\}$ is negligible, because otherwise
we could just use \eqref{e13.1} to send some of it to the other $W_i$
and some of it to the trash, and make a profit.

We decided also to include the case when
$u_1$ is real-valued, and then \eqref{e13.65} also counts the part of $W_1$
where $u_1 < 0$. 

We did not mention the case when $B(x,r)$ is centered near $\d \Omega$,
because even though the result is still true in that case (if $\d \Omega$ 
is reasonably smooth), this is for the stupid reason that $B(x,r) \sm \Omega$
is already large enough. So we shall restrict to $B(x,r) \i \Omega$. In many cases, 
we will be able to deduce \eqref{e13.64} from Theorem \ref{t10.1}, 
applied to $B(x,2r)$ if it is contained in $\Omega$; but then $C_0$ will depend 
on $\int_{B(x,2r)} |\nabla \u|^2$.

\begin{proof}
The proof will be based on the following (admittedly vague) idea.
We want to show that $u_1$ is close to a harmonic function $v_1$ in
some ball centered on $\d \Omega_1$;
if \eqref{e13.65} fails, $u_1$ and $v_1$ should almost be positive. 
Theorem \ref{t13.1} says that $u_1$, hence also $v_1$, is reasonably large
on average, so the mean value property for $v_1$ says that it should also be 
large near the center of the ball. But the closeby $u_1$ vanishes on that
center (a contradiction). The harmonic function $v_1$ will be the same one as in the
harmonic competitor.

Let the pair $(x,r)$ be as in the statement, and suppose in addition
that \eqref{e13.65} fails. By assumption, we can find
$y\in B(x,r/2) \cap \d \Omega_1$, and without lost of generality, we may assume that
$y=0$. We first want to select a radius $\rho \in (r/4,r/2)$, with the following properties.
Since we want to use the harmonic competitor (with main function $u_1$),
we first demand that the restriction of $\u$ to $S_\rho$ lie in $W^{1,2}(S_\rho)$,
with a derivative that can be computed from the restriction of $\nabla \u$
to $S_\rho$, and with the usual estimates
\begin{equation} 
\label{e13.66}
\int_{S_\rho} |\nabla_t u_i|^2 \leq 20N r^{-1} \int_{B(x,r)}  |\nabla u_i|^2 
\end{equation}
for $1 \leq i \leq N$ that we can easily obtain by Chebyshev.
In the present case, we know that $\u$ is Lipschitz, so we don't need to be 
as prudent as usual about restrictions. We also require that
\begin{equation} 
\label{e13.67}
u_i(z) = 0 \ \text{ $\sigma$-almost everywhere on } S_\rho\sm W_i
\end{equation}
(as in \eqref{e6.16}), which is true for almost every $\rho$, and that
\begin{equation} \label{e13.68}
\sigma\big(\big\{ z\in S_\rho \, ; \, u_1(z) \leq  0 \big\}\big) \leq 20 c_6 r^{n-1}.
\end{equation}
We can also get this last condition, by Chebyshev and because we assume \eqref{e13.65}
to fail. 

Define the harmonic competitor $(\u^\ast,\W^\ast)$ as we did near \eqref{e6.11}
(but with the radius $\rho$); notice in particular that the requirement \eqref{e6.9} 
holds because $B(0,\rho) \i B(x,r) \i \Omega$. 
This time, we shall need to take $a \in [1/2,1)$ close to $1$, to be chosen later.
Notice that $\u_1^\ast$ is given in terms of a harmonic function $v_1$
(see \eqref{e6.14} and above), and a good part of the estimates that follow is aimed
at showing that $v_1-u_1$ is quite small.

Let us first estimate $F(\W^\ast)-F(\W)$. Recall the definition
\eqref{e6.17}-\eqref{e6.18} of $\W^\ast$. For $i \geq 2$,
$|W_i \sm W_i^\ast| \leq |W_i| \leq c_6 r^n$ because $u_1 = 0$ on $W_i$.
Also, $|W_i^\ast \sm W_i| \leq |W_i^\ast| \leq B(0,\rho) \sm B(0,a\rho) \leq C(1-a)r^n$.
Next, $|W_1 \sm W_1^\ast| \leq |B(0,\rho) \sm W_1^\ast| \leq |B(0,\rho) \sm B(0,a\rho)| 
\leq C(1-a)r^n$. Finally, $|W_1^\ast \sm W_1| \leq |B(0,\rho) \sm W_1|
\leq |B(0,\rho) \sm \Omega_1| \leq c_6 r^n$. Thus by \eqref{e10.2}
\begin{equation} \label{e13.69}
|F(\W^\ast)-F(\W)| \leq C\sum_{i=1}^N  |W_i \Delta W_i^\ast|
\leq C (c_6 + (1-a)) r^n.
\end{equation}
Notice that $u_1(y) = 0$ because $y \in \d \Omega_1$, and
all the other $u_i$ also vanish somewhere in $B(y,\rho)$ (in fact, anywhere on 
$B(y,\rho) \cap \Omega_1$), because \eqref{e13.65} fails. Since $\u$ is 
assumed to be $C_0$-Lipschitz on $B(x,r)$, 
we get that $|\u| \leq C r$ on $B(0,\rho)$, and (by the maximum principle) 
the same thing holds for $\u^\ast$. [We shall not keep track of the dependence
of our constants on $C_0$, or the other constants mentioned in the statement.]
Then
\begin{equation} \label{e13.70}
|M(\u^\ast)-M(\u)| \leq \sum_{i=1}^N  \int_{B(0,\rho)} 
\big[||f_i||_\infty |u^2-(u^\ast)^2| + ||g_i||_\infty |u-u^\ast|\big] \leq C r^{n+1}.
\end{equation}
For the energy part, we start with $i \geq 2$. By \eqref{e6.19},
\begin{equation} \label{e13.71}
\int_{B(0,\rho)} |\nabla u_{i}^\ast|^2 
\leq (1-a) r  a^{2-n} \int_{S_{\rho}} |\nabla_{t} u_{i}|^2
+ 4 (1-a)^{-1} r  a^{-n} \int_{S_{\rho}} |\rho^{-1} u_{i}|^2.
\end{equation}
We continue as usual, but at this point we could also use the fact that $\u$
is Lipschitz on $\overline B(0,\rho)$ to get the same conclusion. 
We want to use \eqref{e4.7} with $E = S_\rho \sm \Omega_1$,
so we first observe that if $z\in S_\rho \sm E = S_\rho \cap \Omega_1$,
then $z \in W_1$ almost surely (by \eqref{e13.67} for $i=1$), 
so $z\in S_\rho \sm W_i$, and $\sigma$-almost surely $u_i(z) = 0$
(by \eqref{e13.67} for $i$). So $u_i(z)=0$ almost everywhere on
$S_\rho \sm E$. Also, $u_1(z) \leq 0$ on $E$, so
$\sigma(E) \leq 20 c_6 r^{n-1}$ by \eqref{e13.68}, and
\begin{equation} \label{e13.72}
\int_{S_{\rho}} |\rho^{-1} u_{i}|^2 \leq C \rho^{-2}
\sigma(E)^{2 \over n-1}\int_{S_{\rho}} |\nabla_t u_{i}|^2
\leq C c_6^{2 \over n-1}\int_{S_{\rho}} |\nabla_t u_{i}|^2
\end{equation}
by \eqref{e4.7} and 
\begin{eqnarray} \label{e13.73}
\int_{B(0,\rho)} |\nabla u_{i}^\ast|^2 
&\leq& C [(1-a) + (1-a)^{-1} c_6^{2 \over n-1}] \, r  
\int_{S_{\rho}} |\nabla_{t} u_{i}|^2
\nonumber\\
&\leq&  C [(1-a) + (1-a)^{-1} c_6^{2 \over n-1}] 
\int_{B(x,r)} |\nabla u_{i}|^2
\nonumber\\
&\leq& C [(1-a) + (1-a)^{-1} c_6^{2 \over n-1}] r^n
\end{eqnarray}
by \eqref{e13.71} and \eqref{e13.66}, and brutally because
$\u$ is Lipschitz in $\overline B(0,\rho)$.

We have the estimate \eqref{e6.20} for the exterior part
of $\int |\nabla u_1^\ast|^2$, namely,
\begin{eqnarray} \label{e13.74}
\int_{B(0,\rho)\sm B(0,a\rho)} |\nabla u_{1}^\ast|^2
&\leq& (1-a) \rho  a^{2-n} \int_{S_{\rho}} |\nabla_{t} u_{1}|^2
\nonumber\\
&\leq& C (1-a) \int_{B(x,r)} |\nabla u_1|^2 \leq C(1-a) r^n
\end{eqnarray}
(by \eqref{e13.66} again).
The last term is 
\begin{eqnarray}  \label{e13.75}
\int_{B(0,a\rho)} |\nabla u_{1}^\ast|^2 
&=& a^{n-2}\int_{B(0,\rho)} |\nabla v_{1}|^2
\nonumber
\\
&=& a^{n-2} \inf\Big\{ \int_{B(0,\rho)} |\nabla v|^2 \, ; \,
v\in W^{1,2}(B(0,\rho)) \text{ and } v=u_{1} \text{ on } S_{\rho} \Big\},
\nonumber
\\
&\leq& a^{n-2} \int_{B(0,\rho)} |\nabla u_{1}|^2 
\end{eqnarray}
by \eqref{e6.21}. 
Set $\Delta = \int_{B(0,\rho)} |\nabla u_{1}|^2  - \int_{B(0,\rho)} |\nabla v_{1}|^2$; 
then by the minimizing property in \eqref{e13.75},
the fact that $v_1 + t(u_1-v_1)$ is a competitor for $v_1$ for all $t$, 
and the usual Pythagorus argument (see for instance the proof of \eqref{e10.32}),
\begin{equation} \label{e13.76}
\Delta = \int_{B(0,\rho)} |\nabla (u_{1}-v_1)|^2.
\end{equation}
But also, by the first line of \eqref{e13.75} and because $a \leq 1$,
\begin{eqnarray}  
\label{e13.77}
a^{n-2}\Delta
&=& a^{n-2}\int_{B(0,\rho)} |\nabla u_{1}|^2 - \int_{B(0,a\rho)} |\nabla u_{1}^\ast|^2 
\nonumber \\
&\leq & \int_{B(0,\rho)} |\nabla u_{1}|^2 - \int_{B(0,a\rho)} |\nabla u_{1}^\ast|^2
\\
&=& \int_{B(0,\rho)} |\nabla u_{1}|^2 - \int_{B(0,\rho)} |\nabla u_{1}^\ast|^2
+\int_{B(0,\rho)\sm B(0,a\rho)} |\nabla u_{1}^\ast|^2
\nonumber
\end{eqnarray}
We now add the contribution of the other components. Notice that
$\int_{B(0,\rho)} |\nabla u_{1}|^2 \leq \int_{B(0,\rho)} |\nabla \u|^2$,
and $\int_{B(0,\rho)} |\nabla u_{1}^\ast|^2 =
\int_{B(0,\rho)} |\nabla \u^\ast|^2 
- \sum_{i \geq 2} \int_{B(0,\rho)} |\nabla u_{i}^\ast|^2$. 
We replace and get that
\begin{equation}  
\label{e13.78}
a^{n-2}\Delta
\leq \int_{B(0,\rho)} |\nabla \u|^2 - \int_{B(0,\rho)} |\nabla \u^\ast|^2
+ \sum_{i \geq 2} \int_{B(0,\rho)} |\nabla u_{i}^\ast|^2
+\int_{B(0,\rho)\sm B(0,a\rho)} |\nabla u_{1}^\ast|^2 
\end{equation}
Let us use the minimality of $(\u,\W)$. Since $\u = \u^\ast$ on $\R^n \sm B(0,\rho)$, 
\begin{eqnarray}  
\label{e13.79}
\int_{B(0,\rho)} |\nabla \u|^2 - \int_{B(0,\rho)} |\nabla \u^\ast|^2
&=& J(\u) - M(\u) - J(\u^\ast) + M(\u^\ast)
\nonumber\\
&\leq& M(\u^\ast) - M(\u) \leq C r^{n+1}
\end{eqnarray}
by \eqref{e1.5}, the minimality of $(\u,\W)$, and \eqref{e13.70}.
Then 
\begin{eqnarray}  
\label{e13.80}
\Delta &\leq& 2 a^{n-2}\Delta
\leq C r^{n+1} + \sum_{i \geq 2} \int_{B(0,\rho)} |\nabla u_{i}^\ast|^2
+\int_{B(0,\rho)\sm B(0,a\rho)} |\nabla u_{1}^\ast|^2 
\nonumber\\
&\leq& C r^{n+1} 
+ C [(1-a) + (1-a)^{-1} c_6^{2 \over n-1}] r^n
+ C(1-a) r^n
\\
&\leq& C [(1-a) + (1-a)^{-1} c_6^{2 \over n-1} + r] r^n
\nonumber
\end{eqnarray}
because we shall take $a$ close to $1$, and by \eqref{e13.78}, \eqref{e13.79}
\eqref{e13.73} and \eqref{e13.74}.

We can now use this to show that $u_1$ is 
close to the harmonic function $v_1$.
Notice that $v_1-u_1$ lies in $W^{1,2}(\R^n)$ and vanishes on 
$\R^n \sm B(0,\rho)$ so Lemma \ref{l3.2}
(or if the reader prefers, Lemma \ref{l4.2} and the standard Poincar\'e inequality), 
implies that
\begin{equation} \label{e13.81}
\int_{B(0,\rho)} |u_{1}-v_1|^2 \leq  C \rho^2 \int_{B(0,\rho)} |\nabla (u_{1}-v_1)|^2
\leq C r^2 \Delta,
\end{equation}
by \eqref{e13.76}.

Next we want to apply Theorem \ref{t13.1} to $B(0,\rho)$;
the assumption \eqref{e13.5} is satisfied, because 
$0=y\in \d \Omega_1$ and if $r_0 \leq \inf(1,\varepsilon^{1/n})$
(recall that $2\rho \leq r \leq r_0$).  So \eqref{e13.6} says that
$\fint_{B(0,\rho)} u_{1,+}^2 \geq c_1 \rho^2 \geq c_1 r^2/16$,
where $c_1$ also depends on $C_0$ through an upper bound
for $\fint_{B(0,\rho)} |\nabla u_{1,+}|^2$.
Since in addition $|u_1(z)| \leq C r$ for $z\in B(0,\rho)$, because $u_1(y) = 0$
(see below \eqref{e13.69}), we get that
\begin{equation} \label{e13.82}
{c_1 r^2\over 16} \leq \fint_{B(0,\rho)} u_{1,+}^2 \leq C r \fint_{B(0,\rho)} u_{1,+}
\end{equation}
If \eqref{e13.65} fails, we also get that
\begin{equation} \label{e13.83}
\fint_{B(0,\rho)} u_{1,-} \leq C  r \rho^{-n} 
\big|\big\{ x\in B(x,r) \, ; \, u_1(x) \leq 0 \big\}\big| 
\leq C c_6 r
\end{equation}
because $|u_1(z)| \leq C r$ for $z\in B(0,\rho)$.
If $c_6$ is small enough compared to $c_1$, we deduce from \eqref{e13.82}
and \eqref{e13.83} that
\begin{equation} \label{e13.84}
\fint_{B(0,\rho)} u_1 \geq C^{-1} c_1 r.
\end{equation}
Next set $m = \fint_{B(0,\rho} v_1$; we deduce from \eqref{e13.81} that
\begin{eqnarray} \label{e13.85}
\Big| m- \fint_{B(0,\rho)} u_1\Big| &=& \Big| \fint_{B(0,\rho)} (v_1 -u_1)\Big|
\leq \fint_{B(0,\rho)} |v_1 -u_1|
\nonumber
\\
&\leq& \Big\{\fint_{B(0,\rho)} |v_1 -u_1|^2 \Big\}^{1/2}
\leq C r (r^{-n}\Delta)^{1/2}.
\end{eqnarray}
Let $\eta > 0$ be small, to be chosen soon, and observe that 
$|u_1(z)| \leq C \eta \rho$ for $z\in B(0,\eta \rho)$
(because $u_1(0) = 0$ and $\u$ is $C_0$-Lipschitz in $B(0,\rho)$).
But $\fint_{B(0,\eta\rho)} v_1 = m$ because $v_1$ is harmonic;
then
\begin{eqnarray} \label{e13.86}
|m| &=& \Big| \fint_{B(0,\eta\rho)} v_1\Big|
\leq C \eta \rho + \Big| \fint_{B(0,\eta\rho)} (v_1-u_1)\Big|
\leq C \eta r +\fint_{B(0,\eta\rho)} |v_1 -u_1|
\nonumber
\\
&\leq& C \eta r + \eta^{-n} \fint_{B(0,\rho)} |v_1 -u_1|
\leq C \eta r + C r \eta^{-n} (r^{-n}\Delta)^{1/2}
\end{eqnarray}
by the end of \eqref{e13.85}. Now \eqref{e13.84}-\eqref{e13.86} yield
\begin{eqnarray} \label{e13.87}
C^{-1} c_1 &\leq& r^{-1} \fint_{B(0,\rho)} u_1 
\leq r^{-1} \Big| m- \fint_{B(0,\rho)} u_1\Big| + r^{-1} |m| 
\nonumber
\\
&\leq& C (r^{-n}\Delta)^{1/2} + C \eta  + C \eta^{-n} (r^{-n}\Delta)^{1/2}
\nonumber
\\
&\leq& C \eta + C  \eta^{-n} [(1-a) + (1-a)^{-1} c_6^{2 \over n-1} + r]^{1/2}
\end{eqnarray}
by \eqref{e13.80}. If we choose $\eta$, then $1-a$, then $c_6$
and $r_0$ small enough (also recall that we need to take 
$r_0 \leq \inf(1,\varepsilon^{1/n})$ to apply Theorem \ref{t13.1}),
we obtain the desired contradiction, which proves \eqref{e13.65}.
Theorem \ref{t13.4} follows.
\qed
\end{proof}

\section{The boundary of a good region is rectifiable}    \label{recti} 

Our assumptions for this section will be roughly the same as for Section \ref{good}.
We shall assume that 
\begin{equation} \label{e17.1}
\text{ the $f_i$ and the $g_i$, $1 \leq i \leq N$, are bounded} 
\end{equation}
and restrict our attention on an open ball $B_0$ such that
\begin{equation} \label{e17.2}
u_1 \text{ is $C_0$-Lipschitz on } B_0
\end{equation}
for some $C_0 \geq 0$. As usual, previous sections give sufficient conditions 
for this to happen.
We shall also assume the Lipschitz condition \eqref{e10.2}, and 
that $W_1$ is a good region for $F$, in the sense that we can find $\varepsilon > 0$
and $\lambda > 0$ such that the nondegeneracy condition \eqref{e13.1} holds.
And we start to study the regularity in $B_0$ of the boundary of the open set
\begin{equation} \label{e17.3}
\Omega_1 = \big\{ x\in \Omega \, ; \, u_1(x) > 0 \big\}.
\end{equation}
Maybe we should point out that $\Omega_1$ is the good free boundary to
study, as opposed to the larger set $\big\{ \u = 0 \}$. Suppose that
$u_1 \geq 0$ to simplify the discussion.
On the other side of  $\Omega_1$, there may be other components,
possibly not all good, and/or the black zone $\Omega \sm \cup_{i} W_i$,
and these may be much less regular. Even when $n=2$, $N=2$, the $u_i$
are required to be positive, 
and $(\u,\W)$ is a local minimizer of the most standard Alt-Caffarelli
functional with $q_1 q_2 = 1$ (or take $q_2 = 2$ if you are afraid of 
a potential degeneracy), it can happen that $\Omega_1$
and $\Omega_2$ are smooth regions, with a black zone 
$\Omega \sm (\Omega_1 \cup \Omega_2)$ in the middle, with lots of
thin parts,  cusps, and islands.

In this section we want to show that $\d \Omega_1$ is a locally Ahlfors regular
and (even uniformly) rectifiable set. We shall also get a reproducing formula  
for $\Delta u_{1,+}$, which will be used later, 
once we have a better description of the blow-up limits of $\u$. 
See Proposition~\ref{t17.2}.

For all this, we shall mostly follow the initial
arguments of \cite{AC}; 
this will be not be too hard to do because, as soon as we have the 
nondegeneracy results of Section \ref{good}, the other components $u_i$
$i \geq 2$, only play a small role in the estimates. 
Set 
\begin{equation} \label{e17.4}
v = u_{1,+} = \max(0,u_1) = u_1 \1_{\Omega_1}.
\end{equation}
The hero of this section is  $\mu$, the restriction of the distribution $\Delta v$ to
$\d \Omega_1$ (see a correct definition below). We shall prove that $\mu$ is 
in fact a (positive!) locally Ahlfors-regular measure, and this will help us prove that
$\Omega_1$ is locally a Caccioppoli set (a set with finite perimeter), 
with a reduced boundary almost equal to $\d \Omega_1$. 
The local Ahlfors-regularity and rectifiability of $\d \Omega$,
and the representation formulas of Proposition \ref{t17.2} will then easily follow.

Recall from \eqref{e9.6} that in $\Omega_1$, $v = u_1$ satisfies the equation
$\Delta v = f_1 v - {1\over 2} g_1$; the official definition of our hero $\mu$ is
the distribution 
\begin{equation} \label{e17.5}
\mu = \Delta v - [f_1 v - {1\over 2} g_1 ]\, \1_{\Omega_1}.
\end{equation}

\begin{pro}\label{t17.1} 
Let $(\u,\W)$, $W_1$ and $B_0$ satisfy the assumptions above
(up to \eqref{e17.3}), and let $\mu$ be the distribution defined by \eqref{e17.5}.
Then $\mu$ is, in $B_0$, a locally Ahlfors-regular positive measure whose 
support is $\d \Omega_1$. More precisely, there are constants 
$C_1 \geq 1$ and $r_0 \leq 1$ such that 
\begin{equation} \label{e17.6}
C_1^{-1} r^{n-1} \leq \mu(B(x,r)) \leq C_1 r^{n-1}
\end{equation} 
for $x\in \d\Omega_1$ and $0 < r \leq r_0$ such that $B(x,2r) \i B_0$.
The constant $C_1$ depends on $n$, the $L^\infty$ bounds in \eqref{e17.1},
the Lipschitz bound in \eqref{e17.2}, the Lipschitz constant in \eqref{e10.2}, and 
the constant $\lambda$ in that shows up in \eqref{e13.1}.
The radius $r_0$ depends on these constants, plus the $\varepsilon > 0$ 
from \eqref{e13.1}.
\end{pro}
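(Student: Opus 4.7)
I would first show that $\mu$ is a nonnegative Radon measure supported on $\partial \Omega_1$, and then derive the Ahlfors regularity: the upper bound by a direct cutoff test, and the lower bound by a comparison with a harmonic replacement that turns the quantitative nondegeneracy of Theorem~\ref{t13.1} into quantitative mass for $\mu$. The proof is modeled on \cite{AC}.

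\textbf{Positivity and support of $\mu$.} On $\Omega_1$, the Euler--Lagrange equation \eqref{e9.6} gives $\Delta u_1 = f_1 u_1 - \tfrac{1}{2} g_1$ strongly, so $\mu$ vanishes on $\Omega_1$ by the definition \eqref{e17.5}; and $v \equiv 0$ on $\R^n \setminus \overline{\Omega_1}$ forces $\mu$ to vanish there too. Hence $\supp \mu \subseteq \partial \Omega_1$. For the positivity, fix $\varphi \in C_c^\infty(B_0)$ with $\varphi \geq 0$. Since $v$ satisfies $\Delta v = f_1 v - \tfrac{1}{2} g_1$ with bounded right-hand side on $\Omega_1$, elliptic regularity gives $v \in C^{1,\alpha}_{\mathrm{loc}}(\Omega_1)$, and for a.e.\ $\delta > 0$ the level set $\{v = \delta\}$ is a $C^{1,\alpha}$ hypersurface. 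Integrating by parts on $\Omega_1^\delta := \{v > \delta\}$ yields
\[
\int_{\Omega_1^\delta} \nabla v \cdot \nabla \varphi \;=\; -\int_{\Omega_1^\delta} \varphi\Bigl(f_1 v - \tfrac{1}{2} g_1\Bigr) \;+\; \int_{\partial \Omega_1^\delta} \varphi\, \partial_\nu v \, d\H^{n-1},
\]
with $\partial_\nu v \leq 0$ (outward normal of $\Omega_1^\delta$, and $v$ decreases from $\delta$ outward). Letting $\delta \to 0^+$, using $\nabla v = 0$ a.e.\ on $\{v = 0\}$ and dominated convergence, one gets $\int \nabla v \cdot \nabla \varphi + \int_{\Omega_1} \varphi(f_1 v - \tfrac{1}{2} g_1) \leq 0$, which by \eqref{e17.5} is exactly $\langle \mu, \varphi\rangle \geq 0$.

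\textbf{Upper bound.} Given $x \in \partial \Omega_1$ with $B(x, 2r) \subset B_0$, pick $\eta \in C_c^\infty(B(x, 2r))$ with $0 \leq \eta \leq 1$, $\eta \equiv 1$ on $B(x, r)$, and $|\nabla \eta| \leq C/r$. Then, since $\mu \geq 0$,
\[
\mu(B(x, r)) \;\leq\; \langle \mu, \eta \rangle \;=\; -\int \nabla v \cdot \nabla \eta \;-\; \int_{\Omega_1}\Bigl(f_1 v - \tfrac{1}{2} g_1\Bigr)\eta \;\leq\; C_0 (C/r)\,|B(x, 2r)| + C r^n \;\leq\; C_1 r^{n-1},
\]
using the Lipschitz bound $\|\nabla v\|_\infty \leq C_0$ from \eqref{e17.2} together with \eqref{e17.1}.

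\textbf{Lower bound; main obstacle.} This is the hardest point. For $x_0 \in \partial \Omega_1$ with $B(x_0, 2r) \subset B_0$, choose $\rho \in (r/2, r)$ such that the trace $v|_{\partial B(x_0, \rho)}$ lies in $W^{1,2}(\partial B(x_0, \rho))$. Let $v^*$ be the harmonic extension of this trace to $B(x_0, \rho)$. Then $\Delta(v^* - v) = -\Delta v$ with $v^* - v = 0$ on $\partial B(x_0, \rho)$, and Green's representation at $x_0$ (with the usual logarithmic modification when $n = 2$) gives
\[
v^*(x_0) \;=\; \int_{B(x_0, \rho)} G_\rho(x_0, \cdot)\, d\mu \;+\; \int_{B(x_0, \rho) \cap \Omega_1} G_\rho(x_0, y)\Bigl[f_1 v - \tfrac{1}{2} g_1\Bigr] dy,
\]
using $v(x_0) = 0$. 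Theorem~\ref{t13.1} gives $\int_{B(x_0, r)} v^2 \geq c r^{n+2}$, and combined with $v(y) \leq C_0 |y-x_0|$ (from \eqref{e17.2} with $v(x_0) = 0$) this forces $\int_{B(x_0, r)} v \geq c' r^{n+1}$; by Fubini there is some $\rho \in (r/2, r)$ with $v^*(x_0) = \fint_{\partial B(x_0, \rho)} v \geq c''\rho$. The second integral above is $O(\rho^2)$, so $\int_{B(x_0, \rho)} G_\rho(x_0, \cdot)\, d\mu \geq c''' \rho$. Using the upper bound $\mu(B(x_0, t)) \leq C_1 t^{n-1}$ together with the layer-cake identity
\[
\int_{B(x_0, \rho)} |y - x_0|^{2-n} d\mu \;=\; (n-2) \int_0^\rho t^{1-n} \mu(B(x_0, t))\, dt \;+\; \rho^{2-n} \mu(B(x_0, \rho)),
\]
one finds some $\rho_1 \in (0, \rho]$ with $\mu(B(x_0, \rho_1)) \geq c_* \rho_1^{n-1}$; applying this at every $x \in \partial \Omega_1 \cap B_0$ and every scale, and invoking monotonicity of $r \mapsto \mu(B(x_0, r))$, promotes the estimate to the uniform lower bound. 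The subtle point is precisely this last step, together with the necessity of the harmonic comparison: naive tests against tent functions $(\rho - |y - x_0|)_+$ produce a near-cancellation between $\int_{\partial B} v$ and $(n-1)\int_B v/|y-x_0|$ too tight for a generic Lipschitz $v$ vanishing at $x_0$ to yield strict positivity; only the harmonic comparison converts the $L^2$-nondegeneracy of Theorem~\ref{t13.1} into strict positivity of $v^*(x_0)$, which Green's identity then carries onto $\mu$.
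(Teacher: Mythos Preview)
Your overall architecture is correct and close to the paper's, but there is a genuine gap in the last step of the lower bound.

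\textbf{The gap.} From the layer-cake identity and the contradiction argument you sketch, you only conclude that for \emph{some} $\rho_1\in(0,\rho]$ one has $\mu(B(x_0,\rho_1))\ge c_* \rho_1^{n-1}$. You then write that ``monotonicity of $r\mapsto\mu(B(x_0,r))$ promotes the estimate to the uniform lower bound''. It does not: monotonicity gives $\mu(B(x_0,r))\ge \mu(B(x_0,\rho_1))\ge c_*\rho_1^{n-1}$, but $\rho_1$ may be arbitrarily small compared to $r$, so this does not yield $\mu(B(x_0,r))\ge c\,r^{n-1}$.

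\textbf{The fix.} Use the already-proved upper bound to absorb the singular part of the Green integral. For a small parameter $\eta\in(0,1)$ write
\[
\int_{B(x_0,\rho)}G_\rho(x_0,\cdot)\,d\mu
=\int_{B(x_0,\eta\rho)}G_\rho\,d\mu
+\int_{B(x_0,\rho)\setminus B(x_0,\eta\rho)}G_\rho\,d\mu .
\]
Your layer-cake computation together with $\mu(B(x_0,t))\le C_1 t^{n-1}$ shows the first term is at most $C(n)C_1\,\eta\rho$; on the annulus $G_\rho(x_0,y)\le c_n\eta^{2-n}\rho^{2-n}$, so the second term is at most $c_n\eta^{2-n}\rho^{2-n}\mu(B(x_0,\rho))$. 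Choosing $\eta$ small enough (depending only on $C_1$ and $c'''$) so that the inner contribution is at most $c'''\rho/2$, you get directly
\[
\mu(B(x_0,\rho))\ \ge\ \frac{c'''}{2c_n}\,\eta^{n-2}\,\rho^{n-1},
\]
which is the desired lower bound at scale $\rho\in(r/2,r)$ and hence at scale $r$.

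\textbf{Comparison with the paper.} The paper's proof is essentially the same computation packaged without Green's function language: it tests $\mu$ against a radial function $\varphi(y)=f(|y-x|)$ equal to $1$ on $B(x,\tau r)$, vanishing outside $B(x,\rho)$, and proportional to $|y-x|^{2-n}-\rho^{2-n}$ in between. One then computes $-\langle\nabla v,\nabla\varphi\rangle$ in terms of the spherical averages $g(t)=\fint_{\partial B(x,t)}v$ and obtains a constant times $g(\rho)-g(\tau r)$. The inner cutoff at $\tau r$ plays exactly the role of your parameter $\eta$, but the paper controls the inner contribution by the more elementary bound $g(\tau r)\le C_0\tau r$ coming from the Lipschitz condition on $v$, rather than via the upper Ahlfors bound on $\mu$; this avoids the layer-cake step entirely. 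For positivity, the paper invokes the observation (Remark~1.4 in \cite{CJK}) that a nonnegative continuous $v$ with $\Delta v\ge -C$ on $\{v>0\}$ satisfies $\Delta v\ge -C$ globally; your level-set integration-by-parts argument is a valid alternative.
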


\begin{proof}
We start with the positivity.
By \eqref{e9.6}, 
$\Delta v = f_1 v - {1\over 2} g_1 \geq - C$ in $\Omega_1\cap B_0$,
where $C$ is a large constant that we don't even want to compute.
Then Remark 1.4 in \cite{CJK} says that $\Delta v \geq -C$.
That is, $\Delta v + C$ is a positive distribution, and this implies that it is 
also a positive measure. Then $\Delta v$ and $\mu$ are measures too. 
We want to check that these three measures have the same restriction to 
$\d \Omega_1 \cap B_0$. Let us first check that
\begin{equation} \label{e17.7}
|\d \Omega_1 \cap B_0| = 0.
\end{equation}
Suppose not. Then we can find a point $x_0 \in \d\Omega_1 \cap B_0$,
which is a Lebesgue density point of $\d \Omega_1 \cap B_0$.
We apply Theorem \ref{t13.1} to this point, with a very small radius $r$,
and find out that $|\Omega_1 \cap B(x,r)| \geq c_2 r^n$
by \eqref{e13.7}. When $r$ is small, this is not compatible with
the definition of a Lebesgue point for $\d\Omega_1$; hence  
\eqref{e17.7} holds.

It easily follows from \eqref{e17.7} that the restrictions of our 
three measures to $\Omega_1 \cap B_0$ are the same; this shows that
$\mu$ is positive, like $\Delta v + C$, and we also get from this that
in $B_0$,
\begin{equation} \label{e17.8}
\text{$\mu$ is the restriction of $\Delta v$ to $\d \Omega_1$,}
\end{equation}
just because the definition \eqref{e17.5} makes it vanish on $\d \Omega_1$,
and the three measure vanish on the open set $\R^n \sm \overline \Omega_1$,
where $v=0$.

Next we want to check that $\mu$ is locally Ahlfors-regular of dimension $n-1$.
We start with the upper estimate. Let $B(x,r)$ be as in the statement, and
let $\varphi$ be a nonnegative smooth function
such that $\varphi(y) = 1$ for $y\in B(x,r)$, $|\nabla \varphi| \leq C r^{-1}$
everywhere, and $\varphi$ is compactly supported in $B(x,2r)$. Then
\begin{eqnarray} \label{e17.9}
\mu(B(x,r)) &\leq& \int \varphi d\mu = \langle \Delta v, \varphi \rangle
- \int_{\Omega_1} [f_1 v - {1\over 2} g_1 ] \varphi
\nonumber\\
&\leq& \langle \Delta v, \varphi \rangle + C ||\varphi||_\infty |B(x,2r)|
\leq |\langle \nabla v,\nabla \varphi\rangle| + Cr^n
\nonumber\\
&\leq& C ||\nabla v||_\infty ||\nabla \varphi||_{L^\infty(B_0)} r^n + C r^n \leq C r^{n-1}
\end{eqnarray}
because $\mu$ is positive, by definition of $\mu$ and of a distribution derivative
(and because we know that $v$ is locally Lipschitz),
and because $|\nabla \varphi| \leq C r^{-1}$, $|\nabla v| \leq C_0$, 
and $r \leq r_0 \leq 1$. This was shockingly easy, but positivity helped
a great deal.

For the lower bound, we shall need to choose our test function more precisely
and use results of Section \ref{good}.
Let us first choose an intermediate radius  $\rho \in (0,r)$.
If $r$ is small enough (depending on $\varepsilon$ in the condition \eqref{e13.1}),
Theorem \ref{t13.1} says that $\fint_{B(x,r)} u_{1,+}^2 \geq c_1 r^2$,
where $u_{1,+} = \max(0,u_1) = v$ and for some $c_1 > 0$ that depends only 
on the constants cited in the statement of Lemma \ref{t17.1}. 
In particular the needed bound on $\fint_{B(x,r)} |\nabla u_1|^2$
follows at once from our Lipschitz bound \eqref{e17.2}. 
By \eqref{e4.3} we can choose $\rho \in (0,r)$ such that
\begin{equation} \label{e17.10}
\int_{\d B(x,\rho)} v^2 \geq r^{-1} \int_{B(x,r)} v^2 \geq C^{-1} r^{n+1}.
\end{equation}
But $x\in \d\Omega_1$, so $u_1(x) = 0$, $v(x)=0$, and so
$v \leq C r$ on $B(x,r)$, by \eqref{e17.2}. 
So $\fint_{\d B(x,\rho)} v^2 \leq C r \fint_{\d B(x,\rho)} v$
and \eqref{e17.10} yields
\begin{equation} \label{e17.11}
\fint_{\d B(x,\rho)} v \geq  C^{-1} r.
\end{equation}
Notice that this forces $\rho \geq C^{-1} r$, because 
$\fint_{\d B(x,\rho)} v \leq C_0 \rho$ (again because $v(x)=0$ and by \eqref{e17.2}).

Let $\tau > 0$ be small (so that $\tau r < \rho$ in particular), 
and define a test function $\varphi$ by 
\begin{equation} \label{e17.12}
\varphi(y) = f(|y-x|),
\end{equation}
where $f : [0,+\infty) \to [0,1]$ is defined as follows. We set
$f(t) = 1$ for $0 \leq t \leq \tau r$, $f(t) =  0$ for $t \geq \rho$, and,
in the remaining region where $\tau r \leq t \leq \rho$, 
\begin{equation} \label{e17.13}
f(t) = {\tau^{n-2}\over 1-\tau^{n-2}}\,{\rho^{n-2} - t^{n-2} \over t^{n-2}}
\ \text{ when } n \geq 2
\end{equation}
and 
\begin{equation} \label{e17.14}
f(t) = (\log(1/\tau))^{-1} \log(\rho/t)
\ \text{ when } n = 2.
\end{equation}
[Here is a typical place when we don't really want to consider $n=1$.]
The point of this choice is that $f$ is continuous and 
$f'(t) = - a_n(\tau) \rho^{n-2} t^{1-n}$ for $\tau r < t < r$, 
where $a_n(\tau)$ is a positive constant that we don't need to compute.
We would like to use the fact that
\begin{eqnarray} \label{e17.15}
\langle \Delta v, \varphi \rangle &=& - \langle \nabla v, \nabla \varphi \rangle
= - \int_{B(x, \rho)\sm B(x,\tau r)} f'(|y-x|) {\d v \over \d r }(y) dy
\nonumber\\
&=& a_n(\tau) \rho^{n-2} 
\int_{B(x, \rho)\sm B(x,\tau r)} |y-x|^{1-n}{\d v \over \d r }(y) dy,
\end{eqnarray}
but $\varphi$ is not a real test function, so we have to do something
about it. There is no problem with the other identities, because
$v$ is Lipschitz; in particular the second line just comes from our 
specific choice of $\varphi$. Rather than doing an approximating now, 
let us keep it for later. In the mean time, let us still compute 
right-hand side of \eqref{e17.15}. Set $g(t) =  \fint_{\d B_(x,t)} v$ 
for $\tau r \leq t \leq r$, and observe that $g$ is Lipschitz, with
\begin{equation} \label{e17.16}
g'(t) = {\d \over \d t}\Big(\fint_{S_1} v(x+t\theta) d\sigma(\theta)\Big)
= \fint_{S_1} {\d v \over \d r}(x+t\theta) d\sigma(\theta)
= \sigma(S_1)^{-1} t^{1-n} \int_{\d B(x,t)} {\d v \over \d r},
\end{equation}
so that
\begin{eqnarray} \label{e17.17}
g(\rho) - g(\tau r) &=& \int_{t = \tau r}^\rho g'(t) dt
= \sigma(S_1)^{-1} \int_{t = \tau r}^\rho t^{1-n} \int_{\d B(x,t)} {\d v \over \d r}
\nonumber\\
&=&  \sigma(S_1)^{-1} \int_{B(x, \rho)\sm B(x,\tau r)} |y-x|^{1-n}{\d v \over \d r}
= - \sigma(S_1)^{-1} a_n(\tau)^{-1} \rho^{2-n}
\langle \nabla v, \nabla \varphi \rangle
\end{eqnarray}
by the correct part of \eqref{e17.15}.
Recall that $g(\rho) \geq C^{-1} r$ by \eqref{e17.11}, and 
that, since $v$ is $C_0$-Lipschitz, $g(\tau r) \leq C_0 \tau r$.
We choose $\tau$ so small that this implies that
$g(\rho) -g(\tau r) \geq (2C)^{-1} r$. Then \eqref{e17.17} yields
\begin{equation} \label{e17.18}
- \langle \nabla v, \nabla \varphi \rangle 
= \sigma(S_1) a_n(\tau) \rho^{n-2} [g(\rho) - g(\tau r)]
\geq C(\tau)^{-1} r^{n-1},
\end{equation}
because $\rho \geq r/C$.

Now we approximate $\varphi$.
For $\eta > 0$ small, pick a function $\varphi_\eta$, such that
$\varphi_\eta (y) = f_\eta(|y-x|)$, where $f_\eta$ is smooth, coincides
with $f$ except on the interval $(\tau r, \tau+\eta)$, 
and yet $|f'_\eta| \leq C(\tau,r)$ for some constant $C(\tau,r)$ that does not 
depend on $\eta$. Then 
\begin{eqnarray} \label{e17.19}
\big| \langle \nabla v, \nabla \varphi \rangle
-  \langle \nabla v, \nabla \varphi_\eta \rangle \big| 
&\leq& \int_{B(x,\tau r+\eta)\sm B(x,\tau)} |\nabla v| 
\,|\nabla \varphi-\nabla \varphi_\eta|
\nonumber\\
&\leq& C ||\nabla \varphi-\nabla \varphi_\eta||_\infty 
|B(x,\tau r+\eta)\sm B(x,\tau)| \leq C'(\tau,r) \eta 
\end{eqnarray}
because $\nabla \varphi_t = \nabla \varphi$ most of the time.
So, if $\eta$ is small enough (depending on $\tau$ and $r$), \eqref{e17.18} yields
\begin{equation} \label{e17.20}
- \langle \nabla v, \nabla \varphi_\eta \rangle 
\geq (2C(\tau))^{-1} r^{n-1}.
\end{equation}
But now $\varphi_\eta$ is smooth, so 
$\langle \Delta v, \varphi_\eta \rangle 
= - \langle \nabla v, \nabla \varphi_\eta \rangle$
by definitions, and 
\begin{eqnarray} \label{e17.21}
\mu(B(x,r)) &\geq& \int \varphi_\eta d\mu 
= \langle \Delta v, \varphi_\eta \rangle -
\int [f_1 v - {1\over 2} g_1 ]\, \1_{\Omega_1} \varphi
\nonumber\\
&\geq& (2C(\tau))^{-1} r^{n-1} - C r^n \geq (4C(\tau))^{-1} r^{n-1} 
\end{eqnarray}
because $\mu$ is a positive measure and (we can very easily arrange that)
$0 \leq \varphi_\eta \leq \1_{B(x,\rho)} \leq \1_{B(x,r)}$, then by \eqref{e17.5}, \eqref{e17.18}, and if $r_0$  is small enough. 
This completes our proof of \eqref{e17.6} and Proposition \ref{t17.1}.
\qed
\end{proof}

\ms
Proposition \ref{t17.1} has a few consequences that we record now.
Let $B_1 \i B_0$ be a strictly smaller open ball. Since \eqref{e17.6} holds for 
every ball of small enough radius centered on $\d \Omega_1 \cap B_1$,
an easy covering argument shows that on $B_1$, $\mu$ is equivalent to the
restriction of $\H^{n-1}$ to $\d \Omega_1 \cap B_1$, in the sense that
\begin{equation} \label{e17.22}
C^{-1} \H^{n-1}(E\cap \d \Omega_1) \leq \mu(E) \leq C\H^{n-1}(E\cap \d \Omega_1)
\ \text{ for every Borel set $E \i B_1$.}
\end{equation}
See for instance Lemma 18.11 on page 109 of \cite{D}. In particular,
$\H^{n-1}(\d \Omega_1 \cap B_1) < +\infty$ (by \eqref{e17.6}), 
and it is classical that in such a case,
$\Omega_1$ is a set of finite perimeter in $B_1$, with
$Per(\Omega_1; B_1) \leq \H^{n-1}(\d \Omega_1 \cap B_1) < +\infty$.
See for instance \cite{Gi}. 
This means that the restriction to $B_1$ of the vector-valued distribution
$\nabla \1_{\Omega_1}$ is in fact a (vector valued) measure, whose total variation,
often denoted by $Per(\Omega_1; B_1)$, is finite. More precisely, we may write
\begin{equation} \label{e17.23}
\nabla \1_{\Omega_1} = n \nu
\end{equation}
in $B_1$, where $\nu$ is the (total) variation of $\nabla \1_{\Omega_1}$
(a positive measure), and $n$ is a measurable function, with values in 
the set of unit vectors in $\R^n$ (the inwards unit vector, which is defined
$\nu$-almost everywhere). It is also known (see \cite{Gi})
that inside $B_1$, $\nu$ is in fact the restriction of $\H^{n-1}$ to 
the reduced boundary of $\Omega_1$, which is usually denoted
by $\d^\ast \Omega_1$, and is in general a Borel set
that is strictly contained in  $\d \Omega_1$.
For instance, corners of an otherwise smooth domain would lie in
$\d \Omega_1 \sm \d^\ast \Omega_1$.
Let us check that in fact
\begin{equation} \label{e17.24}
\H^{n-1}(B_0 \cap \d \Omega_1 \sm \d^\ast \Omega_1) = 0
\ \text{ and }\  \nu =  \H^{n-1}_{\vert \d \Omega_1}.
\end{equation}
The second affirmation will follow at once from the first one (since $\nu$
is the restriction of $\H^{n-1}$ to $\d^\ast \Omega_1$), and for the first one
let us first prove that for $B_1 \i B_0$ as above
there exist constants $C > 0$ and $r_1 > 0$ such that
\begin{equation} \label{e17.25}
\nu(B(x,r)) \geq C^{-1} r^{n-1}
\end{equation}
for $x\in \d\Omega_1 \cap B_1$ and $0 < r \leq r_1$.
Indeed, for such $B(x,r)$ (and if $r_1$ is small enough, in particular so
that $B(x,2r) \i B_0$), \eqref{e13.7} says that
$|\Omega_1 \cap B(x,r)| \geq c_2 r^n$, and \eqref{e13.65} says that
$|B(x,r) \sm \Omega_1| = 
\big|\big\{ x\in B(x,r) \, ; \, u_1(x) \leq 0 \big\}\big| \geq c_6 r^n$.
By the isoperimetric inequality in $B(x,r)$, we deduce from this that
$\nu(B(x,r)) = Per(\Omega_1; B(x,r)) \geq C^{-1} r^{n-1}$, as announced.
Since $\nu(B(x,r)) = H^{n-1}(\d^\ast\Omega_1 \cap B(x,r))
\leq \H^{n-1}(\d\Omega_1 \cap B(x,r)) \leq C r^{n-1}$
by \eqref{e17.6}, we see that $\nu$ also is locally Ahlfors-regular, 
and hence equivalent to $\H^{n-1}_{\vert \d \Omega_1}$. In particular, 
$\H^{n-1}(\d \Omega_1 \sm \d^\ast \Omega_1)
\leq C \nu(\d \Omega_1 \sm \d^\ast \Omega_1) = 0$
(because $\nu$ is supported on the Borel set $\d^\ast \Omega_1$),
which proves \eqref{e17.24}.

Notice that since the reduced boundary of a set with locally finite perimeter
is always rectifiable, \eqref{e17.24} implies that 
\begin{equation} \label{e17.26}
\d \Omega_1 \cap B_0 \text{ is rectifiable.} 
\end{equation}

In fact, we can even prove that $\d \Omega_1$ is uniformly rectifiable
in $B_1$, with big pieces of Lipschitz graphs. 
See Proposition \ref{t17.3} below. 
But let us continue with the same program as in \cite{AC} 
and discuss a representation formula for $\mu$. We know from 
Proposition \ref{t17.1} and its consequence \eqref{e17.22}
that $\mu$ is absolutely continuous with respect to $\H^{n-1}_{\vert \d \Omega_1}$,
which is therefore locally finite.
A differentiation result for measures in $\R^n$ allow us to write
\begin{equation} \label{e17.27}
\mu = h \H^{n-1}_{\vert \d \Omega_1} = h \nu
\end{equation}
on $B_0$, where the density $h$ can be computed by differentiation, i.e.,
\begin{equation} \label{e17.28}
h(x) = \lim_{r \to 0} {\mu(B(x,r)) \over \H^{n-1}(\d \Omega_1 \cap B(x,r))}
= \lim_{r \to 0} {\mu(B(x,r)) \over \nu(B(x,r))}
\end{equation}
for $\H^{n-1}$-almost every $x\in \d \Omega_1 \cap B_0$.
See for instance \cite{M} for this and the next discussion. 
Now $\d \Omega_1 \cap B_0$ is rectifiable, so
$\lim_{r \to 0} r^{1-n} \H^{n-1}(\d \Omega_1 \cap B(x,r)) = \omega_{n-1}$
for $\H^{n-1}$-almost every $x\in \d \Omega_1 \cap B_0$,
where $\omega_{n-1} = \H^{n-1}(\R^{n-1} \cap B(0,1))$ denotes the 
$\H^{n-1}$-measure of the unit ball in $\R^{n-1}$. We could also say that
$\lim_{r \to 0} r^{1-n}\nu(B(x,r)) = \omega_{n-1}$ everywhere on $\d^\ast \Omega_1$,
by definition of the reduced boundary, and get the same conclusion. Anyway, we 
deduce from \eqref{e17.28} that for $\H^{n-1}$-almost every 
$x\in \d \Omega_1 \cap B_0$,
\begin{equation} \label{e17.29}
h(x) = {1 \over \omega_{n-1}} \lim_{r \to 0} \, r^{1-n}\mu(B(x,r)).
\end{equation}
Also notice that in \eqref{e17.5}, the contribution of 
$[f_1 v - {1\over 2} g_1 ]\, \1_{\Omega_1}$ to small balls $B(x,r)$
is negligible compared to $r^{n-1}$, so \eqref{e17.29} also says that
\begin{equation} \label{e17.30}
h(x) = {1 \over \omega_{n-1}} \lim_{r \to 0} \, r^{1-n}
\langle \Delta u_{1,+}, \1_{B(x,r)}\rangle,
\end{equation}
where the right-hand side makes sense because $\Delta u_{1,+}$
is a locally finite measure.
Let us summarize this and a little bit of the previous results.

\begin{pro}\label{t17.2}
Let $(\u,\W)$ be a minimizer for $J$, assume that the $f_i$ and $g_i$
are bounded (as in $\eqref{e17.1}$), that $F$ is Lipschitz (i.e., \eqref{e10.2} holds), 
and that $W_1$ is a good region (i.e. \eqref{e13.1} holds).
Also assume that $u_1$ is Lipschitz on some open ball $B_0$.
Then, inside $B_0$, the Laplacian of $\u_{1,+} = \max(u_1,0)$ can be decomposed as
$\Delta u_{1,+} = \mu + [f_1 v - {1\over 2} g_1 ]\, \1_{\Omega_1}$,
where $\mu$ is a locally Ahlfors-regular measure supported on 
$\d \Omega_1$, and in addition $\mu = h \H^{n-1}_{| \d \Omega_1}$,
with a density $h$ that can be computed by \eqref{e17.28}, \eqref{e17.29},
or \eqref{e17.30}.
\end{pro}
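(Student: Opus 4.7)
The plan is to assemble the proposition directly from the pieces already established in the preceding pages; Proposition \ref{t17.2} is essentially a statement that collects and repackages Proposition \ref{t17.1}, the definition \eqref{e17.5}, and the differentiation discussion \eqref{e17.22}--\eqref{e17.30}. So the proof will be short and mostly bookkeeping, with no new estimate to prove.

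First I would record the decomposition of $\Delta u_{1,+}$. Since $v = u_{1,+}$ by \eqref{e17.4}, the distribution $\mu$ was \emph{defined} in \eqref{e17.5} as $\mu = \Delta v - [f_1 v - \tfrac{1}{2} g_1]\,\1_{\Omega_1}$, which I rewrite as
\begin{equation*}
\Delta u_{1,+} = \mu + \bigl[f_1 v - \tfrac{1}{2} g_1\bigr]\,\1_{\Omega_1}
\end{equation*}
in the sense of distributions on $B_0$. Here I would briefly remark that the second term on the right makes sense as a bounded function on $B_0$ (by \eqref{e17.1} and the Lipschitz, hence bounded, character of $v$ on $B_0$ from \eqref{e17.2}), so that $\mu$ is automatically a distribution.

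Next I would invoke Proposition \ref{t17.1}, whose hypotheses are exactly the standing assumptions here (namely \eqref{e17.1}, \eqref{e17.2}, \eqref{e10.2}, and the nondegeneracy \eqref{e13.1} for $W_1$). It gives that $\mu$ is a positive locally Ahlfors-regular measure whose support is $\d\Omega_1$, with the two-sided bound \eqref{e17.6} valid on every strictly smaller ball $B_1 \Subset B_0$ and every radius $r \leq r_0$ with $B(x,2r)\subset B_0$.

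Finally, for the representation $\mu = h\,\H^{n-1}_{\vert \d\Omega_1}$, I would cite the discussion running from \eqref{e17.22} through \eqref{e17.30}. Specifically: from the Ahlfors regularity of $\mu$ and a covering argument (\textsl{e.g.}\ Lemma 18.11 in \cite{D}), $\mu$ is locally comparable to $\H^{n-1}_{\vert \d\Omega_1}$, so $\H^{n-1}(\d\Omega_1 \cap B_1)<+\infty$ and $\Omega_1$ has finite perimeter in $B_1$. Combining \eqref{e13.7} and \eqref{e13.65} with the relative isoperimetric inequality on small balls centered on $\d\Omega_1 \cap B_1$ yields the lower bound \eqref{e17.25}, so the perimeter measure $\nu$ is also locally Ahlfors-regular; this forces $\H^{n-1}(B_0\cap\d\Omega_1\setminus\d^\ast\Omega_1)=0$ and hence $\nu = \H^{n-1}_{\vert\d\Omega_1}$ as in \eqref{e17.24}, so $\d\Omega_1\cap B_0$ is rectifiable. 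Then $\mu$ is absolutely continuous with respect to $\H^{n-1}_{\vert\d\Omega_1}$ (since it is dominated by it up to a constant), and the Radon--Nikodym / standard differentiation theorem for measures in $\R^n$ provides the density $h$ with $\mu = h\,\H^{n-1}_{\vert\d\Omega_1}$ and the pointwise formula
\begin{equation*}
h(x) = \lim_{r\to 0} \frac{\mu(B(x,r))}{\H^{n-1}(\d\Omega_1\cap B(x,r))}
\end{equation*}
for $\H^{n-1}$-a.e.\ $x\in\d\Omega_1\cap B_0$. Rectifiability gives the density $\lim_{r\to 0} r^{1-n}\H^{n-1}(\d\Omega_1\cap B(x,r)) = \omega_{n-1}$ at $\H^{n-1}$-a.e.\ such $x$, which yields \eqref{e17.29}; and since the contribution of the absolutely continuous piece $[f_1 v - \tfrac12 g_1]\1_{\Omega_1}$ to $B(x,r)$ is $O(r^n)=o(r^{n-1})$, one obtains the equivalent formula \eqref{e17.30} in terms of $\Delta u_{1,+}$ directly.

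There is no real obstacle here: the only item requiring a little care is making sure that the representation formula \eqref{e17.28} is legitimate, which needs the absolute continuity of $\mu$ with respect to $\H^{n-1}_{\vert \d\Omega_1}$, and this follows immediately from \eqref{e17.22}. Everything else has been proved in Proposition~\ref{t17.1} and the paragraphs \eqref{e17.22}--\eqref{e17.30}, so the role of this proof is just to note that the hypotheses on $(\u,\W)$, $F$, $W_1$ and $B_0$ are precisely those needed to apply those results.
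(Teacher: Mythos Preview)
Your proposal is correct and matches the paper's approach exactly: in the paper, Proposition~\ref{t17.2} is introduced with the words ``Let us summarize this and a little bit of the previous results,'' and is not given a separate proof, precisely because the decomposition from \eqref{e17.5}, the Ahlfors regularity from Proposition~\ref{t17.1}, and the differentiation formulae \eqref{e17.22}--\eqref{e17.30} have already been established in the preceding paragraphs. Your write-up simply makes this packaging explicit.
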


The reader may wonder why we find it so interesting to have an expression
of $\Delta u_{1,+}$ in terms of its integrals on small balls. In some cases, 
for instance when we have a good control on the blow-up limits of $(\u,\W)$,
it will be possible to estimate $\langle \Delta u_{1,+}, \1_{B(x,r)}\rangle$
and compute it. See Section \ref{vari}.

Notice that in very smooth cases, $\Delta u_{1,+}$ is expected to be the jump
of the normal derivative of $u_{1,+}$ along $\d \Omega_1$, which plays some
role in the first variation of our functional $J$.

\begin{pro}\label{t17.3}
Let $(\u,\W)$, $W_1$ and $B_0$ satisfy the assumptions of Proposition \ref{t17.2}.
Then $\d\Omega_1$ is locally uniformly rectifiable in $B_0$, with big pieces of
Lipschitz graphs. This means that there
are constants $C_2 \geq 1$ and $r_1 \leq 1$ such that,
for each $x\in \d\Omega_1$ and $0 < r \leq r_1$ such that $B(x,2r) \i B_0$,
we can find a $C_2$-Lipschitz graph $\Gamma$ such that
\begin{equation} \label{e17.31}
\H^{n-1}(\Gamma \cap \d\Omega_1\cap B(x,r)) \geq C_2^{-1} r^{n-1}.
\end{equation} 
The constant $C_2$ depends on $n$, the $L^\infty$ bounds in \eqref{e17.1},
the Lipschitz bound in \eqref{e17.2}, the Lipschitz constant in \eqref{e10.2}, and 
the constant $\lambda$ in that shows up in \eqref{e13.1}.
The radius $r_1$ depends on these constants, plus the $\varepsilon > 0$ 
from \eqref{e13.1}.
\end{pro}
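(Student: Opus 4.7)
The plan is to verify the hypotheses of the David--Jerison theorem on big pieces of Lipschitz graphs: $\partial\Omega_1$ is Ahlfors-regular and satisfies ``Condition B'' inside $B_0$, meaning that at every scale $r$ with $B(x,2r)\subset B_0$ and $x\in\partial\Omega_1$, one can find balls $B(y^+,cr)\subset\Omega_1\cap B(x,r)$ and $B(y^-,cr)\subset B(x,r)\setminus\overline{\Omega_1}$, with $c$ depending only on the constants appearing in the statement. Ahlfors-regularity on scales $r\leq r_0$ is exactly Proposition~\ref{t17.1} combined with \eqref{e17.22}, so only the two corkscrew conditions remain.

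For the interior corkscrew I would invoke Theorem~\ref{t13.1}: the standing Lipschitz assumption \eqref{e17.2} provides the required upper bound on $\fint_{B(x,r)}|\nabla u_{1,+}|^2$, so \eqref{e13.6} yields $\fint_{B(x,r/2)}u_{1,+}^2\geq c_1 r^2$. Because $x\in\partial\Omega_1$ and $u_1$ is $C_0$-Lipschitz on $B_0$, we have $0\leq u_{1,+}\leq C_0 r$ on $B(x,r)$; dividing the $L^2$ bound by this uniform $L^\infty$ bound gives $\fint_{B(x,r/2)}u_{1,+}\geq cr$, so some $y^+\in B(x,r/2)$ satisfies $u_1(y^+)\geq cr/2$. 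The Lipschitz bound then forces $u_1>0$ throughout $B(y^+,cr/(4C_0))$, producing the interior corkscrew ball (possibly shrunk so it sits inside $B(x,r)$).

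The exterior corkscrew is the main obstacle, since Theorem~\ref{t13.4} only furnishes the measure estimate $|B(x,r)\cap\{u_1\leq 0\}|\geq c_6 r^n$ and not a ball. The idea is to rule out thin-finger configurations by coupling this with the upper Ahlfors bound from Proposition~\ref{t17.1}. Because $\partial\Omega_1$ has zero $n$-dimensional Lebesgue measure (being $\H^{n-1}$-finite locally), the open set $U=B(x,r)\setminus\overline{\Omega_1}$ still satisfies $|U|\geq c_6 r^n$. For $\delta>0$ small, the tube $U_\delta=\{y\in U:\dist(y,\partial\Omega_1)<\delta r\}$ is contained in the $\delta r$-neighbourhood of $\partial\Omega_1\cap B(x,2r)$, whose volume, by the Ahlfors bound and a standard covering of that boundary piece by balls of radius $\delta r$, is at most $C\delta r^n$. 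Choosing $\delta$ with $C\delta<c_6/2$ leaves a point $y^-\in U\setminus U_\delta$; the ball $B(y^-,\delta r)$ is connected, disjoint from $\partial\Omega_1$, and contains the point $y^-\notin\overline{\Omega_1}$, hence lies entirely in $\R^n\setminus\overline{\Omega_1}$. A harmless rescaling (running the same argument at scale $(1-\delta)r$ in place of $r$) ensures the ball also sits inside $B(x,r)$.

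With Condition B and Ahlfors-regularity both established on $B_0$ at scales $r\leq r_1$ sufficiently small, I would then conclude by invoking the David--Jerison theorem, which asserts that any $(n-1)$-dimensional Ahlfors-regular subset of $\R^n$ satisfying Condition B contains, in each such ball $B(x,r)$, a subset of $\H^{n-1}$-measure at least $C_2^{-1}r^{n-1}$ lying on some $C_2$-Lipschitz graph; the constants depend only on the ambient dimension, the Ahlfors-regularity constant from Proposition~\ref{t17.1}, and the corkscrew constant $c$ built above, which matches the stated dependence. This delivers the graph $\Gamma$ and the bound \eqref{e17.31}, completing the proof.
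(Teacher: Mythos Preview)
Your proof is correct and follows exactly the same overall route as the paper: establish local Ahlfors regularity (already done in Proposition~\ref{t17.1}) plus Condition~B, and then invoke the David--Jerison / Semmes theorem to obtain big pieces of Lipschitz graphs. Your interior corkscrew argument is identical to the paper's.

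The one place where you and the paper diverge is the exterior corkscrew. The paper argues by porosity: assuming no $y_-$ exists, every point of $B(x,r/2)\setminus\Omega_1$ is within $c_8 r$ of $\partial\Omega_1$; one then uses the already--established \emph{interior} corkscrew at all scales $\rho\in[c_8 r, r/2]$ to show this complement is porous, hence too small to carry the mass $c_6 r^n$ guaranteed by Theorem~\ref{t13.4}. You instead use the Ahlfors upper bound from Proposition~\ref{t17.1} directly: the $\delta r$--tube around $\partial\Omega_1\cap B(x,2r)$ has volume $\leq C\delta r^n$, so for $\delta$ small the set $\{u_1\leq 0\}\cap B(x,r)$ must contain a point $y_-$ at distance $\geq\delta r$ from $\partial\Omega_1$. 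Your route is arguably more direct here, since Ahlfors regularity is already in hand and the tube--volume estimate is a one--line covering argument, whereas the paper's porosity argument leans on an external reference (\cite{DT}) for the details. On the other hand, the paper's porosity approach has the mild conceptual advantage that it does not recycle Proposition~\ref{t17.1} and would work in settings where one has the interior corkscrew and the volume bound but not yet the full Ahlfors regularity of the boundary.
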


Of course Proposition \ref{t17.3} is a natural complement to Proposition \ref{t17.2};
we would usually not talk about uniform rectifiability unless $\d\Omega_1$ is 
locally Ahlfors-regular. A $C_2$-Lipschitz graph is just the graph, in some set
of orthonormal coordinates of $\R^n$, of a real valued Lipschitz function 
with a Lipschitz norm at most $C_2$. We refer to \cite{DS} 
for more information on uniform rectifiability.

Of course Proposition \ref{t17.3} is stronger that \eqref{e17.26}, even though this
is not completely obvious from the definition. The proof below does not use 
regularity properties of sets of finite perimeters, so it could seen as an alternative
route to \eqref{e17.26}, but in fact it relies on heavy machinery too. We shall
deduce Proposition \ref{t17.3} from the following lemma, which says that
$\d\Omega_1$ satisfies the so called Condition $B$ locally in $B_0$.

\begin{lem}\label{t17.4}
Keep the assumptions of Propositions \ref{t17.2} and \ref{t17.3}.
Then there are positive constants $c_7, c_8$, and $r_2 \leq 1$ such that,
for each $x\in \d\Omega_1$ and $0 < r \leq r_2$ such that $B(x,2r) \i B_0$,
we can points $y_+$ and $y_-$ in $B(x,r)$ such that
$B(y_+,c_7r) \i B(x,r) \cap \Omega_1$ and $B(y_-,c_8 r), \i B(x,r) \sm \Omega_1$.
The constants $c_7$ and $c_8$ depend on 
the same parameters as announced for $C_2$ above, and $r_2$ may also
depend on $\varepsilon$.
\end{lem}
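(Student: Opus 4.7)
\medskip\noindent\textbf{Proof plan.}
The strategy is to pick $y_+$ as a point of $\Omega_1 \cap B(x, r/2)$ maximally far from $\partial\Omega_1$, and $y_-$ analogously in $B(x, r/2) \setminus \overline{\Omega_1}$. The existence of interior balls of radius proportional to $r$ around these points will follow from a standard comparison between the $r^n$-order volume on each side of $\partial\Omega_1$ and the $r^{n-1}$-order measure of a tubular neighbourhood of $\partial\Omega_1$ delivered by Ahlfors regularity.

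First I would establish two-sided volume lower bounds at scale $r$. From (\ref{e17.2}) one has $\fint_{B(x,r/2)} |\nabla u_{1,+}|^2 \leq C_0^2$, so Theorem~\ref{t13.1} applied on $B(x, r/2)$ yields $|\Omega_1 \cap B(x, r/2)| \geq c\, r^n$ as soon as $r_2 \leq 2\min(1, \varepsilon^{1/n})$; symmetrically, Theorem~\ref{t13.4} applied on the same ball yields $|B(x, r/2) \setminus \Omega_1| \geq c\, r^n$ after possibly shrinking $r_2$. The full Lipschitz bound on $\u$ needed for Theorem~\ref{t13.4} I would supply from Lemma~\ref{t10.6} applied to each component $u_j$, $j \geq 2$, on sub-balls of $B_0$ where it does not vanish (combined with the trivial $|\nabla u_j| = 0$ a.e.\ on $\{u_j = 0\}$). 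Next, for $0 < \rho \leq r$ I would cover $\partial\Omega_1 \cap \overline{B(x, r)}$ by a Vitali family of balls $B(z_i, \rho)$ centred on $\partial\Omega_1 \cap \overline{B(x,r)}$ with bounded multiplicity; by the upper bound in (\ref{e17.6}) together with (\ref{e17.22}), at most $C(r/\rho)^{n-1}$ such balls are needed, so the tubular neighbourhood
\[
T_\rho := \bigl\{y \in B(x, 2r) : \dist\bigl(y,\, \partial\Omega_1 \cap \overline{B(x,r)}\bigr) < \rho\bigr\}
\]
has volume at most $C\, \rho\, r^{n-1}$.

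Choosing $c_7 > 0$ so small that $C\, c_7\, r^{n-1} \cdot r < \tfrac12\, c\, r^n$, the set $\Omega_1 \cap B(x, r/2) \setminus T_{c_7 r}$ is nonempty; any element $y_+$ of it satisfies $y_+ \in \Omega_1$ and $B(y_+, c_7 r) \cap \partial\Omega_1 = \emptyset$, so $B(y_+, c_7 r) \subset \Omega_1$ by the connectedness of balls, while imposing $c_7 \leq 1/2$ also gives $B(y_+, c_7 r) \subset B(x, r)$. The identical construction on the other side, using the lower bound for $|B(x, r/2) \setminus \Omega_1|$, produces $y_-$ with $B(y_-, c_8 r) \subset B(x, r) \setminus \Omega_1$.

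The Vitali-covering estimate for $T_\rho$ is routine, and the maximisation-of-distance step is immediate once one has both the interior volume lower bound and the tubular volume upper bound. The one genuine obstacle is the verification that Theorem~\ref{t13.4}'s hypothesis of Lipschitzness of the \emph{full} vector $\u$ (not merely $u_1$ as in (\ref{e17.2})) is available on $B(x, r)$; tracing through Section~\ref{lip} to confirm this under our minimal hypotheses, and tracking how the resulting Lipschitz constant for the other $u_j$ enters the final dependence of $c_7$, $c_8$, and $r_2$ on the listed parameters, is the main bookkeeping step beyond the elementary covering geometry that drives the rest of the proof.
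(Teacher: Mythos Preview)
Your approach is correct but takes a different route from the paper in both halves.  For $y_+$ the paper argues more directly: from \eqref{e13.6} one finds $y_+\in B(x,r/2)$ with $u_{1,+}(y_+)\ge \tfrac12\sqrt{c_1}\,r$, and then the $C_0$-Lipschitz bound on $u_1$ alone forces $u_{1,+}>0$ on $B(y_+,c_7 r)$ for $c_7<\sqrt{c_1}/(2C_0)$.  This avoids Proposition~\ref{t17.1} entirely.  For $y_-$, where no pointwise argument is available, the paper uses a porosity argument: if no such $y_-$ exists, every point of $H=B(x,r/2)\setminus\Omega_1$ lies within $c_8 r$ of $\d\Omega_1$, and the already-established $y_+$-balls at all scales $\rho\in[c_8 r,r/2]$ make $H$ porous with constant $\sim c_7$; the resulting volume estimate contradicts Theorem~\ref{t13.4} once $c_8$ is small.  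Your tubular-neighbourhood argument treats both sides symmetrically and cashes in Proposition~\ref{t17.1} directly; the paper's route is a bit more self-contained, with the $y_+$ half needing nothing beyond Theorem~\ref{t13.1} and the Lipschitz bound on $u_1$.

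One caution on your proposed fix for the full Lipschitz bound on $\u$: patching Lemma~\ref{t10.6} on sub-balls where $u_j\ne0$ does not by itself yield a uniform bound, because the estimate there involves $\fint_{B(x,d(x))}|\nabla\u|$ with $d(x)=\dist(x,\{\u=0\})$, which is not obviously controlled as $d(x)\to0$.  The proof of Theorem~\ref{t10.1} tames exactly this via Corollary~\ref{t10.5} (the monotonicity argument), so the clean route is to invoke Theorem~\ref{t10.1} itself, accepting the extra standing hypotheses $f_i\ge0$ and $|\Omega|<+\infty$.  The paper's proof invokes Theorem~\ref{t13.4} under the same implicit understanding.
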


\ms
Proposition \ref{t17.3} is a direct consequence of Lemma \ref{t17.4},
because any Ahlfors-regular set $E$ that satisfies Condition $B$ contains
big pieces of Lipschitz graphs. The condition was introduced by 
Semmes \cite{Se}, 
who proved that it implies the $L^2$-boundedness of
many singular integral operators on $E$. The existence of 
big pieces of Lipschitz graphs were proved slightly later; 
see \cite{Daa} or \cite{DJ}. 
Here there is a minor additional detail, which is that the references
above concern unbounded Ahlfors-regular sets, and we want a local version
of these results. This is not a serious issue; one check that 
both proofs go through (and we recommend the shorter second one).

\ms
So we just need to prove Lemma \ref{t17.4}. 
Let $B(x,r)$ be as in the statement; if $r_1$ is small enough, we 
can apply Theorem \ref{t13.1} to $B(x,r/2)$, and \eqref{e13.6} says that
$\fint_{B(x,r/2)} |u_{1,+}|^2 \geq c_1 r^2$.
So we can find $y_+ \in B(x,r/2)$ such that $u_{1,+}(y_+)^2 \geq c_1 r^2/4$.
But $u_{1,+}$ is $C_0$-lipschitz on $B(x,r)$, so
$u_{1,+}(z) > 0$ if $z\in B(x,r)$ is such that $|z-y_+| < C_0^{-1}\sqrt{c_1} r/2$; 
that is, $B(y_+,c_7 r) \i \Omega_1$ as soon as 
$c_7 < {1 \over 2} \min(\sqrt{c_1} C_0^{-1},1)$.

We do not have such a good estimate on the other component
$\R^n \sm \overline\Omega_1$, but at least Proposition \ref{t13.4}
says that for $B(x,r)$ as above (hence with $r$ small enough),
\begin{equation} 
\label{e17.32}
\big|\big\{ z\in B(x,r) \, ; \, u_1(z) \leq 0 \big\}\big| \geq c_6 r^n.   
\end{equation}
It turns out that this is enough: the existence of the points $y_-$
(again for $r$ small enough, and with a constant $c_8$ which is much
smaller than $c_7$) follows from \eqref{e17.32} and the existence of 
the points $y_{+}$ by a fairly simple porosity  argument. 
Here is the idea. Suppose that for some pair $(x,r)$ we cannot find $y_-$;
we want to show that the set $H = B(x,r/2)\sm \Omega_1$ is porous, at least at 
the scales $\rho \in [c_8 r, r/2]$. Indeed, the fact that we cannot find $y_-$ 
implies that $\dist(y,\d\Omega_1) \leq c_8 r$ for each $y\in H$,
so we we can choose $z=z(y)\in \d\Omega_1$ such that $|z-y| \leq c_8 r$, 
and for each radius $\rho \in [c_8 r, r/2]$, we apply the known existence of 
points $y_+$ to the pairs $(y,\rho)$, and get a point $y_+(y,\rho)$
such that $B(y_+(y,\rho),c_7 \rho) \i B(z,\rho) \cap \Omega_1
\i B(y,2\rho) \sm H$. This is what we mean by porous at 
the scales $\rho \in [c_8 r, r/2]$. It is known that porous sets
(at all scales) have vanishing measure, and are even of Hausdorff dimension
smaller than $n$; the proof of this fact also shows that 
$|H| \leq \eta(c_7,c_8) r^n$, with a function $\eta$ which for each fixed
value of $c_7$ (the porosity constant) tends to $0$ when $c_8$ tends to $0$;
then we take $c_8$ small and contradict \eqref{e17.32}.
We shall not include the details of the argument here, because they are the same
as in Proposition 10.3 of \cite{DT}, for instance, 
which even concerns a similar situation, and uses fairly close notation.
\qed

\section{Limits of minimizers}    \label{limits} 

The main point of this section is not to give a general theory of limits,
but mostly to allow a description of the blow-up limits 
of a given minimizer at a point, and give a little more information on the convergence 
to  the blow-up limits. It will be convenient to work with the following notion
of local minimizers.

We are given an open set $\O$, where we will work (and an open ball of $\R^n$
is our main example), and a measurable set $\Omega \i \O$, and we define
$\F = \F(\O,\Omega)$ to be the set of pairs $(\u,\W)$ such that 
$\u = (u_1, \ldots u_N)$ is a $N$-uple of functions $u_i \in W^{1,2}_{loc}(\O)$, and 
$\W = (W_1, \ldots W_N)$ is a $N$-uple of disjoint measurable subsets of
$\Omega$ such that $u_i = 0$ almost everywhere on $\O \sm W_i$.
When we say that $u_i \in W^{1,2}_{loc}(\O)$, we just mean that
$u_i \in W^{1,2}(B)$ for every ball $B$ such that $\overline B \i \O$.

We are also given a function $F$, defined
on the set ${\cal W}(\Omega)$ of $N$-uples of disjoint measurable 
subsets of $\Omega$, and measurable functions $f_i$ and $g_i$, $1 \leq i \leq N$, 
defined on $\O$ (but only the values on $\Omega$ matter). We shall assume that
these functions are bounded; lesser assumptions, in particular on the $g_i$
(and then on the $g_{i,k}$ below) would be enough, but usually we shall need 
these strong ones to check the other assumptions of the Theorem \ref{t14.1} anyway.

We say that a pair $(\u^\ast,\W^\ast)$ is \underbar{a competitor for $(\u,\W) \in \F$
in $\O$, relative to $\Omega$}, when $(\u^\ast,\W^\ast) \in \F$ there is a compact set 
$K \i \O$ such that $(\u^\ast,\W^\ast)$ coincides with $(\u,\W)$ in $\O \sm K$.
Then we say that $(\u,\W) \in \F$ is a \underbar{local minimizer for 
$J$ in $\O$, relative to $\Omega$}, when 
\begin{equation} \label{e14.1}
\int_K |\nabla \u|^2 + \sum_i \int_K [u_i^2f_i - u_i g_i] + F(\W)
\leq \int_K |\nabla \u^\ast|^2 + \sum_i \int_K [(u_i^\ast)^2f_i - u_i^\ast g_i] + F(\W^\ast)
\end{equation}
whenever $(\u^\ast,\W^\ast)$ is a competitor for $(\u,\W) \in \F$ in $\O$
and $K \i \O$ is a compact set such that $(\u^\ast,\W^\ast)$ 
coincides with $(\u,\W)$ in $\O \sm K$. We would have liked to say that
$J(\u,\W) \leq J(\u^\ast,\W^\ast)$, but these numbers may be infinite,
so \eqref{e14.1} is a good substitute for this.

Local minimizers can be thought of as a substitute of minimizers under a Dirichlet
constraint at the boundary of $\O$ (in a strong way, since competitors have to coincide
with $(\u,\W)$ in a neighborhood of $\d\O$), where the Dirichlet data is not given
in advance, and just comes from the pair $(\u,\W)$ itself.

Our definition is not perfect for large sets $\O$, because we decided to use 
functionals $F$ that may not be local, so we are essentially forced to 
assume that $F$ is defined globally on ${\cal W}(\Omega)$, and finite. 
For functions $F$ defined by \eqref{e1.7}, we should restrict $F$ to $K$
and rewrite \eqref{e14.1} accordingly.

Even when $\O$ bounded, the definition allows the possibility that
$\int_{\Omega} |\nabla \u|^2 = +\infty$, and then we should use \eqref{e14.1}.
But most often $J(\u,\W) < +\infty$ and we can use the simpler form 
$J(\u,\W) \leq J(\u^\ast,\W^\ast)$.

\ms
Notice that the regularity results that we proved so far also hold, locally inside $\O$, 
for local minimizers for $J$ in $\O$, and with essentially the same proof.
Let us say more specifically what we mean by this in the case of Theorem \ref{t11.1}.
We claim that if $\Omega$ is smooth, $F$ is Lipschitz, the $f_i$ and the $g_i$
satisfy \eqref{e10.1} and \eqref{e10.2}, and $(\u,\W)$ is a local minimizer for
$J$ in $\O$, then for each ball $B(x,r)$ such that $B(x,3r) \i \O$, 
the restriction of $\u$ to $B(x,r)$ is Lipschitz, with bounds that depend only on $n$, $N$, 
the regularity constants for $\Omega$, $L^\infty$ bounds for the $f_i$ and the $g_i$, $r$, 
and initial bounds $\int_{B(x,2r)} |\nabla \u|^2$ and $||\u||_{L^\infty(B(x,2r))}$.
The proof just consists in following the proof of Theorem \ref{t11.1}.
Probably, we could even dispense with this last bound on $||\u||_{L^\infty(B(x,r))}$,
but this is not the point of the remark.

\ms
Let us now set the notation for the next result. We are given an open set $\O$
(for instance, an open ball),  a sequence $\{ \Omega_k \}_{k \geq 0}$
of measurable subsets of $\O$, sequences $\{ f_{i,k} \}$ and $\{ g_{i,k} \}$ of 
bounded functions on $\O$, with $f_{i,k} \geq 0$ on $\O$,
and even a sequence of functions $F_k$ defined on the corresponding
${\cal W}(\Omega_k)$. 
We shall assume that there is a constant $C_0$ such that 
\begin{equation} \label{e14.2}
||f_{i,k}||_\infty \leq C_0 \ \text{ and } \ 
||g_{i,k}||_\infty \leq C_0
\text{ for every $k$,} 
\end{equation}
and that there exist weak limits $f_i$ and $g_i$ on $\O$, by which we mean that
\begin{equation} \label{e14.3}
\lim_{k \to +\infty}\int f_{i,k} \varphi = \int f_{i} \varphi
\ \text{ and } \ 
\lim_{k \to +\infty}\int g_{i,k} \varphi = \int g_{i} \varphi 
\end{equation}
for every continuous function $\varphi$ with compact support in $\O$.

We shall also make our life simpler, as in Section \ref{existence}, and assume that
$F_k$ has a simple form for which it will be easy to take limits. We shall start with
the case when $F_k$ is coming from a function of the volumes, i.e., when
\begin{equation} \label{e14.4}
F_k(W_1, \ldots, W_N) = \wt F_k(|W_1|, \ldots, |W_N|)
\end{equation}
for some function $F_k : [0,|\O|]^N \to \R$. We shall assume that 
$|\O| < +\infty$, and that 
\begin{equation} \label{e14.5}
\text{each $\wt F_k : [0,|\O|]^n \to \R$ is continuous, and the $\wt F_k$ 
converge uniformly to a limit $\wt F$.}
\end{equation}
We required $\wt F_k$ to be defined on $[0,|\O|]^N$ when $[0,|\Omega_k|]^N$
would have been enough, so that the $\wt F_k$ have a common domain
of definition. It costs us very little, because functions $\wt F_k$ on $[0,|\Omega_k|]^N$
would be easy to extend. 

These functions $F_k$ fit with the original motivation of the paper,
but we shall also give a statement for functions $F_k$ defined by \eqref{e1.7},
as in the standard setting of Alt, Caffarelli, and Friedman, are possible.
See Corollary \ref{t14.5} at the end of the section.

For the domains $\Omega_k$, we assume the existence of a measurable 
set $\Omega$ such that
\begin{equation} \label{e14.6}
 \lim_{k \to +\infty} \1_{\Omega_k} = \1_{\Omega}
\ \text{ in } L^1(\O),
\end{equation}
and the following weak regularity property of $\Omega$, which will be used
to approximate Sobolev functions by compactly supported ones. We suppose that
for each compact set $K \i \O$, there exist $r_K > 0$ and $c_K > 0$ such that 
\begin{equation} \label{e14.7}
|B(x,r) \sm \Omega| \geq c_K r^n
\ \text{ for $x\in K \cap \d \Omega$ and } 0 < r \leq r_K.
\end{equation}
In addition, \eqref{e14.6} is a little too weak to prevent something that we don't want:
the $\Omega_k$ may have islands of $\O \sm \Omega_k$ inside them,
with very small masses so that \eqref{e14.6} does not see it, but which become
dense in $\Omega$. If this happens, it could be that the $\u_k$ converge to $0$ 
because they need to vanish on $\O \sm \Omega_k$, but $\Omega$ is a nice ball 
for which some bump function will do better. 
So we also assume that for each compact set $K \i \O$, 
\begin{equation} \label{e14.8}
\lim_{k \to+\infty} \delta(K,k) = 0,
\ \text{ where }
\delta(K,k) = \sup\big\{ \dist(x,\O \sm \Omega) \, ; \, x\in K \sm \Omega_k\big\}.
\end{equation}
Since the numbers $\delta(K,k)$ are sensitive to adding small useless pieces to
$\d\Omega_k$, we should probably replace $\O \sm \Omega$ by the smaller
set $Z(k) = \big\{x\in \O \, ; \, |\O \cap B(x,r)\sm \Omega| > 0 \text{ for } r > 0 \big\}$
before we check \eqref{e14.8}. It is easy to see that $|(\O \sm \Omega_k)\sm Z(k)|=0$,
so $\Omega_k$ and $\O \sm Z(k)$ are equivalent for our functional. We do not
need to take this precaution for $\Omega$, because it is already included in 
\eqref{e14.7}. Finally, if the $\Omega_k$ satisfy \eqref{e14.7} uniformly,
then \eqref{e14.8} follows from \eqref{e14.6}; see the second part of the
proof of Lemma \ref{t16.1} below.

These three assumptions sound weak, but remember that we shall need to assume
more regularity on the $\Omega_k$ if we want to make sure that the $\u_k$
are uniformly Lipschitz, as in \eqref{e14.10}.

In addition to the data, we are also given a sequence of pairs 
$(\u_k,\W_k) \in \F(\O,\Omega_k)$, and we assume that
\begin{equation} \label{e14.9}
(\u_k,\W_k) \text{ is a local minimizer for $J_k$ in $\O$, relative to $\Omega_k$,}
\end{equation}
where $J_k$  is the analogue of $J$, but defined with the 
data $\Omega_k$, $f_{i,k}$, $g_{i,k}$, and $F_k$. 
We assume that for each compact ball $B \i \O$, there is a constant 
$C(B)$ such that
\begin{equation} \label{e14.10}
\u_k \text{ is $C(B)$-Lipschitz in $B$,}
\end{equation}
and also that there is a function $\u \in W^{1,2}(\O)$  
such that
\begin{equation} \label{e14.11}
\u(x) = \lim_{k \to +\infty} \u_k(x)
\ \text{ for $x\in \O$.}
\end{equation}
In practice, we shall obtain \eqref{e14.10} by an application of Theorem \ref{t11.1},
which means that we will have stronger assumptions on the $\Omega_k$,
and also explains why we don't try to give weaker assumptions on the $g_i$,
for instance. The existence of a subsequence for which the weak limits in
\eqref{e14.3} and the limit in \eqref{e14.11} will then be rather easy to get.
But again we do not try to give optimal assumptions here.

\begin{thm}\label{t14.1} 
Assume all the conditions above. Then we can find $\W \in {\cal W}(\Omega_k)$ 
such that $(\u,\W) \in \F(\O,\Omega)$ and 
$(\u,\W)$ is a local minimizer for $J$ in $\O$, relative to $\Omega$. In addition, 
\begin{equation} \label{e14.12}
\lim_{k \to +\infty} \u_k = \u \text{ in $W^{1,2}(B)$ for every ball such that 
$\overline B \i \O$.}
\end{equation}
\end{thm}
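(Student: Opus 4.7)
The plan is to follow the standard two-step $\Gamma$-convergence scheme: prove lower semicontinuity of $J_k$ along the sequence $(\u_k,\W_k)$, and construct a recovery sequence for any competitor $(\u^\ast,\W^\ast)$ of the candidate limit $(\u,\W)$. First, I would build $\W$ essentially as in the proof of Theorem~\ref{t3.1}. The uniform Lipschitz bound \eqref{e14.10} together with \eqref{e14.11} gives, after extracting a further subsequence, weak convergence $\u_k\rightharpoonup\u$ in $W^{1,2}_{\mathrm{loc}}(\O)$ and strong convergence in $L^p_{\mathrm{loc}}(\O)$ for every $p<\infty$; in particular $u_iu_j=0$ a.e.\ for $i\neq j$ (passing to the limit in $u_{i,k}u_{j,k}=0$) and $u_i=0$ a.e.\ on $\O\sm\Omega$ (using \eqref{e14.6} and \eqref{e14.8}). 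After extracting once more so that $|W_{i,k}|\to l_i$ with $\sum_i l_i\le|\Omega|$, I would set $W'_i=\{x\in\Omega:u_i(x)\neq0\}$ and complete the $W'_i$ into disjoint Borel sets $W_i\supset W'_i$ contained in $\Omega$ with $|W_i|=l_i$, as in Section~\ref{existence}. Then $(\u,\W)\in\F(\O,\Omega)$.

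Next I would verify the lower semicontinuity $J_K(\u,\W)+F(\W)\le\liminf_k\bigl(J_{k,K}(\u_k,\W_k)+F_k(\W_k)\bigr)$ for any compact $K\i\O$. The gradient piece is weak $L^2$ lower semicontinuity. For the $M$-term I would write
\begin{equation*}
\int_K u_{i,k}^2 f_{i,k} - \int_K u_i^2 f_i
= \int_K (u_{i,k}^2-u_i^2)f_{i,k} + \int_K u_i^2(f_{i,k}-f_i),
\end{equation*}
where the first term tends to $0$ by \eqref{e14.2} and strong $L^2_{\mathrm{loc}}$ convergence of $u_{i,k}$, and the second by the weak convergence \eqref{e14.3} applied to the admissible test function $u_i^2\1_K$ (after a small smoothing if needed); the analogous argument handles the $u_ig_i$ term. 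For the $F$-term, $F_k(\W_k)=\wt F_k(|W_{1,k}|,\dots,|W_{N,k}|)\to\wt F(l_1,\dots,l_N)=F(\W)$ by the uniform convergence \eqref{e14.5} and the choice $|W_i|=l_i$.

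For the recovery sequence, fix a competitor $(\u^\ast,\W^\ast)$ with modification set $K\Subset\O$, pick $\delta>0$ small so that $K_\delta:=\{x:\dist(x,K)<\delta\}\Subset\O$, and a smooth cutoff $\theta_\delta$ equal to $1$ on $K$ and $0$ off $K_\delta$. I would set
\begin{equation*}
u_{i,k}^\ast(x)=\theta_\delta(x)u_i^\ast(x)+(1-\theta_\delta(x))u_{i,k}(x)
\end{equation*}
and define $W_{i,k}^\ast$ to coincide with $W_{i,k}$ outside $K_\delta$ and with $W_i^\ast\cap\Omega_k$ on $K$, with the transition across $K_\delta\sm K$ chosen so that the sets are disjoint, lie in $\Omega_k$, and have volumes converging to those of $W_i^\ast$ as $k\to\infty$ (using \eqref{e14.6} to absorb the small sets $W_i^\ast\sm\Omega_k$). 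The main obstacle here is the disjointness of the $W_{i,k}^\ast$ in the transition annulus $K_\delta\sm K$: since $\u^\ast=\u$ there, the supports of the two pieces $\theta_\delta u_i^\ast$ and $(1-\theta_\delta)u_{j,k}$ for $i\ne j$ may overlap on a small bad set on which $u_i=\lim u_{i,k}$ is close to $0$ without vanishing. I would deal with this by using the uniform convergence of $\u_k$ to the continuous $\u$ to show that this overlap region has measure $o_k(1)$ inside $K_\delta\sm K$ (and is confined to a neighborhood of $\bigcup_i\d\{u_i>0\}$), and then absorb its contribution into the energy error by the usual Poincaré/cut-off estimates of Section~\ref{favorites}; alternatively one may first replace $u_{i,k}$ by $u_{i,k}\,\chi_{i,k}$ for a partition of unity subordinate to a slight thickening of the $\{|u_i|>\varepsilon\}$, using again uniform convergence to justify that this only changes the energy by $o_k(1)+o_\delta(1)$.

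Once this is done, the interpolation gives $\limsup_k[J_k(\u_k^\ast,\W_k^\ast)+F_k(\W_k^\ast)]\le J(\u^\ast,\W^\ast)+F(\W^\ast)+C\delta$, with the $C\delta$ coming from the transition energy on $K_\delta\sm K$. Chaining with the minimality of $(\u_k,\W_k)$ and the lower semicontinuity proved above, and then letting $\delta\to0$, yields \eqref{e14.1} for $(\u,\W)$, so $(\u,\W)$ is a local minimizer of $J$ in $\O$ relative to $\Omega$. Finally, to deduce the strong $W^{1,2}_{\mathrm{loc}}$ convergence \eqref{e14.12}, I would apply the recovery construction with $(\u^\ast,\W^\ast):=(\u,\W)$ (and any $K\Subset\O$ containing a given ball $B$), which gives $\limsup_k\int_B|\nabla\u_k|^2\le\int_B|\nabla\u|^2$; combined with weak convergence in $W^{1,2}(B)$ and the Radon–Riesz property of the Hilbert space $L^2(B)$ applied to $\nabla\u_k$, this upgrades weak to strong convergence and completes the proof.
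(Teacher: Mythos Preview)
Your overall $\Gamma$-convergence strategy and the construction of $\W$ match the paper. The gap is in the recovery step: your interpolated function $u_{i,k}^\ast=\theta_\delta u_i^\ast+(1-\theta_\delta)u_{i,k}$ need not lie in $\F(\O,\Omega_k)$. On $K$ (where $\theta_\delta=1$) you get $u_{i,k}^\ast=u_i^\ast$, but $u_i^\ast$ is only required to vanish on $\O\sm\Omega$, not on $\O\sm\Omega_k$; the set $K\cap\Omega\sm\Omega_k$ can have positive measure, and nothing forces $u_i^\ast=0$ there. Taking $W_{i,k}^\ast=W_i^\ast\cap\Omega_k$ does not help---it just creates points where $u_{i,k}^\ast\neq0$ outside $W_{i,k}^\ast$. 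The paper fixes this by composing $\u^\ast$ with a truncation $h$ (killing values below a threshold $\delta$) and then multiplying by a further cutoff $\psi$ that vanishes within distance $\eta$ of $\O\sm\Omega$; assumption \eqref{e14.8} then guarantees that $K\sm\Omega_k$ lies in $\{\psi=0\}$ for $k$ large.

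This additional cutoff $\psi$ costs energy, and controlling that cost is exactly where the weak regularity hypothesis \eqref{e14.7} is used (via a Poincar\'e-type estimate on small balls centered on $\d\Omega$; see the paper's estimate of the term $\int_K|\nabla\psi|^2|h\circ\u^\ast|^2$). You never invoke \eqref{e14.7}, which is a sign that this ingredient is missing from your argument. Your handling of the disjointness obstacle in the transition annulus is also somewhat informal; the paper resolves it cleanly with the same truncation $h$: since $\|\u-\u_k\|_{L^\infty(K^\varepsilon)}<\delta$, wherever $h(u_i)\neq0$ one has $|u_i|\ge\delta$, hence $u_{i,k}\neq0$, hence $x\in W_{i,k}$---so one may simply keep $W_{i,k}^\ast=W_{i,k}$ on the annulus with no overlap.
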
  

\begin{proof}
The first thing that we need to do is define sets $W_i$, 
so that $(\u,\W) \in \F(\O,\Omega)$,
and we shall proceed as in Section \ref{existence}. 
Because of \eqref{e14.6}, we can find a subsequence $\{ k_j \}$ for which 
\begin{equation} \label{e14.13}
 \lim_{j \to +\infty} \1_{\Omega_{k_j}}(x) = \1_{\Omega}(x)
\end{equation}
for almost every $x\in \O$.

Denote by $Z$ the bad set of points $x\in \O$
such that \eqref{e14.13} fails, or there exists $k \geq 0$
such that $x\in W_{i,k}$ but $u_{i,k}(x) \neq 0$; then
$|Z| = 0$ by definitions. Next set
\begin{equation} \label{e14.14}
W'_{i,k} = \big\{ x\in \Omega_k \sm Z \, ; \, u_{i,k}(x) \neq 0 \big\}
\end{equation}
for each $k \geq 0$, and 
\begin{equation} \label{e14.15}
W'_{i} = \big\{ x\in \Omega \sm Z \, ; \, u_{i}(x) \neq 0 \big\}.
\end{equation}
Also set $\W' = (W'_1, \ldots, W'_N)$; 
we want to show that $(\u,\W') \in \F(\O,\Omega)$, and then we will
obtain the desired $\W$ by adding an extra piece to some $W'_i$, when needed.
Let us first check that
\begin{equation} \label{e14.16}
\text{the $W'_{i}$, $1 \leq i \leq N$, are disjoint}.
\end{equation}
If $x\in W'_{i,k}$, then $x\in W_{i,k}$ (by definition of $Z$); hence
the $W'_{i,k}$ are disjoint. Next, if $x\in W'_i$, then $x\in \Omega_{k_j}$
for $j$ large (because \eqref{e14.13} holds),
and $u_{i,k_j}(x) \neq 0$ (by \eqref{e14.11}), so $x\in W'_{i,k_j}$
for $j$ large. Hence \eqref{e14.16} holds.

In addition, we just proved that
$\1_{W'_{i}} \leq \liminf_{j \to +\infty} \1_{W'_{i,k_j}}$ everywhere, and
by Fatou
\begin{equation} \label{e14.17}
|W'_{i}| = \int \1_{W'_{i}} \leq \liminf_{j \to +\infty} \int\1_{W'_{i,k_j}}
= \liminf_{j \to +\infty} |W'_{i,k_j}| \leq \liminf_{j \to +\infty} |W_{i,k_j}|.
\end{equation}
We also need to know that
$u_i(x) = 0$ almost everywhere on $\O \sm W'_{i}$, and indeed if
$u_{i}(x) \neq 0$ but $x\notin W'_i \cup Z$, then for $j$ large,
$u_{i,k_j}(x) \neq 0$, hence $x\in W_{k_j}$ (because $x\notin Z$), 
so $x\in \Omega_{k_j}$, and then (by \eqref{e14.13}) $x\in \Omega$, 
which contradicts the definition \eqref{e14.15}. 
Notice also that $\u \in W^{1,2}_{loc}(\O)$, because $\u$ is locally Lipschitz
by \eqref{e14.13}; thus we proved that $(\u,\W')\in \F(\O,\Omega)$.

Observe that although our precise definition of $\W'$ depends on the subsequence
$\{ k_j \}$, this dependence is only through the set $Z$; since $|Z|=0$, different
subsequences would yield slightly different, but equivalent sets $W'_i$.

We may not be happy with the $W'_i$ because there may be a way to increase
some of their volumes and make $F(\W)$ smaller, so we will replace the $W'_i$
with possibly larger ones. We could try to make $F(\W)$ as large as possible
(given the natural constraints), but in fact making sure that the volumes 
of the $W^{i,k}$ go to the limit along a subsequence will be enough for our purposes.
Here is the place where we shall use the special form of the $F_k$.

Let us choose a new subsequence $\{ k_j \}$, which we even extract from 
the previous one, so that
\begin{equation} \label{e14.18}
l_i = \lim_{j \to +\infty} |W_{i,k_j}|
\ \text{ exists for each $i$;}
\end{equation}
recall that we assumed that $|\O| < +\infty$.
With this new information, \eqref{e14.17} just says that $|W'_{i}| \leq l_i$. 

Recall that for each $k$, the $W_{i,k}$ are disjoint and contained in $\Omega_k$
(because $(\u_k,\W_k) \in \F(\O,\Omega_k)$); then 
$\sum_i |W_{i,k_j}| \leq |\Omega_{k_j}|$ for each $j$ and
\begin{equation} \label{e14.19}
\sum_i |W'_{i}| \leq \sum_i l_i = \lim_{j \to +\infty} \sum_i |W_{i,k_j}|
\leq \liminf_{j \to +\infty} |\Omega_{k_j}| = |\Omega|,
\end{equation}
by \eqref{e14.6}.  We claim that we can chose disjoint measurable sets $W_i$, 
$1 \leq i \leq N$,  so that
\begin{equation} \label{e14.20}
W'_i \i W_i \i \Omega \ \text{ and } \ |W_i| = l_i \ \text{ for } 1 \leq i \leq N.
\end{equation}
Indeed, the $W'_i$ are disjoint and contained in $\Omega$ 
(see \eqref{e14.15} and \eqref{e14.16}), so we just need to cut a part of 
$\Omega \sm (\cup_i W'_i)$ into pieces, and add them to the $W'_i$
as needed. We could do this with the $l_i$ replaced by any numbers such that
$l_i \geq |W'_i|$ and $\sum_i l_i \leq |\Omega|$, but the present choice will be enough.

This is how we define $\W = (W_1, \ldots, W_N)$. Notice that
\begin{equation} \label{e14.21}
(\u,\W) \i \F(\O,\Omega)
\end{equation}
by construction, and our next task is to show that it is a local minimizer. 
That is, we are given a competitor
$(\u^\ast,\W^\ast) \i \F(\O,\Omega)$ for $(\u,\W)$ in $\O$, and we want to prove
that \eqref{e14.1} holds. As usual, the idea is to modify 
$(\u^\ast,\W^\ast)$ into a competitor $(\u^\ast_k,\W^\ast_k)$
for $(\u_k,\W_k)$, $k$  large, and use the minimality of $(\u_k,\W_k)$
to get some estimates.

We denote by $K_0$ a compact subset of $\O$ such that
$(\u^\ast,\W^\ast)$ coincides with $(\u,\W)$ on $\OÊ\sm K_0$
(as in the definition of competitors). 
Our construction will depend on four small positive constants, $\varepsilon_0$,
$\varepsilon$, $\delta$, and $\eta$, that eventually will all tend to $0$.
Our first action is to replace $K_0$ with a larger compact set $K$, with
$K_0 \i K \i \O$, and so large that 
\begin{equation} \label{e14.22}
|\O \sm K| \leq \varepsilon_0.
\end{equation}
This is possible, because $|\O| < +\infty$ and by the regularity of the 
Lebesgue measure, and this will be helpful when we control the volume
terms of the functional, because whatever happens in $\O \sm K$ 
will not change this term much.

In the estimates that follow, we shall not mark the dependence of the various 
constants on $\varepsilon_0$ and $K$, but we will be more careful about $\varepsilon$, 
$\delta$, and $\eta$. We intend to choose $\varepsilon_0$ and $K$ first, then $\varepsilon$,
then $\delta$, then $\eta$, and our estimates will typically hold as soon as $k$ is large enough,
depending on all these constants.

Set $K^{\varepsilon} = \big\{ x\in \R^n \, ; \, \dist(x,K) \leq \varepsilon \big\}$;
we restrict to $\varepsilon > 0$ so small that $K^{\varepsilon}$ is a compact
subset of $\O$, and we decide to take
\begin{equation} \label{e14.23}
\u_k^\ast = \u_k \text{ and } \W_k^\ast = \W_k \text{ on } \O \sm K^{\varepsilon}.
\end{equation}
In the intermediate region $K^\varepsilon \sm K$, we want to do two things. First,
we want to use a smooth cut-off function $\varphi_\varepsilon$ such that
\begin{equation} \label{e14.24}
\begin{array}{cll}
\varphi_\varepsilon(x) &= 1 &\text{ for } x\in K^{\varepsilon/2}
\\
\varphi_\varepsilon(x) &= 0  &\text{ for } x\in \R^n \sm K^{2\varepsilon/3}
\\
0 \leq \varphi_\varepsilon(x) &\leq 1 &\text{ for } x\in K^\varepsilon \sm K^{\varepsilon/2}
\\
|\nabla \varphi_\varepsilon(x)| &\leq  10\varepsilon^{-1} &\text{ everywhere}
\end{array}
\end{equation}
to interpolate between $\u_k$ and $\u$. But also, we want to replace $\u$
with a slightly smaller function with coordinates $v_i = h \circ u_i$, 
where $h$ is a smooth function such that
\begin{equation}\label{e14.25}
\begin{array}{clll}
h(t) &= &t &\text{ for } |t| \geq 2\delta
\\
h(t) &= &0 &\text{ for } |t| \leq \delta
\\
0 \,\leq \, h'(t) &\leq &3 &\text{ for } |t| \leq 2\delta.
\end{array}
\end{equation}
We should observe now that 
\begin{equation} \label{e14.26}
\lim_{k \to +\infty} ||\u-\u_k||_{L^\infty(K^\varepsilon)} = 0,
\end{equation}
i.e., the $\u_k$ converge to $\u$ uniformly on $K^\varepsilon$;
indeed, we can cover $K^\varepsilon$ by a finite number of compact balls $B \i \O$,
and use \eqref{e14.10} to get the uniform convergence in each $B$. Because of
\eqref{e14.26}, we can decide to restrict to integers $k$ such that
\begin{equation} \label{e14.27}
||\u-\u_k||_{L^\infty(K^\varepsilon)} < \delta.
\end{equation}
Then we set
\begin{equation} \label{e14.28}
u_{k,i}^\ast(x) = \varphi_\varepsilon(x) h(u_i(x)) + (1-\varphi_\varepsilon(x)) u_{k,i}(x)
\ \text{ for $x\in K^\varepsilon \sm K$ and $1 \leq i \leq N$.}
\end{equation}
Let us abuse notation slightly, and rewrite this as
\begin{equation} \label{e14.29}
\u_{k}^\ast = \varphi_\varepsilon \, h \circ \u + (1-\varphi_\varepsilon) \u_{k}
\ \text{ on $x\in K^\varepsilon \sm K$,}
\end{equation}
where we now write $h \circ \u$ for the function whose coordinates are the $h\circ u_i$.
Observe that if $x\in K^\varepsilon \sm K$ is such that $h(u_i(x)) > 0$
for some $i$, then $|u_i(x)| \geq \delta$, hence $u_{k,i} \neq 0$ (by \eqref{e14.26}) and,
almost surely, $x\in W_{k,i}$. So we may take
\begin{equation} \label{e14.30}
\W_k^\ast \cap (K^\varepsilon \sm K)= \W_k \cap (K^\varepsilon \sm K),
\end{equation}
and we still get that $\u_{k,i}^\ast(x) = 0$ almost everywhere on 
$(K^\varepsilon \sm K) \sm W_{k,i}^\ast$. Let us record the fact that
\begin{equation} \label{e14.31}
\u_{k}^\ast =  h \circ \u = h \circ \u^\ast
\ \text{ on } K^{\varepsilon/2} \sm K
\end{equation}
because $\varphi_\varepsilon = 1$ there and by definition of $K \supset K_0$.

Next we want to define $\u_k^\ast$ on $K$. We would have liked to take
$\u_k^\ast = h \circ\u^\ast$ (that is, $u_{k,i}^\ast = h \circ u_i^\ast$
for $1 \leq i \leq N$), but it could be that $h \circ u^\ast \neq 0$ somewhere on 
$\O \sm\Omega_k$, and this is not allowed. [Recall that we only get 
from the definitions that $\u^\ast = 0$ on $\O \sm \Omega$.]
So we will have to kill $h \circ u_i^\ast$ near $\O \sm \Omega_k$
with another cut-off function.
Let $\eta \in (0,\varepsilon/100)$ be our third small number, 
and let $\psi$ be a smooth function such that
\begin{equation} \label{e14.32}
\begin{array}{clll}
\psi(x) &= &0 &\text{ when } \dist(x,\O \sm \Omega) \leq \eta
\\
\psi(x) &= &1 &\text{ when } \dist(x,\O \sm \Omega) \geq 2\eta
\\
0 \,\leq \, \psi(x)  &\leq &1 &\text{ when } \eta \leq \dist(x,\O \sm \Omega) \leq 2\eta
\\
|\nabla\psi(x)| &\leq & 2\eta^{-1} &\text{ everywhere.}
\end{array}
\end{equation}

We intend to take
\begin{equation} \label{e14.33}
\u_{k}^\ast(x) =  \psi(x) \, h \circ \u^\ast(x)
\ \text{ for } x\in K^{\varepsilon/2},
\end{equation}
with an overlap of domains that will be useful to prove that $W^{1,2}_{loc}(\O)$,
but then we shall need to check that the two definitions coincide 
on $K^{\varepsilon/2} \sm K$.
We know that $\u^\ast = \u$ on that set, by definition of $K$, so just need
to check that $\psi(x) \, h \circ \u(x) = h \circ \u(x)$ on $K^{\varepsilon/2} \sm K$. 
When $\dist(x,\O \sm \Omega) \geq 2\eta$, this is clear because $\psi(x)=1$.
Set
\begin{equation} \label{e14.34}
H = \big\{ x\in K^{\varepsilon/2} \, ; \, 
\dist(x,\O \sm \Omega) \leq 2\eta \big\}
\end{equation}
If we show that
\begin{equation} \label{e14.35}
|\u(x)| \leq \delta \text{ and $h \circ \u(x) = 0$ for } x \in H,
\end{equation}
the second part will give the desired result. 

The second part follows from the first one, because 
$h(t) = 0$ when $|t| \leq \delta$. For the first part, 
let us back up a little and do a construction that depends 
only on $\varepsilon$ (we just want to avoid any confusion about 
what our constants depend on).
Let $Y$ be a maximal subset of $K^{3\varepsilon/4}\sm\Omega$ 
whose points lie at mutual distances larger than $\varepsilon/100$. 
For each $y\in Y$, the ball $D_y = \overline B(y,\varepsilon/10)$ 
is contained in $K^\varepsilon$, so \eqref{e14.10} gives a constant 
$C_y$ such that the $\u_k$, and then $\u$  too, are $C_y$-Lipschitz on $D_y$. 
Denote by $C_\varepsilon$ the largest of these numbers; the notation is fair 
because we can compute $C_\varepsilon$ as soon as $\varepsilon$ is chosen,
and it will not depend on $\delta$ and $\eta$. 

Now let $x\in H$ be given. First assume that $B(x,\eta) \i \O \sm \Omega$;
then $\u = 0$ almost everywhere on $B(x,\eta)$. 
Since $x\in K^{3\varepsilon/4}\sm\Omega$ we can find
$y\in Y$ such that $|y-x| \leq \varepsilon/100$, so $B(x,\eta) \i D_y$
(recall that $\eta \leq \varepsilon/100$), $\u$ is Lipschitz near $x$, and
$\u(x) = 0$. Now suppose that $B(x,\eta)$ meets $\Omega$;
since $\dist(x,\O \sm \Omega) \leq 2\eta$, we can find 
$z\in \d\Omega \cap B(x,3\eta)$. 
By the weak regularity condition \eqref{e14.7}, applied with a sufficiently
small radius $r$, we can find a set of positive measure $A$
such that $A \i B(z,\eta) \sm \Omega$, and 
hence $\u = 0$ almost everywhere on $A$.
Also, $z\in K^{3\varepsilon/4}\sm\Omega$ because $\eta \leq \varepsilon/100$,
so we can find $y\in Y$ such that $|y-z| \leq \varepsilon/100$, and then
$A$ and $x$ both lie in $D_y$ where $\u$ is $C_{\varepsilon}$-Lipschitz.
Pick $w\in A$, with $\u(w) = 0$; then $\u(x) \leq C_\varepsilon |x-w|
\leq 4C\varepsilon \eta < \delta$ if $\eta$ is small enough, depending
on $\delta$.

This proves the first part of \eqref{e14.35}; as we saw before, the second part and
then the fact that the two definitions \eqref{e14.33} and \eqref{e14.31} coincide
on $K^{\varepsilon}\sm K$ follow.

We now have a complete definition of $\u_k^\ast$ on $\O$, and let us check that 
\begin{equation} \label{e14.36}
\u^\ast_k \in W^{1,2}_{loc}(\O).
\end{equation}
By definitions, it is enough to show that for each $x\in \O$ there is a small ball 
$B_x$ centered at $x$ such that $\u^\ast_k \in W^{1,2}(B_x)$;
the verification would involve covering any open ball $B$ such that
$\overline B \i \O$ by a finite number of balls $B_x$, and then using a partition 
of the function $1$ to compute $\langle \nabla u^\ast_k, \varphi \rangle
= -\int u^\ast_k \nabla \varphi$ locally in the small balls.

When $x \in \O \sm K^{2\varepsilon /3}$, we 
choose $B_x \i \i \O$ so that $B_x \i \O \sm K^{2\varepsilon /3}$,
observe that  $\u^\ast_k = \u^\ast$ on $B_x$, either by \eqref{e14.23} or 
by \eqref{e14.29} and the fact that $\varphi_\varepsilon = 0$
on $B_x$, and just use our assumption that $(\u^\ast,\W^\ast) \in \F(\O,\Omega)$
to get that $\u^\ast_k \in W^{1,2}(B_x)$.
When $x\in K^{2\varepsilon /3} \sm K$, we 
choose $B_x \i K^{\varepsilon} \sm K$, notice that \eqref{e14.29}
is valid on $B_x$, observe that the first piece $\varphi_\varepsilon(\cdot) h \circ u(\cdot)$
is even Lipschitz on $B_x$ (because of \eqref{e14.10} and \eqref{e14.11}),
and get that $\u^\ast_k \in W^{1,2}(B_x)$ too. 
When $x\in K$, we use \eqref{e14.33}, observe that 
$\u^\ast \in W^{1,2}(B_x)$ by assumption, and then 
that the composition with the smooth function $h$  
with a bounded derivative and then the multiplication with $\psi$, preserve this
(see for instance \cite{Z}).
So \eqref{e14.36} holds.

Next we complete the definition of $\W_k^\ast$ by taking
\begin{equation} \label{e14.37}
W_{k,i}^\ast \cap K = \Omega_k \cap W_{i}^\ast \cap K 
\end{equation}
and we want to check that
\begin{equation} \label{e14.38}
(\u_k^\ast,\W_{k}^\ast) \text{ is a competitor for $(u_k,\W_k)$ in $\O$,
relative to $\Omega_k$.}
\end{equation}
We already know that $(\u_k^\ast,\W_{k}^\ast)$ coincides with
$(u_k,\W_k)$ on $\O \sm K^{\varepsilon}$, so we just need
to check that $(\u_k^\ast,\W_{k}^\ast) \in \F(\O,\Omega_k)$.
The $W_{k,i}^\ast$ are disjoint: on $\O \sm K$,
this is because the  $W_{k,i}$ are disjoint 
(see \eqref{e14.23} and \eqref{e14.30}), and
on $K$ we use the fact that the $W_{i}^\ast$ are disjoint. 
Also, $W_{k,i}^\ast \i \Omega_k$ (on $\O \sm K$, use the fact that this is 
true with the $W_{k,i}$, and on $K$ we forced it in \eqref{e14.37}). 
So $\W_k^\ast \in {\cal W}(\Omega_k)$.
Since we know that $\u^\ast_k \in W^{1,2}_{loc}(\O)$, we just need
to check that 
\begin{equation} \label{e14.39}
\u^\ast_{k,i}(x) = 0 \text{ almost everywhere on } \O \sm W^\ast_{i,k}.
\end{equation}
In $\O \sm K^\varepsilon$, this comes from \eqref{e14.23} 
(because $(\u_k,\W_k) \in \F(\O,\Omega_k)$).
We checked this on $K^\varepsilon \sm K$, just below \eqref{e14.30}; so we are 
left with $x\in K \sm W^\ast_{i,k}$. 

If $x\in K \sm W_{i}^\ast$, then almost surely $u_i^\ast(x) = 0$
(because $(\u^\ast,\W^\ast) \in \F(\O,\Omega)$), 
hence $u_{i,k}(x) = 0$ by \eqref{e14.33}.
Otherwise, \eqref{e14.37} says that $x\in K \cap W_{i}^\ast \sm W^\ast_{i,k}
= K \cap W_{i}^\ast \sm \Omega_k \i K \cap \Omega \sm \Omega_k$.
By \eqref{e14.8}, $\dist(x,\O \sm \Omega) \leq \delta(K,k)$, which is less
than $\eta$ if $k$ is large enough (depending on $K$ and $\eta$).
By \eqref{e14.32}, $\psi(x)=0$, and by \eqref{e14.33}
$\u^\ast_{k,i}(x) = 0$.
This proves \eqref{e14.39}, and \eqref{e14.38} follows.

We shall now start comparing our various function; we start 
with an estimate of $\u_k^\ast-\u^\ast$ on $K^\varepsilon$.

\begin{lem}\label{l14.2} 
We have that
\begin{equation} \label{e14.40}
 \int_{K^{\varepsilon}} |\nabla(\u_k^\ast-\u^\ast)|^2  = o(1),
\end{equation}
where by convention $o(1)$ is a number that can be made as small as we want
(once $\varepsilon_0$ and $K$ are chosen) by choosing $\varepsilon$, then $\delta$, 
then $\eta$ small enough, and taking $k$ large enough. 
\end{lem}

\begin{proof}
We write $\int_{K^{\varepsilon}} |\nabla(\u_k^\ast-\u^\ast)|^2 = A_1+A_2$,
with
\begin{equation} \label{e14.41}
A_1 = \int_{K^{\varepsilon} \sm K} |\nabla(\u_k^\ast-\u^\ast)|^2
\ \text{ and } \ 
A_2 = \int_{K} |\nabla(\u_k^\ast-\u^\ast)|^2.
\end{equation}
We start with $A_1$. Recall from \eqref{e14.29} that on $K^{\varepsilon} \sm K$,
\begin{equation} \label{e14.42}
\u_k^\ast-\u^\ast 
= \varphi_\varepsilon \, h \circ \u + (1-\varphi_\varepsilon) \u_{k} -\u^\ast
= \varphi_\varepsilon [h \circ \u - \u] + (1-\varphi_\varepsilon) [\u_{k} -\u]
\end{equation}
because $\u^\ast = \u$ on $\O\sm K$ by definition of $K$. 
This naturally gives a decomposition of $\nabla(\u_k^\ast-\u^\ast)$, and then 
$A_1$, into four pieces. We start with
\begin{equation} \label{e14.43}
A_{1,1} = \int_{K^{\varepsilon} \sm K} |\nabla \varphi_\varepsilon|^2 |h \circ \u - \u|^2
\leq \int_{K^{\varepsilon} \sm K} (100 \varepsilon^{-2})(9 \delta^2) = o(1)
\end{equation}
by \eqref{e14.24} and \eqref{e14.25}, and because we can choose $\delta$ small,
depending on $K$ and $\varepsilon$. For the next piece, 
cover $K$ by a finite number of open balls $B_i$ such that $\overline B_i \i \O$, 
and then apply \eqref{e14.10} and \eqref{e14.11}; we get that 
the $\u_k$, and then also $\u$, are Lipschitz on the $B_i$, with uniform bounds,
and so there is a constant $C_K$ such that
\begin{equation} \label{e14.44}
|\nabla \u_k|^2 + |\nabla \u|^2 \leq C_K 
\ \text{ on } K^\varepsilon.
\end{equation}
Our next term is 
\begin{equation} \label{e14.45}
A_{1,2} = \int_{K^{\varepsilon} \sm K} \varphi_\varepsilon^2 |\nabla [h \circ \u - \u]|^2
\leq 9 \int_{K^{\varepsilon} \sm K} |\nabla \u|^2 
\leq 9 C_K |K^\varepsilon \sm K| = o(1)
\end{equation}
by the chain rule and \eqref{e14.25}, and because the monotone intersection of the 
$K^{\varepsilon} \sm K$, when $\varepsilon$ tend to $0$, is empty. We continue with
\begin{equation} \label{e14.46}
A_{1,3} = \int_{K^{\varepsilon} \sm K} |\nabla \varphi_\varepsilon|^2 |\u_{k} -\u|^2
\leq 100 \varepsilon^{-2} \int_{K^{\varepsilon} \sm K} |\u_{k} -\u|^2
\leq 100 \varepsilon^{-2} \delta |K^\varepsilon \sm K|
= o(1)
\end{equation}
by \eqref{e14.24} and because $||\u-\u_k||_{L^\infty(K^\varepsilon)} < \delta$
by \eqref{e14.27}. Finally,
\begin{equation} \label{e14.47}
A_{1,4} = \int_{K^{\varepsilon} \sm K} (1-\varphi_\varepsilon)^2 |\nabla (\u_{k} -\u)|^2
\leq 2 \int_{K^{\varepsilon} \sm K} |\nabla \u_{k}|^2  + |\nabla\u|^2
\leq 2 C_K |K^{\varepsilon} \sm K| = o(1)
\end{equation}
by \eqref{e14.44} and as in \eqref{e14.45}.

We are thus left with $A_2$. On the set $K$, \eqref{e14.33} says that
\begin{equation} \label{e14.48}
\u_k^\ast-\u^\ast = \psi \, h\circ \u^\ast - \u^\ast
= (\psi - 1) h\circ \u^\ast + [h\circ \u^\ast - \u^\ast],
\end{equation}
which gives a natural decomposition of $A_2$ into three terms. The most
interesting one is 
\begin{equation} \label{e14.49}
A_{2,1}= \int_{K} |\nabla \psi|^2   |h\circ \u^\ast|^2.
\end{equation}
Notice that $\u^\ast$ and $h\circ \u^\ast$ vanish almost everywhere 
on $\O \sm \Omega$, because $(\u^\ast,\W^\ast) \in \F(\O,\Omega)$,
so we only need to integrate on $K \cap \Omega$. In addition, 
\eqref{e14.32} says that $\nabla \psi = 0$ unless
$\eta \leq \dist(x,\O \sm \Omega) \leq 2\eta$, so we just need to integrate on 
\begin{equation} \label{e14.50}
H_\eta = \big\{ x\in K \cap \Omega \, ; \,  
\dist(x,\O \sm \Omega) \leq 2\eta \big\}.
\end{equation}

Cover $H_\eta$ by balls $B_j$ of radius $4\eta$
centered on $H_\eta$ and such the $B_j$ have bounded overlap.
Obviously
\begin{equation} \label{e14.51}
\begin{aligned}
A_{2,1} &\leq \int_{H_\eta} |\nabla \psi|^2   |h\circ \u^\ast|^2
\leq 4 \eta^{-2} \int_{H_\eta}  |h\circ \u^\ast|^2
\\
&\leq 4 \eta^{-2} \sum_i \int_{B_j} |h\circ \u^\ast|^2
\leq 36 \eta^{-2} \sum_i \int_{B_j} |\u^\ast|^2
\end{aligned}
\end{equation}
by \eqref{e14.32} and \eqref{e14.25}, and now we shall use the weak regularity
assumption \eqref{e14.7} to estimate $|\u^\ast|^2$.
For each $j$, $B_j = B(x_j,4\eta)$ for some $x_j \in H_\eta$.
By definition of $H_\eta$, $x\in \Omega$ but
$\dist(x_j,\O \sm \Omega) \leq 2\eta$,
so we can find $y_j \in \d \Omega$ such that $|y_j-x_j| \leq 2\eta$.
Notice that $y_j \in K^\varepsilon \i \i \Omega$ because $x\in K$ and 
$\eta \leq \varepsilon/100$. If $\eta$ is small enough (depending on $K^\varepsilon$)
we can apply \eqref{e14.7} to $B(y_j,\eta)$ and get that 
$|B(y_j,\eta) \sm \Omega| \geq c_\varepsilon \eta^n$,
where $c_\varepsilon$ depends on $\varepsilon$ through $K^\varepsilon$.

Notice that this is the only place where we seriously use \eqref{e14.7}
(the previous time, we just needed to say that $\u(x) = 0$ on $\d \Omega$,
in a place where $\u$ was in fact Lipschitz).

Anyway, return to $A_{2,1}$ and the analogue of \eqref{e4.6} to $\u^\ast$
on  $B_j$, with $E = B_j \cap \Omega$. We get that
\begin{equation} \label{e14.52}
\int_{B_j} |\u^\ast|^2 = \int_{E} |\u^\ast|^2
\leq C (4\eta)^2 {|B_j| \over |B_j \cap E|} \, \int_{B_j} |\nabla \u^\ast|^2
\leq C \eta^2 c_\varepsilon^{-1} \int_{B_j} |\nabla \u^\ast|^2.
\end{equation}
We now return to \eqref{e14.51}, use the fact that the $B_j$ have bounded overlap,
and get that
\begin{equation} \label{e14.53}
A_{2,1} \leq C \eta^{-2} \sum_i \int_{B_j} |\u^\ast|^2
\leq C c_\varepsilon^{-1} \sum_i \int_{B_j} |\nabla\u^\ast|^2
\leq C c_\varepsilon^{-1} \int_{\cup_j B_j} |\nabla\u^\ast|^2.
\end{equation}
Recall that each $B_j$ contains a point $y_j \in \d\Omega$, and 
is centered on $K$; thus $\cup_j B_j \i Z(\eta)$, with
\begin{equation} \label{e14.54}
Z(\eta) = \big\{ x\in K^{\varepsilon/2} \, ; \,  \dist(x,\d\Omega) \leq 4\eta \big\}
\end{equation}
In addition, we claim that $\nabla \u^\ast = 0$
almost-everywhere on $K^{\varepsilon/2} \cap \d\Omega$.
Indeed, the Rademacher-Calder\'on theorem says that for almost every
$x\in K^{\varepsilon/2} \cap\d\Omega$, $\u^\ast$ is differentiable at $x$, 
with a differential that coincides with the distributional derivative. 
It is easy to compute that $\nabla \u^\ast(x) = 0$ for such an $x$, because by \eqref{e14.7}
every small ball centered at $x$ contains a fixed proportion of points of $\O\sm \Omega$,
where $\u^\ast = 0$ almost everywhere because $(\u^\ast,\W^\ast) \in \F(\O,\Omega)$.
Because of this,
\begin{equation} \label{e14.55}
A_{2,1} \leq C c_\varepsilon^{-1} \int_{\cup_j B_j} |\nabla\u^\ast|^2
\leq C c_\varepsilon^{-1} \int_{Z(\eta) \sm \d \Omega} |\nabla\u^\ast|^2 = o(1)
\end{equation}
because $|\nabla\u^\ast|^2$ is integrable near $K^{\varepsilon/2}$
and when $\varepsilon$ is fixed and $\eta$ decreases to $0$,
the set $Z(\eta)\sm \d\Omega$ decreases to the empty set.

The next term is
\begin{equation} \label{e14.56}
A_{2,2}= \int_{K} (1-\psi)^2   |\nabla (h\circ \u^\ast)|^2
\leq 9 \int_{K} (1-\psi)^2   |\nabla \u^\ast|^2
\end{equation}
by the chain rule and \eqref{e14.25}.
By \eqref{e14.32}, we integrate on the set 
$\big\{ x\in K \, ; \,  \dist(x,\O \sm\Omega) \leq 2\eta \big\}$,
but $\nabla \u^\ast$ almost everywhere on $\O \sm\Omega$, because $\u^\ast=0$
there, and also almost everywhere on $\O \cap \d\Omega$, by the same argument as for
$A_{2,1}$. Thus we may integrate on the smaller set 
$\big\{ x\in K \, ; \,  0 < \dist(x,\d\Omega) \leq 2\eta \big\}$,
and $A_{2,2}=o(1)$ by the proof of \eqref{e14.55}. We are left with 
\begin{equation} \label{e14.57}
A_{2,3}= \int_{K} |\nabla [h\circ \u^\ast - \u^\ast]|^2.
\end{equation}
By \eqref{e14.25}, we only integrate on the set 
$\big\{ x\in K \, ; \,  |\u^\ast(x)| \leq 2N\delta\big\}$
(we added $N$ to account for the different coordinates of $\u^\ast$),
and we can even remove the set where $\u^\ast(x) = 0$, because 
$\nabla [h\circ \u^\ast - \u^\ast] = 0$ almost everywhere on that set.
We are left with 
\begin{equation} \label{e14.58}
A_{2,3} = \int_{\big\{ x\in K \, ; \,  0 < |\u^\ast(x)| \leq N\delta\big\}} 
|\nabla [h\circ \u^\ast - \u^\ast]|^2 
\leq 9 \int_{\big\{ x\in K \, ; \,  0 < |\u^\ast(x)| \leq N\delta\big\}} 
|\nabla \u^\ast |^2 = o(1)
\end{equation}
by \eqref{e14.25} and because the intersection, when $\delta$ tends to $0$,
of the domains of integration is empty. This completes our proof of Lemma \ref{l14.2}.
\qed
\end{proof}

\ms
Our next estimate is on the difference $\u_k^\ast-\u^\ast$ itself. 
We claim that
\begin{equation} \label{e14.59}
\int_{K^\varepsilon} |\u_k^\ast-\u^\ast|^2 = o(1).
\end{equation}

We want the be a little less brutal that we have been so far, with respect
to estimates on $K^\varepsilon$.
Choose a new constant $\varepsilon_1$, that depends on $K$
but not on $\varepsilon$, $\delta$, or $\eta$, so small that $K^{\varepsilon_1} \i \O$,
and observe that $\u$ is bounded on $K^{\varepsilon_1}$
(cover $K^{\varepsilon_1}$ by a finite collection of balls $B$
such that $2B \i \O$, and apply \eqref{e14.10} and \eqref{e14.11}.
Then $\u$ is also bounded on $K^\varepsilon$ for $\varepsilon \leq \varepsilon_1$,
with a bound that does not depend on $\varepsilon$, and by \eqref{e14.26}
we also get that $|\u_k| \leq C$ for $k$ large, with a bound that does not
depend on $\varepsilon$, $\delta$, and $\eta$. The fact that how large $k$  
may depend on these constants does not matter.

We start our proof of \eqref{e14.59} with the contribution of $K^\varepsilon \sm K$.
On this set, $\u^\ast = \u$ by definition of $K$, and $\u_k^\ast$ is given by \eqref{e14.29},
so it is bounded as well. Thus 
\begin{equation} \label{e14.60}
\int_{K^\varepsilon \sm K} |\u_k^\ast-\u^\ast|^2 \leq C |K^\varepsilon \sm K| = o(1)
\end{equation}
and we are left with 
\begin{equation} \label{e14.61}
\int_{K} |\u_k^\ast-\u^\ast|^2 
\leq 2 \int_{K} \big| (\psi - 1) h\circ \u^\ast \big|^2 
+ 2 \int_{K} \big| h\circ \u^\ast - \u^\ast \big|^2 
\end{equation}
by \eqref{e14.33}. The second integral is easily estimated, since 
$|h\circ \u^\ast - \u^\ast| \leq C\delta$ everywhere, $|K| < +\infty$,
and we can take $\delta$ small.
For the first one, we observe that it is enough to integrate on $\Omega$
(because $h\circ \u^\ast = \u^\ast = 0$ on $\O\sm \Omega$), and even on 
$H_\eta = \big\{ x\in K \cap \Omega \, ; \,  \dist(x,\O\sm \Omega)\leq 2\eta \big\}$,
because otherwise $\psi(x) = 1$ by \eqref{e14.32}. This is the same set as in
\eqref{e14.50}, so the second parts of \eqref{e14.51} and of \eqref{e14.53},
and then \eqref{e14.55}, show that
\begin{eqnarray} \label{e14.62}
 \int_{K} \big| (\psi - 1) h\circ \u^\ast \big|^2
 &\leq& \int_{H_\eta} \big| h\circ \u^\ast \big|^2
 \leq C \sum_i \int_{B_j} |\u^\ast|^2
 \leq Cc_\varepsilon^{-1} \eta^2 \int_{\cup_j B_j} |\nabla \u^\ast|^2
 \nonumber\\
 &\leq& C c_\varepsilon^{-1} \eta^2 \int_{Z(\eta) \sm \d \Omega} |\nabla\u^\ast|^2 
 = o(1);
\end{eqnarray}
either for the same reason as in \eqref{e14.55}, or because 
$Z(\eta) \i K^{\varepsilon/2}$ and we can use the extra $\eta^2$ to make 
the right-hand side small. This completes our proof of the claim \eqref{e14.59}.

\ms
We are now ready to take limits. Let us assume that $\varepsilon$ is chosen
so that the boundary of $K^\varepsilon$ has vanishing Lebesgue measure.
Since the boundaries $\d K^\varepsilon = \big\{ x \, ; \, \dist(x,K) = \varepsilon \big\}$,
$\varepsilon > 0$, are all disjoint, this is the case for almost every $\varepsilon$.
Recall that by \eqref{e14.10} and \eqref{e14.11}, the $\u_k$ converge to $\u$
uniformly on $K^\varepsilon$. By the lowersemicontinuity of the homogeneous $W^{1,2}$ 
norm on the interior of $K^\varepsilon$,
\begin{equation} \label{e14.63}
\int_{K^\varepsilon} |\nabla u_i|^2 
= \int_{K^\varepsilon \sm \d K^{\varepsilon}} |\nabla u_i|^2
\leq \liminf_{k \to +\infty} \int_{K^\varepsilon} |\nabla u_{i,k}|^2
\end{equation}
for each $i$ (recall that we can evaluate these norm by duality with compactly
supported smooth functions, and then use the uniform convergence
to control integrals like $\int_K u_{i,k} \d_j\varphi$).

Let us also look at the convergence of the $M$-terms. Write
\begin{equation} \label{e14.64}
\Big|\int_{K^\varepsilon} u_i g_i - u_{i,k} g_{i,k} \Big|
\leq \Big|\int_{K^\varepsilon} u_i (g_{i}-g_{i,k}) \Big| 
+ \Big|\int_{K^\varepsilon} (u_i - u_{i,k}) g_{i,k} \Big|,
\end{equation}
observe the second term tends to $0$ because $u_{i,k}$ tends to $u_i$ uniformly
on $K$ and $||g_{i,k}||_\infty \leq C_0$, and the first term tends to $0$ by
\eqref{e14.3} and because $||u_i - u_{i,k}||_\infty \leq 2C_0$ by \eqref{e14.2}
and $u_i \1_{K^\varepsilon}$ can be approximated in $L^1(\O)$ by continuous compactly 
supported function $\varphi$. Thus
\begin{equation} \label{e14.65}
\lim_{k \to +\infty}\Big|\int_{K^\varepsilon} u_i g_i - u_{i,k} g_{i,k} \Big| = 0.
\end{equation}
The same argument works for the $\int u_i^2 f_i$ and yields
\begin{equation} \label{e14.66}
\lim_{k \to +\infty}\Big|\int_{K^\varepsilon} u_i^2 f_i - u_{i,k}^2 f_{i,k} \Big| = 0.
\end{equation}
Set 
\begin{equation} \label{e14.67}
J_-(\u) = \int_{K^\varepsilon} \Big[|\nabla \u|^2 + \sum_i u_i^2 f_i - \sum_i u_i g_i  \Big]
\end{equation}
(we leave the volume terms for later); we just checked that
\begin{equation} \label{e14.68}
J_-(\u) \leq \liminf_{k \to +\infty} J_{k,-}(\u_k),
\end{equation}
where
\begin{equation} \label{e14.69}
J_{k,-}(\u_k) = \int_{K^\varepsilon} 
\Big[|\nabla \u_k|^2 + \sum_i u_{i,k}^2 f_{i,k} - \sum_i u_{i,k} g_{i,k}  \Big].
\end{equation}
Next, by the minimizing property of $(\u_k,\W_k)$, \eqref{e14.38}, 
and \eqref{e14.23}
\begin{equation} \label{e14.70}
J_{k,-}(\u_k) \leq J_{k,-}(\u_k^\ast) - F_k(\W_k) + F_k(\W_k^\ast).
\end{equation}
We may soon replace $\u_k^\ast$ with $\u^\ast$, because
\begin{eqnarray} \label{e14.71}
|J_{k,-}(\u_k^\ast) - J_{k,-}(\u^\ast)| &\leq& \int_{K^\varepsilon} |\nabla (\u_k^\ast-\u^\ast)|^2
+ C \int_{K^\varepsilon} \sum_i \big[ |u_{k,i}^\ast-u_i^\ast| 
+ |(u_{k,i}^\ast)^2-(u_i^\ast)^2|\big] 
\nonumber
\\
&\leq& o(1) + C  \int_{K^\varepsilon} |\u_k^\ast-\u^\ast| (1 + |\u^\ast|+|\u_k^\ast-\u^\ast|)
= o(1)
\end{eqnarray}
by Lemma \ref{l14.2}, because $|(u_{k,i}^\ast)^2-(u_i^\ast)^2|
= |u_{k,i}^\ast-u_i^\ast| \, |u_{k,i}^\ast+u_i^\ast|$, and by \eqref{e14.59} and 
Cauchy-Schwarz. In turn,
\begin{equation} \label{e14.72}
J_{k,-}(\u^\ast) - J_-(\u^\ast)  = \int_{K^\varepsilon} \sum_i
\Big[ (u_i^\ast)^2 [f_i - f_{i,k}] -   u_i^\ast [g_i-g_{i,k}]  \Big]
= o(1)
\end{equation}
because only the $M$-term changes, by the weak limit assumption \eqref{e14.3},
and because $\1_{K^\varepsilon} \u^\ast$ and $\1_{K^\varepsilon} (\u^\ast)^2$
can be approximated in $L^1(\O)$ by continuous functions with compact support in $\O$
(see the proof of \eqref{e14.65}).

By \eqref{e14.68}, \eqref{e14.70}, \eqref{e14.71}, and \eqref{e14.72},
\begin{equation} \label{e14.73}
J_-(\u) \leq J_-(\u^\ast) - F_k(\W_k) + F_k(\W_k^\ast)  + o(1),
\end{equation}
and we now need to worry about the $F$-terms. 
We want to use the continuity of $F$, so let us estimate some symmetric
differences between sets; for $1 \leq i \leq N$,
\begin{eqnarray} \label{e14.74}
|W_{k,i}^\ast \Delta W_i^\ast| 
&\leq& |(\W_{k,i}^\ast \cap K) \Delta (W_i^\ast \cap K)| + |\O \sm K|
\nonumber
\\
&=& |(\Omega_k \cap W_{i}^\ast \cap K) \Delta (W_i^\ast \cap K)| + |O \sm K|
\nonumber
\\
&\leq& |\Omega \sm \Omega_k| + \varepsilon_0 = o(1) + \varepsilon_0 = o'(1)
\end{eqnarray}
by \eqref{e14.37}, because $W_i^\ast \i \Omega$,  by \eqref{e14.22}
and \eqref{e14.6}, and with the convention that $o'(1)$ is a number that can
be made as small as we want, by choosing $\varepsilon_0$ small, then
choosing $K$ and the other constants, and finally restricting to $k$ large enough.

We are ready to use the special form of $F_k$  in \eqref{e14.4}
for the first time. Recall also that by
\eqref{e14.5}, the $\wt F_k$ converge uniformly to the continuous
function $\wt F$, so that
\begin{eqnarray} \label{e14.75}
F_k(\W_k^\ast) &=& \wt F_k(|W_{1,k,}^\ast|, \ldots, |W_{N,k}^\ast|)
= \wt F(|W_{1,k}^\ast|, \ldots, |W_{N,k}^\ast|) + o(1)
\nonumber
\\
&=& \wt F(|W_{1}^\ast|, \ldots, |W_{N}^\ast|) + o'(1)
= F(\W^\ast) + o'(1)
\end{eqnarray}
by \eqref{e14.74}.
We shall restrict our attention to $k = k_j$, where $\{ k_j \}$ is 
the subsequence that we chose to get \eqref{e14.18} and to define $\W$.
We get that
\begin{eqnarray} \label{e14.76}
F_{k_j}(\W_{k_j}) &=& \wt F_{k_j}(|W_{1,{k_j}}|, \ldots, |W_{N,{k_j}}|)
= \wt F(|W_{1,{k_j}}|, \ldots, |W_{N,{k_j}}|) + o(1)
\nonumber
\\
&=&  \wt F(l_1, \ldots, l_N) + o(1) = \wt F(|W_{1}|, \ldots, |W_{N}|) + o(1)
= F(\W) + o(1)
\end{eqnarray}
by \eqref{e14.18} and \eqref{e14.20}. Thus \eqref{e14.73}, restricted to the subsequence 
$\{ k_j \}$, yields
\begin{equation} \label{e14.77}
J_-(\u) \leq J_-(\u^\ast) - F(\W) + F(\W^\ast) + o'(1).
\end{equation}
We may now let $\varepsilon_0$, $\varepsilon$, $\delta$, $\eta$
tend to $0$ in the prescribed order, let $k_j$ tend to $+\infty$,
and get that $J_-(\u) \leq J_-(\u^\ast) - F(\W) + F(\W^\ast)$.
This is the same thing as \eqref{e14.1}, with the compact set $K^\varepsilon$
(recall the definition \eqref{e14.67} of $J_-$, and that
the pairs $(\u,\W)$ and $(\u^\ast,\W^\ast)$ coincide outside of $K$
by definition of $K$).

\ms
We just completed the proof of minimality for our pair $(\u,\W)$, but we still
need to check the strong limit property in \eqref{e14.12}.
Let $B$ be a ball in $\O$, with $\overline B \i \O$, and observe that
for each $i$, 
\begin{equation} \label{e14.78}
\int_{\overline B} |\nabla u_i|^2 = \int_{B} |\nabla u_i|^2
\leq \liminf_{k \to +\infty} \int_{B} |\nabla u_{i,k}|^2
\end{equation}
because $\u$ is Lipschitz near $\overline B$ and by the lower semicontinuity
of $\int_{B} |\nabla u_i|^2$ (see the proof of \eqref{e14.63}). 
Suppose that for some $i$, 
\begin{equation} \label{e14.79}
\int_{\overline B} |\nabla u_i|^2 
- \liminf_{k \to +\infty} \int_{B} |\nabla u_{i,k}|^2 = \alpha > 0,
\end{equation}
and run all the proof above with $(\u^\ast,\W^\ast) = (\u,\W)$,
and $K \supset \overline B$. We can improve \eqref{e14.63}
and write instead
\begin{eqnarray} \label{e14.80}
\int_{K^\varepsilon} |\nabla u_i|^2 
&=& \int_{K^\varepsilon \sm \d K^{\varepsilon}} |\nabla u_i|^2
= \int_{K^\varepsilon \sm (\d K^{\varepsilon} \cup \overline B)} |\nabla u_i|^2
+ \int_{\overline B} |\nabla u_i|^2 
\nonumber
\\
&\leq& \liminf_{k \to +\infty} 
\int_{K^\varepsilon \sm (\d K^{\varepsilon} \cup \overline B)} |\nabla u_{i,k}|^2
+ \liminf_{k \to +\infty} \int_{B} |\nabla u_{i,k}|^2 -\alpha
\nonumber
\\
&\leq& \liminf_{k \to +\infty} \int_{K^\varepsilon} |\nabla u_{i,k}|^2 -\alpha,
\end{eqnarray}
where we have applied our lower semicontinuity estimate on the open set
$K^\varepsilon \sm (\d K^{\varepsilon} \cup \overline B)$.

We follow the rest of the proof, and eventually get an improved version of 
\eqref{e14.77}, that says that
\begin{equation} \label{e14.81}
J_-(\u) \leq J_-(\u^\ast) - \alpha - F(\W) + F(\W^\ast) + o'(1)
= J_-(\u) - \alpha + o'(1)
\end{equation}
(because $(\u^\ast,\W^\ast) = (\u,\W)$). This is impossible, so
$\int_{B} |\nabla u_i|^2 = \liminf_{k \to +\infty} \int_{B} |\nabla u_{i,k}|^2$.
We already know that $\nabla u_{i,k}$ converges weakly to $\nabla u_i$ in $L^2(B)$,
and the fact that the norms converge now imply that the convergence is  strong.
This proves \eqref{e14.12}, and completes our proof of Theorem~\ref{t14.1}.
\qed
\end{proof}

\ms
The following remark is probably more amusing than useful.

\begin{rem} \label{t14.3} 
In Theorem~\ref{t14.1}, if the volumes $|W_{k,i}|$ do not have a limit
for each $i\in [1,N]$, then there are more than one minimizers, associated 
to the same function $\u$, but with different $N$-uples $\W$. 
The values of $F$ for all these $\W$ is the same.
\end{rem}

Let us be more specific. To each $k$, we associate the
$N$-uple ${\bf V}_k = (|W_{1,k}|, \ldots , |W_{N,k}|)$
of volumes. If the sequence $\{ {\bf V}_k \}$ does not have a 
limit in $[0,|\O|]$, then for each point of accumulation 
${\bf V}_\infty$ of this sequence, we constructed a minimizer
$(\u,\W)$ such that $|W_i| = V_{i,\infty}$ for $1 \leq i \leq N$.
If $\W^\sharp$ is another $N$-uple of ${\cal W}(\Omega)$ for which 
$(\u,\W^\sharp)$ is a local minimizer, we can use $W$ to construct
competitors for $(\u,\W^\sharp)$ and show that $F(\W) \leq F(\W^\sharp)$:
we just keep $\W$ in some very large compact set $K$, replace
$\W$ by $\W^\sharp$ on $\O \sm K$, keep the same $\u$, notice that this still
gives a pair in $\F(\O,\Omega)$, and then let $K$ tend to $\O$.
The same argument shows that $F(\W^\sharp) \leq F(\W)$ too.

Also recall that we defined sets $W'_i$, $1 \leq i \leq N$. 
We first chose a sequence $\{ k_j \}$ so that the  $|W_{i,k_j}|$ converge
to a limit $l_i$, and given a point of accumulation ${\bf V}_\infty$ as above,
we could easily choose the sequence so that $l_i = V_{i,\infty}$. 
Then we constructed a pair $(\u,\W') \in \F(\O,\Omega)$, 
such that $|W'_i| \leq l_i$. Finally we chose $\W$ above so that $W_i \supset W'_i$
and $|W_i| = l_i$, and this was useful to prove that $(\u,\W)$ is a local minimizer.
but we could have tried other choices, with different measures.

Let $\cal V$ denote the set of $N$-uples ${\bf V} =(v_1, \ldots, v_N)$
such that $|W'_i| \leq v_i$ for $1 \leq i \leq N$ and $\sum_i v_i \leq |\Omega|$.
As was explained below \eqref{e14.19}, for each ${\bf V} \in \cal V$
we can find $\W^\sharp \in {\cal W}(\Omega)$ such that 
$W^\sharp_i = v_i$ and $W'_i \i W^\sharp_i$ for $1 \leq i \leq N$;
then $(\u,\W^\sharp) \in \F(\O,\Omega)$, and the local minimality
of $(\u,\W)$ shows that $F(\W) \leq F(\W^\sharp)$ (just use $\W$
to produce local competitors for $(\u,\W^\sharp)$, as above. In other
words, we get that
\begin{equation} \label{e14.82}
F(\W) = \wt F({\bf V}_\infty) \leq \wt F({\bf V})
\ \text{ for } {\bf V} \in \cal V.
\end{equation}
But recall that the class $\cal V$ depends on the point of accumulation
${\bf V}_\infty$ through the the $W'_i$ and the subsequence $\{ k_j \}$.

\begin{rem} \label{t14.4} 
The conclusion of Theorem~\ref{t14.1}
still holds, with the same proof, if instead of assuming that
$(\u_k,\W_k)$ is a local minimizer for $J_k$ in $\O$, as in \eqref{e14.9},
we only assume that this is asymptotically true on compact sets, i.e., 
that for each compact set $K \i \O$, there is a sequence 
$\{ \alpha_k \}$ that tends to $0$, such that
\begin{equation} \label{e14.83}
J_k(\u_k,\W_k) \leq J_k(\u_k^\ast,\W_k^\ast) + \alpha_k
\end{equation}
for every competitor $(\u_k^\ast,\W_k^\ast)$ for $(\u_k,\W_k)$
in $\O$, relative to $\Omega_k$, which coincides with $(\u_k,\W_k)$ on $\O \sm K$.
\end{rem}

\begin{proof}
Indeed nothing changes until \eqref{e14.70}, when we apply the minimality
of $(\u_k,\W_k)$. What we get instead is
\begin{equation} \label{e14.84}
J_{k,-}(\u_k) \leq J_{k,-}(\u_k^\ast) - F_k(\W_k) + F_k(\W_k^\ast)+ \alpha_k,
\end{equation}
where the sequence $\{ \alpha_k \}$ is associated to the compact set $K^\varepsilon$
(see \eqref{e14.23}). Then we continue the argument as above, with an extra
term $\alpha_k$ that does not disturb because it tends to $0$.
\qed
\end{proof}

\ms
We promised to say a few words about the analogue of 
Theorem~\ref{t14.1} in the context of Alt, Caffarelli, and Friedman.
Suppose, instead of \eqref{e14.4}, that the $F_k$ are given by 
\begin{equation} \label{e14.85}
F_k(W_{1}, \ldots, W_{N}) = \sum_{i=1}^N \int_{W_{i}} q_{i,k}(x) dx,
\end{equation}
where the $q_{i,k}$ are nonnegative measurable functions. 
We shall assume that the $q_{i,k}$ are locally bounded, with uniform estimates. 
That is, for each compact set $K \i \O$, we assume that there is a constant $C(K)$ 
such that
\begin{equation} \label{e14.86}
0 \leq q_{i,k}(x) \leq C(K) \ \text{ for $k \geq 0$, $1 \leq i \leq N$, and $x\in K$.}
\end{equation}
We also suppose that the $q_{i,k}$ converge weakly to locally bounded functions $q_i$, 
by which we mean that for each compact set $K \i \O$ and each measurable set $E \i K$,
\begin{equation} \label{e14.87}
\int_{E} q_{i}(x) dx = \lim_{k \to +\infty} \int_{E} q_{i,k}(x) dx.
\end{equation}
We could have chosen compact sets $E$, or integrated the $q_{i,k}$ and $q_i$ against
continuous function with compact support, without changing the notion (because of our
$L^\infty$ bounds). We just picked the definition that we shall use.

Of course in many cases, $|\O|$ will be finite, the functions $q_{i,k}$ will be uniformly
bounded (regardless of compact support), and the functional $F_k$ defined above
will take finite values. 
In the other cases, \eqref{e14.85} and its analogue for the $q_i$ may be infinite, 
but we can define local minimizers as we did before,
except that we replace \eqref{e14.1} by the simpler condition
\begin{equation} \label{e14.88}
\sum_i \int_{KÊ\cap W_i} |\nabla u_i|^2 + u_i^2 f_i - u_i g_i + q_i 
\leq \sum_i \int_{KÊ\cap W_i^\ast} |\nabla u_i^\ast|^2 + (u_i^\ast)^2 f_i - u_i^\ast g_i + q_i .
\end{equation}
That is, we say that $(\u,\W)$ is a local minimizer for $J$ in $\O$
if $(\u,\W) \in \F(\O,\Omega)$ and \eqref{e14.88} holds for every compact set
$K \i \O$ and every pair $(\u^\ast,\W^\ast) \in \F(\O,\Omega)$ that coincides 
with $(\u,\W)$ in $\O \sm K$. We define local minimizers for the $J_k$
similarly, with $\Omega_k$, the $f_{i,k}$, $g_{i,k}$, and $q_{i,k}$.

\begin{cor}\label{t14.5} 
Assume that the hypotheses of Theorem~\ref{t14.1} are satisfied,
except that we replace \eqref{e14.4} and \eqref{e14.5}
with \eqref{e14.85}, \eqref{e14.86}, and \eqref{e14.87}, and we modify
the definition of local minimizers as above.
Then we can find $\W$ such that $(\u,\W) \in \F(\O,\Omega)$ and $(\u,\W)$ 
is a local minimizer for $J$ in $\O$.
In addition, \eqref{e14.12} holds.
\end{cor}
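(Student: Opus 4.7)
The strategy is to mimic the proof of Theorem \ref{t14.1}, keeping the same construction of the competitors $(\u_k^\ast,\W_k^\ast)$ (namely \eqref{e14.23}, \eqref{e14.28}, \eqref{e14.30}, \eqref{e14.33}, \eqref{e14.37}) and the same limit function $\u$, but with two modifications. First, since $F_k$ is now only monotone in each $W_i$ (because $q_{i,k}\ge 0$) rather than a function of volumes, there is no need to enlarge the $W'_i$ to absorb missing mass: I will simply take $\W=\W'=(W'_1,\ldots,W'_N)$ as defined in \eqref{e14.15}, so that trivially $(\u,\W)\in\F(\O,\Omega)$. Second, the passage to the limit in the volume terms must be done by a semicontinuity argument, replacing \eqref{e14.75}-\eqref{e14.76}.

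For the non-volume parts, essentially nothing changes. The lower semicontinuity estimate \eqref{e14.63} still gives $J_-(\u)\le \liminf_k J_{k,-}(\u_k)$, and the proof of $J_{k,-}(\u_k^\ast)=J_-(\u^\ast)+o'(1)$ goes through as before: Lemma~\ref{l14.2} and \eqref{e14.59} are purely about $\u_k^\ast,\u^\ast$ and the regularity of $\Omega$, while \eqref{e14.65}-\eqref{e14.66} and \eqref{e14.72} only need the uniform bound \eqref{e14.2} and the weak convergence \eqref{e14.3} of $f_{i,k},g_{i,k}$ against $L^1$ test functions, which is unaffected by the new hypotheses.

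The main new step, and the main obstacle, is controlling the $F$-terms under only weak convergence \eqref{e14.87}. Writing out $F_k(\W_k^\ast)-F_k(\W_k)$, the integrals outside $K$ cancel (by \eqref{e14.23}, \eqref{e14.30}), leaving
\[
F_k(\W_k^\ast)-F_k(\W_k)=\sum_i\int_{K\cap\Omega_k\cap W_i^\ast}q_{i,k}-\sum_i\int_{K\cap W_{k,i}}q_{i,k}.
\]
The first sum converges to $\sum_i\int_{K\cap W_i^\ast}q_i$, because $W_i^\ast\subset\Omega$, $|\Omega\setminus\Omega_k|\to 0$ by \eqref{e14.6}, $q_{i,k}$ is uniformly bounded on $K$ by \eqref{e14.86}, and \eqref{e14.87} applies to the fixed measurable set $W_i^\ast\cap K$. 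For the second sum I claim the lower bound
\[
\liminf_{k\to\infty}\int_{K\cap W_{k,i}}q_{i,k}\ \ge\ \int_{K\cap W'_i}q_i.\qquad(\ast)
\]
This is where the sign condition $q_{i,k}\ge 0$ is essential. By the pointwise convergence \eqref{e14.11} and the definition of $Z$ in the proof of Theorem \ref{t14.1}, every $x\in W'_i\setminus Z$ satisfies $u_{k,i}(x)\ne 0$ for $k$ large, hence $x\in W_{k,i}$ eventually; thus $\mathbf{1}_{W'_i\setminus W_{k,i}}\to 0$ a.e.\ on $K$, and dominated convergence gives $|K\cap(W'_i\setminus W_{k,i})|\to 0$. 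Since $0\le q_{i,k}\le C(K)$,
\[
\int_{K\cap W'_i}q_{i,k}-\int_{K\cap W_{k,i}}q_{i,k}\le\int_{K\cap(W'_i\setminus W_{k,i})}q_{i,k}\le C(K)\,|K\cap(W'_i\setminus W_{k,i})|\to 0,
\]
and \eqref{e14.87} gives $\int_{K\cap W'_i}q_{i,k}\to\int_{K\cap W'_i}q_i$, so $(\ast)$ follows. Combining the two estimates yields $\limsup_k[F_k(\W_k^\ast)-F_k(\W_k)]\le F(\W^\ast)-F(\W')$, and then the minimality inequality $J_{k,-}(\u_k)-J_{k,-}(\u_k^\ast)\le F_k(\W_k^\ast)-F_k(\W_k)+\alpha_k$ passes to the limit to give $J_-(\u)+F(\W')\le J_-(\u^\ast)+F(\W^\ast)+o'(1)$. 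Letting $\varepsilon_0,\varepsilon,\delta,\eta\to 0$ yields the local minimality in the form \eqref{e14.88}.

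Finally, the strong convergence \eqref{e14.12} is obtained exactly as in the end of the proof of Theorem \ref{t14.1}: take $(\u^\ast,\W^\ast)=(\u,\W)$ in the argument just completed, in which case $F(\W^\ast)-F(\W')=0$ and the improved lower semicontinuity \eqref{e14.80} would contradict the limiting minimality inequality unless $\int_B|\nabla u_{i,k}|^2\to\int_B|\nabla u_i|^2$ for every ball $B\subset\subset\O$; combined with weak $L^2$-convergence of $\nabla u_{i,k}$ this yields the claim.
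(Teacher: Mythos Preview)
Your proof is correct and follows essentially the same approach as the paper's: both take $\W=\W'$ directly, reuse the competitor construction and the non-volume estimates from Theorem~\ref{t14.1} verbatim, and handle the volume terms by combining the weak convergence \eqref{e14.87} with the key lower bound $(\ast)$, which is exactly the content of the paper's \eqref{e14.92}--\eqref{e14.94} (your formulation via $|K\cap(W'_i\setminus W_{k,i})|\to 0$ is equivalent). The only cosmetic difference is that the paper works directly with the localized minimality form \eqref{e14.89} on $K^\varepsilon$ rather than writing $F_k(\W_k^\ast)-F_k(\W_k)$ and cancelling, which is cleaner when $|\O|=+\infty$ and the $F_k$ may be infinite.
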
  

\ms
Our proof will really use the fact that the $q_{i,k}$ are nonnegative
(otherwise, we get the problem that we may not extract a sequence for which
the $\1_{W_{i,k}}$ converge).

We shall no longer need the assumption that $|\O| < +\infty$. This is an advantage
of choosing a local functional $F$, and the improvement is not a real one, because
we can easily reduce to the case when $\O$ is bounded by observing that 
$(\u,\W)$ is a local minimizer for $J$ in $\O$ if and only if its restriction
to $\O \cap B(0,R)$ is a local minimizer for $J$ in $\O \cap B(0,R)$ for every 
$R > 0$. We could not do this trick with our non local functional $F$
(but we could try to use Remark \ref{t14.4} in some cases).
 
\begin{proof}
The proof is even simpler. We are happy to take $\W = (W'_1, \ldots, W'_N)$,
where the $W'_i$ are defined near \eqref{e14.15}, because taking larger sets
may only make $F(\W)$ larger anyway.

Then we repeat the same proof as before, except that we need to 
rewrite \eqref{e14.70} and modify the limiting argument \eqref{e14.75}-\eqref{e14.76}. 
Instead of \eqref{e14.70}, the minimality of $(\u_k,\W_k)$ is now expressed by
\begin{equation} \label{e14.89}
J_{k,-}(\u_k) \leq J_{k,-}(\u_k^\ast) - \sum_i\int_{K^\varepsilon \cap W_{i,k}} q_{i,k}(x) dx
+ \sum_i\int_{K^\varepsilon \cap W_{i,k}^\ast} q_{i,k}(x) dx
\end{equation}
(compare with \eqref{e14.88}). That is, we replace 
$- F_k(\W_k) + F_k(\W_k^\ast)$ with the expression 
\begin{equation} \label{e14.90}
 - \sum_i\int_{K^\varepsilon \cap W_{i,k}} q_{i,k}(x) dx
+ \sum_i\int_{K^\varepsilon \cap W_{i,k}^\ast} q_{i,k}(x) dx
\end{equation}
in \eqref{e14.70}, and then in \eqref{e14.73}.

Instead of \eqref{e14.75}, we restrict to $K^\varepsilon$ and say that
\begin{eqnarray} \label{e14.91}
\sum_{i=1}^N  \int_{K^\varepsilon \cap W_{i,k}^\ast} q_{i,k}(x) dx
&=& \sum_{i=1}^N  \int_{K^\varepsilon \cap W_{i}^\ast} q_{i,k}(x) dx + o'(1)
\nonumber
\\
&=&  \sum_{i=1}^N  \int_{K^\varepsilon \cap W_{i}^\ast} q_{i}(x) dx + o'(1)
\end{eqnarray}
by \eqref{e14.74} and \eqref{e14.86}, and then \eqref{e14.87}.
For the analogue of \eqref{e14.76},
we use the subsequence $\{ k_j \}$ that defines $W_i = W'_i$ and first say that
\begin{equation} \label{e14.92}
\sum_{i=1}^N  \int_{K^\varepsilon \cap W_{i,k_j}} q_{i,k}(x) dx
\geq  \sum_{i=1}^N  \int_{K^\varepsilon \cap W_{i,k_j} \cap W_{i}} q_{i,k}(x) dx
\end{equation}
because $q_{i,k}(x) \geq 0$. Next observe that
\begin{equation} \label{e14.93}
\1_{W_{i}} = \1_{W'_{i}} \leq \liminf_{j \to +\infty} \1_{W'_{i,k_j}}
\leq \liminf_{j \to +\infty} \1_{W_{i,k_j}}
\end{equation}
by the remark above \eqref{e14.17} and because $W'_{i,k_j} \i W_{i,k_j}$
(see below \eqref{e14.16}). So 
$\1_{W_{i}} = \lim_{j \to +\infty} \1_{W_{i,k_j} \cap W_{i}}$ and
the right-hand side of \eqref{e14.92} is
\begin{eqnarray} \label{e14.94}
\sum_{i=1}^N  \int_{K^\varepsilon \cap W_{i,k_j} \cap W_{i}} q_{i,k}(x) dx
&=& \sum_{i=1}^N  \int_{K^\varepsilon \cap W_{i}} q_{i,k}(x) dx + o(1)
\nonumber
\\
&=& \sum_{i=1}^N  \int_{K^\varepsilon \cap W_{i}} q_{i}(x) dx + o(1)
\end{eqnarray}
by \eqref{e14.87}. We insert \eqref{e14.91}, \eqref{e14.92}, and \eqref{e14.94}
in the expression \eqref{e14.90} as we did for \eqref{e14.75} and \eqref{e14.76}, 
and we get the following replacement \eqref{e14.77}:
\begin{equation} \label{e14.95}
J_-(\u) \leq J_-(\u^\ast) - \sum_i \int_{K^\varepsilon \cap W_i} q_i
+ \sum_i \int_{K^\varepsilon \cap W_i^\ast} q_i + o'(1).
\end{equation}
This is then enough to conclude as before.
\qed
\end{proof}

\section{Blow-up Limits are minimizers}   \label{blowup}

We shall now apply the results of the previous section to describe blow-up limits 
of minimizers of our main functional. In this section, we show that they are 
themselves local minimizers of a simpler functional; in later sections, 
we shall use previous work in the context of \cite {AC} and \cite{ACF} 
to describe the blow-up limits more precisely, and then obtain some information on 
the initial minimizers themselves.

In this section we start from a minimizer $(\u,\W)$ for the usual functional $J$ 
(see Section~\ref{intro}), and choose an origin $x_0 \in \R^n$ where we do the blow up. 
Here we shall just take $x_0 = 0$ to save some notation. 
We give ourselves a sequence $\{ r_k \}$, with 
\begin{equation} \label{e15.1}
\lim_{k\to +\infty} r_k = 0
\end{equation}
and consider the pairs $(\u_k,\W_k)$ given by
\begin{equation} \label{e15.2}
\u_{k}(x) = r_k^{-1} \u(r_k x)
\ \text{ and } \ 
\W_{k} = r_k^{-1} \W
\end{equation}
(i.e., $W_{i,k} = r_k^{-1}W_{i}$ for $1 \leq i \leq N$). 
Notice the normalization of $\u_k$, which keeps the Lipschitz constants intact. 

Let us describe our main assumptions now, that will allow us to apply
the results of the previous section. First of all, we assume that
\begin{equation} \label{e15.3}
\u(0) = 0.
\end{equation}
This is needed because we will require the $u_{k,i}$ to have pointwise
limits $u_{\infty,i}$, in particular at the origin, and this will not hurt because 
otherwise (under weak assumptions that make $\u$ H\"older continuous) 
only one component $W_i$ is present near $0$, and there is no free boundary to study. 

We also assume that there is a ball $B(0,\rho_0)$ such that for $1 \leq i \leq N$
\begin{equation} \label{e15.4}
f_i \geq 0 \text{ on } B(0,\rho_0), \ \text{ and } \ 
||f_i||_{L^\infty(B(0,\rho_0))}+||g_i||_{L^\infty(B(0,\rho_0))} < +\infty
\end{equation}
and 
\begin{equation} \label{e15.5}
\text{$\u$ is Lipschitz in $B(0,\rho_0)$.}
\end{equation}
It is just as simple here not to say how we know that \eqref{e15.5}
holds, but recall that we could obtain it by applying Theorem \ref{t10.1} 
(if $0$ is an interior point of $\Omega$) or \ref{t11.1} (if $0\in \d\Omega$).

Next let $R>0$ be given, and assume that there is a Lipschitz function
$\u_\infty$ on $B(0,R)$ such that
\begin{equation} \label{e15.6}
\u_\infty(x) = \lim_{k \to +\infty} \u_k(x)
\ \text{ for } x\in B(0,R).
\end{equation}
Notice that if $\u$ is $C_0$-Lipschitz on $B(0,\rho_0)$, then
$\u_k$ is $C_0$-Lipschitz on $B(0,R)$ as soon as $r_k \leq R^{-1} \rho_0$.
Since $\u(0)=0$, it is easy to extract subsequences for which \eqref{e15.6} 
holds for any given $R$, or even all integers $R$ at the same time. 
Because of this, we shall often be able to assume that
$\u_\infty$ is defined on the whole $\R^n$ and that
$\u_\infty(x) = \lim_{k \to +\infty} \u_k(x)$ for every $x\in \R^n$.

Since $\u_k$ is $C_0$-Lipschitz on $B(0,R)$ for $k$ large,
$\u$ also is Lipschitz, and the convergence in \eqref{e15.6} 
is automatically uniform.

In some cases the origin lies on $\d \Omega$, and then we also need to worry
about a limit for the sets
\begin{equation} \label{e15.7}
\Omega_k = r_k^{-1} \Omega.
\end{equation}
We shall essentially keep the mild assumptions of the previous section.
Let us assume that there is a measurable set 
$\Omega_\infty  = \Omega_{\infty,R}\i B(0,R)$ such that
\begin{equation} \label{e15.8}
\1_{\Omega_\infty} = \lim_{k \to +\infty} \1_{B(0,R) \cap \Omega_k} 
= \lim_{k \to +\infty} \1_{B(0,R) \cap r_k^{-1}\Omega} 
\ \text{ in } L^1(B(0,R)),
\end{equation}
as in \eqref{e14.6}, that for $0 < T < R$, there exist small constants $r_T > 0$ 
and $c_T > 0$ such that
\begin{equation} \label{e15.9}
|B(x,r) \sm \Omega_\infty| \geq c_T r^n
\ \text{ for $x\in B(0,T) \cap \d \Omega_\infty$ and } 0 < r \leq r_T,
\end{equation}
as in \eqref{e14.7}, and that for $0 < T < R$
\begin{equation} \label{e15.10}
\lim_{k \to+\infty} \delta(k,T) = 0,
\ \text{ where }
\delta(k,T) = \sup\big\{ \dist(x,B(0,R) \sm \Omega_\infty) \, ; \, 
x\in B(0,T) \sm \Omega_k\big\},
\end{equation}
as in \eqref{e14.8} and to avoid tiny little islands of $B(0,R) \sm \Omega_k$
in the middle of $\Omega_k$ that \eqref{e15.8} would not detect.
Take $\delta(k,T)=0$ when $B(0,T) \sm \Omega_k = \emptyset$.

If $\Omega$ is very general, we cannot guarantee that we can find 
sequences for which such an $\Omega_\infty$ exists, but fairly weak regularity 
properties of $\Omega$ will ensure all this. We give in Lemma~\ref{t16.1}
below a sufficient condition for this to happen, where we ask $\d\Omega$
to be porous near $0$ (so that we can find a subsequence and $\Omega_\infty$ 
such that \eqref{e15.8} holds), and $\Omega$ to satisfy \eqref{e15.9} near $0$ 
(so that \eqref{e15.10} holds for $\Omega_\infty$). 

In the mean time, notice that if $\Omega$ has a $C^1$ boundary near the origin, 
we don't even need to extract a subsequence and the limit is a half space.
Notice also that the special case when $0$ is an interior point of $\Omega$
is still included here; the set $\Omega_{\infty,R} = B(0,R)$ satisfies the conditions above
because $\Omega_k \supset B(0,r)$ for $k$ large.

Next we describe our assumptions on the functional $F$. 
We assume that there exist real constants $\lambda_i$, $1 \leq i \leq N$, and a function
$\varepsilon(r)$ defined for $r$ small and such that $\lim_{r \to 0} \varepsilon(r) = 0$,
such that 
\begin{equation} \label{e15.11}
\Big| F(\W)-F(\W') - \sum_{i=1}^N \lambda_i \big[|W_i \cap B(0,r)| -|W'_i \cap B(0,r)|\big]
\Big| \leq r^n \varepsilon(r)
\end{equation}
for every $N$-uple $\W' = (W'_1, \ldots, W'_N) \in {\cal W}(\Omega)$
such that $W_i \sm B(0,r) = W'_i \sm B(0,r)$ for $0 \leq i \leq N$.
Recall that $\W$ is the $N$-uple that comes from our minimizer $(\u,\W)$.

Thus, very near $0$, we require $F$ to look a lot like the function $F$
defined by \eqref{e1.7}, with constant functions $q_i = \lambda_i$.
Slightly surprisingly, we shall be able to content ourselves with the error
$r^n \varepsilon(r)$, rather than $\varepsilon(r) |\W \Delta \W'|$
(which would vaguely correspond to requiring a derivative of the special
form suggested by \eqref{e15.11} in some directions.

Let us give two examples where this condition is satisfied. Naturally the first one is when
$F$ is given by the formula \eqref{e1.7}, with functions $q_i(x)$ that are 
continuous at $x=0$. In fact, it is even enough to take the $q_i(x)$ locally integrable,
and to assume that $0$ is a Lebesgue point for each $q_i$, in the sense that
\begin{equation} \label{e15.12}
\lim_{r \to 0} \fint_{B(0,r)} |q_i(x) - q_i(0)| = 0.
\end{equation}
Indeed, if $\W' = (W'_1, \ldots, W'_N) \in {\cal W}(\Omega)$
such that $W_i \sm B(0,r) = W'_i \sm B(0,r)$ for $0 \leq i \leq N$,
\begin{eqnarray} \label{e15.13}
F(\W)-F(\W') &=& \sum_{i=1}^N \int_{B(x,r)} [\1_{W_i}(x)-\1_{W'_i}(x)] q_i(x) dx
\nonumber\\
&=& \sum_{i=1}^N q_i(0) \big[|W_i \cap B(0,r)| -|W'_i \cap B(0,r)|\big] + E,
\end{eqnarray}
with
\begin{eqnarray} \label{e15.14}
|E| &=& \Big|\sum_{i=1}^N \int_{B(x,r)} [\1_{W_i}(x)-\1_{W'_i}(x)] (q_i(x)-q_i(0)) dx\Big|
\nonumber\\
&\leq& \sum_{i=1}^N \int_{B(x,r)} |q_i(x)-q_i(0)| dx = o(r^n).
\end{eqnarray}
Thus, \eqref{e15.11} holds, with $\lambda_i = q_i(0)$.
Notice that we do not require the $q_i$ to be nonnegative.

Our second example is when $F$ is a function of the volumes, i.e.,
$F({\bf Z}) = \wt F(|Z_1|, \ldots, |Z_N|)$ 
for ${\bf Z} \in {\cal W}(\Omega)$,
with a function $\wt F$ of $N$ variables  
which is differentiable at $(|W_1|, \ldots, |W_N|)$.
That is, assume that near $(|W_1|, \ldots, |W_N|)$, 
\begin{equation} \label{e15.15}
\wt F(v_1, \ldots, v_N) - \wt F(|W_1|, \ldots, |W_N|)
= \sum_{i=1}^N  \lambda_i (v_i-|W_i|) + o\big(\sum_i |v_i-|W_i||\big);
\end{equation}
then \eqref{e15.11} holds, with the same numbers $\lambda_i$,
because $\sum_i |v_i-|W_i|| \leq C r^n$ when ${\bf Z} = \W$
outside of $B(0,r)$.

So our last assumption \eqref{e15.11} is reasonably mild.
Associated to $\Omega_\infty$ and the $\lambda_i$, there is a functional
$J_\infty$, defined on $\F(B(0,R),\Omega_\infty)$ by
\begin{equation} \label{e15.16}
J_\infty({\bf v},{\bf Z}) = \sum_{i=1}^N \int_{B(0,R)} |\nabla v_i|^2
+ \sum_{i=1}^N \lambda_i |Z_i|.
\end{equation}

\begin{thm}\label{t15.1} 
Let $(\u,\W) \in \F$ be a minimizer for the functional $J$ of Section \ref{intro},
and let $\{ r_k \}$ be a sequence such that $\lim_{k\to +\infty} r_k = 0$. Assume
that \eqref{e15.3}-\eqref{e15.5} hold for some $R >0$ and some Lipschitz function 
$\u_\infty$ on $B(0,R)$, that \eqref{e15.8}-\eqref{e15.10} hold for some
$\Omega_\infty \i B(0,R)$, and that $F$ satisfies \eqref{e15.11}.
Then we can find $\W_\infty$ such that 
$(\u_\infty,\W_\infty) \in \F(B(0,R), \Omega_\infty)$ and $(\u_\infty,\W_\infty)$ 
is a local minimizer for $J_\infty$ in $B(0,R)$. We also have that
\begin{equation} \label{e15.17}
\lim_{k \to +\infty} \u_k = \u_\infty \text{ in $W^{1,2}(B(0,R))$.}
\end{equation}
\end{thm}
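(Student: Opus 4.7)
The plan is to apply Theorem~\ref{t14.1} (in its asymptotic form from Remark~\ref{t14.4}) directly to the rescaled sequence. Take the fixed open set $\O = B(0,R)$, the rescaled domains $\Omega_k = r_k^{-1}\Omega \cap B(0,R)$, the rescaled data $f_{i,k}(x) = r_k^2 f_i(r_k x)$ and $g_{i,k}(x) = r_k g_i(r_k x)$, and the linear volume functional $F_k({\bf Z}) = \wt F(|Z_1|,\ldots,|Z_N|) := \sum_i \lambda_i |Z_i|$, which is already in the form of \eqref{e14.4} and \eqref{e14.5} and is even independent of $k$. I would use Theorem~\ref{t14.1} rather than Corollary~\ref{t14.5} because the signs of the $\lambda_i$ are not constrained by our hypotheses.

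The first step is to check asymptotic minimality of $(\u_k,\W_k)$ for the associated functional $J_k$. Given a competitor $(\u_k^\ast,\W_k^\ast)$ for $(\u_k,\W_k)$ in $B(0,R)$ relative to $\Omega_k$ that coincides with $(\u_k,\W_k)$ outside some compact $K\subset B(0,R)$, unscaling via $y=r_k x$ produces a pair $(\u^\ast,\W^\ast)\in\F(\Omega)$ which coincides with $(\u,\W)$ outside $B(0,r_k R)$. The elementary scaling identities
$$\int_{B(0,r_k R)}|\nabla \u|^2\,dy = r_k^n \int_{B(0,R)} |\nabla \u_k|^2\,dx,$$
$$\int_{B(0,r_k R)} \big[u_i^2 f_i - u_i g_i\big]\,dy = r_k^n \int_{B(0,R)} \big[u_{i,k}^2 f_{i,k} - u_{i,k} g_{i,k}\big]\,dx,$$
together with $|W_i\cap B(0,r_k R)| = r_k^n |W_{i,k}|$ and hypothesis \eqref{e15.11} applied at radius $r = r_k R$, convert the global minimality inequality $J(\u,\W)\leq J(\u^\ast,\W^\ast)$ into
$$J_k(\u_k,\W_k) \leq J_k(\u_k^\ast,\W_k^\ast) + R^n \varepsilon(r_k R),$$
which is exactly condition \eqref{e14.83} of Remark~\ref{t14.4} with $\alpha_k = R^n \varepsilon(r_k R) \to 0$, a sequence that depends on $K$ only through the constant $R$.

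The remaining hypotheses of Theorem~\ref{t14.1} are then easy to verify. The bounds in \eqref{e15.4} give $\|f_{i,k}\|_\infty \leq r_k^2\|f_i\|_\infty$ and $\|g_{i,k}\|_\infty \leq r_k \|g_i\|_\infty$, yielding \eqref{e14.2} with a uniform constant and forcing the weak limits in \eqref{e14.3} to be identically zero. The Lipschitz hypothesis \eqref{e15.5} transfers to a uniform Lipschitz bound for the $\u_k$ on every ball with closure in $B(0,R)$ once $r_k R \leq \rho_0$ (since $\nabla \u_k(x) = \nabla \u(r_k x)$), giving \eqref{e14.10}; the pointwise convergence \eqref{e14.11} is \eqref{e15.6}, and our hypotheses \eqref{e15.8}--\eqref{e15.10} on $\Omega_\infty$ match the domain conditions \eqref{e14.6}--\eqref{e14.8} term for term. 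Theorem~\ref{t14.1} combined with Remark~\ref{t14.4} then produces $\W_\infty$ such that $(\u_\infty,\W_\infty)\in\F(B(0,R),\Omega_\infty)$ is a local minimizer of the limit functional
$$J_\infty(\v,{\bf Z}) = \int_{B(0,R)} |\nabla \v|^2 + \sum_i \lambda_i |Z_i|$$
(the $M$-terms having vanished since $f_i = g_i = 0$ in the limit), which is precisely \eqref{e15.16}; and the strong convergence \eqref{e15.17} follows from \eqref{e14.12}. The main point requiring care is the scaling step, where one must check that the error $(r_k R)^n\varepsilon(r_k R)$ from \eqref{e15.11}, after dividing by $r_k^n$, indeed yields a single sequence $\alpha_k \to 0$ that dominates uniformly over every competitor supported in a given compact $K\subset B(0,R)$.
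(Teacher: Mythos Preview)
Your approach is correct and essentially the same as the paper's: reduce to Theorem~\ref{t14.1} via Remark~\ref{t14.4}, using the scaling identities to convert global minimality of $(\u,\W)$ into asymptotic local minimality of $(\u_k,\W_k)$, with the error coming from \eqref{e15.11}. The only organizational difference is that the paper takes all the $J_k$ equal to $J_\infty$ itself (so $f_{i,k}=g_{i,k}=0$) and absorbs the rescaled $M$-terms into the error $\alpha_k$ (getting $\alpha_k \leq C r_k + \varepsilon(r_kR)$), whereas you keep the rescaled $f_{i,k},g_{i,k}$ inside $J_k$ and let them vanish in the limit via \eqref{e14.3}; both bookkeepings work. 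Two small points you glossed over that the paper makes explicit: the competitor $(\u_k^\ast,\W_k^\ast)$ lives only in $B(0,R)$ and must be extended by $(\u_k,\W_k)$ on $\R^n\setminus B(0,R)$ before unscaling, and the upgrade from \eqref{e14.12} (convergence in $W^{1,2}(B)$ for $\overline B\subset B(0,R)$) to \eqref{e15.17} (the full ball) uses the uniform Lipschitz bound on $B(0,R)$ to control the annular remainder.
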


\ms
See near \eqref{e14.1} for the definition of $\F(B(0,R),\Omega_\infty)$ and
local minimizers for $J_\infty$; in the present case, $J_\infty$ has no $M$-term
and $\int_{B(0,R)} |\nabla U_\infty| < +\infty$, so \eqref{e14.1} can be rewritten as
\begin{equation} \label{e15.18}
J_\infty({\bf u}_\infty,\W_\infty)  \leq J_\infty({\bf v},{\bf Z}),
\end{equation}
and the local minimality of $(\u_\infty,\W_\infty)$ means that 
\eqref{e15.18} holds for every pair $({\bf v},{\bf Z}) \in \F(B(0,R),\Omega_\infty)$
that coincides with $(\u_\infty,\W_\infty)$ on some $B(0,R)\sm B(0,r)$, $r < 1$.

See Corollary \ref{t15.3} for the more natural rephrasing of this theorem with $R=+\infty$.

\begin{proof}
Naturally we want to deduce this from Theorem \ref{t14.1} and Remark \ref{t14.4}. 
Ironically, we cannot use Corollary \ref{t14.5}, even when $F$ is given by \eqref{e1.7},
because we assumed that the $q_i$ are nonnegative there.
We first compute the functional $J_k$ of which the pair $(\u_k,\W_k)$
defined in \eqref{e15.2} is a minimizer.

\begin{lem}\label{t15.2} 
If $(\u,\W) \in \F$ is a minimizer for $J$ in the class $\F(\Omega)$ of
Definition \ref{d1.1}, then the pair $(\u_k,\W_k)$ is a minimizer for 
$J_k$ in $\F(\Omega_k)$, where $\Omega_k = r_k^{-1} \Omega$
and $J_k$ is defined like $J$ in \eqref{e1.5}, 
except that we use the functions $f_{i,k}$ and $g_{i,k}$ given by
\begin{equation} \label{e15.19}
f_{i,k}(x) = r_k^2 f_i(r_k x) \ \text{ and } \ g_{i,k}(x) = r_k g_i(r_k x)
\end{equation}
and the functional $F_k$ defined by
\begin{equation} \label{e15.20}
F_k(H_1, \ldots, H_N) = r_k^{-n} \, F(r_k H_1, \ldots, r_k H_N)
= : r_k^{-n} \, F(r_k {\bf H}).
\end{equation}
\end{lem}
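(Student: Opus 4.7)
The plan is to show that the rescaling $(\u,\W) \mapsto (\u_k,\W_k)$ defines a bijection of admissible pairs from $\F(\Omega)$ onto $\F(\Omega_k)$ that transforms $J$ into $r_k^{-n} J_k$; minimality then transfers automatically. First I would check that $(\u_k,\W_k) \in \F(\Omega_k)$. The sets $W_{i,k} = r_k^{-1} W_i$ are pairwise disjoint Borel subsets of $\Omega_k = r_k^{-1}\Omega$, and the chain rule for Sobolev functions gives $u_{i,k} \in W^{1,2}(\R^n)$ together with the pointwise identity $\nabla u_{i,k}(x) = \nabla u_i(r_k x)$; the zero-set condition $u_{i,k} = 0$ a.e. on $\R^n \sm W_{i,k}$ is inherited from the corresponding property for $u_i$ via the change of variable $y = r_k x$.

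Next I would carry out the scaling computation term by term, using the substitution $y = r_k x$ (Jacobian $r_k^n$):
\begin{align*}
\int |\nabla u_{i,k}(x)|^2 dx &= \int |\nabla u_i(r_k x)|^2 dx = r_k^{-n}\int |\nabla u_i(y)|^2 dy,\\
\int u_{i,k}(x)^2 f_{i,k}(x)\, dx &= \int r_k^{-2} u_i(r_k x)^2\, r_k^2 f_i(r_k x)\, dx = r_k^{-n}\int u_i(y)^2 f_i(y)\, dy,\\
\int u_{i,k}(x) g_{i,k}(x)\, dx &= \int r_k^{-1} u_i(r_k x)\, r_k g_i(r_k x)\, dx = r_k^{-n}\int u_i(y) g_i(y)\, dy,
\end{align*}
using the definitions \eqref{e15.19} of $f_{i,k}$ and $g_{i,k}$. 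For the volume term, the definition \eqref{e15.20} gives directly $F_k(\W_k) = r_k^{-n} F(r_k \W_k) = r_k^{-n} F(\W)$. Summing these identities yields the clean scaling relation $J_k(\u_k,\W_k) = r_k^{-n} J(\u,\W)$.

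Now given any competitor $(\u_k^\ast,\W_k^\ast) \in \F(\Omega_k)$, I would define its preimage by $u_i^\ast(x) = r_k u_{i,k}^\ast(x/r_k)$ and $W_i^\ast = r_k W_{i,k}^\ast$. The same verifications as in the first paragraph (run in reverse) show that $(\u^\ast,\W^\ast) \in \F(\Omega)$, and the same scaling identities give $J_k(\u_k^\ast,\W_k^\ast) = r_k^{-n} J(\u^\ast,\W^\ast)$. Since $(\u,\W)$ minimizes $J$ over $\F(\Omega)$, we have $J(\u,\W) \leq J(\u^\ast,\W^\ast)$, and multiplying by $r_k^{-n} > 0$ yields $J_k(\u_k,\W_k) \leq J_k(\u_k^\ast,\W_k^\ast)$, as required.

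There is no genuine obstacle here; the only point deserving care is that the correspondence $(\u^\ast,\W^\ast) \leftrightarrow (\u_k^\ast,\W_k^\ast)$ is a bijection between $\F(\Omega)$ and $\F(\Omega_k)$, so that testing against all competitors on one side is equivalent to testing against all competitors on the other. The particular exponents chosen in \eqref{e15.19}, namely $r_k^2$ for $f_i$ and $r_k$ for $g_i$, are precisely what is needed to make the three terms of $E + M$ scale with the same factor $r_k^{-n}$ as the gradient energy; this is the bookkeeping that motivates the definitions given in the statement.
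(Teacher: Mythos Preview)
Your proof is correct and follows essentially the same approach as the paper: establish the bijection $\F(\Omega) \leftrightarrow \F(\Omega_k)$ via rescaling, verify the scaling identity $J_k(\u_k^\ast,\W_k^\ast) = r_k^{-n} J(\u^\ast,\W^\ast)$ term by term, and transfer minimality. The paper's proof is slightly more terse but performs the same computations (equations \eqref{e15.21}--\eqref{e15.23}).
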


\begin{proof}
Notice the extra powers in \eqref{e15.19}, which come from the scaling
and the fact that we base our normalization of $J_k$ on the energy.
For a general pair $(\u^\ast,\W^\ast)$, define
$\u^\ast_k$ and $\W^\ast_k$ as we did for 
$\u_k$ and $\W_k$ (in \eqref{e15.2}).
It is clear that $(\u^\ast,\W^\ast) \in \F(\Omega)$ if and only if
$(\u_k^\ast,\W_k^\ast)$ lies in $\F(\Omega_k)$, and that
$F_k(\W_k^\ast)$ is well defined by \eqref{e15.20} (because 
$r_k {\bf H} \in {\cal W}(\Omega)$). Let us show that 
\begin{equation} \label{e15.21}
J_k(\u^\ast_k,\W^\ast_k) = r_k^{-n} J(\u^\ast,\W^\ast);
\end{equation}
the conclusion will easily follow.
For the last term, this is just a consequence of the definition, since
$F_k(\W^\ast_k) = r_k^{-n} F(\W^\ast)$ by \eqref{e15.20}.
For the energy, we just change variables and get that
\begin{eqnarray} \label{e15.22}
E(\u_k^\ast) &=& \int |\nabla u_k^\ast|^2 = \int |\nabla [r_k^{-1} \u^\ast(r_k \,\cdot) ]|^2
\nonumber
\\
&=& \int |(\nabla \u^\ast)(r_k x)|^2 dx = r_k^{-n} \int |\nabla \u^\ast|^2 = r_k^{-n} E(\u^\ast).
\end{eqnarray}
The $M$-term of the functional is computed similarly:
\begin{eqnarray} \label{e15.23}
M_k(\u_k^\ast) &=:& \sum_{i=1}^N \int [u_{i,k}^\ast(x)^2 f_{i,k}(x) - u_{i,k}^\ast(x) g_{i,k}(x)] dx
\nonumber
\\
&=& \sum_{i=1}^N \int [u_{i}^\ast(r_k x)^2 f_{i}(r_k x) - u_{i}^\ast(r_k x) g_{i}(r_k x)] dx
= r_k^{-n} M(\u^\ast)
\end{eqnarray}
because $u_{i,k}^\ast(x) = r_k^{-1}u_i(r_k x)$ by \eqref{e15.2}, and
by \eqref{e15.19}. Hence \eqref{e15.21} holds and the lemma follows.
\qed
\end{proof}

\ms
We return to the proof of Theorem \ref{t15.1}. 
Let $R > 0$ be as in the statement. We want to apply Theorem \ref{t14.1}
and Remark \ref{t14.4} to the restriction to $\O = B(0,R)$ of our sequence
of minimizers $(\u_k,\W_k)$. We use the domains $\Omega_k \cap \O$
as our measurable subsets of $\O$, and just the fixed $J_\infty$ as 
our variable functional; of course the $(\u_k,\W_k)$ will only be approximate
local minimizers for $J_\infty$, and this is why we shall use Remark \ref{t14.4}.
Then \eqref{e14.2} and \eqref{e14.3} are trivial (we take $f_i = f_{i_k}=g_i = g_{i,k}=0$),
\eqref{e14.4} and \eqref{e14.5} hold with 
$\wt F_k(v_1, \ldots , v_N) = \sum_i \lambda_i v_i$
and $\wt F = \wt F_k$, \eqref{e14.6} is just a translation of \eqref{e15.8},
\eqref{e14.7} is the same as \eqref{e15.9}, and 
\eqref{e14.8} follows from \eqref{e15.10}.

We will need to replace \eqref{e14.9} by \eqref{e14.83}
but the uniform Lipschitz bound in \eqref{e14.10} holds by \eqref{e15.5},
and \eqref{e14.11} follows from \eqref{e15.6}. This completes our list of
easy verifications; as soon as we check \eqref{e14.83}, 
Remark \ref{t14.4} will say that the conclusion of Theorem \ref{t14.1}
holds for the limit function $\u_\infty$,
and Theorem \ref{t15.1} will follow. Notice that the conclusion \eqref{e14.12} 
looks a little weaker than \eqref{e15.17}, but here $\u_\infty$ is Lipschitz
on $B(0,R)$ (by \eqref{e15.5} and \eqref{e15.6}), so
$\lim_{k \to +\infty} \u_k = \u_\infty$ in $W^{1,2}(B(0,R))$
as soon as this happens in $B(0,T)$ for every $T<R$.

So we check \eqref{e14.83}, which means that we take a
competitor $({\bf v},{\bf H})$ (previously called $(\u_k^\ast,\W_k^\ast)$)
for $(\u_k,\W_k)$ in $\O = B(0,R)$, relative to $\Omega_k \cap \O$, 
and we want to show that
\begin{equation} \label{e15.24}
J_\infty(\u_k,\W_k) \leq J_\infty({\bf v},{\bf H}) + \alpha_k,
\end{equation}
for some $\alpha_k$ that tends to $0$; we shall not need the dependence
on the compact set $K$ here.

Notice that the pair $(\u_k,\W_k)$ was actually defined on the larger
set $\R^n$, but $({\bf v},{\bf H}) \in \F(B(0,R),\Omega_k)$ 
is only defined on $B(0,R)$. This is easy to fix: set
${\bf v}(x) = \u_k(x)$ on $\R^n \sm B(0,R)$ and define 
${\bf H}'$ by $H'_i = H_i \cup (W_{i,k} \sm B(0,R)$ for $1 \leq i \leq N$. 
It is easy to see that $({\bf v},{\bf H}') \in \F(\Omega_k) = \F(\R^n,\Omega_k)$;
in particular, the fact that ${\bf v} \in W^{1,2}(\R^n)$ is trivial, because
${\bf v}$ coincides with $\u_k$ in $B(0,R) \sm B(0,T)$ for some $T<R$,
which gives enough room to glue $v\in W^{1,2}_{loc}(B(0,R)$ with $\u \in W^{1,2}(\R^n)$.
Notice also that $({\bf v},{\bf H}')$ is a competitor for $(\u_k,\W_k)$, which
implies that
\begin{equation} \label{e15.25}
J_k(\u_k,\W_k) \leq J_k({\bf v},{\bf H}').
\end{equation}
Recall from Lemma \ref{t15.2} that 
\begin{equation} \label{e15.26}
J_k({\bf v},{\bf H}') =  \int_{\Omega_k} |\nabla {\bf v}|^2 + M_k({\bf v}) 
+ r_k^{-n} \, F(r_k {\bf H}),
\end{equation}
where 
\begin{equation} \label{e15.27}
M_k({\bf v}) = \sum_{i=1}^N \int_{\Omega_k} [v_i(x)^2 f_{i,k}(x) - v_i(x) g_{i,k}(x)] dx
\end{equation}
Let us remove the large constant
\begin{equation} \label{e15.28}
A_k = \int_{\Omega_k \sm B(0,R)} |\nabla \u_k|^2 
+ \int_{\Omega_k \sm B(0,R)} [u_{i,k}(x)^2 f_{i,k}(x) - u_{i,k}(x) g_{i,k}(x)] dx
+ r_k^{-n} \, F(\W)
\end{equation}
from this; we get that
\begin{equation} \label{e15.29}
J_k({\bf v},{\bf H}') - A_k =  \int_{B(0,R)} |\nabla {\bf v}|^2 + M'_k({\bf v}) 
+ r_k^{-n} \, \big(F(r_k {\bf H}) - F(\W) \big),
\end{equation}
where 
\begin{equation} \label{e15.30}
M'_k({\bf v}) = \sum_{i=1}^N \int_{B(0,R)} [v_i(x)^2 f_{i,k}(x) - v_i(x) g_{i,k}(x)] dx.
\end{equation}
Also, the special case when $({\bf v},{\bf H}') = (\u_k,\W_k)$ yields
\begin{equation} \label{e15.31}
J_k(\u_k,\W_k) - A_k =  \int_{B(0,R)} |\nabla \u_k|^2 + M'_k(\u_k)
\end{equation}
because $r_k \W_k = \W$.
We now estimate various terms. First observe that for $x\in B(0,R)$,
\begin{equation} \label{e15.32}
|u_k(x)| = r_k^{-1} |\u(r_k x)| = r_k^{-1} |\u(r_k x)-\u(0)| \leq C
\end{equation}
by \eqref{e15.2}, \eqref{e15.3}, and \eqref{e15.5}; similarly,
$\u_k$ is $C$-Lipschitz on $B(0,R)$, so we easily get
that 
\begin{equation} \label{e15.33}
|M'_k(\u_k)| \leq C |B(0,R)| (||f_{i,k}||_\infty + ||g_{i,k}||_\infty) \leq C r_k
\end{equation}
by \eqref{e15.19} and \eqref{e15.4}, and 
with a constant $C$ that does not depend on $k$.

Notice that we may assume that
$J_\infty({\bf v},{\bf H}) \leq J_\infty(\u_k,\W_k)$, because otherwise
\eqref{e15.24} is satisfied for any $\alpha_k \geq 0$. This yields
\begin{equation} \label{e15.34}
\int_{B(0,R)} |\nabla {\bf v}|^2 
\leq J_\infty({\bf v},{\bf H}) + C \leq J_\infty(\u_k,\W_k) + C
\int_{B(0,R)} |\nabla \u_k|^2 + 2C \leq C'
\end{equation}
where our constants $C$ and $C'$ depend on the $\lambda_i$ and $R$, 
but again not on $k$, and we used again the fact that $\u_k$ is $C$-Lipschitz.
Recall also that the Sobolev function ${\bf v} - \u_k$ vanishes outside of $B(0,R)$,
so the Poincar\'e inequality \eqref{e3.7} yields
\begin{equation} \label{e15.35}
\int_{B(0,R} |{\bf v} - \u_k|^2 \leq C R^2 \int_{B(0,R} |\nabla({\bf v} - u_k|^2 \leq C
\end{equation}
hence $\int_{B(0,R} |{\bf v}|^2 \leq C$, by \eqref{e15.32}. Thus
\begin{equation} \label{e15.36}
|M'_k({\bf v})| \leq  (||f_{i,k}||_\infty + ||g_{i,k}||_\infty) 
\int_{B(0,R} (|{\bf v}| + |{\bf v}|^2) \leq C r_k.
\end{equation}
Finally notice that ${\bf H}'$ coincides with $\W_k$ outside of $B(0,R)$,
which implies that $r_k {\bf H}' = r_k \W_k = \W$ outside of $B(0,r_k R)$.
So we can apply \eqref{e15.11}, and we get that
\begin{equation} \label{e15.37}
r_k^{-n} \, \big(F(r_k {\bf H}') - F(\W) \big)
= r_k^{-n} \Big\{\sum_{i} \lambda_i 
\big[|r_k H'_i \cap B(0,r_k R)| -|W_i \cap B(0,r_k R)|\big]
+ error \Big\}
\end{equation}
with $|error| \leq (r_kR)^n \varepsilon(r_kR)$.
Since $W_i \cap B(0,r_k R) = r_k (W_{i,k} \cap B(0,R))$
and  $r_k H'_i \cap B(0,r_k R) = r_k(H'_i \cap B(0, R)) = r_k H_i$
by definition of ${\bf H}'$, \eqref{e15.37} is the same as
\begin{equation} \label{e15.38}
r_k^{-n} \, \big(F(r_k {\bf H}') - F(\W) \big)
= \sum_{i} \lambda_i \big[ |H_i | -|W_{i,k} \cap B(0,R)|\big]
+  r_k^{-n} error .
\end{equation}
We may now put things together:
\begin{eqnarray} \label{e15.39}
J_\infty(\u_k,\W_k) - J_\infty({\bf v},{\bf H})
&=&  \int_{B(0,R)} |\nabla \u_k|^2 - |\nabla {\bf v}|^2
- \sum_{i=1}^N \lambda_i \big[ |H_i | -|W_{i,k} \cap B(0,R)|\big]
\nonumber\\
&=& \int_{B(0,R)} |\nabla \u_k|^2 - |\nabla {\bf v}|^2
- r_k^{-n} \, \big(F(r_k {\bf H}') - F(\W) \big) + r_k^{-n} error
\nonumber\\
&=& J_k(\u_k,\W_k) - J_k({\bf v},{\bf H}') - M'_k(\u_k) + M'_k({\bf v})
+ r_k^{-n} error
\\
&\leq& - M'_k(\u_k) + M'_k({\bf v}) + r_k^{-n} error
\leq C r_k + \varepsilon(r_kR)
\nonumber
\end{eqnarray}
by \eqref{e15.16}, \eqref{e15.29}, \eqref{e15.31}, \eqref{e15.25},
\eqref{e15.33}, and \eqref{e15.36}.
This proves \eqref{e15.24}, and we know that Theorem~\ref{t15.1} follows.
\qed
\end{proof}

\ms
Usually it does not hurt to treat all the balls $B(0,R)$ at the same time, 
because extracting a sequence from an original sequence $\{ r_k \}$
so that \eqref{e15.6} holds for every $R > 0$ is just as easy, if we
know that $\u$ is Lipschitz near $0$ (as in \eqref{e15.5}), as getting
it for a single $R$. Here is the corresponding statement.

\begin{cor}\label{t15.3} 
Let $(\u,\W) \in \F$ and the sequence $\{ r_k \}$ satisfy
the assumptions of Theorem~\ref{t15.1} for each integer $R > 0$.
Then the $\u_k$ converge uniformly on compact subsets of $\R^n$ to a 
Lipschitz function $\u_\infty : \R^n \to \R^N$, 
there is a measurable set $\Omega_\infty \i \R^n$ such that 
\eqref{e15.8} holds for every $R > 0$, and we can find disjoint sets
$W_{i,\infty} \i \Omega_\infty$ with the following property. Set
$\W_\infty = (W_{1,\infty}, \ldots, W_{N,\infty})$;
then $(\u_\infty,\W_\infty)$ lies in $\F(\R^n,\Omega_\infty)$
and is a local minimizer for $J_\infty$ in $\R^n$.
In addition, \eqref{e15.17} holds for all $R > 0$.
\end{cor}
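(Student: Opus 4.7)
The plan is to combine a diagonal extraction with Theorem \ref{t15.1} applied on each ball $B(0,R)$, and then glue the resulting objects into global data. By assumption, for every integer $R \geq 1$ the hypotheses of Theorem \ref{t15.1} hold on $B(0,R)$. Starting from a subsequence of $\{r_k\}$ that works for $R=1$, I would extract successively finer subsequences for $R=2,3,\ldots$, at each stage further ensuring that, for each index $i$, the volumes $|W_{i,k} \cap B(0,R)|$ converge to a limit $l_{i,R} \in [0, |B(0,R)|]$. A standard diagonal sequence, which I still denote $\{r_k\}$, then has all these properties simultaneously for every integer $R$.

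Since $\u$ is Lipschitz on $B(0,\rho_0)$ by \eqref{e15.5}, each $\u_k$ is uniformly Lipschitz on $B(0,R)$ as soon as $r_k R \leq \rho_0$, with a constant independent of $k$. Consequently the Lipschitz functions $\u_{\infty,R}$ produced by Theorem \ref{t15.1} are consistent in $R$ (restriction from $B(0,R')$ to $B(0,R)$ yields $\u_{\infty,R}$), and define a single globally Lipschitz function $\u_\infty : \R^n \to \R^N$ with $\u_k \to \u_\infty$ uniformly on compact subsets of $\R^n$. The same consistency holds for the $\Omega_{\infty,R}$, yielding a measurable $\Omega_\infty \i \R^n$ with $\Omega_\infty \cap B(0,R) = \Omega_{\infty,R}$ modulo null sets. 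For the $W_{i,\infty}$, I would start from the intrinsic sets $W'_i = \big\{ x\in \Omega_\infty \, ; \, u_{i,\infty}(x) \neq 0 \big\}$, which as in \eqref{e14.17} satisfy $|W'_i \cap B(0,R)| \leq l_{i,R}$ for every $R$, and globally extend them by distributing extra mass in a nested fashion across the annular regions $B(0,R) \sm B(0,R-1)$ so that $|W_{i,\infty} \cap B(0,R)| = l_{i,R}$ for every integer $R$ and every $i$. Disjointness of the $W_{i,\infty}$ can be maintained throughout, because $\sum_i l_{i,R} \leq |\Omega_\infty \cap B(0,R)|$ by Fatou and the disjointness of the $W_{i,k}$ inside $\Omega_k$.

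It remains to verify that $(\u_\infty,\W_\infty)$ is a local minimizer for $J_\infty$ in $\R^n$. Given a competitor $({\bf v},{\bf Z})$ coinciding with $(\u_\infty,\W_\infty)$ outside a compact set $K$, I would pick an integer $R$ with $K \i B(0,R)$. By the annular construction, $\W_\infty \cap B(0,R)$ is precisely one of the $N$-uples produced by the proof of Theorem \ref{t14.1} applied inside $B(0,R)$ (the enlargement step there is free to choose any allocation achieving the prescribed volumes $l_{i,R}$). Hence $(\u_\infty|_{B(0,R)}, \W_\infty \cap B(0,R))$ is a local minimizer for $J_\infty$ in $B(0,R)$ by Theorem \ref{t15.1}, and since $J_\infty$ splits as a sum of local energy and volume contributions that cancel on $\R^n \sm K \supset \R^n \sm B(0,R)$, \eqref{e15.18} on $B(0,R)$ immediately yields the corresponding inequality on $\R^n$. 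The strong $W^{1,2}(B(0,R))$ convergence \eqref{e15.17} is inherited directly from Theorem \ref{t15.1}. The main obstacle in all of this is the consistent construction of $\W_\infty$ across all scales; once this is handled by the nested annular extension above, everything else reduces to routine bookkeeping on top of Theorem \ref{t15.1}.
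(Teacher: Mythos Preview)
Your approach is correct and matches the paper's main argument: a diagonal extraction so that the volumes converge on annular regions, followed by an annulus-by-annulus completion of the $W'_i$ ensuring that $\W_\infty \cap B(0,R)$ is a valid output of the proof of Theorem~\ref{t14.1} for every integer $R$. The paper also notes a shortcut exploiting the linear form of $J_\infty$---take $W_{i,\infty} = W'_i$ for all $i$ except possibly the index $i_0$ with smallest $\lambda_{i_0}$, and if $\lambda_{i_0} < 0$ set $W_{i_0,\infty} = \Omega_\infty \setminus \bigcup_{i \neq i_0} W'_i$---but your construction is the more general one they describe first.
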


See the beginning of Section \ref{limits} for the definition $\F(\R^n,\Omega_\infty)$.
The functional is still defined as in $\eqref{e15.16}$, 
except that we would now integrate on $\R^n$; so, when we say 
that $(\u_\infty,\W_\infty)$ is a local minimizer for $J_\infty$ in $\R^n$,
we mean as in \eqref{e14.1}, or rather \eqref{e14.88} that if 
$({\bf v},{\bf H}) \in \F(\R^n,\Omega_\infty)$
coincides with $(\u_\infty,\W_\infty)$ outside of a ball $B(0,R)$, then
\begin{equation} \label{e15.40}
\int_{B(0,R)} |\nabla u_{i,\infty}|^2 
+ \sum_{i=1}^N \lambda_i |W_{i,\infty} \cap B(0,R)|
\leq
\int_{B(0,R)} |\nabla v_{i}|^2 + \sum_{i=1}^N \lambda_i |H_{i}\cap B(0,R)|.
\end{equation}

We did not require directly the existence of a limit $\u_\infty$ that
works for all $R$, but it is easy to see that if the $\u_k$ converge
in $B(0,R)$ for every integer $R$, the limit does not depend on $R$.
This is our function $\u_\infty$, and the fact that the convergence
is uniform on compact subsets of $\R^n$ follows easily from the 
fact that for each $R$, the $\u_k$, $k$ large enough, are uniformly
Lipschitz by \eqref{e15.5}.

\begin{proof}
Compared to Theorem \ref{t15.1}, there is just a small amount of new information here.
We already discussed the fact that the limit of the $\u_k(x)$ does not depend on the  
radius $R$, and that this limit is uniform on compact sets.
We also need to say that, modulo sets of measure $0$, the limit set
$\Omega_\infty$ on \eqref{e15.8} does not depend on $R$ either. 
It could be that if we are too clumsy with the gluing of the various 
$\Omega_\infty$, the condition \eqref{e15.9} does not hold any more
(on the whole $\R^n$). We do not need this to prove the corollary, and we could
probably easily fix this problem if it became an issue for some other question. 

Finally, we also need to show that we can find a fixed $\W_\infty$ that works for all $R$.
One way we can do this is to modify slightly our choice of $\W$ (starting a little
above \eqref{e14.18}).
Indeed the proof of \eqref{e14.17} also yields that for $A_R = B(0,R+1) \sm B(0,R)$,
$R \in \mathbb N$,
\begin{equation} \label{e15.41}
|W'_i \cap A_R| \leq \liminf_{j\to +\infty} |W_{i,k_j} \cap A_R|.
\end{equation}
Then we extract a new subsequence $\{ k_j \}$, so that for each integer $R$,
the limits $l_{i,R} = \lim_{j \to +\infty} |W_{i,k_j} \cap A_R|$ exist (as in \eqref{e14.17}),
we still have an analogue of \eqref{e14.20} in each annulus, and this allows us to
complete the $W'_i$, independently on each annulus, so that we have a stronger form
of \eqref{e14.20} with $|W_i \cap A_R| = l_{i,R}$. With this construction, the restriction
of $\W$ to a given ball does not depend on $R$.

We could also use the fact that $J_\infty$ has a special form to replace directly 
$\W'$ with an optimal choice, where we keep $W_i = W'_i$ for all $i$,
except perhaps for one $i_0$ for which $\lambda_{i_0}$ is the smallest.
If $\lambda_{i_0} \geq 0$ we still keep $W_{i_0} = W'_{i_0}$, but otherwise
we take $W_{i_0} = W'_{i_0} \cap \big[\Omega_\infty \sm 
\big(\cup_{i \neq i_0} W'_i\big)\big]$.
This candidate works at least as well as our old one, and it is easy to see that it does not
depend on $R$ (if we always choose the same $i_0$).
\qed
\end{proof}

\section{Blow-up limits with 2 phases}  \label{2phases} 

Our interest in the blow-up limits of our minimizers $(\u,\W)$ comes from the
fact that they should be simpler to study, and their description should still provide 
useful information on $(\u,\W)$. We also intend to use the fact that 
since our blow-up limits are given by a more standard Alt, Caffarelli, and Friedman 
formula, they were studied intensively and we can use some of the results.

In this Section we keep the same assumptions as in the previous one,
add a minor regularity property for $\Omega$ to make sure that we can
apply Corollary \ref{t15.3}, and study the blow-up limits of $\u$ 
when we can find two different choices 
of pairs $(i,\varepsilon)$, where $i \in [1,N]$ and $\varepsilon$ is a sign,
such that the functional $\Phi(r)$ associated in Section \ref{mono} to the two 
functions $(\varepsilon u_i)_+$ has a nonzero limit when $r$ tends to $0$.

We will see that in this case, all the blow-up limits $\u_\infty$ of $\u$ at the origin 
are composed of just two non-trivial affine functions defined on the two components
of the complement of some hyperplane $H$, and which vanish on $H$; the other
components of $\u_\infty$ are null.

It will follow that $0$ lies in the interior of $\Omega$, and that
the other components of $\u$ are small near the origin.
See Corollary \ref{t16.4}.

We will also see that in this case the natural free boundaries associated
to $\u$ stay quite close to hyperplanes in the small balls $B(0,r)$.
See Corollary \ref{t16.5}. This is not such an impressive regularity
result, but we can get it without nondegeneracy assumptions like
\eqref{e13.1}, or the size of the $q_i(0)$ when $F$ is given
by \eqref{e1.7} with functions $q_i$ that are nearly continuous at
$0$ (as in \eqref{e15.12}).

\ms
The main assumptions for this section are almost the same as for Corollary \ref{t15.3}. 
We consider a minimizer $(\u,\W)$ for $J$ and a sequence $\{ r_k \}$ that tend to $0$,
we suppose that the $f_i$ are nonnegative and bounded and the $g_i$
are bounded (as in \eqref{e15.4}),
that $\u(0) = 0$,  $\u$ is Lipschitz near $0$, and 
the $\u_k$ defined by \eqref{e15.2} converge pointwise 
(or uniformly on compact sets, this is the same) to a function $\u_\infty$, 
as in \eqref{e15.3}, \eqref{e15.5}, and \eqref{e15.6}, that we assume for all $R>0$.

We also systematically assume that the volume functional
$F$ satisfies the regularity condition \eqref{e15.11}.

When we want information for a specific blow-up limit of $(\u,\W)$
at the origin, we shall assume that that for each $R >0$, there is a limit 
$\Omega_\infty= \Omega_{\infty,R}$, as in \eqref{e15.8} and \eqref{e15.10}, 
and that satisfies the weak regularity assumption \eqref{e15.9}.
Then Corollary \ref{t15.3} will give a limiting domain $\Omega_\infty$
(that does not depend on $R$, but this is not a surprise), 
and a $N$-uple $\W_\infty$ such that
$(\u_\infty,\W_\infty)$ is a  local minimizer for the functional
$J_\infty$ in $\R^n$, relative to $\Omega_\infty$ (i.e., in the class
$\F(\R^n, \Omega_\infty)$).
We recall what this means near \eqref{e15.40}.

But for Corollary \ref{t16.4} we will need to know that for each sequence
$\{ r_k \}$ that tends to $0$, we can find a subsequence that
satisfies the assumptions above. There is no problem with the convergence
of $\u_k$, since $\u$ is Lipschitz near $0$, but if $\Omega$ is
too ugly near the origin, it may be hard to find the $\Omega_{\infty,R}$.
For instance, the functions ${\bf 1}_{\Omega_k}$ may converge weakly to the
constant $1/2$. Let us give a condition which will prevent that. 

\begin{lem} \label{t16.1}
Suppose that there is a radius $\rho > 0$ 
and a constant $\tau > 0$ such that, for $x\in \d\Omega \cap B(0,\rho)$ 
and $0 < r \leq \rho$, 
\begin{equation}\label{e16.1}
|B(x,r) \sm \Omega| \geq \tau r^n
\end{equation}
and we can find $y\in B(x,r)$ such that
\begin{equation}\label{e16.2}
\dist(y,\d\Omega) \geq \tau r.
\end{equation}
Then for each sequence $\{ r_k \}$ that
tends to $0$, we can find a subsequence and a measurable set
$\Omega_\infty$ for which \eqref{e15.8}-\eqref{e15.10} hold for each
$R > 0$.
\end{lem}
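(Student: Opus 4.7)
The strategy is to pass the boundaries $\partial \Omega_k$ to a Hausdorff limit $\Gamma$ along a subsequence, show that $\Gamma$ is a porous set of Lebesgue measure zero (thanks to \eqref{e16.2} scaled), and then define $\Omega_\infty$ by labelling each connected component of $\R^n \setminus \Gamma$ according to whether it lies inside $\Omega_k$ or in its complement for large $k$.

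First I would apply Blaschke's selection theorem on each closed ball $\overline{B(0,R)}$, $R \in \mathbb{N}$, together with a diagonal extraction, to produce a subsequence (not relabelled) such that $\partial \Omega_k \cap \overline{B(0,R)}$ converges in Hausdorff distance to a closed set $\Gamma_R$ for each $R$. By consistency across $R$, the union $\Gamma = \bigcup_R \Gamma_R$ is a well-defined closed subset of $\R^n$. The scaled form of \eqref{e16.2} asserts that for $R$ fixed, any $k$ with $\rho/r_k > R+1$, any $x \in \partial \Omega_k \cap B(0,R)$, and any $r\leq 1$, one can find $y\in B(x,r)$ with $\dist(y,\partial \Omega_k)\geq \tau r$. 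Passing to the Hausdorff limit yields that $\Gamma$ itself satisfies a porosity condition with constant $\tau/2$ (say), and hence has Hausdorff dimension strictly less than $n$; in particular, $|\Gamma|=0$.

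Next I would define $\Omega_\infty$. For each connected component $C$ of the open set $\R^n\setminus \Gamma$ and each $x\in C$, the Hausdorff convergence gives $d_x>0$ with $B(x,d_x)\cap \partial \Omega_k = \emptyset$ for all $k$ large, so $B(x,d_x)\subset \Omega_k$ or $B(x,d_x)\subset \Omega_k^c$; by connectedness of $C$ this choice is constant over $C$ for each fixed $k$, giving a label $\ell_k(C)\in\{0,1\}$. A diagonal extraction over the (countable) collection of components produces a further subsequence along which each $\ell_k(C)$ stabilizes to a limit $\ell_\infty(C)$, and I set $\Omega_\infty = \bigcup\{C : \ell_\infty(C)=1\}$. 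By construction $\mathbf{1}_{\Omega_k}\to \mathbf{1}_{\Omega_\infty}$ pointwise off $\Gamma$; since $|\Gamma|=0$, dominated convergence yields \eqref{e15.8} on every $B(0,R)$.

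It remains to verify \eqref{e15.9} and \eqref{e15.10}. Since $\Omega_\infty$ is a union of components of $\R^n\setminus\Gamma$, one has $\partial \Omega_\infty \subset \Gamma$; so if $x\in \partial \Omega_\infty\cap B(0,T)$ there exist $x_k\in \partial \Omega_k$ with $x_k\to x$, and applying the scaled \eqref{e16.1} to $x_k$ at radius $r/2$ gives $|B(x_k,r/2)\setminus \Omega_k|\geq \tau(r/2)^n$; since $B(x_k,r/2)\subset B(x,r)$ for $k$ large, passing to the $L^1$ limit via \eqref{e15.8} yields \eqref{e15.9} with $c_T=\tau 2^{-n}$. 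For \eqref{e15.10} I would argue by contradiction: if $\delta(k,T)\not\to 0$, extract $x_k\in B(0,T)\setminus \Omega_k$ with $x_k\to x_\infty$ and $B(x_\infty,\epsilon/2)\cap B(0,R)\subset \Omega_\infty$ (modulo a null set). Either $B(x_k,\epsilon/8)\cap \Omega_k=\emptyset$, giving a ball of $\Omega_k^c$ inside $\Omega_\infty$ of measure $\gtrsim \epsilon^n$ directly; or $B(x_k,\epsilon/8)$ meets $\Omega_k$ hence contains a point $z_k\in \partial \Omega_k$, and \eqref{e16.1} at $z_k$ with radius $\epsilon/8$ again produces a chunk of $\Omega_k^c$ of measure $\gtrsim \epsilon^n$ inside $B(x_\infty,\epsilon/2)$. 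Either case contradicts the $L^1$ convergence together with $B(x_\infty,\epsilon/2)\subset \Omega_\infty$.

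The main obstacle I expect is the clean organization of the double extraction (Hausdorff convergence of boundaries, then stabilization of the labels $\ell_k(C)$ over countably many components), together with the correct bookkeeping that $\partial \Omega_\infty\subset \Gamma$ and that the scaled \eqref{e16.1}--\eqref{e16.2} survive the passage to the limit. The porosity step is also slightly delicate because one needs the Hausdorff convergence to transfer the scale-invariant porosity of $\partial \Omega_k$ to $\Gamma$ uniformly in $R$.
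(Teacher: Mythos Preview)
Your overall strategy is exactly the paper's: pass $\partial\Omega_k$ to a Hausdorff limit $\Gamma$ (called $Z$ there), use the scaled \eqref{e16.2} to prove $\Gamma$ is porous hence null, label the pieces of $\R^n\setminus\Gamma$ according to which side of $\partial\Omega_k$ they eventually fall on, extract so the labels stabilize, and then verify \eqref{e15.8}--\eqref{e15.10} essentially as you do. Your arguments for \eqref{e15.9} and \eqref{e15.10} are correct and parallel the paper's.

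There is one real gap, in the labelling step. You assert that ``by connectedness of $C$ this choice is constant over $C$ for each fixed $k$''. This is not true as stated: a component $C$ of $\R^n\setminus\Gamma$ typically has $\partial C\subset\Gamma$, so $\inf_{x\in C}\dist(x,\Gamma)=0$, and for any fixed $k$ the set $\partial\Omega_k$ (which is only Hausdorff-close to $\Gamma$) can and generally will cut through $C$ near its boundary. Thus $C$ need not lie entirely in $\Omega_k$ or in $\Omega_k^c$, and $\ell_k(C)$ is not well-defined by your rule. The fix is easy: choose a base point $x_C\in C$, set $\ell_k(C)=\1_{\Omega_k}(x_C)$ for $k$ large, extract so each $\ell_k(C)$ stabilizes, and then for any other $x\in C$ join $x$ to $x_C$ by a compact path in $C$; this path has positive distance to $\Gamma$, hence for $k$ large avoids $\partial\Omega_k$, forcing $x$ and $x_C$ to the same side. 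That gives the pointwise convergence you need.

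The paper sidesteps this subtlety by covering $\R^n\setminus Z$ with a countable family of balls $B_j=B(y_j,\dist(y_j,Z)/2)$ (using a dense sequence $\{y_j\}$) rather than with components. Each $B_j$ is compact with positive distance to $Z$, so for $k$ large $B_j\cap\partial\Omega_k=\emptyset$ and the label $m_{j,k}\in\{-1,1\}$ is unambiguous; a diagonal extraction then handles the countable family, and the consistency of overlapping balls replaces your connectedness/path argument. Either bookkeeping works, but the ball version is the cleaner one to write down.
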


It is amusing that we do not need to know on which side of
$\d \Omega$ the point $y$ lies. Of course if we assume that
$\dist(y,\Omega) \geq \tau r$, we get \eqref{e16.1} for free.

We included the lemma mostly for fun, and the the reader that would not be 
convinced can skip and assume that $\Omega$
is a Lipschitz (or even $C^1$) domain near the origin; then the existence of a good
subsequence is really easy.

\begin{proof}
First we take a subsequence (which as usual we still
denote by $\{ r_k \}$) for which the boundaries $\d\Omega_k$ 
of converge to a closed set $Z$ locally in $\R^n$, for the Hausdorff distance.
This just means that for each $R>0$, the numbers $d_R$ tend to
$0$, where
\begin{equation} \label{e16.3}
d_R = \sup\big\{ \dist(z,\d\Omega_k) \, ; \, z\in Z \big\}
+\sup\big\{ \dist(w,Z) \, ; \, w\in \d\Omega_k \big\},
\end{equation}
and the existence of such a subsequence comes from the standard compactness 
property of the Hausdorff distance. 

Then we show that $Z$ is porous. Let $z\in Z$ and $r > 0$ be given.
Choose $R$ such that $B(z,r) \i B(0,R)$, and then $k$ so large that
$d_{2R} \leq \tau r/10$ and $r_k R < \rho$.
Pick $w\in \d\Omega_k$ such that $|w-z| \leq \tau t/10$,
and apply our second assumption to $x=r_k w \in \d\Omega \cap B(0,\rho)$ 
and the radius $r_k r < \rho$; this gives a point $y \in B(x,r_k r)$
such that $\dist(y,\d\Omega) \geq \tau r_k r$. Then $y'=r_k^{-1} y$
lies in $B(w,r) \i B(z,2r)$ and $\dist(y',\d\Omega_k) \geq \tau r$.
Since $d_{2R} \leq \tau r/10$, we get that $\dist(y',Z) \geq \tau r/2$.
Thus every ball $B(z,2r)$ centered on $Z$ contains a ball of radius
$\tau r/2$ that does not meet $Z$, our definition of porous.

It follows that $|Z| = 0$ (if $Z$ is porous, it cannot have a Lebesgue point of
density). We now need to say which part of $\R^n \sm Z$ lies in $\Omega_\infty$
and which part lies in $\R^n \sm \Omega_\infty$, and for this we shall extract
a subsequence again. Let $\{ y_j \}$, $j \geq 0$, be a dense sequence in
$\R^n \sm Z$, and cover $\R^n \sm Z$ by the balls $B_j = B(y_j,\dist(y_j,Z)/2)$.
For each $j \geq 0$, we define a sequence $\{m_{j,k} \}$, $k \geq 0$.
Set $m_{j,k}=1$ when $B_j \i \Omega_k$, $m_{j,k}=-1$ when 
$B_j \i \R^n\sm\Omega_k$, and $m_{j,k}=0$ otherwise. Since 
$\dist(B_j,Z) > 0$, we get that $B_j \cap \d \Omega_k$ for $k$ large,
so $m_{j,k} \neq 0$ for $k$ large. We extract our new subsequence 
so that $\{m_{j,k} \}$ has a limit $l_j$ for each $j$ (which is therefore either
$1$ or $-1$). Then we set 
$\Omega_\infty = \bigcup_{j \geq 0; l_j = 1} B_j$ and
$\Omega^\sharp =  \bigcup_{j \geq 0; l_j = -1} B_j$. Let us check that
\begin{equation} \label{e16.4}
\R^n \sm Z \text{ is the disjoint union of $\Omega_\infty$ and }
\Omega^\sharp.
\end{equation}
Both sets are contained in $\R^n \sm Z$ because $B_j \i \R^n \sm Z$ for $j \geq 0$.
If $x \in \R^n \sm Z$, then $x\in B_j$ for some $j$, and then
$x$ lies in the corresponding set. But if $x \in B_j \cap B_i$,
then for $k$ large, $B_i$ and $B_j$ are contained in $\Omega_k$
or $\R^n \sm \Omega_k$, and this has to be the same set for both balls
(the set that contains $x$). So $l_i = l_j$, and our two sets are disjoint.
So \eqref{e16.4} holds.

Now we have a candidate $\Omega_\infty$, and we just need to check 
\eqref{e15.8}-\eqref{e15.10}. 
We start with \
Let $R > 0$ and $\varepsilon > 0$ be given, and choose a compact set
$K \i B(0,R) \sm Z$ such that 
$|K| \geq |B(0,R) \sm Z| -\varepsilon = |B(0,R)| - \varepsilon$.
Cover $K$ by a finite number of balls $B_j$, $j\in J$.
Notice that for $k$ large, $B_j$ is either contained in $\Omega_k$
(if $l_j = 1$) or in $\R^n \sm \Omega_k$ (if $l_j = -1$).
Now for each $x\in K$, choose $j\in J$ such that $x\in B_j$;
if $l_j = 1$, then $x\in \Omega_k \cap \Omega_\infty$. If
$l_j = -1$, then $x\in (\R^n \sm \Omega_k) \cap \Omega^\sharp
\i (\R^n \sm \Omega_k) \cap(\R^n \sm \Omega_\infty)$
(by \eqref{e16.4}). That is $K \cap \Omega_\infty = K \cap \Omega_k$,
and hence $||\1_{\Omega_\infty}-\1_{\Omega_k}||_{L^1(B(0,R))}
\leq |B(0,R) \sm K| \leq \varepsilon$. This holds for $k$ large; 
\eqref{e15.8} follows.

Next we want to deduce from \eqref{e15.8} and \eqref{e16.1} that
\begin{equation} \label{e16.5}
|B(x,r) \sm \Omega_\infty| \geq \tau 2^{-n} r^n
\text{ for $x\in \d\Omega_\infty$ and $r > 0$;}
\end{equation}
obviously \eqref{e15.9} will follow (we even get some additional
scale invariance).

Let $x\in \d\Omega_\infty$ and $r > 0$ be given.
Observe that $x\in Z$, because otherwise $x$ would lie in one of the 
open balls $B_i$, which cannot meet $\d\Omega_\infty$ because they are 
contained in $\Omega_\infty$ or in $\Omega^\sharp \i \R^n \sm \Omega_\infty$.
For $k$ large, we can find $w \in \d\Omega_k$ such that
$|w-x| \leq r/2$. Then \eqref{e16.1}, applied to the ball
$B(r_k w,r_k r/2)$ (which is contained in $B(0,\rho)$ for $k$ large), says that
\begin{equation} \label{e16.6}
|B(x,r) \sm \Omega_k| 
\geq |B(w,r/2) \sm \Omega_k| = r_k^{-n} |B(r_k w,r_k r/2) \sm \Omega|
\geq \tau 2^{-n} r^n.
\end{equation}
But \eqref{e15.8} says that  
$|B(x,r) \sm \Omega_\infty| = \lim_{k \to +\infty} |B(x,r) \sm \Omega_k|$
so we take a limit in \eqref{e16.6} and get that
$|B(x,r) \cap \sm \Omega_\infty| \geq \tau 2^{-n} r^n$; \eqref{e16.5}
and \eqref{e15.9} follow.

For \eqref{e15.10} we fix $0 < T < R$ and $\varepsilon > 0$, and show
that for $k$ large, $\delta(k,T) 
:= \sup\big\{ \dist(x,B(0,R) \sm \Omega_\infty) \, ; \, 
x\in B(0,T) \sm \Omega_k\big\},\leq \varepsilon$. 
We may safely assume that
$\varepsilon < R-T$. Pick $x\in B(0,T) \sm \Omega_k$,
and first assume that $\dist(x,\Omega_k) \leq \varepsilon/2$.
Then $\dist(x,\d\Omega_k) \leq \varepsilon/2$ too, and if
$k$ is large enough (depending on $\varepsilon$ and $R$), we also get that 
$\dist(x,\d\Omega_k) \leq 2\varepsilon/3$,
because $Z$ is the limit of the $\d\Omega_k$.
By \eqref{e16.5} (applied to a small ball centered on $Z\cap B(x,\varepsilon)$, 
$\Omega_\infty$ meets $B(x,\varepsilon)$, and 
$\dist(x, B(0,R) \sm \Omega_\infty) < \varepsilon$,
because $x\in B(0,T)$ and $\varepsilon < R-T$.

When $\dist(x,\Omega_k) \geq \varepsilon/2$, we just say that
\begin{eqnarray} \label{e16.7}
|\Omega_\infty \cap B(x,\varepsilon/2)|
&\geq& |\Omega_k \cap B(x,\varepsilon/2)|
- ||\1_{\Omega_\infty}-\1_{\Omega_k}||_{L^1(B(0,R))}
\nonumber \\
&=&
|B(x,\varepsilon/2)| - ||\1_{\Omega_\infty}-\1_{\Omega_k}||_{L^1(B(0,R))}
> 0
\end{eqnarray}
if $k$ is small enough (again depending on $\varepsilon$ and $R$),
then $B(x,\varepsilon/2)$ meets $\Omega_\infty$ and we can conclude as above.
So $\delta(k,T) \leq \varepsilon$, and \eqref{e15.10} follows.

Notice that our proof of \eqref{e15.8} only uses the second condition
\eqref{e16.2}, and that our proof of \eqref{e15.9} and \eqref{e15.10} 
only uses the first one and \eqref{e15.8}.
\qed
\end{proof}

\ms
Return to the general case (without the assumptions of the lemma).

When $\u_\infty$ is the pointwise limit of a sequence $\{\u_k\}$
as above (and this includes the existence, for $R > 0$, of a limit
$\Omega_\infty$ such that \eqref{e15.8}-\eqref{e15.10} hold)
we shall say that it is a \underbar{regular blow-up limit of} $\u$ 
(at the origin, associated to the sequence $\{ r_k \}$);
by a slight abuse of notation (due to the fact that $\W_\infty$ is not
really determined by the sequence $\{ r_k \}$), we also say that
$(\u_\infty,\W_\infty)$ is a \underbar{regular blow-up limit} of $(\u,\W)$.
The reader should not worry, we shall not use this terminology too much;
we just want to insist on the fact that Lemma~\ref{t16.2} below, for instance,
only works for regular blow-up limits.

Our first result says that under the (other) assumptions above, the functionals $\Phi$
introduced in Section~\ref{mono} are monotone, and their normalized versions for the
$\u_k$ go to the limit, so that their analogues for $\u_\infty$ are constant 
on $(0,+\infty)$, with the value $L = \lim_{r \to 0} \Phi(r)$. We shall then 
study more carefully the case when $L >0$, which is somewhat easier.

Let us use again, and even expand slightly, the ugly notation of Section \ref{mono}.
The point is that in the discussion below, the main objects will not be the functions
$u_i$ themselves, but their positive or negative part.
Denote by $I$ the set of pairs $\varphi = (i,\varepsilon)$, where $i \in [1,N]$ as usual,
and $\varepsilon \in \{ -1, +1 \}$ is a sign. 
For $\varphi \in I$, we define the function $v_\varphi$ by
\begin{equation}\label{e16.8}
v_\varphi(x) = [\varepsilon u_{i}(x)]_+ = \max(0,\varepsilon u_{i}(x)) \in [0,+\infty)
\ \text{ for $x\in \R^n$.}
\end{equation}
We shall often refer $\varphi$ as a \underbar{phase}, and to $v_\varphi$ 
as a phase of $\u$. 
Hopefully, the reader will not be too disturbed by this additional notion, but
otherwise the trick that we used for Lemma~\ref{t10.2} can be applied here, 
to reduce to the situation where all the $u_i$ are required to be nonnegative, and the
notion of phase is useless (i.e., we could take $I =[1,N]$).
Incidentally, the reader may also want to remove from $I$ the phases
$\varphi = (i,\varepsilon)$ for which we demanded that $\varepsilon u_i \leq 0$
in the definition of $\F(\Omega)$ (the class of admissible pairs). These phases
will not really disturb, but they are useless because $v_\varphi = 0$.

We want to take blow-up limits, so we set
\begin{equation} \label{e16.9}
v_{\varphi,k}(x) = r_k^{-1} v_\varphi(r_k x) = [\varepsilon u_{i,k}(x)]_+ 
\end{equation}
for $k \geq 0$ and $x\in \R^n$ and, corresponding to $k=+\infty$,
\begin{equation} \label{e16.10}
v_{\varphi,\infty} =  [\varepsilon u_{i,\infty}]_+ .
\end{equation}
We shall now denote by $\Phi_{\varphi}^0$ the function
that we defined in \eqref{e9.4} in terms of $v_\varphi$, with $x_0=0$. 
That is, we set
\begin{equation} \label{e16.11}
\Phi_{\varphi}^0(r) = {1 \over r^2} \int_{B(0,r)} {|\nabla v_{\varphi}|^2 \over |x|^{n-2}} \, dx
\ \text{ for } r > 0.
\end{equation}
Notice that by \eqref{e15.5} (our Lipschitz condition),
\begin{equation} \label{e16.12}
\Phi_{\varphi}^0(r) 
\leq {1 \over r^2} \int_{B(0,r)} {C\over |x|^{n-2}} \, dx
\leq C'  \ \text{ for } 0 < r < \rho_0.
\end{equation}
Also define the analogue of $\Phi_{\varphi}^0$ for $k \geq 0$
and $k=\infty$, by
\begin{equation} \label{e16.13}
\Phi_{\varphi,k}(r) 
= {1 \over r^2} \int_{B(0,r)} {|\nabla v_{\varphi,k}|^2 \over |x|^{n-2}} \, dx.
\end{equation}

\begin{lem}\label{t16.2} 
For each $r > 0$,
\begin{equation} \label{e16.14}
\Phi_{\varphi,\infty}(r) = \lim_{k \to +\infty} \Phi_{\varphi,k}(r)
= \lim_{k \to +\infty} \Phi_{\varphi}^0(r_k r).
\end{equation}
\end{lem}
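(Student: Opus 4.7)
My plan is as follows.

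First, the second equality $\lim_k \Phi_{\varphi,k}(r) = \lim_k \Phi_\varphi^0(r_k r)$ is actually an identity for each $k$, which will follow from a change of variables. Since $v_{\varphi,k}(x) = r_k^{-1} v_\varphi(r_k x)$ gives $\nabla v_{\varphi,k}(x) = \nabla v_\varphi(r_k x)$, substituting $y = r_k x$ in the definition \eqref{e16.13} produces
\begin{equation*}
\Phi_{\varphi,k}(r) = r^{-2} \int_{B(0,r)} |\nabla v_\varphi(r_k x)|^2 |x|^{2-n}\,dx
= (r_k r)^{-2} \int_{B(0,r_k r)} |\nabla v_\varphi(y)|^2 |y|^{2-n}\,dy = \Phi_\varphi^0(r_k r),
\end{equation*}
where the powers of $r_k$ combine as $r_k^{(2-n)-n}\cdot r^{-2} = r_k^{-2}r^{-2}$.

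Next, to prove $\Phi_{\varphi,k}(r) \to \Phi_{\varphi,\infty}(r)$, I would invoke Corollary \ref{t15.3} (applicable under the standing assumptions of the section, including \eqref{e15.4}--\eqref{e15.6} and the existence of the blow-up domain $\Omega_\infty$) to get $\u_k \to \u_\infty$ in $W^{1,2}(B(0,R))$ for every $R$, and in particular $\nabla u_{i,k} \to \nabla u_{i,\infty}$ in $L^2(B(0,R))$. Since the positive-part map $f \mapsto f_+$ is continuous from $W^{1,2}$ to itself, with $\nabla f_+ = \1_{\{f>0\}}\nabla f$, this yields $\nabla v_{\varphi,k} \to \nabla v_{\varphi,\infty}$ strongly in $L^2(B(0,R))$. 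We also have the uniform pointwise bound $|\nabla v_{\varphi,k}| \le |\nabla u_{i,k}| \le C$ coming from the uniform Lipschitz bound on the $\u_k$ (which in turn comes from \eqref{e15.5} and the definition \eqref{e15.2}); the same bound holds for $v_{\varphi,\infty}$.

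The main (and only real) technical point is that the weight $|x|^{2-n}$ in \eqref{e16.13} is singular at the origin, so strong $L^2$ convergence of the gradients does not immediately transfer to the weighted integral. I would handle this by splitting $B(0,r) = B(0,\delta) \cup [B(0,r) \setminus B(0,\delta)]$ for small $\delta > 0$. On the inner piece, the uniform Lipschitz bound gives
\begin{equation*}
\int_{B(0,\delta)} |\nabla v_{\varphi,k}|^2 |x|^{2-n}\,dx \le C^2 \int_{B(0,\delta)} |x|^{2-n}\,dx = C'\delta^2,
\end{equation*}
uniformly in $k \in \mathbb{N}\cup\{\infty\}$. On the outer piece, the weight $|x|^{2-n}$ is bounded by $\delta^{2-n}$ (when $n\ge 2$), and the strong $L^2$ convergence of $\nabla v_{\varphi,k}$ combined with the identity $|\nabla v_{\varphi,k}|^2 - |\nabla v_{\varphi,\infty}|^2 = \langle \nabla v_{\varphi,k} - \nabla v_{\varphi,\infty},\, \nabla v_{\varphi,k} + \nabla v_{\varphi,\infty}\rangle$ and Cauchy--Schwarz forces
\begin{equation*}
\int_{B(0,r)\setminus B(0,\delta)} \bigl||\nabla v_{\varphi,k}|^2 - |\nabla v_{\varphi,\infty}|^2\bigr| |x|^{2-n}\,dx \xrightarrow[k\to\infty]{} 0.
\end{equation*}
Sending $k \to \infty$ first and then $\delta \to 0$ yields $\Phi_{\varphi,k}(r) \to \Phi_{\varphi,\infty}(r)$, completing the proof. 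I do not expect any serious obstacle here; the only subtlety is keeping careful track of the weight, which the splitting handles cleanly.
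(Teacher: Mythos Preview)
Your proof is correct and follows essentially the same approach as the paper: the change-of-variables identity $\Phi_{\varphi,k}(r)=\Phi_\varphi^0(r_kr)$ for the second equality, and the splitting of $B(0,r)$ into a small ball (controlled by the uniform Lipschitz bound) and an annulus (where the weight is bounded and the strong $W^{1,2}$ convergence from Corollary~\ref{t15.3} applies) for the first. If anything, you are slightly more careful in justifying why the $L^2$ convergence of $\nabla u_{i,k}$ passes to $\nabla v_{\varphi,k}$ via the continuity of $f\mapsto f_+$ on $W^{1,2}$, a step the paper leaves implicit.
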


\begin{proof}
Fix $r >0$ and for each $\varepsilon > 0$, cut an integral as
\begin{eqnarray} \label{e16.15}
r^2 |\Phi_{\varphi,k}(r) - \Phi_{\varphi,\infty}(r)| 
&=& \int_{B(0,r)} {\big||\nabla v_{\varphi,k}|^2 
- |\nabla v_{\varphi,\infty}|^2\big|\over |x|^{n-2}} \, dx
\nonumber
\\
&&\hskip-2.2cm
\leq  \int_{x\in B(0,\eta r)} {|\nabla v_{\varphi,k}|^2 
+ |\nabla v_{\varphi,\infty}|^2\over |x|^{n-2}} 
+ (\eta r)^{2-n	} \int_{B(0,r) \sm B(0,\eta r)} 
\big||\nabla v_{\varphi,k}|^2 - |\nabla v_{\varphi,\infty}|^2\big| 
\nonumber
\\
&&\hskip-2.2cm
\leq C \int_{x\in B(0,\eta r)} {1\over |x|^{n-2}} 
 + (\eta r)^{2-n} \int_{B(0,r) \sm B(0,\eta r)} 
\big||\nabla v_{\varphi,k}|^2 - |\nabla v_{\varphi,\infty}|^2\big|
\end{eqnarray}
because our functions are uniformly Lipschitz.
The first term can be made as small as we want, by choosing $\eta$ small,
and then the second term also, because \eqref{e15.17} holds for every $R > 0$, and
says that $\nabla v_{\varphi,k}$ converges to $\nabla v_{\varphi,\infty}$ in $L^2(B(0,r))$. This is where we use the fact that $\u_\infty$ is a regular blow-up
limit. This proves the first part of \eqref{e16.14}.

For the second part we just compute.
By \eqref{e16.9}, \eqref{e15.2}, and \eqref{e9.3}, 
$v_{\varphi,k}(x) = r_k^{-1} v_\varphi(r_k x)$, 
so $\nabla v_{\varphi,k}(x) = \nabla v_\varphi(r_k x)$ and hence 
(setting $y = r_k x$ in the integral)
\begin{equation} \label{e16.16}
\Phi_{\varphi,k}(r) 
= {1 \over r^2} \int_{B(0,r)} {|\nabla v_{\varphi,k}(x)|^2 \over |x|^{n-2}} \, dx
= {1 \over r^2} \int_{B(0,r_k r)} 
{|\nabla v_{\varphi}(y)|^2 \over |r_k^{-1}y|^{n-2}} \, r_k^{-n} dy
= \Phi_{\varphi}^0(r_k r)
\end{equation}
for $k \geq 0$ and $r > 0$. Lemma \ref{t16.2} follows.
\qed
\end{proof}

\ms
We now come to the nearly monotone functional of Section \ref{mono}. 
Pick two different indices $\varphi_1, \varphi_2 \in I$,
and consider the product of the corresponding functions $\Phi_{\varphi}$.
Our function $\Phi$ from \eqref{e9.4} is now called
\begin{equation} \label{e16.17}
\Phi^0_{\varphi_1,\varphi_2} = 
\Phi_{\varphi_1}^0 \Phi_{\varphi_2}^0
\end{equation}
and we also set, for $k \geq 0$ and $k=\infty$
\begin{equation} \label{e16.18}
\Phi_{\varphi_1,\varphi_2,k} = 
\Phi_{\varphi_1,k} \Phi_{\varphi_2,k}.
\end{equation}
It was observed in the proof of Theorem \ref{t9.1} that the functions 
$v_{\varphi_1}$ and $v_{\varphi_2}$ satisfy the assumptions of Theorem 1.3 in \cite{CJK},
and this was even the main ingredient in the proof. Now we also know that
$\u(0) = 0$ (by \eqref{e15.3}), and since $\u$ is Lipschitz near the origin
(by \eqref{e15.5}), the additional size assumption in Theorem 1.6 of \cite{CJK}
is satisfied. Then that theorem says that the limit
\begin{equation} \label{e16.19}
L(\varphi_1,\varphi_2) =
\lim_{\rho \to 0} \Phi^0_{\varphi_1,\varphi_2}(\rho)
= \lim_{\rho \to 0} \Phi^{0}_{\varphi_1}(\rho)\Phi^{0}_{\varphi_2}(\rho)
\text{ exists.}
\end{equation}
It follows from Lemma \ref{t16.2} that the function 
$\Phi_{\varphi_1,\varphi_2,\infty}$ of \eqref{e16.18} is constant, with 
\begin{equation} \label{e16.20}
\Phi_{\varphi_1,\varphi_2,\infty}(r) =  L(\varphi_1,\varphi_2)
\ \text{ for } r>0.
\end{equation}

In this section we shall concentrate on the case when the limit
$L(\varphi_1,\varphi_2)$ is positive for some choice of phases
$\varphi_1 \neq \varphi_2$. This will be made easier, because
the following theorem gives a very good description of $\u_\infty$
when this happens.
For this theorem we forget a little minimizers and blow-up limits, 
and concentrate on pairs of harmonic functions with essentially 
disjoint supports.

\begin{thm} \label{t16.3}
Let $v_1$ and $v_2$ be two nonnegative Lipschitz functions on $\R^n$ such that
$v_1v_2 = 0$ everywhere, each $v_j$ is harmonic on the open set 
$\O_j = \big\{ x\in \R^n \, ; \, v_j(x) > 0 \big\}$, and there is 
a constant $L > 0$ such that 
\begin{equation} \label{e16.21}
\Phi_1(r)\Phi_2(r) = L
\ \text{ for } r > 0,
\end{equation}
where we set
\begin{equation} \label{e16.22}
\Phi_j(r) = {1 \over r^2} \int_{B(0,r)} {|\nabla v_j|^2 \over |x|^{n-2}}.
\end{equation}
Then there is a unit vector $e \in \R^n$, and two positive constants $a_1$ and $a_2$,
such that 
\begin{equation} \label{e16.23}
v_1(x) = a_1 \max(0,\langle x, e \rangle)
\text{ and } 
v_{2}(x) = a_2 \max(0,\langle x, -e \rangle)
\text{ for } x\in \R^n. 
\end{equation}
\end{thm}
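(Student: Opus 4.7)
The plan is to view this as the rigidity (case of equality) statement attached to the Alt--Caffarelli--Friedman (ACF) monotonicity formula. In their proof, for a.e.\ $r > 0$ one shows
\[
\frac{r\,\Phi_j'(r)}{\Phi_j(r)} \;\geq\; 2\bigl(\gamma(\alpha_j(r)) - 1\bigr),
\]
where $\alpha_j(r)$ is the first Dirichlet eigenvalue of the spherical Laplacian on the spherical open set $D_j(r) = \{v_j > 0\}\cap S_r$, and $\gamma(\alpha) = -\tfrac{n-2}{2}+\sqrt{\tfrac{(n-2)^2}{4}+\alpha}$ is the associated homogeneity exponent. The Friedland--Hayman inequality gives $\gamma(\alpha_1(r)) + \gamma(\alpha_2(r)) \geq 2$ (using $D_1(r)\cap D_2(r) = \emptyset$), so $(\Phi_1\Phi_2)' \geq 0$. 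In our setting $\Phi_1\Phi_2 \equiv L > 0$, so \emph{all} of these inequalities must be equalities for a.e.\ $r$, and the game is to read off the structure of $v_1, v_2$ from this.

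Equality in each inequality contributes one structural piece. Equality in Friedland--Hayman forces $D_1(r)$ and $D_2(r)$ to be open half-spheres complementary across some equator $E(r)\subset S_r$, and consequently $\gamma(\alpha_j(r)) = 1$. Equality in the spherical Rayleigh quotient step forces $v_j|_{S_r}$ to be a nonnegative multiple of the first Dirichlet eigenfunction on $D_j(r)$, which on a half-sphere is exactly the restriction of a linear function vanishing on the bounding hyperplane. Equality in the integration-by-parts / Cauchy--Schwarz step comparing the radial and homogeneous-extension parts of $|\nabla v_j|^2$ then forces $v_j$ on $B(0,r)\cap \O_j$ to coincide with the $\gamma(\alpha_j(r)) = 1$-homogeneous extension of its trace. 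Putting the three pieces together, for a.e.\ $r$ there exist a unit vector $e(r)$, a sign assignment $(\varepsilon_1,\varepsilon_2) = (+1,-1)$ (or the reverse), and constants $a_j(r) \geq 0$ with $v_j(x) = a_j(r)\,\langle x, \varepsilon_j e(r)\rangle_+$ on $\overline{B(0,r)}$.

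It remains to promote this ``scale by scale'' representation to the global one \eqref{e16.23}. Given two radii $r < r'$ at which the representation holds, the two linear functions giving $v_1$ on $B(0,r)$ and on $B(0,r')$ must agree on $B(0,r)$; this rigidity forces $e(r) = e(r')$ and $a_j(r) = a_j(r')$, so all the data are independent of $r$. The Lipschitz continuity of $v_j$ then extends the formula from a.e.\ $r$ to every $x \in \R^n$. Finally, $a_j > 0$ because $a_j = 0$ would give $v_j \equiv 0$, hence $\Phi_j \equiv 0$, contradicting $L > 0$. The main obstacle is the careful bookkeeping in the middle paragraph --- isolating the three distinct inequalities inside the ACF argument and extracting the sharp consequence of each equality case --- but this is precisely the content of the equality analysis already carried out in \cite{CJK}, so in practice one imports it rather than redoing it from scratch.
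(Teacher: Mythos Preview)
Your overall strategy is the right one --- track equality through every inequality in the ACF monotonicity argument --- and it is essentially the same route the paper takes. But there is a real gap in your execution.

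The sentence ``this is precisely the content of the equality analysis already carried out in \cite{CJK}, so in practice one imports it rather than redoing it from scratch'' is where the proposal breaks down. The paper \cite{CJK} is the almost-monotonicity paper; it does not contain an equality analysis of the kind you need. In particular, the most delicate step is the one you dispose of in half a line: \emph{equality in the Friedland--Hayman inequality $\gamma_1 + \gamma_2 = 2$ forces the spherical domains to be complementary hemispheres}. This is not a soft consequence; the paper notes that the cleanest route uses an unpublished result of Beckner, Kenig, and Pipher \cite{BKP}, and then instead proves it by hand using the Brothers--Ziemer equality case of the P\'olya--Szeg\H{o} symmetrization inequality \cite{BZ}. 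That argument --- checking that $\alpha_j = \alpha_j^\ast$, that $v_j^\ast$ is the first Dirichlet eigenfunction on a half-sphere, and then invoking the rigidity statement in \cite{BZ} (which itself requires verifying a nondegeneracy condition on the level sets of $v_j^\ast$) --- is the real content of the proof, and you cannot import it from \cite{CJK}.

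There is also a difference in logical order worth noting. You deduce ``half-spheres first'' from Friedland--Hayman equality, then homogeneity. The paper does the reverse: equality in the Cauchy--Schwarz step \eqref{e16.31} yields the radial ODE $\partial v_j/\partial\rho = \gamma_j(r)\,v_j/r$, from which one first shows $\O_j$ is a cone, then that $\gamma_j$ is constant, then that $\gamma_1 = \gamma_2 = 1$ (using the Lipschitz bound near the origin), and only then invokes \cite{BZ} to pin down the spherical cross-sections as hemispheres. Your order is viable if you have \cite{BKP}, but since that is unpublished, the paper's order is the one that actually closes.
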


\ms 
Of course we shall use this when the two $v_j$ are phases of our blow-up
limit $\u_\infty$; they are harmonic because they minimize $\int |\nabla v_i|^2$
locally; see the proof of \eqref{e9.6} with $f_i=g_i=0$.

\begin{proof}
The fact that $\Phi_1(r)\Phi_2(r)$ is nondecreasing when $v_1$ and $v_2$ 
are nonnegative Lipschitz functions such that $v_1v_2 = 0$ and $v_i$ 
is harmonic on $\O_i$ was proved in \cite{ACF}, and we shall follow their
proof (Lemma 5.1 in \cite{ACF}), see where equality occurs in the argument, 
and try to conclude from there.
So we need to recall how the proof of \cite{ACF} goes.

We start with some notation. Set $\rho(x) = |x|$ for $x\in \R^n$, and 
\begin{equation} \label{e16.24}
A_j(r) = r^2 \Phi_j(r) = \int_{B(0,r)} {|\nabla v_j(x)|^2 \over |x|^{n-2}} \, dx
= \int_{B(0,r)} \rho^{2-n} |\nabla v_j|^2\,dx
\end{equation}
for $j=1,2$. Set $B_r = B(0,r)$ and $S_r = \d B(0,r)$,
and denote by $\sigma$ the surface measure on $S_r$.

Notice that $S_r \cap \O_j$ is not empty, because 
otherwise $v_j = 0$ on $S_r$, hence also on $B_r$ (by the maximum principle), 
and then $\Phi_1(r)\Phi_2(r) = 0$ for $r$ small; we excluded this
in \eqref{e16.21}. Now let $r^2\alpha_j(r)\in (0,+\infty]$ 
be the square root of the Sobolev constant on $S_r \cap \O_j$, i.e., 
the smallest constant such that
\begin{equation} \label{e16.25}
\int_{S_r \cap \O_j} |u|^2 d\sigma 
\leq r^2\alpha_j(r) \int_{S_r \cap \O_j} |\nabla_t u|^2 d\sigma
\end{equation}
for a function $u \in W^{1,2}(S_r)$ with compact support in $S_r \cap \O_j$,
and where $\nabla_t$ denotes the gradient on the sphere.
This is the same thing as (5.4) in \cite{ACF}, except that we intend to keep the dependence on $r$ apparent, and we normalize $\alpha_j(r)$
so that it is dimensionless. Notice that $\alpha_j(r) < +\infty$
because the other domain meets $S_r$, so there is a nontrivial
Sobolev inequality.
Naturally we want to apply \eqref{e16.25} to the restriction of $v_j$ to $S_r$, 
which in our case even lies in $W^{1,2}(S_r)$ for every $r$ because 
it is Lipschitz. Of course it is not compactly supported in $S_r \cap \O_j$,
but this is easy to fix, because  $v_j$ is easy to approximate by such functions 
(for instance, try $u_\varepsilon = [v_j - \varepsilon]_+$),
so we get \eqref{e16.25} for $v_j$ as well. 

The proof will use the fact that
\begin{eqnarray} \label{e16.26}
2\int_{B_r} \rho^{2-n} |\nabla v_j|^2 
&\leq& 2 r^{2-n} \int_{S_r} v_j {\d v_j \over d\rho}
+ (n-2) r^{1-n} \int_{S_r} v_j^2
\nonumber\\
&=&  r^{3-n} \int_{S_r} \Big[2 {v_j\over r} {\d v_j \over d\rho}
+ (n-2) {v_j^2 \over r^2} \Big]
\end{eqnarray}
(see (5.2) in \cite{ACF}). The reader may be surprised that 
this is only an inequality, but this is because its proof uses the 
fact that $\Delta v_i \geq 0$ (as a distribution). 
Also, we only get it for almost every $r$ because some limit of integrals on
thin annuli near $S_r$ is taken. 

So we consider $\Phi_1(r)\Phi_2(r) = r^{-4} A_1(r) A_2(r)$ and differentiate it. 
Here we do not even care that $\Phi_1\Phi_2$ is the integral of its derivative;
we know that the derivative is zero, and we just need to compute it at almost
every $r$. We get that
\begin{equation} \label{e16.27}
0 = -4 r^{-5} A_1(r) A_2(r) + r^{-4} A'_1(r) A_2(r) + r^{-4} A_1(r) A'_2(r)
\end{equation}
with,
\begin{equation} \label{e16.28}
A'_j(r) = \int_{S_r} \rho^{2-n} |\nabla v_j|^2
= r^{2-n}\int_{S_r} |\nabla v_j|^2
\end{equation}
by \eqref{e4.3}. We divide by 
$r^{-5} A_1(r) A_2(r) = r^{-1} L \neq 0$ and get that
\begin{equation} \label{e16.28}
{r A'_1(r) \over A_1(r)} + {r A'_2(r) \over A_2(r)} = 4.
\end{equation}
Then we evaluate each ${r A'_j(r) \over A_j(r)}$.
We choose numbers $\beta_j(r) \in (0,1)$ such that
\begin{equation} \label{e16.30}
{1-\beta_j(r)^2 \over \alpha_j(r)} = (n-2) {\beta_j(r) \over \sqrt{\alpha_j(r)}}
\end{equation}
(see (5.4) in \cite{ACF}; there is only one solution $\beta_j(r) \in (0,1)$, 
which is even computed seven lines later in \cite{ACF}, but this is not the point yet).
We write that
\begin{equation} \label{e16.31}
 {2\beta_j(r) \over \sqrt{\alpha_j(r)}} \int_{S_r} {v_j\over r} {\d v_j \over d\rho}
 \leq {2\beta_j(r) \over \sqrt{\alpha_j(r)}} 
 \Big\{ \int_{S_r} {v_j^2 \over r^2} \Big\}^{1/2}
 \Big\{ \int_{S_r} \big|{\d v_j \over d\rho}\big|^2 \Big\}^{1/2}
\end{equation}
by Cauchy-Schwarz, then use \eqref{e16.25} to get that
\begin{equation} \label{e16.32}
{\beta_j(r) \over \sqrt{\alpha_j(r)}} 
\Big\{ \int_{S_r} {v_j^2 \over r^2}\Big\}^{1/2}
 \leq \Big\{ \int_{S_r} \beta_j(r)^2 |\nabla_t v_j|^2 \Big\}^{1/2},
\end{equation}
and then use the fact that $2AB = A^2 + B^2$, with
\begin{equation} \label{e16.33}
A = \Big\{ \int_{S_r} \big|{\d v_j \over d\rho}\big|^2 \Big\}^{1/2}
\ \text{ and } \ 
B = \Big\{ \int_{S_r} \beta_j(r)^2 |\nabla_t v_j|^2 \Big\}^{1/2},
\end{equation}
to get that
\begin{equation} \label{e16.34}
 {2\beta_j(r) \over \sqrt{\alpha_j(r)}} \int_{S_r} {v_j \over r} {\d v_j \over d\rho}
 \leq 2AB \leq A^2 + B^2
 = \int_{S_r} \big|{\d v_j \over d\rho}\big|^2 + \beta_j(r)^2 |\nabla_t v_j|^2.
\end{equation}
We use \eqref{e16.25} a second time, to say that
\begin{equation} \label{e16.35}
{1-\beta_j(r)^2 \over \alpha_j(r)} \int_{S_r} {v_j^2 \over r^2}
\leq \int_{S_r} (1-\beta_j(r)^2) |\nabla_t v_j|^2,
\end{equation}
and then we add this to \eqref{e16.34} to get that
\begin{equation} \label{e16.36}
{2\beta_j(r) \over \sqrt{\alpha_j(r)}} \int_{S_r} {v_j\over r} {\d v_j \over d\rho}
+{1-\beta_j(r)^2 \over \alpha_j(r)} \int_{S_r}{v_j^2 \over r^2}
\leq \int_{S_r} |\nabla v_j|^2
= r^{n-2} A'_j(r)
\end{equation}
by \eqref{e16.28}.
Because of \eqref{e16.30},  the left-hand side is equal to
\begin{equation} \label{e16.37}
{\beta_j(r) \over \sqrt{\alpha_j(r)}} \int_{S_r} 
\Big[ 2{v_j\over r} {\d v_j \over d\rho} + (n-2) {v_j^2 \over r^2}\Big]
\geq 2 {\beta_j(r) \over \sqrt{\alpha_j(r)}} \, r^{n-3} A_j(r) ,
\end{equation}
where the last part comes from\eqref{e16.26}. Thus
\begin{equation} \label{e16.38}
2 {\beta_j(r) \over \sqrt{\alpha_j(r)}} A_j(r) \leq  r A'_j(r).
\end{equation}
Hence
\begin{equation} \label{e16.39}
{r A'_1(r) \over A_1(r)} + {r A'_2(r) \over A_2(r)}
\geq 2 {\beta_1(r) \over \sqrt{\alpha_j(r)}}
+2 {\beta_2(r) \over \sqrt{\alpha_j(r)}}.
\end{equation}
Then we compute the constants, and find out that
\begin{equation} \label{e16.40}
{\beta_1(r) \over \sqrt{\alpha_j(r)}} = \gamma_j(r),
\end{equation}
where the numbers $\gamma_j(r)$ are defined by
\begin{equation} \label{e16.41}
\gamma_j(r) (\gamma_j(r) + n - 2) = {1 \over \alpha_j(r)}\, ;
\end{equation}
see (5.8) and (5.9) in \cite{ACF}, and observe again that these
numbers are dimensionless. Now the situation is that for
all choices of disjoint domains $\O_1$ and $\O_2$ that intersect the
sphere, the numbers $\gamma_j(r)$ are such that
\begin{equation} \label{e16.42}
\gamma_1(r) + \gamma_2(r) \geq 2,
\end{equation}
from which \cite{ACF} deduces that 
${rA'_1(r) \over rA_1(r)} + {A'_2(r) \over A_2(r)} \geq 4$
and (returning to variable $r$ and integrating) that $\Phi_1\Phi_2$
is nondecreasing. This is roughly how one proceeds in \cite{ACF}.

In the situation of Theorem \ref{t16.3}, we know that
${A'_1(r) \over A_1(r)} + {A'_2(r) \over A_2(r)} = 4$
(by \eqref{e16.40}), so all the inequalities above are in fact
identities (the quantities in play are all positive), and we 
need to derive information from that.

The fastest route would use an unpublished paper
of W. Beckner, C. Kenig, and J. Pipher \cite{BKP} which says that
when $\gamma_1(r) + \gamma_2(r) = 2$, the two domains
$\O_j \cap S_t$ are complementary hemispheres. This helps greatly,
but let us see what we can get easily.

Because we have equality in \eqref{e16.31} (Cauchy-Schwarz),
we get that the two functions ${v_j \over r}$ and ${\d v_j \over \d \rho}$
are proportional. That is, there is a constant $c_j(r)$ such that
\begin{equation} \label{e16.43}
{\d v_j \over \d \rho} = c_j(r){v_j \over r}
\ \text{ on } \O_j \cap S_r.
\end{equation}
We don't need to say almost everywhere, because both functions are smooth
on $\O_j$, and we know that $0 <  c_j(r) < +\infty$
because otherwise the left-hand side of \eqref{e16.31}
would not be positive like the right-hand side.
We can even compute $c_j(r)$; indeed \eqref{e16.32} is an
equality, and $A=B$ in \eqref{e16.33}
(because \eqref{e16.34} is an equality), so
\begin{equation} \label{e16.44}
{\beta_j(r) \over \sqrt{\alpha_j(r)}} 
\Big\{ \int_{S_r} {v_j^2 \over r^2}\Big\}^{1/2}
= B = A = \Big\{ \int_{S_r} \big|{\d v_j \over d\rho}\big|^2 \Big\}^{1/2}
\end{equation}
and hence $c_j(r) = {\beta_j(r) \over \sqrt{\alpha_j(r)}}= \gamma_j(r)$
(by \eqref{e16.40}).
Notice that \eqref{e16.43} holds for almost every $r > 0$;
let us use this to check that
\begin{equation} \label{e16.45}
\O_j \text{ is a cone.}
\end{equation}
Pick $x \in \O_j$, write $x = r_0 \xi$ for some $\xi \in S_1$,
and consider $h(r) = \log(v_j(r \xi))$; we know that $h$ is defined
and locally Lipschitz as long as $r\xi \in \O_j$, and 
\eqref{e16.43} yields $h'(r) = r^{-1} c_j(r) = r^{-1} \gamma_j(r)$ 
almost everywhere. Since $\gamma_j(r) \leq 2$, we get that
$h(r) \geq h(r_0)| - 2 |\log(r/r_0)|$ as long as $r\xi \in \O_j$.
This gives a lower bound for $v_j(r\xi)$ and proves that in fact 
$r\xi \in \O_j$ for all $r > 0$; \eqref{e16.45} follows.

By \eqref{e16.45}, the numbers $\alpha_j(r)$, and then
$\beta_j(r)$ and $\gamma_j(r)$, do not depend on $r$.
Then we can solve the differential equation \eqref{e16.43},
and we get that $v_j(tx) = t^{\gamma_j} v_j(x)$
for $x\in \O_j$ and $t > 0$. Since $v_j$ is Lipschitz near the origin
and $\nabla_t v_j \neq 0$ somewhere,
$\gamma_j \leq 1$. Since $\gamma_1+\gamma_2 = 2$,
$\gamma_1 = \gamma_2 = 1$.

Now we really need some information on Sobolev constants, and we
shall use results from \cite{BZ} (that are in fact a little posterior to \cite{ACF}).
Fix a pole in $z_0 \in S_1$, and denote by 
$\Gamma_j$ the spherical cap centered at $z_0$
(meaning, the intersection $S_1 \cap B$, where $B$ is a ball
centered at $z_0$) such that $\sigma(\Gamma_j) =\sigma(\O_j \cap S_1)$.
Denote by $\alpha_j^\ast$ and $\gamma_j^\ast$ the constants
associated to $\Gamma_j$ as above.
Then Theorem 5.1 on page 175  of \cite{BZ}, with $A(t) = t^2$, 
says the following things.

First, if $u \in W^{1,2}(S_1)$ and $u^\ast$ denotes its 
symmetric rearrangement, then $u^\ast \in W^{1,2}(S_1)$ and 
\begin{equation} \label{e16.46}
\int_{S_r} |\nabla_t u^\ast|^2 \leq \int_{S_r} |\nabla_t u|^2.
\end{equation}
Notice that if $u$ is compactly supported in $\O_j \cap S_1$,
then $u^\ast$ is compactly supported in $\Gamma_j$;
since $\int |u^\ast|^2 = \int |u|^2$, the definition \eqref{e16.25} 
says that $\alpha_j^\ast \leq \alpha_j$,
and then $\gamma_j^\ast \geq \gamma_j = 1$
by \eqref{e16.41}.

It is easy to check that $\alpha_j^\ast$ is a (strictly)
decreasing function of (the volume of) $\Gamma_j$, hence
$\gamma_j^\ast$ is increasing. 
Also, $\gamma_j^\ast = 1$ when $\Gamma_j$
is a hemisphere (see later), hence $\sigma(\Gamma_j) \geq \sigma(S_1)/2$.
But $\sigma(\Gamma_j) =\sigma(\O_j \cap S_1)$
and the $\O_j$ are disjoint, so 
$\sigma(\Gamma_j) =\sigma(\O_j \cap S_1)=1$
for both $j$, $\gamma_j = \gamma^\ast_j=1$, and
$\alpha_j = \alpha_j^\ast$.

Notice that $\int_{S_1} |v_j|^2 = \alpha_j \int_{S_1} |\nabla_t v_j|^2$
because \eqref{e16.35} is an equality for almost
every $r$ and $v_1$ is homogeneous of degree $1$, hence
by \eqref{e16.46} and \eqref{e16.25} for $\Gamma_j$,
\begin{equation} \label{e16.47}
\int_{S_r} |\nabla_t u^\ast|^2 \leq \int_{S_r} |\nabla_t u|^2
= \alpha_j^{-1}\int_{S_1} |v_j|^2 = \alpha_j^{-1}\int_{S_1} |v_j^\ast|^2
\leq  \alpha_j^{-1}\alpha_j^\ast \int_{S_r} |\nabla_t u^\ast|^2;
\end{equation}
we know that $\alpha_j = \alpha_j^\ast$, so \eqref{e16.25}
is an equality for $v_j$. So $v_j^\ast$ is a Sobolev minimizer
in a half sphere. It is well known that then $v_j^\ast$ is a first
eigenfunction for the Laplacian on the sphere, and (because we can
use a symmetry argument to reduce to the sphere) that it is
the restriction to $S_1$ of an affine function. This is also how one
computes that $\gamma_j^\ast = 1$.

Return to Theorem 5.1 of \cite{BZ}; its most important part is
that its says that since \eqref{e16.46} is an equality for $v_j$, 
$v_j$ is of the form $v_j^\ast \circ R$, where $R$ is a rotation. 
There are just two assumptions to check: 
first, that $A(t)$ is increasing (this is trivial because $A(t)=t^2$), 
and also that
\begin{equation} \label{e16.48}
\big|\big\{ x\in S_1 \, ; \, v_1^\ast(x) > 0 \text{ and } 
\nabla_t v_j^\ast = 0 \big\}\big| = 0,
\end{equation}
which holds because $v_j^\ast$ comes from an affine function.
So Brothers and Ziemer's theorem applies and says that both $v_j$
are also equal to affine functions on the sphere;
we easily deduce the representation formula
\eqref{e16.23} for the $v_j$ from this, because they
are homogeneous of degree~$1$, and Theorem \ref{t16.3} follows.
\qed
\end{proof}

\ms
Let us now return to our minimizer $(\u,\W)$ and its regular blow-up
limits. As was said above, if $\u_\infty$ is as in the beginning
of the section and we can find different indices $\varphi_1, \varphi_2 \in I$
such that $L(\varphi_1,\varphi_2) > 0$ in \eqref{e16.20},
the corresponding functions $v_j = v_{\varphi_j,\infty}$ satisfy the 
hypotheses of Theorem~\ref{t16.3} (because they are harmonic,
see \eqref{e9.6}), and so they are described by \eqref{e16.23}.

Because of this, all the other functions $v_{\varphi,\infty}$,
$\varphi \neq \varphi_1, \varphi_2$ are null because, by definition of
the class $\F(\R^n,\Omega_\infty)$, they vanish almost everywhere on 
the sets where $v_{\varphi_j,\infty} > 0$.

But Corollary \ref{t15.3} also says that $(\u_\infty,\W_\infty)$ is a 
minimizer for the functional $J_\infty$, which means that
\eqref{e15.40} holds for all competitors of $(\u_\infty,\W_\infty)$
in a ball $B(0,R)$. We could simplify this and write it in terms of
the single function $v = v_1-v_2$ and usual Alt, Caffarelli, and
Friedman minimizers in $\R^n$, but let us not do this for the moment.
Write $\varphi_1 = (i_1,\varepsilon_1)$ and $\varphi_2 = (i_2,\varepsilon_2)$.
We claim that because of this additional minimality property (where we may 
move the free boundary),
\begin{equation} \label{e16.49}
a_1^2 - a_2^2 = \lambda_{i_1} - \lambda_{i_2}.
\end{equation}  
This can be proved easily, with a computations of first variation where
we modify $v_1-v_2$. The proof is classical, but we shall do it (later)
for the convenience of the reader. See near \eqref{e20.10}
We can also check whether making $v_1 = v_2 = 0$ near the separating
hyperplane would make things better, and the first variation computation for this 
gives the additional constraints that
\begin{equation} \label{e16.50}
a_1^2 \geq \lambda_{i_1} -\min(0, \lambda_1, \ldots, \lambda_N)
\ \text{ and } \ 
a_2^2 \geq \lambda_{i_2} -\min(0, \lambda_2, \ldots, \lambda_N).
\end{equation} 
See near \eqref{e20.12}, and notice that most often all the
$\lambda_i$ are nonnegative, so we get the simpler relations
$a_j^2 \geq \lambda_{i_j}$.

There is another relation, which is more a matter of definitions:
\begin{eqnarray} \label{e16.51}
L(\varphi_1,\varphi_2) &=& \Phi_{\varphi_1,\varphi_2,\infty}(1)
= \Phi_{\varphi_2,\infty}(1)\Phi_{\varphi_1,\infty}(1)
\nonumber\\
&=& \Big\{\int_{B(0,1)} {|\nabla v_{\varphi_1,\infty}(x)|^2 \over |x|^{n-2}} \Big\}
\Big\{\int_{B(0,1)} {|\nabla v_{\varphi_2,\infty}(x)|^2 \over |x|^{n-2}} \Big\}
\nonumber\\
&=& {a_1 a_2 \over 4} 
\big\{\int_{B(0,1)}  |x|^{2-n} \big\}^2
= {a_1 a_2 \over 16} \sigma(S_1)^2 =  {n^2 a_1 a_2 \over 4} |B(0,1)|^2
\end{eqnarray}  
by \eqref{e16.20}, \eqref{e16.18}, \eqref{e16.16}, and \eqref{e16.23}.
So we can always compute $a_1$ and $a_2$, if we know $L(\varphi_1,\varphi_2)$
and the $\lambda_i$.

\ms
Let us gather a few easy consequences of this discussion on
the minimizer $(\u,\W)$ itself.

In the next statement, a blow-up limit of $\u$ is any function 
$\u_\infty$ such that $\u_\infty(x) = \lim_{k \to +\infty} r_k^{-1}\u(r_kx)$
for $x\in \R^n$, for some sequence $\{ r_k \}$ that tends to $0$.
Since $\u$ is assumed to be Lipschitz near $0$, such limits exist and the
convergence is uniform on compact sets of $\R^n$. We won't need to
say that $\u_\infty$ is a regular blow-up limit, because this will follow
from the assumptions of Lemma~\ref{t16.1}.
  
\ms
\begin{cor}\label{t16.4}
Let $(\u,\W)$ be a minimizer for $J$.
Suppose that $\u$ satisfies \eqref{e15.3} and \eqref{e15.5}
that the $f_i$ and $g_i$ satisfy \eqref{e15.4},
that the domain $\Omega$ satisfies the two assumptions of Lemma~\ref{t16.1},
and that $F$ satisfies \eqref{e15.11}. Suppose in addition that
there are phases $\varphi_1 = (i_1,\varepsilon_1) \in I$ and 
$\varphi_2 = (i_2,\varepsilon_2) \in I \sm \{ \varphi_1 \}$ 
such that
\begin{equation} \label{e16.52}
L(\varphi_1,\varphi_2) 
= \lim_{\rho \to 0} \Phi^{0}_{\varphi_1}(\rho)\Phi^{0}_{\varphi_2}(\rho)
= \lim_{\rho \to 0} \Big\{{1 \over \rho^4} 
\int_{B(0,\rho)} {|\nabla v_{\varphi_1}|^2 \over |x|^{n-2}}
\int_{B(0,\rho)} {|\nabla v_{\varphi_2}|^2 \over |x|^{n-2}} \Big\}
\neq 0
\end{equation}
(see \eqref{e16.19} and \eqref{e16.11}). Then 

\noindent (i) For each blow-up limit $\u_\infty$ of $\u$ at $0$,
the functions $v_j = v_{\varphi_j,\infty}= [\varepsilon_j u_{\varphi_j,\infty}]_+$ 
take the form given by \eqref{e16.23}, with constants $a_j > 0$ such
that \eqref{e16.49}, \eqref{e16.50}, and \eqref{e16.51} hold, and all the other
components $v_{\varphi,\infty}$, $\varphi \in I \sm \{ \varphi_1, \varphi_2 \}$,
of $\u_\infty$ are null.

\noindent (ii) For each $\varphi \in I \sm \{ \varphi_1, \varphi_2 \}$,
\begin{equation} \label{e16.53}
\lim_{\rho \to 0} \Phi^{0}_{\varphi}(\rho) := 
\lim_{\rho \to 0} {1 \over \rho^2} \int_{B(0,\rho)} 
{|\nabla v_{\varphi}|^2 \over |x|^{n-2}} = 0.
\end{equation}

\noindent (iii) the origin is an interior point of $\Omega$.
\end{cor}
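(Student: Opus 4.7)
The plan is to reduce Corollary~\ref{t16.4} to Theorem~\ref{t16.3} applied to a regular blow-up limit produced by Lemma~\ref{t16.1} and Corollary~\ref{t15.3}, and then to read off (ii) and (iii) from (i).

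First I would fix an arbitrary blow-up sequence $r_k\to 0$ and the corresponding limit $\u_\infty$ (which exists and is Lipschitz by \eqref{e15.5}). Using the two assumptions on $\Omega$ listed in Lemma~\ref{t16.1}, I would extract a subsequence so that, for every $R>0$, the sets $\Omega_k=r_k^{-1}\Omega$ admit a limit $\Omega_{\infty}$ satisfying \eqref{e15.8}–\eqref{e15.10}; since extracting a further subsequence does not alter the pointwise limit $\u_\infty$, this exhibits $\u_\infty$ as a regular blow-up limit. Corollary~\ref{t15.3} then produces $\W_\infty$ such that $(\u_\infty,\W_\infty)$ is a local minimizer of $J_\infty$ on $\R^n$, and in particular each phase $v_{\varphi,\infty}$ is harmonic on its positivity set (by the first-variation argument behind \eqref{e9.6}, with $f_i=g_i=0$). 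The pair $v_1=v_{\varphi_1,\infty}$, $v_2=v_{\varphi_2,\infty}$ satisfies $v_1v_2=0$ everywhere, and by Lemma~\ref{t16.2} together with \eqref{e16.52} we have $\Phi_{\varphi_1,\infty}(r)\Phi_{\varphi_2,\infty}(r)=L(\varphi_1,\varphi_2)>0$ for all $r>0$. Theorem~\ref{t16.3} therefore gives the representation \eqref{e16.23} with constants $a_1,a_2>0$.

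Having fixed $v_1$ and $v_2$ in this form, the two open half-spaces $H_\pm=\{\pm\langle x,e\rangle>0\}$ are exhausted by $\{v_1>0\}\cup\{v_2>0\}$, and for any other phase $\varphi\in I\sm\{\varphi_1,\varphi_2\}$ the disjoint-support condition built into $\F(\R^n,\Omega_\infty)$ forces $v_{\varphi,\infty}=0$ a.e.\ on $H_+\cup H_-$, i.e.\ everywhere. This proves the first sentence of (i). The identity \eqref{e16.51} is then an immediate computation from \eqref{e16.20}, \eqref{e16.18}, \eqref{e16.16} and the explicit formula \eqref{e16.23}, as displayed in the statement itself. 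The Euler–Lagrange relations \eqref{e16.49} and \eqref{e16.50} I would obtain by a standard first-variation argument on $(\u_\infty,\W_\infty)$: for \eqref{e16.49}, perturb the separating hyperplane infinitesimally and use the local minimality of $J_\infty$ (this is precisely the computation carried out for the affine blow-up limits in Section~\ref{vari}, cf.\ Proposition~\ref{t20.1}); for \eqref{e16.50}, compare $(\u_\infty,\W_\infty)$ with the competitor obtained by shrinking one of the two half-spaces by a small slab and redistributing the freed mass among the $\lambda_i$ that minimize $\lambda$.

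For (ii), observe that since $\u_\infty$ from (i) is obtained along \emph{any} sequence $r_k\to 0$ (after the subsequence extraction above), one has $v_{\varphi,\infty}=0$ and hence $\Phi_{\varphi,\infty}(r)=0$ for every $r>0$, for every $\varphi\in I\sm\{\varphi_1,\varphi_2\}$. By Lemma~\ref{t16.2}, $\lim_{k\to\infty}\Phi^0_\varphi(r_k)=\Phi_{\varphi,\infty}(1)=0$ along the chosen subsequence; since every sequence $r_k\to 0$ admits a further subsequence to which this applies, a standard diagonal argument gives $\lim_{\rho\to 0}\Phi^0_\varphi(\rho)=0$, which is \eqref{e16.53}. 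For (iii), suppose for contradiction that $0\in\d\Omega$. The first condition of Lemma~\ref{t16.1}, rescaled to $\Omega_k$ and passed to the limit via \eqref{e15.8}, gives $|B(0,r)\sm\Omega_\infty|\geq c\,r^n$ for all $r>0$ (this is exactly \eqref{e16.5} derived inside the proof of Lemma~\ref{t16.1}). On the other hand, since $(\u_\infty,\W_\infty)\in\F(\R^n,\Omega_\infty)$, the positivity sets $\{v_j>0\}=H_\pm$ must both sit inside $\Omega_\infty$ up to null sets, so $|\R^n\sm\Omega_\infty|=0$, a contradiction. Hence $0$ lies in the interior of $\Omega$.

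The main obstacle in this plan is the derivation of \eqref{e16.49} and \eqref{e16.50}: everything else is essentially bookkeeping on top of Theorem~\ref{t16.3}, Lemma~\ref{t16.1}, and Corollary~\ref{t15.3}, but the Euler–Lagrange identities require a genuine (if classical) first-variation computation comparing $(\u_\infty,\W_\infty)$ with explicit perturbations that move the free boundary and reassign volume between the $W_{i,\infty}$; this is the content we would import from Section~\ref{vari}.
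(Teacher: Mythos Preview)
Your proposal is correct and, for parts (i) and (iii), follows the paper's approach almost verbatim: extract a regular blow-up via Lemma~\ref{t16.1}, invoke Corollary~\ref{t15.3} to get a local $J_\infty$-minimizer, apply Theorem~\ref{t16.3} to the two phases $v_{\varphi_1,\infty},v_{\varphi_2,\infty}$ (using \eqref{e16.20}), and defer \eqref{e16.49}--\eqref{e16.50} to the first-variation computations of Section~\ref{vari}; for (iii) both you and the paper use the density lower bound \eqref{e16.5} against the fact that the affine description \eqref{e16.23} leaves no room for $\R^n\sm\Omega_\infty$.

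For part (ii) you take a genuinely different route from the paper. The paper argues by contradiction: if $\limsup_{\rho\to 0}\Phi^0_\varphi(\rho)>0$ for some third phase $\varphi$, one picks a sequence realizing this $\limsup$, uses the Lipschitz bound \eqref{e16.12} on $\Phi^0_{\varphi_2}$ to force $\liminf\Phi^0_{\varphi_1}(r_k)\geq C^{-1}L(\varphi_1,\varphi_2)>0$ (this is \eqref{e16.55}), and concludes that $L(\varphi,\varphi_1)>0$, so Theorem~\ref{t16.3} would produce a \emph{third} affine half-space, impossible by lack of room. Your argument instead leverages (i) directly: along any sequence, some subsequence is regular and has $v_{\varphi,\infty}=0$, whence $\Phi^0_\varphi(r_{k_j})\to\Phi_{\varphi,\infty}(1)=0$ by Lemma~\ref{t16.2}; the standard subsequence principle (every subsequence has a further subsequence with limit $0$, and $\Phi^0_\varphi$ is bounded by \eqref{e16.12}) gives \eqref{e16.53}. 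Your route is shorter and avoids the auxiliary estimate \eqref{e16.55}; the paper's route is slightly more self-contained in that it does not re-invoke the full machinery of (i) but works directly with the ACF limits $L(\cdot,\cdot)$.
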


\ms
In (i), the coefficients $a_j$ in the description \eqref{e16.23} 
do not depend on the blow-up sequence $\{ r_k \}$ because they
can be computed in terms of $L(\varphi_1,\varphi_2)$ and the
$\lambda_i$, but in principle the unit vector $e$ does. 
Unless we prove a better regularity result for $\u$ near $0$.

Also, (iii) only says that $0 \in {\rm int}(\Omega)$ under our assumptions,
which includes some weak regularity assumptions on $\Omega$. 
Probably \eqref{e16.52} can happen when $\Omega$ has an inward 
cusp at $0$, with a free boundary that continues in $\Omega$ in
the direction of the cusp.

\begin{proof}
We included the assumptions of Lemma \ref{t16.1} to make sure that
when $\u_\infty$ is a blow-up limit associated to any sequence $\{ r_k \}$
that tends to $0$, the assumptions of the beginning of this section 
(we were missing the conditions on $\Omega_\infty$) are satisfied for
some subsequence. Then (i) is a consequence of the discussion above.

Suppose that \eqref{e16.53} fails for some 
$\varphi \in I \sm \{ \varphi_1, \varphi_2 \}$, and choose 
a sequence $\{ r_k \}$ such that 
\begin{equation} \label{e16.54}
\lim_{k \to +\infty} \Phi^{0}_{\varphi}(r_k)
= \limsup_{k \to \infty}\Phi^{0}_{\varphi}(\rho) > 0.
\end{equation}
Then use Lemma \ref{t16.1} to replace $\{ r_k \}$ with  a subsequence
that satisfies the assumptions at the beginning of this section.
Of course $\u_\infty$ stays the same for this subsequence, and
(i) gives two nontrivial components $v_{\varphi_j,\infty}$ of $\u_\infty$.
Notice that
\begin{eqnarray} \label{e16.55}
\liminf_{k \to +\infty} \Phi^{0}_{\varphi_1}(r_k)
&\geq& {\liminf_{k \to \infty}
\big[\Phi^{0}_{\varphi_1}(r_k)\Phi^{0}_{\varphi_2}(r_k)\big]
\over
\limsup_{k \to \infty} \Phi^{0}_{\varphi_2}(r_k)}
\nonumber \\
&=& {L(\varphi_1,\varphi_2) \over
\limsup_{k \to \infty} \Phi^{0}_{\varphi_2}(r_k)}
\geq C^{-1} L(\varphi_1,\varphi_2) > 0
\end{eqnarray}
by \eqref{e16.52} and \eqref{e16.12}, so
\begin{eqnarray} \label{e16.56}
L(\varphi,\varphi_1) 
&=& \lim_{\rho \to 0} \Phi^{0}_{\varphi}(\rho)\Phi^{0}_{\varphi_1}(\rho)
= \lim_{k \to +\infty} \Phi^{0}_{\varphi}(r_k)\Phi^{0}_{\varphi_1}(r_k)
\nonumber\\
&\geq& C^{-1} L(\varphi_1,\varphi_2) 
\limsup_{k \to \infty}\Phi^{0}_{\varphi}(\rho) > 0
\end{eqnarray}
by \eqref{e16.19}, \eqref{e16.54}, and \eqref{e16.55}. 
Then the component $v_{\varphi,\infty}$ of $\u_\infty$
is also given by a formula like \eqref{e16.23}, which is impossible
because the other two don't leave any room for its support.
This proves (ii).

Finally suppose that $0$ is an interior point of $\d\Omega$.
Then Lemma \ref{t16.1} gives a limiting domain $\Omega_\infty$
such that in particular $|B(x,r) \sm \Omega_\infty| > 0$
for every ball $B(x,r)$ centered on $\d\Omega_\infty$;
see \eqref{e16.5}. But $\u_\infty = 0$ almost everywhere on 
$\R^n \sm \Omega_\infty$, by definition of $\F(\R^n,\Omega_\infty)$
(see the lines above Lemma \ref{t16.1}, and the first lines of Section \ref{limits}.
This contradicts that description \eqref{e16.23} of the $v_{\varphi_j,\infty}$,
proves (iii), and completes the proof of Corollary \ref{t16.4}.
\qed
\end{proof}

\ms
We can find sufficient conditions for the condition \eqref{e16.52}
to hold. The main one will use good domains and the nondegeneracy condition
of Section \ref{good}, and we shall discuss it in Section \ref{good2}. 
Let us just say now that if
\begin{equation} \label{e16.57}
\liminf_{\rho \to 0} \Phi^{0}_{\varphi_j}(\rho) > 0
\end{equation}
for $j=1, 2$ (and $\u$ is Lipschitz near $0$), then \eqref{e16.52}
holds because we already knew from \eqref{e16.19} that the limit
of the product exits. In general, if we merely assume that
\begin{equation} \label{e16.58}
\limsup_{\rho \to 0} \Phi^{0}_{\varphi_j}(\rho) > 0,
\end{equation}
this should not be enough to conclude. 
A priori it can happen that $v_{\varphi_1}$ is dormant at some scales 
(i.e., $\nabla v_{\varphi_1}$ is very small), and revives at smaller scales, 
but never at the same time as for the other phase $\varphi_2$; then $L(\varphi_1,\varphi_2)$ may be null, and we won't find a blow-up 
sequence that would show both phases.

\ms
The following proposition can be seen as a weak regularity result for the
free boundaries associated to phases that satisfy the conditions of
Corollary \ref{t16.4}. Set
\begin{equation} \label{e16.59}
\Omega_\varphi = \big\{ x\in \R^n \, ; \, v_\varphi(x) > 0 \big\}
= \big\{ x\in \R^n \, ; \, \varepsilon u_i(x) > 0 \big\}
\end{equation}
for $\varphi = (i,\varepsilon) \in I$. Then let $\varphi_1$ and $\varphi_2 \in I$
be as in Corollary \ref{t16.4}; we want to measure the flatness of 
$\d\Omega_{\varphi_1} \cup \d\Omega_{\varphi_2}$ in small
balls $B(0,r)$.

\begin{pro} \label{t16.5}
Let $(\u,\W)$ and the pairs $\varphi_1$ and $\varphi_2 \in I$
satisfy the hypotheses of Corollary \ref{t16.4}. Then there exists 
numbers $\beta(r) \in [0,1]$ such that
\begin{equation} \label{e16.60}
\lim_{r\to 0} \beta(r) = 0
\end{equation}
and, for $r > 0$, a unit vector $e = e(r)$ such that
\begin{equation} \label{e16.61}
v_{\varphi_1}(x) > 0 \ \text{ for $x\in B(0,r)$ such that } 
\langle x, e(r) \rangle \geq \beta(r) r,
\end{equation}
\begin{equation} \label{e16.62}
v_{\varphi_2}(x) > 0 \ \text{ for $x\in B(0,r)$ such that } 
\langle x, e(r) \rangle \leq -\beta(r) r,
\end{equation}
\begin{equation} \label{e16.63}
(\d\Omega_{\varphi_1} \cup \d\Omega_{\varphi_2})\cap B(0,r)
\i \big\{ x\in B(x,r) \, ; \, |\langle x, e(r) \rangle| \leq \beta(r) r \big\},
\end{equation}
and
\begin{equation} \label{e16.64}
\Omega_{\varphi} \cap B(0,r) \i \big\{ x\in B(x,r) \, ; \, 
|\langle x, e(r) \rangle| \leq \beta(r) r \big\}
\ \text{ for } \varphi \in I \sm \{ \varphi_1, \varphi_1 \}.
\end{equation}
\end{pro}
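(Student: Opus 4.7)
\medskip\noindent\textbf{Proof plan for Proposition \ref{t16.5}.}
The plan is to argue by contradiction via a compactness/blow-up argument, using Corollary \ref{t16.4} as the essential input. Suppose the conclusion fails: then we may find $\beta_0 > 0$ and a sequence $\{ r_k \}$ with $r_k \to 0$ such that, for every unit vector $e \in \SS^{n-1}$, at least one of the four conclusions \eqref{e16.61}--\eqref{e16.64} fails at the ball $B(0,r_k)$ with the value $\beta_0$. For each $k$ we thus obtain a witness configuration (a point violating one of the four inclusions), and our goal is to extract a limit that contradicts the rigid structure \eqref{e16.23}.

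Next, I would rescale and take a blow-up. Define $\u_k(x) = r_k^{-1}\u(r_k x)$ and $\Omega_k = r_k^{-1}\Omega$ as in \eqref{e15.2} and \eqref{e15.7}. Since $\u$ is Lipschitz near $0$ by \eqref{e15.5}, the $\u_k$ are uniformly Lipschitz on every $B(0,R)$, and Arzel\`a--Ascoli produces (along a subsequence, not relabelled) uniform convergence on compacta to some Lipschitz $\u_\infty$. The two assumptions of Lemma \ref{t16.1} included in Corollary \ref{t16.4} allow us to further extract a subsequence so that \eqref{e15.8}--\eqref{e15.10} hold for every integer $R$, i.e.\ so that $\u_\infty$ is a \emph{regular} blow-up limit. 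Corollary \ref{t16.4}(i) then supplies a unit vector $e_\infty$ and positive constants $a_1,a_2$ such that
\[
 v_{\varphi_1,\infty}(y) = a_1 \max(0,\langle y,e_\infty\rangle),\qquad
 v_{\varphi_2,\infty}(y) = a_2 \max(0,\langle y,-e_\infty\rangle),
\]
while $v_{\varphi,\infty}\equiv 0$ for every $\varphi \in I \sm \{\varphi_1,\varphi_2\}$.

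The third step consists in checking that the choice $e(r_k) := e_\infty$ and any $\beta < \beta_0$ satisfies all four properties \eqref{e16.61}--\eqref{e16.64} for $k$ large, yielding the desired contradiction. After rescaling by $r_k^{-1}$ the four claims become statements in the fixed ball $B(0,1)$ about $v_{\varphi,k}(y) = r_k^{-1} v_\varphi(r_k y)$. On the set $H_+ := \{ y \in B(0,1) : \langle y,e_\infty\rangle \geq \beta_0\}$ we have $v_{\varphi_1,\infty} \geq a_1\beta_0 > 0$, so uniform convergence gives $v_{\varphi_1,k} > 0$ on $H_+$ for $k$ large, which rescales to \eqref{e16.61}; the analogous argument on $H_- := \{ y : \langle y,e_\infty\rangle \leq -\beta_0\}$ gives \eqref{e16.62}. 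For \eqref{e16.64}, the key mechanism is the pointwise disjointness of phases $v_{\varphi_1}v_\varphi = 0$ (coming directly from the fact that, for $\varphi = (i,\varepsilon) \neq \varphi_1$, either $i$ differs from $i_1$ and the supports lie in disjoint $W_i$, or $i = i_1$ and the positive and negative parts cannot coexist): since $v_{\varphi_1,k}>0$ on $H_+$, one has $v_{\varphi,k}\equiv 0$ on $H_+$, and symmetrically for $H_-$, so $\Omega_{\varphi,k}\cap B(0,1)$ is contained in the slab $\{|\langle y,e_\infty\rangle|\leq \beta_0\}$, which is \eqref{e16.64}. Finally \eqref{e16.63} follows from the same two inclusions applied to $\varphi_1$ and $\varphi_2$: points of $B(0,1)$ outside the slab are either in the open set $\{v_{\varphi_j,k} > 0\}$ or in the open set where $v_{\varphi_j,k} \equiv 0$ (because the \emph{other} phase is strictly positive there), hence cannot lie in $\d\Omega_{\varphi_j,k}$.

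The main technical subtlety, and essentially the only non-mechanical point, is the inclusion \eqref{e16.64} for the spurious phases $\varphi \notin \{\varphi_1,\varphi_2\}$: uniform convergence $v_{\varphi,k}\to 0$ by itself would only say that these phases become small, not that they vanish, which is why it is crucial to route the argument through the algebraic disjointness relation $v_{\varphi_1}v_\varphi \equiv 0$ rather than through any quantitative smallness of $v_{\varphi,k}$. Everything else is a soft compactness argument on top of Corollary \ref{t16.4}, and no further estimates are needed.
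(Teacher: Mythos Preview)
Your proof is correct and follows essentially the same contradiction/compactness scheme as the paper. The only organizational difference is that the paper first observes that \eqref{e16.63} and \eqref{e16.64} are immediate consequences of \eqref{e16.61}--\eqref{e16.62} via the disjointness $\Omega_{\varphi_j}\cap\Omega_\varphi=\emptyset$, and then runs the contradiction argument with a single witness point $x_k$ violating \eqref{e16.61} or \eqref{e16.62}, whereas you argue uniform positivity of $v_{\varphi_j,k}$ on the whole slab $H_\pm$; these are equivalent once uniform convergence on compacta is in hand.
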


\ms
Thus all the action is on the thin band where $|\langle x, e(r) \rangle| \leq \beta(r) r$.
There may be a lot of action though, because if volume for the other $W_i$ is
cheap, lots of them may find it convenient to squat some of that band.
Even when we assume that all the regions are good, there may be a part of the
band that lies on $\Omega \sm \cup_{i} W_i$.

As for the blow-up limits, we do not know whether the direction $e(r)$ really depends
on $r$, but by lack of a better regularity theorem (which may be hard
with the present degree of generality), we have to assume that it does.

\begin{proof}
Let us first observe that 
$\Omega_{\varphi_1} \cap \Omega_{\varphi_2}= \emptyset$,
by definition if $\F$ if $i_1 \neq i_2$ and by definition of $v_{\varphi_i}$
otherwise. So \eqref{e16.63} is an 
immediate consequence of \eqref{e16.61} and \eqref{e16.62}.
Similarly, $\Omega_{\varphi}$ does not meet the $\Omega_{\varphi_j}$
and \eqref{e16.64} follows from \eqref{e16.61} and \eqref{e16.62}.

Notice also that \eqref{e16.61} and \eqref{e16.62} are trivial when
$\beta(r) = 1$, so the point is to show that we may make it tend to $0$.

We shall prove the proposition by contradiction and compactness.
Suppose that we cannot find the $\beta(r)$ and $e(r)$ as above;
then we can find a positive number $\alpha > 0$, and a sequence
$\{ r_k \}$ that tends to $0$, with the following property. 
For each $k$, there is no choice of a unit vector $e$ such that
\eqref{e16.61} and \eqref{e16.62} hold with $r=r_k$ and 
$\beta(r) = \alpha$.

Extract from $\{ r_k \}$ a subsequence for which the assumptions
of Corollary \ref{t15.3} are satisfied; we have seen at the beginning of
the proof of Corollary \ref{t16.4} that Lemma \ref{t16.1} allow us to do this.
Then we get a blow-up limit $\u_\infty$, associated to our subsequence,
which admits the description of \eqref{e16.23}. This gives a unit
vector $e$, which we may try in \eqref{e16.61} and \eqref{e16.62}.
By definition of $\{ r_k \}$, this does not work well, so we get a
point $x_k \in B(0,r_k)$ such that 
\begin{equation} \label{e16.65}
\langle x_k, e(r) \rangle \geq \alpha r_k \ \text{ but }
v_{\varphi_1}(x_k) = 0,
\end{equation}
or (a point $x_k \in B(0,r)$ such that)
\begin{equation} \label{e16.66}
\langle x_k, e(r) \rangle \leq - \alpha r_k \ \text{ but }
v_{\varphi_2}(x_k) = 0.
\end{equation}
We extract a new subsequence so that we get \eqref{e16.65}
for all $k$, or we get \eqref{e16.65} for all $k$, and in addition
$y_k = r_k^{-1} x_k$ has a limit $y_\infty \in \overline B(0,1)$.

Suppose for instance that \eqref{e16.65} holds for all $k$.
Then $v_{\varphi_1,k}(y_k) = r_k^{-1} v_{\varphi_1}(r_k y_k) 
r_k^{-1} v_{\varphi_1}(x_k) = 0$ by \eqref{e16.9}, and
\begin{equation} \label{e16.67}
|v_{\varphi_1,\infty}(y_\infty)| 
\leq |v_{\varphi_1,\infty}(y_\infty)-v_{\varphi_1,k}(y_\infty)|
+ |v_{\varphi_1,k}(y_\infty)-v_{\varphi_1,k}(y_k)|.
\end{equation}
The first term tends to $0$ because $v_{\varphi_1,\infty}$
is the limit of the $v_{\varphi_1,k}$ in $\R^n$, and the
second term because the $v_{\varphi_1,k}$ are uniformly Lipschitz
and $y_k$ tends to $y_\infty$. So $v_{\varphi_1,\infty}(y_\infty) = 0$.
But $\langle y_k, e(r) \rangle = r_k^{-1} \langle x_k, e(r) \rangle
\geq \alpha$ by \eqref{e16.65}, so $\langle y_\infty, e(r) \rangle \geq \alpha$,
which contradicts \eqref{e16.23} (recall that we set $v_j = v_{\varphi_j,\infty}$
there). The case when \eqref{e16.66} holds for all $k$ would be treated
the same way, and Proposition \ref{t16.5} follows from the contradiction.
\qed
\end{proof}

\section{Blow-up limits with one phase}   \label{1phase}

In this section we keep the same general assumptions as in Section \ref{2phases} 
and study the blow-up limits of $\u$
when all the limits $L(\varphi_1,\varphi_2)$ of \eqref{e16.19}
are null, but, say 
\begin{equation} \label{e18.1}
\limsup_{r \to 0} \Phi_{1,1}^0(r) 
:= \limsup_{r \to 0}{1 \over r^2} 
\int_{B(0,r)} {|\nabla u_{1,+}|^2 \over |x|^{n-2}} \, dx > 0.
\end{equation}
In fact, we shall rapidly assume that $F$ is Lipschitz and $i=1$ is a good index, 
i.e., that
\begin{equation} \label{e18.2}
\text{\eqref{e10.2} holds and there exists $\lambda > 0$ and $\varepsilon > 0$
such that \eqref{e13.1} holds.}
\end{equation}
The two go together because we want to apply Proposition \ref{t17.1},
for instance. At this stage, this is not so much to assume, because 
we shall see that if \eqref{e18.1} holds and $0$ is an interior point
of $\Omega$, then
\begin{equation} \label{e18.3}
\lambda_1 > \min(0, \lambda_2, \ldots, \lambda_N),
\end{equation}
where the $\lambda_i$ are as in our assumption \eqref{e15.11}.
In this case, \eqref{e18.2} just amounts to requiring a little bit more regularity 
on $F$ than we do in \eqref{e15.11}.

Then, if $0$ is an interior point of $\Omega$, or at least $\Omega$
looks enough like cones near $0$,  we shall be able to show that some 
blow-up limits $\u_\infty$ of $\u_1$ at the origin are nontrivial, 
one-phase minimizers of the standard Alt, Caffarelli, and Friedman functional 
(in $\R^n$ or in a cone), that are also homogeneous of degree $1$. 
See Theorem \ref{t18.2} below.
When $n \leq 3$ and the cone is $\R^n$, 
it was proved in \cite{CJK2} that the first coordinate of such functions 
$u_{\infty}$ is of the form $u(x) = a(x)_+ = \max(a(x),0)$, 
where $a$ is affine, so this will give a good description of the corresponding 
blow-up limits of $\u$ at $0$ when $n \leq 3$ and $0$ is an interior point of $\Omega$.
See Corollary \ref{t18.3}.

The main ingredient for this section is a functional introduced by
G. S. Weiss \cite{We}, its monotonicity properties, and what happens
when it is constant.

We shall try to add assumptions as they are used.
The initial assumptions for this section are, as for most of Section \ref{1phase},
that
\begin{equation} \label{e18.4}
\text{$\u$ satisfies \eqref{e15.3} and \eqref{e15.5}, the $f_i$ and $g_i$ satisfy \eqref{e15.4}, $F$ satisfies \eqref{e15.11},}
\end{equation}
\begin{equation} \label{e18.5}
\text{the domain $\Omega$ satisfies the two assumptions of Lemma~\ref{t16.1},}
\end{equation}
but for the main result we shall assume that $0$ is an interior 
point of $\Omega$, in which case the issue does not arise.
We also assume that
\begin{equation} \label{e18.6}
L(\varphi_1,\varphi_2) := \lim_{\rho \to 0} 
\Phi^{0}_{\varphi_1}(\rho)\Phi^{0}_{\varphi_2}(\rho) = 0 
\ \text{ for $\varphi_1 \in I$ and $\varphi_1 \in I\sm \{ \varphi_1 \}$}
\end{equation}
(see the definitions \eqref{e16.19} and \eqref{e16.11}),
and \eqref{e18.1} or \eqref{e18.2}.

\ms
Let us define the Weiss functional associated to $u_{1,+}$.
Set $v = u_{1,+} = \max(0,u_1)$ and 
$\Omega_1 = \big\{ x\in \R^n \, ; \, u_1(x) > 0 \big\}$ to save notation,
and then define $\Psi$ by 
\begin{eqnarray} \label{e18.7}
\Psi(r) &=& r^{-n} \int_{B(0,r)} |\nabla v|^2 
+r^{-n} \lambda_1 |\Omega_1 \cap B(0,r)| 
- \fint_{0 \leq t \leq r} t^{1-n}\int_{\d B(0,t)} \Big|{\d v \over d\rho}\Big|^2
d\sigma dt
\nonumber \\
&=& r^{-n} \int_{B(0,r)} |\nabla v|^2 
+r^{-n} \lambda_1 |\Omega_1 \cap B(0,r)| 
- {1 \over r} \int_{B(0,r)} |x|^{1-n}\Big|{\d v \over d\rho}(x)\Big|^2 dx
\end{eqnarray}
for $0 < r < \rho_0$ (where $\rho_0$ comes from our Lipschitz assumption
\eqref{e15.5}). Here $\lambda_1$ comes from \eqref{e15.11} and
$\d u \over d\rho$ denotes the radial derivative of
$u$ (also written $\langle \nabla u, \nu \rangle$ in \cite{We}),
and there is no convergence problem for the integrals, since $\u$
is Lipschitz near $0$.
This is the same function as $p$ on page 319 of \cite{We}, with
$Q(0) = \lambda_1$.

Next let $\u_\infty$ be any blow-up limit of $\u$ at $0$.
That is, assume that there is a sequence $\{ r_k \}$ that tends to $0$, such that
$\u_\infty(x) = \lim_{k \to +\infty} r_k^{-1}\u(r_k x)$ on $\R^n$.
As usual, set
\begin{equation} \label{e18.8}
v_k(x) = r_k^{-1} v(r_k x) = r_k^{-1} [u_1(r_k x)]_+ \ \text{ for } x\in \R^n,
\end{equation}
\begin{equation} \label{e18.9}
v_\infty(x) = [\u_{\infty,1}(x)]_+ = \lim_{k \to +\infty} r_k^{-1} v(r_k x)
\end{equation}
for $x\in \R^n$, and 
\begin{equation} \label{e18.10}
\Omega_{1,k} = \big\{ x\in \R^n \, ; \, v_k(x) > 0\big\}
\ \text{ for $k \geq 0$ and $k=+\infty$.}
\end{equation}
We shall soon use the limit Weiss function
$\Psi_\infty$ defined on $(0,+\infty)$ by
\begin{equation} \label{e18.11}
\Psi_\infty(r) = r^{-n} \int_{B(0,r)} |\nabla v_\infty|^2 
+r^{-n} \lambda_1 |\Omega_{1,\infty} \cap B(0,r)| 
- {1 \over r} \int_{B(0,r)} |x|^{1-n}\Big|{\d v_\infty \over d\rho}(x)\Big|^2 dx
\end{equation}
but let us first talk about the minimizing properties of $\u_\infty$.

Because of Lemma \ref{t16.1}, we can replace 
$\{ r_k \}$ with some subsequence for which \eqref{e15.8}-\eqref{e15.10}
hold, and the we can apply Corollary \ref{t15.3}. Thus there is a domain 
$\Omega_\infty$ and a $N$-uple $\W_\infty$ such that
\begin{equation} \label{e18.12}
(\u_\infty,\W_\infty) \text{ is a local minimizer for $J_\infty$
in } \F(\R^n,\Omega_\infty); 
\end{equation}
see near \eqref{e15.40} for the definitions.
We claim that because of \eqref{e18.6}
\begin{equation} \label{e18.13}
u_{1,\infty} \geq 0 \ \text{ and } 
u_{i,\infty} = 0 \text{ for } i \geq 2
\end{equation}
if we chose $\{ r_k \}$ such that 
\begin{equation} \label{e18.14}
\liminf_{k \to +\infty} \Phi_{1,1}^0(r_k) > 0.
\end{equation}
Of course we can find a sequence like this, by \eqref{e18.1}.
And indeed, suppose that \eqref{e18.14} holds. 
Set $\varphi_1 = (1,1)$ and let $\varphi \in I\sm \{ \varphi_1 \}$ be given.
By \eqref{e18.14}, $\liminf_{k \to +\infty} \Phi_{1,1}^0(r_k R) > 0$
for each $R > 1$ as well (just because 
$\Phi_{1,1}^0(r_k R) \geq R^{-2}\Phi_{1,1}^0(r_k)$
by \eqref{e16.11}); by \eqref{e18.6},
$\Phi_{\varphi_1}^0(r_k R)\Phi_{\varphi}^0(r_k R)$ tends to $0$,
so $\Phi_{\varphi}^0(r_k R)$ tends to $0$, and then 
Lemma \ref{t16.2} says that
\begin{equation} \label{e18.15}
\Phi_{\varphi,\infty}(R) = \lim_{k \to +\infty} \Phi_{\varphi}^0(r_k R) = 0.
\end{equation}
Then $\nabla v_{\varphi} = 0$ almost everywhere, and \eqref{e18.13}
follows.

Next we check that \eqref{e18.3} holds if $0$ is an interior point
of $\Omega$. Suppose not.
Choose a sequence $\{ r_k \}$ such that \eqref{e18.14} holds, 
and then use Lemma~\ref{t16.1} to get a subsequence such that 
Corollary \ref{t15.3} applies, 
so that we get a local minimizer $(\u_\infty,\W_\infty)$ as in 
\eqref{e18.12}. Notice that here $\Omega_\infty = \R^n$.
Set $\varphi = (1,1)$, and notice that 
$\Phi_{\varphi,\infty}(1) = \lim_{k \to +\infty} \Phi_{\varphi}^0(r_k) >0$
by Lemma \ref{t16.2}, so $u_{1,\infty} > 0$ somewhere.

We build a competitor for $(\u_\infty,\W_\infty)$. Pick a ball $B = B(0,r)$
such that $u_{1,\infty} > 0$ somewhere on $\d B$, then denote
by $u_1^\ast$ the harmonic extension to $B$ of the restriction of
$u_{1,\infty}$ to $\d B(0,r)$. Keep $u_1^\ast = u_{1,\infty}$
on $\R^n \sm B$, and also set $u_i^\ast = 0$ for $i \geq 2$.
Also set $W_1^\ast = W_{1,\infty} \cup B$ and 
$W_i^\ast = W_{i,\infty} \cup B$ for $i \geq 2$. It is easy to see
that $(\u^\ast,\W^\ast) \in \F(\R^n, \Omega_\infty)$
(because $\Omega_\infty = \R^n$ and by \eqref{e18.13}). So \eqref{e15.40} holds,
even with $R = r$. Since
$\sum_i  \lambda_i |W_1^\ast \cap B| = \lambda_1 |B|
\leq \sum_i  \lambda_i |W_{i,\infty} \cap B|$ because 
\eqref{e18.3} fails, we do not lose anything on the volume term, and
\eqref{e15.40} yields $\int_{B} |\nabla u_{1,\infty}|^2 
\leq \int_{B} |\nabla u_1^\ast|^2$. But $u_1^\ast$ is the only
minimizer of $\int_{B} |\nabla u|^2$ with the given boundary values on $\d B$,
so $u_{1,\infty} = u_1^\ast$ on $B$. This is not possible, because
$\u(0) = 0$ and $u_1^\ast(0) > 0$
(recall that $u_{1,\infty} \geq 0$ by \eqref{e18.13}, and that it is positive
somewhere on $\d B$).

When $0$ lies on the boundary, it is harder to say much; $\Omega_1$
may fill the entire region $\Omega$, be harmonic (or satisfy the
equation \eqref{e9.4}) there, and naturally
vanish at the boundary because this was our initial constraint. In this case,
\eqref{e18.1} seems to reflect more on the shape of the boundary than on
the $\lambda_i$. But even in this case 
we shall decide to assume \eqref{e18.2}.

We return to the Weiss functional of \eqref{e18.7} and \eqref{e18.11}
and prove that it goes to the limit.

\ms
\begin{lem} \label{t18.1}
Let $\u_\infty$ be a blow-up limit of $\u$ at $0$,
associated to the sequence $\{ r_k \}$. Suppose, in addition to
\eqref{e18.4} and \eqref{e18.5}, that \eqref{e18.2} holds. 
Then, maybe after replacing $\{ r_k \}$ by some subsequence, 
\begin{equation} \label{e18.16}
\Psi_\infty(r) = \lim_{k \to +\infty} \Psi(r_k r)
\ \text{ for } r >0.
\end{equation}
\end{lem}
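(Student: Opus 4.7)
The plan is to reduce \eqref{e18.16} to the convergence of three separate pieces via scaling. The change of variables $y = r_k x$, combined with $v_k(x) = r_k^{-1} v(r_k x)$, $\Omega_{1,k} = r_k^{-1}\Omega_1$, and the identity $(\d v/\d\rho)(r_k x) = (\d v_k/\d\rho)(x)$, yields
\[
\Psi(r_k r) = r^{-n}\int_{B(0,r)} |\nabla v_k|^2 + r^{-n}\lambda_1 |\Omega_{1,k} \cap B(0,r)| - \frac{1}{r}\int_{B(0,r)} |x|^{1-n}\Big|\frac{\d v_k}{\d\rho}\Big|^2 dx.
\]
The hypothesis \eqref{e18.5} together with Lemma \ref{t16.1} lets me extract, once and for all, a subsequence along which Corollary \ref{t15.3} applies, so that $v_k \to v_\infty$ strongly in $W^{1,2}_{\mathrm{loc}}(\R^n)$ and uniformly on compact sets, and $(\u_\infty,\W_\infty)$ is a local minimizer of $J_\infty$.

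The gradient term converges immediately from strong $W^{1,2}$ convergence. For the weighted radial derivative, I would split at $|x| = \eta$: on $B(0,\eta)$ the uniform Lipschitz bound \eqref{e15.5} gives an estimate of order $\eta$ that is uniform in $k$, and on $B(0,r)\setminus B(0,\eta)$ the weight $|x|^{1-n} \leq \eta^{1-n}$ is bounded, so strong $W^{1,2}$ convergence pushes the integral to the limit. Letting first $k \to \infty$ and then $\eta \to 0$ is precisely the device used in Lemma \ref{t16.2}, and poses no real trouble.

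The main obstacle is the volume term, because neither weak nor uniform convergence of $v_k$ controls $|\Omega_{1,k} \cap B(0,r)|$ on its own: in principle $\Omega_{1,k}$ could have thin tentacles collapsing into $\{v_\infty = 0\}$. My plan is to kill this via the non-degeneracy \eqref{e18.2}. Under the scaling of Lemma \ref{t15.2}, $(\u_k,\W_k)$ is a minimizer of $J_k$, the Lipschitz constants of $v_k$ and $F_k$ equal those of $v$ and $F$, and condition \eqref{e13.1} for index $1$ is preserved with the \emph{same} constant $\lambda$ (only the threshold $\varepsilon$ rescales to $r_k^{-n}\varepsilon \to +\infty$, so the conclusions apply at all scales for large $k$). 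Hence Theorem \ref{t13.3} and Proposition \ref{t17.1} apply to each $v_k$ with constants independent of $k$.

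With that in hand, the lower bound $\liminf_k |\Omega_{1,k} \cap B(0,r)| \geq |\Omega_{1,\infty} \cap B(0,r)|$ is free from uniform convergence and Fatou's lemma, since $v_\infty(x) > 0$ forces $v_k(x) > 0$ for large $k$. For the upper bound, fix $\varepsilon > 0$ and split $\Omega_{1,k}\cap B(0,r)$ into $\{v_k \geq \varepsilon\}$ and $\{0 < v_k < \varepsilon\}$. On the first, uniform convergence forces $v_\infty \geq \varepsilon/2$ for large $k$, so this part sits in $\Omega_{1,\infty}\cap B(0,r)$. On the second, Theorem \ref{t13.3} gives $\dist(x,\R^n \setminus \Omega_{1,k}) \leq C\varepsilon$, so it lies in a $C\varepsilon$-neighborhood of $\d\Omega_{1,k}$; uniform $(n-1)$-Ahlfors-regularity of $\d\Omega_{1,k}$ from Proposition \ref{t17.1} bounds its Lebesgue measure by $C'\varepsilon\, r^{n-1}$. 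Letting $\varepsilon \to 0$ yields $\limsup_k |\Omega_{1,k} \cap B(0,r)| \leq |\Omega_{1,\infty}\cap B(0,r)|$, and combining the three convergences gives \eqref{e18.16}.
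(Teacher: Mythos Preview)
Your proposal is correct and follows essentially the same route as the paper: the scaling identity, the strong $W^{1,2}$ convergence from Corollary \ref{t15.3} for the gradient term, the split at $|x|=\eta$ with the uniform Lipschitz bound for the weighted radial term, and the combination of Fatou plus Theorem \ref{t13.3} and Proposition \ref{t17.1} for the volume term are exactly what the paper does. The only cosmetic difference is that the paper takes the cutoff to be $\varepsilon_k = \|v_\infty - v_k\|_{L^\infty(B(0,r))}$ (so the error $|\O_k| \leq C\varepsilon_k r^{n-1}$ goes to zero automatically), whereas you fix $\varepsilon>0$ and send it to zero after $k\to\infty$; and the paper applies Theorem \ref{t13.3} and Proposition \ref{t17.1} to the original $u_1$ and then rescales, rather than to $v_k$ directly --- but as you correctly note, the constants are scale-invariant, so the two formulations are equivalent.
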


\ms
\begin{proof}
Because of Lemma \ref{t16.1}, we can replace 
$\{ r_k \}$ with some subsequence for which \eqref{e15.8}-\eqref{e15.10}
hold, and then we can apply Corollary \ref{t15.3}. We get that there is a domain 
$\Omega_\infty$ and a $N$-uple $\W_\infty$ such that
\eqref{e18.12} holds, but this is not what we care about here, we just
need to know that for $r > 0$, 
\begin{equation} \label{e18.17}
\nabla v_\infty = \lim_{k \to +\infty} \nabla v_k
\ \text{ in } L^2(B(0,r)),
\end{equation}
as in \eqref{e15.17}. Then
\begin{eqnarray} \label{e18.18}
r^{-n} \int_{B(0,r)} |\nabla v_\infty|^2
&=& \lim_{k \to +\infty} r^{-n} \int_{B(0,r)} |\nabla v_k(x)|^2 dx
= \lim_{k \to +\infty} r^{-n} \int_{B(0,r)} |\nabla v(r_k x)|^2 dx
\nonumber \\
&=& \lim_{k \to +\infty} \int_{B(0,r_k r)} |\nabla v(y)|^2 dy
\end{eqnarray}
by \eqref{e18.8} and a change of variable, and similarly,
for each small $\varepsilon > 0$,
\begin{eqnarray} \label{e18.19}
{1 \over r}\int_{B(0,r) \sm B(0,\varepsilon)} 
|x|^{1-n}\Big|{\d v_\infty \over d\rho}(x)\Big|^2 dx
&=& \lim_{k \to +\infty}
{1 \over r} \int_{B(0,r) \sm B(0,\varepsilon)} 
|x|^{1-n}\Big|{\d v_k \over d\rho}(x)\Big|^2 dx
\nonumber \\
&=& \lim_{k \to +\infty}
{1 \over r} \int_{B(0,r) \sm B(0,\varepsilon)} 
|x|^{1-n}\Big|{\d v \over d\rho}(r_k x)\Big|^2 dx
\nonumber \\
&=&\lim_{k \to +\infty}
{1 \over r_k r}\int_{B(0,r_k r) \sm B(0,r_k\varepsilon)} 
|x|^{1-n}\Big|{\d v \over d\rho}(x)\Big|^2 dx.
\end{eqnarray}
The small missing pieces are estimated with the Lipschitz norms, i.e.,
\begin{equation} \label{e18.20}
{1 \over r}\int_{B(0,\varepsilon)} 
|x|^{1-n}\Big|{\d v_\infty \over d\rho}(x)\Big|^2 dx
\leq {C \over r} \int_{B(0,\varepsilon)} |x|^{1-n}
\leq {C \varepsilon \over r},
\end{equation}
and 
\begin{equation} \label{e18.21}
{1 \over r_k r}\int_ {B(0,r_k\varepsilon)} 
|x|^{1-n}\Big|{\d v \over d\rho}(x)\Big|^2 dx
\leq {C \over r_k r} \int_{B(0,r_k\varepsilon)} |x|^{1-n} \leq {C \varepsilon \over r}.
\end{equation}
Thus the last term of the functional $\Psi$ goes to the limit too, and 
the lemma will follow as soon as we prove that
\begin{equation} \label{e18.22}
r^{-n} \lambda_1 |\Omega_{1,\infty} \cap B(0,r)| 
= \lim_{k \to +\infty}
(r_k r)^{-n} \lambda_1 |\Omega_{1} \cap B(0,r_k r)|.
\end{equation}
This is where the nondegeneracy assumption \eqref{e18.2}
will be useful.
Set $\Omega_{1,k} = r_k^{-1} \Omega_1 = \big\{x \, ; \,  v_k > 0 \big\}$
and simplify \eqref{e18.22}; we just need to check that
\begin{equation} \label{e18.23}
|\Omega_{1,\infty} \cap B(0,r)| 
= \lim_{k \to +\infty} |\Omega_{1,k} \cap B(0,r)|.
\end{equation}
If $x\in \Omega_{1,\infty} \cap B(0,r)$, then $v_\infty(x) > 0$,
hence $v_k(x) > 0$ for $k$ large, and 
$x\in \Omega_{1,k} \cap B(0,r)$ for $k$ large. So 
$\1_{\Omega_{1,\infty} \cap B(0,r)} \leq
\liminf_{k \to +\infty} \1_{\Omega_{1,\infty} \cap B(0,r)}$ and
by Fatou
\begin{equation} \label{e18.24}
|\Omega_{1,\infty} \cap B(0,r)| 
\leq \liminf_{k \to +\infty} |\Omega_{1,k} \cap B(0,r)|.
\end{equation}
Next set $\varepsilon_k = ||v_\infty - v_k||_{L^\infty(B(0,r))}$ and
\begin{equation} \label{e18.25}
\O_k = \big\{ x\in B(0,r) \, ; \,  0 < v_k(x) \leq 2\varepsilon_k \big\}.
\end{equation}
By definition, $\Omega_{1,k} \cap B(0,r) \i \Omega_{1,\infty} \cup \O_k$;
if we prove that
\begin{equation} \label{e18.26}
\lim_{k \to +\infty} |\O_k| = 0,
\end{equation}
we will get that
\begin{equation} \label{e18.27}
\limsup_{k \to +\infty} |\Omega_{1,k} \cap B(0,r)|   
\leq  |\Omega_{1,\infty} \cap B(0,r)|+ \limsup_{k \to +\infty} |\O_k|
= |\Omega_{1,\infty} \cap B(0,r)|,
\end{equation}
and \eqref{e18.23} and the lemma will follow.

Let us first use Theorem \ref{t13.3} to show that 
for $k$ large,
\begin{equation} \label{e18.28}
\O_k \i A_k := \big\{ y\in B(0,r) \, ; \,  
\dist(y,\d\Omega_{1,k}) \leq C \varepsilon_k \big\}.
\end{equation}
Pick any $x \in \Omega_1 \cap B(0,\rho_0/2)$, 
where $\rho_0$ is still such that $\u$ is Lipschitz
on $B(0,\rho_0)$. Set $\delta(x) = \dist(x,\R^n \sm\Omega_1)$ as 
in \eqref{e13.39}. Since $\u(0) = 0$, we get that $\delta(x) < \rho_0/2$,
hence $\u$ is Lipschitz on $B(x,\delta(x)/2)$. The other assumptions of
Theorem \ref{t13.3} follow from \eqref{e18.2} and \eqref{e18.4},
so \eqref{e13.40} holds. That is,
\begin{equation} \label{e18.29}
u_1(x) \geq c_5 \min(\delta(x), \varepsilon^{1/n}, 1)
\ \text{ for } x\in B(0,\rho_0/2),
\end{equation}
where $\varepsilon$ is a constant that comes from \eqref{e13.1}.
Set $y = r_k^{-1}x$; this yields that for $y \in \Omega_{1,k} \cap B(0,r_k^{-1}\rho_0/2)$,
\begin{equation} \label{e18.30}
v_k (y) =   r_k^{-1} v(r_k y)  = r_k^{-1} u_1(r_k y) = r_k^{-1} u_1(x)
\geq c_5 r_k^{-1} \min(\delta(r_ky), \varepsilon^{1/n}, 1)
\end{equation}
(by \eqref{e18.8} and because $u_1(r_k y)$).
If $k$ is large enough, this holds for $y\in \Omega_{1,k} \cap B(0,r)$, and in addition 
$\delta(r_ky) = \dist(r_k y,\R^n\sm\Omega_1) \leq |r_k y|
< \min(\varepsilon^{1/n}, 1)$.
That is,
\begin{equation} \label{e18.31}
v_k(y) \geq c_5 r_k^{-1} \dist(r_k y,\R^n \sm\Omega_1)
= c_5 \dist(y,\R^n \sm\Omega_{1,k})
\end{equation}
for $y\in \Omega_{1,k} \cap B(0,r)$; \eqref{e18.28} follows because 
$y\in \Omega_{k,1}$ and $v_k(y) \leq 2 \varepsilon_k$
when $y\in \O_k$.

Now we shall use the uniform local Ahlfors regularity of the $\d \Omega_k$
to estimate $|A_k|$.
Let us apply Proposition \ref{t17.1} to some the ball 
$B_0 = B(0,r_0)$, where $r_0$ is chosen small so that
the assumptions of the proposition are satisfied. 
Notice that we did not forget to add \eqref{e10.2} in \eqref{e18.2}. 
We get a measure $\mu$ such that the local Ahlfors regularity condition \eqref{e17.6}
is satisfied for all balls $B$ centered on $\d\Omega_1$
such that $2B \i B_0$. Set $\mu_k(A) = r_k^{1-n}\mu(r_k A)$
for Borel sets $A$ (to preserve the homogeneity); then
by \eqref{e17.6}
\begin{equation} \label{e18.32}
C_1^{-1} \rho^{n-1} \leq \mu_k(B(y,\rho)) 
= r_k^{1-n}\mu(B(r_k y,r_k\rho)) \leq C_1 \rho^{n-1}
\end{equation} 
for $y\in \d\Omega_{1,k} = r_k^{-1}\d\Omega_1$ and 
$\rho > 0$ such that $B(y,2\rho) \i B(0,r_k^{-1} r_0)$.
For $k$ large enough, this includes all the balls $B(y,\rho)$ such that
$y\in \d \Omega_k \cap B(0,2r)$ and $0 \leq \rho \leq 3r$.
We shall gladly restrict to such $k$.

For each $t \in (0,r)$, denote by $Y_{k,t}$ a subset of 
$\d\Omega_k \cap B(0,2r)$, whose points lie at distances at least
$t$ from each other, and which is maximal with this property. The 
number of points of $Y_{k,t}$ is easy to estimate, as 
\begin{eqnarray} \label{e18.33}
\sharp Y_{k,t} &\leq& C\sum_{y\in Y_{k,t}} t^{-d}\mu_k(B(y,t))
\leq C t^{-d} \mu_k\big(\bigcup_{y\in Y_{k,t}} B(y,t)\big)
\nonumber\\
&\leq& C t^{-d} \mu_k(B(0,3r)) \leq C t^{1-n} r^{n-1}
\end{eqnarray}
 by \eqref{e18.32} and because the balls have bounded overlap. 
 We apply this with $t = C \varepsilon_k$, where
 $C$ is as in \eqref{e18.28}, notice that the balls $B(y,2t)$,
 $y\in Y_{k,t}$, cover $A_k$, and deduce from \eqref{e18.28} that
 \begin{equation} \label{e18.34}
|\O_k| \leq |A_k| \leq \sum_{y\in Y_{k,t}} |B(y,2t)|
\leq C t^n \sharp Y_{k,t}
\leq C t r^{n-1} = C \varepsilon_k r^{n-1}
\end{equation}
for $k$ large; Since $\varepsilon_k$ tends to $0$ because the $v_k$
converge to $v_{\infty}$ uniformly in $B(0,r)$, \eqref{e18.26}, 
then \eqref{e18.23} and Lemma \ref{t18.1}, follow.
\qed
\end{proof}

\ms
The next stage is to apply the monotonicity argument. 
The wiser thing to do would probably be to restrict to the case when
$0$ is an interior point of $\Omega$, but we shall try to include boundary
points for some time. We shall assume that
\begin{equation} \label{e18.35}
\begin{aligned}
&\text{each limit set $\Omega_\infty$ that can be obtained from $\Omega$
by applying Lemma \ref{t16.1} to}
\\
&\text{a sequence $\{ r_k \}$  that tend to $0$ 
is equal a.e. to an open cone centered at $0$;}
\end{aligned}
\end{equation}
this is not very explicit because we did not really choose our notion
of convergence for the domains, but fairly weak conditions of approximation
of $\Omega$ by cones at $0$ would imply this.

\begin{thm} \label{t18.2}
Assume that $0 \in \d \Omega_1 = \d \big\{ x\in \R^n \, ; \ u_1(x) >0 \big\}$,
and that \eqref{e18.2}, \eqref{e18.4}, \eqref{e18.5}, \eqref{e18.6},
and \eqref{e18.35} hold. Then there is a sequence $\{ r_k \}$  that tend to $0$ 
such that the blow-up limit $\u_\infty$ defined by $\{ r_k \}$ exists, is non trivial,
gives a minimizer of $J_\infty$ as in \eqref{e18.12}, and is homogeneous
of degree 1.
\end{thm}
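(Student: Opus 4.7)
The plan is to use the Weiss monotonicity formula together with the limit result Lemma \ref{t18.1}, and to combine this with the nondegeneracy from Section \ref{good} to rule out the trivial limit.

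First I would choose the blow-up sequence. Using \eqref{e18.1}, pick $\{r_k\}$ tending to $0$ with $\liminf_k \Phi^0_{1,1}(r_k)>0$, which by the discussion around \eqref{e18.14}--\eqref{e18.15} (via Lemma \ref{t16.2}) forces the vanishing of every other phase at the limit, so \eqref{e18.13} holds. Then I would apply Lemma \ref{t16.1} (justified by \eqref{e18.5}) to extract a further subsequence for which the sets $r_k^{-1}\Omega$ converge to some $\Omega_\infty$ satisfying \eqref{e15.8}--\eqref{e15.10}, and which by \eqref{e18.35} is (equivalent to) an open cone. Passing to a further subsequence if needed, I would use \eqref{e15.5} and Arzel\`a--Ascoli to ensure the $\u_k$ converge uniformly on compact sets to a Lipschitz $\u_\infty$; Corollary \ref{t15.3} then produces $\W_\infty$ so that $(\u_\infty,\W_\infty)$ is a local minimizer of $J_\infty$ in $\F(\R^n,\Omega_\infty)$, giving \eqref{e18.12}. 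By \eqref{e18.13} only $v_\infty=u_{1,\infty}\ge 0$ survives, and on its positive set $v_\infty$ is harmonic.

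Next I would prove non-triviality using the nondegeneracy hypothesis \eqref{e18.2}. Since $0\in\partial\Omega_1$, Theorem \ref{t13.1} applied at $0$ (for each small radius $r\le \min(1,\varepsilon^{1/n})$) yields $\fint_{B(0,r)} u_{1,+}^2 \ge c_1 r^2$, with the upper bound on $\fint |\nabla u_{1,+}|^2$ coming from \eqref{e15.5}. After rescaling this becomes $\fint_{B(0,1)} v_k^2 \ge c_1$ for all large $k$, and since the $v_k$ converge uniformly on $\overline{B(0,1)}$ to $v_\infty$, we get $\fint_{B(0,1)} v_\infty^2\ge c_1 > 0$; in particular $v_\infty\not\equiv 0$.

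The central step is to prove that $v_\infty$ is homogeneous of degree $1$. Here I would invoke the Weiss functional $\Psi$ from \eqref{e18.7}. The standard computation (Weiss \cite{We}) shows that for a local minimizer of the one-phase functional $\int|\nabla v|^2 + \lambda_1 |\{v>0\}|$, the function $\Psi(r)$ is nondecreasing, and has vanishing derivative on an interval if and only if $v$ is $1$-homogeneous there. For the rescaled minimizer $(\u_\infty,\W_\infty)$, only $v_\infty$ contributes a nontrivial energy and volume term (by \eqref{e18.13}), so $v_\infty$ is exactly a local minimizer of this one-phase functional in $\Omega_\infty$, and Weiss's formula applies to $v_\infty$ directly: $\Psi_\infty$ is nondecreasing in $r$. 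Now by Lemma \ref{t18.1}, for every $r>0$
\begin{equation*}
\Psi_\infty(r) = \lim_{k\to\infty} \Psi(r_k r).
\end{equation*}
For the original $(\u,\W)$, the nondegeneracy, the Lipschitz bound \eqref{e15.5}, and the regularity assumption \eqref{e15.11} on $F$ imply that $\Psi$ is almost-monotone near $0$ in the sense that $\Psi(r) + o(1)$ is nondecreasing, so $\ell:=\lim_{s\to 0^+} \Psi(s)$ exists (and is finite, by the Lipschitz bound). Consequently $\Psi_\infty(r)\equiv \ell$ is constant on $(0,\infty)$. The equality case in Weiss's monotonicity identity then forces $\partial v_\infty/\partial\rho = v_\infty/|x|$ on $\{v_\infty>0\}$, i.e.\ $v_\infty$ is homogeneous of degree $1$, which is the remaining claim.

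The main obstacle is the monotonicity of $\Psi$ for the original minimizer $(\u,\W)$: one has to justify that the deviation of $F$ from the pure volume form $\sum_i \lambda_i |W_i|$ (controlled by $r^n\varepsilon(r)$ via \eqref{e15.11}) produces only a lower-order error in the Weiss differential inequality, so that $\Psi(r)$ still has a limit at $0$. Once this near-monotonicity is in hand, Lemma \ref{t18.1} instantly converts it to the exact constancy of $\Psi_\infty$, and the rigidity part of Weiss's formula (applied to the bona fide one-phase local minimizer $v_\infty$ in the cone $\Omega_\infty$) finishes the proof.
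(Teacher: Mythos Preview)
Your overall architecture is right (Weiss functional plus Lemma \ref{t18.1} plus nondegeneracy), but the step you flag as ``the main obstacle'' is a genuine gap, and the paper does \emph{not} fill it the way you suggest. You claim that for the original minimizer $(\u,\W)$, the Weiss functional $\Psi$ of \eqref{e18.7} is almost-monotone near $0$, so that $\ell=\lim_{s\to 0}\Psi(s)$ exists. This is not established anywhere, and it is not clear how to prove it: Weiss's argument works by comparing $v=u_{1,+}$ with its degree-$1$ homogeneous extension inside $B(0,r)$, but at the level of the original problem there are other phases $v_\varphi$ present in $B(0,r)$, and you must produce a full competitor $(\u^\ast,\W^\ast)\in\F(\Omega)$. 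Killing or homogeneously extending the other phases introduces error terms involving $\int_{B(0,r)}|\nabla v_\varphi|^2$, which you only know are $o(r^n)$ (via \eqref{e18.6} and \eqref{e18.36}), not identically zero; showing these errors integrate against the Weiss differential inequality without destroying the existence of the limit is real work that you have not done. The assumption \eqref{e15.11} controls only the volume term, not these extra energy contributions.

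The paper sidesteps this entirely by a different choice of blow-up sequence. Instead of choosing $\{r_k\}$ from \eqref{e18.14}, it sets $L=\limsup_{\rho\to 0}\Psi(\rho)$ (finite since $\u$ is Lipschitz) and picks $\{r_k\}$ so that $\Psi(r_k)\to L$. Then Lemma \ref{t18.1} gives $\Psi_\infty(1)=L$ and, since $L$ is the $\limsup$, also $\Psi_\infty(r)\le L$ for every $r>0$. Now one only needs monotonicity of $\Psi_\infty$, which is legitimate because (thanks to \eqref{e18.36}, hence \eqref{e18.14} for \emph{any} sequence, hence \eqref{e18.13}) $v_\infty$ is a genuine one-phase ACF minimizer in the cone $\Omega_\infty$, and Weiss's computation applies cleanly. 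Combining $\Psi_\infty(1)=L$, $\Psi_\infty\le L$, and $\Psi_\infty$ nondecreasing gives $\Psi_\infty\equiv L$ on $[1,\infty)$; the equality case in \eqref{e18.47}--\eqref{e18.49} on any interval $(s,r)\subset(1,\infty)$ then forces the radial derivative of $v_\infty$ to be constant on each ray segment $[0,t]$, $t>1$, hence on all of $[0,\infty)$, giving the homogeneity. The point is that the paper never needs $\Psi$ itself to have a limit, only its $\limsup$, and transfers all the monotonicity work to the limit object where the other phases have already vanished.
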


By ``exists'', we would just mean that the $r^{-k} \u(r_k x)$ have a limit,
but \eqref{e18.12} asks for more anyway (a domain $\Omega_\infty$ and a partner 
$\W_\infty$).
Since all the components other than $(u_{1,\infty})_+$ vanish by \eqref{e18.13},
by nontrivial we just mean that $u_{1,\infty}(x) > 0$ somewhere.

Note that when $0 \in \d\Omega$, the assumptions of Theorem \ref{t18.2} 
put a nontrivial restriction on the shape of $\Omega$ near $0$, 
because the limit cone $\Omega_\infty$ needs
to be large enough to host a subdomain $\Omega_{1,\infty}$ and a nontrivial
positive harmonic function on $\Omega_{1,\infty}$ that vanishes on the boundary
and is homogeneous of degree $1$. For instance, if $\d\Omega$ is flat at $0$,
$\Omega_\infty$ is a half space, and there is just enough room to put such a harmonic
function on $\Omega_\infty$. In this case, we could prove that  $(u_{1,\infty})_+$
coincides with an affine function on $\Omega_\infty$. The point is that the
Poincar\'e constant for $\Omega_{1,\infty} \cap \d B(0,1)$ is at least as small
as for a half sphere, which gives a control on the first eigenvalue of the Laplacian
on $\Omega_{1,\infty} \cap \d B(0,1)$, and then on the existence of homogeneous 
harmonic functions on $\Omega_{1,\infty}$; we would get that the Poincar\'e
constant is the same as for the half sphere, then that $\Omega_{1,\infty} \cap \d B(0,1)$
is a half sphere, $\Omega_{1,\infty}$ is a half space, and $(u_{1,\infty})_+$ is affine on 
$\Omega_{1,\infty} = \Omega_\infty$.

\begin{proof}
Let us first check that \eqref{e18.1} follows from \eqref{e18.2}.
We claim that
\eqref{e13.1} and \eqref{e18.4} even imply that
\begin{equation} \label{e18.36}
\liminf_{r \to 0} \Phi_{1,1}^0(r) 
= \liminf_{r \to 0}{1 \over r^2} 
\int_{B(0,r)} {|\nabla u_{1,+}|^2 \over |x|^{n-2}} \, dx > 0,
\end{equation}
where the first part comes from the definition \eqref{e16.11}. Indeed if 
$0 \in \d\Omega_1$ and \eqref{e13.1} holds, Theorem \ref{t13.2}
applies to $B(0,r)$ for $r$ small, and says that
\begin{equation} \label{e18.37}
\liminf_{r \to 0} \fint_{B(0,r)} |\nabla u_{1,+}|^2 \geq c_3
\end{equation}
for some constant $c_3 > 0$. But
\begin{eqnarray} \label{e18.38}
\Phi_{1,1}^0(r) &=& 
{1 \over r^2} \int_{B(0,r)} {|\nabla u_{1,+}|^2 \over |x|^{n-2}} \, dx 
\geq {1 \over r^2} \int_{B(0,r) \sm B(0, \eta r)} {|\nabla u_{1,+}|^2 \over |x|^{n-2}}
\nonumber\\
&\geq& {\eta^{n-2}  \over r^n} \int_{B(0,r) \sm B(0, \eta r)} |\nabla u_{1,+}|^2 
= {\eta^{n-2}  \over r^n} \Big\{\int_{B(0,r)} |\nabla u_{1,+}|^2 
-  \int_{B(0,\eta r)} |\nabla u_{1,+}|^2 \Big\}
\nonumber\\
&\geq& {\eta^{n-2}  \over r^n} \int_{B(0,r)} |\nabla u_{1,+}|^2 
-C \eta^{n-2} > c 
\end{eqnarray}
for $r$ small if we choose $\eta>0$ small enough, depending on $c_3$ and 
the Lipschitz constant in \eqref{e15.5}. This proves \eqref{e18.36}.

Now we select a first sequence. Notice that our function $\Psi$
from \eqref{e18.7} is bounded, so 
\begin{equation} \label{e18.39}
L = \limsup_{\rho \to 0} \Psi(\rho)
\end{equation}
is finite. We choose $\{ r_k \}$, tending to $0$, such that
\begin{equation} \label{e18.40}
\lim_{k \to +\infty} \Psi(r_k) = L.
\end{equation}
Then we find a subsequence for which we can apply Corollary 
\ref{t15.3}; this gives a domain $\Omega_\infty$ and a pair
$(\u_\infty,\W_\infty)$ that satisfies \eqref{e18.12}.
By Lemma \ref{t18.1}, 
\begin{equation} \label{e18.41}
\Psi_\infty(1) = \lim_{k \to +\infty} \Psi(r_k r) = L
\ \text{ and } \ 
\Psi_\infty(r) = \lim_{k \to +\infty} \Psi(r_k r) \leq L 
\text{ for } r > 0.
\end{equation}
Now we want to use the proof of Theorem 1.2 in \cite{We} to show that
\begin{equation} \label{e18.42}
\Psi_\infty \text{ is a nondecreasing function on $(0,+\infty)$.}
\end{equation}
There are a few differences with his statement and the present
situation that we need to discuss. First we need to modify a little our definition
of minimizer for $J_\infty$. Set $w = \u_{1,\infty}$. We know from
\eqref{e18.13} that $w \geq 0$ and all the other components are null.
Let $Q \geq 0$ be defined by
\begin{equation} \label{e18.43}
Q^2 = \big[\lambda_1 - \min(0, \lambda_2, \ldots, \lambda_N) \big]_+.
\end{equation}
Thus $Q > 0$ when \eqref{e18.3} holds and $Q = 0$ otherwise.
It turns out that the assumptions \eqref{e13.1} and \eqref{e15.11}
imply \eqref{e18.3}, so $Q > 0$, but we do not need to know this
for the moment.

We claim that $w$ is a \underline{one-phase
ACF minimizer} in $\Omega_\infty$ with coefficient $Q^2$,
and we mean by this that if $w^\ast \in W^{1,2}_{loc}(\R^n)$ is 
such that $w^\ast = 0$ almost everywhere on $\R^n \sm \Omega_\infty$,
and there is a ball $B$ such that $w^\ast = w$ almost everywhere on 
$\R^n \sm B$, then
\begin{equation} \label{e18.44}
\int_B |\nabla w|^2 + Q^2 \big|\big\{x \in B \, ; \, w(x) > 0\big\}\big|
\leq \int_B |\nabla w^\ast|^2 + Q^2 \big|\big\{x \in B \, ; \, w^\ast(x) > 0\big\}\big|.
\end{equation}
The verification is easy. Given a competitor $w^\ast$ for $w$,
we construct a competitor for $(\u_\infty,\W_\infty)$, use the minimality
of this pair, and conclude. That is, we observe that $w^\ast_+$ is
at least as good as $w^\ast$, take $u_1^\ast = w^\ast_+$ and keep
$u_i^\ast = 0$ for $i \geq 2$. We keep $\W^\ast = \W$ on $\R^n \sm B$, 
and on $B$ we distinguish cases. 
If $Q > 0$ and $\min(0, \lambda_2, \ldots, \lambda_N) = \lambda_j$ for some
$j \geq 2$, we take $B \cap W_1^\ast = B \cap \{ w^\ast > 0\}$,
$W_j^\ast = B \sm W_1^\ast$, and all the other $B \cap W_i^\ast$
empty. If $Q > 0$ and $\min(0, \lambda_2, \ldots, \lambda_N) = 0$
we take $B \cap W_1^\ast = B \cap \{ w^\ast > 0\}$ and all the other ones empty,
and if $Q=0$ we take  $B \cap W_1^\ast = B$ and all the other ones empty.
It is easy to see that we could hardly do better, and that the minimality
of $(\u_\infty,\W_\infty)$ yields \eqref{e18.44};
we spare the details.

This definition is almost the same as the one used in \cite{We},
and we are happy because $Q$ is constant and our functional will
be nondecreasing.
It is true that Theorem 1.2 in \cite{We} is stated when $\Omega_\infty = \R^n$,
but its proof also works when $\Omega_\infty$ is an open cone (and changing
$\Omega$ on a set of measure $0$ does not change the fact that $w$
is an AFC minimizer). The main ingredient of the proof consists in taking
a ball $B$ centered at the origin, and testing the competitor $w^\ast$
which is equal to $w$ on $\R^n \sm B$, is continuous across $\d B$,
and is homogeneous of degree $1$ on $B$. The proof uses the fact that
$w$ is Lipschitz (this simplifies the computations), that $w(0)=0$,
and now we need to notice that $w^\ast(x) = 0$ on $\R^n \sm \Omega_\infty$
because $\Omega$ is a cone. The claim \eqref{e18.42} follows.

From \eqref{e18.41} and \eqref{e18.42} we easily deduce that
\begin{equation} \label{e18.45}
\Psi_\infty(r) = L \text{ for } r \geq 1;
\end{equation}
we want to use this to show that
%
\begin{equation} \label{e18.46}
\u_\infty(\lambda x) = \lambda \u_\infty(x) 
\ \text{ for $x\in \R^n$ and } r > 0.
\end{equation}
In fact, it is enough to prove this for $w = (u_{1,\infty})_+$ and
$x$ in the cone $\Omega_\infty$ (because $\u_\infty = 0$
almost everywhere on $\R^n \sm \Omega_\infty$).

Again we just follow the proof given in \cite{We}.
The proof of monotonicity shows that for $0 < s < r$,
\begin{equation} \label{e18.47}
\Psi(r) - \Psi(s) \geq A(s,r),
\end{equation}
where the quantity
\begin{equation} \label{e18.48}
A(s,r) = \int_{t=s}^r t^{-3} \int_{\xi \in \Omega_\infty \cap \d B(0,1)}
\Big[ t \int_{a=0}^t \Big|{\d w \over \d \rho}(a\xi)\Big|^2 da
- \Big\{ \int_{a=0}^t {\d w \over \d \rho}(a\xi) da \Big\}^2
\Big] d\sigma(\xi) dt
\end{equation}
is nonnegative by an application of Cauchy-Schwarz in the $a$-integral. 
Here we know that $\Psi(s) =\Psi(r)$ for $1 < s< r$, and we get that
\begin{equation} \label{e18.49}
t \int_{a=0}^t \Big|{\d w \over \d \rho}(a\xi)\Big|^2 da
= \Big\{ \int_{a=0}^t {\d w \over \d \rho}(a\xi) da \Big\}^2
\end{equation}
for almost every $t \in (s,r)$ and almost every $\xi \in \Omega_\infty \cap \d B(0,1)$.
Thus for such $t$ and $\xi$,
${\d w \over \d \rho}(a\xi)$ is (almost everywhere) constant 
on $[0,t]$; \eqref{e18.46} easily follows from this (recall that $w$ is Lipschitz).
Theorem \ref{t18.2} follows.
\qed
\end{proof}

When $n\leq 3$ and $\Omega_\infty = \R^n$, 
the main theorem of \cite{CJK2} says that the homogeneous
minimizer $\u_\infty$ that we produced for Theorem \ref{t18.2}
has the following simple form: there is a unit vector $e \in \R^n$
such that 
\begin{equation} \label{e18.50}
u_{1,\infty} = Q \max(0,\langle x,e \rangle)
\ \text{ for } x\in \R^n,
\end{equation}
where $Q$ is given by \eqref{e18.43}. Notice that this forces $Q > 0$
(which we could also have obtained by comparing \eqref{e13.1} with
\eqref{e15.3}), because $\u_\infty$ is not null.
Thus we obtained the following result.

\begin{cor} \label{t18.3}
Suppose that $n = 2$ or $3$, that $0$ is an interior point of $\Omega$,
that $0 \in \d \Omega_1 = \d \big\{ u_1(x) > 0 \big\}$, 
and that \eqref{e18.2}, \eqref{e18.4}, and \eqref{e18.6} hold.
Then there is a sequence $\{ r_k \}$  that tend to $0$ 
such that the blow-up limit $\u_\infty$ defined by $\{ r_k \}$ exists
and is such that $\u_{i,\infty} = 0$ for $i > 0$, and \eqref{e18.50} holds
for some unit vector $e$, with
\begin{equation} \label{e18.51}
Q^2 = \lambda_1 - \min(0, \lambda_2, \ldots, \lambda_N) > 0.
\end{equation}
\end{cor}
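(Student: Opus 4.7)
\medskip\noindent{\bf Proof proposal.} The strategy is to reduce the corollary to a direct application of Theorem~\ref{t18.2} together with the classification result from \cite{CJK2}, after verifying that the interior-point hypothesis makes the remaining structural assumptions of Theorem~\ref{t18.2} automatic.

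First I would check that the hypotheses of Theorem~\ref{t18.2} all hold. The conditions \eqref{e18.2}, \eqref{e18.4}, \eqref{e18.6} are assumed directly, and $0\in\partial\Omega_1$ is assumed. Since $0$ is an interior point of $\Omega$, we have $B(0,r_0)\subset\Omega$ for some $r_0>0$, so for any sequence $r_k\to 0$ the rescaled domains $\Omega_k=r_k^{-1}\Omega$ eventually contain every compact subset of $\R^n$; this makes the two conditions of Lemma~\ref{t16.1} trivially satisfied near $0$, and forces every admissible limit set $\Omega_\infty$ to equal $\R^n$ almost everywhere. In particular \eqref{e18.5} and the cone assumption \eqref{e18.35} hold for free. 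Thus Theorem~\ref{t18.2} applies and produces a sequence $\{r_k\}\to 0$ along which a nontrivial blow-up limit $\u_\infty$ exists, yields a local minimizer of $J_\infty$ in the sense of \eqref{e18.12}, and is homogeneous of degree $1$, with associated limit domain $\Omega_\infty=\R^n$.

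Next I would invoke \eqref{e18.13}: since Theorem~\ref{t18.2}'s proof already establishes \eqref{e18.1} from \eqref{e18.2} (via \eqref{e18.36}--\eqref{e18.38}), and \eqref{e18.6} is assumed, we obtain $u_{i,\infty}=0$ for $i\geq 2$ and $u_{1,\infty}\geq 0$. Writing $w=u_{1,\infty}$, the minimality argument in the proof of Theorem~\ref{t18.2} (culminating in \eqref{e18.44}) shows that $w$ is a one-phase ACF minimizer on $\R^n$ with constant coefficient $Q^2=[\lambda_1-\min(0,\lambda_2,\ldots,\lambda_N)]_+$, and $w$ is homogeneous of degree $1$.

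At this stage I would apply the main theorem of \cite{CJK2}, which (because $n\leq 3$ and $\Omega_\infty=\R^n$) classifies nontrivial homogeneous degree-$1$ one-phase ACF minimizers as being of the form $w(x)=Q\max(0,\langle x,e\rangle)$ for some unit vector $e$. This yields \eqref{e18.50}. Finally, since $\u_\infty$ is nontrivial (this is the content of Theorem~\ref{t18.2}), we must have $Q>0$, which rules out the case $\lambda_1\leq\min(0,\lambda_2,\ldots,\lambda_N)$ and gives \eqref{e18.51} with strict inequality. The main obstacle is really just organizational: all the analytic work has been packaged into Theorem~\ref{t18.2} and into \cite{CJK2}, so the proof of the corollary amounts to verifying that the interior hypothesis removes the awkward geometric assumption \eqref{e18.35} and that the $\lambda_i$-dependence of $Q$ is the one dictated by \eqref{e18.43}.
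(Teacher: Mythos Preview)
Your proposal is correct and follows essentially the same approach as the paper: verify that the interior-point assumption trivializes \eqref{e18.5} and \eqref{e18.35}, apply Theorem~\ref{t18.2} to obtain a nontrivial homogeneous one-phase ACF minimizer in $\R^n$, then invoke \cite{CJK2} to classify it and observe that nontriviality forces $Q>0$. The paper's own proof is just the short paragraph immediately following the statement, and you have simply spelled out the verifications in more detail.
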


\ms
Recall that the $\lambda_i$ come from \eqref{e15.11}.
We removed the assumptions \eqref{e18.5} and \eqref{e18.35} because 
they are trivial when $0$ is an interior point of $\Omega$.

The description of $\u_\infty$ by \eqref{e18.50} implies that the free boundary
$\d\Omega_1$ has some flat blow-up limits at the origin, as in the following.

\begin{lem} \label{t18.4}
If $\{ r_k \}$ is as in Corollary \ref{t18.3}, then for each 
$R<0$ there exist numbers $\beta_k > 0$ such that 
$\lim_{k \to +\infty} \beta_k = 0$ and
\begin{equation} \label{e18.52}
|\langle x, e \rangle| \leq \beta_k r_k 
\ \text{ for $x\in B(0,r_k R) \cap \d\Omega_1$.}
\end{equation}
\end{lem}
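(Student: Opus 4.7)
\smallskip\noindent\textbf{Proof plan for Lemma \ref{t18.4}.}
The plan is to proceed by contradiction, working in the rescaled coordinates $y=x/r_k$. If the conclusion fails, then for some $R>0$ there is an $\alpha>0$ and, after passing to a subsequence of $\{r_k\}$, points $x_k\in B(0,r_kR)\cap \d\Omega_1$ with $|\langle x_k,e\rangle|>\alpha r_k$. Setting $y_k=x_k/r_k$, we obtain $y_k\in B(0,R)\cap\d\Omega_{1,k}$ with $|\langle y_k,e\rangle|>\alpha$, where $\Omega_{1,k}=r_k^{-1}\Omega_1=\{v_k>0\}$ and $v_k(y)=r_k^{-1}u_{1,+}(r_ky)$. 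By compactness, a further subsequence gives $y_k\to y_\infty\in\overline{B(0,R)}$ with $|\langle y_\infty,e\rangle|\geq\alpha$. We split into two cases according to the sign of $\langle y_\infty,e\rangle$, and in each we will derive a contradiction with the uniform convergence $v_k\to v_\infty(y)=Q(\langle y,e\rangle)_+$ given by Corollary \ref{t18.3} (recall $Q>0$ by \eqref{e18.51}).

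First I would treat the easy case $\langle y_\infty,e\rangle\geq\alpha$. Here $v_\infty(y_\infty)=Q\alpha>0$. On the other hand, since $y_k\in\d\Omega_{1,k}$ and $v_k$ is continuous and nonnegative, $v_k(y_k)=0$. But $v_k$ is uniformly Lipschitz (by \eqref{e15.5} and the fact that the Lipschitz constant is preserved under the normalization \eqref{e15.2}) and converges uniformly to $v_\infty$ on compact subsets of $\R^n$, so
\[
|v_\infty(y_\infty)-v_k(y_k)|\leq |v_\infty(y_\infty)-v_\infty(y_k)|+\|v_k-v_\infty\|_{L^\infty(B(0,R+1))}\longrightarrow 0,
\]
contradicting $v_\infty(y_\infty)=Q\alpha>0$.

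The genuinely harder case is $\langle y_\infty,e\rangle\leq-\alpha$, where $v_\infty\equiv 0$ on the ball $B(y_\infty,\alpha/2)$, so uniform convergence gives $\|v_k\|_{L^\infty(B(y_\infty,\alpha/2))}\to 0$. To reach a contradiction I would apply the nondegeneracy Theorem \ref{t13.1} to the rescaled minimizer $(\u_k,\W_k)$ at the boundary point $y_k$. For this, one first observes using Lemma \ref{t15.2} that $(\u_k,\W_k)$ is a minimizer of $J_k$, and that the hypothesis \eqref{e13.1} transports to $J_k$ with the \emph{same} constant $\lambda$: if $A'\subset W_{1,k}$ has $|A'|\leq r_k^{-n}\varepsilon$, then $A=r_kA'\subset W_1$ satisfies $|A|\leq\varepsilon$, and the $r_k^{-n}$-normalization of $F_k$ in \eqref{e15.20} gives \eqref{e13.1} for $F_k$ with threshold $r_k^{-n}\varepsilon\to+\infty$. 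The other hypotheses of Theorem \ref{t13.1} are uniform in $k$ because $\|f_{i,k}\|_\infty=r_k^2\|f_i\|_\infty\to 0$, $\|g_{i,k}\|_\infty=r_k\|g_i\|_\infty\to 0$, and $\fint_{B(y_k,\alpha/8)}|\nabla v_k|^2\leq C_0^2$ by the uniform Lipschitz bound. Since $y_k\in B(0,R/2)\cap\d\Omega_{1,k}$ for $k$ large, \eqref{e13.6} gives, for $k$ large enough so that $\alpha/8\leq\min(1,(r_k^{-n}\varepsilon)^{1/n})$,
\[
\fint_{B(y_k,\alpha/8)}v_k^2\;\geq\;c_1(\alpha/8)^2.
\]
But $B(y_k,\alpha/8)\subset B(y_\infty,\alpha/4)\subset B(y_\infty,\alpha/2)$ for $k$ large, so the left-hand side is bounded by $\|v_k\|_{L^\infty(B(y_\infty,\alpha/2))}^2\to 0$, which is the desired contradiction.

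The main obstacle is the one addressed in the third paragraph: checking that the nondegeneracy condition \eqref{e13.1} and the quantitative lower bound of Theorem \ref{t13.1} survive the blow-up with constants that do not deteriorate. Once the scaling bookkeeping is in place, the rest is a direct application of the uniform convergence $v_k\to v_\infty$ together with the structure of $v_\infty$ provided by Corollary \ref{t18.3}.
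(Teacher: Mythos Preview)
Your proof is correct and uses the same two ingredients as the paper: uniform convergence $v_k\to v_\infty$ handles the half-space $\langle y,e\rangle>0$, and the nondegeneracy estimate of Theorem~\ref{t13.1} handles the other half. The difference is only in packaging. You argue by contradiction and apply Theorem~\ref{t13.1} to the \emph{rescaled} minimizer $(\u_k,\W_k)$, which forces you to check that the hypothesis \eqref{e13.1} and the constants of Theorem~\ref{t13.1} survive the dilation (your third paragraph). The paper instead argues directly and applies Theorem~\ref{t13.1} to the \emph{original} minimizer at the original point $x\in\d\Omega_1$ with the small radius $\eta r_k$: this gives a point $z\in B(x,\eta r_k)$ with $u_1(z)\geq c_1^{1/2}\eta r_k$, hence $v_k(r_k^{-1}z)\geq c_1^{1/2}\eta$, and uniform convergence forces $\langle r_k^{-1}z,e\rangle>0$ for $k$ large, so $\langle x,e\rangle\geq -\eta r_k$. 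This avoids the rescaling bookkeeping entirely and is a bit cleaner, but your route is perfectly valid and the scaling verification you give is correct.
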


\begin{proof}
Let $R > 0$ be given, and set 
$\varepsilon_k = ||u_{1,\infty} - u_{1,k}||_{L^\infty(B(0,2R)}$
for $k \geq 0$. Thus $\varepsilon_k$ tends to $0$.
Let $k \geq 0$ and $x\in B(0,r_k R) \cap \d\Omega_1$
be given. Set $y= r_k^{-1} y \in B(0,R)$, and observe that $u_{1,k}(y) 
= r_k^{-1} u_1(x) = 0$. Thus $u_{1,\infty}(y) \leq \varepsilon_k$,
and by \eqref{e18.50} $\langle y,e \rangle \leq Q^{-1} \varepsilon_k$.
Thus $\langle x,e \rangle \leq Q^{-1} \varepsilon_k r_k$.

Now let $\eta > 0$ be small, and apply Theorem \ref{t13.1} to the ball 
$B(x, \eta r_k)$. This is possible, because we assumed that \eqref{e13.1}
holds and as soon as $r_k$ is small enough. We get that
$\fint_{B(x,\eta r)} |u_{1,+}|^2 \geq c_1 (\eta r_k)^2$,
by \eqref{e13.6}. By Chebyshev, we can choose $z\in B(x,\eta r_k)$ such
that $u_{1}(z) \geq c_1^{1/2} \eta r_k$. Set $w = r_k^{-1} y$;
then $z\in r_k^{-1} B(x,\eta r_k) \i B(0,2R)$ and
$u_{1,k}(w) = r_k^{-1}u_1(z) \geq c_1^{1/2} \eta$.
If $k$ is so large (depending on $\eta$) that 
$\varepsilon_k < c_1^{1/2} \eta$, we get that
$u_{1,\infty}(w) > 0$, hence $\langle w,e \rangle >0$ by \eqref{e18.50} and
$\langle z,e \rangle >0$ too. Then $\langle x,e \rangle \geq - \eta r_k$ for $k$
large. This holds for every $\eta > 0$; the lemma follows.
\qed
\end{proof}

Of course it would be better to know that
all the blow-up limits of $\u$ are as $\u_\infty$ above, or that
all the blow-up limits of $\d\Omega_1$ are hyperplanes, but here the
presence of the other components $\Omega_\varphi$ seems to make it harder
to prove better estimates, even though their contribution is small
by \eqref{e18.6}. The situation will be better in 
Section \ref{good2}, because none of the other $\Omega_\varphi$
are allowed to touch $0$.

Notice that the flatness of $\u$ and $\d \Omega_1$ at some small
scales is often the entry point for further regularity results, but we do not
know whether we still can prove such regularity results without further 
assumptions on the other components. See the comments in Section~\ref{good2}.

\ms
There is another case when we can get the same control on 
the blow-up limit $\u_\infty$ as in Corollaries \ref{t16.4}
and \ref{t18.3}, and this is when we know that the free boundary is flat
at the origin.

\begin{pro} \label{t18.5}
Suppose that $0 \in \d \Omega_1$, and that \eqref{e18.2}, \eqref{e18.4}, 
and \eqref{e18.5} hold. Let $\{ r_k \}$ be a sequence that tends to $0$ and such that 
\begin{equation} \label{e18.53}
v(x) = \lim_{k \to +\infty} [u_{1,k}(x)]_+
\ \text{ exists for $x\in \R^n$.}
\end{equation}
Also suppose that there is a unit vector $e \in \R^n$ and, for each
$R > 0$, numbers $\beta_k > 0$ 
such that $\lim_{k \to +\infty} \beta_k = 0$ and \eqref{e18.52} holds.
Then there is a constant $a \neq 0$ such that
\begin{equation} \label{e18.54}
v(x) = \max(0,\langle x, ae \rangle)
\ \text{ for } x\in \R^n.
\end{equation}
If in addition $0$ is an interior point of $\Omega$ and the $\u_k(x)$ converge to some
limit $\u_\infty(x)$, then conclusion of either Corollary~\ref{t16.4}
or Corollary~\ref{t18.3} holds.
\end{pro}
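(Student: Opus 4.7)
The plan is to combine the flatness hypothesis \eqref{e18.52} with the nondegeneracy consequences of \eqref{e18.2} to recognize $v$ as the positive part of a linear function on a half-space, and then use Liouville.

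First I would extract a subsequence so that $\u_k$ converges uniformly on compact sets to some $\u_\infty$ (using the uniform Lipschitz bound \eqref{e15.5} and Arzel\`a-Ascoli), and so that Lemma \ref{t16.1} applies; then $v = [u_{1,\infty}]_+$. Set $\Omega_{1,k} = r_k^{-1}\Omega_1$, and translate \eqref{e18.52} to this scale: $\d\Omega_{1,k}\cap B(0,R) \subset \{|\langle y,e\rangle|\le \beta_k\}$. The open set $B(0,R)\setminus\{|\langle y,e\rangle|\le \beta_k\}$ has two connected components $B^\pm_k$; since $\Omega_{1,k}$ is open with boundary disjoint from $B^\pm_k$, each of them is either contained in $\Omega_{1,k}$ or disjoint from it. Applying Theorems \ref{t13.1} and \ref{t13.4} to the original minimizer at the origin (note $0\in\d\Omega_1$) and rescaling, both $|\Omega_{1,k}\cap B(0,R)|$ and $|B(0,R)\setminus \Omega_{1,k}|$ are bounded below by $cR^n$; since $|\{|\langle y,e\rangle|\le\beta_k\}|\to 0$, for $k$ large exactly one of $B^\pm_k$ lies in $\Omega_{1,k}$. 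Passing to a subsequence and replacing $e$ by $-e$ if necessary (which only changes the sign of the eventual $a$), assume $B^+_k\subset \Omega_{1,k}$.

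The second step is to identify $v$ analytically. Since $v_k = [u_{1,k}]_+$ vanishes on $B^-_k$ for $k$ large, passing to the uniform limit gives $v\equiv 0$ on $\{\langle y,e\rangle<0\}$, hence on $\{\langle y,e\rangle\le 0\}$ by continuity. On the positive half-space $H^+=\{\langle y,e\rangle>0\}$, every compact set is eventually contained in $B^+_k \subset \Omega_{1,k}$, where $u_{1,k}$ satisfies \eqref{e9.6} rescaled, namely $\Delta u_{1,k} = f_{1,k} u_{1,k} - g_{1,k}/2$ with $\|f_{1,k}\|_\infty, \|g_{1,k}\|_\infty \to 0$ by \eqref{e15.19}. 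Passing to the distributional limit, $\Delta v = 0$ on $H^+$. Thus $v$ is a nonnegative Lipschitz harmonic function on $H^+$, continuous on $\overline{H^+}$, vanishing on $\d H^+$, with $v(0)=0$.

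The third step is to apply Liouville. Reflect $v$ oddly across $\d H^+$ by setting $V(x) = v(x)$ for $x\in \overline{H^+}$ and $V(x) = -v(x - 2\langle x,e\rangle e)$ for $x\in H^-$; gluing and the vanishing boundary values yield $V\in C(\R^n)$, harmonic on $H^+\cup H^-$, and harmonic across $\d H^+$ by the standard Schwarz reflection principle (one can alternatively verify it against test functions). Since $V$ is globally Lipschitz with $V(0)=0$, it has at most linear growth, and by the Liouville theorem for harmonic functions with linear growth, $V$ is affine, hence linear since $V(0)=0$. The odd symmetry in $\langle x,e\rangle$ forces $V(x)=a\langle x,e\rangle$ for some $a\in\R$, so $v(x)=\max(0,a\langle x,e\rangle)$ on $\R^n$. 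To rule out $a=0$, apply \eqref{e13.6} to the ball $B(0,r_k R)$ (valid since $0\in\d\Omega_1$) to get $\fint_{B(0,r_kR)} u_{1,+}^2 \ge c_1 (r_kR)^2$, and rescaling yields $\fint_{B(0,R)} v_k^2 \ge c_1 R^2$; passing to the limit gives $\fint_{B(0,R)} v^2 \ge c_1 R^2>0$, so $a\neq 0$. This gives \eqref{e18.54} with the announced sign convention absorbed into the sign of $a$.

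For the last sentence, assume in addition that $0\in\mathrm{int}(\Omega)$ and $\u_k\to\u_\infty$. If \eqref{e18.6} fails, i.e.\ some $L(\varphi_1,\varphi_2)>0$, Corollary \ref{t16.4} directly applies and gives its conclusion. If \eqref{e18.6} holds, then the argument around \eqref{e18.13} (combined with \eqref{e18.36}, which follows from \eqref{e13.1} via Theorem \ref{t13.2}) yields $u_{1,\infty}\ge 0$ and $u_{i,\infty}=0$ for $i\ge 2$, so $u_{1,\infty}=v=\max(0,a\langle x,e\rangle)$; identifying the scalar $a$ with the correct $Q$ of \eqref{e18.51} follows from the first-variation computation developed in Section \ref{vari} (the same identification is used implicitly at the end of the proof of Corollary \ref{t18.3}), and we recover the conclusion of Corollary \ref{t18.3} without the restriction $n\le 3$ since the flatness we would have deduced from \cite{CJK2} has been supplied as a hypothesis. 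The main obstacle is the consistent choice of side in step one: ruling out the possibility that $\Omega_{1,k}$ swings from $B^+_k$ to $B^-_k$ along the sequence, which is exactly where the two-sided nondegeneracy supplied by Theorems \ref{t13.1} and \ref{t13.4} is essential.
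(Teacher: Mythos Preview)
Your proof is correct and follows the same overall strategy as the paper: identify that $v$ vanishes on one half-space and is harmonic on the other, then use odd reflection and Liouville to conclude it is affine. The tactical execution differs in two places worth noting. First, to decide the sides, the paper argues pointwise via Theorem~\ref{t13.3} (the lower bound $u_1(x)\ge c_5\,\delta(x)$) and rules out ``both sides positive'' by invoking the local minimality of the blow-up limit (a harmonic extension would beat $u_{1,\infty}$); you instead use the connected-component argument together with the two-sided volume bounds from Theorems~\ref{t13.1} and~\ref{t13.4}, which is cleaner and avoids using minimality of the limit at that stage. Second, you obtain harmonicity of $v$ on $H^+$ by passing the rescaled equation~\eqref{e9.6} to the limit directly, whereas the paper reads it off from the fact that $(\u_\infty,\W_\infty)$ is a local minimizer of $J_\infty$.

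One small point to tighten: Theorem~\ref{t13.4} requires $B(0,r)\subset\Omega$, which fails when $0\in\d\Omega$. In that case you should instead invoke \eqref{e16.1} (contained in your hypothesis \eqref{e18.5}) to get $|B(0,r_kR)\setminus\Omega|\ge \tau(r_kR)^n$, which a fortiori bounds $|B(0,r_kR)\setminus\Omega_1|$ from below; when $0\in\mathrm{int}(\Omega)$ your citation of Theorem~\ref{t13.4} is fine for $k$ large. With this case split made explicit, your argument goes through, and the final paragraph matches the paper's dichotomy and its appeal to the first-variation identity in Section~\ref{vari}.
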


\ms\begin{proof}  
Let $\{ r_k \}$ be as in the statement, and as usual we use Lemma \ref{t16.1} 
to replace it with a subsequence for which we can apply Corollary \ref{t15.3}
(see the discussion at the beginning of Section \ref{2phases}).
This gives a limit $\u_\infty$ and a $N$-uple $\W_\infty$
such that $(\u_\infty,\W_\infty)$ is a local minimizer for $J$,
as in \eqref{e18.12} and \eqref{e15.40}. 
Naturally, $v = [u_{1,\infty}]_+$, so we look for a good description
of $\u_\infty$.

We want to use \eqref{e18.52} and the nondegeneracy results of Section \ref{good}
to show that $v > 0$ on one side of the hyperplane 
$H = \big\{ x\in \R^n \, ; \,  \langle x, e \rangle = 0 \big\}$, and
$v=0$ on the other side.

Let $x\in \R^n \sm H$ be given, and choose $R > 0$ such that
$x\in B(0,R/2)$ and $\delta > 0$ such that $B(x,2\delta) \i B(0,R)\sm H$.
Then set $x_k = r_k y$. Thus $x_k \in B(0,r_k r)$ and 
$\dist(x_k,H) \geq 2\delta r_k$. If $k$ is so large
that $\beta_k < \delta$, \eqref{e18.52} says that $B(x_k,\delta r_k)$
does not meet $\d \Omega_1$.
One possibility is that $B(x_k,\delta r_k) \i \R^n \sm \Omega_1$;
then $u_{1} \leq 0$ on $B(x_k,\delta r_k)$ and 
$u_{1,k} \leq 0$ on $B(x,\delta)$. If this happens for an infinite number
of values of $k$, we get that $u_{1,\infty} \leq 0$ and $v=0$ on $B(x,\delta)$.

Otherwise, for an infinite number of values of $k$, we get that 
$B(x_k,\delta r_k) \i \Omega_1$.
We want to apply Theorem \ref{t13.3} to the point $x_k$.
The assumption \eqref{e13.1} comes from \eqref{e18.2},
$\delta(x_k) = \dist(x_k, \R^n \sm \Omega_1) \leq r_k R$
(see \eqref{e13.39} and recall that $0 \notin \Omega_1$),
so $\u$ is Lipschitz on $B(x,\delta(x_k))$, with a constant that
does not depend on $k$, by \eqref{e16.5}. 
Since $\delta(x_k) \leq r_k R \leq \min(\delta^{1/n},1)$
for $k$ large (recall that $\varepsilon$ is a constant that 
comes from \eqref{e13.1}), \eqref{e13.40} says that
$u_1(x_k) \geq c_5 \delta(x_k) \geq c_5  r_k \delta$. Then
$u_{1,k}(x) = r_k^{-1} u_1(x_k) \geq c_5 \delta$.
Since this happens for infinitely many $k$, we get that
$u_{1,\infty}(x) \geq c_5 \delta$ as well. In this second
case, there is a small ball centered at $x$ on which 
$u_{1,\infty} > 0$. So, on each component of $\R^n \sm H$,
$v = [u_{1,\infty}]_+$ is either always positive, always $0$.

We claim that the case when $u_{1,\infty} \leq 0$ on both sides of $H$ is impossible.
Indeed $0 \in \d \Omega_1$, so for $k$ large we can apply Theorem \ref{t13.1} to the
ball $B(x,r_k)$, and use \eqref{e13.6} and Chebyshev to find 
$y_k \in B(x,r_k)$ such that $u_1(y_k) \geq c_1^{1/2} r_k$;
we can then extract a new sequence so that $r_k^{-1} y_k$ has a limit
$y_\infty \in \overline B(0,1)$, and then
$u_{1,k}(y_\infty) \geq u_1(y_k) - C |y_\infty - y_{k}|
\geq c_1^{1/2} r_k/2$ for $k$ large, hence $u_{1,k}(y_\infty) > 0$.

The case when $u_{1,\infty} > 0$ on both sides of $H$ is impossible as
well. Indeed, if this happens, $W_{1,\infty} = \R^n$ modulo a set of measure
$0$, then $0$ is an interior point of $\Omega$ (because otherwise \eqref{e16.1}
would give big chunks of $\R^n \sm \Omega$ in each small ball $B(0,r_k)$,
and this would stay true for the limit $\Omega_\infty$, by \eqref{e15.8}.
Then replacing $\u_{1,\infty}$ by the harmonic extension of its values 
on $\d B(0,1)$ would be licit, and make a strictly better competitor than 
$\u_{1,\infty}$ (notice that the two are really different, because 
$\u_{\infty}(0) = 0$), a contradiction with the minimality of $(\u_\infty,\W_\infty)$.

Denote by $U_+$ the component of $\R^n \sm H$ where $\u_{1,\infty}(x) >0$.
We know that $v = [\u_{1,\infty}]_+$ is harmonic on $U_+$, because 
$(\u_\infty,\W_\infty)$ is a local minimizer for $J$, and also that it
is Lipschitz and vanishes at $0$. It is then easy to show that 
$v$ is affine on $U_+$ (use a reflection to get a harmonic function
$\wt v$ in $\R^n$, and then write the Poisson formula for $\nabla \wt v$ on huge balls 
to find out that $\nabla^2 \wt v = 0$). So $v$ is given by \eqref{e18.54} for some
$a \neq 0$.

We continue a little further. If \eqref{e16.52} holds, we can apply Corollary \ref{t16.4},
$\u_\infty$ satisfies its conclusions, and we are happy (because the function
$\u_\infty$ that we study now coincides with the $\u_\infty$ that appears
in the statement). Otherwise, for 
$\varphi_1 = (1,1)$ and each choice of $\varphi_2 \neq \varphi_1$
and $R > 0$,
\begin{eqnarray} \label{e18.55}
0 &=& L(\varphi_1,\varphi_2) 
= \lim_{\rho \to 0} \Phi^{0}_{\varphi_1}(\rho)\Phi^{0}_{\varphi_2}(\rho)
\nonumber \\
&=&\lim_{k \to +\infty} \Phi^{0}_{\varphi_1}(r_k R) \, \Phi^{0}_{\varphi_2}(r_k R)
= \Phi_{\varphi_1,\infty}(R) \, \Phi_{\varphi_2,\infty}(R)
\end{eqnarray}
by \eqref{e16.19} and \eqref{e16.14}.
Now $\Phi_{\varphi_1,\infty}(R) = C > 0$, by direct computation with 
\eqref{e18.54}, so we are left with $\Phi_{\varphi_2,\infty}(R)=0$,
and hence $u_{\varphi, \infty} = 0$ (see the definition \eqref{e16.13}).
That is, $\u_\infty$ is given by the same sort of formula as in 
Corollary \ref{e18.3}, we just need to check that $|a|$ in \eqref{e18.54}
is the same as $Q$ in \eqref{e18.51}, and this is where we need our extra
assumption that $0$ is an interior point of $\Omega$.

Indeed, otherwise it could be that $\d\Omega$ is smooth near $0$,
$\Omega_1 \cap B(0,r) = \Omega \cap B(0,r)$ for $r$ small, and the
normal derivative of $u$ along $\d \Omega$ is very large, due to a
large pressure coming from outside of $B(x,r)$. In other words, we are not
free to add space to $\Omega_1$ on the other side of $\Omega$, so the
first variation computation that leads to \eqref{e18.51} is not available
when $0 \in \d\Omega$.

So we assume that $0$ is an interior point of $\Omega$, and then 
fact that $(\u_\infty,\W_\infty)$ is a local minimizer for the functional $J$
(as in \eqref{e18.12} and \eqref{e15.40}), and the discussion
near \eqref{e18.43}, show that $v$ is a one-phase ACF minimizer, 
associated to the possibly different $Q_+ = \max(0,Q)$ (see \eqref{e18.43}), 
and to the domain $\Omega_\infty = \R^n$.  
In addition, $Q_+ > 0$ because \eqref{e18.54} holds for some
$a \neq 0$, and if $Q_+ = 0$ we can replace $v$ by its harmonic extension near
$0$, win on the energy, and not lose on the volume. So $Q > 0$ too. Finally, the 
fact that $|a| = Q$ comes from the fact that $v$ is a one-phase ACF minimizer
and a classical first variation computation. See Section \ref{vari}, near \eqref{e20.10}.
\qed
\end{proof}

\ms
\begin{rem} \label{t18.6}
An obvious defect of Proposition \ref{t18.5} (compared with Corollary \ref{t18.3}
for instance) is that we cannot be sure in advance that the assumption will
be satisfied for a given origin $x_0$ (so far called $0$). But Proposition \ref{t17.3}
gives a lot of points $x_0$ that it could be applied to. Indeed, under the current assumptions
(including \eqref{e10.2} and \eqref{e13.1} for $i=1$), Proposition~\ref{t17.3}
implies that $\d\Omega_1$ has a tangent plane at $\H^{n-1}$-almost every 
point $x_0$. This is not hard; it uses the fact that locally uniformly rectifiable sets
have tangent planes at almost every point. We could also use 
\eqref{e17.26}, and the fact that if a locally Ahlfors-regular set
has an approximate tangent plane at some point, it has a true tangent plane at
that point. See for instance Exercise 41.21 in \cite{D}.
Anyway, Proposition \ref{t18.5} applies to almost every point 
$x_0 \in \d\Omega_1 \sm \d \Omega_0$, and with any sequence $\{ r_k \}$
for which the $\u_k$ converges. 
\end{rem}

\section{Local regularity when all the indices are good}   \label{good2}

In this section we suppose that all the concerned indices are good,
and reduce the study of our minimizer $(\u,\W)$ near a point (say, the origin) to
the study of minor variants of the Alt, Caffarelli, and Friedman's 
free boundary problems, with just one or two phases. Most of the section here
will follow at once from results of Sections \ref{2phases} and \ref{1phase}.

More precisely, we shall still assume that 
\begin{equation} \label{e19.1}
\text{$\u$ satisfies \eqref{e15.3} and \eqref{e15.5}, the $f_i$ and $g_i$ satisfy \eqref{e15.4}, and $F$ satisfies \eqref{e15.11},}
\end{equation}
as in the previous sections, and now that, for each index $i$ such that
\begin{equation} \label{e19.2}
\text{$0$ lies in the boundary of } \big\{ x\in \R^n \, ; \, u_i(x) \neq 0 \big\},
\end{equation}
there exist $\lambda > 0$ and $\varepsilon > 0$ such that
\begin{equation} \label{e19.3}
\text{the analogue of \eqref{e13.1} for the index $i$ holds.}
\end{equation}
This is more brutal than in the previous sections, but
not a shocking thing to ask; for instance, this holds if
$F$ is given by \eqref{e1.7} with $q_i \geq c > 0$,
or by \eqref{e1.6} with $a > 0$ and $b\geq 0$, where in both cases
we can trade against the empty set. Of course this assumption will
simplify our life, because we won't have to worry about phases of $\u$
that may be small near $0$, but not really vanish.

When $0$ is not an interior point of $\Omega$, we shall also assume that
\begin{equation} \label{e19.4}
\Omega \text{ satisfies the assumptions of Lemma \ref{t16.1}.}
\end{equation}
With these assumptions, we can already reduce the number of phases 
that live near the origin. 
For $\varphi = (i,\varepsilon)\in I = [1,N] \times \{ -1, +1 \}$, set
\begin{equation} \label{e19.5}
\Omega_\varphi = \big\{ x\in \R^n \, ; \, \varepsilon u_i(x) > 0 \big\},
\end{equation}
and the denote by $I(0)$ the set of $\varphi \in I$ such that
$0 \in \d \Omega_\varphi$. 

\begin{lem} \label{t19.1}
Under the assumptions above, $I(0)$ has at most two elements 
if $0$ lies in the interior of $\Omega$, and at most one if $0 \in \d \Omega$.
\end{lem}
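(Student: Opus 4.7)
\medskip\noindent\textbf{Proof plan for Lemma \ref{t19.1}.}
The argument is by blow-up and compactness, using the two-phase ACF classification of Theorem~\ref{t16.3}. Given any sequence $\{r_k\}$ tending to $0$, I first invoke Lemma~\ref{t16.1} (available by \eqref{e19.4}, and trivial when $0$ is an interior point) to extract a subsequence along which the hypotheses of Corollary~\ref{t15.3} are satisfied. This produces a regular blow-up limit $(\u_\infty,\W_\infty)$ that is a local minimizer of $J_\infty$ on $\F(\R^n,\Omega_\infty)$, where $\Omega_\infty=\R^n$ if $0\in\mathrm{int}(\Omega)$, and otherwise $\Omega_\infty$ is an open cone with $|B(0,r)\setminus\Omega_\infty|>0$ for every $r>0$ (this follows from \eqref{e16.5} applied at the origin, once one notes that $0\in\d\Omega_k$ for all $k$, hence $0$ lies in the Hausdorff limit $Z=\d\Omega_\infty$).

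Next, for each phase $\varphi=(i,\varepsilon)\in I(0)$, assumption \eqref{e19.3} together with the Lipschitz bound \eqref{e15.5} lets me apply Theorem~\ref{t13.1}, yielding $\fint_{B(0,r)}v_\varphi^2\geq c\,r^2$ for all sufficiently small $r$. Rescaling by $r_k$ gives $\fint_{B(0,1)}v_{\varphi,k}^2\geq c$, and since the $\u_k$ are uniformly Lipschitz on every fixed ball, the convergence $v_{\varphi,k}\to v_{\varphi,\infty}$ is uniform on compacts, so $\fint_{B(0,1)}v_{\varphi,\infty}^2\geq c$ and in particular $v_{\varphi,\infty}\not\equiv 0$. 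Being a nonzero nonnegative Lipschitz function that is harmonic on its positive set (because $\u_\infty$ locally minimizes $J_\infty$, whose $M$-term is zero), $v_{\varphi,\infty}$ has $\nabla v_{\varphi,\infty}\neq 0$ on a set of positive measure, so $\Phi_{\varphi,\infty}(r)>0$ for every sufficiently large $r$. For any two distinct $\varphi_1,\varphi_2\in I(0)$, the constancy relation \eqref{e16.20} combined with $\Phi_{\varphi_j,\infty}(r)>0$ for large $r$ then forces $L(\varphi_1,\varphi_2)>0$.

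Pick two distinct $\varphi_1,\varphi_2\in I(0)$ and apply Theorem~\ref{t16.3} to $v_1=v_{\varphi_1,\infty}$ and $v_2=v_{\varphi_2,\infty}$: the exclusion relation $v_1v_2=0$ holds because the $W_{i,\infty}$ are disjoint (and different sign-phases of the same $u_i$ are automatically disjoint), and the remaining hypotheses were just verified. The theorem produces a unit vector $e$ and positive constants $a_1,a_2$ with $v_j(x)=a_j\max(0,(-1)^{j+1}\langle x,e\rangle)$, so the positive sets of $v_1,v_2$ are complementary open half-spaces whose union covers $\R^n$ up to a hyperplane. If $0$ is an interior point and there were a third $\varphi_3\in I(0)$, the second step would give $v_{\varphi_3,\infty}\not\equiv 0$, yet its positive set $\Omega_{\varphi_3,\infty}$ must be disjoint from those of $v_1$ and $v_2$, a contradiction; hence $|I(0)|\leq 2$. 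If $0\in\d\Omega$, already having two phases is impossible: $v_1$ and $v_2$ vanish a.e.\ on $\R^n\setminus\Omega_\infty$, so $\Omega_\infty$ would have to contain both open half-spaces up to a null set, contradicting $|B(0,r)\setminus\Omega_\infty|>0$; hence $|I(0)|\leq 1$.

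The main technical point is the transfer of the quantitative nondegeneracy of $v_\varphi$ near $0$ (from Theorem~\ref{t13.1}) to the nontriviality of the blow-up $v_{\varphi,\infty}$, done uniformly for all $\varphi\in I(0)$ via the uniform Lipschitz bound; once this is in hand, the rigidity of the ACF equality case (Theorem~\ref{t16.3}) leaves no room combinatorially for more than two phases in the interior case or more than one phase on the boundary.
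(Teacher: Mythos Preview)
Your argument is correct and follows the same overall strategy as the paper: good index $\Rightarrow$ nondegeneracy $\Rightarrow L(\varphi_1,\varphi_2)>0$ $\Rightarrow$ ACF rigidity leaves no room for a third phase (or a second phase when $0\in\d\Omega$). The paper's proof is slightly more economical in its packaging: rather than constructing a blow-up limit and transferring the $L^2$ nondegeneracy of Theorem~\ref{t13.1} to $v_{\varphi,\infty}$, it invokes \eqref{e18.36} (which rests on the energy lower bound of Theorem~\ref{t13.2}) to obtain $\liminf_{r\to 0}\Phi^0_\varphi(r)>0$ for each $\varphi\in I(0)$ directly at the original scale; since the limit $L(\varphi_1,\varphi_2)$ exists by \eqref{e16.19}, positivity follows at once. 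Corollary~\ref{t16.4} is then used as a black box: part~(ii) forces $\Phi^0_\varphi(r)\to 0$ for any $\varphi\notin\{\varphi_1,\varphi_2\}$, contradicting \eqref{e18.36} if such $\varphi$ were in $I(0)$, and part~(iii) rules out $0\in\d\Omega$. Your version unpacks Corollary~\ref{t16.4} and argues directly with Theorem~\ref{t16.3} at the blow-up level, which is equally valid but a touch longer.
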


As we were preparing this document, we were informed of a recent result
of D. Bucur and B. Velichkov \cite{BV} , which contains a result very similar to 
Lemma \ref{t19.1}, although in a slightly different context. Their paper
is based on a $3$-phase monotonicity formula, which they manage to prove
and use without knowing that their analogue of $\u$ is H\"older-continuous
(left alone, Lipschitz).

\begin{proof}  
Because of \eqref{e19.3}, we know from \eqref{e18.36} that 
\begin{equation} \label{e19.6}
\liminf_{r \to 0} \Phi_{\varphi}^0(r) > 0 \ \text{ for } \varphi \in I(0).
\end{equation}
We may assume that $I(0)$ contains (at least) two elements 
$\varphi_1$ and $\varphi_2$. Notice that $L(\varphi_1,\varphi_2) > 0$
(see the definition \eqref{e16.19}); then we can apply Corollary \ref{t16.4},
and point (ii) says that $\lim_{r \to 0} \Phi_{\varphi}^0(r)= 0$
for $\varphi \in I \sm \{ \varphi_1, \varphi_2 \}$. By \eqref{e19.6} again, such
$\varphi$ lie out of $I(0)$; hence $I(0)$ has at most $2$ elements.
In addition, part (iii) of Corollary \ref{t16.4} says that $0$ lies in
the interior of $\Omega$; this completes our proof of Lemma \ref{t19.1}.
\qed
\end{proof}

\ms
Notice that since the sets $\overline \Omega_\varphi$ are closed, and
touch $0$ only when $\varphi \in I(0)$, there is a small
radius $r_0 > 0$ such that
\begin{equation} \label{e19.7}
u_{\varphi}(x) = 0 \ \text{ for $x\in B(0,r_0)$ and } \varphi \in I \sm I(0),
\end{equation}
where $u_{\varphi} = (\varepsilon u_i)_+$ when
$\varphi = (i,\varepsilon)$. Thus we get a small ball $B_0 = B(0,r_0)$
where $\u$ has at most two nonzero phases. We shall soon see that in this
ball, $(\u,\W)$ solves a simpler free boundary problem.

\begin{rem} \label{t19.2}
Our proof of Lemma \ref{t19.1} does not seem to give lower bounds
for $r_0$ above. But the following scheme, that occurred to us {\rm after}
discussing the results of \cite{BV} with B. Velichkov, seems to give
such lower bounds. Suppose that $\Omega_{\varphi_j}$ meets
$B(0,\tau)$ for three different phases $\varphi_j$; by \eqref{e13.7},
$|\Omega_{\varphi_3} \cap B(0,r)| \geq c_2 r^n$ for
$2\tau \leq r \leq C^{-1}\rho_0$, where $\rho_0$ is as in \eqref{e15.5}.
We can use this to show that for many radii $r$, the two open sets
$\Omega_{\varphi_1} \cap \d B(0,r)$ and $\Omega_{\varphi_2} \cap \d B(0,r)$
have a significantly smaller joint measure than $\d B(0,r)$, which leads to a strict 
increase (with estimates) for the corresponding functional 
$\Phi^0_{\varphi_1,\varphi_2} = \Phi^0_{\varphi_1}\Phi^0_{\varphi_2}$
of Section \ref{mono}. If $\tau$ is small enough, we get that
$\Phi^0_{\varphi_1,\varphi_2}(2\tau)$ is so small that this 
contradicts \eqref{e13.10}. Notice that for this, the fact that $\u$ is Lipschitz
and the quantitative nondegeneracy estimates of Section \ref{mono} would
be needed.
\end{rem}

\begin{rem} \label{t19.3}
In the last sections, we have taken the fact that $\u$ is Lipschitz
as a assumption, but Theorem \ref{t10.1} (when $0$ is an interior
point) and Theorem \ref{t11.1} (when $0 \in \d\Omega$) give sufficient 
conditions for this to happen.
\end{rem}

\ms
Let us now formalize our claim that 
the restriction of $(\u,\W)$ to $B_0$ comes from a simpler
free boundary problem. We start with the simpler case when there
are two true phases.

\ms
\noindent{\bf Case 1.} Let us assume, in addition to the hypotheses
above, that $I(0) = \{\varphi_1,\varphi_2\}$, with $\varphi_j = (i_j,\varepsilon_j)$.
Recall that in this case $0$ is an interior point of $\Omega$,
and choose $B_0$ so small that $B_0 \i \Omega$.

\begin{lem} \label{t19.4} 
Set
\begin{equation} \label{e19.8}
v = u_{\varphi_1} - u_{\varphi_2} 
= [\varepsilon_1 u_{i_1}]_+ - [\varepsilon_2 u_{i_2}]_+
\end{equation}
and denote by $\F_v$ the class of functions $w \in W^{1,2}_{loc}(\R^n)$
such that $w = v$ almost everywhere on $\R^n \sm B_0$.
Then $v$ is a minimizer in $\F_v$ of the functional
\begin{equation} \label{e19.9}
J(w) = G(w) + \int_{B(0,r)} |\nabla w|^2 + (w_+)^2 f_{i_1} + (w_-)^2 f_{i_2}
- w_+ g_{i_1} -  w_- g_{i_2},
\end{equation}
where the $f_i$ and the $g_i$ are as in the definition of $M$
in \eqref{e1.4}, $w_{\pm} = \max(0,\pm w)$ as usual, and 
the volume term $G(w)$ can be computed in terms of the sets 
\begin{equation} \label{e19.10}
A_{w,\pm} = \big\{ x\in B_0 \, ; \, \pm w(x) > 0 \big\}.
\end{equation}
See \eqref{e19.13} below for the formula.
\end{lem}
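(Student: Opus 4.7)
\medskip\noindent\textbf{Proof plan for Lemma \ref{t19.4}.}

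The plan is to transfer the minimality of $(\u,\W)$ for the original functional into a scalar minimization for $v$, by exploiting that on $B_0$ only the two phases $\varphi_1=(i_1,\varepsilon_1)$ and $\varphi_2=(i_2,\varepsilon_2)$ survive. I will first record the structural simplification: by \eqref{e19.7}, $u_\varphi\equiv 0$ on $B_0$ for every $\varphi\in I\setminus I(0)$, which forces $u_j=0$ on $B_0$ for each index $j\notin\{i_1,i_2\}$ and forces $u_{i_j}$ to have constant sign $\varepsilon_j$ on $B_0$ (since the opposite phase $(i_j,-\varepsilon_j)$ is not in $I(0)$). In particular $u_{i_j}=\varepsilon_j u_{\varphi_j}$ on $B_0$, so the triple $(u_1,\ldots,u_N)_{|B_0}$ is encoded entirely by the single scalar $v=u_{\varphi_1}-u_{\varphi_2}$ (this holds in both subcases $i_1\neq i_2$ and $i_1=i_2$, which I will treat together by setting $u^*_{i_1}=\varepsilon_1 w_+$, $u^*_{i_2}=\varepsilon_2 w_-$, interpreting the sum when $i_1=i_2$).

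Next I will define $G$ explicitly. Given any $w\in\F_v$, set
\begin{equation*}
A_{w,+}=\{w>0\}\cap B_0,\qquad A_{w,-}=\{w<0\}\cap B_0,
\end{equation*}
and let $\mathcal{W}(w)$ be the family of $N$-uples $\W^*\in {\cal W}(\Omega)$ that agree with $\W$ outside $B_0$, are contained in $\Omega$, are pairwise disjoint, and satisfy $A_{w,+}\subset W^*_{i_1}$ and $A_{w,-}\subset W^*_{i_2}$ (with the obvious modification $A_{w,+}\cup A_{w,-}\subset W^*_{i_1}=W^*_{i_2}$ in the degenerate case $i_1=i_2$). Define
\begin{equation}\label{e19.13}
G(w) \;=\; \inf_{\W^*\in\mathcal{W}(w)} F(\W^*).
\end{equation}
The point of this definition is that for a competitor $w$, the pair $(\u^*,\W^*)$ constructed by setting $u^*_{i_j}=\varepsilon_j[(-1)^{j-1}w]_+$ on $B_0$, $u^*_i=u_i$ otherwise (and $u^*_j\equiv 0$ on $B_0$ for $j\notin\{i_1,i_2\}$), with $\W^*\in\mathcal{W}(w)$, is automatically in $\F(\Omega)$.

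Then I will execute the two halves of the equivalence. For the forward direction, note that $(\u,\W)$ itself realizes $G(v)=F(\W)$: indeed $\W\in\mathcal{W}(v)$ by the structural simplification, and if $\W^\sharp\in\mathcal{W}(v)$ were strictly better for $F$, then keeping $\u$ unchanged and replacing $\W$ by $\W^\sharp$ would give an admissible pair with strictly smaller $J$, contradicting minimality. Hence $G(v)=F(\W)$, which combined with the identities
\begin{equation*}
\int_{B_0}|\nabla \u|^2=\int_{B_0}|\nabla v|^2,\qquad M(\u)=\int_{B_0}\bigl[v_+^2 f_{i_1}+v_-^2 f_{i_2}-v_+g_{i_1}-v_-g_{i_2}\bigr]+C_{\text{ext}},
\end{equation*}
where $C_{\text{ext}}$ collects contributions from $\R^n\setminus B_0$ (which are the same for any competitor), gives $J(\u,\W)=J(v)+C_{\text{ext}}$. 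For the reverse direction, given $w\in\F_v$ and $\eta>0$, pick $\W^*\in\mathcal{W}(w)$ with $F(\W^*)\leq G(w)+\eta$, form the associated $(\u^*,\W^*)\in\F(\Omega)$; since it coincides with $(\u,\W)$ outside $B_0$, the minimality of $(\u,\W)$ yields $J(\u,\W)\leq J(\u^*,\W^*)$, and after cancelling the exterior parts this becomes $J(v)\leq J(w)+\eta$. Letting $\eta\to 0$ finishes the proof.

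The only point demanding real care (and the place where I expect to spend the most verification effort) is checking that $(\u^*,\W^*)$ genuinely lies in $\F(\Omega)$: one must verify that $u^*_{i_j}\in W^{1,2}(\R^n)$ across $\partial B_0$ (which follows because $u^*$ agrees with $u$ outside $B_0$ and $w\in W^{1,2}_{\mathrm{loc}}(\R^n)$ with $w=v$ a.e.\ outside $B_0$, so no jump appears via the gluing argument of Section \ref{poincare}), that the sign conventions in the definition of $\F$ (if some $u_i$ was required nonnegative) are preserved---which is automatic because the sign of $u_{i_j}$ on $B_0$ is fixed to be $\varepsilon_j$ and we only reproduce that sign---and that the disjointness and inclusion constraints $W^*_i\subset\Omega$ can always be satisfied, which is where the assumption $I(0)=\{\varphi_1,\varphi_2\}$ and the choice of $B_0\subset\Omega$ small enough for \eqref{e19.7} are essential.
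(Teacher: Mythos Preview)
Your proposal is correct and follows essentially the same approach as the paper: associate to each $w\in\F_v$ a competitor $(\u_w,\W^*)$ for $(\u,\W)$, define $G(w)$ as the infimum of $F(\W^*)$ over the admissible $N$-uples $\W^*$ compatible with $w$, observe that $G(v)=F(\W)$ by minimality, and then compare. Your version is slightly more explicit than the paper's in two respects---you spell out the $\eta$-approximation when the infimum defining $G(w)$ may not be attained, and you flag the $W^{1,2}$ gluing across $\partial B_0$---but the skeleton is identical.
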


Notice that the real-valued function contains all
the information about $\u$ in $B_0$, by definition of $I(0)$.
The condition that $w=v$ on $\R^n \sm B_0$ is our way of 
stating a Dirichlet constraint on $\d B_0$.
We left the computation of $G$ for the proof, because the formula
is a little ugly. But let us say now that when $F$ is given by \eqref{e1.7}
with nonnegative functions $q_i$, we can take
\begin{equation} \label{e19.11}
G(w) = \int_{A_{w,+}} q_{i_1} + \int_{A_{w,-}} q_{i_2}.
\end{equation}

\begin{proof}  
Let us prove the lemma, and at the same time define $G$.
The idea is simple: we associate to each $w\in \F_v$ a
competitor $(\u_w,\W_v)$ for $(\u,\W)$, test the minimality of 
$(\u,\W)$ on this competitor, and hopefully we shall get
the desired inequality. 

Due to our definition of phases, we shall need to distinguish
between two main cases. First assume that $i_1 \neq i_2$, and to
simplify the notation, that $\varphi_1 = (1,+1)$ and $\varphi_2 = (2,+1)$.
The $N$-uple of functions associated to $w$
is just  $\u_w = (w_+,w_-, u_3, \ldots, u_N)$
(notice that $w$ is also defined on $\R^n \sm B_0$), and
a simple $N$-uple of sets $\W_w$ that we can take is given by
\begin{equation} \label{e19.12}
\begin{aligned}
W_{w,1} &= A_{w,+} \cup (W_1 \sm B_0),
\cr
W_{w,2} &= A_{w,-} \cup (W_2 \sm B_0),
\cr
W_{w,i} &= W_i \sm B_0  \hskip0.5cm\text{ for } i > 2.
\end{aligned}
\end{equation}
It is easy to see that $(\u_w,\W_v)$ is an acceptable pair
(i.e., that $(\u_w,\W_v) \in \F(\Omega)$); in particular
there is no gluing problem because $u_{w,i} = u_i$ for $i > 2$,
and the $W_{w,i}$ are contained in $\Omega$ because $B_0 \i \Omega$.

If $F$ is a nondecreasing function of the $W_i$, this is the best
that we could do. But in some cases, we may prefer to use 
another element of the class ${\cal H}(w)$, where ${\cal H}(w)$ is the set of 
$N$-uples $\W^\ast= (W^\ast_1, \ldots, W^\ast_N)$,
where the $W^\ast_i$ are disjoint, and each $W^\ast_i$ 
contains $W_{w,i}$ and coincides with $W_i$ and $W_{w,i}$ on $\R^n \sm B_0$. 
Notice that $(\u_w,\W^\ast_v)$ also lies in $\F(\Omega)$ when
$\W^\ast \in {\cal H}(w)$. We set
\begin{equation} \label{e19.13}
G(w) = \inf\{ F(\W^\ast) \, ; \, \W^\ast \in {\cal H}(w) \big\}.
\end{equation}
Notice that $G(v) = F(\W)$, because $(\u,\W)$ is a minimizer,
hence $F(\W)$ also minimizes $F$ in the class ${\cal H}(v)$.

We are ready to check the minimality of $v$. Let
$w \in \F_v$ be given, and let $(\u_w,\W^\ast_v)$ be as above.
Call $M_0$ the part of $M(\u)$ that comes from integrating
outside of $B_0$; then
\begin{eqnarray} \label{e19.14}
J(v) &=& G(v) + \int_{B(0,r)} |\nabla u_1|^2 
+ u_1^2 f_1 - u_1 g_1+ u_2^2 f_2 - u_2^2 g_2
\nonumber \\
&=& F(\W) + \int_{B(0,r)} |\nabla u_1|^2 
+ u_1^2 f_1 - u_1 g_1+ u_2^2 f_2 - u_2^2 g_2
 \\
&=& J(\u,\W) - M_0 \leq J(\u_w,\W_w) -M_0 = J(w)
\nonumber
\end{eqnarray}
because $(\u,\W)$ minimizes $J$, all the other components of $\u$ and $\u_w$ 
vanish on $B_0$, and ${\bf u}_w = \u$ on $\R^n \sm B(0,R)$.
This proves the minimality of $v$ when $\varphi_1 = (1,+1)$ and 
$\varphi_2 = (2,+1)$.

When $\varphi_1 = (1,+1)$ and $\varphi_2 = (1,-1)$, we just need to define
$\u_w$ and $\W_w$ slightly differently. We set
$\u_w = (w,u_2, \ldots, u_N)$,
\begin{equation} \label{e19.15}
W_{w,1} = A_{w,+} \cup A_{w,-} \cup(W_1 \sm B_0)
\ \text{ and } \ 
W_{w,i} = W_i \sm B_0 \text{ for } i > 1.
\end{equation}
We then define ${\cal H}(w)$ and $G$ and complete the argument as above.
All the other cases can be treated like one of these two,
and the lemma follows.
\qed
\end{proof}

\ms
The formula \eqref{e19.13} is not very beautiful, but in many cases we may
use the simpler formula 
\begin{equation} \label{e19.16}
G(w) = F(\W_w), \text{ with $\W_w$ as in
\eqref{e19.12} or \eqref{e19.15}.}
\end{equation}
This is the case when 
$F$ is given by \eqref{e1.7} with nonnegative functions $q_i$, and this
is why we get \eqref{e19.11} (we may add a constant to $G$
without changing the result). But for instance if $q_N$ is negative
and smaller than the all the other $q_i$, the good choice of $G$ is really
$G(w) = \int_{A_{w,+}} (q_{i_1}-q_N) + \int_{A_{w,-}} (q_{i_2}-q_N)$.

The proof above shows that Lemma \ref{t19.4} is still true 
(but may be less precise) with $G$ given by \eqref{e19.16} as soon
as $F(\W) = F(\W_v)$, and we claim that this is the case as soon as
\begin{equation} \label{e19.17}
|W_i \cap B_0| = 0 \ \text{ for } i \neq i_1, i_2.
\end{equation}
To check this, let us even show that for each good index $i$,
\begin{equation} \label{e19.18}
W_i= \big\{ x\in \R^n \, ; \, u_i(x) \neq 0 \big\}
\end{equation}
modulo negligible sets. Indeed, call $V$ the set on the right;
we know that $V \i W_i$ almost everywhere, because 
$u_i = 0$ almost everywhere on $\R^n \sm W_i$.
If $|W_i \sm V| > 0$,  we can take a subset of positive measure in 
$W_i \sm V$, use \eqref{e13.1} to sell part of it to some other region,
throw the rest to the trash, and make a profit. This is impossible
because $(\u,\W)$ is a minimizer. So \eqref{e19.18} holds.

We use this, with $i = i_1$ or $i_2$, and \eqref{e19.17}, to see
that $\W_v = \W$ (almost everywhere) when we take $w=v$
in \eqref{e19.12} or \eqref{e19.15}. The claim follows.

We also deduce from \eqref{e19.18} that \eqref{e19.17} is automatic
when all the indices $i$ (such that $W_i$ meets $B_0$) satisfy \eqref{e13.1}.

\ms
In this case 1, the results of Section \ref{2phases} are available. 
That is, all the blow-up limits of $\u$ at the origin are given by pairs 
of affine functions, as in \eqref{e16.23} and Corollary~\ref{t16.4},
and the free boundary $\d \Omega_{\varphi_1} \cup \d \Omega_{\varphi_2}$
is flat near the origin, as in Proposition~\ref{t16.5}.

It can be expected that, modulo additional regularity and
nondegeneracy assumptions on $F$, the method of \cite{C1}, \cite{C2} 
yields a much stronger regularity result, namely that both free boundaries
$\d \Omega_{\varphi_1}$ and $\d \Omega_{\varphi_2}$ coincide with $C^{1+\alpha}$
submanifolds in a neighborhood of the origin. In the special case when
$\u$ (or equivalently, $v$ above) is harmonic on the $\Omega_{\varphi_1}$, 
$F$ is given by \eqref{e1.7} with nonnegative functions $q_i$ such that $q_{i_1}$ 
and $q_{i_2}$ are both Lipschitz near $0$, and, say $q_{i_1}(0) > q_{i_2}(0) > 0$, 
we can apply directly the results of \cite{C1} and \cite{C2}, and the description 
of the blow-up limits given by Corollary~\ref{t16.4} is more than enough to say that 
$v$ above is a weak solution.
Here we have a slightly different situation in a few respects, because 
$v$ satisfies a slightly different equation (coming from \eqref{e9.6}),
and $F$ is a little more general than in \eqref{e1.7}. We do not expect major differences
to come from our different equation \eqref{e9.6}, and it can be imagined that if we
replace \eqref{e15.11} with a stronger form of approximation by volume functionals
of type \eqref{e1.7}, Caffarelli's regularity results may go through.
We shall not do the verification here, but at least we know that the difficulties,
if they exist, do not come from the fact that $J$ is a functional with many phases,
or the fact that we would not have enough control in the Lipschitz properties
of $\u$ or the description of the blow-up limits.

Even if this works, we may still wonder about what happens when we do not assume
the additional condition that $q_{i_1}(0) > q_{i_2}(0) > 0$ (or similar ones on the
$\lambda_i$ from \eqref{e15.11}), but just that all the indices are good (as in \eqref{e15.11}). Also, we do not know whether the presence of additional
phases $\varphi_i$, with low energy near $0$ as in \eqref{e16.53}, may disturb
the results above, even when $F$ is as in \eqref{e1.7} with smooth nonnegative
(but not positive) coefficients $q_i$.

\ms
\noindent{\bf Case 2}. Let us now assume that $I(0)$ has a unique element
$\varphi$, and let us assume for definiteness that $\varphi = (1,+1)$.
For the moment, let us authorize the case when $0 \in \d \Omega$, and let
$B_0$ be as in \eqref{e19.7}. We start with an analogue of Lemma \ref{t19.4}.

Set $v = (u_{1})_+$ and
\begin{equation} \label{e19.19}
\F(B_0,\Omega,v) = \big\{ w\in W^{1,2}(\R^n) \, ; \,
w = v \text{ a.e. on } \R^n \sm B_0
\text{ and $w= 0$ a.e. on } \R^n \sm \Omega
 \big\}.
\end{equation}
We also add in \eqref{e19.19} the requirement that $w \geq 0$, if we required
that $u_1 \geq 0$ in the definition of $\F$ (see Definition \ref{d1.1}); otherwise,
this is not needed.

\begin{lem} \label{t19.5} 
Suppose as above that \eqref{e19.1}-\eqref{e19.5} hold, and
let $B_0$ and $\F(B_0,\Omega,v)$ be as in \eqref{e19.7} and \eqref{e19.19}.
Then $v$ is a minimizer in $\F_v$ of the functional
\begin{equation} \label{e19.20}
J_+(w) = G_+(w) + \int_{B(0,r)} |\nabla w|^2 + w^2 f_{1} - w g_1 
\end{equation}
where the $f_i$ and the $g_i$ are as in \eqref{e1.4} and 
the volume term $G_+(w)$, which can be computed in terms of 
\begin{equation} \label{e19.21}
A_w = \big\{ x\in B_0 \, ; w(x) > 0 \big\},
\end{equation}
is given by \eqref{e19.22} below.
\end{lem}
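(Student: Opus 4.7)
The plan is to follow the strategy of Lemma \ref{t19.4} closely, but now the single phase $\varphi=(1,+1)$ plays the role that the pair played in Case 1, and we must be slightly careful about what happens across $\partial B_0$ since $u_1$ may be negative outside $B_0$. First I would shrink $B_0$ if necessary so that $v=u_1$ in an open neighborhood of $\overline{B_0}$. This is possible because \eqref{e19.7}, applied to $\varphi'=(1,-1)\in I\setminus I(0)$, says that $(u_1)_-=0$ on $B(0,r_0)$; replacing $B_0$ by $B(0,r_0/2)$ ensures $u_1\geq 0$, hence $v=u_1$, on a full open neighborhood of $\overline{B_0}$.

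Next, given $w\in \F(B_0,\Omega,v)$, I would build a competitor $(\u_w,\W_w)$ for $(\u,\W)$ by setting $u_{w,1}=w$ inside $B_0$ and $u_{w,1}=u_1$ outside $B_0$, and $u_{w,i}=u_i$ for $i\geq 2$. The Sobolev gluing across $\partial B_0$ works in the annular neighborhood where $v=u_1$: the trace of $w$ from inside agrees with the trace of $v$ from outside because $w\in W^{1,2}(\R^n)$ and $w=v$ a.e.\ outside $B_0$, and this trace equals the trace of $u_1$ from outside by the choice of $B_0$. For the sets, take $W_{w,1}=(\{w\neq 0\}\cap B_0)\cup(W_1\setminus B_0)$ and $W_{w,i}=W_i\setminus B_0$ for $i\geq 2$; the containment $\{w\neq 0\}\cap B_0\subset\Omega$ follows from the constraint $w=0$ a.e.\ on $\R^n\setminus\Omega$ present in the definition \eqref{e19.19} of $\F(B_0,\Omega,v)$. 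In the case where we require $u_1\geq 0$, we simultaneously require $w\geq 0$ and $A_w$ replaces $\{w\neq 0\}$.

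I would then define $\mathcal{H}_+(w)$ as the class of $N$-uples $\W^*\in\mathcal{W}(\Omega)$ whose restriction to $\R^n\setminus B_0$ agrees with that of $\W$ and whose $i$-th component contains $W_{w,i}$, and set
\begin{equation}
G_+(w)=\inf\{F(\W^*):\W^*\in\mathcal{H}_+(w)\}.
\end{equation}
For any $\W^*\in\mathcal{H}_+(w)$ the pair $(\u_w,\W^*)$ lies in $\F(\Omega)$, so the minimality of $(\u,\W)$ yields $J(\u,\W)\leq J(\u_w,\W^*)$. Since $u_{w,i}=u_i$ vanishes on $B_0$ for $i\geq 2$ (by \eqref{e19.7}), and $u_{w,1}$ agrees with $u_1$ outside $B_0$, subtracting the common contributions outside $B_0$ and using $v=u_1$ on $B_0$ gives
\begin{equation}
\int_{B_0}|\nabla v|^2+v^2 f_1-v g_1+F(\W)\leq \int_{B_0}|\nabla w|^2+w^2 f_1-w g_1+F(\W^*).
\end{equation}
Taking the infimum over $\W^*\in\mathcal{H}_+(w)$ and observing that $F(\W)=G_+(v)$ (because $\W\in\mathcal{H}_+(v)$ is already $F$-minimal in that class, otherwise one could replace $\W$ by a better extension and contradict the minimality of $(\u,\W)$) yields $J_+(v)\leq J_+(w)$, which is the claim.

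The main obstacle will be the gluing at $\partial B_0$: verifying that $u_{w,1}$ lies in $W^{1,2}(\R^n)$ despite the fact that $u_1$ and $v$ can differ outside $B_0$ on $\{u_1<0\}$. The shrinking of $B_0$ neutralizes this by producing an annulus around $\partial B_0$ on which $v$ and $u_1$ coincide, reducing the trace matching to the automatic matching between $w$ and $v$. A secondary bookkeeping point is the case where $u_1$ (and therefore $w$) is allowed to take negative values, handled by using $\{w\neq 0\}$ rather than $\{w>0\}$ when defining $W_{w,1}$; everything else is parallel to Lemma \ref{t19.4}, and in the standard case where $F$ is given by \eqref{e1.7} with nonnegative $q_i$, $G_+(w)$ simplifies to $\int_{A_w}q_1$ plus a constant depending only on data outside $B_0$.
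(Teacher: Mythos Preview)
Your argument is correct and follows essentially the same route as the paper: associate to each $w$ a competitor $(\u_w,\W^\ast)$ for $(\u,\W)$, define $G_+(w)$ as an infimum of $F(\W^\ast)$ over an appropriate class $\mathcal H_+(w)$ (this is the paper's \eqref{e19.22}), and compare. The paper simply sets $\u_w=(w,u_2,\dots,u_N)$ globally (so no gluing is needed, since $w\in W^{1,2}(\R^n)$ by definition of the class) and then asserts ``$\u_w=\u$ on $\R^n\setminus B_0$''. Your extra step of shrinking $B_0$ and gluing $w$ with $u_1$ across $\partial B_0$ makes that identity literally true even when $u_1$ is allowed to change sign far from the origin, and your replacement of $A_w$ by $\{w\ne 0\}$ in the definition of $W_{w,1}$ correctly handles the same sign issue inside $B_0$; both are legitimate refinements rather than a different method. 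One cosmetic slip: with that replacement, the simplified volume term in your last sentence should read $\int_{\{w\ne 0\}\cap B_0}q_1$ rather than $\int_{A_w}q_1$.
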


\ms\begin{proof}  
The proof is essentially the same as for Lemma \ref{t19.4}.
To each $w \in \F(B_0,\Omega,v)$, we associate
$\u_w = (w, u_2, \ldots, u_N)$ and 
$\W_w = (A_w \cup (W_1 \sm B_0), W_2 \sm B_0, \ldots, W_N \sm B_0)$
(as we did near \eqref{e19.12}), and observe that
$(\u_w,\W_w) \in \F(\Omega)$, where this time we also need to say that
$w \geq 0$ if we required that $u_1 \geq 0$ in the definition of $\F(\Omega)$,
and $\u_w=0$ almost everywhere on $\R^n \sm \Omega$, by definition
of $\F(B_0,\Omega,v)$. Then we denote by ${\cal H}(w)$ the class of 
$N$-uples $\W^\ast = (W_1^\ast, \ldots, W_N^\ast)$ such that
the $W_i^\ast$ are disjoint and contained in $\Omega$, and each
$W_i^\ast$ coincides with $W_i$ on $\R^n \sm \Omega$ and 
contains $W_{w,i}$. Finally we set
\begin{equation} \label{e19.22}
G(w) = \inf\{ F(\W^\ast) \, ; \, \W^\ast \in {\cal H}(w) \big\}
\end{equation}
as we did before. Then we follow the proof of Lemma \ref{t19.4}
and get the result. Also notice that when $|W_i \cap B_0| = 0$
for $i > 1$, we may simplify the definition of $G$, and merely
take $G(w) = F(\W_w)$, as in \eqref{e19.16}. 
\qed
\end{proof}

\ms
Again we are in a relatively good situation to continue our investigation, 
with the functional $J_+$ that has only one phase.

If we also assume that $F$ is Lipschitz (as in \eqref{e10.2})
and (if $0 \in \d\Omega$) that the blow-up limits of $\Omega$ 
are open cones $\Omega_\infty$, as in \eqref{e18.35}, then Theorem \ref{t18.2}
says that some blow-up limits of $\u$ at $0$ are nontrivial homogeneous minimizers of a
simple Alt-Caffarelli-Friedman functional ($J_\infty$ as in \eqref{e15.40}),
but in the perhaps complicated cone $\Omega_\infty$.

If in addition $n \leq 3$ and $0$ is an interior point of $\Omega$,
this blow-up limit of $\u_\infty$ is given by an affine function, as
in Corollary \ref{t18.3} and \eqref{e18.50}, and we also get that the 
free boundary $\d \Omega_1$ is flat at the origin (but only along the corresponding
sequence), as in \eqref{e18.52}.

Again, the situation is not perfect, because in addition to the potential difficulties
of Case 1 (connected to our slightly different equations for $\u$, the 
more general $F$, and other non-good phases that may float around),
we may have complications due to the shape of $\Omega$, and the fact that
one-phase minimizers of the Alt-Caffarelli functional in $\R^n$, $n \geq 4$
are not well understood yet. But we shall nonetheless quit here for the moment
and pretend that this is a general problem about free boundaries, and not a
specific problem about $N \geq 3$.

\ms
Let us also mention that if we do not insist on studying $\u$ and the free
boundary near a specific point, Proposition \ref{t17.3} gives lots of points
of the $\d \Omega_i$ where the blow-up limits of $\u$ and $\d\Omega_i$
are controlled as in Corollaries \ref{t16.4} and \ref{t18.3}.
See Remark \ref{t18.6} and Proposition~\ref{t18.5}. This is true in all
dimensions, and the points where we can do this are also good candidates for
further local regularity, depending for instance on how the techniques
of \cite{C1} and \cite{C2} adapt. 

\begin{rem} \label{t19.6}
Lemmas \ref{t19.1}, \ref{t19.4}, and \ref{t19.5} seem to be give a fast
shortcut to many of the results above, especially if the technique of \cite{BV}
allows to prove Lemma \ref{t19.1} here. But this would not give exactly the
same estimates. First we would still need to check that the special form of $F$
that we have does not disturb the usual Lipschitz and nondegeneracy estimates
in the one- or two-phase problems (they are badly needed), 
but also our estimates would depend on the small radius $r_0$ which, 
with fast proofs, we won't be able to estimate. Finally, we can always hope
to understand better the situation when some of the indices $i$ do not
satisfy \eqref{e13.1}, and some of the long proofs above may prepare the way.
\end{rem}

\section{First variation and the normal derivative}   \label{vari}

We start this section with some first variation computations and the verification 
of the formulas \eqref{e16.49}, \eqref{e16.50}, and \eqref{e18.50}.
We first do a computation with one phase, consider 
\begin{equation} \label{e20.1}
v(x) = a\langle x, e \rangle_+
\end{equation}
for some unit vector $e$ and some positive constant
$a$, and try to find necessary conditions for $v$ to 
define a local minimizer of our functional $J_\infty$ in $\R^n$,
or more simply to be a one-phase ACF minimizer in $\R^n$,
as defined near \eqref{e18.44}. 

The typical small variations that we would try in general would be to add
a small function to $v$, but here it will be simpler to compose with 
a one parameter family of diffeomorphism, because it will be easier to
keep track of the domains. Let us choose coordinates in 
$\R^n$ so that $e = (0, \ldots, 1)$, and write the generic
point of $\R^n$ as $x = (x',y)$, with $x' \in e^\perp \simeq \R^{n-1}$
and $t \in \R e \simeq \R$.

We pick a nonnegative smooth bump function $\varphi$ and for small $t \in \R$ 
define a diffeomorphism $\Phi_t$ of $\R^n$ by
$\Phi_t(x) = x + t \varphi(x) e$. Then we set 
\begin{equation} \label{e20.2}
v_t(x) = v(\Phi_t(x)) = a\langle \Phi_t(x), e \rangle_+
\end{equation}
and compute the derivative of $v_t$. Set $\O = \big\{ x\in \R^n \, ; \, 
\langle x, e \rangle > 0 \big\}$ and 
$\O_t = \Phi_t^{-1}(\O) =
\big\{ x\in \R^n \, ; \, \langle \Phi_t(x), e \rangle > 0 \big\}$;
we just need to compute $\nabla v_t$ on $\O_t$, because 
it vanishes on $\R^n \sm \O_t$. And for $x\in \O_t$,
\begin{equation} \label{e20.3}
{\d v_t \over dx_j} = a \langle e, {\d \Phi_t \over dx_j}\rangle
= a \delta_{j,n} + at {\d \varphi \over dx_j},
\end{equation}
where we used the Kronecker symbol $\delta_{j,n}$.
Let $B$ be a ball that contains the support of $\varphi$;
observe that by \eqref{e20.3}, 
$|\nabla v_t(x)|^2 = a^2 [1+ 2 t {\d \varphi \over \d x_n}(x) + O(t^2)]$,
where $O(t^2)$ is function which is less than $Ct^2$ and is supported
in $B$. Then 
\begin{equation} \label{e20.4}
\int_{2B} |\nabla v_t|^2  = a^2 \int_{2B \cap \O_t} |\nabla v_t|^2
= a^2 \int_{2B \cap \O_t} 1 + 2t {\d \varphi \over \d x_n}(x) + O(t^2).
\end{equation}
Notice that for $t$ small, $2B = \varphi_t(2B)$;
then we set $y = \Phi_t(x)$, with $y \in 2B \cap \O$, notice that
$dy = J_{\Phi_t}(x) dx = 1 + {\d \varphi \over dx_n}(x)$
(because the matrix of $D\Phi_t$ is an identity matrix, plus a last
column composed of the ${\d \varphi \over dx_j}$), so
\begin{eqnarray} \label{e20.5}
\int_{2B} |\nabla v_t|^2  
&=& a^2 \int_{2B \cap \O} [1 + 2t{\d \varphi \over \d x_n}(\Phi_t^{-1}(y)) + O(t^2)]
[1 + t{\d \varphi \over \d x_n}(\Phi_t^{-1}(y)]^{-1} dy
\nonumber\\
&=& a^2 \int_{2B \cap \O} [1 + t{\d \varphi \over \d x_n}(\Phi_t^{-1}(y)) + O(t^2) ]dy
= a^2 \int_{2B \cap \O} [1 + t{\d \varphi \over \d x_n}(y) + O(t^2) ]dy
\nonumber\\
&=& a^2 |2B \cap \O| + a^2 t \int_{2B \cap \O} {\d \varphi \over \d x_n} + O(t^2).
\end{eqnarray}
Set $A = \int_{2B \cap \O} {\d \varphi \over \d x_n}$; notice that
$A = - \int_{2B \cap \d O} \varphi(x') dx'$, and so we can choose $\varphi$
so that $A < 0$. The same computation as above also shows that
\begin{eqnarray} \label{e20.6}
|2B \cap \O_t| &=& \int_{2B \cap \O_t} dx 
= \int_{2B \cap \O} [1 + t{\d \varphi \over \d x_n}(\Phi_t^{-1}(y)]^{-1} dy
\nonumber\\
&=& |2B \cap \O_t| - t \int_{2B \cap \O} {\d \varphi \over \d x_n} + O(t^2)
= |2B \cap \O_t| - t A + O(t^2).
\end{eqnarray}

If $v$ is a one-phase ACF minimizer in $\R^n$, associated to the constant $Q^2$,
\eqref{e18.44} says that for $t$ small
\begin{equation} \label{e20.7}
\int_{2B} |\nabla v|^2 + Q^2 |2B \cap \O|
\leq \int_{2B} |\nabla v_t|^2 + Q^2 |2B \cap \O_t|,
\end{equation}
and by \eqref{e20.5} and \eqref{e20.6}, this means that
\begin{equation} \label{e20.8}
0 \leq a^2 t A - Q^2t A + O(t^2).
\end{equation}
Since this holds for $t$ small (of both signs), we get that $a^2=Q^2$.

Thus, in \eqref{e18.50}, we had no choice about the value of $Q$,
it had to be given by \eqref{e18.43}, where the $\lambda_j$ are the same 
as in the definition of $J_\infty$ (see \eqref{e15.40}), 
and the story is the same for the end of the proof of Proposition \ref{t18.5}.

We can do a similar computation for $2$-phase functions. That is,
let $\u_\infty$ be a local minimizer in $\R^n$ of the functional $J_\infty$
that shows up in \eqref{e15.40}, and suppose that we have two phases
$\varphi_1 = (i_1,\varepsilon_1)$ and $\varphi_2 = (i_2,\varepsilon_2)$ such that 
\begin{equation} \label{e20.9}
v_1(x) = a_1 \langle x, e \rangle_+
\ \text{ and } \ 
v_2(x) = a_2 \langle x, e \rangle_-
\ \text{ for $x\in \R^n$,}
\end{equation}
where we set $v_j= [\varepsilon_j u_{i_j,\infty}]_+$,
and with coefficients $a_1, a_2 > 0$. Then there is no place left
for the other phases of $\u_\infty$ (so they are null), and also our only choice is to take
$W_{i_j} = \big\{ x\in \R^n \, ; \,  (-1)^j\langle x, e \rangle > 0 \big\}$ 
if $i_1 \neq i_2$, and $W_{i_1} = W_{i_2} = \R^n$ if $i_1 = i_2$.

We can compare $\u_\infty$ with $\u_{\infty}\circ \Phi_t$, where $\Phi_t$
is as above, and then use the definition \eqref{e15.40} of a local minimizer. 
Again we have no other choice than taking the sets $\Phi_t^{-1}(W_{i_j})$.
We can compute both pieces
$\int_{2B} |\nabla v_j|^2 + \lambda_{i_j} |2B \cap \Phi_t^{-1}(W_{i_j})|$
as we did before, and then \eqref{e15.40} yields
\begin{equation} \label{e20.10}
0 \leq a_1^2 A t - \lambda_{i_1} A t + a_2^2 A t - \lambda_{i_2} A t + O(t^2),
\end{equation}
just as we obtained \eqref{e20.8} above. That is, we obtain the necessary
condition 
\begin{equation} \label{e20.11}
a_1^2 - a_2^2 = \lambda_{i_1} - \lambda_{i_2},
\end{equation}
which is the same as  \eqref{e16.49}.

But we can also let $\Phi_t$ operate on $v_1$ alone, and let $v_2$ as it is,
provided that the domain $\O_t = \Phi_{-1}(\O)$ associated to the modification
of $v_1$ stays inside of $\O$, so that $(v_1)_t$ does not interfere with $v_2$.
We can get that if $\varphi \geq 0$, by restricting to $t < 0$.
This even leaves some free space (the sets $\O \sm \O_t$), which we can attribute
to the empty set, or any index $i$ that we may find suitable. That is, set
$\lambda_0 = \min(0, \lambda_1, \ldots, \lambda_N)$; when we add
$\O \sm \O_t$ to some other $W_i$, or just drop it, we win
$(\lambda_{i_1}-\lambda_0)|\O \sm \O_t|$ in the volume term.

The same computation as for \eqref{e20.8} now yields
\begin{equation} \label{e20.12}
0 \leq a_1^2 t A - (\lambda_{i_1} -\lambda_0) t A + O(t^2).
\end{equation}
Now recall that $A < 0$, and also that we are only allowed to take $t < 0$;
then we only get the inequality
\begin{equation} \label{e20.13}
a_1^2 \geq \lambda_{i_1} -\lambda_0 = \lambda_{i_1} -\min(0, \lambda_1, \ldots, \lambda_N).
\end{equation}
The same argument also yields $a_2^2 \geq \lambda_{i_2} -\lambda_0$.
Those are the constraints that were noted in \eqref{e16.50}.

\ms
It is possible to do the same sort of first variation computations
near a minimizer $(\u,\W)$, assuming that everything is sufficiently smooth
for us to make the computations. This leads to Euler-Lagrange equations at the
boundary, that can be stated as follows. On a (smooth enough) piece of boundary
that would separate two regions $\Omega_{\varphi_i}$, the normal derivatives
${\d v_j \over \d n}$ satisfy
\begin{equation} \label{e20.14}
\big({\d v_1 \over \d n}\big)^2 - \big({\d v_2 \over \d n}\big)^2 
= \lambda_{i_1} - \lambda_{i_2},
\end{equation}
as in \eqref{e20.11}, where as usual we write $\varphi_j = (i_j,\varepsilon_j)$ and
$v_j = [\varepsilon_j u_{i_j}]_+$. Similarly, one-sided variations lead to the constraint
\begin{equation} \label{e20.15}
\big({\d v_j \over \d n}\big)^2 \geq \lambda_{i_j} -\min(0, \lambda_1, \ldots, \lambda_N),
\end{equation}
as in \eqref{e20.13}.
And along a nice piece of $\d \Omega_{\varphi_1}$, that would lie inside
of $\Omega$ and separate $\Omega_{\varphi_1}$ from a region where $\u = 0$, 
we get that 
\begin{equation} \label{e20.16}
\big({\d v_1 \over \d n}\big)^2 = \lambda_{i_1} -\min(0, \lambda_1, \ldots, \lambda_N),
\end{equation}
as in our estimate below \eqref{e20.13}. 

Finally, along a (smooth enough) piece of $\d\Omega \cap \d\Omega_{\varphi_1}$ 
(where we would have $\Omega_{\varphi_1}$ on one side and $\R^n \sm \Omega$
on the other side), we would get that
$\big({\d v_1 \over \d n}\big)^2 \geq \lambda_{i_1} 
-\min(0, \lambda_1, \ldots, \lambda_N)$, by the same one-sided variations as
for \eqref{e20.15}.

We do not do the computation here, because they are similar, but just more complicated
than the computations above. But the following discussion will justify the equations
\eqref{e20.14}-\eqref{e20.16} on an almost everywhere pointwise level.

\begin{pro} \label{t20.1}
Assume that $0$ is an interior point of $\Omega$, that $0 \in \d \Omega_1$,
and that \eqref{e18.2} and \eqref{e18.4} hold; in particular, $i=1$ is a good index.
Assume that we can find a blow-up limit $\u_\infty$ of $\u$ at the origin
such that 
\begin{equation} \label{e20.17}
u_{1,\infty}(x) = a \langle x,e \rangle
\ \text{ for } x\in \R^n
\end{equation}
for some choice of $a>0$ and $e \in \d B(0,1)$, and that the limit
\begin{equation} \label{e20.18}
h(0) = {1 \over \omega_{n-1}} \lim_{r \to 0} \, r^{1-n}
\langle \Delta u_{1,+}, \1_{B(0,r)}\rangle
\end{equation}
exists. Then $h(0) = a$.
\end{pro}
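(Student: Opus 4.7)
The plan is to use the representation $\Delta u_{1,+} = \mu + [f_1 u_{1,+} - \tfrac{1}{2}g_1]\1_{\Omega_1}$ from Proposition \ref{t17.2} and compute the limit in \eqref{e20.18} along the blow-up sequence $\{r_k\}$ provided by the hypothesis; since $h(0)$ is assumed to exist as a full limit, evaluating along any subsequence will do. Setting $v = u_{1,+}$ and $v_k(x) = r_k^{-1}v(r_kx)$ as in \eqref{e18.8}, a direct change of variables in the definition of distributional Laplacian yields the scaling identity $\Delta v_k(B(0,1)) = r_k^{1-n}\Delta v(B(0,r_k))$. The target is therefore to show $\Delta v_k(B(0,1)) \to a\,\omega_{n-1}$, since then \eqref{e20.18} gives $h(0) = a$.

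By Lemma \ref{t15.2}, $(\u_k,\W_k)$ is a minimizer for the rescaled functional $J_k$; the nondegeneracy condition \eqref{e13.1} rescales with the same $\lambda$ but a (much larger) threshold $r_k^{-n}\varepsilon$, and the standing Lipschitz and boundedness assumptions on $F$, $f_1$, $g_1$ transfer cleanly, so Proposition \ref{t17.2} applies to $(\u_k,\W_k)$ uniformly in $k$. This gives $\Delta v_k = \mu_k + h_k$, with $\mu_k \geq 0$ supported on $\d\Omega_{1,k}$ and $h_k = [f_{1,k}v_k - \tfrac{1}{2}g_{1,k}]\1_{\Omega_{1,k}}$. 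The scalings $f_{1,k}(x) = r_k^2 f_1(r_kx)$ and $g_{1,k}(x) = r_kg_1(r_kx)$ from \eqref{e15.19}, combined with the uniform Lipschitz bound on the $v_k$, yield $\|h_k\|_{L^\infty(B(0,1))} = O(r_k)$, so $\int_{B(0,1)} h_k \to 0$. Matters reduce to showing $\mu_k(B(0,1)) \to a\,\omega_{n-1}$.

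Since $\u_k \to \u_\infty$ uniformly on compact sets by Corollary \ref{t15.3}, taking positive parts gives $v_k \to v_\infty := a\langle\cdot,e\rangle_+$ in $L^\infty_{\mathrm{loc}}$, so $\Delta v_k \to \Delta v_\infty$ in the sense of distributions and, combining with $h_k \to 0$ in $L^1_{\mathrm{loc}}$, $\mu_k \to \Delta v_\infty$ as distributions. A direct divergence-theorem computation on the half-space $\{x\cdot e > 0\}$ identifies $\Delta v_\infty = a\,\H^{n-1}|_H$ with $H = e^\perp$. The uniform Ahlfors-regular bound $\mu_k(B(y,\rho)) \leq C\rho^{n-1}$ from Proposition \ref{t17.1}, which rescales uniformly in $k$ because it is driven by a fixed Lipschitz bound on $\u$ near the origin, together with the distributional convergence, upgrades to weak-$*$ convergence of Radon measures $\mu_k \to a\,\H^{n-1}|_H$ on every compact subset of $\R^n$. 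Since $H \cap \d B(0,1)$ is an $(n-2)$-sphere and hence $\H^{n-1}$-null, the portmanteau theorem for measures then gives $\mu_k(B(0,1)) \to a\,\H^{n-1}(H\cap B(0,1)) = a\,\omega_{n-1}$, which closes the argument.

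The one step requiring real care is the upgrade from distributional convergence $\mu_k \to a\,\H^{n-1}|_H$ to weak-$*$ convergence of Radon measures on compact sets, since only the latter is strong enough to conclude convergence of $\mu_k(B(0,1))$. This upgrade leans on the uniform $(n-1)$-dimensional mass bounds supplied by Proposition~\ref{t17.1}, which in turn rest on the nondegeneracy hypothesis \eqref{e13.1}; without it, the $\mu_k$ could in principle concentrate near $\d B(0,1)$ in a way that the distributional limit would not detect.
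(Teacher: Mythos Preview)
Your argument is correct and takes a genuinely different route from the paper's. The paper fixes a small parameter $\tau>0$, replaces $\1_{B(0,r_k)}$ by a smooth $\varphi_{\tau,k}$, integrates by parts, and uses the $W^{1,2}$ convergence of \eqref{e15.17} to compute $\lim_k r_k^{1-n}\langle \Delta u_{1,+},\varphi_{\tau,k}\rangle$ directly as $a\int_H\varphi_\tau$; the remaining annulus $B(0,r_k)\setminus B(0,(1-\tau)r_k)$ is then controlled using the \emph{assumed existence} of $h(0)$ via \eqref{e20.27}, and finally $\tau\to 0$. By contrast, you rescale to $\mu_k$, use only the uniform convergence $v_k\to v_\infty$ to get distributional convergence $\mu_k\to a\,\H^{n-1}|_H$, then invoke the uniform Ahlfors bound (which indeed rescales, as in \eqref{e18.32}) to upgrade to weak-$*$ convergence and apply portmanteau on $B(0,1)$; the hypothesis that $h(0)$ exists enters only at the very end, to pass from the subsequential limit $a$ to the full limit. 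Your approach is cleaner from a geometric-measure-theory standpoint and avoids the auxiliary parameter $\tau$; the paper's approach is more hands-on and does not need to appeal to portmanteau or to the weak-$*$ upgrade, trading that for an explicit $\tau$-argument and the $W^{1,2}$ convergence of Corollary~\ref{t15.3}.
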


\ms
Recall that $h(0)$ is the same as in \eqref{e17.30}, that the limit $h(x)$ 
exists for $\H^{n-1}$-every point $\d \Omega_1 \cap {\rm int}(\Omega)$,
and that $h$ is the Radon-Nikodym density of $\mu$, the restriction
of $\Delta u_{1,+}$ to $\d \Omega_1$, with respect with the restriction
of $\H^{n-1}$ to $\d \Omega_1$. See Proposition \ref{t17.2}.

The first assumption says that there is a sequence $\{ r_k \}$ that tends 
to $0$ such that the $\u_k$ defined by \eqref{e15.2} tend to $\u_\infty$.
In general, $\u_\infty$ and $e$ may depend on the sequence $\{ r_k \}$, but
the proposition says that $a$ does not. The case when $a=0$
would not be allowed by our assumption \eqref{e18.2},
because \eqref{e18.36} says that $\liminf_{r \to 0} \Phi^0_{1,1}(r) > 0$, 
and then $\Phi_{1,1,\infty}(1) = \lim_{k \to +\infty} \Phi^0_{1,1}(r_k) > 0$
by Lemma \ref{t16.2}.

Finally observe that if \eqref{e18.2} holds, the assumptions of 
Proposition \ref{t20.1} (with $0$ replaced by $x_0$)
are satisfied at $\H^{n-1}$-every point 
$x_0 \in \d \Omega_1 \cap {\rm int}(\Omega)$ 
for which \eqref{e18.4} holds, because $\d\Omega_1$
has a tangent plane at $x_0$; 
see Remark \ref{t18.6} and Proposition~\ref{t18.5}.
In this case, since $\d\Omega_1$ has a tangent plane at $x_0$,
we even get that $e$ does not depend on the blow-up sequence,
and that $u_{1,+}$ has a normal derivative at $x_0$, equal to $a$.
[The verification would need a little bit of playing with \eqref{e20.17},
to get a coherent coherent choice of $e$, but this is easy and we leave the details.]

\begin{proof} 
Since we intend to take limits, let us replace $\1_{B(0,r)}$ by smoother functions.
For $\tau > 0$ small, choose a smooth radial function $\varphi_\tau$
such that 
\begin{equation} \label{e20.19}
\1_{B(0,1-\tau)} \leq \varphi_\tau \leq \1_{B(0,1)},
\end{equation}
and then define $\varphi_{\tau,k}$ by 
$\varphi_{\tau,k}(x) = \varphi_{\tau}(r_k^{-1}x)$, where 
$r_k$ comes from our blow-up sequence. 
We will use $\varphi_{\tau,k}$ to approximate $\1_{B(0,r_k)}$.
Set $v = u_{1,+}$ and $v_k(x) = r_k^{-1} v(r_k x)$ to save notation, 
and compute
\begin{eqnarray} \label{e20.20}
A_k :&=&  r_k^{1-n} \langle \Delta u_{1,+}, \varphi_{\tau,k}\rangle
= - r_k^{1-n} \langle \nabla u_{1,+}, \nabla \varphi_{\tau,k}\rangle
= - r_k^{1-n} \langle \nabla v, \nabla \varphi_{\tau,k}\rangle
\nonumber \\
&=& - r_k^{-n} \int \langle \nabla v(x), (\nabla \varphi_{\tau})(r_k^{-1}x)\rangle dx
= - \int \langle \nabla v(r_k y), \nabla \varphi_{\tau}(y)\rangle dy
\nonumber \\
&=& - \int \langle \nabla v_k(y), \nabla \varphi_{\tau}(y)\rangle dy.
\end{eqnarray}
Then we apply Corollary \ref{t15.3} (no need to extract a subsequence or apply
Lemma \ref{t18.1} here, because $0$ is an interior point of $\Omega$), and 
get that \eqref{e15.17} holds for all $R$. Hence
\begin{equation} \label{e20.21}
\lim_{k \to +\infty} A_k = - \int \langle \nabla v_\infty, \nabla \varphi_{\tau}\rangle 
= - \int \langle \nabla u_{1,\infty,+} , \nabla \varphi_{\tau}\rangle.
\end{equation}
Set $H = \big\{ x\in \R^n \, ; \, \langle x, e\rangle\big\} = 0$
and $H_+ = \big\{ x\in \R^n \, ; \, \langle x, e\rangle\big\} > 0$.
Then \eqref{e20.17} yields
\begin{equation} \label{e20.22}
\lim_{k \to +\infty} A_k =  - a \int_{H_+} {\d \varphi_{\tau} \over \d e}
= a \int_{H} \varphi_{\tau},
\end{equation}
and by \eqref{e20.19} 
\begin{equation} \label{e20.23}
(1-\tau)^{n-1} a\leq {1 \over \omega_{n-1}} \, \lim_{k \to +\infty} A_k \leq a.
\end{equation}
Now we estimate 
\begin{equation} \label{e20.24}
\delta_k = r_k^{1-n} \langle \Delta u_{1,+}, \1_{B(0,r_k)}\rangle - A_k
= r_k^{1-n} \langle \Delta u_{1,+}, (\1_{B(0,r_k)}- \varphi_{\tau,k})\rangle.
\end{equation}
Recall from Proposition \ref{t17.2} that 
$\Delta u_{1,+} = \mu + [f_1 v - {1\over 2}g_1] \1_{\Omega_1}$,
where $\mu$ is a positive measure on $\d\Omega_1$ and (we just need
to know that) $w = [f_1 v - {1\over 2}g_1] \1_{\Omega_1}$ is bounded.
Thus
\begin{eqnarray} \label{e20.25}
|\delta_k| &\leq& r_k^{1-n} \int (\1_{B(0,r_k)}- \varphi_{\tau,k}) d\mu
+ r_k^{1-n} \int_{B(0,r_k)} |w|
\nonumber \\
&\leq& r_k^{1-n} \mu(B(0,r_k) \sm B(0,(1-\tau) r_k)) + C r_k
\end{eqnarray}
by \eqref{e20.19}. Also notice that for $r$ small,
\begin{equation} \label{e20.26}
\big| r^{1-n} \langle \Delta u_{1,+}, \1_{B(0,r)}\rangle - r^{1-n} \mu(B(0,r)) \big|
= r^{1-n} \Big| \int_{B(0,r)} w \Big| \leq C r
\end{equation}
for the same reason, which means that 
\begin{equation} \label{e20.27}
h(0) = {1 \over \omega_{n-1}} \lim_{r \to 0} r^{1-n} \mu(B(0,r))
\end{equation}
by \eqref{e20.18}. Then $r_k^{1-n} \mu(B(0,r_k) \sm B(0,(1-\tau) r_k))$
tends to $[1 - (1-\tau)^{n-1}] \omega_{n-1} h(0)$, and
$|\delta_k| \leq C \tau$ for $k$ large. We may now put things together.
For $k$ large,
\begin{equation} \label{e}
\omega_{n-1} |h(0) - a|  \leq \big| \omega_{n-1} h(0) - r_k^{1-n} 
\langle \Delta u_{1,+}, \1_{B(0,r_k)}\rangle\big|
+ |\delta_k| + |A_k - \omega_{n-1} a| \leq C \tau,
\end{equation}
by \eqref{e20.18} and \eqref{e20.23}. Since $\tau$ is as small as we want,
we get that $h(0) = a$; Proposition~\ref{t20.1} follows.
\qed
 \end{proof}

\Addresses

\end{document}